\theoremstyle{plain}
\newtheorem{thm}{Theorem}[section]
\newtheorem{lem}[thm]{Lemma}
\newtheorem{prop}[thm]{Proposition}
\newtheorem{cor}[thm]{Corollary}
\newtheorem*{thma}{Theorem A}
\newtheorem*{thmb}{Theorem B}
\newtheorem*{thmc}{Theorem C}
\newtheorem*{thmd}{Theorem D}
\newtheorem*{thme}{Theorem E}
\newtheorem*{apriori}{\emph{A Priori} Bounds}
\newtheorem*{finite}{Finite-time Checkability}
\newtheorem*{ren conv}{Renormalization Convergence}
\newtheorem*{reg unicrit}{Regular Unicriticality}
\theoremstyle{definition}
\newtheorem{defn}[thm]{Definition}
\newtheorem{ex}[thm]{Example}
\newtheorem{rem}[thm]{Remark}
\newtheorem{conj}{Conjecture}
\numberwithin{equation}{section}
\newcommand{\thmref}[1]{Theorem~\ref{#1}}
\newcommand{\propref}[1]{Proposition~\ref{#1}}
\newcommand{\lemref}[1]{Lemma~\ref{#1}}
\newcommand{\corref}[1]{Corollary~\ref{#1}}
\newcommand{\figref}[1]{Figure~\ref{#1}}
\newcommand{\secref}[1]{Section~\ref{#1}}
\newcommand{\subsecref}[1]{Subsection~\ref{#1}}
\newcommand{\appref}[1]{Appendix~\ref{#1}}
\newcommand{\remref}[1]{Remark~\ref{#1}}
\newcommand{\defnref}[1]{Definition~\ref{#1}}
\newcommand{\exref}[1]{Example~\ref{#1}}
\newcommand{\md}[1]{\;(\operatorname{mod}\; #1)}
\renewcommand{\epsilon}{\varepsilon}
\newcommand{\bbB}{\mathbb B}
\newcommand{\bbD}{\mathbb D}
\newcommand{\bbL}{\mathbb L}
\newcommand{\bbN}{\mathbb N}
\newcommand{\bbP}{\mathbb P}
\newcommand{\bbR}{\mathbb R}
\newcommand{\bbZ}{\mathbb Z}
\newcommand{\cA}{\mathcal A}
\newcommand{\cB}{\mathcal B}
\newcommand{\cC}{\mathcal C}
\newcommand{\cD}{\mathcal D}
\newcommand{\cE}{\mathcal E}
\newcommand{\cG}{\mathcal G}
\newcommand{\cH}{\mathcal H}
\newcommand{\cI}{\mathcal I}
\newcommand{\cJ}{\mathcal J}
\newcommand{\cK}{\mathcal K}
\newcommand{\cL}{\mathcal L}
\newcommand{\cO}{\mathcal O}
\newcommand{\cP}{\mathcal P}
\newcommand{\cQ}{\mathcal Q}
\newcommand{\cR}{\mathcal R}
\newcommand{\cS}{\mathcal S}
\newcommand{\cT}{\mathcal T}
\newcommand{\cU}{\mathcal U}
\newcommand{\cV}{\mathcal V}
\newcommand{\cW}{\mathcal W}
\newcommand{\cY}{\mathcal Y}
\newcommand{\cZ}{\mathcal Z}
\newcommand{\bfC}{\mathbf C}
\newcommand{\bfK}{\mathbf K}
\newcommand{\bfL}{\mathbf L}
\newcommand{\bfZ}{\mathbf Z}
\newcommand{\bfb}{\mathbf b}
\newcommand{\bfq}{\mathbf q}
\newcommand{\hB}{\hat B}
\newcommand{\hE}{\hat E}
\newcommand{\hH}{\hat H}
\newcommand{\hJ}{\hat J}
\newcommand{\hR}{\hat R}
\newcommand{\hS}{\hat S}
\newcommand{\ha}{\hat a}
\newcommand{\hf}{\hat f}
\newcommand{\hk}{\hat k}
\newcommand{\hm}{\hat m}
\newcommand{\hr}{\hat r}
\newcommand{\hx}{\hat x}
\newcommand{\hy}{\hat y}
\newcommand{\hz}{\hat z}
\newcommand{\tiB}{\tilde B}
\newcommand{\tiC}{\tilde C}
\newcommand{\tiE}{\tilde E}
\newcommand{\tiF}{\tilde F}
\newcommand{\tiH}{\tilde H}
\newcommand{\tiI}{\tilde I}
\newcommand{\tiJ}{\tilde J}
\newcommand{\tiS}{\tilde S}
\newcommand{\tic}{\tilde c}
\newcommand{\tif}{\tilde f}
\newcommand{\tig}{\tilde g}
\newcommand{\bepsilon}{{\bar \epsilon}}
\newcommand{\bL}{{\bar L}}
\newcommand{\chcB}{{\check{\cB}}}
\newcommand{\chF}{{\check{F}}}
\newcommand{\chf}{{\check{f}}}
\newcommand{\tixi}{\tilde \xi}
\newcommand{\tirho}{\tilde \rho}
\newcommand{\chPhi}{\check \Phi}
\newcommand{\chJ}{\check{J}}
\newcommand{\tiGamma}{\tilde \Gamma}
\newcommand{\tiPhi}{\tilde \Phi}
\newcommand{\chB}{\check B}
\newcommand{\bdelta}{{\bar \delta}}
\newcommand{\bK}{{\bar K}}
\newcommand{\udelta}{\underline{\delta}}
\newcommand{\uepsilon}{\underline{\epsilon}}
\newcommand{\bchi}{\bar{\chi}}
\newcommand{\ueta}{\underline{\eta}}
\newcommand{\baeta}{\bar\eta}
\newcommand{\chgamma}{\check{\gamma}}
\newcommand{\chcI}{\check{\cI}}
\newcommand{\chI}{\check{I}}
\newcommand{\ticI}{\tilde{\cI}}
\newcommand{\hcJ}{\hat{\cJ}}
\newcommand{\hcD}{\hat{\cD}}
\newcommand{\chl}{\check{l}}
\newcommand{\chH}{\check{H}}
\newcommand{\chcD}{\check{\cD}}
\newcommand{\hcB}{\hat{\cB}}
\newcommand{\ticB}{\tilde\cB}
\newcommand{\frU}{{\mathfrak{U}}}
\newcommand{\frQ}{{\mathfrak{Q}}}
\newcommand{\frH}{{\mathfrak{H}}}
\newcommand{\frF}{{\mathfrak{F}}}
\newcommand{\frN}{{\mathfrak{N}}}
\newcommand{\frHL}{{\mathfrak{H}\mathfrak{L}}}
\newcommand{\frd}{{\frd}}
\newcommand{\chh}{{\check h}}
\newcommand{\ticE}{{\tilde\cE}}
\newcommand{\hcH}{{\hat\cH}}
\newcommand{\chd}{{\check d}}
\newcommand{\frA}{{\mathfrak{A}}}
\newcommand{\frI}{{\mathfrak{I}}}
\newcommand{\frf}{{\mathfrak{f}}}
\newcommand{\bC}{{\bar C}}
\newcommand{\In}{\operatorname{in}}
\newcommand{\out}{\operatorname{out}}
\newcommand{\Jac}{\operatorname{Jac}}
\newcommand{\diam}{\operatorname{diam}}
\newcommand{\dist}{\operatorname{dist}}
\newcommand{\Id}{\operatorname{Id}}
\newcommand{\loc}{\operatorname{loc}}
\newcommand{\crit}{\operatorname{crit}}
\newcommand{\sign}{\operatorname{sign}}
\newcommand{\depth}{\operatorname{depth}}
\newcommand{\logl}{\log_\lambda}
\newcommand{\vd}{\operatorname{vd}}
\newcommand{\Dis}{\operatorname{Dis}}
\newcommand{\vm}{\operatorname{vm}}
\newcommand{\cRod}{\cR_{\operatorname{1D}}}
\newcommand{\Piod}{\Pi_{\operatorname{1D}}}
\newcommand{\Index}{\operatorname{Index}}
\newcommand{\matsp}[1]{\hspace{5mm} \text{#1} \hspace{5mm}}
\newcommand{\comma}{, \hspace{5mm}}
\title[On Regular H\'enon-like Renormalization]{On Regular H\'enon-like Renormalization}
\author{Jonguk Yang}
\begin{document}

\maketitle

\begin{abstract}
\vspace{-0.75cm}
We develop a renormalization theory of non-perturbative dissipative H\'enon-like maps with  combinatorics of bounded type. The main novelty of our approach is the incorporation of Pesin theoretic ideas to the renormalization method, which enables us to control the small-scale geometry of dynamics in the higher-dimensional setting. In \cite{CLPY3}, it is shown that, under certain regularity conditions on the return maps, renormalizations of H\'enon-like maps have {\it a priori} bounds. The current paper is devoted to the applications of this critical estimate. First, we prove that H\'enon-like maps converge under renormalization to the same renormalization attractor as for 1D unimodal maps. Second, we show that the necessary and sufficient conditions for renormalization convergence are finite-time checkable. Lastly, we show that every infinitely renormalizable H\'enon-like map is {\it regularly unicritical}: there exists a unique orbit of tangencies between strong-stable and center manifolds, and outside a slow-exponentially shrinking neighborhood of this orbit, the dynamics behaves as a uniformly partially hyperbolic system.\vspace{-0.75cm}
\end{abstract}

\tableofcontents


\section{Introduction}\label{sec.intro}

The set of real quadratic polynomials, after normalization, can be represented as the following one-parameter family:
\begin{equation}\label{eq.quad fam}
\frQ := \{f_a(x) := x^2+a \; | \; a\in\bbR\}.
\end{equation}
We refer to $\frQ$ as the {\it (real) quadratic family}. Despite its elementary form, the dynamics in $\frQ$ turns out to be incredibly rich and fascinatingly complicated. In fact, the study of this family has been a focal point in the field of one-dimensional dynamics for nearly three decades (see e.g. \cite{L2}).

At the heart of this topic lies the renormalization theory of unimodal maps, which analyzes the appearance of small-scale self-similarity in these systems. It was first introduced to the subject independently by Feigenbaum \cite{Fe} and Coullet--Tresser \cite{CoTr} in the mid 1970's. They observed that under successive renormalizations (or ``zoom-ins''), the small-scale dynamics of a unimodal map asymptotically approach a {\it universal} limit sequence that only depends on the combinatorial type of the original system. As a conjectural explanation of this phenomenon, they proposed that renormalization can be viewed as an operator acting on the space of unimodal maps, and that the set of renormalization limits form a hyperbolic attractor $\frA$ for this operator. A rigorous mathematical proof of this conjecture was completed in 1999, through the combined efforts of Sullivan \cite{Su}, McMullen \cite{Mc} and Lyubich \cite{L1}.

In dimension two, the role of the quadratic family is assumed by the {\it H\'enon family}:
\begin{equation}\label{eq.henon fam}
\frH := \{F_{a,b}(x,y) := (x^2+a-by, x) \; | \; a,b \in \bbR\}.
\end{equation}
The elements in $\frH$ are referred to as {\it H\'enon maps}. We identify $\frQ$ with the line $b = 0$ consisting of degenerate H\'enon maps. This way, $\frH$ can be viewed as a two-dimensional extension of $\frQ$. A H\'enon map $F_{a,b}$ is said to be {\it perturbative} if $|b|\ll 1$, as it can be obtained by making a small 2D perturbation to the 1D system $F_{a,0} \sim f_a$.

The H\'enon maps were introduced by H\'enon in 1969 as particularly simple systems that, based on numerical experiments, appeared to feature {\it strange attractors}: attractors in the phase space that exhibit chaotic behaviors \cite{He}. Since then, these maps have been some of the most widely studied examples in two-dimensional dynamics. For some particularly notable results about H\'enon maps, see \cite{BeCa}, \cite{WaYo}, \cite{PaYo}, \cite{Ber}, \cite{BeMaPa} and \cite{BoSt}. However, despite these remarkable developments, the dynamics in the H\'enon family remains deeply enigmatic, and unravelling its dynamics continues to be a wide open area of research.

Given the enormous success of renormalization theory in the study of the quadratic family, it is natural to try and extend this technique to the two-dimensional setting. The main conjecture concerning renormalization of H\'enon maps was proposed by Gambaudo--Tresser in the late 1980's \cite{GaTr}. We state it below in greater generality as two separate conjectures.

\begin{conj}
Let $f_{a_*} \in \cQ$ be an infinitely renormalizable quadratic polynomial with bounded type combinatorics. Then there exists a real analytic curve $\gamma(b) = (a(b), b)$ for $b \in [0, 1)$ extending from $\gamma(0) := (a_*, 0)$ such that the H\'enon map $F_{\gamma(b)} \in \cH$ is infinitely renormalizable, and converges to the same universal renormalization limit as $f_{a_*}$ under renormalization.
\end{conj}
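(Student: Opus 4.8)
The plan is to realize $\gamma$ as (a branch of) the intersection of the H\'enon family $\frH$ with the stable set $W^s(\frA)$ of the renormalization operator $\cR$, taken along the itinerary in $\frA$ prescribed by the combinatorial type of $f_{a_*}$, and then to continue this arc from the known one-dimensional parameter $(a_*,0)$ out to $b=1$ by an open--closed continuation argument powered by the \emph{a priori} bounds of \cite{CLPY3}. For the base case, recall that $f_{a_*}$, viewed as the degenerate H\'enon map $F_{(a_*,0)}$, is infinitely renormalizable with $\cR^n F_{(a_*,0)}\to\frA$, and is trivially \emph{regular}, the regularity conditions on return maps being automatic when $b=0$. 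By the one-dimensional theory (\cite{Su,Mc,L1}) the stable manifold $W^s_{\od}(\frA)$ is a codimension-one real-analytic submanifold of the space of real-analytic unimodal maps that meets $\frQ$ transversally at $a_*$; combined with the perturbative H\'enon renormalization theory of de~Carvalho--Lyubich--Martens, this already produces a local real-analytic arc $\gamma(b)=(a(b),b)$, $b\in[0,b_0)$, along which $F_{\gamma(b)}$ is regularly infinitely renormalizable and converges to $\frA$.

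That $W^s(\frA)\cap\frH$ is genuinely a curve near $(a_*,0)$ rests on the fact that $\cR$ is strongly contracting transverse to the unimodal slice: the average Jacobian of $\cR^nF$ collapses to zero super-exponentially fast along renormalization orbits, so the H\'enon-like renormalization attractor coincides with the one-dimensional one and carries a normally hyperbolic local stable manifold of codimension one. Hence $W^s(\frA)$ is a real-analytic codimension-one submanifold of H\'enon-like space, transverse to $\frQ$ at $a_*$ and therefore transverse to the two-parameter family $\frH$; the implicit function theorem, applied in a suitable combinatorial ``position'' coordinate, then graphs $W^s(\frA)\cap\frH$ over the $b$-axis as $b\mapsto(a(b),b)$. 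Openness of the continuation is supplied by \emph{finite-time checkability}: once $\cR^nF$ is regular for $n$ up to an explicit bound $N$, the \emph{a priori} bounds of \cite{CLPY3} trap all the $\cR^nF$ in a fixed precompact family of regular maps, and precompactness together with hyperbolicity of $\frA$ forces $\cR^nF\to\frA$; since each finite-level regularity condition is cut out by finitely many strict inequalities, the set $\cW$ of regularly infinitely renormalizable maps of the prescribed combinatorial type is open, and $\cW\cap\frH$ is an open set containing the base-case arc.

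The globalization is then a maximality argument. Let $b^*=\sup\{\beta\le 1 : \gamma\text{ continues real-analytically on }[0,\beta)\text{ inside }W^s(\frA)\cap\frH\}$ and suppose $b^*<1$. On $[0,b^*)$ every $F_{\gamma(b)}$ is regularly infinitely renormalizable, so by the \emph{a priori} bounds the family $\{\cR^nF_{\gamma(b)}:n\ge 0,\ b<b^*\}$ is precompact; letting $b\to b^*$, and using continuity of the finitely many combinatorial data together with the closedness of the \emph{a priori} bounds, shows that $F_{\gamma(b^*)}$ is itself regularly infinitely renormalizable with $\cR^nF_{\gamma(b^*)}\to\frA$. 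Applying the openness of $\cW$ and the implicit function theorem at $(a(b^*),b^*)$ extends $\gamma$ real-analytically past $b^*$, contradicting maximality; hence $b^*=1$. Real analyticity of the whole curve is automatic, as at every point it is locally the transverse intersection of the real-analytic submanifolds $W^s(\frA)$ and $\frH$.

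The main obstacle is not this soft scaffolding but the input it rests on: that finite-level regularity genuinely propagates to all scales \emph{non-perturbatively}, i.e. that the \emph{a priori} bounds plus boundedness of $\{\cR^nF\}$ really yield \emph{regular} infinite renormalizability and not merely infinite renormalizability. This is where Pesin-theoretic control of the small-scale geometry is essential: one must track the strong-stable and center foliations, show the locus of their mutual tangencies collapses onto a single orbit (\emph{regular unicriticality}), and rule out the recurrence to a slow-exponentially shrinking neighborhood of that orbit which would destroy return-map regularity at some deep level. Establishing that the partial hyperbolicity away from the critical orbit is uniform across \emph{all} renormalization levels --- so that the regularity hypothesis of \cite{CLPY3} is never violated along the curve --- is the crux, and is precisely what the main theorems of the present paper are built to deliver.
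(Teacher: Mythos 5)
This statement is an open \emph{conjecture} (attributed to Gambaudo--Tresser) which the paper does not prove; it is stated in the introduction precisely as a problem that the renormalization machinery developed here is aimed at, not one that it settles. The paper's actual contribution toward it is Theorems~B and~C together with Examples~\ref{ex.henon 0}--\ref{ex.henon 1}: a \emph{finite-time checkable} criterion which, for any fixed Jacobian $\lambda$, reduces infinite regular renormalizability to verifying $(L,\epsilon_0,\lambda)$-regularity of the first $n_1(\lambda)$ returns, and which could in principle be run numerically family-by-family; the paper is explicit that the required depth $n_1(\lambda)$ blows up as $\lambda\to 1$, so no uniform statement out to $b=1$ is obtained. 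Your proposal should therefore not be read as recovering a proof that exists in the paper.

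The genuine gap in your argument is the closedness step of the open--closed continuation. Regularity is a quantitative notion with constants $(L,\epsilon,\lambda)$, and nothing in the paper, nor in your argument, controls how the irregularity factor $L$ behaves as $b$ varies along the putative curve. The \emph{a priori} bounds of \cite{CLPY3}, and Theorem~B here, take $(L,\epsilon,\lambda)$-regularity up to a depth $n_1$ satisfying $\bar K \lambda^{\epsilon R_{n_1}}<1$ as a \emph{hypothesis}; they do not prevent $L(b)\to\infty$ (nor the required depth $n_1$ from drifting to infinity, since $\lambda=\lambda(b)\to 1$ forces $R_{n_1}\to\infty$) as $b\to b^*$. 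If the constants degenerate, the limiting map $F_{\gamma(b^*)}$ need not be regularly renormalizable, and your ``precompactness plus continuity of the finitely many combinatorial data'' argument does not apply because the hypotheses under which precompactness was derived are exactly what is failing. Your own final paragraph concedes that ``uniform partial hyperbolicity across all renormalization levels \ldots is the crux,'' but then asserts it is ``precisely what the main theorems of the present paper are built to deliver''--- that is not so: Theorem~E gives regular unicriticality \emph{for a single fixed} infinitely regularly renormalizable $F$, not uniformity of the regularity constants as the parameter sweeps toward the conservative limit. A secondary issue is the asserted real-analytic codimension-one stable manifold $W^s(\frA)$ in H\'enon-like space with the transversality to $\frH$ needed for the implicit function theorem: this is known perturbatively (the de~Carvalho--Lyubich--Martens/Hazard input you cite), but it is not established non-perturbatively anywhere in this paper, and Theorem~A's convergence statement is again conditional on the regularity hypothesis rather than a proof that $\cR$ acts hyperbolically on a neighborhood in the full 2D space.
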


\begin{conj}
The {\it boundary of chaos} $\partial_{\operatorname{chaos}}$ (the boundary of the set of maps with zero entropy) in $\cH$ is a piece-wise real analytic curve consisting of infinitely renormalizable H\'enon maps.
\end{conj}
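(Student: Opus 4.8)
The plan is to leverage the three theorems of this paper — renormalization convergence, finite-time checkability, and regular unicriticality — to identify $\partial_{\operatorname{chaos}}$ with the trace on $\frH$ of the stable manifold of the period-doubling renormalization operator, and then to read off real-analyticity and the combinatorial description from the hyperbolicity of renormalization together with its classical $1$D counterpart. Throughout, ``renormalizable'' means period-doubling renormalizable: a H\'enon-like map one of whose renormalizations carries a periodic orbit of period $\geq 3$ has a horseshoe, hence positive entropy, so it plays no role near the boundary of chaos. \emph{First}, I would show that $Z := \{F \in \frH : h_{\operatorname{top}}(F) = 0\}$ is closed, so that $\partial_{\operatorname{chaos}} = \partial Z$: by Katok's theorem a $C^{1+\alpha}$ surface diffeomorphism has positive entropy iff it carries a transverse homoclinic orbit, and such orbits persist under perturbation. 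The key structural input is a $2$D analogue of the interval-map dichotomy ``zero entropy $\Leftrightarrow$ all periodic periods are powers of $2$'': for $F \in Z$, either (i) $F$ is only finitely many times renormalizable and its terminal renormalization is hyperbolic, whence $F$ is Morse--Smale, structurally stable, and interior to $Z$; or (ii) $F$ is infinitely renormalizable. Consequently $\partial Z \subseteq \Sigma_\infty$, the set of infinitely renormalizable maps in $\frH$.

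\emph{Second}, I would show $\Sigma_\infty$ is a real-analytic codimension-one submanifold that coincides locally with $\partial Z$. Work with the renormalization operator $\cR$ on the Banach manifold of real-analytic H\'enon-like germs — analytic and compact, and by the convergence theorem of this paper attracting onto the $1$D renormalization attractor $\frA$, on which $\cR$ acts as $\cRod$. Combining Lyubich's hyperbolicity of $\cRod$ at the period-doubling fixed point with the tangency control furnished by regular unicriticality (which prevents the extra H\'enon directions from producing neutral behavior) yields hyperbolicity of $\cR$ at the unique period-doubling fixed point, with one-dimensional unstable direction and codimension-one, real-analytic stable manifold $W^s = \Sigma_\infty$; finite-time checkability then identifies, near any given map, exactly which maps lie in $\Sigma_\infty$. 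The one-dimensional unstable direction shows that $W^s$ locally separates a side on which maps are one more time renormalizable — hence, by the first step, interior to $Z$ — from a side on which the combinatorics overshoots and a horseshoe appears; so $W^s = \partial Z$ near each of its points.

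\emph{Third}, intersect $\frH$ with $\Sigma_\infty$. On the slice $b = 0$ this is the quadratic family, which crosses $W^s$ transversally at the Feigenbaum point by the classical transversality theorem for real-analytic unimodal families; transversality is open, so it persists for small $b$ and $\frH \cap \Sigma_\infty$ is a real-analytic curve near $(a_*,0)$ — precisely the Gambaudo--Tresser curve of the preceding conjecture. Continuing along $\Sigma_\infty$, transversality of $\frH$ to $W^s$ can fail only on a set which, by analyticity, is discrete; at such parameters the curve must be re-parametrized and its analytic branches joined, and this, together with the breakdown of dissipative renormalization as $|b| \to 1$, accounts for the ``piece-wise'' qualifier.

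I expect the first step to be the principal obstacle. Unlike in dimension one there is no kneading theory or Sharkovskii ordering to lean on, so the combinatorial skeleton of a zero-entropy H\'enon-like map has to be reconstructed by hand from the \emph{a priori} bounds of \cite{CLPY3}, the regular-unicriticality normal form, and the $\lambda$-lemma applied to the center and strong-stable laminations; in particular, excluding a non-renormalizable zero-entropy map with a complicated recurrent set is a delicate argument with no $1$D precedent. A secondary difficulty, in the second step, is that convergence onto $\frA$ does not by itself deliver a spectral gap for $D\cR$ on the infinite-dimensional tangent space; one must show the extra directions are genuinely contracted, which should follow from compactness of $\cR$ together with the fact that these are exactly the directions along which a H\'enon-like germ collapses to its universal $1$D profile.
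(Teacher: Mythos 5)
This is stated in the paper as an open \emph{conjecture}, not a theorem; the paper proves no proof of it, and Theorems A--E are presented explicitly as tools \emph{toward} the Gambaudo--Tresser program rather than as a resolution of it. So there is no ``paper's own proof'' to compare against, and your proposal should be evaluated as an attack on an open problem rather than as a reconstruction.

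As such, the proposal has several genuine gaps beyond the one you flag. (1) The dichotomy in your first step --- zero entropy implies either finitely renormalizable Morse--Smale or infinitely renormalizable --- is exactly the hard, missing combinatorial input; you correctly identify it as the principal obstacle, but without it the inclusion $\partial Z \subseteq \Sigma_\infty$ is unproved, and nothing in Theorems A--E supplies it. In particular, the opening claim that a renormalization carrying a period-$\geq 3$ orbit forces a horseshoe is a Sharkovskii-type 1D statement with no 2D analogue available here. (2) The second step asserts hyperbolicity of the 2D renormalization operator at the period-doubling fixed point, with codimension-one analytic stable manifold $\Sigma_\infty$. This is not a theorem of this paper, and it is not a routine consequence of renormalization convergence plus regular unicriticality: the paper proves that renormalizations collapse super-exponentially onto 1D and converge to $\frA_\bfb$, which gives an attractor, but a spectral gap for $D\cR$ on the full infinite-dimensional tangent space is a separate, harder statement. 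Indeed, De Carvalho--Lyubich--Martens (\cite{DCLMa}, cited in the paper) proved ``universality but non-rigidity'' for Hénon period-doubling renormalization, which already shows the 2D operator behaves more subtly than a textbook hyperbolic fixed point with smooth stable manifold. (3) The conjecture concerns the full family $\frH$, including Jacobians of modulus near and above $1$; the machinery of this paper is confined to the dissipative, regularly renormalizable regime, and even there Theorems B--C only guarantee, at best, that infinite renormalizability can be verified in finite time once a sufficiently deep regular return is exhibited. Nothing addresses $\partial_{\operatorname{chaos}}$ outside that regime, which is exactly where the ``piece-wise'' qualifier in the conjecture is expected to bite. (4) Finally, even granting (1)--(2), the passage from ``$\Sigma_\infty$ separates one-more-time-renormalizable maps from horseshoe maps'' to ``$W^s = \partial Z$'' needs the full-measure/openness statements that in 1D come from the real a priori bounds and Milnor--Thurston theory; the 2D counterparts are not established.

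In short: your outline is a reasonable strategic sketch that correctly identifies the role the paper's theorems would play, but every load-bearing step is an open problem, and the proposal should not be read as a proof.
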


In the perturbative regime, these conjectures can be verified for bounded type combinatorics using the following argument. Since the 1D renormalization attractor $\frA$ is hyperbolic, infinitely renormalizable quadratic polynomials converge to $\frA$ under renormalization in a robust way. Using this fact, one can show that the 1D renormalization convergence in the quadratic family extends to nearby 2D infinitely renormalizable H\'enon maps. This argument has been applied to the period-doubling case by Coullet-Eckmann-Koch \cite{CoEcKo}, Gambaudo--Treser--van Strien \cite{GavSTr} and De Carvalho--Lyubich--Martens \cite{DCLMa}; and to the stationary case by Hazard \cite{Ha}.

The goal of this paper is to extend the 1D renormalization theory of unimodal maps to a {\it non-perturbative} 2D setting. A natural 2D analogue of a unimodal map is given by a {\it H\'enon-like map}: a diffeomorphism $F : D \to F(D) \Subset D$ of the form $F(x, y) = (f(x,y), x)$ defined on a rectangle $0 \in D \subset \bbR^2$, such that for any fixed $y$, the 1D map $f(\cdot, y)$ is unimodal. One may visualize the action of $F$ as bending $D$ into a U-shape, and then turning it on its side. See \figref{fig.henontrans}. We refer to $\Piod(F)(x) := f(x, 0)$ as the {\it 1D profile of $F$}.

\begin{figure}[h]
\centering
\includegraphics[scale=0.45]{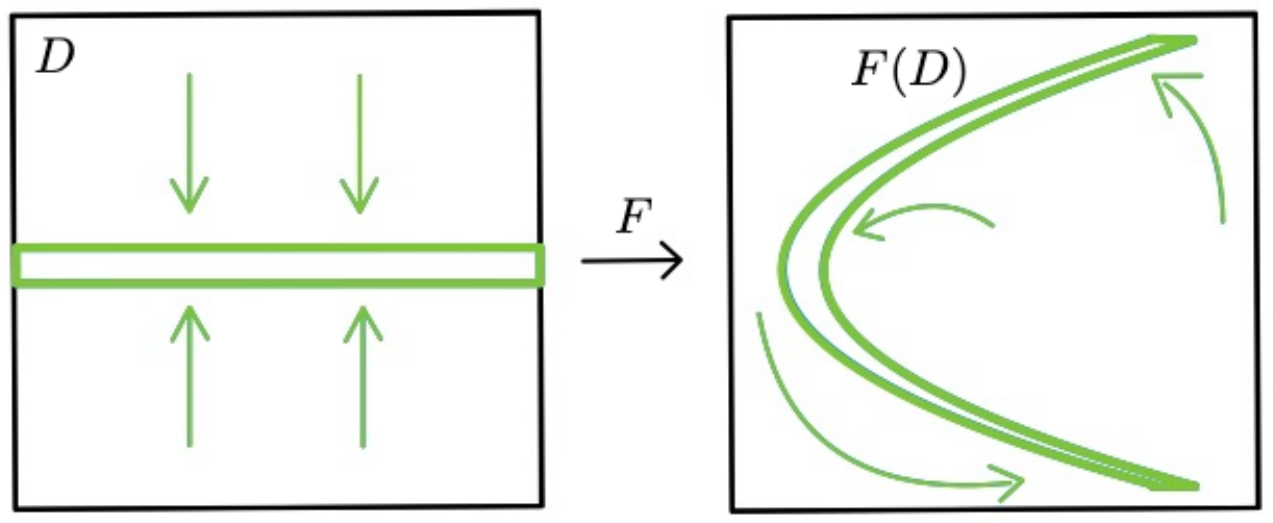}
\caption{H\'enon-like mapping.}
\label{fig.henontrans}
\end{figure}

The H\'enon-like map $F$ is {\it (H\'enon-like) renormalizable} if for some integer $R \geq 2$, there is an $R$-periodic subdomain $\cB^1 \Subset D$, and the return map $F^R|_{\cB^1}$ is again H\'enon-like after a smooth change-of-coordinates $\Phi : \cB^1 \to D^1$. In this case, the map $\Phi$ and the pair $(F^R, \Phi)$ are referred to as a {\it straightening chart} and a {\it H\'enon-like return} respectively. We define the {\it (H\'enon-like) renormalization} $\cR(F)$ of $F$ as the H\'enon-like map obtained via a suitable affine rescaling of $\Phi\circ F^R \circ \Phi^{-1}$ that normalizes the width of the domain $D^1$. See \figref{fig.henonren}. Lastly, a {\it centered straightening chart} $\Psi : \cB^1 \to B^1$ is a chart that induces the same vertical and horizontal foliations over $\cB^1$, and preserves the arclengths along the vertical and horizontal lines through the origin in $B^1$.

\begin{figure}[h]
\centering
\includegraphics[scale=0.25]{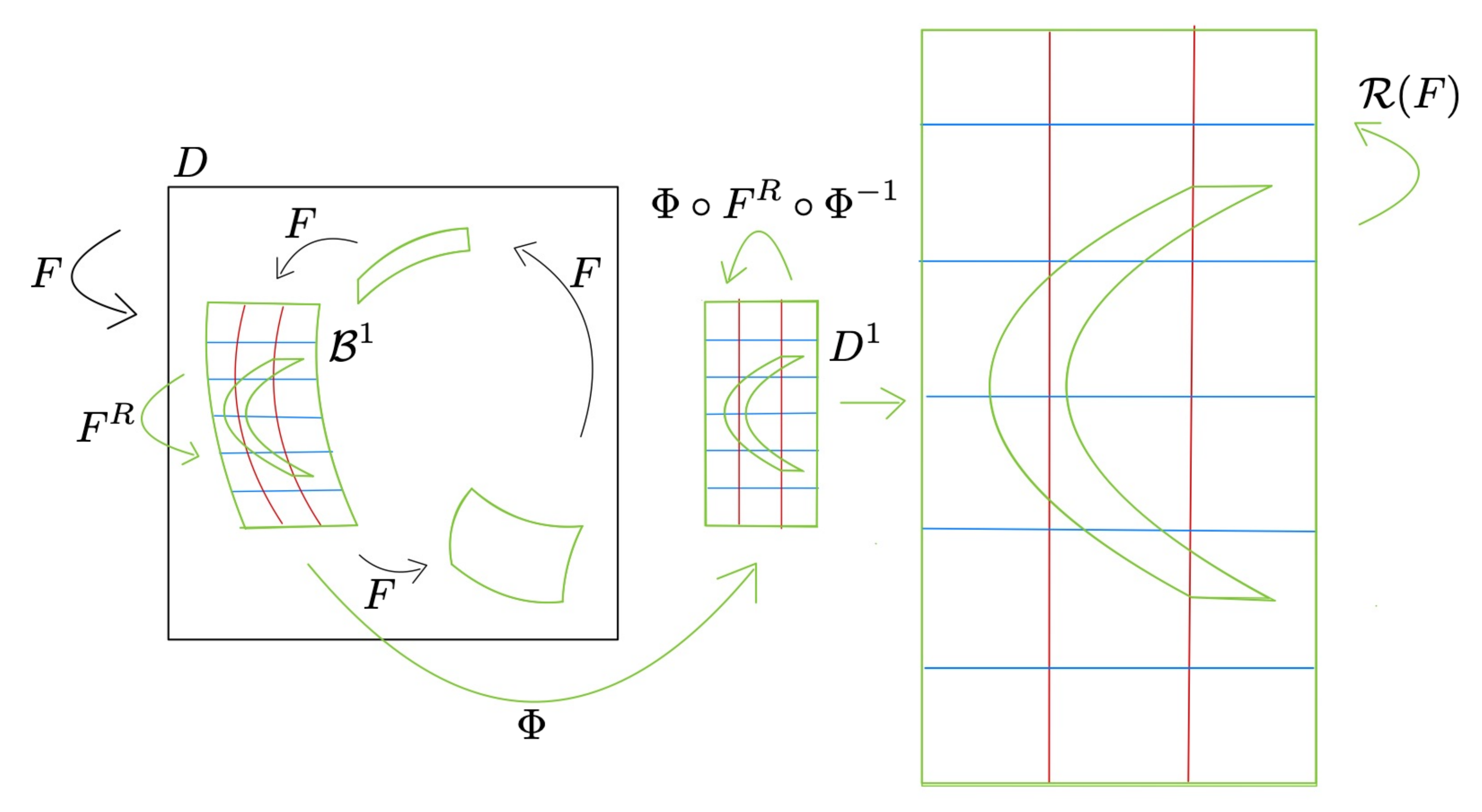}
\caption{H\'enon-like renormalization.}
\label{fig.henonren}
\end{figure}
 
A key novelty of our approach is the incorporation of Pesin theoretic ideas to the renormalization method. This involves keeping track of the {\it regularity} of points, which can then be used to control the geometry of dynamics in the higher-dimensional setting (see \appref{sec.pesin}). We give loose definitions of these notions below. For the precise definitions, see \subsecref{subsec.regular}.

Let $p$ be a point in $D$, and let $E_p$ be a tangent direction at $p$.  For $M \in \bbN \cup \{\infty\}$, we say that $p$ is {\it $M$-times forward regular along $E_p$} if there is sufficiently dominant exponential contraction along $E_p$ under $DF^m$ for $1 \leq m \leq M$. Similarly, $p$ is {\it $M$-times backward regular along $E_p$} if there is sufficiently dominant exponential expansion along $E_p$ under $DF^{-m}$ for $1 \leq m \leq M$.

Consider the H\'enon-like return $(F^R, \Phi)$. For $p \in \cB^1$, let $E_p^v$ and $E_p^h$ be the tangent directions at $p$ that are mapped by $D\Phi$ to the genuine vertical and horizontal directions respectively. We say that $(F^R, \Phi)$ is {\it regular}, and that $F$ is {\it regularly H\'enon-like renormalizable} if 
\begin{enumerate}[i)]
\item every $p \in \cB^1$ is $R$-times forward regular along $E^v_p$;
\item every $q \in F^{R}(\cB^1)$ is $R$-times backward regular along $E^h_q$; and
\item at every $p \in \cB^1$, the tangent directions $E_p^v$ and $E_p^h$ are uniformly transversal.
\end{enumerate}

Under this regularity assumption, it is established in \cite{CLPY3} the following uniform control on the small-scale geometry of the dynamics of a H\'enon-like map that holds at all renormalization depths. An estimate of this kind is commonly referred to as {\it a priori} bounds, and is typically the key ingredient needed to develop a functioning renormalization theory. The precise version is stated in \thmref{a priori}.

\begin{apriori}\cite{CLPY3}
Suppose for some $N \in \bbN \cup \{\infty\}$, a H\'enon-like map $F$ is $N$-times regularly H\'enon-like renormalizable with bounded type combinatorics. Then for all $1 \leq n \leq N$, the distortion along the horizontal direction of the $n$th return map is uniformly bounded.
\end{apriori}

A priori bounds has far-reaching consequences for renormalization of H\'enon-like maps, which we summarize below as three main results. They are given informally here in order to better convey their conceptual meaning to the readers. For their precise statements, see \secref{sec.main thm}.

The first main result describes the asymptotics of H\'enon-like maps under regular H\'enon-like renormalization. The precise version is stated as Theorem A in \secref{sec.main thm}.

\begin{ren conv}
Suppose a H\'enon-like map $F$ is infinitely regularly H\'enon-like renormalizable with bounded type combinatorics. Then the following statements hold. 
\begin{enumerate}[i)]
\item The centered straightening charts for the renormalizations of $F$ converge super-exponentially fast.
\item The renormalizations of $F$ converge to the space of 1D systems (i.e. their dependence on the second coordinate goes to zero) super-exponentially fast.
\item The 1D profiles of the renormalizations of $F$ converge to the 1D renormalization attractor for unimodal maps exponentially fast.
\end{enumerate}
\end{ren conv}

The second main result addresses the problem of guaranteeing the existence of infinitely regularly H\'enon-like renormalizable maps. It is actually a combination of two theorems: Theorem B and Theorem C in \secref{sec.main thm}. See also \remref{rem.finite}.

\begin{finite}
For bounded type combinatorics, infinite regular H\'enon-like renormalizability is a finite-time checkable condition.
\end{finite}

Applying the previous two main results to the H\'enon family $\frH$, it becomes theoretically possible to verify numerically if the curves of infinitely renormalizable H\'enon maps extend  arbitrarily close to $b=1$ (although the computations involved would become infinitely complex as the Jacobian gets closer and closer to $1$). See Examples \ref{ex.henon 0} and \ref{ex.henon 1} for more details.

Renormalization convergence gives us extremely precise information about what the dynamics of a H\'enon map looks like when it is ``zoomed-in'' at a certain point (which we later identify as the {\it critical value} of the map). The last main result concerns the global geometry of the dynamics over the entire renormalization limit set. A more detailed version of this result is stated in Theorem E in \secref{sec.main thm}.

\begin{reg unicrit}
An infinitely regularly H\'enon-like renormalizable map with bounded type combinatorics is \emph{regularly unicritical} on its renormalization limit set.
\end{reg unicrit}

The notion of regular unicriticality is introduced and studied in \cite{CLPY1}. It is a new type of axiomatic dynamics defined on uniquely ergodic sets. Loosely speaking, it means that the system features a unique {\it critical orbit}: an orbit of tangencies between strong-stable and center manifolds. Moreover, outside a slow-exponentially shrinking neighborhood of this orbit, every point is uniformly regular. See \defnref{def unicrit}. In \cite{CLPY1}, it is shown that, despite the presence of the critical orbit adding highly non-linear ``bends'' in the picture, the global geometry of the dynamics for a regularly unicritical system can be understood almost as explicitly as for a uniformly partially hyperbolic system.

\subsection{Notations and conventions}

Unless otherwise specified, we adopt the following notations and conventions. 

Any diffeomorphism on a domain in $\bbR^2$ is assumed to be orientation-preserving. The projective tangent space at a point $p \in \bbR^2$ is denoted by $\bbP^2_p$.

Given a number $\eta > 0$, we use $\baeta$ to denote any number that satisfy
$$
\eta < \baeta < C\eta^D
$$
for some uniform constants $C > 1$ and $D > 1$ (if $\eta >1$) or $D \in (0, 1)$ (if $\eta < 1$) that are independent of the map being considered. Additionally, we allow $\baeta$ to absorb any uniformly bounded coefficient or power. So for example, if $\baeta >1$, then we may write
$$
\text{``}\;\; 10\baeta^5 = \baeta\;\;\text{''}.
$$
Similarly, we use $\ueta$ to denote any number that satisfy
$$
c\eta^d < \ueta < \eta
$$
for some uniform constants $c \in (0, 1)$ and $d \in (0, 1)$ (if $\eta >1$) or $d >1$ (if $\eta <1$) that are independent of the map being considered. As before, we allow $\ueta$ to absorb any uniformly bounded coefficient or power. So for example, if $\ueta >1$, then we may write
$$
\text{``}\;\;\tfrac{1}{3}\ueta^{1/4} = \ueta\;\;\text{''}.
$$
These notations apply to any positive real number: e.g. $\bepsilon > \epsilon$, $\udelta < \delta$, $\bL > L$, etc.

Note that $\baeta$ can be much larger than $\eta$ (similarly, $\ueta$ can be much smaller than $\eta$). Sometimes, we may avoid the $\beta$ or $\ueta$ notation when indicating numbers that are somewhat or very close to the original value of $\eta$. For example, if $\eta \in (0,1)$ is a small number, then we may denote $\eta':=(1-\baeta)\eta$. Then $\ueta\ll \eta' < \eta$.

We use $n, m, i, j$ to denote integers (and less frequently $l, k$). The letter $i$ is never the imaginary number. Typically (but not always), $n \in\bbN$ and $m \in \bbZ$. We typically use $N, M$ to indicate fixed integers (often related to variables $n, m$).

We typically denote constants used for estimate bounds by $C, K \geq 1$ (less frequently $c > 0$).

We use calligraphic font $\cU, \cT, \cI,$ etc, for objects in the phase space and regular fonts $U, T, I,$ etc, for corresponding objects in the linearized/uniformized coordinates. A notable exception are for the invariant manifolds $W^{ss}, W^c$.

We use $p, q$ to indicate points in the phase space, and $z, w$ for points in linearized/uniformized coordinates.

For any set $X_m \subset \Omega$ with a numerical index $m \in \bbZ$, we denote
$$
X_{m+l} := F^l(X_m)
$$
for all $l \in \bbZ$ for which the right-hand side is well-defined. Similarly, for any direction $E_{p_m} \in \bbP^2_{p_m}$ at a point $p_m \in \Omega$, we denote
$$
E_{p_{m+l}} := DF^l(E_{p_m}).
$$

Define
$$
\pi_h(x,y) := x
\comma
\pi_v(x,y) := y
\comma
\Pi_h(x,y) := (x,0)
\matsp{and}
\Pi_v(x,y) := (0,y).
$$

\section{Preliminaries}\label{sec.prelim}

\subsection{Renormalization of unimodal maps}\label{subsec.unimodal}

Let $I \subset \bbR$ be an interval. A $C^2$-map $f : I \to I$ is {\it unimodal} if it has a unique critical point $c \in I$, which of quadratic type: i.e. $f'(c) = 0$ and $f''(c) \neq 0$. Denote the critical value of $f$ by $v := f(c)$. We say that $f$ is {\it normalized} if $c = 0$ and $f''(c) = 2$. Let $\gamma \in \{r, \omega\}$, where $r \geq 2$ is an integer. The space of normalized $C^\gamma$-unimodal maps is denoted $\frU^\gamma$.

A unimodal map $f : I \to I$ is {\it topologically renormalizable} if there exists an {\it $R$-periodic} subinterval $I^1 \subset I$:
$$
f^i(I^1) \cap I^1 = \varnothing
\matsp{for}
1 \leq i < R
\matsp{and}
f^R(I^1) \Subset I^1.
$$
We say that $f$ is {\it (valuably) renormalizable} if $f^R(I^1)$ contains the critical value $v$.

If $f$ is valuably renormalizable, then the return map $f^R|_{I^1}$ is also unimodal. We define the renormalization of $f$ to be
$$
\cRod(f) := S\circ f^R|_{I^1}\circ S^{-1},
$$
where $S : \bbR \to \bbR$ is the unique affine map such that $\cRod(f) \in \frU^\gamma$.

\subsection{H\'enon-like maps}

Let $D := I \times J \subset \bbR^2$ be a rectangle, where $0 \in I \Subset J \subset \bbR$ are intervals. A $C^2$-diffemorphism $F : D \to F(D) \Subset D$ is {\it H\'enon-like} if $F$ is of the form
\begin{equation}\label{eq.henonlike}
F(x,y) = (f(x,y), x)
\matsp{for}
(x,y) \in D,
\end{equation}
such that for any $y \in J$, the map $f(\cdot,y) : I \to I$ is a unimodal map. We say that $F$ is {\it normalized} if $f(\cdot, 0)$ is normalized. The set of normalized $C^\gamma$-H\'enon-like maps is denoted $\frHL^\gamma$.

For $\beta \in (0, 1]$, we say that $F$ is {\it $\beta$-thin (in $C^\gamma$)} if
$$
\|\partial_y f\|_{C^{\gamma-1}} < \beta.
$$
The space of $\beta$-thin H\'enon-like maps in $\frHL^\gamma$ is denoted $\frHL^\gamma_\beta$.

For any 1D map $g : I \to I$, define its {\it 2D embedding} $\iota(g) : I\times \bbR \to I\times \bbR$ by
\begin{equation}\label{eq.embed}
\iota(g)(x,y) := (g(x), x).
\end{equation}
For any 2D map $G : D \to D$, define its {\it 1D profile} $\Piod(G) : I \to I$ by
\begin{equation}\label{eq.1d profile}
\Piod(G)(x) := \pi_h\circ G(x, 0).
\end{equation}
Note that we have $\Piod \circ \iota(g) = g$.

The space of degenerate $C^\gamma$-H\'enon-like maps is given by $\frHL^\gamma_0 := \iota(\frU^\gamma)$. A map $F \in \frHL^\gamma_\beta$ is said to be {\it perturbative} if $\beta \ll 1$, as it can be obtained by making a small 2D perturbation to a 1D system in $\frHL^\gamma_0$.

\subsection{Charts}

For $z \in \bbR^2$, let $E^{gv}_z, E^{gh}_z\in \bbP^2_z$ denote the genuine vertical and horizontal directions at $z$ respectively.

A {\it $C^r$-chart} is a $C^r$-diffeomorphism $\Phi : \cB \to B$ from a quadrilateral $\cB \subset \bbR^2$ to a rectangle $B = I \times J \subset \bbR^2$, where $I, J \subset \bbR$ are intervals. The {\it vertical/horizontal direction $E^{v/h}_p \in \bbP^2_p$ at $p\in\cB$ (associated to $\Phi$)} are given by
$$
E^{v/h}_p := D\Phi^{-1}\left(E^{gv/gh}_{\Phi(p)}\right).
$$
The chart $\Phi$ is said to be {\it genuinely vertical/horizontal} if $E^{v/h}_p = E^{gv/gh}_p$ for all $p \in\cB$.

A {\it vertical leaf in $\cB$} is a curve $l^v$ such that
$$
l^v \subseteq \Phi^{-1}(\{a\} \times \pi_v(B))
\matsp{for some}
a \in \pi_h(B).
$$
If the above containment is an equality, then $l^v$ is said to be {\it full}. A {\it (full) horizontal leaf $l^h$ in $\cB$} is defined analogously.

Let $p \in \cB$ and $E_p \in \bbP^2_p$. Denote
$
z := \Phi(p)
$ and $
E_z := D\Phi(E_p).
$
For $t > 0$, the direction $E_p$ is said to be {\it $t$-vertical in $\cB$} if 
$$
\frac{\measuredangle(E_z, E_z^{gv})}{\measuredangle(E_z, E_z^{gh})} < t.
$$
A {\it $t$-horizontal direction in $\cB$} is analogously defined.

A $C^0$-curve $\Gamma^v \subset \cB$ is said to be {\it vertical in $\cB$} if $\Phi(\Gamma^v)$ is a vertical graph in $B$ in the usual sense. That is, there exists an interval $I^v \subseteq \pi_v(B)$ and a map $g_v : I^v \to \pi_h(B)$ such that
$$
\Phi(\Gamma^v) = \cG^v(g_v) := \{(g_v(y), y) \; | \; y \in I^v\}.
$$
If $I^v = \pi_v(B)$, then $\Gamma^v$ is said to be {\it vertically proper in $\cB$}. A {\it horizontal} or a {\it horizontally proper curve $\Gamma^h$ in $\cB$} is analogously defined. If $\Gamma^v$ is $C^r$, and $\|g_v'\|_{C^{r-1}} \leq t$ for some $t \geq 0$, then we say that $\Gamma^v$ is {\it $t$-vertical (in $C^r$) in $\cB$}. Note that $\Gamma^v$ is a (vertically proper) $0$-vertical curve if and only if it is a (full) vertical leaf.

If $\Gamma^v$ is $C^2$, and $g_v$ has a unique critical point $c \in I^v$ of quadratic type: $g_v'(c) =0$ and
\begin{equation}\label{eq.val curv}
\kappa_\Phi(\Gamma^v) := g_v''(c) \neq 0,
\end{equation}
then $\Gamma^v$ is a {\it vertical quadratic curve in $\cB$}. We refer to $\kappa_\Phi(\Gamma^v)$ as the {\it valuable curvature of $\Gamma^v$ in $\cB$}.

Let $\cE^v : \cB \to T^1(\cB)$ be the $C^{r-1}$-unit vector field given by
$$
\cE^v(p) := D\Phi^{-1}(E^{gv}_{\Phi(p)}).
$$
A $C^{r-1}$-unit vector field $\ticE^v : \cU \to T^1(\cU)$ defined on a domain $\cU \subset \cB$ is said to be {\it $t$-vertical in $C^{r-1}$ in $\cB$} for some $t \geq 0$ if $\|\ticE^v - \cE^v\|_{C^{r-1}} \leq t$.

Let $\tiPhi : \ticB \to \tiB$ be another chart with $\ticB \subset \cB$. We define the following relations between $\Phi$ and $\tiPhi$.
Let $\tiPhi : \ticB \to \tiB$ be another chart with $\ticB \subset \cB$. We define the following relations between $\Phi$ and $\tiPhi$.
\begin{itemize}
\item We say that $\ticB$ is {\it vertically proper in $\cB$} if every full vertical leaf in $\ticB$ is vertically proper in $\cB$.
\item We say that $\Phi$ and $\tiPhi$ are {\it horizontally equivalent on $\ticB$} if every horizontal leaf in $\ticB$ is a horizontal leaf in $\cB$.
\item For $t \geq 0$, we say that {\it $\ticB$ is $t$-vertical in $\cB$} if $\Phi$ and $\tiPhi$ are horizontally equivalent, and the unit vector field given by
$$
\ticE^v(p) := D\tiPhi^{-1}(E^{gv}_{\tiPhi(p)})
\matsp{for}
p \in \ticB
$$
is $t$-vertical in $C^{r-1}$ in $\cB$.
\item We say that $\Phi$ and $\tiPhi$ are {\it equivalent on $\ticB$} if $\ticB$ is $0$-vertical in $\cB$.
\end{itemize}

Let $\Psi : \cB \to B$ be a chart satisfying the following properties.
\begin{itemize}
\item There exists $q \in \cB$ such that $\Psi(q) = 0 \in B$.
\item Let
$$
\cI^h(t) := \Psi^{-1}(t, 0)
\matsp{for}
t \in \pi_h(B),
$$
and
$$
\cI^v(s) := \Psi^{-1}(0, s)
\matsp{for}
s \in \pi_v(B).
$$
Then $\|(\cI^{h/v})'\| \equiv 1$.
\end{itemize}
In this case, we say that $\Psi$ is {\it centered (at $q$)}. Clearly, for any chart $\Phi : \cB \to D$ and any point $q \in \cB$, there exists a unique chart $\Psi :\cB \to B$ equivalent to $\Phi$ that is centered at $q$.

Suppose that $\Psi : (\cB, q) \to (B, 0)$ is centered at some point $q \in \cB$. Let $\Gamma^h \subset \cB$ be a horizontal $C^r$-curve, so that $\Psi(\Gamma^h)$ is the horizontal graph in $B$ of a $C^r$-map $g_h : I^h \to \pi_v(B)$ defined on an interval $I^h \subset \pi_h(B)$. We say that $\Gamma^h$ is {\it $t$-horizontal in $C^r$ in $\cB$} if $\|g_h\|_{C^r} \leq t$. In particular, $\Gamma^h$ is $0$-horizontal in $\cB$ if and only if $\Gamma^h$ is a subarc of the full horizontal leaf containing $q$.

\subsection{H\'enon-like renormalization}

Consider a $C^{r+1}$-H\'enon-like map $F : D \to D$. We say that $F$ is {\it topologically renormalizable} if there exists an $R$-periodic Jordan domain $\cB^1 \Subset D$ for some integer $R \geq 2$:
$$
F^i(\cB^1) \cap \cB^1 = \varnothing
\matsp{for}
1 \leq i < R
\matsp{and}
F^R(\cB^1) \Subset \cB^1.
$$
If, additionally, $\cB^1$ contains $(v,0)$, where $v$ is the critical value of the unimodal map $\Piod(F)$, and there exists a genuinely horizontal $C^r$-chart $\Phi$ from $\cB^1$ to a rectangle $D^1 \subset \bbR^2$ such that the map $\tiF_1 := \Phi\circ F\circ \Phi^{-1}$
is again H\'enon-like, then $F$ is said to be {\it (H\'enon-like) renormalizable}. In this case, any chart $\Psi : \cB^1 \to B^1$ equivalent to $\Phi$ is referred to as a {\it straightening chart}, and the pair $(F^R, \Psi)$ is referred to as a {\it H\'enon-like return}.

Denote $\tif_1 := \Piod (\tiF_1)$, and let $S : \bbR \to \bbR$ be the unique affine map such that $S\circ \tif_1 \circ S^{-1} \in \frU^\gamma$. Define $\cS$ as the affine map on $\bbR^2$ given by $\cS(x,y) := (S(x), S(y)).$ The {\it (H\'enon-like) renormalization of $F$} is
$$
\cR(F) := \cS \circ \Phi\circ F^R \circ (\cS \circ \Phi)^{-1}.
$$
Observe that $\cR(F) \in \frHL^r$.

\begin{rem}
Note the loss of one degree of smoothness from $F$ (which is $C^{r+1}$) to $\Phi$ and $\cR(F)$ (which are $C^r$). This is to account for the loss of smoothness in the construction of regular charts given in \thmref{reg chart}. This is not a critical issue, since it forces the loss of only one degree of smoothness no matter how many times $F$ is renormalized.
\end{rem}

\subsection{Definition of regularity}\label{subsec.regular}

Consider a $C^r$-diffeomorphism $F : D \to F(D) \Subset D$ defined on a domain $D\subset \bbR^2$. Let $L \geq 1$; $\epsilon, \lambda \in (0,1)$  and $M \in \bbN \cup \{\infty\}$. A point $p \in D$ is {\it $M$-times forward $(L, \epsilon, \lambda)$-regular along $E_p^+ \in \bbP^2_p$} if for $s \in \{0, 1\}$, we have
\begin{equation}\label{eq.for reg}
L^{-1}\lambda^{(1+\epsilon)m}\leq \frac{\|DF^m|_{E^+_p}\|^{s+1}}{(\Jac_pF^m)^s} \leq L \lambda^{(1-\epsilon)m}
\matsp{for all}
1 \leq m \leq M.
\end{equation}
Similarly, $p$ is {\it $M$-times backward $(L, \epsilon, \lambda)$-regular along $E^-_p \in \bbP^2_p$} if for $s \in \{0, 1\}$, we have
\begin{equation}\label{eq.back reg}
L^{-1}\lambda^{(1+\epsilon)m}\leq \frac{(\Jac_pF^{-m})^s}{\|DF^{-m}|_{E^-_p}\|^{s+1}} \leq L \lambda^{(1-\epsilon)m}
\matsp{for all}
1 \leq m \leq M.
\end{equation}
The constants $L$, $\epsilon$ and $\lambda$ are referred to as an {\it irregularity factor}, a {\it marginal exponent} and a {\it contraction base} respectively.

There exists a uniform constant $\epsilon_1 \in (0,1)$ independent of $F$ such that if \eqref{eq.for reg} (or \eqref{eq.back reg} resp.) holds with $\epsilon \leq \epsilon_1$, then the local dynamics of $F$ near the forward (or backward resp.) orbit of $p$ can be quasi-linearized up to the $M$th iterate (see \thmref{reg chart}). If $M = \infty$, this implies in particular that $p$ has a well-defined $C^r$-smooth strong-stable manifold $W^{ss}(p)$ (or center manifold $W^c(p)$ resp.). It should be noted that the center manifold at an infinitely backward regular point $p$ is not uniquely defined. However, its $C^r$-jet at $p$ is unique (see \thmref{center jet}). Henceforth, any marginal exponent will be assumed to be less than $\epsilon_1$.

\subsection{Regular H\'enon-like returns}\label{subsec.reg ret}

A H\'enon-like return $(F^R, \Psi : \cB^1 \to B^1)$ is said to be {\it $(L, \epsilon, \lambda)$-regular} if the following conditions hold. Let
$$
E_p^{v/h} := D\Psi^{-1}\left(E^{gv/gh}_{\Psi(p)}\right).
$$
\begin{itemize}
\item Every $p \in \cB^1$ is $R$-times forward $(L, \epsilon, \lambda)$-regular along $E^v_p$.
\item Every $q \in F^R(\cB^1) \Subset \cB^1$ is $R$-times backward $(L, \epsilon, \lambda)$-regular along $E^h_q$.
\item For any $p \in \cB^1$, we have $\measuredangle(E^v_p, E^h_p) > 1/L$.
\end{itemize}
In this case, we say that $F$ is {\it $(L, \epsilon, \lambda)$-regularly H\'enon-like renormalizable}.

\begin{ex}\label{ex.perturb}
Let $f : I \to I$ be a $C^{r+1}$-unimodal map. Suppose $f$ is valuably renormalizable: there exists an $R$-periodic subinterval $I^1 \subset I$ such that $f^R(I^1)$ contains the critical value $v$ of $f$. Then for $\epsilon > 0$, there exists $\beta = \beta(f, R, \epsilon) > 0$ such that the following holds. Let $F : D \to D$ be a $\beta$-thin $C^{r+1}$-H\'enon-like map defined on a rectangle $D := I \times J$ with $\Piod(F) = f$. Then there exists an $R$-periodic quadrilateral $\cB^1 \subset D$ containing $(v,0)$ that is $\beta^{1-\epsilon}$-close to $I^1 \times J$ in the Hausdorff topology, and a $C^r$-chart $\Psi : \cB^1 \to B^1$ centered at $(v,0)$ that is $\beta^{1-\epsilon}$-close to the identity in the $C^r$-topology such that $(F^R, \Psi)$ is a $(1, \epsilon, \beta)$-regular H\'enon-like return. See \propref{2d ren from 1d ren}.
\end{ex}

\subsection{Nested H\'enon-like returns}\label{subsec.nested}

A $C^{r+1}$-H\'enon-like map $F : D \to D$ is $N$-times topologically renormalizable for some $N \in \bbN \cup \{\infty\}$ if there exist sequences
$$
D =: \cB^0 \Supset \cB^1\Supset \ldots
\matsp{and}
1 =: R_0 < R_1 < \ldots
$$
such that for $1 \leq n \leq N$, the set $\cB^n$ is an $R_n$-periodic Jordan domain. If there exists $\bfb \geq 2$ such that
$$
r_{n-1} := R_n/R_{n-1} \leq \bfb
\matsp{for all}
1\leq n \leq N,
$$
then we say that the combinatorics of renormalization for $F$ is {\it of ($\bfb$-)bounded type}. If $N = \infty$, then the {\it renormalization limit set} for $F$ is
\begin{equation}\label{eq.limit set}
\Lambda_F := \bigcap_{n=1}^\infty \bigcup_{i=0}^{R_n-1} F^{R_n+i}(\cB^n).
\end{equation}

Suppose for $1 \leq n \leq N$, there exist a $C^r$-straightening chart $\Psi^n:\cB^n \to B^n$ such that $(F^{R_n}, \Psi^n)$ is a H\'enon-like return. Then the sequence
\begin{equation}\label{eq.returns}
\{(F^{R_n}, \Psi^n:\cB^n \to B^n)\}_{n=1}^N
\end{equation}
said to be {\it nested}. Without loss of generality, we may assume that $\Psi^n$ is a {\it centered straightening chart}: that is, $\Psi^n$ is centered at some common point
$$
v_0 \in \bigcap_{n=1}^N F^{R_n}(\cB^n).
$$

Let $\Phi^n : \cB^n \to D^n$ be a chart equivalent to $\Psi^n$ such that $\tiF_n := \Phi^n \circ F^{R_n}\circ (\Phi^n)^{-1}$ is H\'enon-like. Denote $\tif_n := \Piod(\tiF_n)$, and let $S^n : \bbR \to \bbR$ be the unique affine map such that $S^n\circ \tif_n \circ (S^n)^{-1} \in \frU^r$. Define $\cS^n$ as the affine map on $\bbR^2$ given by $\cS^n(x,y) := (S^n(x), S^n(y))$. The $n$th (H\'enon-like) renormalization of $F$ is given by
\begin{equation}\label{eq.renorm}
F_n = \cR^n(F) := \cS^n \circ \Phi^n \circ F^{R_n} \circ (\cS^n \circ \Phi^n)^{-1}.
\end{equation}
Note that $\cR^n(F) \in \frHL^r$. Lastly, we say that $F$ is $N$-times $(L, \epsilon, \lambda)$-regularly H\'enon-like renormalizable for some $L \geq 1$ and $\epsilon, \lambda \in (0,1)$ if $(F^{R_n}, \Psi^n)$ is $(L, \epsilon, \lambda)$-regular for all $1 \leq n \leq N$.

Suppose that the combinatorics of renormalization for $F$ are of $\bfb$-bounded type for some $\bfb \geq 2$. For many of our results, the specific values of the constants of regularity are not important, as long as $\epsilon$ is sufficiently small to compensate for the size of $\bfb$. That is, we have $\bfb\epsilon^d <1$ for some uniform constant $d \in (0,1)$ independent of $F$. In this case, we will sometimes say that $F$ is ``$N$-times regularly H\'enon-like renormalizable,'' without specifying the constants of regularity.

\begin{defn}\label{val curve}
For $1 \leq n\leq N$, denote
$$
I^n_0 := \pi_h(B^n_0)
\matsp{and}
\cI^n_m := F^m\circ (\Psi^n)^{-1}(I^n_0 \times \{0\})
\matsp{for}
m \in \bbZ.
$$
The {\it $n$th valuable curvature} of the H\'enon-like returns given in \eqref{eq.returns} is defined as
\begin{equation}\label{eq.n val curv}
\kappa_n := \kappa_{\Psi^n}(\cI^n_{R_n})
\end{equation}
(see \eqref{eq.val curv}).
\end{defn}

\subsection{Definition of regular unicriticality}\label{subsec.reg unicrit}

Consider a $C^2$-H\'enon-like map $F : D \to D$. Suppose that $F$ is infinitely renormalizable, and the renormalization limit set $\Lambda_F$ supports a unique invariant probability measure $\mu$. Then with respect $\mu$, the Lyapunov exponents of $F$ are $0$ and $\log \lambda_\mu <0$ for some $\lambda_\mu \in (0,1)$ (see \propref{lya exp}). By Oseledets theorem, $\mu$-a.e. point $p \in \Lambda_F$ has strong-stable and center directions $E^{ss}_p, E^c_p \in \bbP^2_p$ such that
\begin{equation}\label{eq.for ly reg}
\lim_{n \to +\infty} \frac{1}{n}\log \|DF^n|_{E_p^{ss}}\| = \log\lambda_\mu
\end{equation}
and
\begin{equation}\label{eq.back ly reg}
\lim_{n \to +\infty} \frac{1}{n}\log \|DF^{-n}|_{E_p^c}\| = 0.
\end{equation}
Let $\epsilon >0$. Since $F|_{\Lambda_F}$ is uniquely ergodic, \eqref{eq.for ly reg} (\eqref{eq.back ly reg} resp.) implies that $p$ is infinitely forward (backward resp.) $(L, \epsilon, \lambda_\mu)$-regular for some $L = L(p, \epsilon) \geq 1$ (see \cite[Proposition 4.7]{CLPY1}).

If $p \in \Lambda_F$ satisfies \eqref{eq.for ly reg} and \eqref{eq.back ly reg} with
$$
E^*_p := E^{ss}_p = E^c_p,
$$
then $\{F^m(p)\}_{m\in\bbZ}$ is referred to as a {\it regular critical orbit}. Note that in this case, the local strong-stable manifold $W^{ss}_{\loc}(p)$ and the center manifold $W^c(p)$ form a tangency at $p$. If this tangency is quadratic, then $\{F^m(p)\}_{m\in\bbZ}$ is referred to as a {\it regular quadratic critical orbit}.

For $t > 0$ and $p \in \bbR^2$, denote
$$
\bbD_p(t) := \{q \in \bbR^2 \; | \; \dist(q, p) < t\}.
$$

\begin{defn}\label{def unicrit}
For $0<\epsilon < \delta < 1$, we say that $F$ is {\it $(\delta, \epsilon)$-regularly unicritical on $\Lambda_F$} if the following conditions hold.
\begin{enumerate}[i)]
\item There is a regular quadratic critical orbit point $v_0 \in \Lambda_F$ (referred to as the {\it critical value} of $F$).
\item For all $t >0$, there exists $L(t) \geq 1$ such that for any $N \in \bbN$, if
\begin{equation}\label{eq.away from crit}
p \in \Lambda_F \setminus \bigcup_{n = 0}^{N-1} \bbD_{v_{-n}}(t\lambda_\mu^{\epsilon n}),
\end{equation}
then $p$ is $N$-times forward $(L(t), \delta, \lambda_\mu)$-regular.
\end{enumerate}
When $\delta$ and $\epsilon$ are implicit, we simply say that $F$ is {\it regularly unicritical on $\Lambda_F$}.
\end{defn}

\section{Statements of the Main Theorems}\label{sec.main thm}

Let $r \geq 2$ be an integer, and consider a $C^{r+1}$-H\'enon-like map $F \in \frHL^{r+1}$ of the form \eqref{eq.henonlike}. A quick computation shows that
$$
\Jac F(x,y) = -\partial_y f(x,y).
$$
If $\|\Jac F\| = 0$, then $F$ does not depend on the second coordinate $y$. This means that $F$ has the same dynamics as the unimodal map $\Piod(F)(\cdot) := f(\cdot, 0) \in \frU^{r+1}$. Hence, one may view the size of $\|\Jac F\|$ as a measure of how far $F$ is from being a 1D system.

In this paper, we focus on the case when $F$ is {\it dissipative}: $\|\Jac F\| \leq \lambda < 1$ for some $\lambda \in (0,1)$. Our goal is to understand what happens when such a map is renormalized many times. The following heuristics imply that the renormalizations of $F$ rapidly become more and more one-dimensional.

Suppose that $F$ is $N \in \bbN\cup\{\infty\}$ times renormalizable. For $1\leq n\leq N$, the $n$th renormalization $\cR^n(F)$ of $F$ is given by \eqref{eq.renorm}. Once we establish that that the nonlinear component $\Phi^n$ of the $n$th rescaling map and its inverse remain uniformly bounded, we obtain
$$
\|\Jac \cR^n(F)\| = \|\Jac \Phi^n\|\cdot\|\Jac F^{R_n}\|\cdot\|\Jac (\Phi^n)^{-1}\| \asymp \|\Jac F^{R_n}\| \leq \lambda^{R_n}.
$$
Since the return times $R_n$ grow exponentially with $n$, we see that the renormalizations $\cR^n(F)$ of $F$ converge to the space of 1D systems super-exponentially fast. Thus, to complete our understanding of the behavior of the 2D renormalization sequence $\{\cR^n(F)\}_{n=1}^N \subset \frHL^r$, it suffices to study the following sequence of 1D profiles:
\begin{equation}\label{eq.1d ren seq}
\{f_n := \Piod\circ \cR^n(F)\}_{n=1}^N \subset \frU^r.
\end{equation}

\subsection{Renormalization convergence}

The following theorem summarizes the asymptotic behavior of an infinite regular H\'enon-like renormalization sequence with combinatorics of bounded type.

\begin{thma}
Let $r \geq 2$ be an integer, and consider a $C^{r+4}$-H\'enon-like map $F : D \to D$. Suppose  that $F$ has infinite nested regular H\'enon-like returns given by \eqref{eq.returns} with combinatorics of bounded type. Let $\epsilon, \lambda \in (0,1)$ be the marginal exponent and the contraction base of regularity respectively. Then
\begin{equation}\label{eq.crit value}
\bigcap_{n=1}^\infty F^{R_n}(\cB^n) = \{v_0\}.
\end{equation}
and $\cO_{\crit} := \{v_m := F^m(v_0)\}$ is a regular critical orbit. Moreover, for all $n \in \bbN$ sufficiently large, the following properties hold.
\begin{enumerate}[i)]
\item There exists a $C^{r+3}$-chart $\Phi : \cU \to U$ centered at $v_0$ such that $\cB^n \Subset \cU$, and
$$
\|\Phi \circ(\Psi^n)^{-1} - \Id \|_{C^{r+3}} < \lambda^{(1-\bepsilon)R_n}.
$$
\item The renormalization domain $\cB^n$ can be extended vertically so that $|\pi_v \circ \Phi(\cB^n)|$ is uniformly bounded below. Moreover, there exist a uniform constants $0 < \sigma_1 < \sigma_2<1$ such that
$$
\sigma_1^n < |\pi_h\circ \Phi(\cB^n)| < \sigma_2^n
\matsp{and}
|\pi_v \circ \Phi(F^{R_n}(\cB^n))|^2 \asymp |\pi_h\circ \Phi(\cB^n)|.
$$
\item The H\'enon-like map $\cR^n(F)$ is $\lambda^{(1-\bepsilon)R_n}$-thin in $C^{r+3}$.
\item We have $\|\cR^n(F)\|_{C^r} = O(1)$.
\item If $r \geq 4$, then there exists a universal constant $\rho \in (0,1)$ such that for any unimodal map $f_* \in \frU^r$ with the same asymptotic combinatorial type as $F$, we have
$$
\|\Piod\circ\cR^n(F) - \cRod^n(f_*)\|_{C^{r-1}} <\rho^n.
$$
\end{enumerate}
\end{thma}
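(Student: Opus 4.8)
The plan is to leverage the \emph{a priori} bounds from \cite{CLPY3} (stated as \thmref{a priori}) as a black box, and build the rest of the theorem by combining the resulting distortion control with the heuristics already sketched in \secref{sec.main thm}. First I would establish \eqref{eq.crit value}: since the $\cB^n$ form a nested sequence of $R_n$-periodic Jordan domains containing $(v,0)$ at each level, and since the horizontal distortion of the $n$th return map $F^{R_n}|_{\cB^n}$ is uniformly bounded, the images $F^{R_n}(\cB^n)$ shrink geometrically in the horizontal direction (controlled by the $\sigma_2^n$ bound in ii)); the vertical direction is handled by the $\beta$-thinness estimate, using dissipativity and the exponential growth of $R_n$. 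The common point $v_0$ is then the unique intersection, and the orbit $\cO_{\crit}$ being a regular critical orbit follows from the forward regularity along $E^v_p$ and backward regularity along $E^h_q$ in the definition of regular H\'enon-like return, taken in the limit $n \to \infty$: the strong-stable and center directions at $v_0$ must coincide because the vertical and horizontal foliations degenerate onto the same (super-exponentially thin) curve.

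Next, for part i), I would construct the chart $\Phi : \cU \to U$ using the regular chart machinery (\thmref{reg chart}), which quasi-linearizes the dynamics along the forward orbit of $v_0$; the comparison $\|\Phi\circ(\Psi^n)^{-1} - \Id\|_{C^{r+3}} < \lambda^{(1-\bepsilon)R_n}$ comes from iterating the contraction estimate $R_n$ times — each application of $DF$ along $E^v$ contributes a factor comparable to $\lambda$, and the nonlinear corrections accumulate geometrically, so after $R_n$ steps one lands within $\lambda^{(1-\bepsilon)R_n}$ of the identity. Part iii) is then essentially a restatement of i) composed with the conjugation formula \eqref{eq.renorm}: the thinness $\|\partial_y f_n\|_{C^{r+2}} < \lambda^{(1-\bepsilon)R_n}$ follows because the $y$-dependence of $\cR^n(F)$ is exactly the deviation of $\Phi^n$ from a genuinely horizontal chart, which i) controls. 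For part ii), the horizontal size estimates $\sigma_1^n < |\pi_h\circ\Phi(\cB^n)| < \sigma_2^n$ follow from bounded-type combinatorics together with the distortion bound (each renormalization level shrinks the horizontal scale by a bounded factor bounded away from $1$), and the relation $|\pi_v\circ\Phi(F^{R_n}(\cB^n))|^2 \asymp |\pi_h\circ\Phi(\cB^n)|$ encodes the quadratic nature of the critical point — the vertical extent after one return is the square root of the horizontal width, which is the 2D shadow of the unimodal folding. Part iv), $\|\cR^n(F)\|_{C^r} = O(1)$, is a compactness statement: the \emph{a priori} bounds give uniform $C^r$-control on the return maps, and the affine rescaling $\cS^n$ normalizes the width, so the family $\{\cR^n(F)\}$ lives in a precompact subset of $\frHL^r$.

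Finally, part v) is the deepest: it identifies the limiting behavior of the 1D profiles $f_n = \Piod\circ\cR^n(F)$ with the 1D renormalization attractor. Here I would argue in two stages. First, by iii) and iv), the 2D renormalizations $\cR^n(F)$ are super-exponentially close to the 2D embedding $\iota(f_n)$ of their own profiles, so the 2D renormalization operator $\cR$ acting on $\cR^n(F)$ agrees, up to a super-exponentially small error, with the 1D renormalization operator $\cRod$ acting on $f_n$. Thus $\{f_n\}$ is an \emph{approximate} orbit of $\cRod$ with super-exponentially small one-step errors. Second, I would invoke the hyperbolicity of the 1D renormalization horseshoe $\frA$ (Sullivan--McMullen--Lyubich, cited in the introduction): an approximate orbit of a hyperbolic map with summable (indeed super-exponentially small) errors shadows a genuine orbit, which by the combinatorial assumption must be the orbit of $\cRod$ starting near $\cRod^n(f_*)$; the exponential convergence rate $\rho^n$ is then exactly the contraction rate along the stable manifold of $\frA$. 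The requirement $r \geq 4$ enters because the hyperbolicity of $\frA$ and the shadowing argument need enough smoothness to run in the $C^{r-1}$-topology (two derivatives are lost: one from \thmref{reg chart} in passing from $F$ to the charts, and one more in the $C^{r-1}$ comparison), and $C^{r+4}$ regularity of the original $F$ is what remains after these losses at every depth.

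\textbf{Main obstacle.} I expect the hard part to be making the shadowing argument in part v) rigorous: one must show that the super-exponentially small discrepancy between the 2D renormalization of $\cR^n(F)$ and the 1D renormalization of $f_n$ is genuinely controlled in the right topology uniformly in $n$ — this requires that the straightening charts $\Psi^n$, the rescalings $\cS^n$, and the quasi-linearizing charts all interact compatibly with the error estimates from \thmref{a priori}, and in particular that the ``$\bepsilon$'' losses in the exponents do not compound across the tower of renormalizations. The bounded-type hypothesis ($r_n \leq \bfb$) and the smallness of the marginal exponent $\epsilon$ relative to $\bfb$ are precisely what prevents this compounding, but verifying it cleanly is the technical heart of the proof.
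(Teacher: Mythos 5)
Your overall roadmap matches the paper's: \eqref{eq.crit value} follows from the shrinking of the nested pieces (the paper invokes \thmref{crit rec} from \cite{CLPY3}, which rests on the 1D-like combinatorial structure of Section~\ref{sec.combin}); parts i)--iii) come from the valuable-chart convergence of \thmref{crit chart} together with the rescaling estimates of \propref{exp scale} and \thmref{rescale}; and part v) is proved exactly as you outline, via a shadowing lemma (\propref{shadow}) combined with hyperbolicity of the 1D renormalization attractor (\lemref{1d ren hyp}). Note, though, that the actual reason for $r\geq 4$ is that \lemref{1d ren hyp} is applied in the $C^{r-1}$-topology, and the de~Faria--de~Melo--Pinto hyperbolicity result needs $C^3$, forcing $r-1\geq 3$; your "two derivatives lost" accounting points in the right direction but misses this specific constraint.

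The substantive gap is in part iv). You treat $\|\cR^n(F)\|_{C^r} = O(1)$ as a soft compactness consequence of \thmref{a priori}, but \emph{a priori} bounds only control the distortion $\Dis(\hH_i,\cI^n_0)$, a ratio of first derivatives. Upgrading this $C^1$-level control to genuine $C^r$-bounds on the rescaled return map is the entire content of Section~\ref{sec.cr bound} (\thmref{cr bound proj} and \thmref{cr bound 2d}): one must decompose $\hH_{lR_{n_0}}$ into a long composition of explicit square-root/square maps and near-identity diffeomorphisms (\propref{diff quad decomp}), verify that the deviations from the identity are geometrically summable using the commensurability estimates of \propref{a priori geo}, and then invoke the Avila--de~Melo--Martens chain lemmas (\lemref{avila lem 1}, \lemref{avila lem 2}). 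This step is emphatically not a corollary of bounded distortion plus affine rescaling, and it is one of the heaviest pieces of the whole argument. A second, smaller gap: the phrase "regular critical orbit" (Definition in Subsection~\ref{subsec.reg unicrit}) is stated in terms of the Lyapunov exponent $\lambda_\mu$, not the contraction base $\lambda$ of the given regularity. To verify $v_0$ has such an orbit one must first establish unique ergodicity of $\Lambda_F$ (Theorem~D, relying on \thmref{exp shrink}) and then pass from $(L,\epsilon,\lambda)$-regularity along $E^{ss},E^c$ to $(L_\delta,\delta,\lambda_\mu)$-regularity for all $\delta>0$ via \thmref{lya reg henon return}; "take the limit $n\to\infty$" is not quite enough.
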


The point $v_0$ given in \eqref{eq.crit value} is referred to as a {\it regular critical value}. As the name suggests, it is the 2D analog of the critical value for unimodal maps.

Let us briefly comment on how part v) in Theorem A relates to the existing renormalization theory of unimodal maps in literature. For $\gamma \in \{r, \omega\}$, the 1D renormalization $\cRod$ defined in \subsecref{subsec.unimodal} can be viewed as an operator acting on the Banach space $\frU^\gamma$ of unimodal maps. In \cite{L1}, Lyubich showed that $\cRod$ restricted to $\frU^\omega$ is an analytic operator that has a hyperbolic attractor $\frA \subset \frU^\omega$ with exactly one unstable dimension. This attractor is referred to as the {\it full renormalization horseshoe}. 

Given an integer $\bfb \geq 2$, let $\frA_{\bfb}$ be the compact invariant subset of $\frA$ that consists of maps with combinatorics of $\bfb$-bounded type. In \cite{dFdMPi}, de Faria-de Melo-Pinto showed that for the action of the renormalization operator $\cRod$ on the much larger space $\frU^3 \supset \frU^\omega$, the set $\frA_{\bfb}$ is still a hyperbolic attractor with one unstable dimension.

Once we establish parts iii) and iv) in Theorem A, it follows by a general lemma (\lemref{vary compose}) that the sequence of 1D profiles of the renormalizations of $F$ is shadowed by actual orbits of 1D renormalization. Asymptotic convergence then follows from the hyperbolicity of 1D renormalization discussed above.

\subsection{Finite-time checkability}

If a quadratic polynomial $f : I \to I$ has a periodic subinterval $I^1 \subset I$, then the cycle of $I^1$ must go through the critical point of $f$ exactly once. This ensures that the first return map on $I^1$ under $f$ is again a unimodal map. In contrast, if a H\'enon map $F : D \to D$ has a periodic subdomain $\cB^1 \subset B$, it is not necessarily true that the first return map on $\cB^1$ under $F$ must also be H\'enon-like. Intuitively, this contrast is due to the following important conceptual difference between the 1D case and the 2D case: for the dynamics of 2D diffeomorphisms, there is no definite single pinpoint location at which the action of  the critical point takes place. Nevertheless, the following result states that if $F$ is regular H\'enon-like renormalizable up to a sufficiently deep depth, then all further topological renormalizations of $F$ are necessarily also regular H\'enon-like.

\begin{thmb}
Suppose we are given the following data:
\begin{enumerate}[i)]
\item a H\'enon-like map $F \in \frHL^6$;
\item constants $\bfb \geq 2$; $L \geq 1$ and $\lambda \in (0,1)$; and
\item an increasing sequence $\{R_n\}_{n=0}^\infty$ such that $R_0 = 1$ and $R_n/R_{n-1} \leq \bfb$ for all $n \in \bbN$.
\end{enumerate}
Let $\epsilon_0 \in (0,1)$ and $n_0 \in \bbN \cup \{0\}$ be constants satisfying
\begin{equation}\label{eq.small margin}
\bfb \epsilon_0^d < 1,
\end{equation}
and
\begin{equation}\label{eq.large depth 0}
C\lambda^{\epsilon_0 R_{n_0}} < 1
\end{equation}
for some uniform constants $d \in (0,1)$ (independent of $F$) and $C \geq 1$ (depending only on $L$, $\lambda$, $\lambda^{1-\epsilon_0}\|DF^{-1}\|$, $\|F\|_{C^3}$ and $\bfb$). Suppose $F$ has $n_1$ nested $(L, \epsilon_0, \lambda)$-regular H\'enon-like returns given by \eqref{eq.returns} for some $n_1 \geq n_0$. Suppose that
\begin{equation}\label{eq.large depth 1}
K\lambda^{\epsilon_0 R_{n_1}} < 1,
\end{equation}
for some uniform constant $K = K(C, \|F^{R_{n_0}}|_{\cB^{n_0}}\|_{C^6}, \kappa_{n_0}) \geq 1$ (where $\kappa_{n_0}$ is the $n_0$th valuable curvature given in \eqref{eq.n val curv}). Then 
there exists uniform constants $\bfL \geq L$ and $\delta \in (\epsilon_0, 1)$ such that the following holds. Suppose that $F$ is $N$-times topologically renormalizable for some $n_1 \leq N \leq \infty$ with return times $\{R_n\}_{n=1}^N$. Then all $N$ renormalizations of $F$ are $(\bfL, \delta, \lambda)$-regular H\'enon-like (except possibly the last two if $N < \infty$).
\end{thmb}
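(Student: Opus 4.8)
The plan is to argue by induction on the renormalization depth $n$, from $n_1$ up to $N$ (losing the last two levels when $N<\infty$), with the uniform constants $\bfL\ge L$ and $\delta\in(\epsilon_0,1)$ to be fixed by the estimates below. The base case is the hypothesis that $F$ has $n_1$ nested $(L,\epsilon_0,\lambda)$-regular H\'enon-like returns. The inductive hypothesis at depth $n$ is that $F$ has $n$ nested $(\bfL,\delta,\lambda)$-regular H\'enon-like returns, with centered straightening charts $\Psi^1,\dots,\Psi^n$, together with the thinness and geometry bounds for $F_n:=\cR^n(F)$ recorded in Step~1; granting in addition that $F$ is topologically renormalizable to depth at least $n+2$, I will construct the $(n+1)$st regular H\'enon-like return with the same constants, which closes the induction.

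\medskip
\noindent\emph{Step 1 (descent to the perturbative regime) and Step 2 (combinatorial matching).} Because $F$ is $n$-times $(\bfL,\delta,\lambda)$-regularly H\'enon-like renormalizable with $\bfb$-bounded combinatorics, the a priori bounds of \cite{CLPY3} (\thmref{a priori}) hold through depth $n$. Feeding the uniform horizontal-distortion bounds into the finite-depth geometry estimates used in the proof of Theorem~A gives $\|F_n\|_{C^r}=O(1)$, that $F_n$ is $\beta_n$-thin with $\beta_n\le\lambda^{(1-\bepsilon)R_n}$ (super-exponentially small), and that the $1$D profile $f_n:=\Piod(F_n)$ has uniformly bounded geometry and lies in a uniformly small neighbourhood of $\frA_\bfb$ (using the hyperbolicity of $\cRod$ from \cite{dFdMPi} together with \lemref{vary compose}); the large-depth hypotheses \eqref{eq.large depth 0}, \eqref{eq.large depth 1} force $R_n$ (for $n\ge n_1$) to be large enough that $\beta_n$ lies below the perturbative threshold attached to this bounded $1$D data. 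Next, the given periodic Jordan domain $\cB^{n+1}\subset\cB^n$ returns to $\cB^n$ exactly every $R_n$ iterates (minimality of return times in the tower) and is $r_n$-periodic under $F^{R_n}|_{\cB^n}$, with $r_n=R_{n+1}/R_n\le\bfb$; pushed through $\cS^n\circ\Phi^n$ it becomes an $r_n$-periodic domain for $F_n$. Since $f_n$ is close to $\frA_\bfb$ and $\cRod$ is uniformly expanding transverse to its stable lamination, $f_n$ is valuably renormalizable (indeed $\gtrsim n$ times); and since $F_n$ is thin its dynamics $C^0$-shadows that of the embedding $\iota(f_n)$, so the horizontal projection of $\cB^{n+1}$ is an $r_n$-periodic interval of $f_n$ into whose interior the critical value $v_{f_n}$ returns with a definite margin, with combinatorial type dictated by the location of $\cB^{n+1}$. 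In particular $f_n$ is valuably renormalizable with period $r_n$.

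\medskip
\noindent\emph{Step 3 (construction of $\Psi^{n+1}$) and Step 4 (composition of regular returns).} Applying \exref{ex.perturb}/\propref{2d ren from 1d ren} to $f_n$, with return time $r_n$ and thinness $\beta_n$, yields an $r_n$-periodic quadrilateral $\cB^1_{F_n}\ni(v_{f_n},0)$ that may be taken to contain the image of $\cB^{n+1}$, and a $C^r$-chart $\Psi^1_{F_n}$ centered at $(v_{f_n},0)$ and $\beta_n^{1-\epsilon_1}$-close to the identity, such that $(F_n^{r_n},\Psi^1_{F_n})$ is a $(1,\epsilon_1,\beta_n)$-regular H\'enon-like return. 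Pulling back through the affine rescaling and chart $\cS^n\circ\Phi^n$ of the $n$th renormalization produces the level-$(n+1)$ data for $F$: the periodic Jordan domain $\cB^{n+1}$ and a centered straightening chart $\Psi^{n+1}$ whose associated vertical and horizontal directions $E^{v/h}_p$ differ from the level-$n$ directions (those of $\Psi^n$) by only $O(\beta_n^{1-\epsilon_1})$ on $\cB^{n+1}\subset\cB^n$, using that $\Phi^n$ and $\Psi^n$ share their foliations and that $\cS^n$ is affine. It then remains to check that $(F^{R_{n+1}},\Psi^{n+1})$, with $R_{n+1}=r_nR_n$, is $(\bfL,\delta,\lambda)$-regular. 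For forward regularity of $p\in\cB^{n+1}$ along $E^v_p$ over $1\le m\le R_{n+1}$, decompose the orbit segment into $r_n$ consecutive blocks of length $R_n$, each starting at a point $F^{jR_n}(p)\in\cB^n$; each block is controlled by the $R_n$-times forward $(\bfL,\delta,\lambda)$-regularity of $F^{jR_n}(p)$ along the level-$n$ vertical direction, once one knows $DF^{jR_n}(E^v_p)$ lies in a narrow cone about that direction — which holds because $E^v_p$ is $O(\beta_n^{1-\epsilon_1})$-close to the level-$n$ vertical at $p$ and strong contraction toward the common approximate strong-stable direction collapses the discrepancy under iteration. Multiplying the $r_n\le\bfb$ block estimates, together with the cone corrections (each super-exponentially small in $R_n$, hence summable over the levels $\ge n_1$), yields forward $(\bfL,\delta,\lambda)$-regularity over $[1,R_{n+1}]$; here \eqref{eq.small margin} (i.e.\ $\bfb\delta^d<1$) is exactly what keeps the marginal exponent below the quasi-linearization threshold $\epsilon_1$ through these compositions, with $\bfL,\delta$ absorbing the cumulative but uniformly bounded losses. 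Backward regularity of $q\in F^{R_{n+1}}(\cB^{n+1})$ along $E^h_q$, and the transversality $\measuredangle(E^v_p,E^h_p)>1/\bfL$, follow symmetrically from the backward regularity and transversality of the level-$n$ returns and the smallness of $\beta_n$. The loss of the last two levels when $N<\infty$ is forced because Steps~1 and~4 invoke the a priori bounds and geometry control of \cite{CLPY3} and the chart constructions of this section, all of which presuppose two further renormalization levels.

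\medskip
\noindent\emph{The main obstacle.} The crux is Step~4: propagating regularity across the $r_n$ blocks of length $R_n$ without letting the irregularity factor or the marginal exponent degrade. A priori, composing up to $\bfb$ regular cocycle segments, each carrying its own multiplicative error and cone mismatch, could inflate the constants by a fixed factor per level and hence compound unboundedly over the infinitely many inductive steps. The resolution is that the direction mismatches are not merely bounded but super-exponentially small in $R_n$ — the payoff of the thinness $\beta_n$ supplied by the a priori bounds — so that the accumulated losses over all levels $n\ge n_1$ form a convergent product; the hypotheses \eqref{eq.small margin}, \eqref{eq.large depth 0}, \eqref{eq.large depth 1} on $\epsilon_0$ and $n_1$ are exactly what make this product start small enough to remain bounded by fixed $\bfL$ and $\delta$.
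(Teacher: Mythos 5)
There is a genuine gap in your Step~4, centered on the claim that backward regularity ``follows symmetrically'' from the forward case. It does not. Under \emph{forward} iteration, vertical directions are strongly contracted toward the strong-stable cone, so a vertical direction at $p$ close to the level-$n$ vertical $E^{v,n}_p$ stays close along the whole block; this is what makes your block decomposition for forward regularity morally workable (and indeed the paper's proof of \thmref{pres reg} achieves it, though via \lemref{hor deriv out} and a horizontal-to-vertical conversion rather than by multiplying vertical block estimates). But under \emph{backward} iteration there is no analogous stabilization of the horizontal/center direction. After pulling $q\in F^{R_{n+1}}(\cB^{n+1})$ back $jR_n$ steps, the direction $DF^{-jR_n}(E^h_q)$ has no a priori reason to sit in a narrow cone about the level-$n$ horizontal at $q_{-jR_n}$, precisely because those pullbacks pass through neighborhoods of the critical orbit where $f_n'$ degenerates; ``smallness of $\beta_n$'' controls the thinness of $F_n$, not the loss of horizontality across a quadratic fold. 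This is exactly where the paper introduces the \emph{tempered sequence} machinery: it decomposes the backward time $S$ not into $r_n$ uniform blocks of length $R_n$, but into a sum $s_1R_{n_1}+\dots+s_kR_{n_k}$ over variable depths, isolates the maximal tempered tail, applies \lemref{temper in} there, and uses the non-temperedness of the preceding gap (which forces $R_{n_m}$ to be super-exponentially larger than $R_{n_{m-1}}$) to control the remaining $S-S'$ iterates by brute force (\eqref{eq.reg pres 2}--\eqref{eq.reg pres 3}). Your uniform $R_n$-block decomposition has no analogue of this and would allow the irregularity to compound near the critical returns.

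A second, smaller issue: your mechanism for keeping $\bfL,\delta$ fixed across infinitely many levels (``super-exponentially small cone corrections are summable'') addresses only the direction mismatch, not the multiplicative irregularity factor. Multiplying $r_n\le\bfb$ blocks each with factor $\bfL$ gives $\bfL^{r_n}$, which must be traded off into a per-level increment of the marginal exponent of order $(r_n-1)\log\bfL/(R_n\log\lambda^{-1})$; this \emph{is} summable thanks to the exponential growth of $R_n$ once $n_1$ is deep enough, but you never state the trade-off, and the paper avoids this bookkeeping altogether by controlling the horizontal derivative $\|DF^T|_{E^{gh}}\|$ through the a priori bound $K^{n-n_0}$ of \lemref{hor deriv out}, which grows only exponentially in the depth $n$ (hence sub-exponentially in $T<R_n$), and then converting to vertical regularity via \propref{transverse for reg}. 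So even for forward regularity, where your plan could in principle be made to work, the paper's route through horizontal distortion is structurally different and circumvents the compounding issue more cleanly. I would accept the overall inductive skeleton and the role of \thmref{ren is henon like}, \thmref{rescale}, and the a priori bounds, but Step~4 must be replaced: forward regularity via horizontal derivative control plus \propref{transverse for reg}, and backward regularity via the tempered decomposition and \lemref{temper in}.
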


Quantitative Pesin theory combined with some standard arguments in one-dimensional dynamics is used to show that any topological return map within a sufficiently deep renormalization depth must be H\'enon-like (see \secref{sec.combin}). A priori bounds is then needed to guarantee that regularity is preserved when we pass into deeper renormalization depths (see \secref{sec.preserve}).

The significance of the previous theorem is that it turns infinite regular H\'enon-like renormalizability of $F$ into a finite condition, provided that $F$ is known to be infinitely topologically renormalizable. The next result gives a criterion for guaranteeing the latter property.

\begin{thmc}
Suppose we are given the following data:
\begin{enumerate}[i)]
\item a one-parameter family $\{F_a\}_{a\in \frI} \subset \frHL^6$ depending $C^1$-smoothly on $a$;
\item constants $\bfb \geq 2$; $L \geq 1$ and $\lambda \in (0,1)$; and
\item an increasing sequence $\{R_n\}_{n=0}^\infty$ such that $R_0 = 1$ and $R_n/R_{n-1} \leq \bfb$ for all $n \in \bbN$.
\end{enumerate}
Let $\epsilon_0 \in (0,1)$ and $n_1 \in \bbN$ be the constants given in \eqref{eq.small margin} and \eqref{eq.large depth 1}. Suppose that for all $a \in \frI$, the map $F_a$ is $n_1$-times $(L, \epsilon_0, \lambda)$-regularly H\'enon-like renormalizable, and that $\{\Piod \circ \cR^{n_1}(F_a)\}_{a \in \frI}$ forms a full one-parameter family of 1D unimodal maps. Then for any $\bfb$-bounded renormalization type, there exists a parameter $a_* \in \frI$ such that $\cR^{n_1}(F_{a_*})$ realizes this type.
\end{thmc}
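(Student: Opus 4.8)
The plan is to reduce Theorem~C to a one-dimensional realization statement and then to run a parameter-exclusion (``sliding window'') argument along $\frI$, using part~iii) of Theorem~A to control how far the renormalizations of $F_a$ deviate from being genuinely $1$D, and Theorem~B to upgrade topological renormalizability into regular H\'enon-like renormalizability. Fix a $\bfb$-bounded renormalization type $\underline\sigma=(\sigma_1,\sigma_2,\dots)$, whose $k$th entry prescribes a combinatorial type with period ratio at most $\bfb$. By Theorem~B it suffices to produce a single $a_*\in\frI$ for which $F_{a_*}$ is \emph{infinitely topologically renormalizable}, with return times $R_1<\dots<R_{n_1}$ followed by the return times dictated by $\underline\sigma$: such an $F_{a_*}$ meets the hypotheses of Theorem~B with $N=\infty$, so \emph{all} of its renormalizations are $(\bfL,\delta,\lambda)$-regular H\'enon-like, and $\cR^{n_1}(F_{a_*})$ is by construction an infinitely regularly H\'enon-like renormalizable map realizing $\underline\sigma$.

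To obtain such an $a_*$ I would build a nested sequence of nonempty compact intervals $\frI=:\Delta_0\supseteq\Delta_1\supseteq\Delta_2\supseteq\cdots$ so that for every $k\ge0$ and every $a\in\Delta_k$: (i) $F_a$ is $(n_1+k)$-times topologically renormalizable, the $k$ deepest combinatorial types are $\sigma_1,\dots,\sigma_k$, and the periodic orbit on the boundary of the $(n_1+k)$th periodic domain is uniformly hyperbolic repelling; and (ii) the family of $1$D profiles $\{\Piod\circ\cR^{n_1+k}(F_a)\}_{a\in\Delta_k}\subset\frU^r$ is a full family of unimodal maps. Given (i) and the hypothesis that each $F_a$ has $n_1$ nested regular H\'enon-like returns, Theorem~B shows all $n_1+k$ renormalizations of $F_a$ are regular H\'enon-like with uniform constants (the ``last two'' caveat is harmless, since the construction is self-propagating and hence always supplies several deeper topological renormalizations); thus $\cR^{n_1+k}(F_a)$ is a well-defined element of $\frHL^r$, and, because the regular charts of \thmref{reg chart} and the hyperbolic periodic domains of (i) depend continuously (indeed $C^1$) on the map, $a\mapsto\cR^{n_1+k}(F_a)$ is continuous. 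Once the $\Delta_k$ are built, $\bigcap_k\Delta_k\ne\varnothing$ by compactness, and any $a_*$ in the intersection does the job.

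The inductive step $\Delta_k\rightsquigarrow\Delta_{k+1}$ is where the $1$D and $2$D pictures must be glued. Put $g_a:=\Piod\circ\cR^{n_1+k}(F_a)$. By (ii), $\{g_a\}_{a\in\Delta_k}$ is full, so the classical theory of unimodal renormalization (cf.\ \cite{dFdMPi}) furnishes a closed subinterval $\Delta'\subseteq\Delta_k$ --- a slightly shrunken $\sigma_{k+1}$-renormalization window --- on which $g_a$ is $\sigma_{k+1}$-renormalizable with a uniformly hyperbolic repelling boundary orbit and on which $\{\cRod(g_a)\}_{a\in\Delta'}$ is again full. For $a\in\Delta'$ the H\'enon-like map $G_a:=\cR^{n_1+k}(F_a)$ is $\lambda^{(1-\bepsilon)R_{n_1+k}}$-thin by part~iii) of Theorem~A, hence a super-exponentially small $C^{r-1}$-perturbation of $\iota(g_a)$; since $g_a$ is valuably renormalizable with robustly repelling boundary orbit, the perturbative mechanism of \exref{ex.perturb}/\propref{2d ren from 1d ren} (whose quantitative threshold is uniform here because the period of $\sigma_{k+1}$ is at most $\bfb$ and the $g_a$ obey the uniform a priori bounds of part~iv) of Theorem~A) thickens the periodic interval of $g_a$ into a periodic Jordan domain of $G_a$ with hyperbolic boundary orbit. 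Pulling this domain back through the rescaling chart $\cS^{n_1+k}\circ\Phi^{n_1+k}$ shows $F_a$ is $(n_1+k+1)$-times topologically renormalizable with deepest combinatorics $\sigma_{k+1}$, so (i) holds at level $k+1$ on $\Delta'$. Finally, a priori bounds give that $\Piod\circ\cR^{n_1+k+1}(F_a)$ is $C^{r-1}$-close to $\cRod(g_a)$ with error of order the thinness $\lambda^{(1-\bepsilon)R_{n_1+k}}$; hence $\{\Piod\circ\cR^{n_1+k+1}(F_a)\}_{a\in\Delta'}$ is a super-exponentially small perturbation of the full family $\{\cRod(g_a)\}_{a\in\Delta'}$, and is therefore still full after shrinking $\Delta'$ slightly to $\Delta_{k+1}$ (fullness only requires the two extremal endpoint behaviours, which are open conditions). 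This closes the induction.

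The main obstacle is precisely this gluing: one must ensure both that the $2$D renormalization sees the same combinatorial window structure as its $1$D profile --- so that a periodic interval lifts to a genuine periodic domain --- and that fullness of the profile family survives each renormalization step despite the nonzero (though super-exponentially small) discrepancy between $\Piod\circ\cR$ and $\cRod\circ\Piod$. Both hinge on the uniform a priori bounds of \cite{CLPY3} together with the super-exponential thinness in part~iii) of Theorem~A, which force the accumulated $1$D/$2$D errors at level $k$ to be negligible next to the size of the level-$k$ combinatorial window; the $1$D ingredient --- a full family, restricted to a renormalization window and renormalized, is again a full family in a robust way --- is classical but must be invoked in quantitatively stable form. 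Minor bookkeeping is needed for the ``last two'' levels excluded by Theorem~B (handled by always carrying a couple of spare topological renormalizations) and to verify that the chosen periodic domains contain $(v,0)$, so that the renormalizations are valuable; neither affects the argument.
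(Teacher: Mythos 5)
Your proposal is correct and follows essentially the same strategy as the paper's proof: an inductive construction of nested parameter intervals in which a 1D realization step (restricting a full unimodal family to a combinatorial window, Proposition~\ref{1d full}) alternates with a 2D promotion step (thickening the 1D periodic interval into a 2D periodic domain via super-exponential thinness, Proposition~\ref{2d ren from 1d ren}, with Theorem~B / Theorem~\ref{pres reg} upgrading topological to regular renormalizability). The ``quantitatively stable form'' of fullness that you correctly identify as the crux is realized in the paper via the $(\eta,\bfb)$-full condition with a depth-independent gap $\eta_1=\eta_1(\bfK,\bfb)$ coming from the a priori $C^1$-bounds, so that the 2D/1D discrepancy $\lambda^{(1-\bdelta)R_N}<c\eta_1$ never overtakes the window size and fullness survives each step with gap merely halved.
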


The {\it renormalization type} of an infinitely regular H\'enon-like renormalizable map referred to in Theorem C is defined in \secref{sec.combin} (see \eqref{eq.proj value seq}). It can be identified with the combinatorial type of some infinitely renormalizable unimodal map. The proof of Theorem C is the content of \secref{sec.realization}.

\begin{rem}\label{rem.finite}
While it is not done in this paper, it is possible to obtain explicit estimates of the constants $d$, $C$ and $K$ in \eqref{eq.small margin}, \eqref{eq.large depth 0} and \eqref{eq.large depth 1}. This means that for a given specific family of 2D maps (say, the H\'enon family $\frH$), Theorems B and C turn infinite regular H\'enon-like renormalizability in this family into an explicit finite-time checkable condition. This is illustrated in Examples \ref{ex.henon 0} and \ref{ex.henon 1}.
\end{rem}

\begin{ex}\label{ex.henon 0}
Consider a H\'enon map $F_{a,b} \in \frH$ (see \eqref{eq.henon fam}) restricted to a suitable bounded subset $U \subset \bbR^2$. Then $F_{a,b}$ has uniformly bounded $C^6$-norm. Moreover, there exists a uniform constant $c = c(U) \geq 1$ such that $\|DF_{a,b}|_{U}^{-1}\| < c/|b|$. Lastly, the $0$th valuable curvature of $F_{a,b}$ is exactly equal to $2$.

For $\lambda \in (0,1)$, consider the one parameter family of H\'enon maps $\frH_\lambda := \{F_{a,\lambda}\}_{a\in\bbR}$. Given $\bfb \geq 2$, a number $\epsilon_0 \in (0,1)$ can be chosen so that \eqref{eq.small margin} holds. Set $L = 1$. By the above observations, we see that for $\frH_\lambda$, the value of the uniform constant $K$ given in \eqref{eq.large depth 1} depends only on $\lambda$. Then let $\lambda_0 \in (0,1)$ be the largest number such that \eqref{eq.large depth 1} is satisfied when we set $\lambda = \lambda_0$ and $n_1 = 0$ (i.e. $C\lambda_0^{\epsilon_0} < 1$).

Fix $\lambda \in (0, \lambda_0)$. Since $\{\Piod(F_{a,\lambda})\}_{a \in \bbR} \equiv \frQ$ is a full family, it follows from the realization theorem that for any $\bfb$-bounded renormalization type, there exists a parameter value $a_*(\lambda) \in \bbR$ such that $F_{a_*(\lambda), \lambda}$ is infinitely regularly H\'enon-like renormalizable with this combinatorics.

Note that the above argument is non-perturbative, and does not rely on the robustness of the 1D renormalization convergence. In particular, any numerical estimates on the quantities $D$ and $K$ would immediately yield a definite lower bound on the value of the Jacobian $\lambda_0$.
\end{ex}

\begin{ex}\label{ex.henon 1}
Allowing for non-zero values of $n_1$ in \exref{ex.henon 0} enables us to potentially find infinitely regularly H\'enon-like renormalizable maps in the H\'enon family with Jacobians arbitrarily close to $1$ as follows.

Fix $\lambda \in (0,1)$, and consider the one-parameter family $\frH_\lambda$ of H\'enon maps with Jacobian $\lambda$. Suppose we can find an interval $\frI \subset \bbR$ of parameters such that for each map $F_a := F_{a, \lambda}$ with $a\in \frI$, there exists a sequence of $N \in \bbN$ nested H\'enon-like returns $\{(F_a^{R_n}, \Psi^n_a)\}_{n=1}^N$ with combinatorics of $\bfb$-bounded type. Additionally, suppose we can verify the following conditions.
\begin{enumerate}[i)]
\item The family $\{\Piod \circ \cR^N(F_a)\}_{a \in \frI}$ depends smoothly on the parameter $a$, and is full.
\item There exists $L \geq 1$ such that for all $a \in \frI$, the returns $\{(F_a^{R_n}, \Psi^n_a)\}_{n=1}^N$ are $(L, \epsilon_0, \lambda)$-regular.
\item We have $N \geq n_1$, where $n_1 \in \bbN$ is the smallest number such that \eqref{eq.large depth 1} holds.
\end{enumerate}
Then as before, we are guaranteed the existence of an infinitely regularly H\'enon-like renormalizable map $F_{a_*(\lambda), \lambda}$ with $a_*(\lambda) \in \frI$ that realizes any given $\bfb$-bounded renormalization type.

We expect that for reasonably small values of $\lambda$ (which would result in small values of $n_1$), it should be feasible to check these conditions numerically using a computer. 
\end{ex}

\subsection{Regular unicriticality}

The renormalization convergence in Theorem A gives a very detailed account of the geometry  near the critical value $v_0$ given in \eqref{crit value}. We now turn out attention to the global geometry of the dynamics. A priori bounds implies that that there is definite scaling when we pass from one renormalization depth to the next. This fact has the following important geometric consequence for infinitely renormalizable H\'enon-like maps.

\begin{thmd}
Consider a $C^6$-H\'enon-like map $F : D \to D$. Suppose that $F$ is infinitely regularly H\'enon-like renormalizable with combinatorics of bounded type. Then the Hausdorff dimension of the renormalization limit set $\Lambda_F$ given in \eqref{eq.limit set} is less than $1$. Consequently, $\Lambda_F$ is totally disconnected, minimal, and supports a unique invariant probability measure $\mu$.
\end{thmd}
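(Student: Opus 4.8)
The plan is to control the geometry of the renormalization limit set $\Lambda_F$ at all scales by using the \emph{a priori} bounds and the super-exponential thinning established in Theorem A, and then to run a standard bounded-geometry Cantor-set argument to estimate the Hausdorff dimension.

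First I would fix $n$ and examine the ``$n$th level pieces'' of $\Lambda_F$, namely the sets $F^{R_n+i}(\cB^n)$ for $0 \le i < R_n$, whose intersection over all $n$ (suitably unioned) is $\Lambda_F$ by \eqref{eq.limit set}. By part ii) of Theorem A, in the uniformizing chart $\Phi$ centered at $v_0$ the renormalization domain $\cB^n$ has horizontal extent comparable to $\sigma^n$ (with $\sigma_1^n < |\pi_h\circ\Phi(\cB^n)| < \sigma_2^n$) and vertical extent controlled by $|\pi_v\circ\Phi(F^{R_n}(\cB^n))|^2 \asymp |\pi_h\circ\Phi(\cB^n)|$. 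Since $\cR^n(F)$ is $\lambda^{(1-\bepsilon)R_n}$-thin in high smoothness (part iii) and has $C^r$-norm $O(1)$ (part iv), the dynamics inside $\cB^n$, after straightening, is an exponentially small perturbation of the one-dimensional renormalization picture. Thus each level-$n$ piece, when renormalized once more, splits into $r_n = R_{n+1}/R_n \le \bfb$ level-$(n+1)$ pieces that are ``essentially 1D'': their images under the straightening chart are thin horizontal strips whose horizontal sizes are comparable to the sizes of the corresponding little intervals in the 1D renormalization microscope, which by part v) of Theorem A and the classical real \emph{a priori} bounds of Sullivan/de Melo--van Strien have bounded geometry. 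Because the combinatorics are of bounded type, this gives a definite ratio: each level-$n$ piece contains at most $\bfb$ level-$(n+1)$ pieces, each of diameter at most $\theta$ times the parent's diameter, for some uniform $\theta = \theta(\bfb) < 1$; and they are pairwise separated by gaps of size comparable to the diameters.

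Next I would convert this nested structure into a dimension bound. Since the level-$n$ pieces have diameter $\asymp \sigma^n$ (dominated by the horizontal extent, the vertical one being even smaller by part ii), and there are at most $R_n \asymp \prod_{k\le n} r_k$ of them at level $n$ — wait, more precisely there are $R_n$ translated copies, so to get a genuine Cantor structure I would instead fix one cycle and count: covering $\Lambda_F$ by the $R_n$ pieces at level $n$, each of diameter $\le C\sigma_2^n$, and each level-$n$ piece containing at most $\bfb$ children of relative size $\le \theta$, one gets for the $s$-dimensional Hausdorff content a contraction by $\bfb\theta^s$ at each step. Choosing $s$ slightly less than $1$ so that $\theta^s$ is close to $\theta < 1$ but the number of children per piece is still bounded by $\bfb$, and using that the \emph{vertical} contribution to the diameter is of order (horizontal size)$^{1/2}$ hence negligible for the covering, the key point is that the 1D pieces themselves already have Hausdorff dimension strictly less than $1$ by the real bounds (bounded geometry Cantor set with definite gaps), and the 2D pieces are exponentially-thin thickenings that do not increase the dimension. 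So $\dim_H \Lambda_F = \dim_H(\text{1D Cantor attractor}) < 1$, with a uniform bound depending only on $\bfb$. The consequences are then formal: a set of Hausdorff dimension less than $1$ in $\bbR^2$ is totally disconnected; minimality of $F|_{\Lambda_F}$ follows from the nested periodic structure together with the fact that every point's orbit visits every level-$n$ piece (a standard odometer/adding-machine argument for bounded-type combinatorics); and unique ergodicity follows from minimality plus the combinatorial rigidity — or, alternatively, is quoted from \cite{CLPY1} once regular unicriticality (the Regular Unicriticality result) is in hand.

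The main obstacle I anticipate is making rigorous the claim that the 2D pieces ``do not increase the dimension beyond the 1D value'' — i.e., transferring the bounded-geometry estimates from the 1D profile to the genuinely 2D renormalization domains. The subtlety is that the pieces $F^{R_n+i}(\cB^n)$ are curvilinear quadrilaterals, not rectangles, and their shapes are controlled only through the straightening charts $\Psi^n$, whose distortion must be shown uniformly bounded at all depths — this is exactly where the \emph{a priori} bounds (uniform bound on the distortion along the horizontal direction of the $n$th return map) is indispensable, together with part i) of Theorem A controlling the change of chart $\Phi\circ(\Psi^n)^{-1}$ up to exponentially small errors. One must check that composing these bounded-distortion charts along a length-$R_n$ orbit does not blow up the distortion; the saving grace, as emphasized in the introduction, is that the loss is only one derivative total and the thinness is super-exponential, so the accumulated nonlinear error over the whole cycle is summable. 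Once the bounded-geometry of the 2D Cantor pieces is established, the dimension estimate and its corollaries are routine.
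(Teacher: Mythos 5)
Your strategy---translate \emph{a priori} bounds into bounded geometry of the 2D Cantor pieces (definite scaling ratios and gaps), then run a Moran-type dimension estimate---lands in the right place but is not the route the paper takes, and one of your intermediate claims does not survive scrutiny as stated.

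The paper's proof of Theorem D reduces everything to the single summability estimate of Theorem \ref{exp shrink}: $\sum_{i=0}^{R_n-1}\diam\bigl(F^i(\cB^n_{R_n})\bigr) = O(\tirho^n)$. Together with the crude bound $R_n \le \bfb^n$ from bounded type, this gives $\dim_H \Lambda_F < 1$ by a one-line power-mean/H\"older estimate---no gap control, no Moran structure, no appeal to the 1D attractor's dimension. The content is entirely in establishing \thmref{exp shrink}, which is done via the ``projected orbit'' maps $\hcH_i$ (\secref{sec.shrink}): one shows that $\hcH_i$ approximates $F^i$ to within error $\lambda^{(1-\bepsilon)R_{\hm(i)}}$ (\lemref{H vs F}), controls the sum of diameters of $\hcH_{lR_m}(\cI^n_0)$ exactly as in the 1D case via \propref{a priori geo}, and then fills in the remaining $R_m$ iterates using a $C^1$-bound of size $K^m$ that grows geometrically in the \emph{level} $m$, not in the iterate count $R_m$. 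This tower decomposition is exactly what makes the transfer from 1D to 2D work.

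By contrast, your proposed fix for the obstacle you correctly identify---that the distortion of $R_n$ chart compositions must be controlled---is that ``the accumulated nonlinear error over the whole cycle is summable'' because the thinness is super-exponential. That does not close the gap on its own: the cycle has length $R_n \asymp \bfb^n$, so even exponentially small per-step errors can accumulate to something of size $\bfb^n \lambda^{(1-\bepsilon)R_n}$, and one still needs the tower structure to see that this is small. The paper avoids summing $R_n$ errors by only summing $n$ of them (one per renormalization scale), which is the point of the $\hcH_i$ projections. Your proposal never describes a mechanism that achieves this.

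Two smaller issues: first, you invoke Theorem A parts ii)--v), but in the paper's logical ordering Theorem A is proved \emph{after} Theorem D (its proof sits in \secref{sec.converge}, which follows \secref{sec.shrink}); the underlying ingredients you want (\thmref{rescale}, \propref{exp scale}, \propref{a priori geo}, \thmref{cr bound 2d}) are available at this point and should be cited directly rather than routing through Theorem A, to avoid circularity. Second, the heuristic ``the 2D pieces are exponentially-thin thickenings that do not increase the dimension of the 1D Cantor attractor'' conflates two different sets: $\Lambda_F$ is not the 1D Cantor attractor, nor a product with it; it is the intersection \eqref{eq.limit set}, and the dimension estimate must be argued directly for the 2D pieces. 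Your bounded-geometry claim (at most $\bfb$ children per piece, definite ratios and gaps) would indeed yield the conclusion, but establishing it rigorously is the entire difficulty, and your sketch delegates this to Theorem A, which as noted does not precede Theorem D.
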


The proof of Theorem D is the content of \secref{sec.shrink}.

Let $X \Subset D$ be a compact totally invariant set for a H\'enon-like map $F$. We say that $F$ is {\it uniformly partially hyperbolic on $X$} if every point $p \in X$ is infinitely forward and backward regular along some tangent direction $E^{ss}_p$ at $p$, and the constants of regularity are uniform in $p$.  The geometry of a 2D dynamical system is very well understood on uniformly partially hyperbolic sets. In particular, it is known that the leaves in the strong-stable and center laminations vary continuously, have uniformly bounded curvature, and are uniformly transverse to each other.

Suppose that $F$ has infinite nested regular H\'enon-like returns given by \eqref{eq.returns} with combinatorics of bounded type. Consider the regular critical value $v_0$ given in \eqref{eq.crit value}. Note that $v_0$ is both infinitely forward and backward regular. Thus, $v_0$ has well-defined strong-stable manifold $W^{ss}(v_0)$ and center manifold $W^c(v_0)$. The H\'enon-likeness of the return maps under $F$ forces $W^{ss}(v_0)$ and $W^c(v_0)$ to form a quadratic tangency at $v_0$. See \figref{fig.critval}. Thus, the orbit $\cO_{\crit}$ of $v_0$ is a regular quadratic critical orbit of $F$ (as defined in \subsecref{subsec.reg unicrit}).

\begin{figure}[h]
\centering
\includegraphics[scale=0.3]{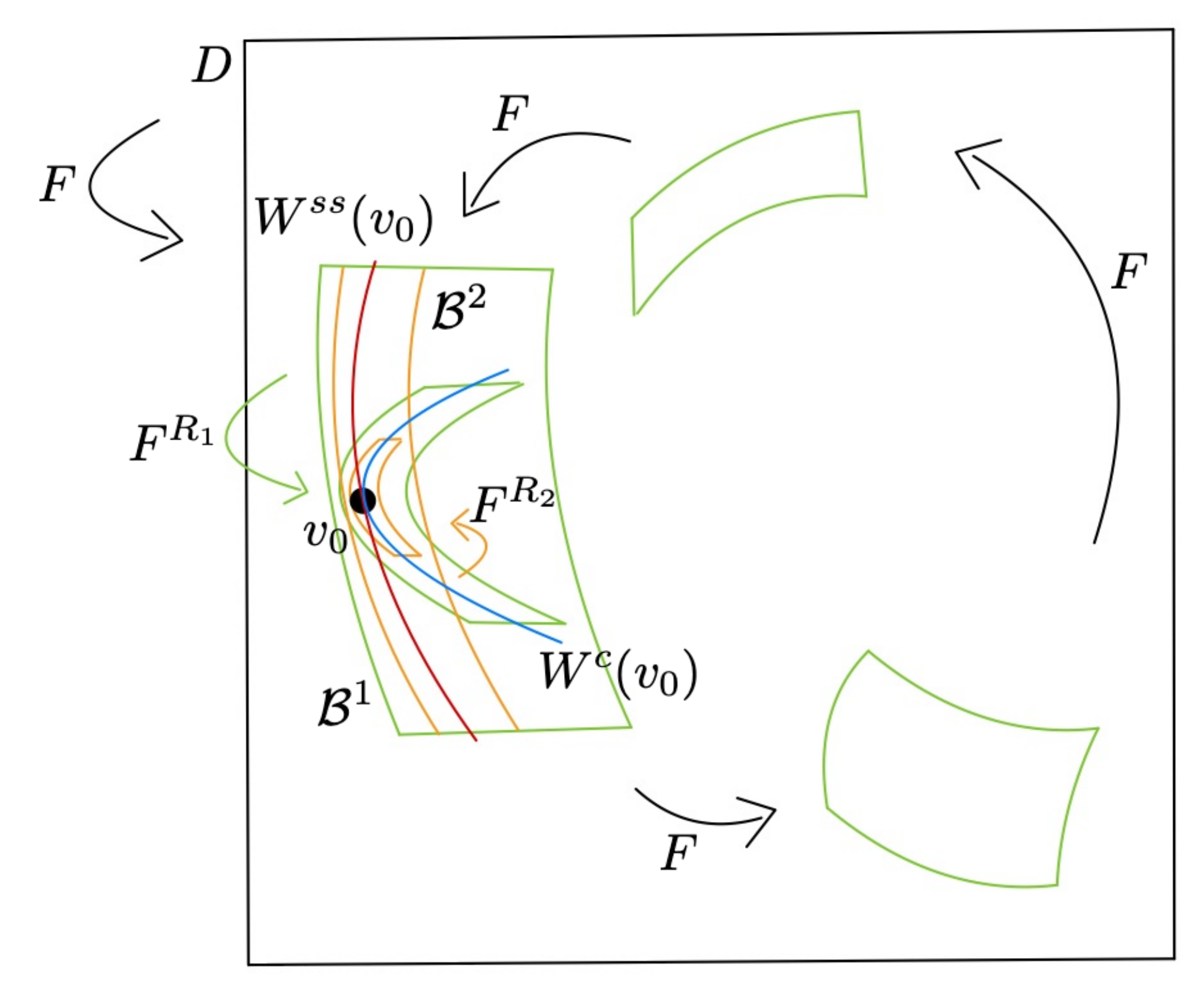}
\caption{The critical value $v_0$ of an infinitely regularly H\'enon-like renormalizable map $F$.}
\label{fig.critval}
\end{figure}

The existence of $\cO_{\crit}$ immediately implies that $F$ is not uniformly partially hyperbolic on $\Lambda_F$. However, our last main theorem states that this is the only obstruction, and that uniform regularity still holds outside a slow-exponentially shrinking neighborhood of $\cO_{\crit}$.

\begin{thme}
Consider a $C^6$-H\'enon-like map $F : D \to D$. Suppose that $F$ has infinite nested regular H\'enon-like returns given by \eqref{eq.returns} with combinatorics of bounded type. Then for any $\epsilon > 0$, there exists $L_\epsilon \geq 1$ such that for all $n \in \bbN$, the H\'enon-like return $(F^{R_n}, \Psi^n)$ is $(L_\epsilon, \epsilon, \lambda_\mu)$-regular. Moreover, $F$ is regularly unicritical on the renormalization limit set $\Lambda_F$ with the critical value $v_0$ given by \eqref{eq.crit value}.
\end{thme}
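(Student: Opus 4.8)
The plan is to deduce Theorem E from the a priori bounds of \cite{CLPY3} together with Theorem A, by first upgrading the \emph{fixed} constants of regularity $(L,\epsilon,\lambda)$ coming from the hypothesis into constants adapted to the unique invariant measure $\mu$ on $\Lambda_F$, and then verifying the two clauses of \defnref{def unicrit} directly for the critical orbit $\cO_{\crit}$ produced by Theorem A. The existence of $\mu$, of $\lambda_\mu$, and of Oseledets directions $E^{ss}_p, E^c_p$ for $\mu$-a.e.\ $p$ is guaranteed by \propref{lya exp} and Theorem D (unique ergodicity, minimality, total disconnectedness), so the structural skeleton of the statement is already in place before the argument begins.

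First I would show that the contraction base in the regularity estimates can be taken to be exactly $\lambda_\mu$, with marginal exponent arbitrarily small. The point is that the vertical distortion bounds from the a priori estimates (\thmref{a priori}) give, at every renormalization depth $n$ and at every $p\in\cB^n$, two-sided control of $\|DF^{R_n}|_{E^v_p}\|$ of the form $\lambda^{(1+\bepsilon)R_n}\lesssim \|DF^{R_n}|_{E^v_p}\|\lesssim \lambda^{(1-\bepsilon)R_n}$. Since $\mu$ is the unique invariant measure, the Birkhoff averages of $\log\|DF|_{E^{ss}}\|$ over the orbit of any $p\in\Lambda_F$ converge uniformly to $\log\lambda_\mu$; comparing this asymptotic rate against the per-level bounds for $1\le n$, telescoping along the nested return times $R_n$ (which grow at a fixed exponential rate by bounded combinatorics), one extracts that for every $\epsilon>0$ there is $L_\epsilon\ge 1$ such that $(F^{R_n},\Psi^n)$ is $(L_\epsilon,\epsilon,\lambda_\mu)$-regular for all $n$. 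This is essentially the argument of \cite[Proposition 4.7]{CLPY1} cited in \subsecref{subsec.reg unicrit}, applied level by level and then patched together — and it gives the ``Moreover'' sentence of the theorem as well as the precise regularity input needed for the rest.

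Next I would verify clause i) of \defnref{def unicrit}: that $v_0$ (the point of \eqref{eq.crit value}) is a regular \emph{quadratic} critical orbit point. That $v_0$ is infinitely forward and backward regular, hence carries a $C^r$ strong-stable manifold $W^{ss}(v_0)$ and a center manifold $W^c(v_0)$ with well-defined $C^r$-jet (\thmref{reg chart}, \thmref{center jet}), follows from Theorem A once we know the regularity constants are uniform. The tangency $W^{ss}_{\loc}(v_0)\pitchfork W^c(v_0)$ at $v_0$ comes from H\'enon-likeness of the returns: in the straightened coordinates $\Psi^n$, the image $F^{R_n}(\cB^n)$ is a thin neighborhood of the $n$th valuable curve $\cI^n_{R_n}$, which is a vertical quadratic curve with valuable curvature $\kappa_n$ bounded away from $0$ and $\infty$ uniformly (a priori bounds plus part ii) of Theorem A controlling the ratio $|\pi_v\Phi(F^{R_n}(\cB^n))|^2\asymp|\pi_h\Phi(\cB^n)|$). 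Passing $n\to\infty$, the quadratic curve $\cI^n_{R_n}$ converges to $W^c(v_0)$ and the vertical foliation through $v_0$ converges to $W^{ss}(v_0)$, and the uniform lower bound on $\kappa_n$ passes to the limit, giving the quadratic (not higher-order) tangency. That $E^{ss}_{v_0}=E^c_{v_0}$ is exactly the tangency of the two manifolds at $v_0$.

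Finally, clause ii): given $t>0$ and $N\in\bbN$, if $p\in\Lambda_F\setminus\bigcup_{n=0}^{N-1}\bbD_{v_{-n}}(t\lambda_\mu^{\epsilon n})$, then $p$ is $N$-times forward $(L(t),\delta,\lambda_\mu)$-regular. The mechanism is that the only place where forward regularity along the (candidate) strong-stable direction can degrade is near the critical orbit, where $W^{ss}$ and $W^c$ become tangent and the ``vertical'' direction ceases to be uniformly transverse to the ``horizontal'' one — but by the transversality clause in the definition of regular H\'enon-like return and by the geometric control of the renormalization nest (definite scaling from one depth to the next, part ii) of Theorem A), the region where this happens at combinatorial level $n$ is comparable to $\bbD_{v_{-n}}(\text{const}\cdot\lambda_\mu^{\epsilon' n})$ for a suitable $\epsilon'$. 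One tracks the orbit segment $p,F(p),\dots,F^{N}(p)$ through the nest of renormalization domains; each time it passes through a copy of $\cB^n$ away from the critical locus it picks up a uniformly regular block of contraction with base $\lambda_\mu$, and the hypothesis on $p$ ensures it never enters the bad window at the relevant scale. Concatenating these blocks via the quasi-linearization of \thmref{reg chart} gives the desired $(L(t),\delta,\lambda_\mu)$-regularity up to time $N$, with $\delta$ a controlled enlargement of $\epsilon$ absorbing the finitely many transitional steps. The main obstacle is this last step: making the passage from ``away from $\cO_{\crit}$ at every dyadic scale $t\lambda_\mu^{\epsilon n}$'' to ``uniformly regular for $N$ steps'' quantitatively honest, i.e.\ showing that the marginal exponent $\delta$ can be kept uniformly bounded (independently of $t$ and $N$) while the irregularity factor $L(t)$ is allowed to blow up as $t\to 0$ — this is precisely where the a priori bounds and the super-exponential thinning of $\cR^n(F)$ from Theorem A must be combined carefully, and it is the technical heart of \cite{CLPY1}'s framework that has to be checked in the present renormalization setting.
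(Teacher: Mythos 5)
Your proposal follows the same overall architecture as the paper: upgrade the constants from $(L,\epsilon,\lambda)$ to $(L_\epsilon,\epsilon,\lambda_\mu)$ using unique ergodicity and the a priori bounds, identify $v_0$ as a regular quadratic critical orbit point via Theorem A and \propref{crit value}, and then verify clause ii) of \defnref{def unicrit} by tracking orbits through the renormalization nest. The first two parts are in good shape and match the paper's \propref{homog}, \thmref{lya reg henon return}, and the short theorem that $\{v_m\}_{m\in\bbZ}$ is a regular quadratic critical orbit.

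The gap, which you yourself flag in your last sentence, is that clause ii) is only sketched, and the sketch glosses over the two things that actually make it work. First, the paper does not argue that each pass through $\cB^n$ ``picks up a uniformly regular block''; it has to carefully separate forward regularity along \emph{horizontal} directions (which degenerate near $v_0$ because $DF^{R_n}$ along $E^h$ loses a definite factor proportional to the distance to the critical value) from regularity along \emph{vertical} directions, and the transition between the two requires the angle lemmas (\lemref{quad flat}, \lemref{sq away from value}, \lemref{sq shallow}) to track how close the transported direction field $\hE_{p_i}$ stays to vertical. Second, and more importantly, the paper does not aggregate contributions scale by scale as you describe; it introduces the critical cover $\bfC^N$ (unions of quasi-rectangular tubes $\cC^n_i$ around the backward orbit of $v_0$, with quantitative diameter control $\lambda_\mu^{\epsilon n}$), then defines the \emph{valuable moment} $S=\vm(p_0,N)$ and \emph{valuable depth} $d=\vd(p_0,N)$ of the orbit segment, splits $N=S+T$ into an ``in'' phase and an ``out'' phase, and runs a finite induction on the $\bfb$-adic digits of $S$ and $T$ (\lemref{reg induct} feeds into \thmref{for reg away crit}). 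The hypothesis $p_0\notin\bfC^d$ is exactly what makes the induction close, because \lemref{reg induct} shows that it forces the orbit to avoid the critical windows $\cV^n_{v_0}(\lambda_\mu^{\bepsilon R_n})$ at every depth $n\le d$ and every digit, which is the hypothesis needed in \lemref{sq away from value} and \lemref{sq shallow}. Your high-level account does not explain why the marginal exponent $\delta$ only picks up bounded inflation $\bepsilon\to\bepsilon_k=(1+\bepsilon)^k\bepsilon$ across an \emph{a priori unbounded} number of renormalization levels; the paper handles this via the dichotomy $R_n\gtrless\bepsilon R_{n+k}$ in \lemref{sq shallow} and the final step of \thmref{for reg away crit}, which absorbs the deep-but-short excursions using $\eta$-homogeneity. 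Without these specifics the estimate $\|DF^N|_{\hE_{p_0}}\|>\lambda_\mu^{\bepsilon N}$ does not follow, so the proposal as written does not establish clause ii).
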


The study of 2D dynamics on a uniformly partially hyperbolic set $X$ is greatly facilitated by the fact that $X$ has a {\it local product structure}. This means that $X$ can be covered by finitely many charts, called {\it regular Pesin boxes}, which endows the set with locally defined vertical (strong-stable) and horizontal (center) directions that are invariant under the dynamics.

In our setting, any covering of $\Lambda_F$ by regular Pesin boxes must leave out points that are too close to the critical orbit $\cO_{\crit}$. In \cite{CLPY1}, we introduce new covering domains called {\it critical tunnels} and {\it valuable crescents} that uniformize the dynamics of $F$ near $\cO_{\crit}$ (which is fundamentally non-linear in nature). See \figref{fig.tunnel}. These new domains, together with regular Pesin boxes, completely cover $\Lambda_F$, resulting in a new type of dynamical structure that we call a {\it regular unicritical structure}.

\begin{figure}[h]
\centering
\includegraphics[scale=0.27]{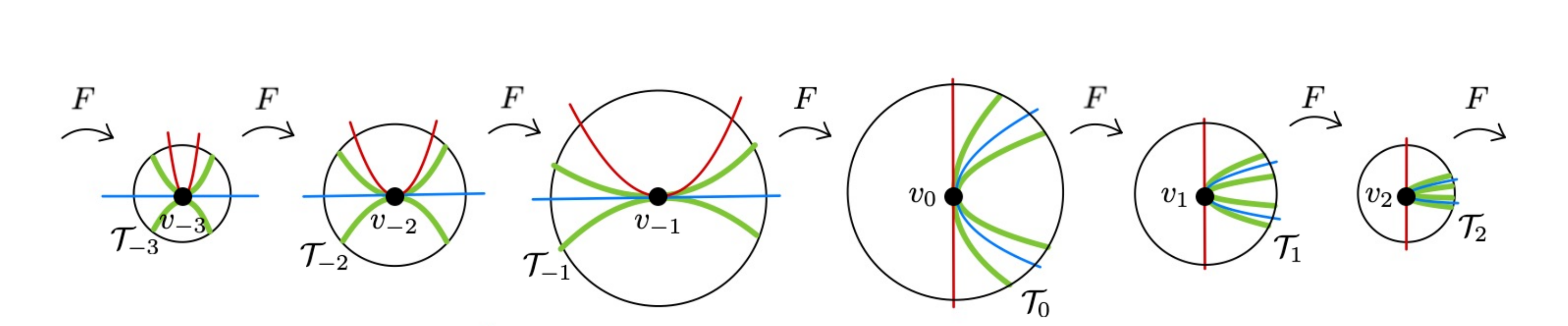}
\caption{Regular quadratic critical orbit $\cO_{\crit} = \{v_m\}_{m\in\bbZ}$ contained in critical tunnels $\{\cT_{-n}\}_{n =1}^\infty$ and valuable crescents $\{\cT_n\}_{n=0}^\infty$. For $m \in \bbZ$, the strong-stable and center manifolds of $v_m$ are indicated as red and blue curves respectively. The tunnel/crescent $\cT_m$ is the pinched region bounded between two green curves that contains $W^c(v_m)$. The diameter of $\cT_m$ shrinks slow-exponentially as $|m| \to \infty$.}
\label{fig.tunnel}
\end{figure}

Regular unicritical structures for regularly unicritical systems can fulfill a similar function as local product structures for uniformly partially hyperbolic systems. In \cite{CLPY1}, this new structure is used to characterize the local geometry of every strong-stable and center manifold in terms of its proximity to the critical orbit in an explicit way. Additionally, the following converse of the unicriticality theorem is proved.

\begin{thm}[\cite{CLPY1}]
Let $F : \cD \to F(\cD) \Subset \cD$ be a dissipative $C^3$-diffeomorphism defined on a Jordan domain $\cD \subset \bbR^2$. Suppose that $F$ is infinitely topologically renormalizable, and assume that $F$ is regularly unicritical on the renormalization limit set. Then the renormalizations of $F$ are eventually regular H\'enon-like.
\end{thm}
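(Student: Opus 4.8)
\smallskip
\noindent\textbf{Proof strategy.} The plan is to read the Regular Unicriticality theorem (Theorem~E) backwards. The hypothesis provides the regular unicritical structure on $\Lambda_F$ constructed in \cite{CLPY1}: a finite cover of the limit set by regular Pesin boxes, together with the critical tunnels $\{\cT_{-k}\}_{k\geq1}$ and valuable crescents $\{\cT_k\}_{k\geq0}$ strung along the regular quadratic critical orbit $\cO_{\crit}=\{v_m\}_{m\in\bbZ}$, with $W^{ss}(v_0)$ and $W^c(v_0)$ quadratically tangent at $v_0$, with $F|_{\Lambda_F}$ uniquely ergodic (so ergodic averages of $\log\|\Jac F\|$ and of the relevant cocycle norms converge uniformly), and with uniform forward regularity along the strong-stable cone at every point of $\Lambda_F$ outside a slow-exponentially shrinking neighborhood of $\cO_{\crit}$. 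Out of this data I would produce, for all $n$ past a threshold, a centered straightening chart $\Psi^n:\cB^n\to B^n$ realizing $(F^{R_n},\Psi^n)$ as an $(L,\epsilon,\lambda_\mu)$-regular H\'enon-like return.

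\smallskip
\noindent\textbf{Combinatorial normalization.} Since $v_0\in\Lambda_F=\bigcap_n\bigcup_i F^i(\cB^n)$, exactly one element of the $R_n$-cycle of $\cB^n$ contains $v_0$; replacing $\cB^n$ by it --- which changes $F^{R_n}|_{\cB^n}$ only by an iterated conjugacy and leaves $\cR^n(F)$ unchanged --- I may assume $v_0\in F^{R_n}(\cB^n)$ for every $n$. With the nesting $F^{R_{n+1}}(\cB^{n+1})\subset F^{R_n}(\cB^n)$ and the slow-exponential decay of the tunnel and crescent diameters, this forces $\bigcap_n F^{R_n}(\cB^n)=\{v_0\}$ and $\diam\cB^n\to0$, so for $n$ large the domains $F^m(\cB^n)$ forming the cycle of $\cB^n$ sit at scales on which the regular unicritical structure is valid. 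Bounded-type combinatorics then dictates how that cycle threads the tunnels and crescents near $\cO_{\crit}$: away from those threadings --- whose positions along the cycle are governed by the renormalization sub-scales $R_1,\dots,R_{n-1}$ --- the orbit segment $p,F(p),\dots,F^{R_n}(p)$ of any $p\in\cB^n$ lies in regular Pesin boxes, where regular unicriticality grants uniform forward $(L,\delta,\lambda_\mu)$-regularity along the strong-stable cone. (Points of $\cB^n\setminus\Lambda_F$ are reduced to points of $\Lambda_F$ by shadowing, $\cB^n$ being a small periodic domain.)

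\smallskip
\noindent\textbf{Straightening, H\'enon-likeness, and regularity.} I would take the $\Psi^n$-vertical foliation of $\cB^n$ to be the local strong-stable lamination --- it foliates $\cB^n$ with $n$-independent constants, by \thmref{reg chart} and regular unicriticality off the tunnels, and by the tunnel/crescent coordinates on the passages near $\cO_{\crit}$ (anchored ultimately at $v_0$, which is infinitely forward regular) --- and the $\Psi^n$-horizontal foliation to be the center cone off the tunnels, matched to the center manifolds along $\cO_{\crit}$ and interpolated across the gap. Because the fold of $F^{R_n}|_{\cB^n}$ is forced by, and localizes at, the single passage of the cycle past the tangency along $\cO_{\crit}$, and is quadratic there --- this is the one place the tangency of $W^{ss}(v_0)$ with $W^c(v_0)$ is used, and its non-degeneracy (nonzero valuable curvature) fixes the relation between the horizontal and vertical sizes of the normalized domain --- the conjugated map $\Phi^n\circ F^{R_n}\circ(\Phi^n)^{-1}$ carries $\Psi^n$-vertical leaves to $\Psi^n$-horizontal leaves and $\Psi^n$-horizontal leaves to U-shaped vertical quadratic graphs; that is, it is H\'enon-like with unimodal $1$D profile. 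Unique ergodicity makes $\Jac_p F^{R_n}$ equal to $\lambda_\mu^{R_n}$ up to a subexponential factor, uniformly in $p\in\cB^n$, so $\cR^n(F)$ is $\lambda_\mu^{(1-\bepsilon)R_n}$-thin; and since $F\in C^3$ a degree is lost in the Pesin charts, placing $\cR^n(F)$ in $\frHL^2$. The three regularity conditions then follow by concatenation. Forward $R_n$-regularity of every $p\in\cB^n$ along $E^v_p$ is the uniform regularity on the regular-Pesin-box stretches, spliced to the quasi-linearization on the bounded threadings near $\cO_{\crit}$. Backward $R_n$-regularity of every $q\in F^{R_n}(\cB^n)$ along $E^h_q$ is the $F^{-1}$-symmetric statement, using that off the tunnels the $\Psi^n$-horizontal direction is transverse to the strong-stable cone and so rotates into the backward-expanding cone at rate $\lambda_\mu^{-1}$. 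Transversality $\measuredangle(E^v_p,E^h_p)>1/L$ holds by the uniform transversality of the strong-stable and center cones off the tunnels and by construction on the interpolation region near $v_0$.

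\smallskip
\noindent\textbf{Main obstacle.} The difficulty lives entirely near $\cO_{\crit}$, where the invariant geometry is genuinely non-linear (the tangency): one must show that the vertical/horizontal foliations of $\Psi^n$, the H\'enon-like normal form, and the attendant regularity and distortion bounds can all be assembled with constants that do \emph{not} degenerate as $\cB^n$ collapses onto $v_0$. That scale-invariance is exactly what the critical-tunnel and valuable-crescent coordinates of \cite{CLPY1} are built to supply --- they quasi-linearize the dynamics along $\cO_{\crit}$ at every scale with scale-independent constants, keeping the fold quadratic and the horizontal distortion bounded --- so the substantive work is to patch these local models together with the regular Pesin boxes along the return loop of $\cB^n$ into one chart, and to verify that the patching, and the estimates it inherits, are uniform in $n$. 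This, together with the combinatorial normalization, is what produces the qualifier ``eventually''.
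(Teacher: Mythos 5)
The paper does not prove this theorem; it is cited from \cite{CLPY1}, so there is no proof here to compare against. Evaluating your sketch on its own terms: the architecture --- reading Theorem~E backwards, covering the return cycle of $\cB^n$ by regular Pesin boxes off $\cO_{\crit}$ and by the tunnel/crescent quasi-linearization near $\cO_{\crit}$, building the straightening charts from the strong-stable/center laminations, and sourcing the fold from the quadratic tangency of $W^{ss}(v_0)$ with $W^c(v_0)$ --- is the right shape of argument.

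The load-bearing step you assert rather than argue is that the fold ``localizes at the single passage of the cycle past the tangency.'' As the paper observes just before Theorem~B, a periodic Jordan subdomain of a surface diffeomorphism need not return H\'enon-like: there is no pinpoint critical location, and the orbit segment of length $R_n$ typically passes near $\cO_{\crit}$ at each sub-scale $R_m$ for $m<n$, not once. You must show that only the deepest passage produces the fold and that the shears accumulated at shallower passages do not destroy the triangular normal form $(x,y)\mapsto(f(x,y),x)$; this is precisely where the tunnel/crescent uniformization has to be deployed quantitatively, and it is the genuine content of the theorem, not a corollary of ``the tangency is quadratic.'' A second gap: $\bigcap_n F^{R_n}(\cB^n)=\{v_0\}$ and $\diam\cB^n\to 0$ do not follow from nesting plus slow-exponential tunnel/crescent decay, because the topological renormalization domains $\cB^n$ are not \emph{a priori} aligned with the dynamically defined tunnels, crescents, or Pesin boxes; their alignment (and hence shrinking) has to be derived from minimality and unique ergodicity of $\Lambda_F$, and you should say how. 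Finally, the theorem's hypotheses do not include bounded-type combinatorics, yet you invoke it to control how the cycle threads the tunnels; as written this is an unsupported assumption, and either the argument must be made without it or you must explain why regular unicriticality on $\Lambda_F$ forces it.
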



 \section{Convergence of the Straightening Charts}\label{sec.conv chart}
 
Let $r \geq 2$ be an integer, and consider a $C^{r+1}$-H\'enon-like map $F : D \to D$. For some $N \in \bbN \cup \{\infty\}$; $L \geq 1$ and $\epsilon, \lambda \in (0,1)$, suppose that $F$ has $N$ nested $(L, \epsilon, \lambda)$-regular H\'enon-like returns given by \eqref{eq.returns}. Furthermore, assume that $N$ is sufficiently large, so that for some smallest number $0 \leq n_0 \leq N$, we have
\begin{equation}\label{eq.proper depth 0}
\overline{K_0} \lambda^{\epsilon R_{n_0}} < \eta,
\end{equation}
where $\eta \in (0,1)$ is independent of $F$, and
\begin{equation}\label{eq.const 0}
{K_0} = {K_0}(L, \lambda, \epsilon, \lambda^{1-\epsilon}\|DF^{-1}\|, \|DF\|_{C^r}, r)\geq 1
\end{equation}
is a uniform constant.

For $n_0 \leq n \leq N$ and $m \in \bbZ$, denote $\cB^n_m := F^m(\cB^n)$. Observe that
$
\cB^{n+1}_{R_{n+1}} \Subset \cB^n_{R_n}.
$
Let
$$
v_0 \in \cZ_0 := \bigcap_{n=1}^N \cB^n_{R_n},
$$
be a point to be specified later (as the {\it critical value of $F$}). Without loss of generality, assume that $\Psi^n$ is centered at $v_0$.

In this section, we outline the description of the asymptotic behavior of the centered straightening charts $\{\Psi^n\}_{n=1}^N$ for the renormalizations of $F$ obtained in \cite{CLPY3}.

Define
$$
I^n_0 := \pi_h(B^n_0)
\matsp{and}
\cI^n_0 := (\Psi^n)^{-1}(I^n_0 \times \{0\}).
$$
Then it follows that
$
I^n_0 \Subset I^1_0
$ and $
\Psi^n|_{\cI^n_0} =\Psi^1|_{\cI^n_0}.
$
Denote $\cI^n_m := F^m(\cI^n_0)$ for $m \in \bbZ$. 

For $p_0 \in \cB^n_0$, write $z_0 := \Psi^n(p_0)$, and let
$$
E^h_{p_0} := D(\Psi^n)^{-1}(E^{gh}_{z_0})
\matsp{and}
E^{v,n}_{p_0} := D(\Psi^n)^{-1}(E^{gv}_{z_0}).
$$
Additionally, let
$$
E^{h,n}_{p_{R_n-1}} := DF^{R_n-1}(E^h_{p_0})
\matsp{and}
E^v_{p_{R_n-1}} := DF^{-1}(E^h_{p_{R_n}}) = DF^{R_n-1}(E^{v,n}_{p_0}).
$$
By increasing $L$ by a uniform amount if necessary (see \propref{grow irreg}), we may assume that every $q \in \cB^n_{R_n-1}$ is $(R_n-1)$-times backward $(L, \epsilon, \lambda)$-regular along $E^v_q$.

\begin{prop}[Vertical extension of charts]\cite[Proposition 3.2]{CLPY3}\label{vert prop nest}
For $n_0 \leq n \leq N$, the $n$th centered straightening chart can be extended to $\Psi^n : \hcB^n_0 \to \hB^n_0$ such that the following properties hold.
\begin{enumerate}[i)]
\item The quadrilateral $\hcB^n_0$ is vertically proper and $\eta$-vertical in $\cB^{n_0}_0$.
\item We have $\|(\Psi^n)^{\pm 1}\|_{C^r} < {K_0}$, and
\begin{equation}\label{eq.psi conv}
\|\Psi^n\circ (\Psi^{n+1})^{-1}-\Id\|_{C^r} <\lambda^{(1-\bepsilon)R_n}.
\end{equation}
\item Every point $q_0 \in \hcB^n_0$ is $R_n$-times forward $({K_0}, \epsilon, \lambda)$-regular along $E^{v,n}_{q_0}$.
\end{enumerate}
\end{prop}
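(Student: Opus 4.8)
The plan is to produce the extension in two stages: first propagate the \emph{vertical direction field} $E^{v,n}$ off the periodic block $\cB^n_0$, then prolong its integral curves — the vertical leaves of $\Psi^n$ — vertically across $\cB^{n_0}_0$, and finally re-parametrize the new vertical foliation so that $\Psi^n$ remains centered at $v_0$. The quantitative statements (i)--(iii) are then read off from the contraction and distortion estimates accumulated along the way.

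\emph{Stage 1 (extending the field).} By hypothesis every $p_0\in\cB^n_0$ is $R_n$-times forward $(L,\epsilon,\lambda)$-regular along $E^{v,n}_{p_0}$, with $\epsilon<\epsilon_1$ as always. First I would apply the regular chart theorem \thmref{reg chart} along the finite forward orbit $p_0,Fp_0,\dots,F^{R_n}p_0$. This quasi-linearizes $F,\dots,F^{R_n}$ in a neighborhood of the orbit whose size is controlled purely by the regularity data, identifies $E^{v,n}$ with the unique most-contracted direction at each intermediate time, and shows that $E^{v,n}$ extends to a $C^{r-1}$ unit vector field of $C^{r-1}$-norm less than ${K_0}$ (a constant of the form \eqref{eq.const 0}) on a definite neighborhood of $\cB^n_0$. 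The domination built into \eqref{eq.for reg} — the inequality $\|DF^m|_{E^{v,n}}\|^2/\Jac F^m\le L\lambda^{(1-\epsilon)m}$ — is what makes the $\eta$-vertical cone strictly invariant under $DF^{-1}$ along the relevant orbits, hence makes this extension (and the graph transform below) converge. The backward regularity of the points of $\cB^n_{R_n-1}$ along $E^v$, noted just before the statement, plays the symmetric role of pinning $E^{v,n}$ down near the image block $\cB^n_{R_n}\ni v_0$.

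\emph{Stage 2 (extending the chart), and (i), (ii).} Using the forward-invariant cone field, each vertical leaf of $\Psi^n$ through a point of $\cB^n_0$ is continued as an integral curve of $E^{v,n}$ until it meets $\partial\cB^{n_0}_0$; a graph-transform and compactness argument shows the continued leaves foliate a quadrilateral $\hcB^n_0\Supset\cB^n_0$ that is vertically proper in $\cB^{n_0}_0$. Leaving the horizontal leaves untouched keeps $\Psi^n$ horizontally equivalent to $\Psi^{n_0}$, and re-parametrizing the new vertical leaves produces the extended $\Psi^n:\hcB^n_0\to\hB^n_0$; bounded distortion of $F^{R_n}$ along the horizontal direction — the a priori bounds of \thmref{a priori}, or directly \eqref{eq.for reg} — keeps $\|(\Psi^n)^{\pm1}\|_{C^r}<{K_0}$, which is most of (ii). The core of (i) is the $\eta$-verticality: at $p_0\in\cB^n_0$ both $E^{v,n}_{p_0}$ and the reference vertical $E^{v,n_0}_{p_0}$ lie in the contracting cone, and iterating the graph transform from time $R_{n_0}$ out to time $R_n$ contracts that cone, so $\measuredangle(E^{v,n}_{p_0},E^{v,n_0}_{p_0})\lesssim\overline{K_0}\lambda^{\epsilon R_{n_0}}<\eta$ by \eqref{eq.proper depth 0}; since the prolonged leaves shadow leaves through $\cB^n_0$, the same bound persists on all of $\hcB^n_0$. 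For the convergence half of (ii), $\Psi^n$ and $\Psi^{n+1}$ share the horizontal foliation and are both centered at $v_0$, so their difference is governed by $E^{v,n}$ versus $E^{v,n+1}$; running the same cone contraction over the extra $R_{n+1}-R_n\ge R_n$ iterates gives $\measuredangle(E^{v,n},E^{v,n+1})<\lambda^{(1-\bepsilon)R_n}$ (with $\bepsilon$ absorbing the distortion and re-centering constants), and integrating this along the leaves yields \eqref{eq.psi conv}.

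\emph{Part (iii), and the main difficulty.} For (iii), a point $q_0\in\hcB^n_0\setminus\cB^n_0$ sits on a vertical leaf that under $F^m$, for $1\le m\le R_n$, stays in a fixed neighborhood of $\cB^n_m$, so the contraction estimate \eqref{eq.for reg} for a nearby base point of $\cB^n_0$ transfers to $q_0$ at the cost of enlarging $L$ to ${K_0}$ — the same mechanism as \propref{grow irreg}. I expect Stage 2, and specifically the prolongation of the vertical leaves all the way across $\cB^{n_0}_0$ while keeping them $\eta$-close to genuinely vertical, to be the main obstacle: a priori $F$ is genuinely nonlinear away from $\cB^n$ (it carries the critical behavior), so there is no reason the contracting direction field should even be defined, let alone tame, outside the periodic block, and everything rests on extracting from the $R_n$-times forward regularity a bona fide forward-invariant $\eta$-vertical cone field over the whole region swept out by the extended leaves.
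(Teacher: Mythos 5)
This proposition is cited from \cite[Proposition 3.2]{CLPY3}; the present paper does not reprove it, so there is no in-paper argument to compare against. Your outline does assemble the correct toolbox --- \thmref{reg chart}, cone invariance, and graph transforms --- and you correctly flag the prolongation across $\cB^{n_0}_0$ as the crux. But a few things need repair.

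First, the cone-invariance claim in Stage~1 is stated backwards: the domination $\|DF^m|_{E^{v,n}}\|^2 \le L\lambda^{(1-\epsilon)m}\Jac F^m$ makes the $\eta$-vertical cone strictly \emph{forward}-invariant (under $DF$), which is what drives the stable graph transform, not invariance under $DF^{-1}$. Second, the obstacle you name --- ``no reason the contracting direction field should even be defined outside $\cB^n$'' --- actually has a concrete resolution you have not extracted: the regular radius $l_{p_0}$ supplied by \thmref{reg chart} depends only on $L,\epsilon,\lambda,\|DF\|_{C^r}$, hence is bounded below uniformly in $n$, whereas the vertical extent of $\cB^{n_0}_0$ shrinks with $n_0$; under \eqref{eq.proper depth 0} one can take $n_0$ large enough that every regular neighborhood $\cU_{p_0}$ for $p_0\in\cB^n_0$ already contains the full vertical fiber of $\cB^{n_0}_0$ through $p_0$, so the local vertical manifolds $W^v_{\loc}(p_0)$ span $\cB^{n_0}_0$ without further propagation. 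Third, and most substantively, your arguments for (i) and (ii) give only $C^0$ control: the $\eta$-verticality in (i) is by definition a $C^{r-1}$ bound on the direction field, and \eqref{eq.psi conv} is a $C^r$ bound on the chart comparison, so \propref{vert angle shrink} alone is not enough. One must run the $C^r$ graph-transform machinery (\propref{back dt}, \propref{cr lin comp}) to control the higher derivatives of both the leaves and the reparametrization, and this is where the smoothness drops from $r+1$ to $r$. Finally, for (iii), your transfer from $p_0\in\cB^n_0$ to $q_0$ on the extended leaf is the right idea (and is essentially \corref{ver hor cons} inside the quasi-linearized charts), but you should verify that the forward orbit $q_m$ actually stays inside $\cU_{p_m}$ for $m\le R_n$ --- this is exactly \lemref{trunc neigh fit}, which requires starting $q_0$ in the truncated neighborhood, so one needs the vertical leaf contraction to win against the horizontal spread, i.e.\ the $s=1$ part of \eqref{eq.for reg} again.
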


\begin{rem}
In \secref{sec.combin}, we will show that $\hcB^n_0$ is $R_n$-periodic (and hence, we may assume that $\cB^n_0 =\hcB^n_0$). See \eqref{eq.min domain}.
\end{rem}

\begin{prop}[Locating the critical value]\cite[Proposition 3.4]{CLPY3}\label{crit value}
If $N = \infty$, then the following statements hold.
\begin{enumerate}[i)]
\item For any point $p_0 \in \cZ_0$, there exists a unique strong stable direction $E^{ss}_{p_0} \in \bbP^2_{p_0}$ such that
$$
\|E^{v, n}_{p_0}-E^{ss}_{p_0}\| < \lambda^{(1-\bepsilon)R_n}
\matsp{for}
n \geq n_0.
$$
Moreover, $p_0$ is infinitely forward $(L, \epsilon, \lambda)$-regular along $E^{ss}_{p_0}$.
\item Any point $p_{-1} \in \cZ_{-1} := F^{-1}(\cZ_0)$ is infinitely backward $(L, \epsilon, \lambda)$-regular along $E^v_{p_{-1}}$. Moreover, there exists a unique center direction $E^c_{p_{-1}} \in \bbP^2_{p_{-1}}$ such that
$$
\|E^{h, n}_{p_{-1}}-E^c_{p_{-1}}\| < \lambda^{(1-\bepsilon)R_n}
\matsp{for}
n \geq n_0.
$$
\item There exists a unique point $v_0 \in \cZ_0$ such that
$$
E^{ss}_{v_0} = DF(E^c_{v_{-1}}).
$$
Moreover, the strong stable manifold $W^{ss}(v_0)$ and the center manifold $F(W^c(v_{-1}))$ have a quadratic tangency at $v_0$.
\end{enumerate}
\end{prop}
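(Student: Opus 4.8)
The plan is to obtain parts (i) and (ii) as limiting arguments built on the super-exponential chart convergence in Proposition~\ref{vert prop nest}, and to extract part (iii) from the quantitative H\'enon-like folding supplied by the \emph{a priori} bounds. Fix $p_0\in\cZ_0=\bigcap_n\cB^n_{R_n}$; since $\cZ_0\subset\cB^n\subset\hcB^n_0$ for every $n$, the directions $E^{v,n}_{p_0}=D(\Psi^n)^{-1}(E^{gv}_{\Psi^n(p_0)})$ are defined, and $p_0$ is $R_n$-times forward $(L,\epsilon,\lambda)$-regular along $E^{v,n}_{p_0}$ by the regularity of the return $(F^{R_n},\Psi^n)$.

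For (i): by \eqref{eq.psi conv} one has $\Psi^n\circ(\Psi^{n+1})^{-1}=\Id+O_{C^r}(\lambda^{(1-\bepsilon)R_n})$, so $\|E^{v,n}_{p_0}-E^{v,n+1}_{p_0}\|<\lambda^{(1-\bepsilon)R_n}$ in $\bbP^2_{p_0}$; since $\{R_n\}$ is strictly increasing (bounded type), the tail $\sum_{m\ge n}\lambda^{(1-\bepsilon)R_m}$ is dominated by $\lambda^{(1-\bepsilon)R_n}$, so $\{E^{v,n}_{p_0}\}_n$ is Cauchy and converges to a unique $E^{ss}_{p_0}$ with the stated rate. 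The inequalities \eqref{eq.for reg} hold along $E^{v,n}_{p_0}$ for $1\le m\le R_n$ with the fixed constant $L$; fixing $m$, letting $n\to\infty$, and using that these are closed conditions on the direction, they pass to $E^{ss}_{p_0}$ for every $m$, which gives infinite forward $(L,\epsilon,\lambda)$-regularity and hence a $C^r$-manifold $W^{ss}(p_0)$. Part (ii) is the $F^{-1}$-dual: for $p_{-1}\in\cZ_{-1}$ one has $p_{-1}\in\cB^n_{R_n-1}$ for all $n$, so (after the uniform enlargement of $L$ recorded before Proposition~\ref{vert prop nest}) $p_{-1}$ is $(R_n-1)$-times backward $(L,\epsilon,\lambda)$-regular along $DF^{R_n-1}(E^{v,n}_{F^{-R_n}(p_0)})$; these directions converge super-exponentially (chart convergence together with the crushing of contracted directions) to a limit $E^v_{p_{-1}}$, and passing \eqref{eq.back reg} to the limit gives infinite backward $(L,\epsilon,\lambda)$-regularity along $E^v_{p_{-1}}$ and hence a center manifold $W^c(p_{-1})$. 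Likewise the transported chart-horizontal directions $E^{h,n}_{p_{-1}}$ form a super-exponential Cauchy sequence via \eqref{eq.psi conv}, converging to the claimed $E^c_{p_{-1}}$.

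For (iii): first, the thinning estimates of \cite{CLPY3} (recorded in Theorem~A(ii)) give $\diam\cB^n_{R_n}\to0$, so that $\cZ_0$ is the single point $\{v_0\}$ and uniqueness is automatic; the content is the tangency $E^{ss}_{v_0}=DF(E^c_{v_{-1}})$. Here the H\'enon-like structure enters: $\cI^n_0=(\Psi^n)^{-1}(I^n_0\times\{0\})$ is the chart-horizontal leaf through $v_0$, so $\cI^n_{R_n}=F^{R_n}(\cI^n_0)\subset\cB^n_{R_n}$ is a vertical quadratic curve in $\cB^n_0$ with valuable curvature $\kappa_n$. Its turning point lies in $\cB^n_{R_n}$, hence converges to $v_0$. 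Since $\|\Psi^n\|_{C^r}<K_0$ uniformly, the tangent to $\cI^n_{R_n}$ at this turning point is the chart-vertical direction there, so it converges to $E^{ss}_{v_0}$ by part (i); on the other hand, using the \emph{a priori} bounds to keep the geometry of the transported horizontal foliation of $\cB^n_{R_n}$ uniformly controlled, its tangent-direction field is nearly constant on the shrinking set $\cB^n_{R_n}$, so this same tangent is $o(1)$-close to $DF^{R_n}(E^h_{F^{-R_n}(v_0)})=DF(E^{h,n}_{v_{-1}})$, which converges to $DF(E^c_{v_{-1}})$ by part (ii). Equating the two limits yields $E^{ss}_{v_0}=DF(E^c_{v_{-1}})$, i.e. $W^{ss}(v_0)$ and $F(W^c(v_{-1}))$ are tangent at $v_0$.

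The main obstacle will be the quantitative part of (iii): controlling the transported horizontal foliation on $\cB^n_{R_n}$ precisely enough that its tangent-direction field really is nearly constant there, and then upgrading the coincidence of directions into a \emph{quadratic} tangency. The latter requires transferring the bound $\kappa_n\asymp 1$ — which is measured in the chart $\Psi^n$ at a scale shrinking like that of $\cB^n_{R_n}$, cf. the relation $|\pi_v\Phi(F^{R_n}(\cB^n))|^2\asymp|\pi_h\Phi(\cB^n)|$ in Theorem~A(ii) — into a fixed-scale non-degeneracy statement for the fold of $F(W^c(v_{-1}))$ relative to the strong-stable foliation at $v_0$, ruling out both flattening and cusping of the tangency in the renormalization limit. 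This is exactly where the uniform distortion control of the \emph{a priori} bounds is indispensable.
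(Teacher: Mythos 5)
The paper does not give a proof of this proposition; it is imported verbatim from \cite{CLPY3} (Proposition~3.4 there), so you are being judged on plausibility and internal consistency rather than against a recorded argument.

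Parts (i) and (ii) are the right idea: \eqref{eq.psi conv} makes $\{E^{v,n}_{p_0}\}_n$ super-exponentially Cauchy with the tail dominated by its first term, and the finite regularity bounds \eqref{eq.for reg}, \eqref{eq.back reg} are closed conditions for each fixed $m$, so they pass to the limit direction. One small confusion in (ii): since the $\Psi^n$ are all horizontally equivalent to the same genuine horizontal chart, $E^v_{p_{-1}} = DF^{-1}(E^{gh}_{p_0})$ is a single $n$-independent direction, so there is nothing to converge there; the limit you do need is only for $E^{h,n}_{p_{-1}} \to E^c_{p_{-1}}$.

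Part (iii) has a real gap, and it is circularity. You reduce uniqueness to the claim $\cZ_0 = \{v_0\}$, citing Theorem~A(ii) (``thinning estimates''). But $\cZ_0 = \{v_0\}$ is exactly Theorem~\ref{crit rec} (\cite{CLPY3} Thm.~5.1), which is proved \emph{using} Proposition~3.4 and the later combinatorial and a priori machinery, and Theorem~A is proved in this paper only \emph{after} Proposition~\ref{crit value} is available (it needs $v_0$ to even state its conclusion). The statement of (iii) is deliberately weaker than ``$\cZ_0$ is a point'': it asserts a unique $v_0 \in \cZ_0$ where $E^{ss}_{v_0} = DF(E^c_{v_{-1}})$, and this uniqueness must be extracted from the geometry of the competing direction fields on all of $\cZ_0$ (roughly: $E^{ss}$ is chart-vertical, $DF(E^c)$ folds quadratically, so the fields agree at exactly one point of each $\cB^n_{R_n}$; those points are Cauchy). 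You cannot shortcut this by assuming $\cZ_0$ is already a single point. Likewise, the scaling relation $|\pi_v\Phi(F^{R_n}(\cB^n))|^2\asymp|\pi_h\Phi(\cB^n)|$ and the a priori bounds are downstream of this proposition, so invoking them to ``upgrade'' the tangency to a quadratic one is also circular. A non-circular route to quadraticity would instead have to get a uniform lower bound on the valuable curvature $\kappa_n$ (or an equivalent second-order non-degeneracy of the fold) directly from the H\'enon-like structure and the regular-chart estimates of Appendix~\ref{sec.pesin}, not from Theorem~A or Theorem~\ref{a priori}.
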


We define the {\it critical value $v_0 \in \cZ_0$} as follows. If $N=\infty$, let $v_0$ be the point given in \propref{crit value} iii). Otherwise, let $v_0$ be the unique point in $\cI^N_{R_N}$ such that
$$
DF^{R_N}(E^h_{v_{-R_N}}) = E^{v,N}_{v_0}
$$
(recall that such a point exists since $\cI^N_{R_N}$ is a vertical quadratic curve in $\cB^N_0$). Define the {\it critical point} as $v_{-1} := F^{-1}(v_0)$.

\begin{thm}[Valuable charts]\cite[Theorem 3.5]{CLPY3}\label{crit chart}
Let ${K_0} \geq 1$ be the constant given in \eqref{eq.const 0}. There exist charts
$$
\Phi_0 : (\cB_0, v_0) \to (B_0, 0)
\matsp{and}
\Phi_{-1} : (\cB_{-1}, v_{-1}) \to (B_{-1}, 0)
$$
such that
\begin{itemize}
\item $\Phi_0$ is centered at $v_0$ and is genuine horizontal;
\item $\cB_0 \supset \cB^{n_0}_0$, $\cB_{-1} \supset \cB^{n_0}_{R_{n_0}-1}$ and $F(\cB_{-1}) \Subset \cB_0$;
\item $\|\Phi_i^{\pm 1}\|_{C^r} < {K_0}$ for $i \in \{0, -1\}$; and
\item we have
\begin{equation}\label{eq.henon trans}
\Phi_0 \circ F \circ \Phi_{-1}^{-1}(x,y) = (f_0(x) - \lambda y, x)
\matsp{for}
(x,y) \in B_{-1},
\end{equation}
where $f_0 : (\pi_h(B_{-1}), 0) \to (\pi_h(B_0), 0)$ is a $C^r$-map that has a unique critical point at $0$ such that
\begin{equation}\label{eq.second}
\|f_0''\|_{C^{r-2}} < {K_0}
\matsp{and}
\kappa_F:=\inf_{x\in\pi_h(B_{-1})} f_0''(x) > 0.
\end{equation} 
\end{itemize}
Moreover, the following properties hold for $n_0 \leq n \leq N$.
\begin{enumerate}[i)]
\item We have
$$
\|\Phi_0 \circ (\Psi^n)^{-1} - \Id\|_{C^r} <\lambda^{(1-\bepsilon)R_n}.
$$
\item Let
$$
H_n := \Phi_{-1}\circ F^{R_n-1}\circ (\Psi^n)^{-1}.
$$
Then $H_n(x,y) = (h_n(x), e_n(x,y))$, where $h_n : I^n_0 \to h_n(I^n_0)$ is a $C^r$-diffeomorphism and $e_n$ is a $C^r$-map such that
\begin{equation}\label{eq.first entry}
\lambda^{\bepsilon R_n} < |h_n'(x)| < \lambda^{-\bepsilon R_n}
\matsp{for}
x\in I^n_0
\matsp{and}
\|e_n\|_{C^r} <\lambda^{(1-\bepsilon)R_n}.
\end{equation}
\end{enumerate}
\end{thm}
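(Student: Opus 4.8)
The plan is to obtain both charts as limits of the vertically extended straightening charts supplied by \propref{vert prop nest}, and then to read off the H\'enon normal form \eqref{eq.henon trans} by a fibre-wise change of the vertical coordinate that renormalizes the Jacobian to the constant $\lambda$. \emph{Construction of $\Phi_0$.} By \propref{vert prop nest}(ii) the compositions $\Psi^n\circ(\Psi^{n+1})^{-1}$ are $\lambda^{(1-\bepsilon)R_n}$-close to the identity in $C^r$ on the fixed quadrilateral $\cB^{n_0}_0$, so $\{\Psi^n\}$ is Cauchy in $C^r$ there and converges. After recentering at $v_0$ and replacing the limit by its (unique) genuinely horizontal representative --- which exists because each $\Psi^n$ is horizontally equivalent to a genuinely horizontal chart and horizontal equivalence survives the limit --- I take $\Phi_0:(\cB_0,v_0)\to(B_0,0)$ to be this chart, extended over a domain $\cB_0\supset\cB^{n_0}_0$. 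The bound $\|\Phi_0^{\pm1}\|_{C^r}<K_0$ is inherited from \propref{vert prop nest}(ii), and since $\sum_{k\ge n}\lambda^{(1-\bepsilon)R_k}\le\lambda^{(1-\bepsilon)R_n}/(1-\lambda^{1-\bepsilon})$ telescopes, one gets part (i), $\|\Phi_0\circ(\Psi^n)^{-1}-\Id\|_{C^r}<\lambda^{(1-\bepsilon)R_n}$. The vertical foliation of $\Phi_0$ at $v_0$ is, by \propref{crit value}(i), the strong-stable direction $E^{ss}_{v_0}$, and $W^{ss}_\loc(v_0)$ maps to the vertical axis.

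\emph{Construction of $\Phi_{-1}$ and the normal form.} Next I define $\Phi_{-1}$ on a neighbourhood $\cB_{-1}$ of $v_{-1}=F^{-1}(v_0)$, with $F(\cB_{-1})\Subset\cB_0$, so that \eqref{eq.henon trans} holds. Writing $F(u,v)=(f(u,v),u)$ in genuine coordinates and using that $\Phi_0$ is genuinely horizontal, the second coordinate of $\Phi_0\circ F$ equals $u$ up to the (uniformly $C^r$-bounded) graph function of the vertical foliation of $\Phi_0$; this pins down the horizontal coordinate of $\Phi_{-1}$. For the vertical coordinate I apply the fibre-wise reparametrization $v\mapsto\beta_u(v)$ that turns $\Jac(\Phi_0\circ F\circ\Phi_{-1}^{-1})$, which is a priori only bounded by $\lambda$, into the constant $\lambda$; this is legitimate since $\partial_v f=-\Jac F$ never vanishes. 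Smoothness and $\|\Phi_{-1}^{\pm1}\|_{C^r}<K_0$ then follow from $F\in C^{r+1}$ together with the $C^r$-control on $\Phi_0$. In the new coordinates $F$ reads $(x,y)\mapsto(f_0(x)-\lambda y,x)$, so its $1$D profile is $f_0$; the tangency located in \propref{crit value}(iii) between $W^{ss}(v_0)$ (the vertical axis) and $F(W^c(v_{-1}))$ translates into $f_0'(0)=0$, and quadraticity of the tangency gives $f_0''(0)\neq0$, which is positive after the orientation convention. The remaining assertions $\|f_0''\|_{C^{r-2}}<K_0$ and $\kappa_F=\inf_x f_0''(x)>0$ on all of $\pi_h(B_{-1})$ --- i.e. that the turn of $f_0$ stays non-degenerate at macroscopic scale --- are where the a priori bounds enter: they provide uniformly bounded distortion of the return maps at every depth, which, transported through the chart convergence, keeps $f_0$ (the common limiting profile of the returns) from developing a flat critical point.

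\emph{First-entry maps.} Finally, set $H_n:=\Phi_{-1}\circ F^{R_n-1}\circ(\Psi^n)^{-1}$. By \propref{vert prop nest}(iii) every point of $\hcB^n_0$ is $R_n$-times forward $(K_0,\epsilon,\lambda)$-regular along its vertical direction $E^{v,n}$; feeding $m=R_n-1$ into the forward-regularity inequality \eqref{eq.for reg} shows $DF^{R_n-1}$ contracts $E^{v,n}$ by a factor comparable to $\lambda^{R_n}$ up to $\lambda^{\pm\bepsilon R_n}$, so the $\eta$-vertical quadrilateral $\cB^n$ is squeezed under $F^{R_n-1}$ onto a set that is $\lambda^{(1-\bepsilon)R_n}$-close in $C^r$ to a genuine horizontal graph. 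Writing $H_n=(h_n(x),e_n(x,y))$ then gives $\|e_n\|_{C^r}<\lambda^{(1-\bepsilon)R_n}$ once the first-order contraction is promoted to a $C^r$ statement through the quasi-linearizing charts of \thmref{reg chart}; the induced map $h_n$ carries no net exponential rate, only the marginal factors, whence $\lambda^{\bepsilon R_n}<|h_n'|<\lambda^{-\bepsilon R_n}$, with the bounded-distortion half of the a priori bounds preventing degeneration. Part (i) for $\Psi^n$ versus $\Phi_0$ was already obtained above.

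\emph{Main obstacle.} The difficulty is that every delicate estimate lives at the level of $C^r$ rather than $C^1$: converting Pesin-type (first-order) contraction data into uniform $C^r$ bounds for the nonlinear charts $\Phi_0,\Phi_{-1}$ and into the super-exponential $C^r$-smallness of $e_n$, over $R_n\to\infty$ iterates and with all constants independent of $n$. This is precisely what the regular-chart construction of \thmref{reg chart} (and, for the uniqueness of the center jet at $v_{-1}$, \thmref{center jet}) is built to handle, but pushing it up the renormalization tower forces one to control all higher-order geometry --- the curvatures of the relevant foliations and the valuable curvatures $\kappa_n$ --- uniformly across scales, which is exactly the content of the a priori bounds. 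Establishing the uniform lower bound $\kappa_F>0$ away from the tip is the single most substantial input, and is in effect equivalent to those bounds.
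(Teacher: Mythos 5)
This theorem is not proved in the present paper: it is imported wholesale from \cite[Theorem~3.5]{CLPY3}, as is the chain of results leading up to it (\propref{vert prop nest}, \propref{crit value}). There is therefore no paper-internal proof to match your outline against. That said, your sketch contains a conceptual error of logical direction that is worth flagging, because it would make the argument circular if transplanted into \cite{CLPY3}.

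You assert that the uniform lower bound $\kappa_F := \inf_{x}f_0''(x) > 0$ is ``in effect equivalent to'' the \emph{a priori} bounds and is what ``the a priori bounds enter'' to establish. This is backwards. Look at the dependency list for the constant in \thmref{a priori}: $\bfK$ is allowed to depend on $\kappa_F$ (and on the valuable curvature $\kappa_{n_0}$ from \eqref{eq.n val curv}). And the constant $K_0$ from \eqref{eq.const 0}, which bounds $\|f_0''\|_{C^{r-2}}$ and $\|\Phi_i^{\pm1}\|_{C^r}$ in the theorem you are proving, depends only on the regularity data $(L,\lambda,\epsilon,\lambda^{1-\epsilon}\|DF^{-1}\|,\|DF\|_{C^r},r)$ — not on $\bfK$ and not on anything downstream. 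So the valuable-chart theorem is an \emph{input} to the a priori bounds, not a consequence of them, and any proof which reaches for a priori bounds at this stage has short-circuited the logic. Relatedly, you describe $f_0$ as ``the common limiting profile of the returns,'' but \eqref{eq.henon trans} is a normal form for $F$ itself (a single iterate), in the limiting chart coordinates near $v_{-1}$ and $v_0$ — not for the return maps $F^{R_n}$. The quadratic non-degeneracy of $f_0$ comes from the H\'enon-likeness of $F$ and the fact that the critical-value tangency in \propref{crit value}(iii) is quadratic, together with the uniform $C^r$ control on the charts from \propref{vert prop nest}; it does not require any inter-scale distortion control.

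The remaining parts of your outline (telescoping $\Psi^n$ to get $\Phi_0$; a fibre-wise vertical reparametrization to normalize the Jacobian of $\Phi_0\circ F\circ\Phi_{-1}^{-1}$ to the constant $\lambda$; reading off \eqref{eq.first entry} from forward regularity and the graph-transform machinery of \thmref{reg chart}, \propref{for gt}) are sensible and consistent with the surrounding setup, though you leave the domain bookkeeping — how one gets a common $\cB_0\supset\cB^{n_0}_0$ on which all the $\Psi^n$ and the limit are simultaneously defined — to the reader. The single serious issue is the misplaced reliance on the a priori bounds for $\kappa_F>0$.
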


\begin{figure}[h]
\centering
\includegraphics[scale=0.15]{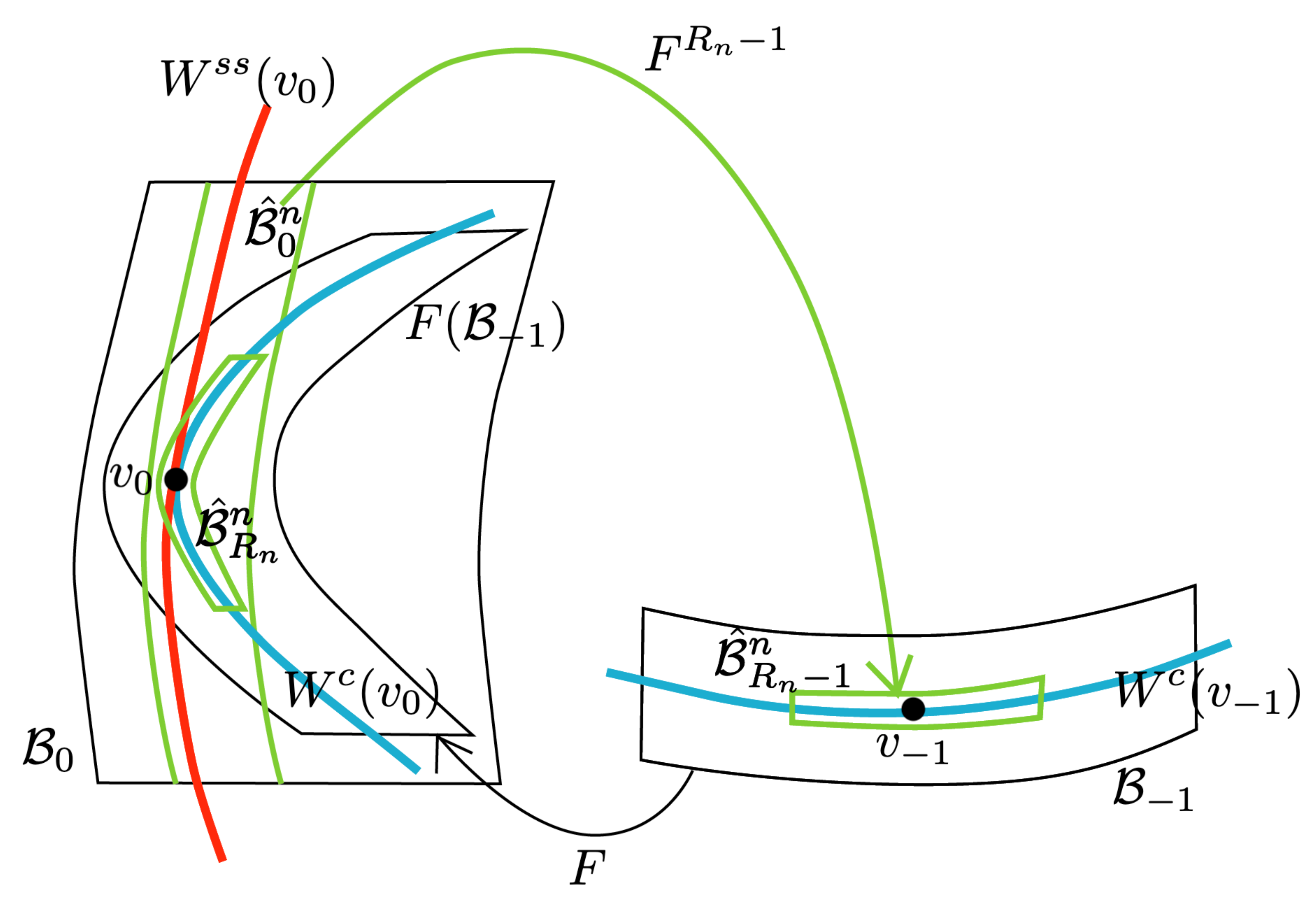}
\caption{Geometry near the critical value $v_0$ and the critical point $v_{-1}$ (if $N =\infty$). For $n \geq n_0$, we have $v_0 \in \hcB^n_0 \subset \cB_0$ and $v_{-1} \in \hcB^n_{R_n-1} \subset \cB_{-1}$. There exist charts $\Phi_0 : \cB_0 \to B_0$ and $\Phi_{-1} : \cB_{-1} \to B_{-1}$ such that $\Phi_0\circ F \circ \Phi_{-1}$ is H\'enon-like (see \eqref{eq.henon trans}). The charts $\Psi^n$ converges to $\Phi_0$}
\label{fig.chartconv}
\end{figure}

Denote
\begin{equation}\label{eq.Ihv}
I^{h/v}_i := \pi_{h/v}(B_i)
\matsp{and}
\cI^h_i := \Phi_i^{-1}(I^h_i \times \{0\})
\matsp{for}
i \in \{0, -1\}.
\end{equation}
Observe that
$$
I^h_0 \Supset I^{n_0}_0 \Supset I^{n_0+1}_0 \Supset \ldots
\matsp{and}
I^h_{-1} \Supset h_{n_0}(I^{n_0}_0) \Supset h_{n_0+1}(I^{n_0+1}_0) \Supset \ldots.
$$
Moreover, if $X \subset \cB^n_0$, then \eqref{eq.first entry} implies
\begin{equation}\label{eq.first entry squeeze}
\Phi_{-1} \circ F^{R_n-1}(X) \subset h_n(I^n_0) \times [-\lambda^{(1-\bepsilon)R_n}, \lambda^{(1-\bepsilon)R_n}].
\end{equation}

We record the following consequences of \thmref{crit chart}.

\begin{lem}\label{est value}
Let $f_0 : I^h_{-1} \to I^h_0$ be the map with a unique critical point at $0$ given in \thmref{crit chart}. Then
$$
\frac{\kappa_F}{2} x^2 < f_0(x) < \frac{{K_0}}{2}x^2
\matsp{and}
\kappa_F |x| < |f_0'(x)| < {K_0} |x|.
$$
\end{lem}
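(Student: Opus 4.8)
The statement is an elementary consequence of \thmref{crit chart}, specifically of the bounds in \eqref{eq.second}: we know $f_0$ has a unique critical point at $0$, that $f_0(0) = 0$ (since $\Phi_0$ and $\Phi_{-1}$ are centered and the H\'enon-like normal form \eqref{eq.henon trans} sends $0$ to $0$), that $f_0'(0) = 0$ (critical point), and that $\kappa_F = \inf_x f_0''(x) > 0$ while $\|f_0''\|_{C^{r-2}} < K_0$, so in particular $\kappa_F \le f_0''(x) \le K_0$ pointwise on $I^h_{-1}$. The plan is simply to integrate these second-derivative bounds twice.

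\textbf{Key steps.} First, from $f_0'(0) = 0$ and the mean value theorem (or integrating $f_0''$ from $0$ to $x$), for any $x \in I^h_{-1}$ there is a $\xi$ between $0$ and $x$ with $f_0'(x) = f_0''(\xi)\, x$; combined with $\kappa_F \le f_0''(\xi) \le K_0$ this yields $\kappa_F |x| \le |f_0'(x)| \le K_0 |x|$, giving the second pair of inequalities. Second, integrating this relation (or integrating $f_0''$ twice with both $f_0(0) = 0$ and $f_0'(0) = 0$), Taylor's theorem with integral remainder gives $f_0(x) = \int_0^x f_0'(t)\, dt = \int_0^x (x - t) f_0''(t)\, dt$, and bounding $f_0''$ on the interval of integration — where $x - t$ has constant sign — produces $\tfrac{\kappa_F}{2} x^2 \le f_0(x) \le \tfrac{K_0}{2} x^2$. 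A remark is warranted that the inequalities should presumably be strict (as written in the statement) on the interior, which follows if $f_0'' < K_0$ strictly or if one is slightly careful about endpoints; since the paper's $\baeta$-style conventions absorb bounded factors anyway, one can always replace $K_0$ by a slightly larger uniform constant to make the upper bounds strict, and $\kappa_F$ is a strict infimum over a domain on which $f_0''$ is continuous, so the lower bounds are strict away from $x = 0$ and trivially true at $x = 0$.

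\textbf{Main obstacle.} There is essentially no obstacle here: the lemma is a bookkeeping corollary recording in usable form what \thmref{crit chart} already establishes about $f_0$. The only point requiring a moment's thought is confirming that $f_0(0) = 0$ and $f_0'(0) = 0$ are genuinely part of the cited theorem's conclusion — the former because $f_0$ is specified as a map $(\pi_h(B_{-1}), 0) \to (\pi_h(B_0), 0)$, the latter because $0$ is stated to be the unique critical point — so that the two integrations start from the origin with vanishing zeroth and first order terms, leaving the pure quadratic behavior controlled two-sidedly by $\kappa_F$ and $K_0$.
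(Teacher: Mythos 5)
Your proof is correct, and since the paper records \lemref{est value} without a proof (it is stated only as a ``consequence of \thmref{crit chart}''), the two-fold integration of the bounds $\kappa_F \le f_0'' < K_0$ starting from $f_0(0)=f_0'(0)=0$ is exactly the argument the authors have in mind. Your remark on strictness is also apt: the upper bounds are genuinely strict since $\|f_0''\|_{C^{r-2}} < K_0$ is strict, while the lower bounds with the literal $\kappa_F = \inf f_0''$ could be non-strict if $f_0''$ were constant on a subinterval; this is harmless under the paper's conventions, which absorb bounded factors into uniform constants, so one could just as well read $\le$ there.
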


\begin{lem}\label{root at value}
Let
$$
I^{h,\pm}_{-1} := \{x \in I^h_{-1} \; | \; \pm x > 0\}
\matsp{and}
g_\pm := \left(f_0|_{I^{h,\pm}_{-1}}\right)^{-1}.
$$
Denote $\theta := {K_0}/\kappa_F$. Then for $1 \leq i \leq r$, we have
$$
|g_\pm^{(i)}(t)| < \frac{\bar \theta}{|t|^{i-1/2}}
\matsp{for}
t > 0.
$$
\end{lem}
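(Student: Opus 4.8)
The plan is to exploit that the nondegenerate critical point of $f_0$ at $0$ forces $g_\pm(t)$ to behave like $\sqrt t$, so that every differentiation in $t$ costs precisely one factor of order $t^{-1/2}$, while all the ``geometric'' quantities $f_0^{(k)}(g_\pm(t))$ with $k\ge 2$ stay uniformly bounded. Write $g:=g_\pm$, so that $f_0(g(t))=t$ with $g(t)$ on the prescribed branch. Differentiating once gives $f_0'(g(t))\,g'(t)=1$, hence $g'(t)=1/f_0'(g(t))$. By \lemref{est value}, $t=f_0(g(t))<\tfrac{{K_0}}{2}g(t)^2$, so $|g(t)|>\sqrt{2t/{K_0}}$, and also $|f_0'(g(t))|>\kappa_F|g(t)|$; combining, $|g'(t)|<\kappa_F^{-1}\sqrt{{K_0}/(2t)}<\bar\theta\,t^{-1/2}$, which is the asserted bound for $i=1$. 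Differentiating $f_0'(g)\,g'=1$ once more and substituting $1/f_0'(g)=g'$ yields the identity $g''=-f_0''(g)\,(g')^3$, which drives the induction.

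Next I would show, by induction on $i$ with $1\le i\le r$, that $g^{(i)}(t)$ is a finite $\bbZ$-linear combination of terms of the form $(g'(t))^{\alpha}\prod_{j}f_0^{(k_j)}(g(t))$, with each $k_j$ an integer satisfying $2\le k_j\le i$, with at most $i-1$ such factors, and with $1\le\alpha\le 2i-1$. The case $i=1$ is the single term $(g')^1$ (empty product). For the inductive step one differentiates each term by the product rule, using $\tfrac{d}{dt}\big((g')^{\alpha}\big)=\alpha(g')^{\alpha-1}g''=-\alpha f_0''(g)(g')^{\alpha+2}$ and $\tfrac{d}{dt}\big(f_0^{(k)}(g)\big)=f_0^{(k+1)}(g)\,g'$; both operations keep the coefficients integral and all derivative-orders $\ge 2$, raise the exponent of $g'$ by at most $2$, increase the number of $f_0^{(\cdot)}$-factors by at most $1$, and raise the largest order appearing by at most $1$. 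Iterating $i-1$ times from $i=1$ gives the stated bounds $\alpha\le 2i-1$ and $k_j\le i$.

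To finish, I would bound each term. Since $2\le k_j\le i\le r$, the estimate $\|f_0''\|_{C^{r-2}}<{K_0}$ from \eqref{eq.second} gives $|f_0^{(k_j)}(g(t))|<{K_0}$, and from the first paragraph $|g'(t)|<\bar\theta\,t^{-1/2}$; hence each term is at most $\bar\theta^{\alpha}{K_0}^{m}t^{-\alpha/2}$ for some $m\le i-1$. The domain $I^h_{-1}$ is bounded and $f_0(I^h_{-1})=I^h_0$ is a bounded interval, so $t$ ranges over a bounded set, and since $\alpha/2\le i-\tfrac12$ one has $t^{-\alpha/2}\le\bar\theta\,t^{-(i-1/2)}$ there; summing the finitely many terms and absorbing the (uniformly bounded) powers of $\bar\theta$ together with the factors of ${K_0}$ and $1/\kappa_F$ into a single $\bar\theta$ yields $|g^{(i)}(t)|<\bar\theta\,t^{-(i-1/2)}$, and the argument is symmetric in the two branches. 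The only fiddly step is the bookkeeping in the second paragraph, and even there the content is merely that each differentiation converts one power of $f_0'(g)\asymp t^{1/2}$ in the denominator into one more power of $g'\asymp t^{-1/2}$; the mild point worth care is that the constants produced are polynomial in ${K_0}$ and $1/\kappa_F$ (hence in $\theta$), so that the $\bar\theta$ notation legitimately absorbs them.
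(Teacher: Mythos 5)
The paper records this lemma without proof, listing it together with \lemref{est value} as an immediate consequence of \thmref{crit chart}, so there is no argument in the text to compare against. Your derivation is correct and is plainly the intended one: the identity $g'=1/(f_0'\circ g)$ combined with the bounds $|f_0'(x)|>\kappa_F|x|$ and $|g(t)|>\sqrt{2t/K_0}$ from \lemref{est value} gives the $i=1$ case, and the Fa\`a--di--Bruno-style induction you set up (each term of $g^{(i)}$ is an integer multiple of $(g')^{\alpha}\prod_j f_0^{(k_j)}(g)$ with $i\le\alpha\le 2i-1$, $2\le k_j\le i$, and at most $i-1$ factors, all the inner derivatives controlled by $\|f_0''\|_{C^{r-2}}<K_0$) handles $2\le i\le r$; the replacement of $t^{-\alpha/2}$ by $t^{-(i-1/2)}$ then only uses the uniform boundedness of $I^h_0$. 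One small caution on your closing remark: a polynomial in $K_0$ and $1/\kappa_F$ is \emph{not} automatically a polynomial in $\theta=K_0/\kappa_F$; what actually saves you is that $K_0\ge 1$ (so $1/\kappa_F\le\theta$) together with the fact that $K_0$ is itself a uniform constant, so the constant you produce is at most $C\theta^D$ with $C,D$ uniform, which is exactly what the $\bar\theta$ convention absorbs.
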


Define $P_{-1} : (\cB_{-1}, v_{-1}) \to (I^h_{-1}, 0)$ and $P^n_0 : (\hcB^n_0, v_0) \to (I^n_0, 0)$ for $n_0 \leq n \leq N$ by
$$
P_{-1} := \pi_h \circ \Phi_{-1}
\matsp{and}
P^n_0 := \pi_h \circ \Psi^n.
$$
Denote
$$
I^n_{R_n-1} := P_{-1}(\hcB^n_{R_n-1}) = P_{-1}(\cI^n_{R_n-1})= h_n(I^n_0).
$$
Define the {\it $n$th (valuable) projection map} $\cP^n_0 : \hcB^n_0 \to \cI^n_0$ by
$$
\cP^n_0 := (\Psi^n)^{-1} \circ \Pi_h \circ \Psi^n.
$$
Observe that $\cP^n_0|_{\cI^n_0} = \Id$.

We record the following immediate consequence of \thmref{reg chart} and Propositions \ref{for gt} and \ref{back dt}.

\begin{lem}\label{flat}
For $n_0 \leq n\leq N$, denote $\lambda_n := \lambda^{(1-\bepsilon)R_n}$. Then for $0 < t < \lambda^{-\bepsilon R_n}$, the following statements hold.
\begin{enumerate}[i)]
\item Let $\tiE_{p_0} \in \bbP^2_{p_0}$ be a $t$-horizontal direction at $p_0 \in \hcB^n_0$. Then $\tiE_{p_{R_n-1}}$ is $(1+t)\lambda_n$-horizontal in $\cB_{-1}$.
\item Let $E_{p_{R_n-1}} \in \bbP^2_{p_{R_n-1}}$ be a $t$-vertical direction at $p_{R_n-1} \in \hcB^n_{R_n-1}$. Then $E_{p_0}$ is $t\lambda_n$-vertical in $\hcB^n_0$.
\item Let $\Gamma^h_0$ be a curve that is $t$-horizontal in $C^r$ in $\hcB^n_0$. Then $F^{R_n-1}(\Gamma^h_0)$ is $(1+t)^r\lambda_n$-horizontal in $C^r$ in $\hcB_{-1}$.
\item Let $\Gamma^v_{R_n-1}$ be a curve that is $t$-vertical in $C^r$ in $\hcB^n_{R_n-1}$. Then $F^{-R_n+1}(\Gamma^v_{R_n-1})$ is $t\lambda_n$-vertical in $C^r$ in $\hcB^n_0$.
\end{enumerate}
\end{lem}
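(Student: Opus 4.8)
\emph{Proof proposal.} The plan is to establish all four estimates inside the coordinates furnished by \thmref{crit chart}, in which the first-return dynamics acquires an explicit lower-triangular normal form; \thmref{reg chart} supplies the quasi-linearization along the intervening orbit segment $p_0,\dots,p_{R_n-1}$, and Propositions \ref{for gt} and \ref{back dt} are the graph-transform estimates that this normal form packages. The key input is the following. Along the segment $p_0,\dots,p_{R_n-1}$ every point is $R_n$-times forward $(K_0,\epsilon,\lambda)$-regular along $E^{v,n}$ (\propref{vert prop nest}\,iii) and every point of $\cB^n_{R_n-1}$ is $(R_n-1)$-times backward regular along $E^v$; hence, in the charts $\Psi^n$ on $\hcB^n_0$ and $\Phi_{-1}$ on $\cB_{-1}$, the map $F^{R_n-1}$ is conjugate to $H_n=\Phi_{-1}\circ F^{R_n-1}\circ(\Psi^n)^{-1}$, which by \thmref{crit chart}\,ii) has the form $H_n(x,y)=(h_n(x),e_n(x,y))$ with $\|e_n\|_{C^r}<\lambda_n$ and $\lambda^{\bepsilon R_n}<|h_n'|<\lambda^{-\bepsilon R_n}$.

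For parts i) and iii) I would simply differentiate, resp.\ apply the higher-order chain and product rules to, this explicit triangular map. The Jacobian $DH_n$ is lower triangular with diagonal $(h_n',\,\partial_y e_n)$ — of sizes $\gtrsim\lambda^{\bepsilon R_n}$ and $\lesssim\lambda_n$ — and off-diagonal entry $\partial_x e_n$ of size $\lesssim\lambda_n$. Thus the slope-from-horizontal of $DH_n(\tiE_{p_0})$ is bounded by $(|\partial_x e_n|+t\,|\partial_y e_n|)/|h_n'|\lesssim(1+t)\lambda_n\lambda^{-\bepsilon R_n}$, and absorbing the factor $\lambda^{-\bepsilon R_n}$ into the exponent of $\lambda_n$ gives i). For iii), if $\Gamma^h_0$ is the graph of $g$ with $\|g\|_{C^r}\le t$, then $H_n(\Gamma^h_0)$, reparametrized over its new base by $h_n^{-1}$, is the graph of $u\mapsto e_n\bigl(h_n^{-1}(u),\,g(h_n^{-1}(u))\bigr)$; bounding its $C^r$-norm via the chain and product rules yields a factor $\|e_n\|_{C^r}$, a factor $(1+\|g\|_{C^r})^r\le(1+t)^r$, and bounded powers of $\|h_n^{-1}\|_{C^r}$ whose only unbounded contributions are powers of $\lambda^{-\bepsilon R_n}$ — altogether $(1+t)^r\lambda_n$. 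The hypothesis $t<\lambda^{-\bepsilon R_n}$ is what keeps the relevant directions and curves inside the region on which the normal form of \thmref{crit chart} is valid.

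Parts ii) and iv) are the backward mirror image, read off from $H_n^{-1}$, whose Jacobian is lower triangular with diagonal $(1/h_n',\,1/\partial_y e_n)$ and vanishing $(1,2)$-entry. That vanishing records the fact that $H_n$, hence $H_n^{-1}$, carries full vertical leaves to full vertical leaves, since $x\mapsto h_n(x)$ does not involve $y$; consequently a $t$-vertical direction at $p_{R_n-1}$ pulls back to a direction whose slope-from-vertical is at most $t\,|\partial_y e_n|/|h_n'|\lesssim t\lambda_n\lambda^{-\bepsilon R_n}$, i.e.\ $t\lambda_n$ after absorption, with no additive ``$+1$'' precisely because pulling back a vertical leaf produces a vertical leaf exactly. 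The same computation with the higher-order chain rule, now linear in the (small) defining function of $\Gamma^v_{R_n-1}$, gives iv).

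The only genuinely delicate point is the bookkeeping of the $\lambda^{\pm\bepsilon R_n}$ factors: one must check at each step that every blow-up of order $\lambda^{-\bepsilon R_n}$ coming from the expanding base $h_n^{-1}$ — and, for the $C^r$-statements, from its higher derivatives and from inverting $y\mapsto e_n(x,y)$ — is dominated by the gain $\lambda_n=\lambda^{(1-\bepsilon)R_n}$ from $\|e_n\|_{C^r}$; that is, that $\lambda^{(1-2\bepsilon)R_n}$ may legitimately be rewritten as $\lambda_n$ in the paper's $\bepsilon$-notation, and that the $t$-dependence assembles into exactly the stated prefactors $(1+t)$, $(1+t)^r$, and $t$. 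Everything else is a routine application of the chain rule to the triangular maps $H_n$ and $H_n^{-1}$, for which Propositions \ref{for gt} and \ref{back dt} are the conceptual model.
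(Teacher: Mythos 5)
Your proposal is built around the global normal form $H_n(x,y)=(h_n(x),e_n(x,y))$ supplied by \thmref{crit chart}, whereas the paper derives the lemma directly from the step-by-step Q-linearization: \thmref{reg chart} to put an adapted chart at each $p_m$ along $p_0,\dots,p_{R_n-1}$, and then Propositions~\ref{for gt} and~\ref{back dt} to propagate horizontal graphs forward and vertical direction fields backward through the Q-linearized maps $F_{p_m}$. Your route gives a cleaner picture and works without issue for parts~i) and~ii): there you only differentiate $H_n$ once, and \eqref{eq.first entry} supplies exactly the needed bounds on $|h_n'|$, $|\partial_x e_n|$, and $|\partial_y e_n|$, so the slope arithmetic and the observation that the $(1,2)$-entry of $DH_n$ vanishes (hence no additive $1$ in ii)) are correct.

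For parts~iii) and~iv), however, there is a genuine gap. Writing the image curve as $u\mapsto e_n\bigl(h_n^{-1}(u),g(h_n^{-1}(u))\bigr)$ and applying the higher-order chain rule forces you to bound $\|h_n^{-1}\|_{C^r}$ (and, for iv), to invert $y\mapsto e_n(x,y)$ and control that inverse in $C^r$ as well). \thmref{crit chart}\,ii) controls only $|h_n'|$ and $\|e_n\|_{C^r}$; it says nothing about $h_n''$, let alone higher derivatives of $h_n$ or of $h_n^{-1}$. Your sentence asserting that the higher derivatives of $h_n^{-1}$ contribute ``only powers of $\lambda^{-\bepsilon R_n}$'' is precisely the unproved step — in fact, establishing such a bound is tantamount to redoing the $C^r$ estimates along the orbit, which is exactly what the cited Propositions~\ref{for gt} and~\ref{back dt} (together with \propref{cr lin comp} in the appendix) already package. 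So either you must invoke those propositions directly in the Q-linearized charts, as the paper does, and transfer the result to the $\Psi^n$ and $\Phi_{-1}$ coordinates via the uniform $C^r$-bounds on the chart changes (\propref{vert prop nest}\,ii) and \thmref{crit chart}); or you must first prove the missing $C^r$ bound on $h_n^{-1}$ from the same source. As written, the $C^r$ claims do not follow from the ingredients you cite.
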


By \lemref{flat} iii), $\cI^n_{R_n-1}$ is $\eta_n$-horizontal in $\cB_{-1}$. Thus, there exists a $C^r$-map $g_n : I^n_{R_n-1} \to \bbR$ with $\|g_n\|_{C^r} < \lambda_n$ such that
$$
\Phi_{-1}(\cI^n_{R_n-1}) = \{(x, g_n(x)) \; | \; x \in I^n_{R_n-1}\}.
$$
Define $G_n : I^n_{R_n-1} \to \Phi_{-1}(\cI^n_{R_n-1})$ by $G_n(x) := (x, g_n(x)).$ Define the {\it $n$th critical projection map $\cP^n_{-1} : P_{-1}^{-1}(I^n_{R_n-1}) \to \cI^n_{R_n-1}$} by
\begin{equation}\label{eq.-1 proj}
\cP^n_{-1} := \Phi_{-1}^{-1}\circ G_n \circ P_{-1}.
\end{equation}

\begin{lem}\cite[Lemma 4.2]{CLPY3}\label{first entry hor graph}
For $n_0 \leq n \leq N$, let $\Gamma_0$ be a horizontal curve in $\hcB^n_0$. Then 
$$
F^{R_n-1}|_{\Gamma_0} = (\cP^n_{-1}|_{\Gamma_{R_n-1}})^{-1}\circ F^{R_n-1} \circ \cP^n_0|_{\Gamma_0}.
$$
\end{lem}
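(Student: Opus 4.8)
The plan is to reduce the identity to the triangular normal form of $F^{R_n-1}$ provided by \thmref{crit chart} ii), together with the observation that both $\cP^n_0$ and $\cP^n_{-1}$ are nothing but vertical projections once read in suitable charts. Concretely, since $\cP^n_0 = (\Psi^n)^{-1}\circ\Pi_h\circ\Psi^n$, the map $\cP^n_0$ is, in $\Psi^n$-coordinates, the vertical projection $(x,y)\mapsto(x,0)$ onto $\Psi^n(\cI^n_0) = I^n_0\times\{0\}$; and since $\cP^n_{-1} = \Phi_{-1}^{-1}\circ G_n\circ P_{-1}$ with $P_{-1} = \pi_h\circ\Phi_{-1}$ and $G_n(x) = (x,g_n(x))$, the map $\cP^n_{-1}$ is, in $\Phi_{-1}$-coordinates, the vertical projection $(x,y)\mapsto(x,g_n(x))$ onto the graph $\Phi_{-1}(\cI^n_{R_n-1}) = \{(x,g_n(x))\}$. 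Recall also that $H_n := \Phi_{-1}\circ F^{R_n-1}\circ(\Psi^n)^{-1}$ has the form $H_n(x,y) = (h_n(x),e_n(x,y))$ with $h_n$ a $C^r$-diffeomorphism, so $H_n$ carries vertical lines to vertical lines; and since $\Phi_{-1}(\cI^n_{R_n-1}) = H_n(I^n_0\times\{0\})$, the defining property of $g_n$ reads $g_n(h_n(x)) = e_n(x,0)$.

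With these facts in hand, I would first prove the stronger pointwise identity
$$
\cP^n_{-1}\circ F^{R_n-1} = F^{R_n-1}\circ\cP^n_0 \qquad \text{on } \hcB^n_0,
$$
by conjugating by $(\Psi^n)^{-1}$ on the right and $\Phi_{-1}$ on the left. On the left-hand side, inserting $\Phi_{-1}^{-1}\circ\Phi_{-1}$ gives $(\Phi_{-1}\circ\cP^n_{-1}\circ\Phi_{-1}^{-1})\circ H_n$, which sends $(x,y)$ to $(h_n(x),g_n(h_n(x)))$. On the right-hand side, using $\cP^n_0\circ(\Psi^n)^{-1} = (\Psi^n)^{-1}\circ\Pi_h$ gives $H_n\circ\Pi_h$, which sends $(x,y)$ to $(h_n(x),e_n(x,0)) = (h_n(x),g_n(h_n(x)))$. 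The two expressions coincide, which proves the claim; note it holds on all of $\hcB^n_0$, not just on $\Gamma_0$.

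Finally I would pass from this pointwise identity to the stated form. Because $\Gamma_0$ is horizontal in $\hcB^n_0$, in $\Psi^n$-coordinates it is the graph of some $C^r$-function $y = \phi(x)$ over a subinterval $I'\subseteq I^n_0$; applying $H_n$ and reparametrizing by $u = h_n(x)$ (legitimate as $h_n$ is a diffeomorphism) shows that $\Phi_{-1}(\Gamma_{R_n-1})$ is a graph over $h_n(I')\subseteq I^n_{R_n-1}$. Hence $\cP^n_{-1}$ restricts to a bijection from $\Gamma_{R_n-1}$ onto the subarc of $\cI^n_{R_n-1}$ lying over $h_n(I')$, so $(\cP^n_{-1}|_{\Gamma_{R_n-1}})^{-1}$ is well defined on that subarc; one also checks that $\cP^n_0$ maps $\Gamma_0$ into the domain of $F^{R_n-1}$ and that $F^{R_n-1}\circ\cP^n_0(\Gamma_0)$ lies in that subarc. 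Composing the displayed identity on the left with $(\cP^n_{-1}|_{\Gamma_{R_n-1}})^{-1}$ then yields exactly $F^{R_n-1}|_{\Gamma_0} = (\cP^n_{-1}|_{\Gamma_{R_n-1}})^{-1}\circ F^{R_n-1}\circ\cP^n_0|_{\Gamma_0}$. The computation is otherwise formal and forced by the normal form; the only step that needs a little care is this last one — verifying that $\cP^n_{-1}|_{\Gamma_{R_n-1}}$ is genuinely invertible and that all intermediate images land in the correct domains.
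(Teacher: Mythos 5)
Your proof is correct. Note first that this lemma is cited from \cite{CLPY3} rather than proved in the present paper, so there is no in-text proof to compare against; but the argument you give is the forced/canonical one once \thmref{crit chart}~ii) is available, and I expect it to match the reference essentially verbatim.

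The computation hinges on exactly the two facts you identify: that $\cP^n_0$ and $\cP^n_{-1}$ are vertical projections when read in the $\Psi^n$- and $\Phi_{-1}$-charts respectively, and that the transition map $H_n = \Phi_{-1}\circ F^{R_n-1}\circ(\Psi^n)^{-1}$ is triangular of the form $(x,y)\mapsto(h_n(x),e_n(x,y))$. Your key compatibility identity $g_n(h_n(x)) = e_n(x,0)$ is precisely the statement that $\Phi_{-1}(\cI^n_{R_n-1}) = H_n(I^n_0\times\{0\})$ is the graph of $g_n$, which is how $g_n$ is defined; this gives the pointwise commutation $\cP^n_{-1}\circ F^{R_n-1} = F^{R_n-1}\circ\cP^n_0$ on all of $\hcB^n_0$, which is indeed the more natural statement to prove first. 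Passing to the curve, you correctly observe that a horizontal curve $\Gamma_0$ has $\Psi^n(\Gamma_0)$ a graph over some $I'\subseteq I^n_0$, so $\Phi_{-1}(\Gamma_{R_n-1}) = H_n(\Psi^n(\Gamma_0))$ is a graph over $h_n(I')$ (since $h_n$ is a diffeomorphism), which makes $\cP^n_{-1}|_{\Gamma_{R_n-1}}$ a bijection onto the subarc of $\cI^n_{R_n-1}$ over $h_n(I')$; and $F^{R_n-1}\circ\cP^n_0(\Gamma_0)$, which in $\Phi_{-1}$-coordinates is $\{(h_n(x),e_n(x,0)):x\in I'\}=\{(u,g_n(u)):u\in h_n(I')\}$, is exactly that subarc. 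So the inversion is legitimate and the stated formula follows. The one cosmetic point: the definition of horizontal curve in this paper is $C^0$; your phrase ``$C^r$-function $y=\phi(x)$'' is a mild overclaim that the argument does not actually need, since no smoothness of $\phi$ is used beyond the graph property.
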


\begin{figure}[h]
\centering
\includegraphics[scale=0.2]{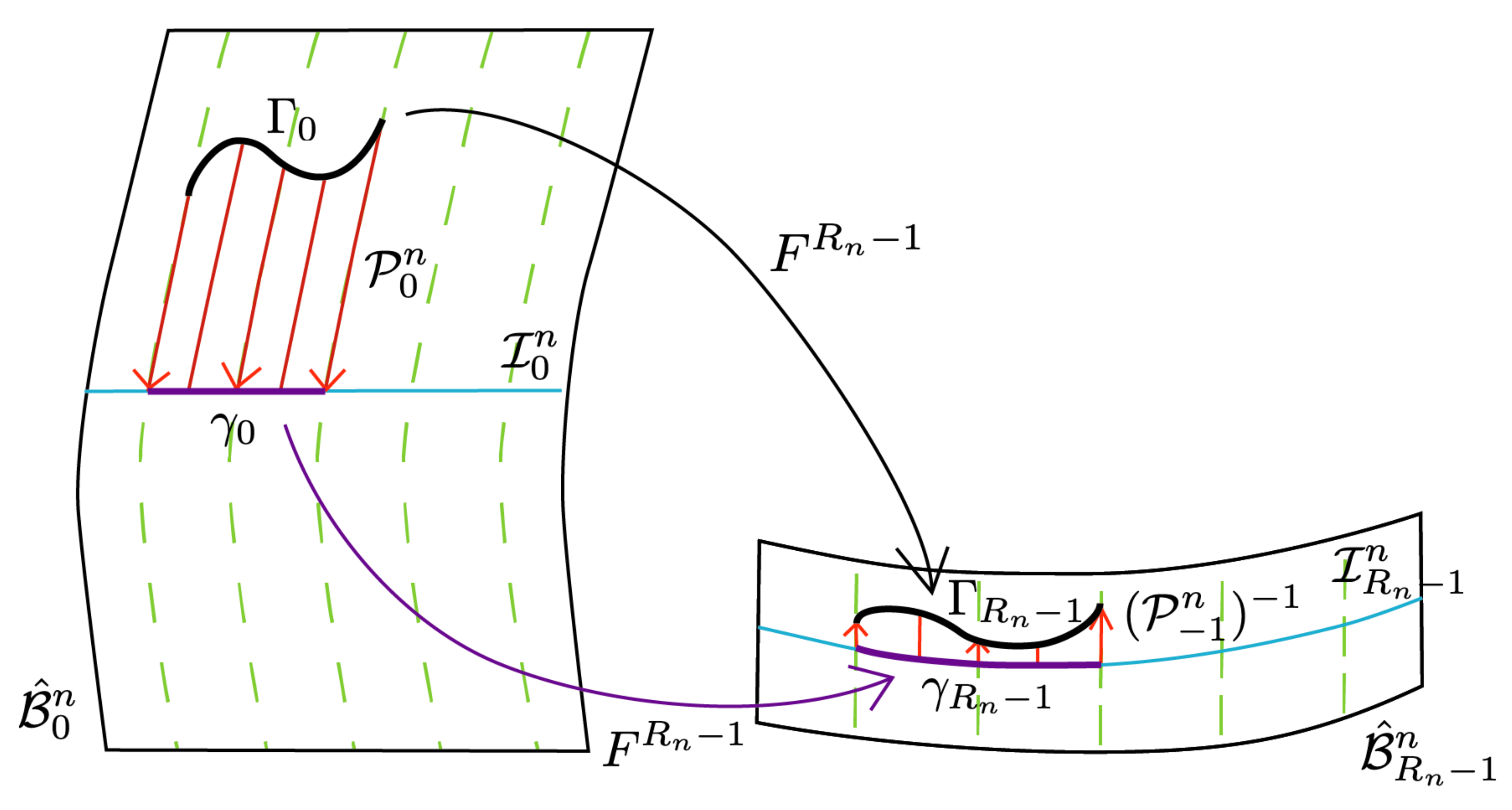}
\caption{Projections $\cP^n_0 : \hcB^n_0 \to \cI^n_0$ and $\cP^n_{-1} : \hcB^n_{R_n-1} \to \cI^n_{R_n-1}$ near the critical value $v_0$ and critical point $v_{-1}$ respectively. On any horizontal curve $\Gamma_0 \subset \hcB^n_0$, the iterate $F^{R_n-1}$ commutes with these projections.}
\label{fig.proj}
\end{figure}


\section{Avoiding the Critical Value}\label{sec.crit avoid}

For some $N \in \bbN \cup \{\infty\}$, let $F$ be the $N$-times regularly H\'enon-like renormalizable map considered in \secref{sec.conv chart}. Recall the constants $n_0$, $\eta$ and ${K_0}$ given in \eqref{eq.proper depth 0} and \eqref{eq.const 0}, and the constant $\kappa_F$ given in \eqref{eq.second}. Additionally, assume that $n_0 \leq N$ is the smallest number such that
\begin{equation}\label{eq.proper depth 1}
\overline{K_1} \lambda^{\epsilon R_{n_0}} < \eta,
\end{equation}
where
\begin{equation}\label{eq.const 1}
K_1 = K_1({K_0}, \kappa_F) \geq 1
\end{equation}
is a uniform constant. In this section, we show that if a (finite or infinite) orbit of a point avoids getting ``too close'' to the critical value $v_0$, then it has uniform regularity.

For $n_0 \leq n \leq N$, recall that the $n$th centered straightening chart $\Psi^n : \cB^n_0 \to B^n_0$ extends vertically to a domain $\hcB^n_0 \supset \cB^n_0$ that is vertically proper in $\cB_0 \supset \cB^{n_0}_0$ (see \propref{vert prop nest} and \thmref{crit chart}). Let  $z = (a,b) \in B_0 = I^h_0 \times I^v_0$. For $t \geq 0$, define
$$
V_z(t) := [a - t, a+t] \times I^v_0.
$$
For $p \in \hcB^n_0$ and $t \geq 0$, denote
$$
\cV_p^n(t) := (\Psi^n)^{-1}(V_{\Psi^n(p)}(t)) \subset \hcB^n_0.
$$

We record the following immediate consequences of Lemmas \ref{est value} and \ref{root at value}, and  \eqref{eq.proper depth 1}.

\begin{lem}\label{quad flat}
For $n_0 \leq n \leq N$, let $E_{p_{-1}} \in \bbP^2_{p_{-1}}$ be a $\lambda^{\bepsilon R_n}$-horizontal direction at $p_{-1} \in \cB_{-1}$. If
$$
p_0 \in \hcB^n_0 \setminus \cV^n_{v_0}(\lambda^{\bepsilon R_n})
$$
then $E_{p_0}$ is $\lambda^{-\bepsilon R_n}$-horizontal in $\hcB^n_0$. Similarly, let $\Gamma_{-1}$ be $\lambda^{\bepsilon R_n}$-horizontal curve in $\cB_{-1}$. If
$$
\Gamma_0:= F(\Gamma_{-1}) \subset \hcB^n_0 \setminus \cV^n_{v_0}(\lambda^{\bepsilon R_n})
\matsp{with}
t > \lambda^{\bepsilon R_n},
$$
then $\Gamma_0$ is $\lambda^{-\bepsilon R_n}$-horizontal in $C^r$ in $\hcB^n_0$.
\end{lem}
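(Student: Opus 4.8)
The plan is to trace a nearly-horizontal direction (or curve) through the single map $F$ using the normal form \eqref{eq.henon trans} provided by \thmref{crit chart}, and to show that the only place where horizontality can deteriorate badly is directly ``above'' the critical value $v_0$ in the $\Phi_0$-coordinates — which is precisely the set $\cV^n_{v_0}(\lambda^{\bepsilon R_n})$ that we have excised. First I would pass to the uniformized coordinates: set $(x,y) = \Phi_{-1}(p_{-1})$ and $(x',y') = \Phi_0(p_0) = \Phi_0 \circ F \circ \Phi_{-1}^{-1}(x,y) = (f_0(x) - \lambda y, x)$. Since $\|\Phi_i^{\pm1}\|_{C^r} < K_0$ and $\Phi_{-1}$, $\Phi_0$ are genuinely horizontal (so that horizontality of a direction is comparable in the phase space and in the chart, up to the uniform factor $K_0$), it is enough to prove the corresponding statement for the explicit map $(x,y)\mapsto(f_0(x)-\lambda y, x)$. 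Differentiating, $D(\Phi_0\circ F\circ \Phi_{-1}^{-1})(x,y) = \begin{pmatrix} f_0'(x) & -\lambda \\ 1 & 0 \end{pmatrix}$. A $\lambda^{\bepsilon R_n}$-horizontal direction at $p_{-1}$ corresponds, in $\Phi_{-1}$-coordinates, to a vector close to $(1,0)$ up to slope $\sim\lambda^{\bepsilon R_n}$; applying the above differential sends $(1, s)$ with $|s| \lesssim \lambda^{\bepsilon R_n}$ to $(f_0'(x) - \lambda s,\, 1)$. Thus the image direction, measured in $\Phi_0$-coordinates, has the ratio $\measuredangle(\cdot, E^{gv})/\measuredangle(\cdot, E^{gh}) \asymp 1/|f_0'(x) - \lambda s|$, which is $\lambda^{-\bepsilon R_n}$-horizontal precisely when $|f_0'(x)| \gtrsim \lambda^{\bepsilon R_n}$ (absorbing the $\lambda s$ term, which is even smaller).

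The second step is to relate the condition $|f_0'(x)| \gtrsim \lambda^{\bepsilon R_n}$ to the hypothesis $p_0 \notin \cV^n_{v_0}(\lambda^{\bepsilon R_n})$. By \lemref{est value}, $\kappa_F|x| < |f_0'(x)| < K_0|x|$, so $|f_0'(x)|$ is small exactly when $|x| = |P_{-1}(p_{-1})|$ is small, i.e. when $p_{-1}$ is close to the critical point $v_{-1}$. But $x' = f_0(x) - \lambda y$ and $f_0(x) \asymp x^2$ (again \lemref{est value}), so when $|x|$ is small we have $x' = \pi_h\circ\Phi_0(p_0)$ small as well — more precisely $|x'| \lesssim x^2 + \lambda|y| \lesssim \lambda^{2\bepsilon R_n} + \lambda \cdot K_0$, which is not directly what we want, so instead I run the implication the useful direction: if $p_0 \notin \cV^n_{v_0}(\lambda^{\bepsilon R_n})$ then $|\pi_h\circ\Psi^n(p_0) - \pi_h\circ\Psi^n(v_0)| \geq \lambda^{\bepsilon R_n}$, and since $\|\Phi_0\circ(\Psi^n)^{-1} - \Id\|_{C^r} < \lambda^{(1-\bepsilon)R_n}$ (Theorem \ref{crit chart}(i)) the same holds with $\Phi_0$ in place of $\Psi^n$ up to a negligible correction, giving $|x' - 0| = |x'| \gtrsim \lambda^{\bepsilon R_n}$ (recall $\Phi_0(v_0) = 0$). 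Now $x' = f_0(x) - \lambda y$ with $|\lambda y| \leq \lambda\cdot K_0 |\pi_v(\cB_{-1})|$; one must check that the vertical extent here is controlled so that $|\lambda y|$ does not dominate — this is where \eqref{eq.proper depth 1} and the vertical-properness of $\hcB^n_0$ in $\cB_0$ from \propref{vert prop nest}(i) enter, forcing $\cV^n_{v_0}(\cdot)$ to be a genuinely ``vertical tube'' and hence $y$ to range over a set on which $f_0(x) \approx x'$ up to the allowed slack. From $|f_0(x)| \asymp x^2$ one gets $x^2 \gtrsim \lambda^{\bepsilon R_n}$, hence $|x| \gtrsim \lambda^{\bepsilon R_n/2}$, and therefore $|f_0'(x)| > \kappa_F |x| \gtrsim \lambda^{\bepsilon R_n/2} \gg \lambda^{\bepsilon R_n}$, as needed. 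The bar-notation absorbs the $\kappa_F$, $K_0$ and the factor-of-$2$ in the exponent.

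For the curve statement one argues the same way at the level of $C^r$-graphs rather than tangent directions: writing $\Gamma_{-1}$ as a horizontal graph $y = \psi(x)$ with $\|\psi\|_{C^r} < \lambda^{\bepsilon R_n}$ in $\Phi_{-1}$-coordinates, its image $\Gamma_0 = F(\Gamma_{-1})$ in $\Phi_0$-coordinates is the curve $\{(f_0(x) - \lambda\psi(x),\, x)\}$, which is a horizontal graph $x' \mapsto x$ whose inverse function is controlled by the inverse branches $g_\pm$ of $f_0$; \lemref{root at value} gives $|g_\pm^{(i)}(t)| < \bar\theta/|t|^{i-1/2}$, so on the range $|t| = |x'| \gtrsim \lambda^{\bepsilon R_n}$ (the range guaranteed by $\Gamma_0 \cap \cV^n_{v_0}(\lambda^{\bepsilon R_n}) = \varnothing$) all derivatives up to order $r$ are bounded by $\bar\theta \lambda^{-\bepsilon R_n(r - 1/2)} = \lambda^{-\bepsilon R_n}$ in bar-notation, i.e. $\Gamma_0$ is $\lambda^{-\bepsilon R_n}$-horizontal in $C^r$ in $\hcB^n_0$; the $\lambda\psi(x)$ perturbation is of size $\lambda^{1+\bepsilon R_n}$ in $C^r$ and is harmlessly absorbed. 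The main obstacle I anticipate is purely bookkeeping: making sure the vertical coordinate $y$ (equivalently $y' = x$) in the normal form genuinely stays in the controlled range so that ``not in $\cV^n_{v_0}(\lambda^{\bepsilon R_n})$'' — a condition on $x' = \pi_h\circ\Psi^n(p_0)$ — can be converted into the lower bound $|x| \gtrsim \lambda^{\bepsilon R_n/2}$ on the $\Phi_{-1}$-horizontal coordinate, rather than leaving open the possibility that $x$ is tiny while $x'$ is saved only by the $-\lambda y$ term; this is exactly the content that \eqref{eq.proper depth 1} and the $\eta$-verticality of $\hcB^n_0$ are designed to rule out, so the argument reduces to invoking those quantitative statements at the right place.
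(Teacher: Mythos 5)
Your overall strategy is the right one: pass to the $\Phi_0,\Phi_{-1}$-coordinates, use the normal form $(x,y)\mapsto(f_0(x)-\lambda y,\,x)$, and invoke Lemmas \ref{est value} and \ref{root at value} to control the folding near $x=0$. For the curve statement your argument essentially works, but it works \emph{only because} the definition of ``$t$-horizontal in $C^r$'' in a \emph{centered} chart reads $\|g_h\|_{C^r}\le t$, which in particular forces $\|\psi\|_{C^0}\le\lambda^{\bepsilon R_n}$; this is what bounds $|\lambda\psi(x)|\le\lambda^{1+\bepsilon R_n}$, lets you convert $|f_0(x)-\lambda\psi(x)|\ge\lambda^{\bepsilon R_n}$ into $f_0(x)\gtrsim\lambda^{\bepsilon R_n}$, hence $|x|\gtrsim\lambda^{\bepsilon R_n/2}$, and guarantees the domain of $\Gamma_{-1}$ in $x$ avoids $0$, so that $x\mapsto f_0(x)-\lambda\psi(x)$ is injective and $\Gamma_0$ really is a graph before you may apply \lemref{root at value}. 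You should make this chain explicit; as written you jump straight to $g_\pm$ on $|t|\ge\lambda^{\bepsilon R_n}$ without establishing that the image is a single-valued graph.

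For the direction statement there is a genuine gap that your appeal to \eqref{eq.proper depth 1} and the $\eta$-verticality of $\hcB^n_0$ does not close. A $t$-horizontal \emph{direction} is, by definition, only a slope condition; it carries no constraint on the location of $p_{-1}$. Hence $y=\pi_v\Phi_{-1}(p_{-1})$ can be of order one, and then $x'=f_0(x)-\lambda y$ can have $|x'|\gtrsim\lambda^{\bepsilon R_n}$ while $x=0$ exactly (take $p_{-1}$ on the central $\Phi_{-1}$-vertical leaf, so $f_0'(x)=0$ and $DF$ sends the horizontal direction to the vertical one). Neither \eqref{eq.proper depth 1} (a smallness condition on $\lambda^{\epsilon R_{n_0}}$) nor the vertical properness of $\hcB^n_0$ restricts $y$: vertical properness says $\hcB^n_0$ spans the full vertical range, which if anything allows $y'=x$ (and hence $y$) to be anything. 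What is actually needed — and what is tacitly present whenever the lemma is applied in \secref{sec.combin}, since the direction in question is always tangent to a curve that is $\lambda^{\bepsilon R_n}$-horizontal in $\cB_{-1}$ — is a companion $C^0$-bound $|\pi_v\Phi_{-1}(p_{-1})|\lesssim\lambda^{\bepsilon R_n}$. With that extra hypothesis, the same computation as in the curve case gives $|x|\gtrsim\lambda^{\bepsilon R_n/2}$ and hence $|f_0'(x)-\lambda s|\ge\kappa_F|x|-\lambda^{1+\bepsilon R_n}\gtrsim\lambda^{\bepsilon R_n}$; without it the conclusion is simply false. You should either add this hypothesis explicitly or state that the direction statement is to be read only for tangent directions of $\lambda^{\bepsilon R_n}$-horizontal curves.
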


\begin{lem}\label{quad straight}
For $n_0 \leq n \leq N$, let $\tiE_{p_0} \in \bbP^2_{p_0}$ be a $\lambda^{\bepsilon R_n}$-vertical direction at $p_0 \in \hcB^n_0$. If
$$
p_0 \in \hcB^n_{R_n} \setminus \cV^n_{v_0}(\lambda^{\bepsilon R_n}),
$$
then $\tiE_{p_0}$ is $\lambda^{-\bepsilon R_n}$-vertical in $\cB_{-1}$. Similarly, let $\tiGamma_0$ be $\lambda^{\bepsilon R_n}$-vertical curve in $\hcB^n_0$. If
$$
\tiGamma_0 \subset \hcB^n_{R_n} \setminus \cV^n_{v_0}(\lambda^{\bepsilon R_n}),
$$
then $\tiGamma_{-1} := F^{-1}(\tiGamma_0)$ is $\lambda^{-\bepsilon R_n}$-vertical in $C^r$ in $\cB_{-1}$.
\end{lem}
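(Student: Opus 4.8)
The plan is to run the whole argument through the valuable charts of \thmref{crit chart}, in which the single application of $F$ joining $\hcB^n_{R_n}$ to $\hcB^n_{R_n-1}$ is realised exactly by the H\'enon model $(x,y)\mapsto(f_0(x)-\lambda y,x)$, so that everything reduces to the quadratic estimates of Lemmas~\ref{est value} and \ref{root at value}. First I would transfer the hypotheses to the chart $\Phi_0$. By \thmref{crit chart}~i) the transition $\Phi_0\circ(\Psi^n)^{-1}$ is $\lambda_n$-close to the identity in $C^r$, with $\lambda_n:=\lambda^{(1-\bepsilon)R_n}$, and by \eqref{eq.proper depth 1} the depth is large enough that $\lambda_n$ and all the absolute constants ($K_0$, $\kappa_F^{-1}$, $\ldots$) below are negligible against $\lambda^{\bepsilon R_n/2}$. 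Thus, after applying $D\Phi_0$, the $\lambda^{\bepsilon R_n}$-vertical direction $\tiE_{p_0}$ becomes a direction $(\alpha,1)$ with $|\alpha|\lesssim\lambda^{\bepsilon R_n}$ relative to the genuine frame, the curve $\tiGamma_0$ becomes the graph $\{(\gamma(v),v)\}$ of a map with $\|\gamma'\|_{C^{r-1}}\lesssim\lambda^{\bepsilon R_n}$, and the condition $p_0\notin\cV^n_{v_0}(\lambda^{\bepsilon R_n})$ becomes $|\pi_h\Phi_0(p_0)|\gtrsim\lambda^{\bepsilon R_n}$.

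Next I would pin down the vertical coordinate $v$ of $\Phi_0(p_0)=:(u,v)$. Since $p_0\in\hcB^n_{R_n}$, the point $p_{-1}:=F^{-1}(p_0)$ lies in $\hcB^n_{R_n-1}$, hence by \eqref{eq.first entry squeeze} in the strip $h_n(I^n_0)\times[-\lambda_n,\lambda_n]$ in $\Phi_{-1}$-coordinates; reading $F$ as the H\'enon model then gives $v\in h_n(I^n_0)$ and $|u-f_0(v)|\le\lambda\lambda_n$. Combined with $|u|\gtrsim\lambda^{\bepsilon R_n}$ this forces $f_0(v)\gtrsim\lambda^{\bepsilon R_n}$, and then \lemref{est value} (which supplies $f_0(v)<\tfrac{K_0}{2}v^2$ and $|f_0'(v)|>\kappa_F|v|$) gives $|v|\gtrsim\lambda^{\bepsilon R_n/2}$ and hence $|f_0'(v)|\gtrsim\lambda^{\bepsilon R_n/2}$. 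In particular $|f_0'(v)|$ now dominates $|\alpha|$ and $\|\gamma'\|_{C^{r-1}}$ by a factor $\gtrsim\lambda^{-\bepsilon R_n/2}$.

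Finally I would push the data through $DF^{-1}$, which in these charts reads $(u,v)\mapsto(v,\lambda^{-1}(f_0(v)-u))$ and so sends $(\alpha,1)$ to $(1,\lambda^{-1}(f_0'(v)-\alpha))$, whose second entry has modulus $\gtrsim\lambda^{-1}\lambda^{\bepsilon R_n/2}\gtrsim\lambda^{\bepsilon R_n/2}$; therefore $\tiE_{p_{-1}}$ is $\lambda^{-\bepsilon R_n}$-vertical in $\cB_{-1}$, which is the first assertion. For the curve, $\Phi_{-1}(\tiGamma_{-1})$ is the image of $\{(\gamma(v),v)\}$ under this map, namely the curve $\{(v,\psi(v))\}$ with $\psi(v):=\lambda^{-1}(f_0(v)-\gamma(v))$; since $|\psi'(v)|=\lambda^{-1}|f_0'(v)-\gamma'(v)|\gtrsim\lambda^{-1}\lambda^{\bepsilon R_n/2}>0$, $\psi$ is a diffeomorphism onto its image and $\tiGamma_{-1}$ is the vertical graph $\{(\psi^{-1}(w),w)\}$; it remains to bound $\|(\psi^{-1})'\|_{C^{r-1}}$. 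On the range $|v|\gtrsim\lambda^{\bepsilon R_n/2}$ the term $\gamma$ is negligible against $f_0$ in every derivative up to order $r$, so $\psi^{-1}$ obeys the estimates of an inverse branch of a quadratic-type map; by \lemref{root at value} (equivalently, by the chain rule for higher derivatives, using $|f_0'(v)|\gtrsim\lambda^{\bepsilon R_n/2}$ and the uniform $C^r$-bounds on $f_0$ in \eqref{eq.second}) one gets $|(\psi^{-1})^{(i)}|\lesssim|f_0'(v)|^{-(2i-1)}\lesssim\lambda^{-(2i-1)\bepsilon R_n/2}$ for $1\le i\le r$. Since $r$ is fixed, $(2r-1)\bepsilon/2$ is again a marginal exponent, so $\|(\psi^{-1})'\|_{C^{r-1}}<\lambda^{-\bepsilon R_n}$, which is the second assertion.

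I expect the last estimate to be the only genuine difficulty: converting the pointwise quadratic lower bound on $|f_0'(v)|$ into a $C^r$ control of $\psi^{-1}$ is not automatic, since the naive size $\sim|f_0'(v)|^{-(2r-1)}$ overshoots $\lambda^{-\bepsilon R_n}$ and survives only because the order of differentiation is bounded; one must also check that $\gamma$ never competes with $f_0'(v)$ on the range $|v|\gtrsim\lambda^{\bepsilon R_n/2}$, which is exactly where the depth hypothesis \eqref{eq.proper depth 1} is used. Everything else is routine transcription through \thmref{crit chart} together with the elementary quadratic estimates of \lemref{est value}.
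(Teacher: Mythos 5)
Your argument is correct and follows the route the paper itself intends: the lemma is stated as an immediate consequence of Lemma~\ref{est value}, Lemma~\ref{root at value}, and the depth hypothesis \eqref{eq.proper depth 1}, which is exactly the toolkit you deploy after transferring to the valuable charts of \thmref{crit chart} (via part~i) and \eqref{eq.first entry squeeze}) and reading $F$ through the H\'enon normal form \eqref{eq.henon trans}. You also correctly resolved the typo in the statement, reading ``$\tiE_{p_0}$ is $\lambda^{-\bepsilon R_n}$-vertical in $\cB_{-1}$'' as the assertion about $\tiE_{p_{-1}} = DF^{-1}(\tiE_{p_0})$, consistent with the paper's indexing convention $E_{p_{m+l}} := DF^l(E_{p_m})$. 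The one point you flag — that the naive bound $|(\psi^{-1})^{(i)}| \lesssim |f_0'(v)|^{-(2i-1)}$ overshoots but is rescued because $r$ is fixed so $(2r-1)\bepsilon/2$ is again a marginal exponent under the $\baeta$ convention — is exactly right, and is indeed the only nontrivial bookkeeping step.
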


\begin{prop}\label{pull back vert in bn}
For ${n_0}\leq n \leq N$, let $p_0 \in \hcB^n_{R_n} \setminus \cV_{v_0}^n(\lambda^{\bepsilon R_n})$. If $E_{p_0}$ is $\lambda^{\bepsilon R_n}$-vertical in $\hcB^n_0$, then $E_{p_{-R_n}}$ is $\lambda^{(1-\bepsilon) R_n}$-vertical in $\hcB^n_0$. Moreover, $p_{-R_n}$ is $R_n$-times forward $(C{K_0}, \bepsilon, \lambda)$-regular along $E_{p_{-R_n}}$ for some uniform constant $C \geq 1$ independent of $F$. Consequently, if $p_{kR_n} \in \hcB^n_0 \setminus \cV_{v_0}^n(\lambda^{\bepsilon R_n})$ for all $k \in \bbN$, then $p_0$ is infinitely forward $(CK_0, \bepsilon, \lambda)$-regular.
\end{prop}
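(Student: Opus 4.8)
The plan is to split the return map as $F^{R_n} = F \circ F^{R_n - 1}$ — the second factor being the strongly vertically contracting ``first entry'' map whose normal form is given by \thmref{crit chart} — and to pull the vertical direction back through the two factors separately, using the quadratic-avoidance estimates \lemref{quad straight} and \lemref{flat}. For the first assertion: given $p_0 \in \hcB^n_{R_n}\setminus\cV^n_{v_0}(\lambda^{\bepsilon R_n})$ with $E_{p_0}$ being $\lambda^{\bepsilon R_n}$-vertical in $\hcB^n_0$, \lemref{quad straight} shows that $E_{p_{-1}} := DF^{-1}(E_{p_0})$ is $\lambda^{-\bepsilon R_n}$-vertical in $\cB_{-1}$ (this is precisely where the hypothesis of staying away from $v_0$ is used: otherwise the quadratic bend of $F$ near $v_{-1}$ could turn a nearly vertical direction into a nearly horizontal one). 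Since $p_{-1} \in \hcB^n_{R_n-1}$ and $p_{-R_n}\in\hcB^n_0$, \lemref{flat}(ii) applied to the first-entry iterate $F^{R_n-1}:\hcB^n_0\to\hcB^n_{R_n-1}$ with $t = \lambda^{-\bepsilon R_n}$ (after enlarging $\bepsilon$ infinitesimally so that $t$ is strictly below the threshold) gives that $E_{p_{-R_n}} = DF^{-(R_n-1)}(E_{p_{-1}})$ is $t\lambda_n = \lambda^{-\bepsilon R_n}\lambda^{(1-\bepsilon)R_n} = \lambda^{(1-\bepsilon)R_n}$-vertical in $\hcB^n_0$ (using that $2\bepsilon$ is again a $\bepsilon$), which is the first claim.

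For the regularity claim, recall from \propref{vert prop nest}(iii) that $p_{-R_n}\in\hcB^n_0$ is $R_n$-times forward $(K_0,\epsilon,\lambda)$-regular along the genuine vertical direction $E^{v,n}_{p_{-R_n}}$, and that $\|(\Psi^n)^{\pm1}\|_{C^r} < K_0$, so the direction $E_{p_{-R_n}}$ just produced lies within $O(K_0\lambda^{(1-\bepsilon)R_n})$ of $E^{v,n}_{p_{-R_n}}$ in $\bbP^2_{p_{-R_n}}$. One then invokes the stability of forward regularity under such a small perturbation of the direction: the contraction cone around $E^{v,n}_{p_{-R_n}}$ is $DF^m$-invariant and uniformly contracted at rate $\lambda^{(1\pm\epsilon)}$ for $m\le R_n$ by the quasi-linearization of \thmref{reg chart} (see also \propref{grow irreg}), so a $\lambda^{(1-\bepsilon)R_n}$-small perturbation stays in the cone throughout, at the cost of a uniform factor $C$ in the irregularity constant and a bump of the marginal exponent from $\epsilon$ to $\bepsilon$. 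This yields that $p_{-R_n}$ is $R_n$-times forward $(CK_0,\bepsilon,\lambda)$-regular along $E_{p_{-R_n}}$.

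For the ``consequently'' statement, assume $p_{kR_n}\in\hcB^n_0\setminus\cV^n_{v_0}(\lambda^{\bepsilon R_n})$ for all $k\ge 0$. Since $\hcB^n_{R_n} = F^{R_n}(\hcB^n_0)$, each $p_{(j+1)R_n} = F^{R_n}(p_{jR_n})$ lies in $\hcB^n_{R_n}\setminus\cV^n_{v_0}(\lambda^{\bepsilon R_n})$. Fix $K\in\bbN$, start from $E^{v,n}_{p_{KR_n}}$ (which is $0$-vertical, hence $\lambda^{\bepsilon R_n}$-vertical), and apply the first two claims repeatedly for $j = K-1, K-2,\dots,0$: at each step we get a direction $E^{(K)}_{p_{jR_n}}$ at $p_{jR_n}$ that is $\lambda^{(1-\bepsilon)R_n}$-vertical in $\hcB^n_0$ — in particular $\lambda^{\bepsilon R_n}$-vertical, since $\bepsilon\le 1/2$ — with $DF^{R_n}(E^{(K)}_{p_{jR_n}}) = E^{(K)}_{p_{(j+1)R_n}}$, and such that $p_{jR_n}$ is $R_n$-times forward $(CK_0,\bepsilon,\lambda)$-regular along it. Concatenating these $K$ blocks of length $R_n$ (the $(CK_0)^K$ loss in the irregularity factor is absorbed into the marginal exponent, which becomes $\bepsilon + O(|\log_\lambda(CK_0)|/R_n) = \bepsilon$ because $R_n\ge R_{n_0}$ is large by \eqref{eq.proper depth 0}) shows $p_0$ is $KR_n$-times forward $(CK_0,\bepsilon,\lambda)$-regular along $E^{(K)}_{p_0}$, with constants independent of $K$. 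Passing to a convergent subsequence of $\{E^{(K)}_{p_0}\}\subset\bbP^2_{p_0}$, whose limit $E^{ss}_{p_0}$ inherits the closed inequalities \eqref{eq.for reg} for every $m$ by continuity of $E\mapsto\|DF^m|_E\|$, gives that $p_0$ is infinitely forward $(CK_0,\bepsilon,\lambda)$-regular along $E^{ss}_{p_0}$. I expect the main obstacle to be the regularity-stability input in the second paragraph: making rigorous that a $\lambda^{(1-\bepsilon)R_n}$-perturbation of the forward-regular vertical direction stays forward regular over the full $R_n$ iterates genuinely requires the cone structure coming from \thmref{reg chart}, since a naive linearized estimate for $\|DF^m|_{E_{p_{-R_n}}}\|$ against $\|DF^m|_{E^{v,n}_{p_{-R_n}}}\|$ degrades for $m$ close to $R_n$.
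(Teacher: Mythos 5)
Your overall route matches the paper's: use $p_0\notin\cV^n_{v_0}(\lambda^{\bepsilon R_n})$ to keep the one quadratic step $F^{-1}$ near $v_{-1}$ from flattening the direction, then use the first-entry contraction to align $E_{p_{-R_n}}$ with the genuine vertical. The first paragraph is essentially what the paper does (the paper argues via \lemref{est value} and \propref{hor angle shrink} rather than invoking \lemref{quad straight} by name, but the content is the same), modulo a small hole: \lemref{quad straight} gives verticality of $E_{p_{-1}}$ in the chart $\Phi_{-1}$ on $\cB_{-1}$, whereas \lemref{flat}(ii) is stated for verticality relative to $\hcB^n_{R_n-1}$, and these two charts must be reconciled; the paper does so by showing $\measuredangle(E^{h,n}_{p_{-1}},E^h_{p_{-1}})<\lambda^{(1-\bepsilon)R_n}$.

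The genuine gap is the regularity claim. You assert that the ``contraction cone'' around $E^{v,n}_{p_{-R_n}}$ is $DF^m$-invariant and that a $\lambda^{(1-\bepsilon)R_n}$-small perturbation stays inside it throughout. This is backwards: since $E^{v,n}$ is the \emph{contracting} direction, the projective action of $DF$ \emph{repels} from the vertical, so the thin cone is forward-repelling. The angle between $DF^m(E_{p_{-R_n}})$ and $E^{v,n}_{p_{-R_n+m}}$ grows roughly like $\lambda^{(1-\bepsilon)R_n-(1\pm\bepsilon)m}$ and becomes order one once $m$ reaches $(1-O(\bepsilon))R_n$, so the perturbed direction escapes any fixed cone near the end of the block. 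You flag exactly this as ``the main obstacle'' in your last paragraph, which is the right instinct, but the proposal does not resolve it, and the cone-invariance sentence cannot be fixed in place. The paper resolves it with a two-block split. In the Q-linearization of \thmref{reg chart} along the backward orbit, set $\theta_{-i}:=\measuredangle(E^{gh}_0,D\Phi_{p_{-i}}(E_{p_{-i}}))$; the quadratic-avoidance step gives $\theta_{-1}>\lambda^{\bepsilon R_n}$, and the angle grows backward like $\lambda^{-(1-\bepsilon)i}$, so it exceeds a fixed $c$ at some $M<\bepsilon R_n$. With $M'=CM$, over the first $R_n-M'$ forward iterates from $p_{-R_n}$ the direction lives in a fixed-aperture cone around the vertical, and \corref{ver hor cons} gives $\|DF^i|_{E_{p_{-R_n}}}\|\asymp\|DF^i|_{E^{v,n}_{p_{-R_n}}}\|$; the remaining $M'<\bepsilon R_n$ iterates are absorbed by the short-orbit bound of \propref{ext deriv bound} at a cost of a factor $\lambda^{\pm\bepsilon M'}$, which the $\bepsilon$-notation swallows. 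Your argument is missing precisely this decomposition (or an equivalent direct computation in the linearized coordinates distinguishing the regime where the vertical component of the vector dominates from the tail near $m=R_n$ where the horizontal component takes over).
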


\begin{proof}
Consider a Q-linearization
$$
\{\Phi_{p_{-m}} : \cU_{p_{-m}} \to U_{p_{-m}}\}_{m=0}^{R_n}
$$
of $F$ along the $R_n$-backward orbit of $p_0$ with vertical direction
$$
E_{p_0}^{v,n} := (D\Psi^n)^{-1}\left(E_{\Psi^n(p_0)}^{gh}\right).
$$
Note that since $(F^{R_n}, \Psi^n)$ is a H\'enon-like return, we have
$$
D\Psi^n\left(E_{p_{-R_n}}^{v,n}\right) = E_{\Psi^n(p_{-R_n})}^{gv}.
$$

Denote
$$
E_{p_{-1}}^{h,n} := D\Phi_{p_{-1}}\left(E_0^{gh}\right)
\matsp{and}
E_{p_{-1}}^h := D\Phi_{-1}\left(E_{\Phi_{-1}(p_{-1})}^{gh}\right),
$$
where $\Phi_{-1} : \cU_{-1} \to U_{-1}$ is the chart defined over the critical point given in \thmref{crit chart}. By \thmref{reg chart} ii) and \eqref{eq.first entry}, we see that
$$
\|DF^{-R_n+1}|_{E_{p_{-1}}^{h,n}}\|\; , \; \|DF^{-R_n+1}|_{E_{p_{-1}}^h}\| < \lambda^{-\bepsilon R_n}.
$$
Hence, it follows from \propref{hor angle shrink} that
$$
\measuredangle(E_{p_{-1}}^{h,n}, E_{p_{-1}}^h) < \lambda^{(1-\bepsilon) R_n}.
$$
Thus, by \lemref{est value}, we have
$$
\measuredangle(E_{p_{-1}}^{h,n}, E_{p_{-1}}) > \lambda^{\bepsilon R_n}.
$$

For $1 \leq i \leq R_n$, denote
$$
\theta_{-i} := \measuredangle(E_0^{gh}, D\Phi_{p_{-i}}(E_{p_{-i}})).
$$
Choose a suitable uniform constant $c \in (0, \pi/2)$ independent of $F$, and let $1 \leq M \leq R_n$ be the smallest number such that $\theta_{-M} > c$. By \thmref{reg chart}, we see that
$$
\theta_{-i} > \lambda^{-(1-\bepsilon)i}\theta_{-1} > \lambda^{-(1-\bepsilon)i}\lambda^{\bepsilon R_n}.
$$
Consequently, $M < \bepsilon R_n$. Let $M' := CM$ for some suitable uniform constant $C \geq 1$ independent of $F$.

By \corref{ver hor cons}, we have
\begin{equation}\label{eq.pull back align}
\|DF^i|_{E_{p_{-R_n}}}\| \asymp \|DF^i|_{E_{p_{-R_n}}^{v,n}}\|
\matsp{for}
0 \leq i < R_n-M'
\end{equation}
By \propref{vert prop nest} iii), $p_{-R_n}$ is $R_n$-times forward $({K_0},\epsilon, \lambda)$-regular along $E^{v,n}_{p_{-R_n}}$. Hence, \eqref{eq.pull back align} implies that $p_{-R_n}$ is $(R_n-M')$-times forward $(C{K_0},\epsilon, \lambda)$-regular along $E_{p_{-R_n}}$.
By \propref{ext deriv bound}, we have
\begin{equation}\label{eq.short tame}
\lambda^{\bepsilon R_n}< \lambda^{(1+\bepsilon) M'}< \|DF^i|_{\tiE_{p_{-M'}}}\| < \lambda^{-\bepsilon M'} < \lambda^{-\bepsilon R_n}
\end{equation}
for any $\tiE_{p_{-M'}} \in \bbP^2_{p_{-M'}}$. We conclude that for $0\leq i < M'$, we have
$$
\lambda^{\bepsilon R_n} < \frac{\|DF^{R_n-M'+i}|_{E_{p_{-R_n}}^{v,n}}\|}{\|DF^{R_n-M'+i}|_{E_{p_{-R_n}}}\|} < \lambda^{-\bepsilon R_n}.
$$
The $(CK_0, \bepsilon, \lambda)$ forward regularity of $p_{-R_n}$ along $E_{p_{-R_n}}$ follows.
\end{proof}

\begin{prop}\label{ss in bn}
For ${n_0}\leq n \leq N$, let $p_0 \in \hcB^n_0$. If $p_0$ is infinitely forward $(\bar{K_0}, \bepsilon, \lambda)$-regular, then $W^{ss}(p_0)$ is $\lambda^{(1-\bepsilon)R_n}$-vertical and vertically proper in $\hcB^n_0$.
\end{prop}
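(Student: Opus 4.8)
Here is the plan.

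\smallskip

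\noindent\textbf{Setup.} Since $p_0\in\hcB^n_0$ is infinitely forward $(\overline{K_0},\bepsilon,\lambda)$-regular along some direction $E^{ss}_{p_0}\in\bbP^2_{p_0}$, \thmref{reg chart} furnishes a $C^r$-smooth local strong-stable manifold $W^{ss}(p_0)$, tangent to $E^{ss}_{p_0}$ at $p_0$, with $C^r$-geometry controlled by $\overline{K_0}$, $\|F\|_{C^r}$, $\lambda$, $\bepsilon$. I will work with $W^{ss}_{\loc}(p_0):=W^{ss}(p_0)\cap\hcB^n_0$. A first, naive attempt is to argue pointwise that the tangent direction $E^{ss}_q=T_qW^{ss}_{\loc}(p_0)$ stays $\lambda^{(1-\bepsilon)R_n}$-close to the $\Psi^n$-vertical direction $E^{v,n}_q$ (using that $E^{v,n}_q$ is $R_n$-times forward $(K_0,\epsilon,\lambda)$-regular by \propref{vert prop nest} iii), that $E^{ss}_q$ is infinitely forward regular, and projective contraction towards $E^{ss}$); this controls the slope function in $C^0$ but, since $W^{ss}_{\loc}(p_0)$ and the vertical leaf both only have \emph{bounded} $C^r$-geometry over a leaf of definite length, it does \emph{not} bound the higher derivatives by $\lambda^{(1-\bepsilon)R_n}$. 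The contraction over the full return time must be used, so I will instead pass to the return map in $\Psi^n$-coordinates.

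\smallskip

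\noindent\textbf{Thinness of the return map.} Consider
$$
G:=\Psi^n\circ F^{R_n}\circ(\Psi^n)^{-1}=[\Psi^n\circ\Phi_0^{-1}]\circ[\Phi_0\circ F\circ\Phi_{-1}^{-1}]\circ[\Phi_{-1}\circ F^{R_n-1}\circ(\Psi^n)^{-1}].
$$
By \eqref{eq.henon trans} the middle factor is $(x,y)\mapsto(f_0(x)-\lambda y,\,x)$; by \thmref{crit chart} ii) the last factor is $H_n(x,y)=(h_n(x),e_n(x,y))$ with $\|e_n\|_{C^r}<\lambda^{(1-\bepsilon)R_n}$ and $\lambda^{\bepsilon R_n}<|h_n'|<\lambda^{-\bepsilon R_n}$; and by \thmref{crit chart} i) the first factor differs from $\Id$ by a map of $C^r$-norm $<\lambda^{(1-\bepsilon)R_n}$. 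Composing, $G(x,y)=(f_0(h_n(x)),\,h_n(x))$ plus a term of $C^r$-norm $<\lambda^{(1-\bepsilon)R_n}$ (absorbing $\lambda\|e_n\|_{C^r}$). In particular $\|\partial_y(\pi_h\circ G)\|_{C^{r-1}}<\lambda^{(1-\bepsilon)R_n}$, i.e. $G$ is $\lambda^{(1-\bepsilon)R_n}$-thin on $\hB^n_0$; moreover $\|DG|_{E^{gv}}\|<\lambda^{(1-\bepsilon)R_n}$ pointwise whereas $\|DG|_{E^{gh}}\|\gtrsim|h_n'|\ge\lambda^{\bepsilon R_n}$, so the genuine vertical cone is uniformly and dominantly contracted by $G$ throughout $\hB^n_0$, with domination strength $\le\lambda^{(1-\bepsilon)R_n}\lambda^{-\bepsilon R_n}=\lambda^{(1-\bepsilon)R_n}$ in our conventions.

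\smallskip

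\noindent\textbf{Conclusion.} The local strong-stable manifold of $G$ at $z_0:=\Psi^n(p_0)$ coincides with $\Psi^n(W^{ss}_{\loc}(p_0))$: a point lies on $W^{ss}_G(z_0)$ iff the forward $F^{R_n}$-orbit of its $(\Psi^n)^{-1}$-image converges to that of $p_0$, which — since the intermediate iterates $F^j$, $0\le j<R_n$, are fixed diffeomorphisms — is equivalent to lying on $W^{ss}(p_0)$. By the standard graph-transform estimate for H\'enon-like maps with a uniformly dominated vertical cone (equivalently, from the thinness above), $W^{ss}_G(z_0)$ is a $C^r$ vertical graph $x=g_v(y)$ over all of $\pi_v(\hB^n_0)$ with $\|g_v'\|_{C^{r-1}}<\lambda^{(1-\bepsilon)R_n}$. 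Pulling back through $(\Psi^n)^{-1}$, this says exactly that $W^{ss}(p_0)$ is $\lambda^{(1-\bepsilon)R_n}$-vertical and vertically proper in $\hcB^n_0$.

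\smallskip

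\noindent\textbf{Main obstacle.} The delicate point is the last paragraph: one must know that $\hcB^n_0$ is $R_n$-periodic — so that $G$ is a genuine H\'enon-like self-map of $\hB^n_0$ and $W^{ss}_G(z_0)$ can be iterated and grown to a full vertical graph, giving vertical-properness — which is the content of the Remark following \propref{vert prop nest} (proved in \secref{sec.combin}); lacking that, one restricts first to $p_0\in\cB^n_0$ and grows $W^{ss}_G(z_0)$ under $G^{-1}$ directly from the uniform cone contraction. One must also track the loss of smoothness carefully so that the $C^r$-conclusion genuinely follows from $F\in C^{r+1}$. The thinness computation itself, and the identification $W^{ss}_G(z_0)=\Psi^n(W^{ss}_{\loc}(p_0))$, are routine given the results of \cite{CLPY3} collected in \secref{sec.conv chart}.
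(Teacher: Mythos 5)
Your route via the return map $G=\Psi^n\circ F^{R_n}\circ(\Psi^n)^{-1}$ and its thinness is genuinely different from the paper's, and you correctly identify the soft spot — but your proposed repair does not close the gap. The paper avoids the return map entirely: it fixes $M\approx R_n/2$ and uses the Q-linearization along the forward orbit of $p_0$. Starting from the thin vertical strip $\cV_{p_0}:=\cV^n_{p_0}(\lambda^{\bepsilon R_n})\subset\hcB^n_0$ (all of whose points are $R_n$-times forward regular along $E^{v,n}$ by \propref{vert prop nest}~iii), it pushes forward by $F^M$, shows $F^M(\cV_{p_0})$ lands inside the regular neighborhood $\cU_{p_M}$ and, in the coordinates $\Phi_{p_M}$, is squeezed into a box of width $\lambda^{\bepsilon R_n}$ and height $\lambda^{(1-\bepsilon)M}$. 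Since $W^{ss}_{\loc}(p_M)=\Phi_{p_M}^{-1}(\{x=0\})$ by \thmref{stable}, it automatically crosses that box top-to-bottom, and pulling back by $F^{-M}$ gives vertical properness. The slope bound follows separately from \propref{vert angle shrink}. Thus only forward iteration to time $\approx R_n/2$ is used, entirely within Pesin charts, and no invariance of the domain under $F^{R_n}$ is needed.

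Your approach hinges on running the graph transform for $G$ to grow $W^{ss}_G(z_0)$ into a vertically proper curve in $\hB^n_0$, which requires (some form of) $G$ being a self-map of $\hB^n_0$. That is exactly the $R_n$-periodicity of $\hcB^n_0$, which the paper establishes only later, in \secref{sec.combin}, and which in turn leans on \propref{ss in bn} via \propref{value or sink}. You flag this and propose restricting to $p_0\in\cB^n_0$, where $R_n$-periodicity is known, and growing $W^{ss}_G(z_0)$ under $G^{-1}$ there. But that workaround only yields vertical properness in $B^n_0$, whereas the statement demands vertical properness in the larger, vertically extended domain $\hcB^n_0$ (indeed the hypothesis allows $p_0\in\hcB^n_0\setminus\cB^n_0$). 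So there is a genuine unfilled step: the passage from $\cB^n_0$ to $\hcB^n_0$ needs precisely the kind of argument the paper supplies — tracking the strip $\cV_{p_0}\subset\hcB^n_0$ under $F^M$ using the regularity furnished by \propref{vert prop nest}~iii on all of $\hcB^n_0$ — and your return-map framework does not produce it without the circular periodicity fact. The thinness computation and the identification $\Psi^n(W^{ss}_{\loc}(p_0))=W^{ss}_G(z_0)$ are fine; the missing piece is the vertical-properness mechanism that works on $\hcB^n_0$ before \secref{sec.combin} is available.
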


\begin{proof}
The verticality of $W^{ss}(p_0)$ follows immediately from \propref{vert angle shrink}. Consider a Q-linearization
$$
\{\Phi_{p_m} : \cU_{p_m} \to U_{p_m}\}_{m=0}^{\infty}
$$
of $F$ along the infinite forward orbit of $p_0$ with vertical direction $E^{ss}_{p_0}$. By \thmref{stable}, we have
\begin{equation}\label{eq.ss in bn 1}
\Phi_{p_m}(W^{ss}_{\loc}(p_m)) \subset \{(0, y) \in U_{p_m} \; | \; y \in \bbR\}.
\end{equation}

Let
$$
\cV_{p_0} := \cV_{p_0}^n(\lambda^{\bepsilon R_n}) \subset \hcB^n_0.
$$

Let $M$ be a nearest integer to $R_n/2$. By \lemref{size reg nbh}, we see that $\cU_{p_M} \supset \bbD_{p_M}\left(\lambda^{\bepsilon M}\right).$ \propref{vert prop nest} iii) and \lemref{trunc neigh fit} implies that \corref{ver hor cons} applies to any point $q_0 \in \cV_{p_0}$. Consequently, $F^M(\cV_{p_0}) \subset \cU_{p_M}$, and 
\begin{equation}\label{eq.ss in bn 2}
\Phi_{p_M}(F^M(\cV_{p_0})) \subset (-\lambda^{\bepsilon R_n}, \lambda^{\bepsilon R_n}) \times (-\lambda^{(1-\bepsilon)M}, \lambda^{(1-\bepsilon)M}).
\end{equation}

For $q_0 \in \cV_{p_0}$, denote
$$
\hE^{v/h}_{q_m} := D(F^m\circ(\Psi^n)^{-1})\left(E_{\Psi^n(q_0)}^{gv/gh}\right).
$$
By Propositions \ref{vert prop nest} iii), \ref{ext deriv bound} and \ref{ver hor cons}, we have
$$
\|DF^m|_{\hE^v_{q_0}}\| < {K_0}\lambda^{(1-\bepsilon) m}
\matsp{and}
\|DF^m|_{\hE^h_{q_0}}\| > {K_0}^{-1}\lambda^{\bepsilon m}.
$$

Let $M_0 \leq M$ be the smallest number number such that
$$
{K_0}\lambda^{(1-\bepsilon)M_0} < \diam(\cU_{p_{M_0}}) \asymp {K_0}^{-1}\lambda^{\bepsilon M_0}.
$$
Then it follows from \propref{size reg nbh} that $q_m \in \cU_{p_m}$ for all $M_0 \leq m \leq R_n$. Define
$$
\tiE^{v/h}_{q_m} := D\Phi_{p_m}^{-1}\left(E_{\Phi_{p_m}(q_m)}^{gv/gh}\right).
$$
By \propref{grow irreg}, $p_{M_0}$ is infinitely forward $(\overline{K_0}\lambda^{-\bepsilon M_0}, \epsilon, \lambda)$-regular. Hence, by \thmref{reg chart} and \corref{ver hor cons}, we have
$$
\left\|DF^{(M - M_0)}|_{\tiE^h_{q_{M_0}}}\right\| > \lambda^{\bepsilon M}.
$$
Thus, \propref{hor angle shrink} implies that
$$
\measuredangle(\tiE^h_{q_M}, \hE^h_{q_M}) < \lambda^{(1-\bepsilon)M}. 
$$
On the other hand, since $\|D\Phi_{p_M}^{\pm 1}\| < \lambda^{-\bepsilon M}$, it follows that
$$
\measuredangle(E^{ss}_{q_M}, \hE^h_{q_M}) > \lambda^{\bepsilon M}.
$$

We conclude by \eqref{eq.ss in bn 1} and \eqref{eq.ss in bn 2} that $W^{ss}_{\loc}(p_M)$ is vertically proper in $F^M(\cV_{p_0})$.
\end{proof}

\begin{prop}\label{value or sink}
For $n_0 \leq n \leq N$, let $\cC_0 \subset \cB^n_0$ be a connected set totally invariant under $F^{d R_n}$ for some $d \in \bbN$ such that $d \bepsilon < 1$. Denote $\cC_m := F^m(\cC_0)$ for $m\in\bbZ$. If
$$
\cV_{v_0}^n(\lambda^{\bepsilon R_n}) \cap \cC = \varnothing,
\matsp{where}
\cC := \bigcup_{i=0}^{d-1}\cC_{iR_n},
$$
then every point in $\cC$ is infinitely forward regular. Moreover, there exists a chart $\Phi : \cD_0 \to D_0$ such that the following statements hold:
\begin{enumerate}[i)]
\item $\cC_0 \subset \cD_0$;
\item $\cD_0$ is $\lambda^{(1-\bepsilon)R_n}$-vertical and vertically proper in $\hcB^n_0$;
\item for $p \in \cC_0$, we have
$$
\Phi^{-1}(\{x\} \times \pi_v(D_0)) \subset W^{ss}(p)
\matsp{where}
\Phi(p) = (x,y) \in D_0;
\hspace{5mm}
and
$$
\item the map $H := \Phi \circ F^{dR_n}\circ \Phi^{-1}$ is of the form $H(x,y)= (h(x), v(x,y))$, where $h : \pi_h(D_0) \to \pi_h(D_0)$ is a diffeomorphism.
\end{enumerate}
\end{prop}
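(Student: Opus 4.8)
\emph{Strategy.} I would prove this in three stages: first upgrade the avoidance hypothesis into uniform infinite forward regularity of every point of $\cC$; then realize $W^{ss}(p)$, for each $p \in \cC_0$, as a uniformly vertical, vertically proper graph inside $\hcB^n_0$; and finally complete the resulting $F^{dR_n}$-invariant strong-stable lamination over $\cC_0$ into an invariant foliation of a full neighborhood $\cD_0 \supset \cC_0$ and straighten it. For the first assertion: the set $\cC$ is $F^{R_n}$-invariant, since $F^{R_n}(\cC_{iR_n}) = \cC_{(i+1)R_n}$ for $0 \leq i < d-1$ while $F^{R_n}(\cC_{(d-1)R_n}) = F^{dR_n}(\cC_0) = \cC_0$ by total invariance. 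As $\cC_0 \subset \cB^n_0$ and $F^{R_n}(\cB^n_0) \Subset \cB^n_0$, induction gives $\cC_{iR_n} \subset \cB^n_0 \subset \hcB^n_0$, so $\cC \subset \hcB^n_0 \setminus \cV^n_{v_0}(\lambda^{\bepsilon R_n})$ by hypothesis. Hence for every $p \in \cC$ the forward orbit $\{F^{kR_n}(p)\}_{k \geq 0}$ stays in $\hcB^n_0 \setminus \cV^n_{v_0}(\lambda^{\bepsilon R_n})$, and the final clause of \propref{pull back vert in bn} shows $p$ is infinitely forward $(CK_0, \bepsilon, \lambda)$-regular with $C$ uniform.

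\emph{Stage 2 (strong-stable lamination).} By \subsecref{subsec.regular} each $p \in \cC_0$ now carries a $C^r$ strong-stable manifold $W^{ss}(p)$, which by \propref{ss in bn} is $\lambda^{(1-\bepsilon)R_n}$-vertical and vertically proper in $\hcB^n_0$; in $\Psi^n$-coordinates it is the graph over $\pi_v(B^n_0)$ of a $C^r$ function $w_p$ with $\|w_p'\|_{C^{r-1}} < \lambda^{(1-\bepsilon)R_n}$. Distinct strong-stable manifolds are disjoint, and $F^{dR_n}(W^{ss}(p) \cap \hcB^n_0) \subset W^{ss}(F^{dR_n}(p))$ with $F^{dR_n}(p) \in \cC_0$. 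So $\{W^{ss}(p)\}_{p \in \cC_0}$ is an $F^{dR_n}$-invariant $C^r$-lamination of a subset of $\hcB^n_0$ by uniformly vertical, vertically proper graphs, and its strong-stable direction field $p \mapsto E^{ss}_p$ is continuous on $\cC_0$ and $\lambda^{(1-\bepsilon)R_n}$-close in $C^{r-1}$ to the $\Psi^n$-vertical field.

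\emph{Stage 3 (the chart).} Fix a $C^{r-1}$ extension $E^{(0)}$ of $E^{ss}$ to a neighborhood of $\cC_0$ still $\lambda^{(1-\bepsilon)R_n}$-close to the $\Psi^n$-vertical field, and iterate the graph transform $E \mapsto DF^{dR_n}(E \circ F^{-dR_n})$. Since $F^{dR_n}$ contracts vertical directions by $\lambda^{(1-\bepsilon)dR_n}$ while distorting horizontal ones only by $\lambda^{\pm d\bepsilon R_n}$ (using the center--stable derivative bounds behind \propref{pull back vert in bn} and \eqref{eq.first entry}), and $d\bepsilon < 1$ with $\epsilon$ small, the relevant bunching inequalities hold and the iteration converges on a fixed neighborhood of $\cC_0$ to an $F^{dR_n}$-invariant $C^r$ field $E$ agreeing with $E^{ss}$ along $\cC_0$. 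Its integral curves inside $\hcB^n_0$ form a foliation $\cF_0$ of a neighborhood $\cD_0 \supset \cC_0$, each leaf being $\lambda^{(1-\bepsilon)R_n}$-vertical in $C^r$ and, by the verticality estimates of \lemref{flat} and \propref{vert prop nest}, vertically proper in $\hcB^n_0$ (a $\lambda^{(1-\bepsilon)R_n}$-vertical integral curve has horizontal drift $\ll |I^n_0|$, so it reaches the top and bottom of $\hcB^n_0$ before its sides). Define $\Phi : \cD_0 \to D_0$ by $\Phi(q) := (x(q), \pi_v \circ \Psi^n(q))$, where $x(q) \in I^n_0$ labels the $\cF_0$-leaf through $q$ by its intersection with $\cI^n_0$, followed if needed by an affine normalization making $D_0$ a rectangle. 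Then $\Phi$ is horizontally equivalent to $\Psi^n$ (its horizontal leaves are $\Psi^n$-horizontal leaves, since the $y$-coordinate is $\pi_v \circ \Psi^n$), so i) and ii) hold; iii) holds because the leaf through $p \in \cC_0$ is the integral curve of $E^{ss}$ through $p$, hence a sub-arc of $W^{ss}(p)$; and iv) holds because $F^{dR_n}$ sends $\cF_0$-leaves into $\cF_0$-leaves, so the $x$-coordinate of the image of a point depends only on its $x$-coordinate, giving $H(x,y) = (h(x), v(x,y))$ with $h$ the induced (diffeomorphic) map on leaf labels.

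\emph{Main obstacle.} The real content is Stage 3: passing from the strong-stable lamination over $\cC_0$ to an $F^{dR_n}$-invariant foliation of a \emph{genuine neighborhood} whose leaves stay $C^r$, uniformly vertical, and vertically proper in $\hcB^n_0$. The delicate point is that only the orbit of $\cC_0$ itself is known to avoid the critical strip $\cV^n_{v_0}(\lambda^{\bepsilon R_n})$, not that of a neighborhood; this is precisely why the construction must proceed by the contraction/graph-transform scheme --- so that $F^{dR_n}$ automatically pulls thin neighborhoods of $\cC_0$ deep into $\hcB^n_0$, away from the strip --- rather than by an arbitrary extension, and it is also where one must track, $n$-uniformly, that the leaves do not escape $\hcB^n_0$. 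Alternatively, one may quote the construction of regular Pesin charts over uniformly forward regular invariant sets from \cite{CLPY1}, for which Stages 1--2 supply exactly the hypotheses.
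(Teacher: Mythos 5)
Your Stages 1 and 2 match the paper's proof precisely: infinite forward regularity of $\cC$ comes from \propref{pull back vert in bn}, and the verticality and vertical properness of each $W^{ss}_{\loc}(p)$ from \propref{ss in bn}. Where you diverge is Stage 3. The paper simply sets
$$
\cD_0 := \bigcup_{p \in \cC_0} W^{ss}_{\loc}(p),
$$
i.e.\ the saturation of $\cC_0$ by local strong-stable leaves, and takes $\Phi$ to be a genuine horizontal chart rectifying this foliation. Because $\cC_0$ is connected and the leaves are vertically proper, uniformly $\lambda^{(1-\bepsilon)R_n}$-vertical, and vary continuously under uniform regularity, the saturation is automatically a (possibly degenerate) quadrilateral foliated by the stable leaves through $\cC_0$; no extension of the direction field to a genuine neighborhood is needed, and no graph transform is invoked. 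This is the step you make substantially harder than necessary.

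Two concrete issues with the route you chose. First, the graph-transform contraction you rely on is only certified where \propref{pull back vert in bn} applies, namely along $\cC$, whereas the iteration $E \mapsto DF^{dR_n}(E\circ F^{-dR_n})$ must be run over a full neighborhood, whose intermediate iterates are not a priori known to avoid $\cV^n_{v_0}(\lambda^{\bepsilon R_n})$. You flag this yourself but do not close it; making it precise requires thinning the neighborhood in an $n$-uniform way, and the proposition does not need a genuine open neighborhood in the first place. Second, your justification of iv) stops at ``$F^{dR_n}$ sends leaves to leaves, so $h$ is the induced map on labels,'' but injectivity of a leaf-wise map does not by itself give $h' \neq 0$. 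The paper derives the non-degeneracy of $h$ from \thmref{crit chart} ii), the H\'enon transition formula \eqref{eq.henon trans}, and the fact that the saturated cycle $\cD$ (not just $\cC$) stays out of the critical strip --- the latter following from the $\lambda^{(1-\bepsilon)R_n}$-verticality of the leaves, since $\lambda^{(1-\bepsilon)R_n} \ll \lambda^{\bepsilon R_n}$ keeps a leaf on the same side of the strip as its base point. You should supply this step.
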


\begin{proof}
The infinite forward regularity of any point $p \in \cC$ is given in \propref{pull back vert in bn}. Let $W^{ss}_{\loc}(p)$ be the connected component of $W^{ss}(p) \cap \hcB^n_0$ containing $p$. Define
$$
\cD := \bigcup_{p \in \cC} W^{ss}_{\loc}(p).
\matsp{and}
\cD_0 := \bigcup_{p \in \cC_0} W^{ss}_{\loc}(p).
$$
By \propref{ss in bn}, the foliation of each component of $\cD$ given by the local strong-stable manifolds is $\lambda^{(1-\bepsilon)R_n}$-vertical and vertically proper in $\hcB^n_0$. Let $\Phi : \cD_0 \to D_0$ be a genuine horizontal chart that rectifies this vertical foliation over $\cD_0$. Then the map $H := \Phi \circ F^{dR_n}\circ \Phi^{-1}$ preserves the vertical foliation.

By \thmref{crit chart} ii), \eqref{eq.henon trans} and the fact that
$$
\cD \cap \cV_{v_0}^n(\lambda^{\bepsilon R_n}) = \varnothing,
$$
it follows that $h:= \Piod(H)$ is a diffeomorphism.
\end{proof}


\section{Combinatorial Structure of Renormalization}\label{sec.combin}

In this section, we show that at sufficiently deep renormalization depths (i.e. beyond the depth $n_0$ given by \eqref{eq.proper depth 0}), the dynamical structure of a 2D H\'enon-like map closely resembles that of a 1D unimodal map. See \figref{fig.valuestruct}. In particular, we prove that topological renormalizations at these depths with combinatorics of bounded type are guaranteed to be regular H\'enon-like, as long as they are not {\it trivial} in the sense defined below.

Following \cite{CPTr}, we say that a diffeomorphism (in any dimension) is {\it generalized Morse-Smale (of $\theta$-bounded type)} for some $\theta \in \bbN$ if
\begin{itemize}
\item the $\omega$-limit set of any forward orbit is a periodic point;
\item the $\alpha$-limit set of any backward orbit is a periodic point; and
\item all periodic orbits have periods bounded by $\theta$.
\end{itemize}
Note that a diffeomorphism of an interval to itself is generalized Morse-Smale of either $1$-bounded type if orientation-preserving, or $2$-bounded type if orientation-reversing. A renormalization of a map is {\it trivial} if the induced return map is a generalized Morse-Smale diffeomorphism of $2$-bounded type.

\subsection{For unimodal maps}\label{subsec.ren combin 1d}

\begin{lem}\label{sink combin}
Consider a unimodal map $f : I \to I$ with the critical point $c \in I$. Suppose that $f''(c) > 0$. Then the following statements hold.
\begin{enumerate}[i)]
\item If $f(c) > c$, then $c$ converges to either a fixed attracting or parabolic sink.
\item If $f^2(c) < c$, then $c$ converges to either a fixed or $2$-periodic attracting or parabolic sink.
\item If $f^3(c) > f^2(c)$, then $c$ converges to a fixed attracting or parabolic sink.
\end{enumerate}
If none of the above cases hold, then $J := [f(c), f^2(c)]$ is the minimal invariant interval containing $c$.
\end{lem}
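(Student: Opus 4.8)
\emph{Proof plan.}

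The plan is to trace the forward orbit of the critical point $c$ directly, exploiting that $f''(c)>0$ makes $c$ a global minimum of $f$. Write $v:=f(c)$. Since $c$ is the only critical point and $f''(c)>0$, the map $f$ is strictly decreasing on $I\cap(-\infty,c]$ and strictly increasing on $I\cap[c,+\infty)$, and $v=\min_I f$; in particular $f^k(c)\ge v$, hence $f^k(c)\in I\cap[v,+\infty)$, for every $k\ge1$. The one analytic input I will use repeatedly is the elementary fact that a continuous nondecreasing self-map $g$ of a compact interval has all orbits monotone and convergent: if $g(x)\ge x$ (resp.\ $g(x)\le x$), then $g^n(x)$ increases (resp.\ decreases) to the least fixed point $p^{*}\ge x$ (resp.\ greatest fixed point $p^{*}\le x$) of $g$; and if $g\in C^1$, then comparing $|g(y)-p^{*}|$ with $|y-p^{*}|$ for $y$ on the side from which the orbit approaches shows $0\le g'(p^{*})\le 1$, so $p^{*}$ is attracting or parabolic.

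Cases (i) and (iii) I would both reduce to exhibiting a forward-invariant interval on which $f$ is increasing. For (i): if $v>c$ then $I':=I\cap[v,+\infty)$ is forward invariant (it contains $f(I)$, since every value of $f$ is $\ge v$) and $f$ is increasing on $I'$ (as $I'\subset(c,+\infty)$), so applying the fact to $f|_{I'}$ with $x:=v$ (note $f(v)\ge v$) gives that $f^n(c)$ converges to a fixed attracting or parabolic sink. For (iii): the hypothesis reads $f^2(v)>f(v)$, and the crux is to deduce from it that $f(v)\ge c$. If instead $f(v)<c$, then $v\le f(v)<c$, so $f$ is decreasing on $[v,f(v)]$, and since $f(c)=v\le f(v)$ this forces $f^2(v)=f(f(v))\le f(v)$, a contradiction. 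Hence $f^2(c)=f(v)\ge c$, the interval $I'':=I\cap[f(v),+\infty)$ is forward invariant with $f$ increasing on it, and $f|_{I''}$ carries $f(v)$ to $f^2(v)>f(v)$; applying the fact with $x:=f(v)$ gives that $f^n(c)$ converges to a fixed attracting or parabolic sink.

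Case (ii) I would handle using $f^2$. If $f^2(c)<c$, i.e.\ $f(v)<c$, then $v\le f(v)<c$ (left inequality from $v=\min_I f$), so $[v,c]\subset I\cap(-\infty,c]$, where $f$ is decreasing; moreover $f([v,c])=[v,f(v)]\subset[v,c)$, so $[v,c]$ is forward invariant and $g:=f^2|_{[v,c]}$ is increasing. Applying the fact to $g$ with $x:=c$ (here $g(c)=f(v)<c$), the even iterates $f^{2n}(c)$ decrease to a fixed point $q^{*}$ of $f^2$ with $0\le(f^2)'(q^{*})\le1$, and by continuity $f^{2n+1}(c)\to f(q^{*})$, so the $\omega$-limit set of $c$ is the orbit $\{q^{*},f(q^{*})\}$ --- a fixed or $2$-periodic attracting or parabolic sink. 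Finally, if none of (i)--(iii) holds, the three negations give $v\le c\le f(v)$ and $f^2(v)\le f(v)$; combining this with $v\le f(v)$ and $v\le f^2(v)$ (both because $v=\min_I f$) yields $v\le f^2(v)\le f(v)$, so $J:=[f(c),f^2(c)]=[v,f(v)]$ contains $c$, the minimum of $f|_J$ equals $f(c)=v$, and by unimodality the maximum of $f|_J$ is attained at an endpoint and so equals $\max(f(v),f^2(v))=f(v)$; hence $f(J)=J$. Since any invariant interval containing $c$ must also contain $f(c)$ and $f^2(c)$, and therefore $J$, the interval $J$ is the minimal invariant interval containing $c$.

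The monotone-orbit fact and the forward-invariance checks are routine; the step that genuinely needs care is the implication $f^2(v)>f(v)\Rightarrow f(v)\ge c$ in case (iii), without which the increasing-map argument cannot be started. A secondary point is that the derivative estimate --- the thing that upgrades ``$c$ converges to a fixed point'' to ``$c$ converges to an attracting or parabolic sink'' --- must be carried out on the correct side of the limiting fixed point, namely the side from which the orbit approaches; the same one-sided comparison, together with $f(I)\subseteq I\cap[v,+\infty)$, also disposes of the degenerate case in which the limiting fixed point is an endpoint of $I$.
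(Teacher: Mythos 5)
Your proof is correct. The paper states this lemma without proof (it is treated as a standard elementary fact about unimodal maps), so there is no paper argument to compare against; your write-up is a valid fill-in. The structure is sound: locate a forward-invariant interval on which $f$ (or $f^2$, in case ii) is monotone, invoke the monotone-orbit fact, and read off the multiplier bound from the one-sided secant comparison on the side of approach. The only step requiring real care is, as you identify, the implication $f^2(v) > f(v) \Rightarrow f(v) \ge c$ in case (iii), and your contradiction argument handles it correctly. One small remark: strictly speaking you want $f(v) > c$ (not just $\ge$) to get $I'' \subset (c,\infty)$, but the boundary case $f(v) = c$ forces $f^2(v) = v$, which together with $f^2(v) > f(v) = c$ gives $v > c$, i.e.\ case (i); so no gap. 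The final paragraph correctly verifies $v \le c \le f(v)$, $f^2(v) \le f(v)$, $f(J) = J$, and the minimality of $J$.
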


Consider a unimodal map $f : I \to I$ with the critical point $c \in I$. For concreteness, assume that $f''(c) > 0$. For $\theta \geq 1$, we say that $f$ has {\it $\theta$-bounded kneading} if $f(c) < c$ and $f^{1+\theta}(c) < c$. Recall that $f$ is valuably renormalizable if there exists an $R$-periodic interval $I^1 \subset I$ for some integer $R \geq 2$ such that $f^R(I^1)$ contains the critical value $v$ for $f$. Note that in this case, $f$ has $\theta$-bounded kneading for some $\theta \leq R$. The {\it renormalization type $\tau(f)$ of $f$} is given by the order of points in $\{f^i(v)\}_{i=0}^{R-1} \subset I$. If $f$ is $N$-times valuably renormalizable, then its {\it $N$-combinatorial type} is defined as
$$
\tau_N(f) := (\tau(f), \tau(\cR(f)), \ldots, \tau(\cR^{N-1}(f))).
$$

\begin{lem}\label{1d struct}
Let $f : I \to I$ be a unimodal map with critical value $v$. If $f$ is non-trivially topologically renormalizable with return time $R \geq 2$, then $f$ is valuably renormalizable. In this case, $I^1 := [v, f^R(v)]$ is the minimal $R$-periodic interval containing $v$.
\end{lem}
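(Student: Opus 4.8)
\emph{Proof plan.} The plan is to reduce the statement to the one-dimensional combinatorics recorded in \lemref{sink combin}. Normalize so that $f''(c)>0$, so that $c$ is a local minimum and $v=f(c)=\min f$, and let $I^1$ be the given $R$-periodic interval for which the return map $f^R|_{I^1}$ is not generalized Morse-Smale of $2$-bounded type. The first step is to see that the critical point is ``active'': among the pairwise disjoint intervals $I^1, f(I^1),\dots,f^{R-1}(I^1)$ of the cycle, exactly one --- call it $J=f^j(I^1)$ --- contains $c$ in its interior. Indeed, if $c$ avoided the interiors of all of them, then $f$ would be monotone on each $f^i(I^1)$, so $f^R|_{I^1}$ would be a monotone self-map of $I^1$ with $f^R(I^1)\Subset I^1$, hence would possess an attracting fixed point or attracting $2$-cycle and be trivial; disjointness then forces uniqueness of $J$. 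I would then pass to the ``valuable'' interval $\tilde I := f(J) = f^{j+1}(I^1)$. A short bookkeeping with indices mod $R$ shows $\tilde I$ is again $R$-periodic, and its return map $\hat g := f^R|_{\tilde I}$ is unimodal: its unique critical point $\xi$ is the unique preimage of $c$ under $f^{R-1}|_{\tilde I}$ (the intervals $f^k(\tilde I)$ for $0\le k\le R-2$ miss $c$), and since $f^{R-1}$ is a local diffeomorphism near $\xi$ while $f$ has a minimum at $c$, the map $\hat g$ has $\hat g''(\xi)>0$ and critical value $\hat g(\xi)=f(c)=v$.

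The crucial point is that $\hat g$ is still not generalized Morse-Smale of $2$-bounded type. I would prove this by comparing critical $\omega$-limit sets: the forward $f$-orbits of $\xi$ and of the critical point of $f^R|_{I^1}$ both pass through $c$, hence both eventually coincide with the forward orbit of $v$, so the critical orbits of $\hat g$ and of $f^R|_{I^1}$ have the same $\omega$-limit set up to a period-preserving homeomorphism; and for a unimodal map, being generalized Morse-Smale of $2$-bounded type is equivalent to its critical orbit converging to a periodic orbit of period at most $2$. (Alternatively one cites the standard fact that the first-return maps to the intervals of a restrictive cycle are pairwise topologically conjugate.) Now apply \lemref{sink combin} to $\hat g$ --- no reflection is needed since $\hat g''(\xi)>0$. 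Non-triviality excludes cases i)--iii), so $\hat g(\xi)<\xi<\hat g^2(\xi)$, $\hat g^3(\xi)\le\hat g^2(\xi)$, and $K:=[\hat g(\xi),\hat g^2(\xi)]$ is the minimal $\hat g$-invariant interval containing $\xi$; since $\xi$ lies in the interior of $K$, where $\hat g$ attains its minimum, $\hat g(K)=[\hat g(\xi),\max(\hat g^2(\xi),\hat g^3(\xi))]=[\hat g(\xi),\hat g^2(\xi)]=K$. As $\hat g(\xi)=v$ and $\hat g^2(\xi)=\hat g(v)=f^R(v)$, this says $K=[v,f^R(v)]$ and $f^R(K)=K$, and in particular $v=\hat g(\xi)\in\hat g(K)\subseteq \hat g(\tilde I)=f^R(\tilde I)$. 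Thus $\tilde I$ is an $R$-periodic interval whose $R$-th image contains the critical value, i.e.\ $f$ is valuably renormalizable.

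It remains to see that $K=[v,f^R(v)]$ is the minimal $R$-periodic interval containing $v$. Any $R$-periodic interval $I'$ containing $v$ also contains $f^R(v)$ (since $f^R(I')\Subset I'$), hence contains all of $K$; taking $I'=\tilde I$ gives $K\subseteq\tilde I$, so $f^i(K)\cap K\subseteq f^i(\tilde I)\cap\tilde I=\varnothing$ for $1\le i<R$, while $f^R(K)=K$ from the previous step. Hence $K$ is itself an $R$-periodic interval containing $v$ and is contained in every other such interval, which is exactly the assertion. (As usual, the minimal restrictive interval satisfies $f^R(K)=K$ rather than the strict inclusion of the definition; this causes no trouble.) I expect the genuine obstacle to be making the non-triviality transfer of the second paragraph fully rigorous: because $f$ folds at $J\ni c$, there is no naive conjugacy between $f^R|_{I^1}$ and $f^R|_{\tilde I}$, so one is forced to argue at the level of critical $\omega$-limit sets (or to import the equivalence of return maps within a restrictive cycle); everything else is routine one-dimensional bookkeeping.
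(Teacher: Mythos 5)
Your route --- isolate the unique $J=f^j(I^1)$ containing $c$, pass to $\tilde I=f(J)$ so that $\hat g:=f^R|_{\tilde I}$ is unimodal with $\hat g(\xi)=v$, and feed this into \lemref{sink combin} --- is the natural one, and the paper records no proof of this lemma to compare against. Note that your first claim (valuable renormalizability) actually falls out directly once $\xi\in\tilde I$ is known: then $v=\hat g(\xi)\in\hat g(\tilde I)=f^R(\tilde I)$ is precisely the definition, with no need for the minimal-invariant-interval machinery.

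The gap for the minimality statement is exactly the one you flagged, and it is genuine. Your asserted equivalence ``generalized Morse-Smale of $2$-bounded type $\Leftrightarrow$ the critical orbit converges to a periodic orbit of period $\le 2$'' is false in the direction you use it, and the failure is concentrated in case iii) of \lemref{sink combin}. A unimodal $\hat g$ can have $v=\hat g(\xi)<\xi$, $\hat g(v)>\xi$, and $\hat g^{n+1}(v)>\hat g^n(v)$ for all $n\ge 1$ --- so the critical orbit increases monotonically to an attracting fixed point $q$ and case iii) holds --- while simultaneously carrying a horseshoe: a repelling fixed point $r\in(\xi,q)$ near $\xi$, with both monotone branches of $\hat g$ mapping expansively over $[v,r]$, produces periodic orbits of all periods, so $\hat g$ is far from generalized Morse-Smale, and $[\hat g(\xi),\hat g^2(\xi)]=[v,f^R(v)]$ is not even $\hat g$-invariant (its forward iterates increase to $[v,q]$). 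Cases i) and ii) really do force the return map to be generalized Morse-Smale (in case i) all of $\hat g(\tilde I)$ lands in the increasing branch; in case ii) $[v,\xi]$ is a decreasing invariant trap), so it is specifically case iii) that needs an extra hypothesis. In the paper, \lemref{1d struct} is applied alongside a bounded-kneading assumption (cf.\ \propref{c1 bound geo}), which forces $\hat g^{1+\theta}(\xi)<\xi$ for some $\theta$ and so directly forbids the monotone escape of case iii); without an input of that kind, the step ``non-triviality excludes cases i)--iii)'' does not follow.
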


\subsection{For H\'enon-like maps}\label{subsec.ren combin 2d}

For some $N \in \bbN \cup \{\infty\}$, let $F$ be the $N$-times regularly H\'enon-like renormalizable map considered in \secref{sec.crit avoid}. Suppose that the combinatorics of  renormalizations of $F$ are of $\bfb$-bounded type for some integer $\bfb \geq 2$. Moreover, assume that $\epsilon$ is sufficiently small so that $\bfb\bepsilon < 1$. By only considering every other returns if necessary, we may also assume without loss of generality that $r_n := R_{n+1}/R_n \geq 3$ for ${n_0}\leq n \leq N$.

Let $z = (a,b), w = (c,d) \in B_0 = I^h_0 \times I^v_0$. Denote
$$
m := \min\{a, c\}
\matsp{and}
M := \max\{a, c\}.
$$
For $t \geq 0$, define
$$
V_{[z,w]}(t) := [m -t, M + t] \times I^v_0.
$$
For $n_0 \leq n \leq N$; $p, q \in \hcB^n_0$ and $t \geq 0$, denote
$$
\cV_{[p, q]}^n(t) := (\Psi^n)^{-1}(V_{[\Psi^n(p), \Psi^n(q)]}(t)).
$$

Let $s \in \{0, 1, 2\}$. For $n_0 \leq n \leq N-s$ and $k \geq -1$, denote
$$
a^n_k := P^n_0(v_{kR_n})
\matsp{and}
b^{n,s}_k:= P^n_0(v_{kR_n+R_{n+s}}) = a^n_{k+R_{n+s}/R_n}.
$$
Define
$$
\hB^{n,s}_{kR_n} := V_{[a^n_k, b^{n,s}_k]}(\lambda^{\bepsilon R_n})
\matsp{and}
\hcB^{n,s}_{kR_n} := (\Psi^n)^{-1}(\hB^{n,s}_{kR_n}).
$$
In particular, we have
$$
\hcB^n_0 \supset \hcB^{n,0}_0 := \cV^n_{[v_0, v_{R_n}]}(\lambda^{\bepsilon R_n}).
$$
See \figref{fig.valuestruct}.

\begin{lem}\label{quick return}
Let $n_0 \leq n\leq N$. Suppose that $F^{R_n}|_{\cB^n_0}$ is non-trivially topologically renormalizable with combinatorics of $\bfb$-bounded type. Then 
$$
\cV_{v_{kR_n}}^n(\lambda^{\bepsilon R_n}) \cap \cV_{v_{-R_n}}^n(\lambda^{\bepsilon R_n}) =\varnothing
\matsp{for}
k = O(\bfb).
$$
\end{lem}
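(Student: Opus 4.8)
The plan is to pass to the one-dimensional profile $f_n := \Piod(\cR^n(F))$, prove a metric estimate for the forward orbit of its critical value, and transport it back to the phase space using the thinness of $\cR^n(F)$. First, observe that the set $\cV^n_p(t) = (\Psi^n)^{-1}(V_{\Psi^n(p)}(t))$ depends only on the horizontal coordinate $P^n_0(p)=\pi_h\circ\Psi^n(p)$ of $p$; writing $a^n_k := P^n_0(v_{kR_n})$, the claimed disjointness for a given $k$ is thus equivalent to $|a^n_k - a^n_{-1}| > 2\lambda^{\bepsilon R_n}$. By \thmref{crit chart} and \eqref{eq.first entry}, the return map $F^{R_n}|_{\cB^n_0}$, read in the chart $\Psi^n$, is H\'enon-like with second-coordinate dependence of size $\lambda^{(1-\bepsilon)R_n}$, and this thinness is unaffected by the affine conjugacy $\cS^n$. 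Hence over the first $O(\bfb)$ iterates the horizontal orbit $\{a^n_k\}_{k\ge 0}$ is $\lambda^{(1-\bepsilon)R_n}$-shadowed by the forward orbit of the critical value under the profile read in $\Psi^n$-coordinates, and $a^n_{-1}$ lies within $\lambda^{(1-\bepsilon)R_n}$ of that profile's critical point. Since the $\Psi^n$-coordinate profile is the affine rescaling of $f_n$ onto an interval of length $|I^n_0|\ge \sigma_0^n$ for some $\sigma_0=\sigma_0(\bfb)\in(0,1)$ (real bounds, \cite{CLPY3}), it suffices to produce $c=c(\bfb)>0$ with
$$
\dist(f_n^k(v_n),\,c_n) > c \matsp{for} 1 \le k = O(\bfb),
$$
where $c_n$ is the critical point of $f_n$ and $v_n := f_n(c_n)$: then $\sigma_0^n\,c$ dominates $2\lambda^{\bepsilon R_n}$ together with the shadowing error for every $n\ge n_0$, because $R_n$ grows exponentially while $\sigma_0^n$ decays only exponentially, and the constants $K_1$ and $\eta$ in \eqref{eq.proper depth 1} are at our disposal in terms of $\bfb$.

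For the one-dimensional estimate, the non-triviality hypothesis together with \lemref{1d struct} makes $f_n$ valuably renormalizable with return time $r_n\le \bfb$, with $I^{n+1}_0 = [v_n,\,f_n^{r_n}(v_n)]$ the minimal $r_n$-periodic interval containing $c_n$; let $\cI_i := f_n^i(I^{n+1}_0)$, $0\le i<r_n$, be its cycle of pairwise disjoint intervals. The real bounds for unimodal renormalization with $\bfb$-bounded combinatorics (\cite{dFdMPi}, equivalently the one-dimensional a priori bounds) give, with all constants depending only on $\bfb$: $|\cI_i|\asymp 1$, each $\cI_i$ carries a definite collar in $I^n_0$ disjoint from the other cycle intervals, and $c_n$ lies at definite distance from $\partial\cI_0$. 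Since $v_n\in\cI_1$, we have $f_n^k(v_n)\in\cI_{(k+1)\bmod r_n}$. If $(k+1)\not\equiv 0\pmod{r_n}$, then $f_n^k(v_n)$ lies in a cycle interval other than $\cI_0$, so $\dist(f_n^k(v_n),c_n)\ge c(\bfb)$. The remaining ``re-entry'' values are $k = jr_n-1$. For $j=1$, $f_n^k(v_n) = f_n^{r_n}(c_n) = g(c_n)$ is the critical value of the return map $g := f_n^{r_n}|_{I^{n+1}_0}$, and minimality of $I^{n+1}_0 = [f_n(c_n),\,f_n(g(c_n))]$ together with the quadratic shape of $f_n$ near $c_n$ (real bounds) gives $|I^{n+1}_0|\asymp(g(c_n)-c_n)^2$, whence $\dist(g(c_n),c_n)\ge c(\bfb)$. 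For $j=2$, $f_n^k(v_n) = g^2(c_n)$: were this within $\delta$ of $c_n$, then, since $g'(c_n)=0$, the map $g^2$ would acquire an attracting or parabolic fixed point $O(\delta)$-close to $c_n$ whose basin contains $c_n$, so the critical orbit of $g$, hence of the thin perturbation $\cR^{n+1}(F)$, would converge to a period-$2$ sink, contradicting non-triviality; a compactness argument (the return maps having bounded geometry by the real bounds) upgrades this to $\dist(g^2(c_n),c_n)\ge c(\bfb)$.

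The two-dimensional reduction and the absorption of the error terms $\lambda^{\bepsilon R_n}$ and the shadowing error into $\sigma_0^n\,c(\bfb)$ are routine once $n_0$ — and with it the constants in \eqref{eq.proper depth 0} and \eqref{eq.proper depth 1} — is taken large in terms of $\bfb$. The main obstacle is the re-entry estimate: controlling $\dist(f_n^k(v_n),c_n)$ when the critical orbit returns to the interval $\cI_0$ that contains $c_n$. This is precisely where the non-triviality hypothesis is used metrically — through \lemref{1d struct} for $k = r_n-1$ and through the Morse--Smale exclusion for $k = 2r_n-1$ — and it is what fixes the admissible range: the argument cannot be pushed past $k\approx 2r_n$, since for larger re-entry indices the critical point of $\cR^{n+1}(F)$ could be periodic of higher (but still $\bfb$-bounded) period, which is consistent with the hypotheses. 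Thus ``$k = O(\bfb)$'' is to be read as a bounded multiple of $r_n\le\bfb$.
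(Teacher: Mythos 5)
Your reduction to the one-dimensional profile is appealing but it is circular at the exact point where the estimate is needed, and the final Morse--Smale step is incomplete.

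The circularity is structural, not cosmetic. Your argument relies on (a) real bounds for the profile $f_n$ with constants depending only on $\bfb$, (b) the comparability $|I^n_0|\gtrsim\sigma_0^n$, and (c) $C^0$-shadowing of the horizontal orbit $\{a^n_k\}$ by the orbit of the critical value of the rescaled profile. All three are consequences of \thmref{a priori}, \corref{a priori cor}, \propref{c1 bound 2d}, \propref{2d close 1d} and \propref{exp scale}. But Section~\ref{sec.a priori} is explicitly set up under the hypothesis that \propref{value struct bound} applies, whose proof rests on \propref{min domain}, whose proof in turn invokes \lemref{quick return}. So the chain a priori bounds $\to$ \propref{value struct bound} $\to$ \propref{min domain} $\to$ \lemref{quick return} means you cannot use any of (a)--(c) to prove this lemma. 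Concretely, without a priori bounds there is no reason the $\Psi^n$-coordinate profile $\tif_n$ should lie in any $\frU(K)$: by \thmref{crit chart} it factors as approximately $f_0\circ h_n$ with $|h_n'|$ anywhere in $(\lambda^{\bepsilon R_n},\lambda^{-\bepsilon R_n})$, so its distortion is a priori unbounded and your claim ``$|\cI_i|\asymp 1$'' has no support at this stage.

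Even granting the 1D estimate, the step from ``the critical orbit of $g=f_n^{r_n}$ converges to a period-$1$ or period-$2$ sink'' to ``the topological renormalization is trivial'' is not immediate. Triviality means the \emph{entire} return map on the candidate periodic domain is generalized Morse--Smale; a sink near the critical value does not rule out nontrivial dynamics elsewhere in that domain. The paper handles this by showing that a fast return forces $\cV^n_{v_0}(\lambda^{\bdelta R_n})$ to map strictly into itself under $F^{(k+1)R_n}$ with uniform contraction after two cycles, hence produces an attracting $R_{n+1}$-periodic point $q_0$ with a vertically proper immediate basin bounded by strong-stable manifolds; then, for \emph{any} $\hR_{n+1}$-periodic Jordan domain $\cD^{n+1}_0$, it splits into the case $\cD^{n+1}_0$ disjoint from this basin (where \propref{value or sink} gives triviality directly) and the case $\cD^{n+1}_0 \ni q_0$ (where the complementary annular region $\cE^{n+1}_0$ is also handled by \propref{value or sink}). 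That genuinely 2D dichotomy, powered only by the Section~\ref{sec.crit avoid} lemmas (\lemref{flat}, \lemref{quad flat}, \lemref{quad straight}, \propref{pull back vert in bn}, \propref{ss in bn}, \propref{value or sink}) and \thmref{crit chart}, is what your proposal is missing, and it is precisely what keeps the proof self-contained.
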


\begin{proof}
Let $\delta \in (\bepsilon, 1)$ with $\bfb\bdelta < 1$. Suppose towards a contradiction that for some $k = O(\bfb)$, we have
\begin{equation}\label{eq.0, -1}
V_{v_{kR_n}}(\lambda^{\bdelta R_n}) \cap V_{v_{-R_n}}(\lambda^{\bdelta R_n})\neq\varnothing.
\end{equation}
Without loss of generality, assume that $k$ is the smallest number for which \eqref{eq.0, -1} holds.

For $y \in I^v_0$, consider
$$
J^n_0 \subset (-\lambda^{\bdelta R_n}, +\lambda^{\bdelta R_n})
\matsp{and}
\cJ^n_0 := \Psi^{-n}(J^n_0 \times\{y\}) \subset \cV_{v_0}^n(\lambda^{\bdelta R_n}).
$$
For $i \in \bbN$, denote $\cJ^n_i := F^i(\cJ^n_0)$.

By \propref{ext deriv bound}, we see that
$$
|\cJ^n_{iR_n-1}| < \lambda^{-i\bepsilon R_n}|J^n_0| < \lambda^{\udelta R_n}
\matsp{for}
i = O(\bfb).
$$
Moreover, since
$$
\cJ^n_{iR_n} \cap V_{v_{-R_n}}(\lambda^{\bdelta R_n}) = \varnothing
\matsp{for}
i < k,
$$
we can argue by induction using \lemref{flat} iii) and \lemref{quad flat} that $\cJ^n_{(k+1)R_n-1}$ is $\lambda^{(1-\bepsilon)R_n}$-horizontal in $C^r$ in $\cB_{-1}$. Then it follows from \eqref{eq.0, -1}, \eqref{eq.henon trans} and \lemref{est value} that
$$
|P^n_0(\cJ^n_{(k+1)R_n})| < \lambda^{\udelta R_n}|\cJ^n_{(k+1)R_n-1}| < \lambda^{\udelta R_n}|J^n_0|.
$$
We conclude that
$$
F^{(k+1)R_n}(\cV_{v_0}^n(\lambda^{\bdelta R_n})) \Subset \cV_{v_0}^n(\lambda^{\bdelta R_n}).
$$

Denote $E^h_{p_i} := DF^i(E_{p_0}^h)$ for $i \in \bbN$. Arguing by induction using \lemref{flat} i) and \lemref{quad flat}, we also see that $E_{p_{(k+1)R_n-1}}^h$ is $\lambda^{(1-\bepsilon)R_n}$-horizontal in $\cB_{-1}$. Consequently, by \eqref{eq.0, -1}, \eqref{eq.henon trans} and \lemref{est value}, we have
$$
\measuredangle(E^h_{p_{(k+1)R_n}}, E^{v,n}_{p_{(k+1)R_n}}) < \lambda^{\udelta R_n}.
$$
Hence, by \thmref{crit chart} ii), it follows that 
$$
\|DF^{R_n}|_{E^h_{p_{(k+1)R_n}}}\| < \lambda^{\udelta R_n}.
$$
Since, by \propref{ext deriv bound}, we have
$$
\|D_{p_0}F^{(k+1)R_n}\| < \lambda^{-\bepsilon R_n},
$$
we conclude that
$$
\|D_{p_0}F^{(k+2)R_n}\| \asymp \|D_{p_0}F^{(k+2)R_n}|_{E^h_{p_0}}\| < \lambda^{-\bepsilon R_n}\lambda^{\udelta R_n} = \lambda^{\udelta R_n}.
$$
Applying \propref{ext deriv bound} again, we have
$$
\|D_{p_0}F^{2(k+1)R_n}\| \leq \|D_{p_0}F^{(k+2)R_n}\|\cdot\|D_{p_{k+2(R_n)}}F^{kR_n}\| < \lambda^{\udelta R_n}\lambda^{-\bepsilon R_n} = \lambda^{\udelta R_n}.
$$
Thus, under $F^{2(k+1)R_n}$, there exists a unique fixed sink $q_0$ that attracts the orbit of every point in $\cV^n_{v_0}(\lambda^{\bepsilon R_n})$. Since $\cV^n_{v_0}(\lambda^{\bepsilon R_n})$ maps into itself under $F^{(k+1)R_n}$, we see that $q_0$ is fixed under $F^{(k+1)R_n}$.

Denote
$$
\cV^n_{v_0} := \cV^n_{v_0}(\lambda^{\bepsilon R_n})
\matsp{and}
R_{n+1} := (k+1)R_n.
$$
The set $\partial \cV^n_{v_0} \setminus \partial \hcB^n_0$ consists of two vertical curves $\gamma^{l, 0}$ and $\gamma^{r, 0}$: the former to the left and the latter to the right of $v_0$.

For $i \geq 0$, let $\cB^{n+1, -i}_0$ be the component of $F^{-i R_{n+1}}(\cV^n_{v_0}) \cap \hcB^n_0$ that contain $\cV^n_{v_0}$. Proceeding inductively, suppose that $\cB^{n+1, -i}_0$ is a quadrilateral such that $\partial \cB^{n+1, -i}_0 \setminus \partial \hcB^n_0$ consists of two vertical curves $\gamma^{l, -i}$ and $\gamma^{r, -i}$:  the former to the left of $\gamma^{l, -i+1}$ and the latter to the right of $\gamma^{r, -i+1}$. Let $y \in I^v_0$, and denote
$$
\cI^{-i, y}_0 := (\Psi^n)^{-1}(I^n_0 \times \{y\})\cap \cB^{n+1, -i}_0
 \matsp{and}
\cI^{-i, y}_j := F^j(\cI^{-i, y}_0)
\matsp{for}
j \in \bbN.
$$
Arguing as above, we see that $\cI^{-i,y}_{R_{n+1}-1}$ is $\lambda^{(1-\bepsilon)R_n}$-horizontal in $\cB_0$, and hence, $\cI^{-i,y}_{R_{n+1}}$ is vertical quadratic in $\hcB^n_0$ whose endpoints are contained in $\gamma^{r, -i}$. Moreover, by \lemref{flat} iv) and \lemref{quad straight}, we also see that the induction hypothesis holds for $\cB^{n+1, -i-1}_0$.

By \propref{pull back vert in bn}, we see that as $i \to \infty$, the curves $\gamma^{l, -i}$ and $\gamma^{r, -i}$ converge to subarcs $\gamma^l$ and $\gamma^r$ respectively of the strong-stable manifold $W^{ss}(w_0)$ of a $R_{n+1}$-periodic saddle $w_0 \in \gamma^r := W^{ss}_{\loc}(y_0)$. Moreover, $\gamma^l$ and $\gamma^r$ are $\lambda^{(1-\bdelta)R_n}$-vertical and vertically proper in $\hcB^n_0$. It follows that these curves bound the immediate basin $\cB^{n+1}_0 \subset \hcB^n_0$ of $q_0$.

Let $R_n < \hR_{n+1} \leq \bfb R_n$. We claim that any $\hR_{n+1}$-periodic Jordan domain $\cD^{n+1}_0 \Subset \cB^n_0$ induces a trivial renormalization of $F^{R_n}|_{\cB^n_0}$.

If $\cD^{n+1}_0 \cap \cB^{n+1}_0 = \varnothing$, then by \propref{value or sink}, $\cD^{n+1}_0$ induces a trivial renormalization of $F^{R_n}|_{\cB^n_0}$. Assume that $\cD^{n+1}_0 \cap \cB^{n+1}_0 \neq \varnothing$. Then $\cD^{n+1}_0\ni q_0$ and $\hR_{n+1} = R_{n+1}$. Define
$$
\cC^{n+1}_0 := \bigcap_{i \in \bbN} F^{iR_{n+1}}(\cD^{n+1}_0).
$$
By \propref{pull back vert in bn}, every point $p \in \cC^{n+1}_0$ that does not eventually map into $\cB^{n+1}_0$ is infinitely forward $(CK_0, \bdelta, \lambda)$-regular. Let $W^{ss}_{\loc}(p)$ be the connected component of $W^{ss}(p) \cap \hcB^n_0$ that contain $p$, which must be $\lambda^{(1-\bdelta)R_n}$-vertical and vertically proper in $\hcB^n_0$ by \propref{ss in bn}.

Let $p^l_0$ and $p^r_0$ be the leftmost and the rightmost points in $\cC^{n+1}_0$ respectively. Let $\hcD^{n+1}_0$ be the quadrilaterals vertically proper in $\hcB^n_0$ enclosed between $W^{ss}_{\loc}(p^l_0)$ and $W^{ss}_{\loc}(p^r_0)$, so that $\cC^{n+1}_0 \subset \hcD^{n+1}_0$. Similarly, let $\cE^{n+1}_0$ be the quadrilaterals vertically proper in $\hcB^n_0$ enclosed between $W^{ss}_{\loc}(w_0)$ and $W^{ss}_{\loc}(p^r_0)$.

Observe that
\begin{equation}\label{eq.quick return}
\hcD^{n+1}_{iR_n} \cap \cV^n_{v_0}(\lambda^{\bepsilon R_n}) = \varnothing
\matsp{for}
1 \leq i < R_{n+1}/R_n.
\end{equation}
Let $\cJ_0$ be a genuine horizontal arc contained in $\cE^{n+1}_0$. Using \eqref{eq.quick return}, a similar argument as above shows that $\cJ_{iR_n}$ for $0 \leq i < R_{n+1}/R_n$ is $\lambda^{-\bdelta R_n}$-horizontal in $\hcB^n_0$. Consequently,
$$
F^{R_{n+1}}(\cE^{n+1}_0) \cap \cB^{n+1}_0 =\varnothing.
$$
It follows that $\cE^{n+1}_0$ is connected and $F^{R_{n+1}}(\cE^{n+1}_0) = \cE^{n+1}_0$. Applying \propref{value or sink}, we conclude that $\cD^{n+1}_0$ induces a trivial renormalization of $F^{R_n}|_{\cB^n_0}$. This is a contradiction, and therefore, \eqref{eq.0, -1} cannot hold.
\end{proof}

\begin{prop}\label{min domain}
Let $n_0 \leq n \leq N$. Suppose that $F^{R_n}|_{\cB^n_0}$ is twice non-trivially topological renormalizable with combinatorics of $\bfb$-bounded type. Then the following statements hold:
\begin{enumerate}[i)]
\item $|a^n_i - a^n_j| > \lambda^{\bepsilon R_n}$ for $-1 \leq i, j \leq 2$ with $i \neq j$;
\item $a^n_0 = 0< a^n_m < a^{n}_1 $ for $m \in \{-1, 2\}$; and
\item $F^{R_n}(\hcB^{n,0}_0) \Subset \hcB^{n,0}_0$.
\end{enumerate}
\end{prop}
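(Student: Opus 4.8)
The plan is to reduce all three assertions to one-dimensional combinatorics. By \thmref{crit chart} the composition $\Phi_0\circ F^{R_n}\circ(\Psi^n)^{-1}=(\Phi_0\circ F\circ\Phi_{-1}^{-1})\circ H_n$ has the form $(x,y)\mapsto\bigl(f_0(h_n(x))-\lambda e_n(x,y),\,h_n(x)\bigr)$, where $f_0$ is convex with its unique critical point at $0$ and $\|e_n\|_{C^r}<\lambda^{(1-\bepsilon)R_n}$ by \eqref{eq.second}, \eqref{eq.first entry}, \eqref{eq.henon trans}; since $\Phi_0$ is $\lambda^{(1-\bepsilon)R_n}$-close to $\Psi^n$ in $C^r$ (\thmref{crit chart} i)), this presents $F^{R_n}|_{\cB^n_0}$, read in the centered chart $\Psi^n$, as a $\lambda^{(1-\bepsilon)R_n}$-thin H\'enon-like map whose one-dimensional profile is the unimodal map $\chf_n(x):=f_0(h_n(x))-\lambda e_n(x,0)$, with unique quadratic critical point $c_n:=h_n^{-1}(0)$. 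Writing $u\approx u'$ for $|u-u'|=O(\lambda^{(1-\bepsilon)R_n})$ (negligible against the scale $\lambda^{\bepsilon R_n}$), one has $a^n_0=\pi_h(\Psi^n(v_0))=0$ because $\Psi^n$ is centered at $v_0$; evaluating the displayed formula at $\Psi^n(v_{-R_n})$ and using $F^{R_n}(v_{-R_n})=v_0$, $\Phi_0(v_0)=0$ forces $h_n(a^n_{-1})=0$, i.e.\ $c_n=a^n_{-1}$; and thinness together with \thmref{crit chart} i) gives $a^n_k\approx\chf_n^{\,k}(0)$ for $k\in\{0,1,2\}$. Thus $a^n_{-1},a^n_0,a^n_1,a^n_2$ are, up to $\approx$, the critical point and the first three forward critical-orbit points of $\chf_n$; and, since $F^{R_n}|_{\cB^n_0}$ is twice non-trivially topologically renormalizable and $\lambda^{(1-\bepsilon)R_n}$-thin, the same is true of $\chf_n$, which is therefore not generalized Morse--Smale.

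For (i), the three pairs involving index $-1$ are handled by \lemref{quick return}: since $k\in\{0,1,2\}$ lies in its range $k=O(\bfb)$ (as $\bfb\geq2$), disjointness of the vertical $\lambda^{\bepsilon R_n}$-strips about $v_{kR_n}$ and $v_{-R_n}$ in the $\Psi^n$-chart means exactly $|a^n_k-a^n_{-1}|>2\lambda^{\bepsilon R_n}$. For the pairs inside $\{0,1,2\}$ I would rerun the sink-building scheme of \lemref{quick return}: a coincidence $|a^n_i-a^n_j|\leq\lambda^{\bepsilon R_n}$, $i<j$, would let $F^{(j-i)R_n}$ map a vertical $\lambda^{\bepsilon R_n}$-strip about $v_{iR_n}$ compactly into itself, producing a periodic sink that absorbs $\cV^n_{v_0}(\lambda^{\bepsilon R_n})$ and trivializes one of the first two renormalizations, a contradiction (the a priori bounds of \cite{CLPY3} keep $\chf_n$ of bounded distortion on the intervals involved, and the normalization $r_n\geq3$ puts $v_0,v_{R_n},v_{2R_n}$ in pairwise disjoint $R_{n+1}$-periodic domains). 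Granting (i), part (ii) follows from \lemref{sink combin} applied to $\chf_n$ with $c=c_n=a^n_{-1}$: $\chf_n$ being non-Morse--Smale rules out all three sink alternatives, so $\chf_n(c_n)\leq c_n$, $\chf_n^2(c_n)\geq c_n$ and $\chf_n^3(c_n)\leq\chf_n^2(c_n)$; translating through $\chf_n(c_n)\approx a^n_0=0$, $\chf_n^2(c_n)\approx a^n_1$, $\chf_n^3(c_n)\approx a^n_2$ and using the separations of (i) gives $0=a^n_0<a^n_{-1}<a^n_1$, while the fact from the same lemma that $[\chf_n(c_n),\chf_n^2(c_n)]\approx[a^n_0,a^n_1]$ is the minimal invariant interval containing $c_n$ forces $a^n_2=\chf_n^3(c_n)$ into it, giving $0<a^n_2<a^n_1$.

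For (iii): by (i)--(ii), $[a^n_0,a^n_1]\approx[\chf_n(c_n),\chf_n^2(c_n)]$ is the minimal $\chf_n$-invariant interval, with $c_n=a^n_{-1}$ and $a^n_2$ in its interior, so $\chf_n([a^n_0,a^n_1])\subseteq[a^n_0,a^n_1]$. Since $F^{R_n}$ contracts the vertical direction strongly, $F^{R_n}(\hcB^{n,0}_0)$ is vertically thin and sits well inside $\hB^n_0$ in the vertical coordinate; horizontally, a point of $\hcB^{n,0}_0$ with $\Psi^n$-horizontal coordinate $x\in[a^n_0-\lambda^{\bepsilon R_n},a^n_1+\lambda^{\bepsilon R_n}]$ maps to one with horizontal coordinate $\approx\chf_n(x)$, and using the a priori distortion bound (so that $|\chf_n'|\asymp|x-c_n|$ on a definite neighbourhood of $[a^n_0,a^n_1]$) together with the separations of (i) one checks that $\chf_n$ carries the two $\lambda^{\bepsilon R_n}$-collars of $[a^n_0,a^n_1]$ strictly into $(a^n_0-\lambda^{\bepsilon R_n},a^n_1+\lambda^{\bepsilon R_n})$; this gives $F^{R_n}(\hcB^{n,0}_0)\Subset\hcB^{n,0}_0$, so $\hcB^{n,0}_0$ is an $R_n$-periodic domain. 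I expect the real work to lie in this endpoint bookkeeping --- and, similarly, in the sink-exclusion estimate for the pairs inside $\{0,1,2\}$ in (i): in both one must verify that the genuinely $O(1)$-sized expansion $\chf_n$ exerts near the critical value does not carry the $\lambda^{\bepsilon R_n}$-collar out of the strip, which calls for the quantitative a priori distortion bounds of \cite{CLPY3} rather than the soft one-dimensional combinatorics sufficient for (ii).
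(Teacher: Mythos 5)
There is a genuine gap in part (i), specifically for the pair $\{1,2\}$. Your ``sink-building scheme'' transplanted from \lemref{quick return} relies on the critical-point contraction: in that lemma the hypothesis is that some $v_{kR_n}$ falls $\lambda^{\bepsilon R_n}$-close to $v_{-R_n}$, so the next application of $F^{R_n}$ passes through the quadratic fold, kills the horizontal derivative, and gives you the strict contraction needed for a sink. A near-collision $|a^n_1-a^n_2|<\lambda^{\bepsilon R_n}$ does \emph{not} place any iterate of the critical value near $v_{-R_n}$; the map $F^{R_n}$ at $v_{R_n}$ has no reason to contract horizontally ($|\chf_n'(a^n_1)|\asymp|a^n_1-c_n|$ can be $O(1)$ under the a priori bounds), so the $\lambda^{\bepsilon R_n}$-strip about $v_{R_n}$ need not map into itself and no sink is produced. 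The paper's proof of this pair is a structurally different argument: from $|a^n_1-a^n_2|<\lambda^{\bepsilon R_n}$ and regularity it deduces $\|v_{kR_n}-v_{(k+1)R_n}\|<\lambda^{\bepsilon R_n}$ for $1\le k<\bfb$, then uses \propref{exist saddle} to extract a saddle $x_0\in\cB^{n+1}_0$ whose vertically-proper local stable manifolds $W^{ss}_{\loc}(x_{iR_n})$ must separate the distinct pieces $\cB^{n+1}_{jR_n}$; the clustering then forces one such leaf to cross the wrong piece, a contradiction. Your parenthetical ``$r_n\geq 3$ puts $v_0,v_{R_n},v_{2R_n}$ in pairwise disjoint $R_{n+1}$-periodic domains'' gestures at the right object, but disjointness of the renormalization pieces alone is not a $\lambda^{\bepsilon R_n}$-separation of their projections; that quantitative gap is precisely what the saddle's stable lamination supplies, and nothing in your proposal produces it. Since your (ii) and (iii) consume the full (i), the gap propagates.

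A secondary (and smaller) soft spot: the step ``since $F^{R_n}|_{\cB^n_0}$ is twice non-trivially topologically renormalizable and $\lambda^{(1-\bepsilon)R_n}$-thin, the same is true of $\chf_n$, which is therefore not generalized Morse--Smale'' is asserted without argument. The implication ``2D non-trivially renormalizable $\Rightarrow$ 1D profile not GMS of type $\le 2$'' is itself one of the 2D-to-1D transfer facts that Section~6 is in the business of establishing, and its clean proof goes through \propref{value or sink} (a sink orbit for $\chf_n$ of period $\le 2$ would, by the thinness, persist for the 2D return and lie in a connected $F^{dR_n}$-invariant set disjoint from $\cV^n_{v_0}(\lambda^{\bepsilon R_n})$, trivializing the renormalization). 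That reduction is correct in spirit but needs to be spelled out rather than inherited for free; as written it risks leaning on the very structure this proposition is meant to prove. The bookkeeping in (ii) and (iii), once (i) and the GMS-exclusion are granted, is in order.
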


\begin{proof}
For $0 \leq i \leq \bfb$, mapping $u^n_{-1}$ and $u^n_i$ by $F^{R_n}$, and applying \thmref{crit chart}, it follows from \lemref{quick return} that
\begin{equation}\label{eq.0,i}
|a^n_0 -a^n_{i+1}| > \lambda^{\bepsilon R_n}|a^n_{-1} - a^n_i| > \lambda^{\bepsilon R_n}.
\end{equation}

Now, suppose towards a contradiction that $a^n_0 = 0 < a^n_{-1} < a^{n}_1$ is not true. Then we have
$$
\lambda^{\bepsilon R_n} <a^{n}_1 < a^n_{-1}- \lambda^{\bepsilon R_n}.
$$
Denote
$$
J^n_0 := [-\lambda^{\bepsilon R_n}, a^n_{-1} - \lambda^{\bepsilon R_n}]
\comma
\chB^n_0 := J^n_0\times I^v_0
\matsp{and}
\chcB^n_0 := (\Psi^n)^{-1}(\chB^n_0).
$$
By \lemref{flat} iii) and \eqref{eq.henon trans}, we see that
$$
F^{R_n}(\chcB^n_0) \Subset \chcB^n_0 \setminus \cV^n_{v_0}(\lambda^{\bepsilon R_n}).
$$
It follows from \propref{value or sink} that $\chcB^n_0$ induces a trivial renormalization of $F$.

Define
$$
\chcD^n_0 = \chcD^{n,0}_0 := \chcB^n_0 \cup \cV^n_{v_{-R_n}}(\lambda^{\bepsilon R_n}).
$$
Observe that $F^{R_n}(\chcD^n_0)\Subset \chcB^n_0$. For $i \in \bbN$, let
$$
\cD^{n,-i}_0 := (F^{R_n}|_{\hcB^n_0})^{-1}(\chcD^{n, -i+1}_0).
$$
Arguing as in the proof of \lemref{quick return}, we see that $\cD^{n,-i}_0$ is a quadrilateral vertically proper in $\hcB^n_0$ that is bounded between two $\lambda^{(1-\bepsilon)R_n}$-vertical curves $\gamma^{l, -i}$ and $\gamma^{r, -i}$:  the former to the left of $\gamma^{l, -i+1}$ and the latter to the right of $\gamma^{r, -i+1}$. Moreover, there exists an $R_n$-periodic saddle $w_0$ such that $\gamma^{r, -i}$ converges to the local strong-stable manifold $W^{ss}_{\loc}(w_0)$ as $i\to \infty$.

Write
$$
\hcB^n_0 \setminus W^{ss}_{\loc}(w_0) = \cD^n_0 \sqcup \cE^n_0,
$$
where $\cD^n_0$ is the connected component that contains $\chcD^n_0$. Observe that  $F^{R_n}(\cE^n_0) \subset \cE^n_0$. Applying \propref{value or sink}, we conclude that $\hcB^n_0$ induces a trivial renormalization of $F$, which is a contradiction.

Next, suppose towards a contradiction that
$$
|a^{n}_1 - a^n_2| < \lambda^{\bepsilon R_n}.
$$
By the $R_n$-times regularity of $F$ on $\hcB^n_0$, we conclude that
\begin{equation}\label{eq.1,2 after}
\|v_{kR_n}-v_{(k+1)R_n}\| < \lambda^{-\bepsilon R_n}|a^{n}_1 - a^n_2| < \lambda^{\bepsilon R_n}
\matsp{for}
1 \leq k < \bfb.
\end{equation}

For $l \in \{1, 2\}$, let $\cB^{n+l}_0 \Subset \cB^{n+l-1}_0$ be the $R_{n+l}$-periodic Jordan domain that induces a non-trivial renormalization of $F^{R_n}|_{\cB^n_0}$. By \propref{value or sink}, we may assume without loss of generality that a $\lambda^{\bepsilon R_n}$-neighborhood of $\cB^{n+l}_0$ contains $v_0$ (and hence, also $v_{R_{n+l}}$).

\propref{exist saddle} implies the existence of a saddle point $x_0 \in \cB^{n+1}_0$ of period $dR_{n+1}$ for some $d \leq r_{n+1}$. For $0 \leq i < R_{n+1}$, let $W^{ss}_{\loc}(x_{iR_n})$ be the connected component of $W^{ss}(x_{iR_n}) \cap \hcB^n_0$ containing $x_{iR_n}$. Clearly,
\begin{equation}\label{eq.disjoint}
W^{ss}_{\loc}(x_{iR_n}) \cap \cB^{n+1}_{jR_n} = \varnothing
\end{equation}
for $0 \leq j < r_n$ such that $i\neq j$. Moreover, it follows from Propositions \ref{ss in bn} and \ref{value or sink} that $W^{ss}_{\loc}(x_{iR_n})$ is $\lambda^{(1-\bepsilon)R_n}$-vertical and vertically proper in $\hcB^n_0$.

Mapping $v_{R_{n+1}}$ and $v_0$ by $F^{R_n}$ and applying \thmref{crit chart}, it follows from \eqref{eq.0,i} that
$$
|a^{n}_1 - a^n_{r_n+1}| > \lambda^{\bepsilon R_n}.
$$
Thus, by \eqref{eq.1,2 after}, we have $a^n_{r_n+1} < a^n_k < a^{n}_1$ for $2 \leq k \leq r_n$. This contradicts \eqref{eq.disjoint}. Therefore, claim i) holds.

Suppose towards a contradiction that $a^n_2 < a^{n}_1$ is not true. Then we have
$$
a^{n}_1 < a^n_2- \lambda^{\bepsilon R_n}.
$$
Let $y \in I^v_0$, and consider
$$
A^n_0 := [a^n_{-1} + \lambda^{\epsilon R_n}, a^{n}_1]
\matsp{and}
\cA^n_0 := (\Psi^n)^{-1}(A^n_0\times \{y\}).
$$
Denote $\cA^n_i := F^i(\cA^n_0)$. Applying \thmref{crit chart} and \lemref{quad flat}, we see that $\cA^n_{R_n}$ is $\lambda^{-\bepsilon R_n}$-horizontal in $\hcB^n_0$. This means that  $f_n := \Piod(F_n)$, where
$$
F_n := \Psi^n\circ F^{R_n} \circ(\Psi^n)^{-1},
$$
maps $A^n_0$ onto $f_n(A^n_0)$ as an orientation-preserving diffeomorphism. Moreover,
$$
f_n(A^n_0) \supset [\lambda^{\bepsilon R_n}, a^n_2 -\lambda^{(1-\bepsilon)R_n}] \Supset A^n_0.
$$

Let
$$
L^n_0 := \{(a^{n}_1, t) \in \hcB^n_0\}
\matsp{and}
\cL^n_0 := (\Psi^n)^{-1}(L^n_0).
$$
For $i \in \bbN$, define
$$
\cL^n_{-i} := F^{-R_n}(\cL^n_{-i+1}\cap \cA^n_{R_n}).
$$
Applying \lemref{quad straight} and \lemref{flat} iv) and arguing by induction, we see that $\cL^n_{-i}$ is $\lambda^{(1-\bepsilon)R_n}$-vertical and vertically proper in $\hcB^n_0$. Moreover, by \lemref{pull back vert in bn}, we see that any point $p \in \cL^n_{-i}$ is $iR_n$-times forward $(CK_0, \bepsilon, \lambda)$-regular along the tangent direction to $\cL^n_{-i}$ at $p$. It follows that $\cL^n_{-i}$ converges as $i \to \infty$ to the local strong-stable manifold of some $R_n$-periodic saddle $z_0$. Let $\cB^{n, r}_0$ be the connected components of $\cB^n_0 \setminus W^{ss}(z_0)$ containing $v_{R_n}$. It follows that $F^{R_n}(\cB^{n, r}_0) \subset \cB^{n, r}_0$. This is a contradiction. Claim ii) now follows.

\thmref{crit chart} implies that
$$
F^{R_n}(\cV_{[v_0, v_{-R_n}]}^n(\lambda^{\bepsilon R_n})) \Subset \cV_{[v_0, v_{R_n}]}^n(\lambda^{\bepsilon R_n}).
$$
We similarly conclude that
$$
F^{R_n}(\cV_{[v_{-R_n}, v_{R_n}]}^n(\lambda^{\bepsilon R_n})) \Subset \cV_{[v_0, v_{2R_n}]}^n(\lambda^{\bepsilon R_n}).
$$
Thus, claim iii) holds.
\end{proof}

\begin{defn}
For $n_0 \leq n < N$ and $1 \leq s \leq N-n$, we say that the H\'enon-like return $(F^{R_n}, \Psi^n)$ has {\it 1D-like structure of depth $s$} if:
\begin{enumerate}[i)]
\item $\hcB^{n, s}_{lR_n} \cap \hcB^{n, s}_{kR_n} = \varnothing$ for $0 \leq l,k < R_{n+s}/R_n$ with $l \neq k$;
\item $a^n_0 = 0< b^{n, s}_0< a^n_k  \, ,\, b^{n, s}_k< b^{n,s}_1  < a^n_1  $ for $2 \leq k < R_{n+s}/R_n$; and
\item $F^{R_n}(\hcB^{n,s}_{kR_n}) \Subset \hcB^{n,s}_{(k+1)R_n \md{R_{n+s}}}$ for $0 \leq k < R_{n+s}/R_n$.
\end{enumerate}
\end{defn}

\begin{prop}\label{value struct bound}
Let $n_0 \leq n \leq N$. Suppose that $F^{R_n}|_{\cB^n_0}$ is twice non-trivially topological renormalizable with combinatorics of $\bfb$-bounded type. Then for $m = n-s$ with $s = O(1)$, the H\'enon-like return $(F^{R_m}, \Psi^m)$ has 1D-like structure of depth $s$. In particular, $\hcB^{n, 0}_0$ is $R_n$-periodic.
\end{prop}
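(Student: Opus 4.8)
The plan is to work with the $1$D-like structure not at level $n$ itself but at a level $m=n-s$ a bounded number $s=O(1)$ of renormalizations below, transporting the local picture of the critical orbit near $v_0$ at scale $R_n$ --- supplied by \propref{min domain} --- down through the intermediate charts $\Psi^{n},\Psi^{n-1},\dots,\Psi^{m}$. The descent is kept within $[n_0,n]$ (so it is truncated at $n_0$ when $n$ is close to $n_0$, which only makes $s$ smaller); by the chart-convergence estimate \eqref{eq.psi conv} and \thmref{crit chart}(i) the only error picked up in passing between these charts is of size $\lambda^{(1-\bepsilon)R_m}$, which is harmless since every separation and every domain width we must preserve is of size at least $\lambda^{\bepsilon R_m}\gg\lambda^{(1-\bepsilon)R_m}$. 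I would run the descent as a downward induction, with \propref{min domain} at level $n$ as the base case: it already supplies $0=a^n_0<a^n_{-1},a^n_2<a^n_1$ with pairwise gaps exceeding $\lambda^{\bepsilon R_n}$ and $F^{R_n}(\hcB^{n,0}_0)\Subset\hcB^{n,0}_0$.

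\textbf{Conditions (ii) and (iii).} For the ordering (ii), I would pass to the $1$D profile $f_m=\Piod(F_m)$ of $F_m=\Psi^m\circ F^{R_m}\circ(\Psi^m)^{-1}$, which by \thmref{crit chart} is super-exponentially thin, so $F_m$ differs from $\iota(f_m)$ by a term of size $\lambda^{(1-\bepsilon)R_m}$; the $1$D combinatorial lemmas \lemref{sink combin} and \lemref{1d struct} (non-triviality ruling out the attracting/parabolic sink alternatives) show that $\{f_m^i(v)\}$ realises the standard renormalization-cycle order, and lifting this order to the $a^m_k=P^m_0(v_{kR_m})$ is immediate once the consecutive gaps are known (from (i)) to exceed $\lambda^{\bepsilon R_m}$. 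For the cyclic mapping (iii): each piece $\hcB^{m,s}_{kR_m}$ straddles the consecutive critical-orbit points $v_{kR_m}$ and $v_{kR_m+R_n}$ in the chart $\Psi^m$; writing $F^{R_m}$ in the H\'enon normal form near the critical orbit (\eqref{eq.henon trans}) and using \lemref{flat} to control how a nearly horizontal strip flattens along the first-return excursion and how the quadratic fold near $v_0$ acts (\lemref{est value}), one gets $F^{R_m}(\hcB^{m,s}_{kR_m})\Subset\hcB^{m,s}_{(k+1)R_m\md{R_n}}$ --- this is exactly the computation of \propref{min domain}(iii), now iterated around the whole cycle.

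\textbf{Condition (i): disjointness --- the main obstacle.} Two overlapping cycle pieces $\hcB^{m,s}_{lR_m}$ and $\hcB^{m,s}_{kR_m}$ cannot be ruled out by the $1$D profile alone, since the overlap may be invisible in the fibre direction; this is the genuinely $2$D point. I would argue by contradiction exactly along the lines of the proof of \lemref{quick return}: such an overlap, pulled back around the cycle using the quasi-linearizations of \thmref{reg chart}, produces a product neighbourhood mapped strictly into itself by a power of $F^{R_m}$ whose derivative is super-exponentially contracting (both excursions are nearly horizontal by \lemref{flat}/\lemref{quad flat} and pass near the quadratic fold, so \eqref{eq.henon trans} and \lemref{est value} kill the derivative), hence an attracting periodic sink; \propref{value or sink} then exhibits the complement as a set on which $F^{R_m}$ acts as a diffeomorphism, so the renormalization is trivial, contradicting the hypothesis that $F^{R_n}|_{\cB^n_0}$ is (twice) non-trivially renormalizable. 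The delicate part is the bookkeeping --- identifying which iterates of the excursion sit near the fold, controlling the number of $R_m$-blocks involved, and checking that the manufactured sink genuinely lies in the relevant renormalization domain --- the same difficulties already negotiated in \lemref{quick return} and \propref{min domain}.

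\textbf{The in-particular statement.} Finally, $\hcB^{n,0}_0=\cV^n_{[v_0,v_{R_n}]}(\lambda^{\bepsilon R_n})$ is contained in the cycle piece $\hcB^{m,s}_0=\cV^m_{[v_0,v_{R_n}]}(\lambda^{\bepsilon R_m})$, since both straddle $v_0$ and $v_{R_n}$ and the charts $\Psi^m,\Psi^n$ agree on $\cI^n_0$ up to the $\lambda^{(1-\bepsilon)R_m}$-correction above. By (iii) the $F^{R_m}$-cycle carries $\hcB^{m,s}_0$ successively through the pieces $\hcB^{m,s}_{kR_m}$, which are pairwise disjoint by (i); hence $F^{jR_m}(\hcB^{n,0}_0)\cap\hcB^{n,0}_0=\varnothing$ for $1\le j<R_n/R_m$. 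For an iterate $i$ not divisible by $R_m$ one writes $i=jR_m+i'$ and uses the $R_m$-periodicity of the original (un-extended) renormalization domain $\cB^m$, together with the fact that the vertical extension remains inside the fixed quadrilateral $\cB^{n_0}_0$, to get $F^i(\hcB^{n,0}_0)\cap\hcB^{n,0}_0=\varnothing$. Combined with $F^{R_n}(\hcB^{n,0}_0)\Subset\hcB^{n,0}_0$ from \propref{min domain}(iii), this shows $\hcB^{n,0}_0$ is $R_n$-periodic.
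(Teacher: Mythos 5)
Your treatment of conditions (ii) and (iii) and of the ``in particular'' statement is broadly consistent with what the paper actually does (the paper dispatches (ii) and (iii) in one sentence, citing \propref{min domain} and \thmref{crit chart}; the periodicity claim is left implicit). The serious issue is your argument for condition (i), which is where the content of the proposition lives.

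You propose to ``argue exactly along the lines of the proof of \lemref{quick return}'': assume two cycle pieces $\hcB^{m,s}_{lR_m}$ and $\hcB^{m,s}_{kR_m}$ overlap, pull back to get a product neighbourhood strictly mapped into itself by a power of $F^{R_m}$ with super-exponential contraction, deduce a sink, and invoke \propref{value or sink} to conclude triviality. That mechanism does not transfer. In \lemref{quick return} the overlap is between $\cV^n_{v_{kR_n}}$ and $\cV^n_{v_{-R_n}}$, i.e.\ the forward critical orbit returns near the preimage $v_{-R_n}$ of the quadratic fold; it is exactly the passage through the fold at the very next $R_n$-block which kills the horizontal derivative and makes $F^{(k+1)R_n}$ a contraction on $\cV^n_{v_0}$. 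For condition (i), however, the overlap is between two generic cycle pieces, straddling $v_{lR_m}$ and $v_{kR_m}$, neither of which need sit near $v_{-R_m}$; there is no guaranteed passage through the fold, no guaranteed short return to a fixed vertical strip, and hence no trapped region. Your parenthetical ``both excursions \dots pass near the quadratic fold'' is asserted but not established, and I do not see how to establish it for an arbitrary overlapping pair $(l,k)$.

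The paper's actual contradiction mechanism is different and does not rely on manufacturing a sink. Assuming $|a^m_i - a^m_j| < \lambda^{\bepsilon R_m}$ for some $1 \le j < i < 2r_m$, one \emph{forward}-iterates both orbit points by $F^{(2r_m-i)R_m}$ (forward iteration preserves closeness up to a controlled $\lambda^{-\bepsilon R_m}$ distortion, unlike backward iteration) to place $v_{j'R_m}$, with $j' = j + 2r_m - i \not\equiv 0 \md{r_m}$, within $\lambda^{\bepsilon R_m}$ of $v_{2R_{m+1}}$; by \propref{min domain} this pins $a^m_{j'}$ into the interval between $a^m_{2r_m}$ and $a^m_{r_m}$. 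Then \propref{exist saddle} supplies a periodic saddle $x_0 \in \cB^{m+2}_0$, whose local strong-stable manifold $W^{ss}_{\loc}(x_{j'R_m})$ is, by \propref{ss in bn} and \propref{value or sink}, vertically proper in $\hcB^m_0$. The pinning estimate forces this vertical leaf to cross into $\cB^{m+1}_0$, contradicting the disjointness of the $\cB^{m+1}$ cycle (since $x_{j'R_m} \in \cB^{m+1}_{j'R_m}$ and $j' \not\equiv 0 \md{r_m}$). No sink is involved; the contradiction is purely one of strong-stable leaves being unable to respect the topological cycle if two projected orbit points collide. You should replace your trapped-region argument for condition (i) with this saddle/strong-stable-manifold argument.
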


\begin{figure}[h]
\centering
\includegraphics[scale=0.25]{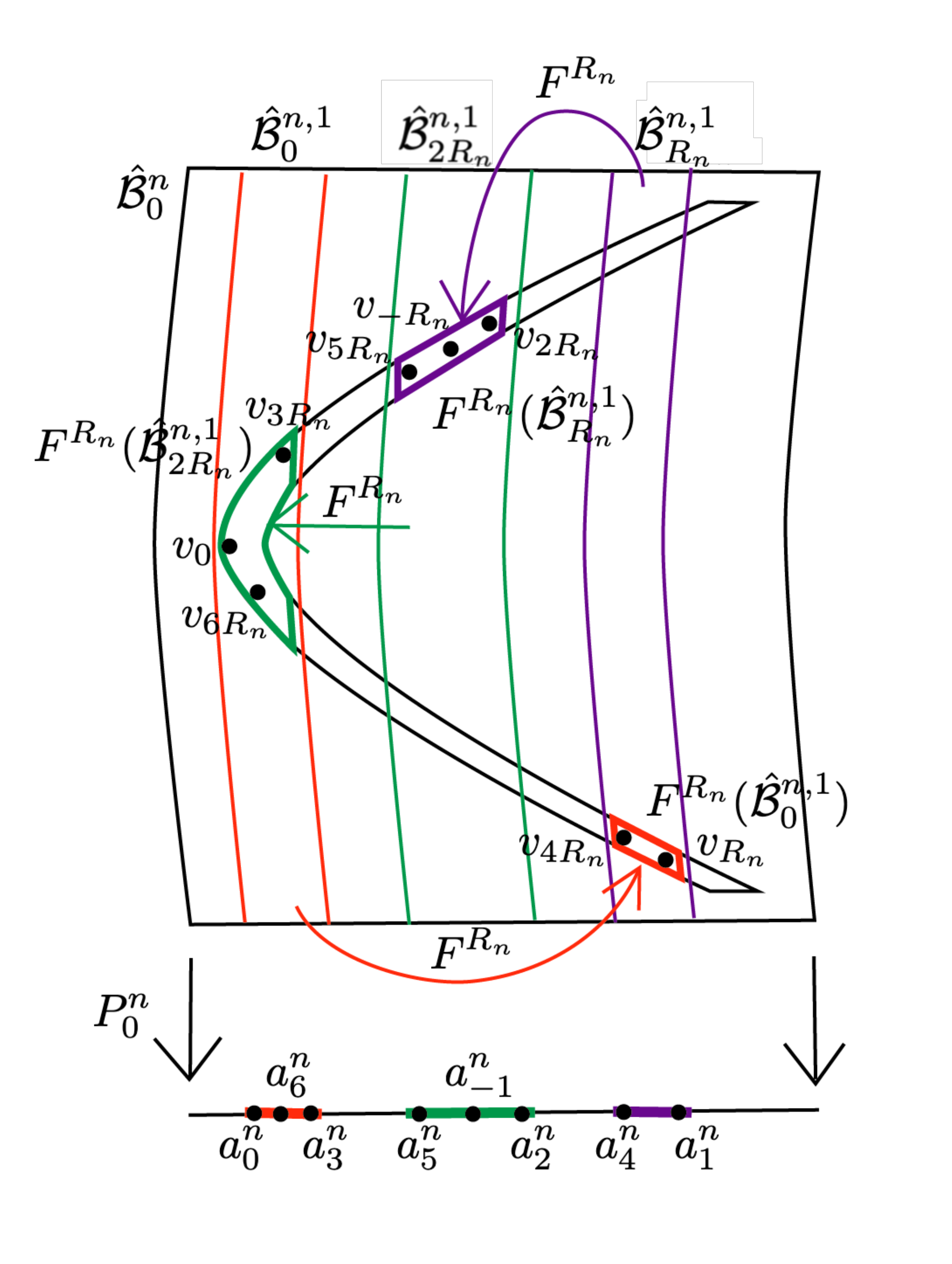}
\caption{The combinatorial structure of the $n$th renormalization of $F$ for $n \geq n_0$ (for $r_n := R_{n+1}/R_n = 3$). The $R_{n+1}$-periodic domains $\hcB^{n,1}_0$, $\hcB^{n,1}_{R_n}$ and $\hcB^{n,1}_{2R_n}$ containing $v_0$, $v_{R_n}$ and $v_{2R_n}$ respectively are vertically proper and pairwise disjoint in $\hcB^n_0$. Moreover, $F^{R_n}(\hcB^{n,1}_{kR_n}) \Subset \hcB^{n,1}_{(k+1)R_n \md{R_{n+1}}}$. Under the projection $P^n_0 : \hcB^n_0 \to I^n_0 \subset \bbR$, the orbit $\{v_{kR_n}\}_{k=-1}^{r_n}$ of the critical value are mapped to $\{a^n_k\}_{k=-1}^{r_n}$.}
\label{fig.valuestruct}
\end{figure}

\begin{proof}
Suppose towards a contradiction that for some $1 \leq j, i <2r_m$ with $j < i$, we have
$$
|a^m_i - a^m_j| < \lambda^{\bepsilon R_m}.
$$
Applying $F^{(2r_m - i)R_m}$ to $v_{iR_m}$ and $v_{jR_m}$, we see by the regularity of $F$ that
$$
\|v_{2R_{m+1}} - v_{j'R_m}\| < \lambda^{\bepsilon R_m},
$$
where $j' := j+2r_m-i$. Note that $j' \neq 0 \md{r_m}$. Hence, 
$$
\cB^{m+1}_{j'R_m} \cap \cB^{m+1}_0 = \varnothing.
$$
Moreover, by \propref{min domain}, we have
\begin{equation}\label{eq.m,j}
\lambda^{\bepsilon R_m} < a^m_{2r_m} - \lambda^{\bepsilon R_m} < a^m_{j'} <a^m_{2r_m} + \lambda^{\bepsilon R_m}  <a^m_{r_m}.
\end{equation}

\propref{exist saddle} implies the existence of a saddle point $x_0 \in \cB^{m+2}_0$ of period $dR_{m+2}$ for some $d \leq r_{m+2}$. For $0 \leq k < R_{m+2}/R_m$, let $W^{ss}_{\loc}(x_{kR_m})$ be the connected component of $W^{ss}(x_{kR_m}) \cap \hcB^m_0$ containing $x_{kR_m}$. Then \eqref{eq.m,j} implies that $W^{ss}_{\loc}(x_{j'R_m})$ intersect $\cB^{m+1}_0$. This is a contradiction. Claim i) follows. Then Claim ii) and iii) follow by \propref{min domain} and \thmref{crit chart} respectively.
\end{proof}

By \propref{value struct bound}, we may henceforth assume without loss of generality that for all $n_0 \leq n \leq N$ such that $F^{R_n}|_{\cB^n_0}$ is twice non-trivially renormalizable, we have
\begin{equation}\label{eq.min domain}
\cB^n_0 := \hcB^{n,0}_0 := \cV^n_{[v_0, v_{R_n}]}(\lambda^{\bepsilon R_n}).
\end{equation}

Let $0 \leq n < N$. Suppose that $F^{R_{n+1}}|_{\cB^{n+1}_0}$ is twice non-trivially topological renormalizable with combinatorics of $\bfb$-bounded type. Consider the sequence of points
\begin{equation}\label{eq.proj value seq}
\{a_k^n := \pi_h\circ \Psi^n(v_{kR_n})\}_{k=0}^{r_n-1} \subset I^n_0 := \pi_h(B^n_0) \subset \bbR.
\end{equation}
The {\it renormalization type $\tau(\cR^n(F))$ of $\cR^n(F)$} is given by the order of points in \eqref{eq.proj value seq}. Additionally, the {\it $N$-renormalization type of $F$} is defined as
$$
\tau_N(F) := (\tau(F), \tau(\cR(F)), \ldots, \tau(\cR^{N-1}(F))).
$$

\begin{lem}\label{unproject}
Let $n_0 \leq n \leq N$. Suppose that $F^{R_n}|_{\cB^n_0}$ is twice topological renormalizable with combinatorics of $\bfb$-bounded type. For $s\in \{1, 2\}$ and $m := n-s$, let $\Gamma_0$ be a $\lambda^{-\bepsilon R_m}$-horizontal curve in $\cB^n_0$. Then the following statements hold for $1 \leq k \leq R_n/R_m$:
\begin{enumerate}[i)]
\item $\Gamma_{(k-1)R_m}$ is $\lambda^{-\bepsilon R_m}$-horizontal in $\cB^m_0$; and
\item $\Gamma_{kR_m-1}$ is $\lambda^{(1-\bepsilon)R_m}$-horizontal in $\cB_{-1}$.
\end{enumerate}
\end{lem}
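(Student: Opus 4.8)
The plan is to run a short induction on $k$ that alternates between statements (i) and (ii), advancing one block of $R_m$ iterates at a time; the combinatorial position of the iterated curve is controlled by \propref{value struct bound}, while \lemref{flat} and \lemref{quad flat} transport horizontality across each block.

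First, fix the combinatorial picture. Since $F^{R_n}|_{\cB^n_0}$ is twice (non-trivially) topologically renormalizable and $s=O(1)$, \propref{value struct bound} gives that $(F^{R_m},\Psi^m)$ has 1D-like structure of depth $s$: the strips $\{\hcB^{m,s}_{kR_m}\}_{k=0}^{R_n/R_m-1}$ are pairwise disjoint and vertically proper in $\hcB^m_0$, and $F^{R_m}(\hcB^{m,s}_{kR_m})\Subset\hcB^{m,s}_{(k+1)R_m\md{R_n}}$. By \eqref{eq.min domain} and the super-exponential bound $\|\Psi^m\circ(\Psi^n)^{-1}-\Id\|_{C^r}<\lambda^{(1-\bepsilon)R_m}$ from \propref{vert prop nest}, one has $\cB^n_0\subset\hcB^{m,s}_0$; iterating the previous inclusion yields $\Gamma_{jR_m}\subset\cB^n_{jR_m}\subset\hcB^{m,s}_{jR_m\md{R_n}}$ for $0\le j\le R_n/R_m$, and for $1\le j<R_n/R_m$ this strip is disjoint from $\cV^m_{v_0}(\lambda^{\bepsilon R_m})\subset\hcB^{m,s}_0$. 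Since $jR_m\equiv 0\md{R_m}$, the curve $\Gamma_{jR_m}$ always lies over the $\cB^m_0$-slot of the depth-$m$ cycle, hence in $\cB^m_0\subset\hcB^m_0$; and by \thmref{crit chart}~ii) together with \eqref{eq.first entry squeeze}, its forward image $\Gamma_{(j+1)R_m-1}=F^{R_m-1}(\Gamma_{jR_m})$ lands in $\cB_{-1}$. Finally $R_n/R_m=r_m\cdots r_{n-1}\le\bfb^s=O(1)$, so every index and the number of induction steps is uniformly bounded, which keeps all absorbed constants uniform.

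For the base case, $\Gamma_0\subset\cB^n_0\subset\cB^m_0$ is $\lambda^{-\bepsilon R_m}$-horizontal in $C^r$ in $\cB^n_0$ by hypothesis; post-composing the graph of $\Psi^n(\Gamma_0)$ with $\Psi^m\circ(\Psi^n)^{-1}$, which is $\lambda^{(1-\bepsilon)R_m}$-close to $\Id$ in $C^r$, alters the $C^r$-norm of the horizontal graph by only a uniformly bounded factor, so $\Gamma_0$ is $\lambda^{-\bepsilon R_m}$-horizontal in $C^r$ in $\cB^m_0$ (equivalently in $\hcB^m_0$, since $\Gamma_0\subset\cB^m_0$). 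This is (i) for $k=1$. The induction then alternates. If (i) holds for $k$, i.e.\ $\Gamma_{(k-1)R_m}$ is $\lambda^{-\bepsilon R_m}$-horizontal in $C^r$ in $\hcB^m_0$, then \lemref{flat}~iii) applied with $m$ in place of $n$ and $t=\lambda^{-\bepsilon R_m}$ shows $\Gamma_{kR_m-1}=F^{R_m-1}(\Gamma_{(k-1)R_m})$ is $(1+\lambda^{-\bepsilon R_m})^r\lambda^{(1-\bepsilon)R_m}$-horizontal in $\cB_{-1}$; since $(1+\lambda^{-\bepsilon R_m})^r\lambda^{(1-\bepsilon)R_m}\asymp\lambda^{(1-(r+1)\bepsilon)R_m}$ is a legitimate instance of $\lambda^{(1-\bepsilon)R_m}$, this is (ii) for $k$. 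Conversely, for $1\le k<R_n/R_m$, the curve $\Gamma_{kR_m-1}$ being $\lambda^{(1-\bepsilon)R_m}$-horizontal in $\cB_{-1}$ is in particular $\lambda^{\bepsilon R_m}$-horizontal there (as $1-\bepsilon>\bepsilon$), and its image $\Gamma_{kR_m}$ lies in $\hcB^{m,s}_{kR_m}\subset\hcB^m_0\setminus\cV^m_{v_0}(\lambda^{\bepsilon R_m})$; so \lemref{quad flat} (again with $m$ for $n$) gives that $\Gamma_{kR_m}$ is $\lambda^{-\bepsilon R_m}$-horizontal in $C^r$ in $\hcB^m_0$, which is (i) for $k+1$. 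Chaining these implications from $k=1$ through $k=R_n/R_m$ proves both (i) and (ii).

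The step demanding the most care is the combinatorial localization in the second paragraph: one must verify that the iterate $\Gamma_{jR_m}$ genuinely returns to the strip $\hcB^{m,s}_{jR_m\md{R_n}}$ of the depth-$m$ 1D-like structure — so that for $1\le j<R_n/R_m$ it avoids the $\lambda^{\bepsilon R_m}$-neighborhood of the depth-$m$ critical value and \lemref{quad flat} applies — and that the intermediate iterate $\Gamma_{jR_m-1}$ actually lies in $\cB_{-1}$, so that \lemref{flat}~iii) and \lemref{quad flat} are invoked over the correct domains. Once this localization is established, the remainder is the routine back-and-forth between the two horizontality estimates, with all $\epsilon$-exponents and bounded coefficients absorbed in the usual way.
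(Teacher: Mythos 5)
Your proposal is correct and takes essentially the same approach as the paper, whose proof is the one-liner ``The result is an immediate consequence of Lemmas \ref{flat}~iii) and \ref{quad flat}, and \propref{value struct bound}.'' You have correctly filled in what ``immediate'' means: \propref{value struct bound} supplies the 1D-like structure of depth $s$ so that $\Gamma_{jR_m}$ lands in the strip $\hcB^{m,s}_{jR_m\md{R_n}}$ (and hence avoids $\cV^m_{v_0}(\lambda^{\bepsilon R_m})$ for $1\le j<R_n/R_m$), \lemref{flat}~iii) drives $\Gamma_{(k-1)R_m}\mapsto\Gamma_{kR_m-1}$, and \lemref{quad flat} drives $\Gamma_{kR_m-1}\mapsto\Gamma_{kR_m}$, with the chart change $\Psi^m\circ(\Psi^n)^{-1}\approx\Id$ from \propref{vert prop nest} handling the base case.
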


\begin{proof}
The result is an immediate consequence of Lemmas \ref{flat} iii) and \ref{quad flat}, and \propref{value struct bound}.
\end{proof}

\begin{thm}[H\'enon-likeness of deep returns]\label{ren is henon like}
Suppose that $F^{R_N}|_{\cB^N_0}$ is three times non-trivially topologically renormalizable with combinatorics of $\bfb$-bounded type. Then $F$ is $(N+1)$-times $(CK_0, \bepsilon, \lambda)$-regularly H\'enon-like renormalizable, where and $C, K_0 \geq 1$ are uniform constants (the former independent of $F$, and the latter given in \eqref{eq.const 0}).
\end{thm}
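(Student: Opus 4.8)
\emph{Proof proposal.} The plan is to manufacture the $(N+1)$st regular H\'enon-like return out of the level-$N$ data. Recall that $F$ is $N$-times regularly H\'enon-like renormalizable, so we have the nested centered returns $(F^{R_n},\Psi^n)$ for $n\le N$, and by \eqref{eq.min domain} the level-$N$ domain is $\cB^N_0=\cV^N_{[v_0,v_{R_N}]}(\lambda^{\bepsilon R_N})$. Since $F^{R_N}|_{\cB^N_0}$ is three times non-trivially topologically renormalizable, $F^{R_{N+1}}|_{\cB^{N+1}_0}$ is twice non-trivially renormalizable; feeding this into \propref{value struct bound} (the extra deeper renormalizations supply the saddle points used in its proof) yields the $1$D-like structure of depth $1$ at level $N$. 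In particular the $R_{N+1}$-periodic Jordan domain
$$\cB^{N+1}_0:=\hcB^{N,1}_0=\cV^N_{[v_0,\,v_{R_{N+1}}]}(\lambda^{\bepsilon R_N})$$
is vertically proper in $\hcB^N_0$ and contains $(v,0)$; its $F^{R_N}$-orbit cycles through the pairwise disjoint pieces $\hcB^{N,1}_{kR_N}$ ($0\le k<r_N$, $r_N:=R_{N+1}/R_N\le\bfb$), with $F^{R_N}(\hcB^{N,1}_{kR_N})\Subset\hcB^{N,1}_{(k+1)R_N\md{R_{N+1}}}$, and the ordering of the points $a^N_k=P^N_0(v_{kR_N})$ mirrors the $1$D picture of \lemref{1d struct}. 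Crucially, \propref{min domain} gives $|a^N_k-a^N_0|>\lambda^{\bepsilon R_N}$ for $1\le k\le r_N-1$ and for $k=-1$; equivalently, $\hcB^{N,1}_{kR_N}$ avoids $\cV^N_{v_0}(\lambda^{\bepsilon R_N})$ for every $k\not\equiv0\md{r_N}$, and the level-$N$ critical point $v_{-R_N}$ is $\lambda^{\bepsilon R_N}$-separated from the critical value $v_0$.

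For H\'enon-likeness I would decompose $F^{R_{N+1}}|_{\cB^{N+1}_0}=F\circ F^{R_{N+1}-1}$, the tail $F^{R_{N+1}-1}$ carrying the $v_0$-neighbourhood $\cB^{N+1}_0$ once around the cycle into $\hcB^N_{R_N-1}\subset\cB_{-1}$, a neighbourhood of the critical point $v_{-1}$. The claim is that the tail is \emph{non-folding}: a genuine horizontal arc in $\cB^{N+1}_0$ stays horizontal through the $r_N-1$ transitions $\hcB^{N,1}_{kR_N}\to\hcB^{N,1}_{(k+1)R_N}$ with $0\le k\le r_N-2$, because each image lies in $\hcB^{N,1}_{(k+1)R_N}$, which avoids $\cV^N_{v_0}(\lambda^{\bepsilon R_N})$ by the previous paragraph, so \lemref{flat}\,iii) and \lemref{quad flat} apply at each step; the leftover $F^{R_N-1}$ inside the tail is controlled by \thmref{crit chart}\,ii) and \lemref{first entry hor graph}, which present it — modulo the projections $\cP^N_0,\cP^N_{-1}$ — as the near-$1$D graph map $H_N(x,y)=(h_N(x),e_N(x,y))$ with $\|e_N\|_{C^r}<\lambda^{(1-\bepsilon)R_N}$. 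Then the final $F$, in the normal form \eqref{eq.henon trans}, takes the (super-horizontal) image near $v_{-1}$ to a vertical \emph{quadratic} curve near $v_0$, inserting exactly one fold, which is quadratic and uniformly convex since $f_0''\ge\kappa_F>0$ by \eqref{eq.second}. One then builds the genuinely horizontal straightening chart $\Phi^{N+1}$ by rectifying the horizontal foliation of the $F^{R_{N+1}}$-return and reparametrizing (using the near-$1$D form of $H_N\circ$(the intermediate $F_N^{r_N-1}$) and \lemref{unproject}) so that $\Phi^{N+1}\circ F^{R_{N+1}}\circ(\Phi^{N+1})^{-1}$ is in standard H\'enon form with $1$D profile an $\frU^r$-unimodal map; after the affine normalization $\cS^{N+1}$ this gives $\cR^{N+1}(F)\in\frHL^r$. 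Taking $\Psi^{N+1}$ to be the centered chart equivalent to $\Phi^{N+1}$, \thmref{crit chart}\,i) shows $\|\Phi_0\circ(\Psi^{N+1})^{-1}-\Id\|_{C^r}<\lambda^{(1-\bepsilon)R_N}$, so $\Psi^{N+1}$ is $\lambda^{(1-\bepsilon)R_N}$-$C^r$-close to $\Psi^N$ on $\cB^{N+1}_0$ and $\hcB^{N,1}_0$ is identified with $\hcB^{N+1,0}_0$ compatibly with \eqref{eq.min domain}.

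It then remains to verify that $(F^{R_{N+1}},\Psi^{N+1})$ is $(CK_0,\bepsilon,\lambda)$-regular. Transversality $\measuredangle(E^{v,N+1}_p,E^{h,N+1}_p)>1/(CK_0)$ is immediate from the $\lambda^{(1-\bepsilon)R_N}$-closeness of $E^{v/h,N+1}_p$ to $E^{v/h,N}_p$ and the $>1/L$-transversality at level $N$. For forward $R_{N+1}$-regularity of $p\in\cB^{N+1}_0$ along $E^{v,N+1}_p$: the $C^r$-closeness of $\Psi^{N+1}$ to $\Psi^N$, \propref{vert prop nest}\,iii), and the angle-contraction estimate \propref{vert angle shrink} give $R_N$-times forward $(\overline{K_0},\bepsilon,\lambda)$-regularity, and to extend to $R_{N+1}=r_NR_N$ iterates one propagates block by block — at the end of the $k$th block $p_{kR_N}\in\hcB^{N,1}_{kR_N}\subset\hcB^N_0$, and for $1\le k\le r_N-1$ the contracted direction $DF^{kR_N}(E^{v,N+1}_p)$ is again $\lambda^{(1-\bepsilon)R_N}$-vertical there by \lemref{flat}\,ii),iv) and \lemref{quad straight} (those blocks avoiding $\cV^N_{v_0}$), so \propref{vert prop nest}\,iii) re-applies on each block; the final block, passing through the fold near $v_{-R_N}$ and $v_{-1}$, is handled by \propref{pull back vert in bn} and the crit-chart control near $v_{-1}$. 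Backward $R_{N+1}$-regularity of $q\in F^{R_{N+1}}(\cB^{N+1}_0)$ along $E^{h,N+1}_q$ is the mirror-image argument, using the backward bullet of \subsecref{subsec.reg ret} (extended to $\cB^N_{R_N-1}$ as before \propref{vert prop nest}), \lemref{quad flat} and \lemref{flat}\,i),iii) for the non-fold blocks, and \thmref{crit chart} for the fold block. Since there are $r_N=O(\bfb)$ blocks and $\bfb\bepsilon<1$, the accumulated irregularity stays within $CK_0$.

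The main obstacle is precisely this last estimate: keeping the regularity constants uniformly at $CK_0$ — \emph{independent of $N$} — through the $O(\bfb)$-fold composition and, above all, through the single block that crosses the fold near $v_{-R_N}$, since the vertical direction of the freshly built $\Psi^{N+1}$ must genuinely be the $R_{N+1}$-times (not merely $R_N$-times) forward regular one. Making this rigorous forces one to interleave the quasi-linearization of \thmref{reg chart} along the full $R_{N+1}$-orbit with the crit-chart decomposition and to check that the polynomially many block-losses are swallowed by the super-exponentially small marginal contraction $\lambda^{\bepsilon R_n}$ — which is exactly what the depth thresholds \eqref{eq.proper depth 0}, \eqref{eq.proper depth 1} and the $\baeta$/$\ueta$ absorption conventions are there to provide. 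A secondary point requiring care is justifying the combinatorial input: one must confirm via \propref{min domain} and \propref{value struct bound} that every intermediate critical-orbit point sits at distance $>\lambda^{\bepsilon R_N}$ from $v_0$, so that the non-folding assertions and the repeated use of \lemref{quad flat}/\lemref{quad straight} are legitimate at every block.
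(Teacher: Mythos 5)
Your step~1 is circular and leaves the heart of the theorem unproven. To obtain the depth-$1$ $1$D-like structure at level $N$, you would need to apply \propref{value struct bound} with $n = N+1$ and $s = 1$; but that proposition's hypotheses presuppose that $(F^{R_{N+1}}, \Psi^{N+1})$ is already a regular H\'enon-like return, which is exactly what is to be proved. With the data actually available here — regular returns at levels $\le N$ plus three further \emph{topological} renormalizations — \propref{value struct bound} (with $n = N$) only yields depth-$s$ structure at level $m = N - s$, and \propref{min domain} at level $N$ separates $a^N_k$ from $a^N_0 = 0$ only for $k \in \{-1, 1, 2\}$, not for all $1 \le k < r_N$. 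Yet your entire non-folding argument for the tail $F^{R_{N+1}-1}$ rests on the assertion that $\hcB^{N,1}_{kR_N}$ avoids $\cV^N_{v_0}(\lambda^{\bepsilon R_N})$ for every $k \not\equiv 0 \md{r_N}$: precisely the separation you have not established. A topological return can in principle fold twice, or fold at the wrong moment, and nothing in your step~1 excludes this.

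The paper closes the gap by proving \eqref{eq.hit crit}: the orbit of the candidate domain $\cD_0 := (\Psi^N)^{-1}(J_0 \times I^v_0)$ first reaches the critical-point strip (in the sense $a^N_{-1}\in\pi_h\circ\Psi^N(\cD_{(r_N-1)R_N})$) exactly at time $(r_N-1)R_N$, neither earlier nor later. This is argued by contradiction: the ``too early'' scenario \eqref{eq.hit crit not early} is ruled out via the saddle of \propref{exist saddle} in $\cB^{N+1}_0$ and disjointness of the $(N+1)$-cycle; the ``too late'' scenario \eqref{eq.hit crit too late} splits into orientation-reversing and orientation-preserving cases for $f_{N+1}$, both terminating in trivial renormalizations via \propref{value or sink} and the unimodal mapping arguments — in particular, deeper topological renormalizations enter only here, not through \propref{value struct bound}. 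Only \emph{after} \eqref{eq.hit crit} is in hand does the paper say ``arguing as in the proofs of Propositions~\ref{min domain} and \ref{value struct bound}'' to conclude $\cD_0$ is $R_{N+1}$-periodic and derive the ordering of $\{a^N_k\}$. Your second half — the direction field $E^{v,N+1}_{p_0} := DF^{-R_{N+1}}(E^h_{p_{R_{N+1}}})$, \propref{pull back vert in bn} for forward regularity, the genuinely horizontal chart $\Psi^{N+1}$, and the block-by-block backward induction — does match the paper's construction, and the ``main obstacle'' you identify (uniform constants across $O(\bfb)$ blocks) is real but secondary; the primary obstacle, and the actual content of the theorem, is the fold-timing statement \eqref{eq.hit crit}, which your proposal assumes away.
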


\begin{proof}
For $l \in \{1, 2\}$, let $\cB^{N+l}_0 \Subset \cB^{N+l-1}_0$ be an $R_{N+l}$-periodic Jordan domain with
$$
r_{N+l-1} := R_{N+l}/R_{N+l-1} \leq \bfb.
$$
Define
$$
\cC^{N+l}_0 := \bigcap_{i=1}^\infty F^{i R_{N+l}}(\cB^{N+l}_0)
\matsp{and}
\cC^{N+l} := \bigcup_{i=0}^{R_{N+l}/R_N-1}F^{iR_N}(\cC^{N+l}_0).
$$
By \propref{value or sink}, we see that
$$
\cV_{v_0}^N(\lambda^{\bepsilon R_N}) \cap \cC^{N+l} \neq \varnothing.
$$
Without loss of generality, assume that
$$
\cV_{v_0}^N(\lambda^{\bepsilon R_N}) \cap \cC^{N+l}_0 \neq \varnothing.
$$
By \eqref{eq.first entry} and \propref{ext deriv bound}, it follows that
$$
\dist(v_{R_{N+l}}, \cC_0^{N+l}) <\lambda^{\bepsilon R_N}.
$$

For $m \geq -1$, let
$$
a^N_m := \pi_h \circ \Psi^N(v_{mR_N}).
$$
Denote
$$
F_{N+1} := \Psi^N\circ F^{R_{N+1}} \circ (\Psi^N)^{-1}
\matsp{and}
f_{N+1} := \Piod(F_{N+1}).
$$
Note that $a^N_0 = 0$ and $a^N_{r_N} = f_{N+1}(0)$. Moreover, by \eqref{eq.first entry}, we see that
$$
|a^N_{(i+1)r_N} - f_{N+1}(a^N_{ir_N})| < \lambda^{(1-\bepsilon)R_N}
\matsp{for}
i \in \bbN.
$$

Define
$$
J_0 := (-\lambda^{\bepsilon R_N}, a^N_{r_N}+\lambda^{\bepsilon R_ N})
\matsp{and}
\cD_0 := (\Psi^N)^{-1}(J_0 \times I^v_0).
$$
Denote $\cD_i := F^i(\cD_0)$ for $i \in \bbN$. By the regularity of $F$, we see that $\cD_{iR_N}$ is contained in the $\lambda^{\bepsilon R_N}$-neighborhood of $\cB^{N+1}_{iR_N}$ for all $1 \leq i \leq \bfb$. We claim that
\begin{equation}\label{eq.hit crit}
a^N_{-1} \in \pi_h \circ \Psi^N(\cD_{(r_N-1)R_N}).
\end{equation}

Suppose towards a contradiction that we have
\begin{equation}\label{eq.hit crit not early}
a^N_{-1} \in \pi_h \circ \Psi^N(\cD_{(\hr-1)R_N})
\matsp{for some}
\hr < r_N.
\end{equation}
\propref{exist saddle} implies the existence of a saddle point $q_0 \in \cB^{N+1}_0$ of period $dR_{N+1}$ for some $d \leq r_{N+1}$. For $0 \leq i < R_{N+1}$, let $W^{ss}_{\loc}(q_{iR_N})$ be the connected component of $W^{ss}(q_{iR_N}) \cap \cB^N_0$ containing $q_{iR_N}$. It follows from Propositions \ref{ss in bn} and \ref{value or sink} that $W^{ss}_{\loc}(q_{iR_N})$ is $\lambda^{(1-\bepsilon)R_N}$-vertical and vertically proper in $\cB^N_0$.

By \eqref{eq.hit crit not early}, either $\pi_h\circ \Psi^N(\cB^{N+1}_0)$ is contained in a $\lambda^{\bepsilon R_N}$-neighborhood of $\pi_h\circ \Psi^N(\cB^{N+1}_{\hr R_N})$, or vice versa. In the former case, $W^{ss}(q_0)$ intersects $\cB^{N+1}_{\hr R_N}$, and in the latter case, $W^{ss}(q_{\hr R_N})$ intersects $\cB^{N+1}_0$. In either case, we have a contradiction. Hence, \eqref{eq.hit crit not early} does not hold.

Suppose towards a contradiction that
\begin{equation}\label{eq.hit crit too late}
a^N_{-1} \not\in \pi_h \circ \Psi^N(\cD_{(r_N-1)R_N})
\end{equation}
For $y \in I^v_0$, let
$$
\cJ^y_0 := (\Psi^N)^{-1}(J_0 \times \{y\})
\matsp{and}
\cJ^y_i := F^i(\cJ^y_0)
\matsp{for}
i \in \bbN.
$$
Arguing inductively using Lemmas \ref{flat} iii) and \ref{quad flat}, we see that 
$$
\cJ^y_{R_{N+1}} \cap \cV_{v_0}^N(\lambda^{\bepsilon R_N}) =\varnothing,
$$
and $\cJ^y_{R_{N+1}}$ is $\lambda^{-\bepsilon R_N}$-horizontal. Hence, $f_{N+1}$ maps $J_0$ as a diffeomorphism onto $f_{N+1}(J_0)$.

If $f_{N+1}$ is orientation-reversing, then $a^N_{2r_N}$ and $a^N_{r_N}$ are $\lambda^{(1-\bepsilon) R_N}$-close to the left and right endpoints of $f_{N+1}(J_0)$ respectively. \lemref{quick return} implies that $\lambda^{\bepsilon R_N} < a^N_{2r_N}$. Thus, we see that $f_{N+1}(J_0) \Subset J_0$. We conclude that
$$
F^{R_{N+1}}(\cD_0) \Subset \cD_0 \setminus \cV^N_{v_0}(\lambda^{\bepsilon R_N}).
$$
Applying \propref{value or sink}, it follows that $\cD_0$ induces a trivial renormalization of $F^{R_N}|_{\cB^N_0}$. Then by a similar argument as in the proof of \lemref{quick return}, we see that $\cB^{N+1}_0$ also induces a trivial renormalization of $F^{R_N}|_{\cB^N_0}$. This is a contradiction.

If $f_{N+1}$ is orientation-preserving, then $a^N_{r_N}$ and $a^N_{2r_N}$ are $\lambda^{(1-\bepsilon) R_N}$-close to the left and right endpoints of $f_{N+1}(J_0)$ respectively. Proceeding inductively on $k \in \bbN$, suppose that $\cJ_{kR_{N+1}}$ is $\lambda^{-\bepsilon R_N}$-horizontal; and every $p_0 \in \cD_0$ is $kR_{N+1}$-times forward $(CK_0, \bepsilon, \lambda)$-times regular along
$$
\hE^{v,k}_{p_0} := DF^{-kR_{N+1}}(E^{v,N}_{p_{kR_{N+1}}}).
$$
It follows that
$$
\dist_{C^r}(\cJ^y_{kR_{N+1}}, \cJ^{y'}_{kR_{N+1}}) < \lambda^{(1-\bepsilon)kR_{N+1}}
\matsp{for}
y, y' \in I^v_0.
$$
By a similar argument as above used to disprove \eqref{eq.hit crit not early}, we see that 
$$
\cJ^y_{(kr_N +i)R_N} \cap \cV_{v_0}^n(\lambda^{\bepsilon R_N}) = \varnothing
\matsp{for}
i < r_N.
$$
Hence, arguing inductively using Lemmas \ref{flat} iii) and \ref{quad flat}, it follows that $\cJ^y_{(k+1)R_{N+1}-1}$ is $\lambda^{(1-\bepsilon) R_N}$-horizontal.

Let 
$$
\hcJ^y_k := \bigcup_{i=0}^{k-1} \cJ^y_{iR_{N+1}}
\matsp{and}
\hJ^y_k := \pi_h \circ \Psi^N(\hcJ^y_k).
$$
If
\begin{equation}\label{eq.miss once}
\cJ^y_{(k+1)R_{N+1}} \cap \cV_{v_0}^n(\lambda^{\bepsilon R_N}) \neq \varnothing,
\end{equation}
then $f_{N+1}|_{\hJ^y_k}$ is a $C^r$-map on the interval $\hJ^y_k$ that maps $\hJ_{k-1}$ as an orientation-preserving diffeomorphism to $f_{N+1}(\hJ_{k-1})$, and maps the unique turning point in $\hJ_k \setminus \hJ_{k-1}$ to an image that is $\lambda^{(1-\bepsilon)R_N}$-close to $0$. This is clearly impossible. It follows by induction that \eqref{eq.miss once} does not hold for all $k \in \bbN$.

Let
$$
\cD := \bigcup_{i=0}^\infty F^{iR_{N+1}}(\cD_0)
\matsp{and}
\cA := \bigcap_{i=0}^{\infty} F^{iR_{N+1}}(\cD)
$$
Then the above observations imply that $\cA_0$ is a totally invariant connected set disjoint from $\cV^N_{v_0}(\lambda^{\bepsilon R_N})$, whose basin contains $\cD$. By a similar argument as in the proof of \lemref{quick return}, we see that $\cB^{N+1}_0$ induces a trivial renormalization of $F^{R_N}|_{\cB^N_0}$. This is a contradiction. Hence, we conclude that \eqref{eq.hit crit} holds. Moreover, arguing as in the proofs of Propositions \ref{min domain} and \ref{value struct bound}, we conclude that $\cD_0$ is $R_{N+1}$-periodic.

For $p_0 \in \cD_0$, let
\begin{equation}\label{eq.vert field}
E^{v, N+1}_{p_0} := DF^{-R_{N+1}}(E^h_{p_{R_{N+1}}}).
\end{equation}
Then \propref{pull back vert in bn} implies that $p_0$ is $R_{N+1}$-times forward $(CK_0, \bepsilon, \lambda)$-regular along $E^{v,N+1}_{p_0}$. Denote $\cB^{N+1}_0 := \cD_0$, and let $\Psi^{N+1} : \cB^{N+1}_0 \to B^{N+1}_0$ be a genuine horizontal chart that rectifies the vertical direction field given by \eqref{eq.vert field}. It follows from \eqref{eq.hit crit} that
$$
(F^{R_{N+1}}, \Psi^{N+1} : \cB^{N+1}_0 \to B^{N+1}_0)
$$
is a H\'enon-like return. 

It remains to prove that any point $p_0 \in \cB^{N+1}_{R_{N+1}}$ is backward $(CK_0, \bepsilon, \lambda)$-regular along $E^h_{p_0}$. By the regularity of the $N$th H\'enon-like return, $p_0$ is $R_N$-times backward $(L, \epsilon, \lambda)$-regular along $E^h_{p_0}$. Proceeding by induction, suppose that for some $1 \leq l < r_{N+1}$, the point $p_0$ is $lR_N$-times backward $(CK_0, \bepsilon, \lambda)$-regular along $E^h_{p_0}$. By \propref{vert angle shrink},
$$
E^{v,N+1}_{p_{-lR_N}} := DF^{-lR_N}(E^h_{p_0})
$$
is $\lambda^{(1-\bepsilon)R_N}$-vertical in $\cB^N_0$. Arguing as in the proof of \propref{pull back vert in bn}, we see that
$$
\lambda^{\bepsilon R_N} < \frac{\|DF^{-i}|_{E_{p_{-lR_N}}^{v,N+1}}\|}{\|DF^{-i}|_{E_{p_{-lR_n}}^h}\|} < \lambda^{-\bepsilon R_N}
\matsp{for}
1 \leq i \leq R_N.
$$
Concatenating with the $lR_N$-times backward regularity of $p_0$, we conclude that $p_0$ is actually $(l+1)R_N$-times backward $(\bC K_0, \bepsilon, \lambda)$-regular along $E^h_{p_0}$ (with $\bepsilon$ increased some uniform amount from the $l$th step).
\end{proof}

In \cite{CLPY3}, we use the 1D-like structures of sufficiently deep H\'enon-like returns of $F$ to prove the following result.

\begin{thm}\cite[Theorem 5.1]{CLPY3}\label{crit rec}
We have
$$
\bigcap_{n=1}^\infty \cB^n_{R_n} = \{v_0\}.
$$
Consequently, the orbit of $v_0$ is recurrent.
\end{thm}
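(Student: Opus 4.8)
The plan is to reduce the statement to the single estimate $\diam\cB^n_{R_n}\to 0$. Once this is available the theorem follows at once: $v_0\in\bigcap_n\cB^n_{R_n}$ by the construction of \secref{sec.conv chart}, and any other point of this intersection lies in every $\cB^n_{R_n}$, hence within $\diam\cB^n_{R_n}$ of $v_0$; moreover $v_{R_n}=F^{R_n}(v_0)\in F^{R_n}(\cB^n_0)=\cB^n_{R_n}$ together with $v_0\in\cB^n_{R_n}$ gives $\dist(F^{R_n}(v_0),v_0)\le\diam\cB^n_{R_n}\to 0$, so the orbit of $v_0$ is recurrent since $R_n\to\infty$. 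Throughout I would work at depths $n\ge n_0$ large enough that \propref{value struct bound} applies, so that \eqref{eq.min domain} holds, i.e.\ $\cB^n_0=\cV^n_{[v_0,v_{R_n}]}(\lambda^{\bepsilon R_n})$.

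To bound $\diam\cB^n_{R_n}$ I would pass to the valuable charts $\Phi_0,\Phi_{-1}$ of \thmref{crit chart} and write $\cB^n_{R_n}=F(\cB^n_{R_n-1})$. Applying the first-entry estimate \eqref{eq.first entry squeeze} with $X=\cB^n_0$, and abbreviating $\lambda_n:=\lambda^{(1-\bepsilon)R_n}$, one gets $\Phi_{-1}(\cB^n_{R_n-1})\subset I^n_{R_n-1}\times[-\lambda_n,\lambda_n]$; feeding this into the H\'enon-like normal form \eqref{eq.henon trans} yields
\[
\Phi_0(\cB^n_{R_n})\subset\bigl\{(f_0(x)-\lambda y,\,x)\ :\ x\in I^n_{R_n-1},\ |y|\le\lambda_n\bigr\}.
\]
Since $v_0\in\cB^n_{R_n}=F^{R_n}(\cB^n_0)$ forces $v_{-R_n}\in\cB^n_0$, hence $v_{-1}=F^{R_n-1}(v_{-R_n})\in\cB^n_{R_n-1}$ and $0=\pi_h\Phi_{-1}(v_{-1})\in I^n_{R_n-1}$, the bound $|f_0(x)|\le\tfrac{K_0}{2}x^2$ of \lemref{est value} shows that $\Phi_0(\cB^n_{R_n})$ is contained in a rectangle of vertical side $\le |I^n_{R_n-1}|$ and horizontal side $\le\tfrac{K_0}{2}|I^n_{R_n-1}|^2+2\lambda\lambda_n$; as $\|\Phi_0^{\pm 1}\|_{C^r}<K_0$ this gives $\diam\cB^n_{R_n}\lesssim K_0\bigl(|I^n_{R_n-1}|+\lambda_n\bigr)$.

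The crux is then to show $|I^n_{R_n-1}|\to 0$. Here I would observe that the valuable curve $\cI^n_{R_n}=F(\cI^n_{R_n-1})\subset\cB^n_{R_n}$ has, in $\Phi_0$-coordinates, second coordinate ranging exactly over $I^n_{R_n-1}$ (because $\Phi_{-1}(\cI^n_{R_n-1})$ is a graph over $I^n_{R_n-1}$ and the second component of $\Phi_0\circ F\circ\Phi_{-1}^{-1}$ is the first variable), so $|I^n_{R_n-1}|=|\pi_v\Phi_0(\cB^n_{R_n})|$. Since $\Phi_0$ agrees with the chart $\Phi$ of Theorem~A up to an $O(\lambda_n)$ error in $C^r$ (by \thmref{crit chart}(i) and Theorem~A(i)), the quadratic rescaling relation $|\pi_v\Phi(\cB^n_{R_n})|^2\asymp|\pi_h\Phi(\cB^n_0)|$ of Theorem~A(ii) --- which is exactly where the \emph{a priori} bounds of \cite{CLPY3} enter --- transfers to give $|I^n_{R_n-1}|^2\asymp|\pi_h\Phi_0(\cB^n_0)|<\sigma_2^{\,n}$ for a universal $\sigma_2\in(0,1)$. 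Hence $|I^n_{R_n-1}|\lesssim\sigma_2^{\,n/2}$, which combined with $\lambda_n\to 0$ gives $\diam\cB^n_{R_n}\to 0$ and completes the argument.

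I expect the last step to be the genuine obstacle. The combinatorial and Pesin-theoretic analysis of \secref{sec.combin} only establishes that $\cB^n_{R_n}$ is a thin ``parabolic'' window around $v_0$; by itself this does not preclude the windows from accumulating onto a macroscopic set. It is precisely the \emph{a priori} bounds --- here used in the form of the quadratic scaling between horizontal width at depth $n$ and vertical height at depth $n$ --- that force the successive renormalization windows to focus down onto the single point $v_0$. Everything else in the proof is soft.
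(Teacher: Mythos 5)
Your reduction to $\diam\cB^n_{R_n}\to 0$, your use of \eqref{eq.first entry squeeze}, the H\'enon normal form \eqref{eq.henon trans}, and the quadratic scaling $|I^n_{R_n-1}|^2\asymp|I^n_0|$ are all correct, and your diagnosis that the a priori bounds are the genuine content of the statement is exactly right: the 1D-like nesting of \propref{value struct bound} by itself would not preclude the domains $\cB^n_{R_n}$ from thickening onto a nondegenerate arc.

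Two remarks. First, there is no proof of this theorem in the present paper to compare against --- it is imported verbatim from \cite[Theorem 5.1]{CLPY3}, so all one can check is whether your ingredients are logically available. Second, on that logical question you should not cite Theorem~A(ii): equation \eqref{eq.crit value} is itself one of the assertions of Theorem~A, so invoking Theorem~A to prove it is circular on its face. The substance you need actually lives in \propref{exp scale} and \thmref{rescale} of \secref{sec.c1 bound}: \propref{exp scale} gives that the gaps among $\{a^n_k\}$ are commensurate with $|I^n_0|$, from which $|I^n_0|\lesssim\rho_2^{\,2n}$ follows, and the proof of \thmref{rescale} contains the claim $|h_n(I^n_0)|^2\asymp|I^n_0|$, which is exactly $|I^n_{R_n-1}|^2\asymp|I^n_0|$. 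Both rest only on \thmref{crit chart}, the a priori bounds \thmref{a priori} (imported from \cite{CLPY3}), and elementary 1D lemmas, and neither depends on \eqref{eq.crit value}; so there is no real circularity, but the citations must be re-routed from Theorem~A to these internal results. With that correction the proof is sound: your bound $\diam\cB^n_{R_n}\lesssim K_0\bigl(|I^n_{R_n-1}|+\lambda_n\bigr)$ together with $|I^n_{R_n-1}|\lesssim\rho_2^{\,n}$ and the super-exponential decay of $\lambda_n=\lambda^{(1-\bepsilon)R_n}$ gives $\diam\cB^n_{R_n}\to 0$, and the recurrence of the orbit of $v_0$ follows as you say from $v_0,v_{R_n}\in\cB^n_{R_n}$.
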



 \section{A Priori Bounds}\label{sec.a priori}

Let $r \geq 2$ be an integer, and consider a $C^{r+4}$-H\'enon-like map $F : D \to D$. For some $N \in \bbN \cup \{\infty\}$; $L \geq 1$ and $\epsilon, \lambda \in (0,1)$, suppose that $F$ has $N$ nested $(L, \epsilon, \lambda)$-regular H\'enon-like returns given by \eqref{eq.returns} with combinatorics of $\bfb$-bounded type for some integer $\bfb \geq 3$. By only considering every other returns if necessary, we may also assume without loss of generality that $r_n := R_{n+1}/R_n \geq 3$ for ${n_0}\leq n \leq N$. Assume that $\epsilon$ is sufficiently small so that $\bfb\bepsilon < 1$. Also assume that $N$ is sufficiently large, so that for some smallest number $0 \leq n_0 \leq N$, we have \eqref{eq.proper depth 1}. Lastly, suppose that $F^{R_N}|_{\cB^N_0}$ is twice non-trivially topologically renormalizable (so that \propref{value struct bound} applies).

Recall that $P^n_0 := \pi_h \circ \Psi^n$ for $n_0 \leq n \leq N$.

\begin{lem}\label{quad mapping}
Let $\kappa_F, K_1 > 0$ be the constants given in \thmref{crit chart} and \eqref{eq.const 1} respectively. Consider a $C^{r+3}$-map $g : I \to \bbR$ on an interval $I \subset I^h_{-1}$ such that $\|g\|_{C^2} < \underline{\kappa_F}$. Denote $G(x) := (x,g(x)).$ Then there exist $a \in I^h_0$ and a $C^r$-diffeomorphism $\psi: I \to \psi(I)$ with
$$
\|\psi^{\pm 1}\|_{C^r} < K_1(1+\|g\|_{C^{r+3}})
$$
such that we have
\begin{equation}\label{eq.quad map}
Q(x) := P^n_0 \circ F \circ \Phi_{-1}^{-1}\circ G(x) = \kappa_F \cdot (\psi(x))^2+a
\end{equation}
where defined.
\end{lem}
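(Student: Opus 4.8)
The plan is to make $Q$ explicit via the H\'enon normal form \eqref{eq.henon trans}, recognize it as a strictly convex ``unimodal profile'' of high smoothness, and then extract the quadratic normal form by a quantitative Morse (square-root) factorization.

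\emph{Reducing $Q$.} Unwinding the composition using \eqref{eq.henon trans}, on the interval where $Q$ is defined one obtains $Q(x)=\pi_h\circ\Psi^n\circ\Phi_0^{-1}\bigl(\phi(x),x\bigr)=T_1\bigl(\phi(x),x\bigr)$, where $\phi:=f_0-\lambda g$, $T:=\Psi^n\circ\Phi_0^{-1}$ and $T_1:=\pi_h\circ T$. Because $F$ is $C^{r+4}$, the charts $\Phi_0,\Phi_{-1},\Psi^n$ and the map $f_0$ furnished by \thmref{crit chart} may be taken of class $C^{r+3}$, with the norm bounds there holding in $C^{r+3}$; in particular \thmref{crit chart} i) gives $\|T-\Id\|_{C^{r+3}}<\overline{K_0}\,\lambda^{(1-\bepsilon)R_n}$, which for $n\ge n_0$ is as small as we please. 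From $f_0(0)=f_0'(0)=0$, $\inf f_0''=\kappa_F$, $\|f_0''\|_{C^{r+1}}<K_0$, and the hypothesis $\|g\|_{C^2}<\underline{\kappa_F}$ (read as: $\|g\|_{C^2}$ is a sufficiently small fraction of $\kappa_F$), I record that $\|\phi\|_{C^1}$ is bounded in terms of $K_0$, that $\phi''=f_0''-\lambda g''$ is bounded below by a fixed multiple of $\kappa_F$, and that the contribution of $T_1-\pi_h$ to $Q''$ is negligible for $n\ge n_0$; hence $Q\in C^{r+3}$ is strictly convex with $Q''$ comparable to $\kappa_F$ from below, $\|Q''\|_{C^{r+1}}\le\overline{K_0}(1+\|g\|_{C^{r+3}})$, and $Q$ has a unique, nondegenerate critical point $x_*$ lying near $0$.

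\emph{Morse factorization.} Set $a:=Q(x_*)$. Since $Q'(x_*)=0$ and $Q''>0$, Hadamard's lemma gives $Q(x)-a=(x-x_*)^2\rho(x)$ with $\rho(x)=\int_0^1(1-t)\,Q''\bigl(x_*+t(x-x_*)\bigr)\,dt$, so $\rho\in C^{r+1}$ and $\rho$ is bounded above by $\overline{K_0}$ and below by a fixed multiple of $\kappa_F$. Put $\psi(x):=(x-x_*)\sqrt{\rho(x)/\kappa_F}$; then $\kappa_F\psi(x)^2+a=Q(x)$ by construction. Differentiating this identity yields $2\kappa_F\psi\psi'=Q'$, so, since $Q'$ and $x-x_*$ carry the same sign (strict convexity), $\psi$ is strictly increasing; and because $|Q'|$ grows at least linearly and $|Q-a|$ at least quadratically in $|x-x_*|$ with constants comparable to $\kappa_F$, while both are bounded above by $\overline{K_0}$ times the corresponding power, $\psi'$ is squeezed between two positive constants depending only on $\kappa_F$ and $K_0$. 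Hence $\psi:I\to\psi(I)$ is a $C^{r+1}$ (a fortiori $C^r$) diffeomorphism, and since $x_*$ and $a=Q(x_*)$ are close to $0$, which lies in the interior of $I^h_0$, we get $a\in I^h_0$.

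\emph{Norm bounds, and the main obstacle.} It remains to bound $\|\psi^{\pm1}\|_{C^r}$ by $K_1(1+\|g\|_{C^{r+3}})$; this, together with the smoothness bookkeeping, is the crux of the lemma. The square-root extraction costs two derivatives, which is why the hypothesis requires $g\in C^{r+3}$, and correspondingly why the valuable charts are taken $C^{r+3}$ and $F$ itself $C^{r+4}$. To keep the resulting $C^r$-bound \emph{linear} in $\|g\|_{C^{r+3}}$ --- as the statement demands --- a plain application of Fa\`a di Bruno is inadequate; one must combine it with the Gagliardo--Nirenberg interpolation inequalities, exploiting that $\|\phi\|_{C^1}$, $\|\psi\|_{C^1}$, $\|1/\psi'\|_{C^0}$ and the range of $\rho/\kappa_F$ are all bounded in terms of $\kappa_F$ and $K_0$ alone --- which is precisely where the \emph{a priori} smallness $\|g\|_{C^2}<\underline{\kappa_F}$, as opposed to merely ``$g$ small'', is used. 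This makes every composition estimate linear in the top-order norm, so that $\|\rho\|_{C^r}$, then $\|\sqrt{\rho/\kappa_F}\|_{C^r}$, then $\|\psi\|_{C^r}=\|(x-x_*)\sqrt{\rho/\kappa_F}\|_{C^r}$, and finally $\|\psi^{-1}\|_{C^r}$ --- via the tame estimate for inversion, applicable since $\|\psi\|_{C^1}$ and $\|1/\psi'\|_{C^0}$ are bounded --- are all at most $\overline{K_0}(1+\|g\|_{C^{r+3}})$, the bounded factor $|I|+1$ being absorbed. Renaming $K_1=K_1(K_0,\kappa_F)$ then completes the argument.
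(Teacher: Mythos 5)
Your proof is correct, and since the paper does not print a proof of this lemma—it is effectively a relativized instance of the square-root factorization \lemref{factor} cited from [CLPY3], applied to $Q(\cdot) - Q(x_*)$ and then rescaled to trade $Q''(x_*)$ for $\kappa_F$—your argument is a sound self-contained reconstruction of it. The reduction $Q(x) = T_1(\phi(x), x)$ via \eqref{eq.henon trans} is exactly right; the verification that $Q$ is uniformly convex with $Q'' \asymp \kappa_F$ (using both $\|g\|_{C^2} < \underline{\kappa_F}$ and the $\lambda^{(1-\bepsilon)R_n}$-closeness of $T = \Psi^n \circ \Phi_0^{-1}$ to the identity) is correctly noted; the Hadamard remainder $\rho$ and the definition $\psi = (x - x_*)\sqrt{\rho/\kappa_F}$ give the factorization; and the identity $2\kappa_F\psi\psi' = Q'$ is precisely the clean way to pin $\psi'$ between positive constants depending only on $\kappa_F$ and $K_0$, avoiding any need to control $\rho'$. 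You also correctly isolate the genuinely delicate point: a plain Fa\`a di Bruno estimate for $\sqrt{\rho}$ and for $\psi^{-1}$ yields a bound that is polynomial in $\|\rho\|_{C^r}$, whereas the statement asserts linearity in $\|g\|_{C^{r+3}}$; pairing Fa\`a di Bruno with Landau--Kolmogorov/Gagliardo--Nirenberg interpolation—legitimate here precisely because $\inf\rho$, $\|\rho\|_{C^0}$, $\|\psi\|_{C^1}$ and $\|1/\psi'\|_{C^0}$ are controlled by $K_0, \kappa_F$ alone, independently of $\|g\|_{C^{r+3}}$—is what makes every step tame, and this is where the hypothesis $\|g\|_{C^2} < \underline{\kappa_F}$ is truly consumed. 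One minor accounting remark: the derivative loss actually occurs in passing to the remainder $\rho$ (which involves $Q''$), not in the square root itself (which costs nothing once $\rho$ is bounded below); the stated three-derivative drop from $C^{r+3}$ to $C^r$ thus carries a derivative of slack, inherited from the phrasing of \lemref{factor}.
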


For $n_0 \leq n \leq N$, define a sequence of maps $\{H^n_i\}_{i=0}^\infty$ as follows. First, let $H^{n_0}_i := F^i.$ Proceeding inductively, suppose $H^{n-1}_i$ is defined. Write $i = j+kR_n$ with $k \geq 0$ and $0\leq j < R_n$. Define
$$
H^n_i := H^{n-1}_j \circ \cP^n_0 \circ F^{kR_n},
$$
where
$$
\cP^n_0 := (\Psi^n)^{-1} \circ \Pi_h \circ \Psi^n
$$
is the $n$th projection map near the critical value $v_0$. Observe that $H^n_i$ is well-defined on $\cB^n_0$.

\begin{lem}\label{H diffeo near tip}
Let $s \in \{1, 2\}$ and $n_0 \leq n \leq N-s$. Then $H^n_i|_{\cI^{n+s}_1}$ is a diffemorphism for $0\leq i < R_{n+s}$.
\end{lem}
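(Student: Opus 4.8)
The plan is to argue by induction on $n$ (strengthening the statement, if necessary, to a class of ``admissible'' uniformly horizontal curves that is stable under the operations occurring in the $H$'s). The base case $n=n_0$ is immediate: $H^{n_0}_i=F^i$ is a diffeomorphism onto its image wherever it is defined, and the curve $\cI^{n_0+s}_1=F(\cI^{n_0+s}_0)$ together with its forward iterates $\cI^{n_0+s}_{1+l}$, $0\le l<R_{n_0+s}-1$, lies in the cycle pieces $\cB^{n_0+s}_{1+l}\subset D$, so $F^i$ is defined and injective on $\cI^{n_0+s}_1$ for every $0\le i<R_{n_0+s}$.

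For the inductive step, write $i=j+kR_n$ with $0\le j<R_n$, so that $H^n_i=H^{n-1}_j\circ\cP^n_0\circ F^{kR_n}$. The heart of the matter is to check that the piece of curve about to meet the $n$th projection arrives there as a graph over the horizontal direction of the $\Psi^n$-chart, i.e.\ $\lambda^{-\bepsilon R_n}$-horizontal in $C^r$ in $\hcB^n_0$. Granting this, $\cP^n_0=(\Psi^n)^{-1}\circ\Pi_h\circ\Psi^n$ restricted to that piece is a diffeomorphism onto a horizontal sub-arc of the central leaf $\cI^n_0$, since $\cP^n_0$ is the identity on $\cI^n_0$ and constant on the vertical leaves of $\Psi^n$, which a horizontal graph meets exactly once. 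The required horizontality is an a priori bounds statement: by \lemref{flat} and \lemref{quad flat} the forward $F$-iterates of $\cI^{n+s}_1$ stay uniformly horizontal as long as they avoid the critical neighborhood $\cV^n_{v_0}(\lambda^{\bepsilon R_n})$; the 1D-like combinatorial structure of \propref{value struct bound} identifies exactly when the orbit of the curve enters that neighborhood --- at the level-$n$ return times, which is precisely why the level-$n$ projection $\cP^n_0$ sits inside $H^n_i$; and \lemref{quad mapping} (with \lemref{est value} and \thmref{crit chart}) describes the single quadratic fold acquired at such an entry, which the following $\cP^n_0$ removes. Once the curve has been brought to a horizontal sub-arc of $\cI^n_0$, \lemref{unproject} lets it be pushed on as a uniformly horizontal curve inside the domain of $H^{n-1}_j$, and the inductive hypothesis at level $n-1$ finishes the argument; composing the two diffeomorphisms yields $H^n_i|_{\cI^{n+s}_1}$.

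The step I expect to be the main obstacle is this \emph{no-folding} claim: ruling out that any projection appearing in $H^n_i$ --- the outer $\cP^n_0$ and, recursively, those hidden in $H^{n-1}_j$ --- is ever fed a vertically folded piece of the curve. This rests on the interplay of (i) the quadratic tangency of $W^{ss}(v_0)$ with the center manifold at $v_0$ quantified by \thmref{crit chart}, \lemref{est value} and \lemref{quad mapping}, which contributes one and only one quadratic fold per passage near $v_0$; (ii) the rigid combinatorics of \propref{value struct bound}, which aligns these passages with the return times at which a projection is inserted; and (iii) the a priori bounds of \lemref{flat}, which prevent folds from accumulating between passages. A secondary, purely bookkeeping point --- to be dispatched along the way --- is to confirm that $\cI^{n+s}_1$ and its $F^{kR_n}$-iterates do sit in the domains on which the relevant projections (transported around the cycle $\{\cB^n_m\}$) and $H^{n-1}_j$ are defined; this again follows from the 1D-like structure of \propref{value struct bound}.
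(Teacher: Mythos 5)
The paper states this lemma without proof; it is one of the technical ingredients of \thmref{a priori}, whose details the paper attributes to the companion work [CLPY3]. So there is no in-text proof to compare with, and your proposal must be judged on its own merits.

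The inductive framework and the decomposition $H^n_i = H^{n-1}_j\circ\cP^n_0\circ F^{kR_n}$ are the right starting points, and you correctly isolate the crux: the iterate of $\cI^{n+s}_0$ presented to $\cP^n_0$ must be a horizontal graph in $\hcB^n_0$. But the combinatorial mechanism you describe is backwards, and followed literally it would not close. You claim \propref{value struct bound} tells us the orbit \emph{enters} $\cV^n_{v_0}(\lambda^{\bepsilon R_n})$ at the level-$n$ return times, and that $\cP^n_0$ then ``removes'' the resulting quadratic fold. That cannot be: if a full fold (tip included) were presented to $\cP^n_0$, the projection would identify the two branches, destroying injectivity --- indeed \lemref{quad mapping} writes $\cP^n_0\circ F\circ\Phi_{-1}^{-1}\circ G=\kappa_F\psi^2+a$ as a genuine quadratic, which is two-to-one across $\psi^{-1}(0)$. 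What \propref{value struct bound} i) actually provides is the opposite fact: the boxes $\hcB^{n,s}_{kR_n}$, $0\le k<R_{n+s}/R_n$, are pairwise disjoint and $\cV^n_{v_0}(\lambda^{\bepsilon R_n})\subset\hcB^{n,s}_0$, so $\cI^{n+s}_{kR_n}\subset\hcB^{n,s}_{kR_n}$ \emph{avoids} the critical strip for every $1\le k<R_{n+s}/R_n$. It is exactly this avoidance that makes the hypothesis of \lemref{quad flat} hold at each passage $\cB_{-1}\to\cB_0$: the image lies on a single branch of the fold curve $\cI^n_{kR_n}$ away from its tip, remains a $\lambda^{-\bepsilon R_n}$-horizontal graph, and $\cP^n_0$ restricted to it is injective. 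In short, the fold tip stays outside the piece of curve at every moment a projection is applied; no fold is ever ``removed,'' and attributing the injectivity to the projection rather than to the avoidance is the gap in your argument.
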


Recall the definition of $\cP^n_{-1}$ for $n_0 \leq n \leq N$ given in \eqref{eq.-1 proj}. 

\begin{lem}\label{lem unproj}
For $s\in \{1, 2\}$ and $n_0 \leq n \leq N-s$, let $\Gamma_0$ be a $C^r$-curve which is $\lambda^{-\bepsilon R_n}$-horizontal in $\cB^{n+s}_0$. Then for $1 \leq k \leq R_{n+s}/R_n$, we have
$$
F^{kR_n-1}|_{\Gamma_0} = \left(\cP^{n_0}_{-1}|_{\Gamma_{kR_n-1}}\right)^{-1}\circ H^n_{kR_n-1}|_{\Gamma_0}.
$$
\end{lem}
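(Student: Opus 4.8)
The plan is to induct on $k$, using Lemma~\ref{first entry hor graph} as the base case and the definition of the maps $H^n_i$ together with the $1$D-like structure from Proposition~\ref{value struct bound} to propagate. First recall that for $s\in\{1,2\}$ and $m:=n-s'$ with $s'=O(1)$, the return $(F^{R_m},\Psi^m)$ has $1$D-like structure, and by \eqref{eq.min domain} we may take $\cB^{n+s}_0=\hcB^{n+s,0}_0$; thus $\Gamma_0\subset\cB^{n+s}_0\subset\cB^n_0\subset\hcB^{n}_0$. Write $H^n_{kR_n-1}=H^{n_0}_{j}\circ\cP^n_0\circ F^{(k-1)R_n}$ where $kR_n-1=j+(k-1)R_n$ with $j=R_n-1$, so $H^n_{kR_n-1}=F^{R_n-1}\circ\cP^n_0\circ F^{(k-1)R_n}$; more generally unwinding the recursion down to depth $n_0$ gives $H^n_{kR_n-1}=H^{n_0}_{R_n-1}\circ\cP^n_0\circ F^{(k-1)R_n}$, and $H^{n_0}_{R_n-1}=F^{R_n-1}$ since $H^{n_0}_i=F^i$.

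For the base case $k=1$: $H^n_{R_n-1}=F^{R_n-1}\circ\cP^n_0$, and since $\Gamma_0$ is $\lambda^{-\bepsilon R_n}$-horizontal (hence in particular horizontal in $\hcB^n_0$), Lemma~\ref{first entry hor graph} gives $F^{R_n-1}|_{\Gamma_0}=(\cP^{n}_{-1}|_{\Gamma_{R_n-1}})^{-1}\circ F^{R_n-1}\circ\cP^n_0|_{\Gamma_0}$. The only discrepancy with the stated formula is $\cP^n_{-1}$ versus $\cP^{n_0}_{-1}$; but by Theorem~\ref{crit chart} i) the charts $\Psi^n$ and $\Phi_0$ (and hence the graphs defining $\cP^n_{-1}$ and $\cP^{n_0}_{-1}$) agree up to $O(\lambda^{(1-\bepsilon)R_n})$, and since $\Gamma_{R_n-1}$ is $\lambda^{(1-\bepsilon)R_n}$-horizontal in $\cB_{-1}$ by Lemma~\ref{quad flat} (using $\Gamma_{R_n}\subset\hcB^n_0\setminus\cV^n_{v_0}(\lambda^{\bepsilon R_n})$, which follows from the $1$D-like structure since $\Gamma_0\subset\hcB^{n+s,0}_0$ is disjoint from $\cV^n_{v_0}(\lambda^{\bepsilon R_n})$), the projection $\cP^{n}_{-1}$ restricted to $\Gamma_{R_n-1}$ coincides with $\cP^{n_0}_{-1}$ restricted there — both send the point to the actual point of $\Gamma_{R_n-1}$, which is the graph being rectified. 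For the inductive step, assume the identity for $k$ and apply $F^{R_n}$ once more: by $1$D-like structure $\Gamma_{kR_n}\subset\hcB^{n+s,0}_{kR_n}$ is disjoint from $\cV^n_{v_0}(\lambda^{\bepsilon R_n})$, and by Lemma~\ref{flat} iii) together with Lemma~\ref{quad flat} the image $\Gamma_{(k-1)R_n}$, $\Gamma_{kR_n}$, etc., remain $\lambda^{-\bepsilon R_n}$-horizontal in $\hcB^n_0$; then $\cP^n_0|_{\Gamma_{kR_n}}=\Id$ up to the already-established horizontality, so $H^n_{(k+1)R_n-1}|_{\Gamma_0}=F^{R_n-1}\circ F^{R_n}|_{\Gamma_{(k-1)R_n}}\circ(\text{previous})$ and one applies Lemma~\ref{first entry hor graph} again on the final $F^{R_n-1}$ block, absorbing the chart discrepancy as in the base case.

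The main obstacle I expect is the careful bookkeeping of two things simultaneously: first, that the intermediate orbit segments $\Gamma_{jR_n}$ stay inside $\hcB^n_0$ and avoid the critical slice $\cV^n_{v_0}(\lambda^{\bepsilon R_n})$ for $1\le j<R_{n+s}/R_n$ — this is exactly where the $1$D-like structure of Proposition~\ref{value struct bound} (the disjointness of the domains $\hcB^{n,s}_{kR_n}$ and property iii) is essential, and one must check the hypotheses of Lemma~\ref{quad flat} and Lemma~\ref{flat} iii) are met at every step so the curve does not develop a fold; second, reconciling $\cP^n_{-1}$ with $\cP^{n_0}_{-1}$, which requires noting that both projections, when restricted to a curve that is already $\lambda^{(1-\bepsilon)R_n}$-horizontal in $\cB_{-1}$, act as the identity up to the quantitative error, so the composition $(\cP^{n_0}_{-1}|_{\Gamma_{kR_n-1}})^{-1}\circ\cP^n_{-1}|_{\Gamma_{kR_n-1}}$ is the identity on the relevant curve. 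Once these two points are handled the induction is routine concatenation of the commutation relation in Lemma~\ref{first entry hor graph} with the unwound definition of $H^n_i$.
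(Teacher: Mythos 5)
The proposal contains a concrete misreading of the recursion that undercuts the whole argument. The paper defines $H^n_i := H^{n-1}_j\circ\cP^n_0\circ F^{kR_n}$ — the superscript drops by \emph{one level}, to $n-1$, not all the way to $n_0$. You wrote $H^n_{kR_n-1}=H^{n_0}_j\circ\cP^n_0\circ F^{(k-1)R_n}$ and concluded $H^n_{kR_n-1}=F^{R_n-1}\circ\cP^n_0\circ F^{(k-1)R_n}$. The correct unwinding is
$H^n_{kR_n-1}=H^{n-1}_{R_n-1}\circ\cP^n_0\circ F^{(k-1)R_n}$, and $H^{n-1}_{R_n-1}$ itself contains an intermediate projection $\cP^m_0$ at every depth $n_0< m\le n-1$ (this is exactly Lemma~\ref{H decomp}). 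Your closed form holds only when $n=n_0+1$. Because of this, your argument never sees the intermediate projections and never explains why the final correction in the statement is $\cP^{n_0}_{-1}$ (at the \emph{base} depth $n_0$) rather than $\cP^n_{-1}$; you try to patch this by saying $\cP^n_{-1}$ and $\cP^{n_0}_{-1}$ ``coincide up to $O(\lambda^{(1-\bepsilon)R_n})$,'' but the lemma is an exact identity with no error term, so approximate coincidence is not enough.

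The induction should run over the renormalization depth $n$, not over $k$. From the true recursion $H^n_{kR_n-1}=H^{n-1}_{R_n-1}\circ\cP^n_0\circ F^{(k-1)R_n}$: (i) use $1$D-like structure (\propref{value struct bound}) and \lemref{unproject} to see that $\Gamma_{(k-1)R_n}$ is horizontal in $\cB^n_0$, so $\cP^n_0$ maps it diffeomorphically onto a subarc $\tilde\Gamma_0\subset\cI^n_0$; (ii) apply the induction hypothesis at depth $n-1$ (with $k'=r_{n-1}$, so $k'R_{n-1}-1=R_n-1$) to rewrite $H^{n-1}_{R_n-1}|_{\tilde\Gamma_0}=\cP^{n_0}_{-1}|_{\tilde\Gamma_{R_n-1}}\circ F^{R_n-1}|_{\tilde\Gamma_0}$; (iii) apply \lemref{first entry hor graph} at depth $n$ on $\Gamma_{(k-1)R_n}$ to commute $F^{R_n-1}\circ\cP^n_0|_{\Gamma_{(k-1)R_n}}=\cP^n_{-1}|_{\Gamma_{kR_n-1}}\circ F^{R_n-1}|_{\Gamma_{(k-1)R_n}}$; and (iv) close via the \emph{exact} nesting $\cP^{n_0}_{-1}\circ\cP^n_{-1}=\cP^{n_0}_{-1}$ — which holds because every $\cP^m_{-1}=\Phi_{-1}^{-1}\circ G_m\circ P_{-1}$ preserves the coordinate $P_{-1}$, so $\cP^{n_0}_{-1}\circ\cP^n_{-1}=\Phi_{-1}^{-1}\circ G_{n_0}\circ P_{-1}=\cP^{n_0}_{-1}$. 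Your choice of supporting lemmas (\lemref{quad flat}, \lemref{flat}~iii, \propref{value struct bound}, \lemref{first entry hor graph}) is correct; the gaps are the misread recursion, the wrong induction axis, and the approximate (rather than exact) treatment of the projection discrepancy.
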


\begin{figure}[h]
\centering
\includegraphics[scale=0.2]{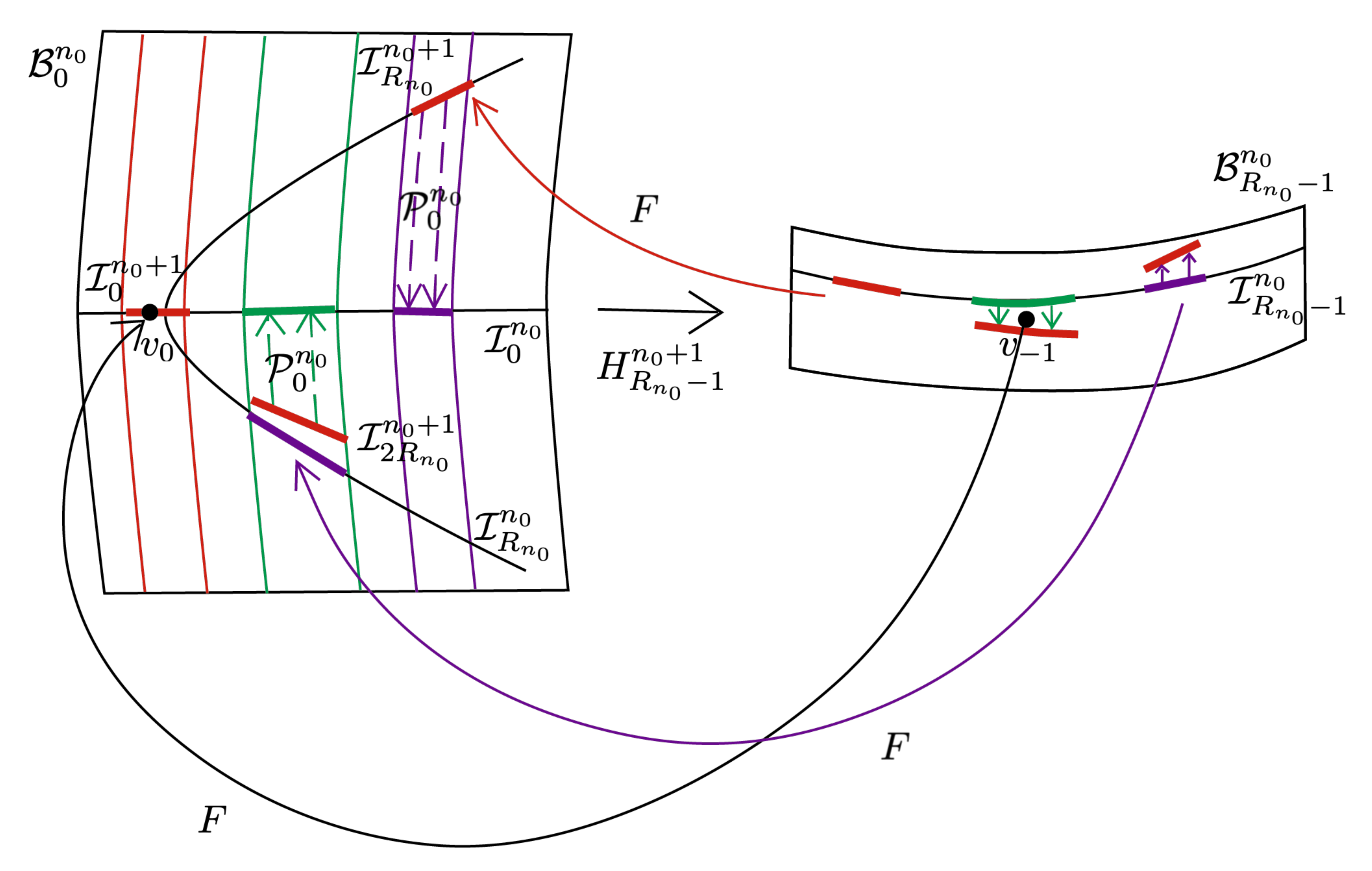}
\caption{Visualization of the map $H^{n_0}_i$ for $0 \leq i < R_{n_0+1}$ acting on the horizontal curve $\cI^{n_0+1}_0 \subset \cI^{n_0}_0$ (for $r_{n_0} := R_{n_0+1}/R_{n_0} = 3$). The orbit of $\cI^{n_0+1}_0$ makes returns to $\cB^{n_0}_0 \ni v_0$ under $F^{kR_n}$ for $0 \leq k < r_{n_0}$. At these moments, the projection map $\cP^{n_0}_0$ is applied to $\cI^{n_0+1}_{kR_{n_0}}$ to bring it down to $\cI^{n_0}_0$. These projections are then ``undone'' in $\cB^{n_0}_{R_{n_0}-1} \ni v_{-1}$ to return to $\cI^{n_0+1}_{(k+1)R_{n_0-1}}$. For $n > n_0$, the multiple projections (at various depths) can be applied to the orbit of $\cI^n_0$ near $v_0$ before they are undone near $v_{-1}$.}
\label{fig.hdiffeo}
\end{figure}

We also define another sequence of maps $\{\hH_i\}_{i=0}^{R_N-1}$ as follows (if $N=\infty$, then $R_N = \infty$). If $i < 2R_{n_0}$, let
$
\hH_i := F^i.
$
Otherwise, let $n_0 \leq n < N$ be the largest number such that $i \geq 2R_n$, and define
$
\hH_i := H^n_i.
$
Observe that by \lemref{lem unproj}, we have
\begin{equation}\label{eq.proj to original}
\hH_{R_n-1}|_{\cI^n_0} = H^{n-1}_{R_n-1}|_{\cI^n_0} = \cP^{n_0}_{-1}|_{\cI^n_{R_n-1}}\circ F^{R_n-1}|_{\cI^n_0}.
\end{equation}

\begin{thm}\label{a priori}
There exists a uniform constant
$$
\bfK = \bfK(L, \lambda, \epsilon, \lambda^{1-\epsilon}\|DF^{-1}\|, \|DF\|_{C^5}, \|F^{R_{n_0}}|_{\cB^{n_0}}\|_{C^6}, \kappa_F)\geq 1
$$
such that for all $n_0 \leq n \leq N$, we have
$$
\Dis(\hH_i, \cI^n_0) < \bfK
\matsp{for}
0\leq i < R_n.
$$
\end{thm}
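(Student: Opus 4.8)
The plan is to induct on the renormalization depth $n$, following the architecture of the classical proof of real \emph{a priori} bounds for unimodal maps, but with the one-dimensional estimates transferred to the curves $\cI^n_m$ via the quasi-linearization furnished by the regularity hypotheses. Since distortion can only decrease upon restriction to a subcurve, and since $\cI^n_0\subset\cI^{n-1}_0\subset\cdots\subset\cI^{n_0}_0$ are nested (by \eqref{eq.min domain} and \propref{value struct bound}) while $\hH_i=H^{n'}_i$ for $n'$ the largest index with $2R_{n'}\le i$, it suffices to bound $\Dis(H^{n-1}_i,\cI^n_0)$ for $2R_{n-1}\le i<R_n$; smaller $i$ is then covered by the bound at shallower depths, and the base case $i<2R_{n_0}$ (where $\hH_i=F^i$) is controlled by $\|F^{R_{n_0}}|_{\cB^{n_0}}\|_{C^2}$ and absorbed into $\bfK$.

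Writing $i=j+kR_{n-1}$ with $0\le j<R_{n-1}$ and $2\le k\le\bfb$, I would unroll $H^{n-1}_i=H^{n-2}_j\circ\cP^{n-1}_0\circ F^{kR_{n-1}}$, using the commuting relations of \lemref{first entry hor graph}, \lemref{lem unproj} and \eqref{eq.proj to original}, into a composition of three kinds of maps: (i) iterates $F^{R_m-1}$ carrying a valuable curve $\cI^m_0$ towards the critical point, the successive images $\cI^m_0,\ldots,\cI^m_{R_m-1}$ lying in the pairwise disjoint domains $\cB^m_0,\ldots,\cB^m_{R_m-1}$; (ii) the maps $F$ near the critical point, which by \lemref{quad mapping} are the quadratic normal forms $Q=\kappa_F\psi^2+a$ with $\|\psi^{\pm1}\|_{C^r}$ uniformly bounded; and (iii) projections $\cP^m_0,\cP^m_{-1},\cP^{n_0}_{-1}$ whose $C^r$-norms are $<K_0$ (by \propref{vert prop nest} and \thmref{crit chart}). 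Crucially, by \lemref{H diffeo near tip} and the minimal-domain structure of \propref{value struct bound}, $H^{n-1}_i|_{\cI^n_0}$ is a genuine diffeomorphism onto its image for $i<R_n$: the unimodal fold of each level only appears at the full return time of that level, so every $Q$ above is met on a single monotone branch, at a definite distance from its turning point.

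I would then estimate each piece. For an iterate $F^{R_m-1}|_{\cI^m_0}$, disjointness gives $\sum_p\length(\cI^m_p)=O(1)$, and the Q-linearization along the orbit (\thmref{reg chart}, \propref{vert prop nest}(iii)) controls $DF^p$ along the tangent line to $\cI^m_p$ while --- using the thinness of \thmref{crit chart} --- keeping these curves uniformly $C^r$-flat, so the two-dimensional distortion is governed by the one-dimensional cross-ratio (Koebe) inequality for the induced $C^{r+4}$ interval maps. For a quadratic piece $Q$, the combinatorial ordering of \propref{value struct bound} places the turning point on a definite side and the bounded-geometry estimate at depth $m$ (available by induction) makes that side metrically definite, so Koebe applied to $\psi$ bounds the distortion of $Q$ on the branch used. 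Adding the logarithmic distortions, the total is $O(1)$: the generic $F$-steps contribute $\sum\length^2=O(1)$ by disjointness, while the (geometrically many in $n$) quadratic contributions are summable because bounded geometry keeps the orbit of $\cI^n_0$ at scale $\asymp\length(\cI^n_0)$ whereas the distances to the deeper turning points decay geometrically; the bounded combinatorial type controls the remaining multiplicities. This closes the induction and produces $\bfK$ with the asserted dependence on the data.

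The hard part will be the bounded-geometry input itself --- that the gaps between the renormalization intervals stay bounded below at every depth. As in one dimension this rests on the bounded combinatorics and the recurrence of the critical orbit (\thmref{crit rec}); the main difficulty is its two-dimensional implementation, which needs the full Pesin-theoretic control of \cite{CLPY3} --- in particular that the curves $\cI^n_m$ remain uniformly $C^r$-flat and the vertical and horizontal directions remain uniformly transverse --- so that the one-dimensional cross-ratio machinery can legitimately be run along them.
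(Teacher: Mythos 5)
The paper does not prove \thmref{a priori}: it is stated in \secref{sec.a priori} without a proof and is attributed explicitly to \cite{CLPY3} in the introduction, so there is no ``paper's own proof'' to compare against. Your top-level architecture --- inducting on depth, factoring $\hH_i$ into diffeomorphic stretches, projections, and quadratic normal forms, and running a Koebe-type distortion estimate along the valuable curves with the Pesin-theoretic linearization supplying the 2D-to-1D transfer --- is consistent with the supporting machinery the paper imports from \cite{CLPY3} (\lemref{quad mapping}, \lemref{lem unproj}, \propref{H decomp fine}) and with the analogous factorization that appears later in \propref{diff quad decomp}, so it is almost certainly the right shape of argument.

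But the attempt leaves a genuine gap in two places. The smaller one is bookkeeping: reducing the inductive step to bounding $\Dis(H^{n-1}_i,\cI^n_0)$ for $2R_{n-1}\le i<R_n$ is not quite enough, because the inductive hypothesis at depth $n-1$ only covers $0\le i<R_{n-1}$; the band $R_{n-1}\le i<2R_{n-1}$, where $\hH_i=H^{n'}_i$ with $n'\le n-2$, is not obtained by restriction to a smaller curve and must be handled afresh in the unrolling. The larger and decisive one is the one you flag yourself: your estimate feeds on bounded geometry of the orbit of $\cI^n_0$ --- lengths of the pieces $\cJ^n_i$ commensurate, definite distance from the deeper turning points --- but in this paper those facts (\propref{exp scale}, \propref{a priori geo}, via \corref{a priori cor} and \propref{c1 bound 2d}) are \emph{deduced from} \thmref{a priori}, so invoking them as inputs is circular unless one sets up and closes a genuine simultaneous induction on geometry and distortion, with an explicit base case and a quantitative summability bound for the quadratic contributions (and your sketch of that summation --- $\sum\length^2=O(1)$ for generic steps, geometric decay for the quadratic pieces --- is only gestured at). You observe that this is ``the hard part'' and defer it to \cite{CLPY3}; that assessment is accurate, but it means the proposal describes the proof's scaffolding while leaving the load-bearing step undone.
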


\begin{cor}\label{a priori cor}
For $n_0 \leq n \leq N$, let $h_n : I^n_0 \to h_n(I^n_0)$ be the diffeomorphism given in \thmref{crit chart} ii). Then $\Dis(h_n, I^n_0) < \bfK$, where $\bfK > 1$ is the uniform constant given in \thmref{a priori}.
\end{cor}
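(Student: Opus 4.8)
The plan is to read the corollary off from \thmref{a priori} applied with $i = R_n - 1$, so that $\Dis(\hH_{R_n-1}, \cI^n_0) < \bfK$. The key observation is that $h_n$ \emph{is} the horizontal coordinate of $F^{R_n-1}$ along the curve $\cI^n_0$: by \thmref{crit chart} ii), $\pi_h \circ \Phi_{-1}\circ F^{R_n-1}\circ(\Psi^n)^{-1}(x,0) = h_n(x)$ for $x \in I^n_0$, so $h_n = P_{-1}\circ F^{R_n-1}\circ(\Psi^n)^{-1}|_{I^n_0 \times \{0\}}$, where $P_{-1} = \pi_h\circ\Phi_{-1}$. Because $\Psi^n$ is a \emph{centered} straightening chart, $x \mapsto (\Psi^n)^{-1}(x,0)$ is the arclength parametrization of $\cI^n_0$; hence, viewed as a $1$D diffeomorphism on $I^n_0$, the map $h_n$ is exactly $F^{R_n-1}|_{\cI^n_0}$ written in arclength on the source and in the coordinate $P_{-1}$ on the target curve $\cI^n_{R_n-1}$.

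Next I would bring in the identity \eqref{eq.proj to original}, namely $\hH_{R_n-1}|_{\cI^n_0} = \cP^{n_0}_{-1}\circ F^{R_n-1}|_{\cI^n_0}$, where by \eqref{eq.-1 proj} the projection $\cP^{n_0}_{-1} = \Phi_{-1}^{-1}\circ G_{n_0}\circ P_{-1}$ slides each point of $\cI^n_{R_n-1}$ along a vertical line of $\Phi_{-1}$ onto $\cI^{n_0}_{R_{n_0}-1}$ without changing its $P_{-1}$-coordinate. Thus $P_{-1}\circ \hH_{R_n-1}|_{\cI^n_0} = P_{-1}\circ F^{R_n-1}|_{\cI^n_0}$, i.e. $h_n = P_{-1}|_{\cI^{n_0}_{R_{n_0}-1}}\circ \hH_{R_n-1}|_{\cI^n_0}\circ (\Psi^n)^{-1}|_{I^n_0\times\{0\}}$. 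Since distortion is subadditive under composition and $\Dis(\hH_{R_n-1}, \cI^n_0) < \bfK$ by \thmref{a priori}, it remains only to bound the distortion of the reparametrization sending arclength of $\cI^{n_0}_{R_{n_0}-1}$ to $I^{n_0}_{R_{n_0}-1}$ via $P_{-1}$.

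This last term is super-exponentially small. Since $\Phi_{-1}$ is centered at $v_{-1}$, the curve $\Phi_{-1}^{-1}(\cdot, 0) = \cI^h_{-1}$ is arclength-parametrized and $D\Phi_{-1}\cdot(\cI^h_{-1})' \equiv (1,0)$, so $\tfrac{d}{ds}(P_{-1}\circ\cI^h_{-1}) \equiv 1$. By \lemref{flat} iii) at depth $n_0$ (the curve $\cI^{n_0}_0$ being $0$-horizontal in $\hcB^{n_0}_0$), the curve $\cI^{n_0}_{R_{n_0}-1}$ is $\lambda^{(1-\bepsilon)R_{n_0}}$-horizontal in $\cB_{-1}$, so its unit tangent differs from that of $\cI^h_{-1}$ by $O(\lambda^{(1-\bepsilon)R_{n_0}})$; together with $\|\Phi_{-1}\|_{C^r} < K_0$ this gives $\tfrac{d}{ds}P_{-1} = 1 + O(\lambda^{(1-\bepsilon)R_{n_0}})$ along $\cI^{n_0}_{R_{n_0}-1}$, hence reparametrization distortion at most $\overline{\lambda^{(1-\bepsilon)R_{n_0}}}$. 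By \eqref{eq.proper depth 0}--\eqref{eq.proper depth 1} the quantity $\lambda^{\epsilon R_{n_0}}$ is already tiny, so this error is absorbed into the uniform constant $\bfK$ of \thmref{a priori}, yielding $\Dis(h_n, I^n_0) < \bfK$.

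There is no real obstacle here: the argument is entirely parametrization bookkeeping — identifying $P^n_0|_{\cI^n_0}$ and $P_{-1}|_{\cI^h_{-1}}$ with arclength via the centered-chart normalization, and checking that the ``undoing'' projection $\cP^{n_0}_{-1}$ preserves the horizontal coordinate. No new analytic input beyond \thmref{crit chart} and \thmref{a priori} is needed; the only point requiring mild care is that the small reparametrization defect must genuinely be controlled by the depth $n_0$, so that it does not enlarge the constant $\bfK$.
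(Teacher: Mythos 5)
Your route is essentially the paper's: read the corollary off from \thmref{a priori} at $i=R_n-1$ by observing, via \eqref{eq.proj to original} and \eqref{eq.-1 proj}, that $P_{-1}\circ\cP^{n_0}_{-1}=P_{-1}$, so $h_n = P_{-1}|_{\cJ^n_{R_n-1}}\circ \hH_{R_n-1}|_{\cI^n_0}\circ (\Psi^n)^{-1}(\cdot,0)$, and then control the two reparametrizations. This decomposition is correct, and so is the identification of $x\mapsto(\Psi^n)^{-1}(x,0)$ with arclength on $\cI^n_0$ (since $\Psi^n$ is centered at $v_0$).

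The one inaccuracy is the claim that ``$\Phi_{-1}$ is centered at $v_{-1}$.'' \thmref{crit chart} asserts this only for $\Phi_0$; for $\Phi_{-1}$ the theorem gives only the containments, the normal form \eqref{eq.henon trans}, and the bound $\|\Phi_{-1}^{\pm 1}\|_{C^r}<K_0$. (That the paper treats $\phi_{-1}:=P_{-1}|_{\cI^{n_0}_{R_{n_0}-1}}$ merely as ``a parameterization,'' while calling $\phi_0:=P_0|_{\cI^h_0}$ an \emph{arclength} parameterization, confirms the asymmetry.) Consequently, your estimate that the target-side reparametrization defect is $O\bigl(\lambda^{(1-\bepsilon)R_{n_0}}\bigr)$ is unjustified. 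What is true, and suffices, is that $\cJ^n_{R_n-1}\subset\cI^{n_0}_{R_{n_0}-1}$ is $\lambda^{(1-\bepsilon)R_{n_0}}$-horizontal in $\cB_{-1}$ by \lemref{flat} iii), so for a unit tangent $u$ one has $|DP_{-1}(u)|\asymp |D\Phi_{-1}(u)|\in[K_0^{-1},K_0]$; the reparametrization distortion along $\cJ^n_{R_n-1}$ is therefore bounded by $\overline{K_0}$, an $O(1)$ quantity rather than an infinitesimal one. Since $\bfK$ already depends on the same data as $K_0$, this factor is absorbed by the paper's barred-constant convention, and the corollary follows exactly as you intend — just with an $O(1)$ rather than super-exponentially small reparametrization cost.
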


Observe that any number $2R_{n_0} \leq i <R_N$ can be uniquely expressed as
$$
i = j + a_{n_0}R_{n_0} + a_{n_0+1} R_{n_0+1} + \ldots + a_nR_n
$$
for some $n_0 \leq n < N$, where
\begin{enumerate}[i)]
\item $0 \leq j < R_{n_0}$;
\item $0 \leq a_m < r_m$ for ${n_0} \leq m < n$; and
\item $2 \leq a_n < 2r_n$.
\end{enumerate}
In this case, we denote
$$
i := j + [a_{n_0}, a_{n_0+1}, \ldots, a_n].
$$
We extend this notation to $i < 2R_{n_0}$ by writing
$$
i = j + [a_{n_0}]
\matsp{for some}
a_{n_0} \in \{0, 1\}
$$

We record the following easy observation.

\begin{lem}\label{H decomp}
Let $2R_{n_0} \leq i <R_N$ be given by
$$
i = j + [a_{n_0}, \ldots, a_n].
$$
Then we have
$$
\hH_i = H^n_i = F^j\circ \left(\cP^{n_0}_0 \circ F^{a_{n_0}R_{n_0}}\right) \circ \ldots \circ \left(\cP^n_0 \circ F^{a_nR_n}\right).
$$
\end{lem}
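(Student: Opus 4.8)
The plan is to unfold the recursive definition of $H^n_i$ by induction on the depth; the statement is a purely formal identity, so no genuine analysis enters.

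First I would check that $\hH_i$ equals $H^n_i$ for the index $n$ occurring in the expansion $i = j + [a_{n_0},\ldots,a_n]$. Since $a_n \geq 2$ we have $i \geq a_n R_n \geq 2R_n$; and using $R_{m+1} = r_m R_m$ together with the ranges $0 \leq j < R_{n_0}$, $0 \leq a_m \leq r_m - 1$ for $n_0 \leq m < n$, and $a_n \leq 2r_n - 1$, the telescoping bound
\[
i \;\leq\; (R_{n_0}-1) + \sum_{m=n_0}^{n-1}(r_m-1)R_m + (2r_n-1)R_n \;=\; 2R_{n+1}-1
\]
shows $i < 2R_{n+1}$. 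Hence $n$ is the largest index with $i \geq 2R_n$, and so $\hH_i = H^n_i$ by definition.

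For the decomposition itself I would prove, by induction on $m$ running from $n_0$ up to $n$, the slightly more general claim: whenever $0 \leq i' < 2R_{m+1}$ has the mixed-radix expansion $i' = j + \sum_{l=n_0}^{m} a_l R_l$ with $0 \leq j < R_{n_0}$, $0 \leq a_l \leq r_l-1$ for $n_0 \leq l < m$, and $0 \leq a_m \leq 2r_m-1$, then
\[
H^m_{i'} \;=\; F^{j} \circ F^{a_{n_0}R_{n_0}} \circ \bigl(\cP^{n_0+1}_0 \circ F^{a_{n_0+1}R_{n_0+1}}\bigr) \circ \cdots \circ \bigl(\cP^m_0 \circ F^{a_m R_m}\bigr).
\]
The base case $m = n_0$ is immediate from the definition $H^{n_0}_{i'} := F^{i'} = F^{j}\circ F^{a_{n_0}R_{n_0}}$ (the leftmost projection $\cP^{n_0}_0$ shown in the statement of the Lemma is a vacuous one, as the recursion bottoms out at $H^{n_0}_\bullet = F^\bullet$ and produces no projection at depth $n_0$). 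For the inductive step I would set $k := a_m$ and $j' := j + \sum_{l=n_0}^{m-1} a_l R_l$; the same telescoping estimate (with $n$ replaced by $m-1$) gives $0 \leq j' \leq R_m - 1$, so $i' = j' + kR_m$ is precisely the Euclidean division of $i'$ by $R_m$, and the defining relation yields $H^m_{i'} = H^{m-1}_{j'} \circ \cP^m_0 \circ F^{kR_m}$. Since $j' < R_m = R_{(m-1)+1} < 2R_{(m-1)+1}$, the inductive hypothesis applies to $H^{m-1}_{j'}$; composing with $\cP^m_0 \circ F^{kR_m}$ gives the claim. Specializing to $m = n$, $i' = i$ then recovers the Lemma.

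The one point that needs care — and the closest thing to an obstacle — is the bookkeeping of the mixed-radix expansion: at each level $m$ one must verify that the partial sum $j' = j + \sum_{l<m} a_l R_l$ genuinely lies in $[0,R_m)$, so that it plays the role of the remainder in the recursion for $H^m$. This is exactly the telescoping identity $R_{n_0} + \sum_{l=n_0}^{m-1}(r_l-1)R_l = R_m$, together with the observation that the top coefficient can attain the value $2r_m - 1$ only at the level $m = n$, where the hypothesis $i \geq 2R_n$ forces $a_n \geq 2$.
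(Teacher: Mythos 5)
Your unwinding is a faithful computation of the recursion as literally written, and the mixed--radix bookkeeping (the telescoping identity $R_{n_0}+\sum_{m=n_0}^{n-1}(r_m-1)R_m = R_n$, the check that $n$ is the largest index with $i \geq 2R_n$, the Euclidean-division step) is correct. But the induction you set up terminates at
$$
H^n_i = F^{j} \circ F^{a_{n_0}R_{n_0}} \circ \cP^{n_0+1}_0 \circ F^{a_{n_0+1}R_{n_0+1}} \circ \cdots \circ \cP^n_0 \circ F^{a_nR_n},
$$
which is not the identity asserted in the Lemma, and the parenthetical you offer --- that the $\cP^{n_0}_0$ in the statement is ``vacuous'' --- is the wrong diagnosis of the mismatch. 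At the slot it occupies in the Lemma's formula, $\cP^{n_0}_0 = (\Psi^{n_0})^{-1}\circ\Pi_h\circ\Psi^{n_0}$ is genuinely nontrivial: after the factor $\cP^{n_0+1}_0$ the image lies on the horizontal leaf $\cI^{n_0+1}_0 \subset \cI^{n_0}_0$, but $F^{a_{n_0}R_{n_0}}$ with $a_{n_0}\geq 1$ takes it off that leaf ($F^{R_{n_0}}(\cI^{n_0}_0)$ is a vertical quadratic curve, not a horizontal one), and $\cP^{n_0}_0$ is exactly what projects back onto $\cI^{n_0}_0$ before $F^j$ acts.

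What is actually going on is that the paper's written base case $H^{n_0}_i := F^i$ is inconsistent with the Lemma and with the subsequent uses of $\hH_i$. If that base case had been $H^{n_0}_i := F^j\circ\cP^{n_0}_0\circ F^{kR_{n_0}}$ for $i = j + kR_{n_0}$ with $0\leq j< R_{n_0}$, your induction would reproduce the Lemma verbatim. The downstream statements demand the projection at depth $n_0$: \lemref{on center track} asserts $\cJ^n_i := \hH_i(\cI^n_0)\subset \cI^{n_0}_{i\md{R_{n_0}}}$, which already fails at $i=R_{n_0}$ under the literal base case (one would need $F^{R_{n_0}}(\cI^n_0)\subset\cI^{n_0}_0$, but the left side is a vertical quadratic arc); and \propref{H decomp fine} opens its decomposition of $\hH_i$ with the factor $\cP^{n_0}_0\circ F^{2R_{n_0}}$, again projecting at depth $n_0$. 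So the $\cP^{n_0}_0$ in the Lemma is intended and essential. Your arithmetic is fine, but it proves a strictly different identity from the one stated, and the explanation you give for the discrepancy points in the wrong direction.
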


For $n_0 \leq n \leq N$, we define a collection of arcs $\{\cJ^n_i\}_{i=0}^{R_n-1}$ by
\begin{equation}\label{eq.Js}
\cJ^n_i := \hH_i(\cI^n_0)
\matsp{for}
0 \leq i < R_n.
\end{equation}
See \figref{fig.Js}.

\begin{lem}\label{J spread}
Let $n_0 \leq n \leq N$ and $0\leq i < R_n$. If 
$$
i = [0, \ldots, 0, a_m, a_{m+1}, \ldots, a_k]
$$
for some ${n_0} \leq m \leq k < n$, then we have
$
\cJ^n_i \subset \cI^m_0.
$
Moreover, we have
$$
 \cJ^n_{i + l} = H^{m-1}_l(\cJ^n_{i})
 \matsp{for}
 0 \leq l < R_m.
 $$
\end{lem}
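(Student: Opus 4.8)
The plan is to unwind the recursive definitions of $\hH_\bullet$ and $H^\bullet_\bullet$ by means of the decomposition in \lemref{H decomp}, leaning on two elementary facts used over and over: for $n_0\le j\le l$ one has $\cI^l_0\subseteq\cI^j_0$ and $\cP^j_0|_{\cI^j_0}=\Id$; and the image of $\cP^j_0$ is contained in $\cI^j_0$. For the containment $\cJ^n_i\subseteq\cI^m_0$, note that since $i=[0,\dots,0,a_m,\dots,a_k]$ has vanishing digits in positions $n_0,\dots,m-1$ and trivial $j$-part, \lemref{H decomp} presents $\hH_i$ as a composition whose innermost block is $\Theta:=(\cP^m_0\circ F^{a_mR_m})\circ\dots\circ(\cP^k_0\circ F^{a_kR_k})$, preceded only by the projections $\cP^{m-1}_0,\dots,\cP^{n_0}_0$ coming from the zero digits. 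I would first verify — using the $R_l$-periodicity of $\cB^l_0$ (so that $F^{a_lR_l}$ carries $\cB^l_0$ into itself for $0\le a_l<r_l$, by \propref{min domain} iii) and \propref{value struct bound}) — that every intermediate set arising inside $\Theta(\cI^n_0)$ stays within the domain of the chart to which the next projection or iterate is applied; then the outermost operation of $\Theta$ being $\cP^m_0$ forces $\Theta(\cI^n_0)\subseteq\cI^m_0$. Finally each leading projection $\cP^j_0$ with $n_0\le j\le m-1$ fixes $\Theta(\cI^n_0)$ pointwise, since $\Theta(\cI^n_0)\subseteq\cI^m_0\subseteq\cI^j_0$. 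Hence $\cJ^n_i=\hH_i(\cI^n_0)=\Theta(\cI^n_0)\subseteq\cI^m_0$.

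For the relation $\cJ^n_{i+l}=H^{m-1}_l(\cJ^n_i)$, I would expand $l<R_m$ in mixed radix as $l=l''+c_{n_0}R_{n_0}+\dots+c_{m-1}R_{m-1}$. Since $l<R_m$, adding $l$ to $i$ produces no carry into position $m$, so $i+l$ has digits $c_{n_0},\dots,c_{m-1}$ in positions $n_0,\dots,m-1$, the same digits $a_m,\dots,a_k$ as $i$ in positions $\ge m$, and the same top index as $i$; moreover $i+l<R_{k+1}\le R_n$, so $\cJ^n_{i+l}$ is defined. Applying \lemref{H decomp} to $i+l$ then factors $\hH_{i+l}=\Lambda\circ\Theta$, where $\Theta$ is exactly the block above and $\Lambda:=F^{l''}\circ(\cP^{n_0}_0\circ F^{c_{n_0}R_{n_0}})\circ\dots\circ(\cP^{m-1}_0\circ F^{c_{m-1}R_{m-1}})$ is, after unwinding the recursion defining $H^\bullet_\bullet$, exactly $H^{m-1}_l$. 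Therefore $\cJ^n_{i+l}=\hH_{i+l}(\cI^n_0)=\Lambda\bigl(\Theta(\cI^n_0)\bigr)=H^{m-1}_l(\cJ^n_i)$.

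The obstacle here is purely the bookkeeping, not the idea. One must handle the boundary cases in the digit expansion — chiefly the reindexing $R_k=r_{k-1}R_{k-1}$ when the top digit $a_k$ equals $1$, so that the top index of $\hH_i$ drops to $k-1$ and the $k$-th factor of $\Theta$ is absorbed into the $(k-1)$-st, and the small-$i$ regime where $\hH_i=F^i$ — and one must carefully track that each set produced along the composition lies in the domain of the next chart, which is where the periodicity of the renormalization domains and the $1$D-like structure of \propref{value struct bound} are needed. Once these domain conditions are secured, both identities follow formally from $\cP^j_0|_{\cI^j_0}=\Id$ and from $\cP^j_0$ taking values in $\cI^j_0$, which is why this is recorded as an easy observation.
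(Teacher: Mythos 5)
Your approach is correct and is surely the intended one given the paper's framework: \lemref{H decomp} writes $\hH_i$ as $(\cP^{n_0}_0 \circ \cdots \circ \cP^{m-1}_0)\circ\Theta$ with $\Theta := (\cP^m_0\circ F^{a_mR_m})\circ\cdots\circ(\cP^k_0\circ F^{a_kR_k})$, whose last operation $\cP^m_0$ lands in $\cI^m_0$, and the leading projections fix $\cI^m_0$ pointwise by the nesting $\cI^m_0\subseteq\cI^j_0$ and $\cP^j_0|_{\cI^j_0}=\Id$; for the second identity the no-carry digit expansion of $i+l$ factors $\hH_{i+l}=\Lambda\circ\Theta$ with $\Lambda=H^{m-1}_l$, so $\cJ^n_{i+l}=\Lambda(\cJ^n_i)$. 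The paper records the lemma without a displayed proof, so there is nothing to compare against, but this is the natural unwinding.

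Two small corrections. The bound $i+l<R_{k+1}$ you invoke is not right in general: the paper's convention on the top digit allows $r_k\le a_k<2r_k$ whenever $k+1<n$, so $i$ alone may already exceed $R_{k+1}$. The correct estimate is $i+l<(1+a_k)R_k<2R_{k+1}$, from which $i+l<R_n$ follows by splitting into the cases $k+1<n$ (where $2R_{k+1}\le R_{k+2}\le R_n$, using $r_{k+1}\ge 3$) and $k+1=n$ (where $i<R_n$ forces $a_k<r_k$, hence $i+l<R_k+a_kR_k\le r_kR_k=R_n$). Also, the boundary case you flag where the top digit $a_k$ equals $1$ cannot arise: in the $\hH$-expansion the top digit always satisfies $2\le a_k<2r_k$, so the top index never drops. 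The one genuine endpoint is $m=n_0$, where the outer chain of projections is empty and $H^{m-1}_l$ must be read as $F^l$ (this is consistent with your $\Lambda=F^{l''}$).
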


\begin{figure}[h]
\centering
\includegraphics[scale=0.2]{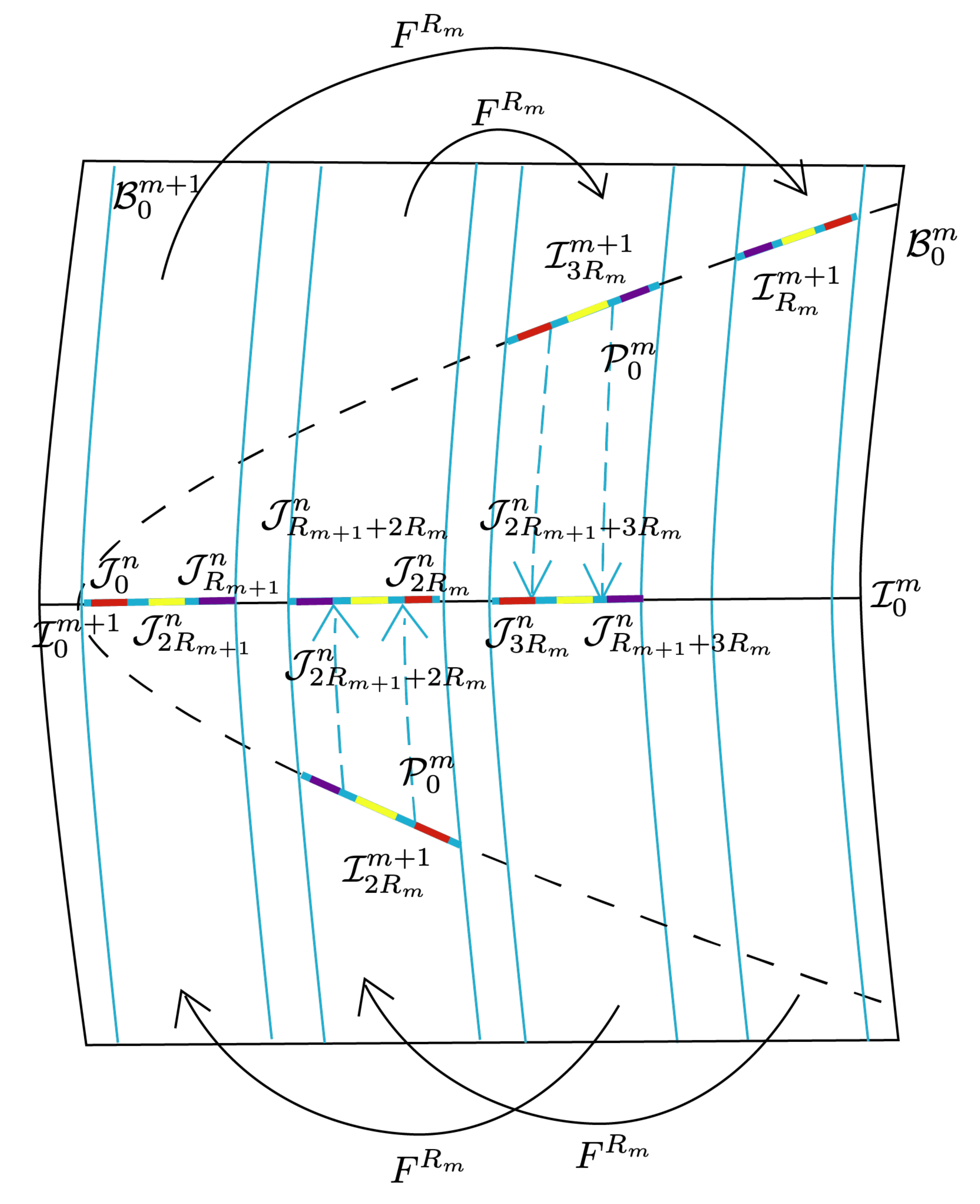}
\caption{Arcs $\cJ^n_i := \hH_i(\cI^n_0)$ with $0 \leq i <R_n$ that are contained in $\mathcal{I}^m_0$ for some $m < n$. For $0 \leq k < r_{m+1}$, we have $\cJ^n_{kR_{m+1}} \subset \cI^{m+1}_0$. For $2 \leq l < r_m$, we have $\cJ^n_{kR_{m+1}+lR_m}= \cP^m_0\circ F^{R_m}(\cJ^n_{kR_{m+1}})$.}
\label{fig.Js}
\end{figure}

\begin{lem}\label{on center track}
For $n_0 \leq n \leq N$ and $0 \leq i < R_n$, we have
$
\cJ^n_{i} \subset \cI^{n_0}_{i \md{R_{n_0}}}.
$
\end{lem}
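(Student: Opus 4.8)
The plan is to obtain the statement as a bookkeeping consequence of Lemma~\ref{J spread} and the recursion behind Lemma~\ref{H decomp}, together with two elementary structural facts. The first is that the valuable arcs are nested, $\cI^n_0 \subset \cI^{n-1}_0 \subset \cdots \subset \cI^{n_0}_0$, since $I^n_0 \Subset I^{n-1}_0$ and the centered charts $\Psi^k$ all agree on $\cI^n_0$ for $k \leq n$; combined with the trivial identity $F^j(\cI^{n_0}_0) = \cI^{n_0}_j$. The second is that each projection $\cP^l_0 = (\Psi^l)^{-1} \circ \Pi_h \circ \Psi^l$ maps $\hcB^l_0$ into $\cI^l_0$ and restricts to the identity on $\cI^l_0$.

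First I would dispose of the low range $0 \leq i < R_{n_0}$: here $\hH_i = F^i$ by definition, so $\cJ^n_i = F^i(\cI^n_0) \subset F^i(\cI^{n_0}_0) = \cI^{n_0}_i = \cI^{n_0}_{i \md{R_{n_0}}}$. For $R_{n_0} \leq i < R_n$ I would write $i = i' + j$ with $0 \leq j < R_{n_0}$ and $i'$ a positive multiple of $R_{n_0}$ (so $i \md{R_{n_0}} = j$), expand $i'$ in the mixed-radix form, and let $n_0 \leq m < n$ be the smallest index carrying a nonzero digit, so that $R_m \mid i'$. Lemma~\ref{J spread} then gives $\cJ^n_{i'} \subset \cI^m_0$ and, since $j < R_{n_0} \leq R_m$, also $\cJ^n_i = \cJ^n_{i'+j} = H^{m-1}_j(\cJ^n_{i'})$. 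Unwinding the recursion for $H$ (each step uses $j < R_{n_0} \leq R_l$, and bottoms out at $H^{n_0}_j = F^j$, with the convention $H^{n_0-1}_j := F^j$ when $m = n_0$) shows $H^{m-1}_j = F^j \circ \cP^{n_0+1}_0 \circ \cdots \circ \cP^{m-1}_0$. Since $\cJ^n_{i'} \subset \cI^m_0 \subset \cI^l_0$ for all $n_0 \leq l \leq m$, every projection in this composition is the identity on $\cJ^n_{i'}$, so $\cJ^n_i = F^j(\cJ^n_{i'})$; and since $\cJ^n_{i'} \subset \cI^m_0 \subset \cI^{n_0}_0$, I conclude $\cJ^n_i \subset F^j(\cI^{n_0}_0) = \cI^{n_0}_j = \cI^{n_0}_{i \md{R_{n_0}}}$.

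I expect the only real friction to be the two conventions at the base of the $H$-recursion (the shift from $\hH_i = F^i$ for $i < 2R_{n_0}$ to $\hH_i = H^n_i$ for $i \geq 2R_{n_0}$, and the reading of $H^{m-1}$ when $m = n_0$), and the verification that the projections $\cP^l_0$ inside $H^{m-1}_j$ truly act trivially on $\cJ^n_{i'}$ — which is immediate from $\cP^l_0|_{\cI^l_0} = \Id$ and the nesting of the valuable arcs, once Lemma~\ref{J spread} has been invoked to place $\cJ^n_{i'}$ inside $\cI^m_0$. Beyond that the argument is purely combinatorial and needs no new estimate.
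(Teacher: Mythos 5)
The paper states Lemma~\ref{on center track} without proof, so there is no argument of the paper's to compare against; your argument is correct and is the natural one given the surrounding material. The two pillars you isolate --- the nesting $\cI^n_0 \subset \cI^{n-1}_0 \subset \cdots \subset \cI^{n_0}_0$ (which the paper makes explicit in \secref{sec.cr bound} via $\cI^n_0 = \cI^h_0 \cap \cB^n_0$) and the identity $\cP^l_0|_{\cI^l_0} = \Id$ --- together with the placement $\cJ^n_{i'} \subset \cI^m_0$ from Lemma~\ref{J spread} and the observed collapse $H^{m-1}_j = F^j\circ\cP^{n_0+1}_0\circ\cdots\circ\cP^{m-1}_0$ on a subarc of $\cI^m_0$, is exactly what the proof needs.

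One remark for your own bookkeeping, since it is precisely the ``friction'' you anticipate at the base of the recursion: the paper's literal definitions $H^{n_0}_i := F^i$ and $\hH_i := F^i$ for $i<2R_{n_0}$ are not consistent with the closed form in Lemma~\ref{H decomp} (and Proposition~\ref{H decomp fine}), which carry an extra $\cP^{n_0}_0$ factor between $F^j$ and $F^{a_{n_0}R_{n_0}}$. Under the literal recursive reading, $\cJ^n_{R_{n_0}} = \cI^n_{R_{n_0}}$ would be a vertical-quadratic arc, so neither Lemma~\ref{J spread} with $m=k=n_0$ nor the present statement for $R_{n_0}\le i<2R_{n_0}$ (or more generally whenever the digit $a_{n_0}$ is nonzero) could be true. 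Your proof sidesteps this by routing the range $i\ge R_{n_0}$ entirely through Lemma~\ref{J spread} rather than through $\hH_i = F^i$, which implicitly commits to the closed-form (projected) reading --- the one under which all of these lemmas hold. That is the right call; it is worth stating once that you are taking Lemma~\ref{J spread}'s conclusion at face value, since it silently supplies the $\cP^{n_0}_0$ that the recursive definition omits.
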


Let $\Gamma : [0, 1] \to \bbR^2$ be a parameterized Jordan arc. For
$$
0 \leq a < b < c < d \leq 1,
$$
consider the subarcs $\Gamma_1 := \Gamma(a,b)$ and $\Gamma_2 := \Gamma(c,d)$ of $\Gamma$. We denote $\Gamma_1 <_\Gamma \Gamma_2.$ Let $\Gamma_3$ be another subarc of $\Gamma$. We denote
$
\Gamma_1 \leq_\Gamma \Gamma_3
$
if either
$
\Gamma_1 <_\Gamma \Gamma_3
$ or $
\Gamma_1 = \Gamma_3.$

Henceforth, we consider $\cI^{n_0}_0$ with parameterization given by
$$
\cI^{n_0}_0(t) := (\Psi^{n_0})^{-1}(t,0)
\matsp{for}
t \in I^{n_0}_0.
$$
Note that
$\cI^{n_0}_0 \circ P^{n_0}_0 = \cP^{n_0}_0.$
Moreover,
$$
P^{n_0}_0(v_0) = 0 < P^{n_0}_0(v_{R_{n_0}}).
$$

\begin{lem}\label{J order}
For $s \in \{1, 2\}$; $n_0 \leq n \leq N-s$ and $1 < k < R_{n+s}/R_n$, we have
$$
\cJ^{n+s}_0  <_{\cI^{n_0}_0} \cJ^{n+s}_{kR_n} <_{\cI^{n_0}_0}\cJ^{n+s}_{R_n}.
$$
\end{lem}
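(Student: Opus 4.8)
The plan is to recognize the three arcs $\cJ^{n+s}_0$, $\cJ^{n+s}_{kR_n}$, $\cJ^{n+s}_{R_n}$ as the (almost) horizontal projections onto $\cI^{n_0}_0$ of the $F^{R_n}$-orbit of the center arc $\cI^{n+s}_0$ running through the disjoint ``1D-like structure'' domains at depth $n$, and then to read the order off from the horizontal combinatorics provided by \propref{value struct bound}. First I would apply \propref{value struct bound} at depth $n+s$, so that $(F^{R_n},\Psi^n)$ has 1D-like structure of depth $s$: writing $r:=R_{n+s}/R_n$, the domains $\hcB^{n,s}_{jR_n}=(\Psi^n)^{-1}(\hB^{n,s}_{jR_n})$ for $0\le j<r$ are vertically proper and pairwise disjoint in $\hcB^n_0$, with $F^{R_n}(\hcB^{n,s}_{jR_n})\Subset\hcB^{n,s}_{(j+1)R_n\md{R_{n+s}}}$, and their horizontal spans obey (part ii of the structure, with $b^{n,s}_0=a^n_r$)
$$
a^n_0=0<b^{n,s}_0<a^n_j,\ b^{n,s}_j<b^{n,s}_1<a^n_1\matsp{for}2\le j<r.
$$
Since $\cI^{n+s}_0\subset\cB^{n+s}_0\subset\hcB^{n,s}_0$ by \propref{value struct bound} and the nesting of the renormalization domains, iterating $F^{R_n}(\hcB^{n,s}_{jR_n})\Subset\hcB^{n,s}_{(j+1)R_n}$ yields $F^{jR_n}(\cI^{n+s}_0)\subset\hcB^{n,s}_{jR_n}$ for $0\le j<r$.

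Next I would identify the $\cJ$'s. For $i\in\{0,k,1\}$, expanding $iR_n$ in mixed radix and using \lemref{H decomp} together with the fact that $\cP^m_0|_{\cI^m_0}=\Id$ for $m\le n$, all projections in $\hH_{iR_n}$ below the leading depth act trivially once the orbit of $\cI^{n+s}_0$ has landed on the depth-$n$ center track; one obtains $\hH_{iR_n}|_{\cI^{n+s}_0}=\cP^{n'}_0\circ F^{iR_n}|_{\cI^{n+s}_0}$, where $n'=n$ for $i=0$ and for most $2\le i<r$, $n'=n-1$ for $i=1$, and $n'=n+1$ for the remaining values of $i$ (which occur only when $s=2$). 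Thus $\cJ^{n+s}_{iR_n}=\cP^{n'}_0(F^{iR_n}(\cI^{n+s}_0))\subset\cP^{n'}_0(\hcB^{n,s}_{iR_n})$. Since $|n'-n|\le1$ and $\|\Psi^{n'}\circ(\Psi^n)^{-1}-\Id\|_{C^r}<\lambda^{(1-\bepsilon)R_{n-1}}$ by \propref{vert prop nest} ii, the projections $\cP^{n'}_0$ and $\cP^n_0$ differ by at most $\lambda^{(1-\bepsilon)R_{n-1}}$, which is $\ll\lambda^{\bepsilon R_n}$ because $\epsilon$ is small relative to $\bfb$; moreover $P^{n_0}_0$ agrees with $P^{n'}_0$ on $\cI^{n'}_0$, so by \lemref{on center track} each $\cJ^{n+s}_{iR_n}$ is a subarc of $\cI^{n_0}_0$ whose parameter range $P^{n_0}_0(\cJ^{n+s}_{iR_n})$ lies in a $\lambda^{\bepsilon R_n}$-neighborhood of $\pi_h(\hB^{n,s}_{iR_n})$.

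To conclude, recall that the order along $\cI^{n_0}_0$, under its parameterization $\cI^{n_0}_0(t)=(\Psi^{n_0})^{-1}(t,0)$, is exactly the order of $P^{n_0}_0$-values. By the displayed inequalities, $\pi_h(\hB^{n,s}_0)$ lies to the left of $\pi_h(\hB^{n,s}_{kR_n})$, which lies to the left of $\pi_h(\hB^{n,s}_{R_n})$; and by the pairwise disjointness of \propref{value struct bound} i together with the separation $|a^n_i-a^n_j|>\lambda^{\bepsilon R_n}$ from \propref{min domain} i (as extended in the proof of \propref{value struct bound}), consecutive spans are separated by a gap dominating the $\lambda^{\bepsilon R_n}$-errors above. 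Hence the three parameter intervals $P^{n_0}_0(\cJ^{n+s}_0)$, $P^{n_0}_0(\cJ^{n+s}_{kR_n})$, $P^{n_0}_0(\cJ^{n+s}_{R_n})$ are pairwise disjoint and increasing, giving $\cJ^{n+s}_0<_{\cI^{n_0}_0}\cJ^{n+s}_{kR_n}<_{\cI^{n_0}_0}\cJ^{n+s}_{R_n}$.

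The main obstacle is the middle step: keeping precise track of which projections $\cP^m_0$ occur in $\hH_{iR_n}$ and at what depth the orbit of $\cI^{n+s}_0$ becomes flat --- in particular isolating the special index $i=1$ (projected at depth $n-1$) from the generic $2\le i<r$ (projected at depth $n$, or $n+1$ when $s=2$) --- and then checking that the resulting chart discrepancies, controlled by the super-exponential convergence of the centered straightening charts, stay negligible against the scale $\lambda^{\bepsilon R_n}$ of the 1D-like structure, so that the combinatorial order survives the passage down to $\cI^{n_0}_0$.
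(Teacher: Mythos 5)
The paper states Lemma \ref{J order} without proof, so there is no proof of record to compare against; instead I assess the proposal on its own terms. Your overall strategy is sound and is almost certainly the intended one: identify each $\cJ^{n+s}_{kR_n}$ as (essentially) the $\cP$-projection of $F^{kR_n}(\cI^{n+s}_0)$, locate these images in the pairwise disjoint vertical strips $\hcB^{n,s}_{kR_n}$ supplied by the 1D-like structure of \propref{value struct bound}, read off the order from Part ii) of that structure, and check that the chart discrepancies between $\Psi^{n-1}$, $\Psi^n$, $\Psi^{n+1}$ (controlled by \propref{vert prop nest} ii)) are $\ll\lambda^{\bepsilon R_n}$ and hence negligible against the $\lambda^{\bepsilon R_n}$-scale separations of \propref{min domain}. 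You correctly use that $\Psi^m|_{\cI^m_0}=\Psi^1|_{\cI^m_0}$, so all projections land on the same horizontal leaf $\cI^{n_0}_0$ and the order along that leaf is the order of the $P^{n_0}_0$-values. The heart of the argument --- tracking which $\cP^m_0$ actually appears in $\hH_{kR_n}$ --- is handled correctly, though with one small imprecision: for $s=2$ and $2r_n\le k$ with $k\not\equiv 0\ (\mathrm{mod}\ r_n)$, the leading projection in $\hH_{kR_n}=H^{n+1}_{kR_n}=\cP^n_0\circ F^{(k\ \mathrm{mod}\ r_n)R_n}\circ\cP^{n+1}_0\circ F^{\lfloor k/r_n\rfloor R_{n+1}}$ is still at depth $n$, not $n+1$; the $\cP^{n+1}_0$ appears only as an intermediate factor, whose displacement ($\lesssim\lambda^{(1-\bepsilon)R_{n+1}}$, inflated by at most $\lambda^{-\bepsilon R_{n+1}}$) is what you must absorb into the error term. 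The case $k\equiv 0\ (\mathrm{mod}\ r_n)$ does reduce to $\cP^{n+1}_0\circ F^{kR_n}$, so your ``$n'=n+1$'' case is only part of the picture, but this bookkeeping slip does not affect the conclusion.

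One thing worth flagging: the literal definitions in \secref{sec.a priori} ($H^{n_0}_i:=F^i$ and $\hH_i:=F^i$ for $i<2R_{n_0}$) conflict with \lemref{H decomp} and \propref{H decomp fine}, which explicitly include a factor $\cP^{n_0}_0$; under the stated base case, $\cJ^{n+s}_{R_{n_0}}$ would be a vertical quadratic arc rather than a subarc of $\cI^{n_0}_0$, and Lemmas \ref{J spread}, \ref{on center track} and \ref{J order} would fail for $n$ near $n_0$. Your proof tacitly adopts the ``corrected'' decomposition of \lemref{H decomp} (projection at every return to $\cB^{n_0}_0$), which is clearly what the paper intends; it would strengthen the write-up to say so explicitly and cite \lemref{H decomp} or \propref{H decomp fine} rather than arguing from the inductive definition of $H^n_i$.
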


Let $i \geq 2R_{n_0}$ be a number given by
$$
i = [0, \ldots, 0, a_m, a_{m+1}, \ldots, a_k]
$$
for some ${n_0} \leq m \leq k$ so that $a_m > 0$. Denote
$$
\hm(i) := m,
\hspace{5mm}
\hk(i) := k
\matsp{and}
\ha(i) := a_m.
$$
We extend this notation to the case when
$
i = a_{n_0}R_{n_0}
$ with $
a_{n_0} \in \{0, 1\}
$
by letting
$$
\hm(i) := 1,
\hspace{5mm}
\hk(i) := 1
\matsp{and}
\ha(i) := a_{n_0}.
$$

\begin{prop}\label{H decomp fine}
Let $n_0 \leq n \leq N$ and $i = j+sR_{n_0}$ with $0\leq j < R_{n_0}$ and $0 \leq s <R_n/R_{n_0}$. For $0\leq l \leq s$, denote
$$
\hm_l := \hm(lR_{n_0}),
\hspace{5mm}
\hk_l := \hk(lR_{n_0})
\matsp{and}
\ha_l := \ha(lR_{n_0}).
$$
If $\hm_l = \hk_l$, let
$$
\chcI^n_l := F^{lR_{n_0}-1}(\ticI^n_{0, i}).
$$
Otherwise, let
$$
\chcI^n_l := \cI^{\hm_l+1}_{\ha_lR_{\hm_l}-1}.
$$
Then $\chcI^n_l$ is $\lambda^{(1-\bepsilon)R_{\hm_l}}$-horizontal. Moreover, define
$$
\chH_l :=  \cP^{\hm_l}_0 \circ F \circ \left(\cP^{n_0}_{-1}|_{\chcI^n_l}\right)^{-1} \circ F^{R_{n_0}-1}|_{\cI^{n_0}_0}.
$$
Then we have
$$
\hH_i|_{\ticI^n_{0, i}} = F^j|_{\cI^{n_0}_0} \circ \chH_s \circ \ldots\circ \chH_4 \circ \chH_3 \circ \cP^{n_0}_0 \circ F^{2R_{n_0}}|_{\ticI^n_{0, i}}.
$$
\end{prop}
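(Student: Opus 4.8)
The plan is to prove both assertions by induction on $l$, stripping one block of length $R_{n_0}$ off $\hH_i$ at a time. The first move is to reduce to the case $j=0$: since the base‑$R$ expansions of $i=j+sR_{n_0}$ and of $sR_{n_0}$ coincide except in the residue digit $j<R_{n_0}$, \lemref{H decomp} gives $\hH_i=F^{j}\circ\hH_{sR_{n_0}}$ as maps, while \lemref{on center track} gives $\hH_{sR_{n_0}}(\ticI^n_{0,i})\subset\cI^{n_0}_0$, so the trailing factor $F^{j}|_{\cI^{n_0}_0}$ is legitimate. It then suffices to establish, for $3\le l\le s$, the one‑step relation
$$
\hH_{lR_{n_0}}\big|_{\ticI^n_{0,i}}=\chH_l\circ\hH_{(l-1)R_{n_0}}\big|_{\ticI^n_{0,i}},
$$
with base case $\hH_{2R_{n_0}}|_{\ticI^n_{0,i}}=\cP^{n_0}_0\circ F^{2R_{n_0}}|_{\ticI^n_{0,i}}$; telescoping these and prepending $F^{j}$ gives the stated formula. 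The base case is essentially formal: $\hH_m=F^m$ for $m<2R_{n_0+1}$, so $\hH_{2R_{n_0}}=F^{2R_{n_0}}$, and since $\cJ^n_{2R_{n_0}}=F^{2R_{n_0}}(\cI^n_0)\subset\cI^{n_0}_0$ by \lemref{on center track}, the projection $\cP^{n_0}_0$ restricts to the identity there.

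For the inductive step I would expand $lR_{n_0}=[a_{n_0},\dots,a_n]$, so that $\hm_l$ is its lowest nonzero level, $\hk_l$ its highest, and $\ha_l=a_{\hm_l}$, and factor the stretch of $\hH$ between times $(l-1)R_{n_0}$ and $lR_{n_0}$ into three stages. By \lemref{on center track} the arc $\hH_{(l-1)R_{n_0}}(\ticI^n_{0,i})$ lies on the center leaf $\cI^{n_0}_0$, so composing with $F^{R_{n_0}-1}$ carries it onto $\cI^{n_0}_{R_{n_0}-1}$, a neighbourhood of $v_{-1}$. Applying \lemref{first entry hor graph} at depth $n_0$, and iterating \lemref{lem unproj} to absorb the deeper structure, I would show that the subsequent action of $F$ together with all of the projections $\cP^m_0$ occurring inside $\hH$ collapses — precisely because each intermediate arc is already on a center leaf, so those projections act as the identity — to a single critical crossing $F\circ(\cP^{n_0}_{-1}|_{\chcI^n_l})^{-1}$ followed by one effective projection $\cP^{\hm_l}_0$ onto the center leaf at the combinatorially‑mandated depth $\hm_l$. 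The routing leaf $\chcI^n_l$ is identified with \lemref{J spread} and the $1$D‑like structure of deep returns (\propref{value struct bound}): when $\hm_l=\hk_l$ the orbit has just completed a return and is beginning a fresh digit, so $\chcI^n_l$ is literally $F^{lR_{n_0}-1}(\ticI^n_{0,i})$; when $\hm_l<\hk_l$ the orbit sits partway through a level‑$\hm_l$ return, and \lemref{J spread} matches the curve through it at time $lR_{n_0}-1$ with a subarc of the canonical first‑entry leaf $\cI^{\hm_l+1}_{\ha_lR_{\hm_l}-1}$, which may be substituted for the true orbit curve in $\chH_l$ since $\cP^{n_0}_{-1}$ depends only on the horizontal coordinate that the two curves share. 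The quantitative $\lambda^{(1-\bepsilon)R_{\hm_l}}$‑horizontality of $\chcI^n_l$ then follows from \lemref{unproject}~(ii) (equivalently, from iterating \lemref{flat}~iii)), applied with depths $(\hm_l+1,\hm_l)$ and $k=\ha_l$ to the flat curve $\cI^{\hm_l+1}_0$ — or to the flat subarc $\ticI^n_{0,i}$ in the case $\hm_l=\hk_l$, where the map factors through at most two levels of first‑return maps — while \lemref{H diffeo near tip} supplies the diffeomorphism property making all the compositions meaningful.

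The hard part will be the bookkeeping inside the inductive step: keeping track of exactly which of the many nested projections $\cP^m_0$ built into $\hH_{lR_{n_0}}$ degenerate to the identity on $\ticI^n_{0,i}$ and which single one is the genuine combinatorial descent $\cP^{\hm_l}_0$, and — in the mixed case $\hm_l<\hk_l$ — verifying that replacing the true orbit curve by the canonical first‑entry leaf $\cI^{\hm_l+1}_{\ha_lR_{\hm_l}-1}$ inside $\chH_l$ leaves the composite unchanged, i.e. that the two curves have the same image under $\cP^{n_0}_{-1}$ and are carried to the same subarc by the ensuing iterate of $F$. This is exactly where the combinatorial results of \secref{sec.combin} — the $1$D‑like structure of \propref{value struct bound} and the nesting of the domains $\hcB^{n,s}_{kR_n}$ — must be invoked with care. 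Once this matching is in place, the telescoping of the factors $\chH_l$ and the trailing $F^{j}$ is routine and the proposition follows.
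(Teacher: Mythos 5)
Your overall architecture — reduce to $j=0$ via $\hH_i = F^j\circ\hH_{sR_{n_0}}$ and \lemref{on center track}, establish the base $\hH_{2R_{n_0}}|_{\ticI^n_{0,i}}=\cP^{n_0}_0\circ F^{2R_{n_0}}|_{\ticI^n_{0,i}}$ (indeed $\hH_{2R_{n_0}}=F^{2R_{n_0}}$ and the projection is vacuous by \lemref{on center track}), then telescope one-step identities $\hH_{lR_{n_0}}|_{\ticI^n_{0,i}}=\chH_l\circ\hH_{(l-1)R_{n_0}}|_{\ticI^n_{0,i}}$ — is the right shape for the argument, and you have located the correct supporting tools (\lemref{H decomp}, \lemref{first entry hor graph}, \lemref{lem unproj}, \lemref{J spread}, \lemref{unproject}, \propref{value struct bound}). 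The horizontality bound for $\chcI^n_l$ via \lemref{unproject}~ii) applied at the pair of depths $(\hm_l+1,\hm_l)$ with $k=\ha_l$ is also correct.

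The genuine gap is the one-step identity itself, and your narrative glosses over the key subtlety in a way that, as written, would not survive scrutiny. You say the projections $\cP^m_0$ ``occurring inside $\hH$'' collapse because the intermediate arcs lie on center leaves. But the maps $\hH_{(l-1)R_{n_0}}$ and $\hH_{lR_{n_0}}$ are, by construction, built at possibly different top levels $n(l-1)\le n(l)$ and have entirely different internal structures — there is no literal ``stretch of $\hH$ between times $(l-1)R_{n_0}$ and $lR_{n_0}$'' to peel off. What the induction really needs is the observation that \emph{restricted to the propagated arc} $\Gamma_0:=\hH_{(l-1)R_{n_0}}(\ticI^n_{0,i})$, one has $\Gamma_0\subset\cI^{\hm_{l-1}}_0$ (\lemref{J spread}), so that applying \lemref{first entry hor graph} at depth $n_0$ converts $F^{R_{n_0}-1}|_{\Gamma_0}$ into the projected/unprojected form, after which the single surviving projection in $\chH_l$ is $\cP^{\hm_l}_0$ because $\cJ^n_{lR_{n_0}}\subset\cI^{\hm_l}_0$ again by \lemref{J spread} (so the projection is genuine exactly when $lR_{n_0}$ is a ``break point'' of $\hH_{lR_{n_0}}$'s decomposition, and is the identity otherwise). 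Your phrasing suggests that \emph{all} projections act as the identity, which cannot be right — the effective one at a break point certainly does not — and masks the distinction between the redundant projections in the $\chH_l$ scheme and the essential ones in $\hH$.

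Two further points you must nail down and currently only gesture at. First, in the case $\hm_l<\hk_l$, you substitute the canonical first-entry leaf $\cI^{\hm_l+1}_{\ha_lR_{\hm_l}-1}$ for the curve the orbit actually occupies at time $lR_{n_0}-1$. You assert these ``have the same image under $\cP^{n_0}_{-1}$,'' but this is precisely the content that needs an argument (via \lemref{lem unproj} and the nesting in \propref{value struct bound}); as written it is an unproved claim, not a derivation. Second, the induction must cross the thresholds $lR_{n_0}=2R_m$ where the ambient level $n(l)$ in the definition of $\hH_{lR_{n_0}}$ changes; your sketch never addresses what happens at such $l$, and this is exactly where the ``carrying'' in the base-$R$ expansion makes the relation between $\hH_{lR_{n_0}}$ and $\hH_{(l-1)R_{n_0}}$ nontrivial. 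You flag the bookkeeping as ``the hard part,'' which is honest — but that bookkeeping \emph{is} the proof, and the proposal does not yet contain it.
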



\section{Uniform $C^1$-Bounds}\label{sec.c1 bound}

\subsection{For unimodal maps}

Let $s \geq 1$ be an integer, and consider a normalized $C^{s+3}$-unimodal map $f : I \to I \in \frU^{s+3}$. Recall that this means $f'(0)=0$ and $f''(0) = 2$. Let $\psi_f$ be the $C^s$-diffeomorphism given in \lemref{factor} so that $f(x) = 2(\psi_f(x))^2$. An elementary computation shows that
$$
2 = f''(0) = 2(\psi_f'(0))^2.
$$
Hence, $\psi_f'(0) = 1$.

For $K \geq 1$, we say that $f$ has {\it $K$-bounded non-linearity} if
\begin{equation}\label{eq.1d dist bound}
\sup_{x,y \in I}\frac{\psi_f'(x)}{\psi_f'(y)} \leq K.
\end{equation}
Denote the space of all normalized $C^{s+3}$-unimodal maps with $K$-bounded non-linearity by $\frU^{s+3}(K)$. Observe that if $f : I \to I$ is in $\frU^{s+3}(K)$, then
\begin{equation}\label{eq.psi deriv}
K^{-1}\leq |\psi_f'(x)| \leq K
\matsp{for}
x\in I.
\end{equation}

\begin{lem}\label{1d upper bound}
Let $f :I \to I \in \frU^4(K)$ for some $K \geq 1$. Then $|I| < 2|K|^2$. Consequently, we have $\|f'\| < C$ for some uniform constant $C = C(K) \geq 1$.
\end{lem}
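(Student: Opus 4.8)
The plan is to push everything through the factorization $f = 2(\psi_f)^2$ provided by \lemref{factor}, using only the two-sided derivative bound $K^{-1}\le|\psi_f'|\le K$ on $I$ recorded in \eqref{eq.psi deriv}. First I would record the elementary consequences: $\psi_f'$ has constant sign on $I$ (being the derivative of a diffeomorphism), and since $f'(x)=4\psi_f(x)\psi_f'(x)$ vanishes at the critical point $0$ we must have $\psi_f(0)=0$, hence also $f(0)=0$. Writing $I=[\alpha,\beta]$ with $\alpha\le 0\le\beta$, the function $|\psi_f|$ is strictly increasing as one moves away from $0$, so $f=2(\psi_f)^2$ is strictly decreasing on $[\alpha,0]$ and strictly increasing on $[0,\beta]$; therefore $f(I)=[0,\max\{f(\alpha),f(\beta)\}]$, and the invariance $f(I)\subseteq I$ is exactly the statement $\max\{f(\alpha),f(\beta)\}\le\beta$.

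Next I would bound the two endpoints. Integrating $|\psi_f'|\ge K^{-1}$ from $0$ to $\beta$ gives $|\psi_f(\beta)|\ge\beta/K$, so $f(\beta)=2(\psi_f(\beta))^2\ge 2\beta^2/K^2$; combined with $f(\beta)\le\beta$ this forces $\beta\le K^2/2$. Symmetrically, integrating over $[\alpha,0]$ gives $|\psi_f(\alpha)|\ge|\alpha|/K$, hence $f(\alpha)\ge 2\alpha^2/K^2$; since $f(\alpha)\le\beta\le K^2/2$ we get $\alpha^2\le K^4/4$, i.e. $|\alpha|\le K^2/2$. Adding the two estimates, $|I|=\beta+|\alpha|\le K^2<2|K|^2$, which is the first assertion.

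For the "consequently" part, differentiate the factorization once more: $f'(x)=4\psi_f(x)\psi_f'(x)$. From $\psi_f(0)=0$ and $|\psi_f'|\le K$ we get $|\psi_f(x)|\le K|x|\le K|I|$ for every $x\in I$, so $\|f'\|\le 4K^2|I|\le 4K^4<C$ with, say, $C:=4K^4+1=C(K)$. I do not expect a genuine obstacle here: the argument is a short chain of one-variable estimates, and the only points needing care are the constant sign of $\psi_f'$ (so the integration bounds carry no sign ambiguity) and the observation that $f|_I$ attains its maximum at an endpoint of $I$ (immediate from unimodality with a minimum at $0$). Since every constant appearing depends only on $K$, all bounds are uniform over $\frU^4(K)$, as required.
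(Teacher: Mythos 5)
Your argument is correct and takes essentially the same route as the paper: exploit the factorization $f=2\psi_f^2$ plus the invariance $f(I)\subseteq I$ to turn quadratic growth of $f$ into an upper bound on $|I|$. The paper packages the same idea as a length estimate ($|f(I)|\gtrsim |\psi_f(I)|^2\gtrsim |I|^2/K^2$, incompatible with $|f(I)|\le|I|$ once $|I|>2K^2$), whereas you run the estimate pointwise at the two endpoints, which is cleaner and incidentally yields the slightly sharper $|I|\le K^2$; either is fine.

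One small caveat worth flagging. Your step ``since $f'(x)=4\psi_f\psi_f'$ vanishes at $0$ we must have $\psi_f(0)=0$, hence $f(0)=0$'' is circular: the factorization of Lemma~C.3 already requires $f(0)=0$ as a hypothesis, and a general normalized unimodal map (e.g.\ a renormalization return) has $f(0)=v\ne 0$. The paper is being informal here and really means $f(x)-f(0)=2\psi_f(x)^2$. The paper's length-based version is automatically translation-invariant, so it survives this unchanged; your endpoint version needs the inequalities $f(\beta)\le\beta$, $f(\alpha)\le\beta$ replaced by $f(\beta)-f(0)\le\beta-\alpha=|I|$ and $f(\alpha)-f(0)\le|I|$ (using $f(0)\ge\alpha$), which then gives $\beta,|\alpha|\le K\sqrt{|I|/2}$ and hence $|I|\le 2K^2$, matching the stated bound. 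This is a one-line repair, not a structural problem, but since you stated $f(0)=0$ as a derived fact rather than as the paper's tacit convention, it should be corrected.
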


\begin{proof}
By \eqref{eq.psi deriv}, we see that $|\psi_f(I)| > K^{-1}|I|$. If the length of an interval is bigger than $2|K|^2$, then $|f(I)| > (2|K|)^2 > 4|K|^2$. Thus, iterated images of $I$ under $f$ become unbounded. This is a contradiction.

We compute
$$
f'(x) = 2\psi_f(x)\psi_f'(x).
$$
The result follows.
\end{proof}

\begin{lem}\label{1d lower bound}
Let $f : I \to I$ be in $\frU^4(K)$ for some $K \geq 1$, and let $n \in \bbN$. If the critical orbit of $f$ does not converge to an $n$-periodic sink, then there exists a uniform constant $\rho_n = \rho_n(K) > 0$ such that $|f^n(0)| > \rho_n$. In particular, $|I| > \rho_1$.
\end{lem}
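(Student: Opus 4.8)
The plan is to prove the contrapositive: there is $\rho_n=\rho_n(K)>0$ such that $|f^n(0)|\le\rho_n$ forces the critical orbit of $f$ to converge to an attracting periodic sink of period dividing $n$ (in particular of period at most $n$). I would need only two inputs. From \lemref{1d upper bound} we have $\|f'\|<C=C(K)$, and we may take $C\ge 1$. From the factorization of \lemref{factor}, $f'(x)=2\psi_f(x)\psi_f'(x)$ with $\psi_f(0)=0$ (since $f'(0)=0$ and $\psi_f'(0)=1$), so \eqref{eq.psi deriv} gives $|\psi_f(x)|\le K|x|$ and hence $|f'(x)|\le 2K^2|x|$ on $I$; that is, $|f'|$ is \emph{linearly small} near the critical point $0$.

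Fix $n$, pick $u=u(K,n)>0$ with $2K^2u\,C^{n-1}\le\tfrac14$, set $\rho_n:=u/4$ and $U:=(-u,u)$, and assume for now that $\overline U\subseteq I$. Suppose $|f^n(0)|\le\rho_n$. For $x\in\overline U$ the chain rule with $|f'(x)|\le 2K^2u$ and $|f'(f^j(x))|\le C$ $(1\le j<n)$ gives
$$
|(f^n)'(x)|=|f'(x)|\prod_{j=1}^{n-1}|f'(f^j(x))|\le(2K^2u)\,C^{n-1}\le\tfrac14,
$$
and $(f^n)'(0)=0$, so $\sup_{\overline U}|(f^n)'|\le\tfrac14$. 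By the fundamental theorem of calculus (the fold of $f^n$ at $0$ is harmless here) $|f^n(x)-f^n(0)|\le\tfrac14|x|\le u/4$ for $x\in\overline U$, whence
$$
f^n(\overline U)\subseteq\big[f^n(0)-\tfrac u4,\ f^n(0)+\tfrac u4\big]\subseteq\big[-\tfrac u2,\tfrac u2\big]\Subset U.
$$
The compact intervals $f^{kn}(\overline U)$ are then nested with $|f^{(k+1)n}(\overline U)|\le\sup_{\overline U}|(f^n)'|\cdot|f^{kn}(\overline U)|\le\tfrac12|f^{kn}(\overline U)|$, so they shrink to a point $p$ with $f^n(p)=p$ and $|(f^n)'(p)|\le\tfrac14<1$. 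Hence $p$ is an attracting periodic point of period dividing $n$ whose immediate basin contains $U\ni 0$; so $f^{kn}(0)\to p$ and the critical orbit of $f$ converges to the attracting cycle of $p$, which proves the contrapositive.

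It remains to dispose of the case $\overline U\not\subseteq I$ (i.e. $0$ within $u(K,n)$ of $\partial I$), which is either excluded by the standing normalization of the domain (e.g. $I$ the dynamical interval) or handled by a direct and easier argument: using once more $|f'|=O_K(|x|)$ near $0$ together with $f(f(I))\subseteq f(I)$, one sees $f$ sends a short interval around its minimum $f(0)$ into itself as a contraction, forcing the critical orbit to converge to a fixed sink — again consistent with the claim. For the last sentence of the statement, apply the above with $n=1$: whenever the critical orbit does not converge to a fixed sink (a fortiori whenever it converges to no periodic sink of period $\le n$) we get $|f(0)|>\rho_1$, and since $0,f(0)\in I$, $|I|\ge|f(0)|>\rho_1$. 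The one genuinely delicate point is that $\rho_n$ depends on $K$ and $n$ only, which comes for free: the $n$-fold chain-rule product along the orbit of a point of $U$ opens with the factor $|f'(x)|=O_K(u)$ from the square factorization and continues with factors $\le C(K)$, so it is $O_{K,n}(u)$ and can be forced $\le\tfrac14$ by shrinking $u$, hence $\rho_n$; no distortion bounds or Schwarzian arguments are involved.
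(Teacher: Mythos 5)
Your proposal is correct and takes essentially the same route as the paper: both use \lemref{1d upper bound} for $\|f'\|<C(K)$ and the factorization $f'=2\psi_f\psi_f'$ (hence $|f'(x)|=O_K(|x|)$ near the critical point) to show that if $|f^n(0)|$ is below a $K$- and $n$-dependent threshold, then $f^n$ maps a small interval around $0$ strictly inside itself as a contraction, so the critical orbit converges to an attracting cycle of period dividing $n$. You spell out the nested-interval/contraction step and the boundary case $\overline U\not\subseteq I$ a bit more explicitly than the paper, but the argument is the same one.
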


\begin{proof}
By \lemref{1d upper bound}, there exists a uniform constant $C = C(K) \geq 1$ such that $\|f'\|< C$. For $n \in \bbN$, let
$$
l_n := \frac{1}{4K^2C^n}
\matsp{and}
J_n := (-l_n, l_n).
$$
Observe that
$$
|f^n(J_n)| < C^n|\psi_f(J_n)|^2 < \frac{1}{16K^2C^n}.
$$
Hence, if $f^n(0) \in (-l_n/8, l_n/8)$, then $f^n(J_n) \Subset J_n$. The result now follows from
$$
|(f^n)'(x)| < 2|\psi_f(x)||\psi_f'(x)|C^n < 1/2
\matsp{for}
x \in J_n.
$$
\end{proof}

\begin{prop}\label{c1 bound geo}
Let $f : I \to I$ be in $\frU^4(K)$ for some $K \geq 1$. Suppose that $f$ is non-trivially renormalizable, so that there exists an $R$-periodic interval $I^1$ such that $f^R(I^1)$ contains the critical value $v$ for $f$. Denote by $c^1$ the critical point for $f^R|_{I^1}$. Assume that $\cRod(f)$ has $\chi$-bounded kneading for some $\chi \geq R$. Let $J$ be a connected component of $I \setminus \{f^i(c^1)\}_{i=0}^{3R+1}.$ Then we have $|J| > \rho$, where $\rho = \rho(K, \chi) \in (0,1)$ is a uniform constant.
\end{prop}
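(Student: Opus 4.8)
The plan is to run the standard one-dimensional distortion/real-bounds argument adapted to the bounded-non-linearity class $\frU^4(K)$, exploiting that the points $\{f^i(c^1)\}_{i=0}^{3R+1}$ partition $I$ into finitely many (at most $3R+2$) intervals, but crucially that we only need a \emph{uniform} lower bound depending on $K$ and $\chi$, not on $R$. First I would record, via \lemref{1d upper bound} and \lemref{1d lower bound}, that $|I|>\rho_1(K)$ and $\|f'\|<C(K)$; in particular $f$ cannot have a low-period attracting sink absorbing the critical orbit (this is exactly what $\chi$-bounded kneading of $\cRod(f)$ rules out, together with non-trivial renormalizability), so the critical orbit genuinely recurs and the combinatorics is well-defined. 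The key structural input is \lemref{1d struct}: since $f$ is non-trivially renormalizable, it is \emph{valuably} renormalizable and $I^1=[v,f^R(v)]$ is the minimal $R$-periodic interval containing $v$; hence $f^R|_{I^1}$ is again unimodal with critical point $c^1$, and $\cRod(f)=S\circ f^R|_{I^1}\circ S^{-1}$ lies in $\frU^4$. The $\chi$-bounded kneading hypothesis on $\cRod(f)$ controls how the first few images of $c^1$ under $f^R$ are spread inside $I^1$: up to the affine normalization $S$, the points $\{f^{jR}(c^1)\}_{j=0}^{3}$ (and one more) are separated by a definite fraction of $|I^1|$, because $\cRod(f)$ being in $\frU^4$ with $\chi$-bounded kneading forces, via \lemref{1d lower bound} applied to $\cRod(f)$, definite lower bounds on $|\cRod(f)^j(0)|$ for $j\le 3$.

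Next I would transfer these bounds back to $I$. The points $\{f^i(c^1)\}_{i=0}^{3R+1}$ fall into two classes: the ``value'' points $f^{jR}(c^1)=f^{jR-1}(v)$ lying in $I^1$ (and its cyclic translates $f^i(I^1)$), $j=0,1,2,3$, together with $f^{3R+1}(c^1)$; and the ``transport'' points obtained by iterating these along the cycle $I^1,f(I^1),\dots,f^{R-1}(I^1)$. A connected component $J$ of the complement is therefore either (i) a gap between two consecutive translates $f^i(I^1)$ and $f^{i'}(I^1)$ of the renormalization interval within the cycle — and the union of the translates of $I^1$ has complementary gaps of definite size, since $|I|>\rho_1(K)$, $\|f'\|<C(K)$ and the cycle has a fixed combinatorial structure of the translates dictated only by the (first-level) renormalization type, whose relevant geometry is again controlled by bounded non-linearity; or (ii) a component of $f^i(I^1)\setminus\{f^{i+jR}(c^1)\}_j$ for some $0\le i<R$. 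In case (ii) I pull back by the diffeomorphism $f^i|_{I^1}$ (more precisely $f^{R-i}$ maps the relevant piece of the cycle onto $I^1$ univalently with bounded distortion): since $f^R|_{I^1}$ lies in $\frU^4(K')$ for $K'$ depending only on $K$ — this is where I use that the non-linearity $\psi_f'$ has ratios bounded by $K$ and that composing $R$ univalent branches along the cycle keeps the \emph{profile} in the class, as a priori-type bounds would give, but here at level one it follows directly from the Koebe-type estimate built into $K$-bounded non-linearity — the images $\{\cRod(f)^j(0)\}_{j=0}^{3}$ are $\rho(K,\chi)$-separated inside the normalized interval, and transporting this back by $S^{-1}$ and then by the bounded-distortion branch $f^{R-i}$ yields $|J|>\rho(K,\chi)\,|f^i(I^1)|>\rho'(K,\chi)$, using that each $|f^i(I^1)|$ is itself bounded below (again by $|I|>\rho_1$ and $\|f'\|<C$, together with the fact that the cycle cannot be degenerate or the critical orbit would converge to a sink of period $\le R$, contradicting $\chi$-bounded kneading).

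The main obstacle I anticipate is case (i) together with making the ``$f^R|_{I^1}$ has bounded non-linearity'' step quantitative with a constant independent of $R$: a priori, composing $R$ univalent branches could degrade the non-linearity bound geometrically in $R$. The resolution I would use is that non-trivial renormalizability plus $\chi$-bounded kneading of $\cRod(f)$ forces genuine \emph{hyperbolicity} of the complement of the renormalization cycle — the translates $f^i(I^1)$ are ``well inside'' $I$ with definite space around them because the map restricted to a neighborhood of the cycle has an expanding Cantor-set-like structure with branching bounded in terms of $\chi$ — so the relevant branches are uniformly hyperbolic and the Koebe distortion principle applies with constants depending only on $K$ and $\chi$, not on $R$. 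Concretely I would: (a) use \lemref{1d struct} to pin down $I^1$ and the cyclic translates; (b) show the ``nice interval'' around the cycle on which $f^R$ acts univalently on each translate has definite relative size (from $\chi$-bounded kneading); (c) apply the Koebe/bounded-distortion estimate for compositions of univalent maps with $K$-bounded non-linearity along the cycle; (d) conclude separation of $\{f^i(c^1)\}_{i=0}^{3R+1}$ by pushing forward the level-one estimate for $\cRod(f)\in\frU^4$ obtained from \lemref{1d lower bound}. Steps (a)–(b) are the conceptual heart; (c)–(d) are routine given the class $\frU^4(K)$.
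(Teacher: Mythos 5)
Your decomposition of a component $J$ into (i) a gap between two translates $f^i(I^1)$ of the renormalization cycle, or (ii) a piece inside a single translate, matches the paper's structure, and \lemref{1d struct}, \lemref{1d upper bound}, \lemref{1d lower bound} together with the bounded-kneading hypothesis are the correct inputs. But two steps fail as written. First, the parenthetical claim that ``$f^{R-i}$ maps the relevant piece of the cycle onto $I^1$ univalently'' is false: $c\in I^1_{R-1}$, so $f^{R-i}|_{I^1_i}$ passes through the critical point for every $0\le i<R$ and folds. The univalent map available to you is $f^{R-1-i}|_{I^1_i}:I^1_i\to I^1_{R-1}$; pushing a gap forward into $I^1_{R-1}\ni c$ turns the separation into a distance of the form $|f^n(0)|$, which is precisely what \lemref{1d lower bound} controls for $f$ itself. (Pulling back by $f^i|_{I^1_0}$, your first phrasing, is also fine; the point is that $f^{R-i}|_{I^1_i}$ is not.)

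Second, and more seriously, you route the argument through establishing $\cRod(f)\in\frU^4(K')$ with $K'$ depending only on $K,\chi$, correctly flag this as the main difficulty, and propose to close it with a hyperbolicity/Koebe argument that you do not carry out; as written this is a genuine gap. The paper never needs this step. It applies \lemref{1d lower bound} only to $f$, at iteration times of size $r_1R$ and $\bchi$, all of which are $O(\chi^D)$ because of the hypothesis $\chi\ge R$. That hypothesis (so $R\le\chi$) is not used in your plan, and it is exactly what makes your worry about ``geometric degradation in $R$'' moot: a factor of size $C(K)^{-R}\ge C(K)^{-\chi}$ is a perfectly good uniform constant here. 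Concretely, inner gaps in $I^1_i$ for $1\le i<R$ are handled by pushing forward by $f^{R-1-i}$ to $I^1_{R-1}$ and invoking \lemref{1d upper bound} and \lemref{1d lower bound}; inner gaps in $I^1_0$ are handled via the first return time $r_1\le\chi$ to the left of $c^1$ together with \lemref{1d lower bound}; and gaps between translates are handled by iterating $m=\bchi$ steps forward until the image either covers some $I^1_j$ (giving $|J_0|>C^{-m}\tirho$) or its endpoint tracks the orbit of $v$, at which point the kneading estimate closes the argument. Your treatment of case (i) as ``an expanding Cantor-set-like structure'' is too vague to substitute for this. Replacing the detour through $\cRod(f)\in\frU^4(K')$ with direct applications of \lemref{1d upper bound} and \lemref{1d lower bound} to $f$ at bounded times closes the gaps and is considerably shorter.
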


\begin{proof}
By \lemref{1d struct}, we have $I^1 := [v, f^R(v)] \ni c^1$. Denote $I^1_i := f^i(I^1)$ for $0 \leq i < R$. The fact that we have $|f^{l+i}(v)-f^i(c^1)| > \rho$ for for $l \in \{0, R\}$ and $1\leq i < R$ follows from Lemmas \ref{1d upper bound} and \ref{1d lower bound}.

By the assumption on bounded kneading, there exists a smallest integer $r_1 \leq \chi$ such that $f^{(r_1+1)R}(c^1) < c^1$. By \lemref{1d lower bound}, there exists a uniform constant $\rho_1 = \rho_1(K, \chi) > 0$ such that
\begin{equation}\label{eq.knead}
v< f^{r_1R}(v) < f^{R}(v)- \rho_1.
\end{equation}

Let $L_i := [f^{iR}(v), f^{(i-1)R}(v)]$ for $2 \leq i \leq r_1$. If $r_1=2$, then it follows from \eqref{eq.knead} that $|L_2| > \rho$. If $r_1 > 2$, then observe that $f^R$ maps $L_i$ diffeomorphically onto $L_{i+1}$ for $i < r_1$. By \lemref{1d upper bound}, $\|(f^R)'\| < C$ for some uniform constant $C \geq 1$, and
$$
L_2 \sqcup L_3 \sqcup \ldots \sqcup L_{r_1} \supset [c^1, f^{R}(v)].
$$
This implies that $|L_2| > \rho'$ for some uniform constant $\rho'$.

Let $J_0$ be the gap between $I^1_k$ and $I^1_l$ with $0 \leq k < l < R$. If $J_m := f^m(J_0)$ with $m = \bchi$ maps onto an interval $I^1_i$ for some $0 \leq i < R$, then by \lemref{1d upper bound} and \lemref{1d lower bound}, we have $|J_0| > C^{-m}\tirho$ for some uniform constants $C = C(K)\geq 1$ and $\tirho=\tirho(K, \chi) > 0$. Thus, we may assume, after replacing $J_0$ with $J_R$ if necessary, that $\partial J_0 \ni f^{k+R}(v)$.

Map $J_0$ by $f^{r_1R -k-R}$. Since
$$
I^1_{l+r_1R - k-R} \cap I^1_0= \varnothing,
$$
the image $J_{r_1R -k-R}$ of the gap must contain $(c^1, f^{R}(v))$. The result now follows from \eqref{eq.knead}.
\end{proof}

\subsection{For H\'enon-like maps}\label{subsec.unif c1}

For an integer $r \geq 2$ and a constant $K \geq 1$, let $\frHL^{r+2}(K)$ be the space of all normalized $C^{r+2}$-H\'enon-like maps whose 1D profiles are contained in $\frU^{r+2}(K)$. Additionally, for $\beta \in (0,1)$, let $\frHL^{r+2}_\beta(K)$ be the set of all H\'enon-like maps in $\frHL^{r+2}(K)$ that are $\beta$-thin in $C^{r+2}$.

For some $N \in \bbN\cup\{\infty\}$, let $F$ be the $N$-times regularly H\'enon-like renormalizable $C^{r+4}$-map with combinatorics of $\bfb$-bounded type considered in \secref{sec.a priori}. If $N < \infty$, suppose that $F^{R_N}|_{\cB^N_0}$ is twice topologically renormalizable with combinatorics of $\bfb$-bounded type.

Let $\bfK \geq 1$ be the uniform constant given in \thmref{a priori}. Assume that $n_0 \leq N$ is the smallest number such that
\begin{equation}\label{eq.proper depth 2}
\overline{K_2} \lambda^{\epsilon R_{n_0}} < 1,
\end{equation}
where
\begin{equation}\label{eq.const 2}
K_2 = K_2(\bfK, \bfb) \geq 1
\end{equation}
is a uniform constant.

For $n_0 \leq n\leq N$, denote
$$
I^n_0 := \pi_h(B^n_0)
\matsp{and}
\cI^n_0 := (\Psi^n)^{-1}(I^n_0 \times \{0\}).
$$
Define
$$
\tiF_n := \Psi^n\circ F^{R_n}\circ(\Psi^n)^{-1}
\matsp{and}
\tif_n := \Piod(\tiF_n).
$$

\begin{prop}\label{c1 bound 2d}
There exists a uniform constant $K = K(\bfK) \geq 1$ such that for all $n_0 \leq n \leq N$, we have $\|\tiF_n\|_{C^1} < K.$
\end{prop}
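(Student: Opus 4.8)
The plan is to reduce the bound to the one-dimensional profile $\tif_n := \Piod(\tiF_n)$, the thin (vertical) direction being controlled separately. Write $\tiF_n(x,y) = (\hf_n(x,y), x)$, so that $\tif_n(x) = \hf_n(x,0)$ and $\|\tiF_n\|_{C^1} \asymp 1 + \sup|\partial_x\hf_n| + \sup|\partial_y\hf_n|$. Since $\partial_y\hf_n = -\Jac\tiF_n$ and $\|(\Psi^n)^{\pm 1}\|_{C^1} < K_0$ by \propref{vert prop nest}, while $|\Jac F^{R_n}| \leq \lambda^{R_n}$ by dissipativity, we get $\sup|\partial_y\hf_n| \leq K_0^4\lambda^{R_n} < 1$ for $n \geq n_0$. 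So the substantive task is the bound on $\sup|\partial_x\hf_n|$, and the three key inputs are \lemref{quad mapping}, \corref{a priori cor}, and \lemref{1d upper bound}.

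Next I would exhibit the factored form of the profile. Decomposing $F^{R_n} = F\circ F^{R_n-1}$ and inserting $\Phi_{-1}^{-1}\circ\Phi_{-1}$ gives $\tiF_n = (\Psi^n\circ F\circ\Phi_{-1}^{-1})\circ H_n$, where $H_n = \Phi_{-1}\circ F^{R_n-1}\circ(\Psi^n)^{-1}$ and $H_n(x,y) = (h_n(x), e_n(x,y))$ as in \thmref{crit chart} ii). Since $\cI^n_{R_n-1}$ is $\lambda^{(1-\bepsilon)R_n}$-horizontal in $\cB_{-1}$ by \lemref{flat} iii), the curve $H_n(I^n_0\times\{0\}) = \Phi_{-1}(\cI^n_{R_n-1})$ is a horizontal graph $G_n(t) = (t, g_n(t))$ whose $C^{r+3}$-norm is exponentially small (in particular $\|g_n\|_{C^2} < \underline{\kappa_F}$ for $n \geq n_0$), so that $e_n(x,0) = g_n(h_n(x))$ and hence $\tif_n(x) = P^n_0\circ F\circ\Phi_{-1}^{-1}\circ G_n(h_n(x))$. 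By \lemref{quad mapping}, $t \mapsto P^n_0\circ F\circ\Phi_{-1}^{-1}\circ G_n(t)$ equals $\kappa_F(\psi_n(t))^2 + a_n$ with $\|\psi_n^{\pm 1}\|_{C^r}$ uniformly bounded, hence $\tif_n(x) = \kappa_F(\psi_n\circ h_n(x))^2 + a_n$. Thus, up to an affine rescaling, the quadratic factorization of $\tif_n$ is $\psi_n\circ h_n$, whose non-linearity is at most $(\sup\psi_n'/\inf\psi_n')\cdot(\sup h_n'/\inf h_n')$: the first factor is controlled by \lemref{quad mapping} and the second by \corref{a priori cor}. Since non-linearity is invariant under affine conjugacy, the normalized profile $f_n = S^n\circ\tif_n\circ(S^n)^{-1}$ lies in $\frU^{r+2}(K)$ with $K = K(\bfK)$.

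The crucial step is to turn this relative estimate into an absolute one. By itself the chain rule only bounds $\|\tif_n'\|$ by a multiple of $\sup|h_n'| \leq \lambda^{-\bepsilon R_n}$, which is useless; the improvement comes from the fact that $\tif_n$ is a return map. Indeed $F^{R_n}(\cB^n_0) = \cB^n_{R_n} \Subset \cB^n_0$ (using the $R_n$-periodicity from \propref{value struct bound}), so $\tiF_n(B^n_0) \Subset B^n_0$ and $\tif_n(I^n_0) \subseteq I^n_0$; after normalization $f_n \in \frU^{r+2}(K) \subseteq \frU^4(K)$ maps an interval into itself. \lemref{1d upper bound} then gives $\|f_n'\| < C(K)$, and since affine conjugacy preserves the sup-norm of the derivative, $\|\tif_n'\|_{C^0} = \|f_n'\|_{C^0} < C(\bfK)$; in particular $\sup|\partial_x\hf_n(\cdot,0)| < C(\bfK)$.

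Finally I would extend the bound to all slices. Each $\hf_n(\cdot,y)$ is unimodal, maps $I^n_0$ into $I^n_0$, and decomposes as $Q_n^{[y]}\circ h_n$ with the same $h_n$: the curve $H_n(I^n_0\times\{y\})$ is, by \lemref{flat} iii) applied to the full horizontal leaf at height $y$, again a horizontal graph of exponentially small $C^{r+3}$-norm, so \lemref{quad mapping} produces $Q_n^{[y]}(t) = \kappa_F(\psi_n^{[y]}(t))^2 + a_n^{[y]}$ with $\|(\psi_n^{[y]})^{\pm 1}\|_{C^r}$ bounded uniformly in $y$. Repeating the previous two steps slice-wise yields $\sup|\partial_x\hf_n| < C'(\bfK)$, and together with the bound on $\partial_y\hf_n$ this gives $\|\tiF_n\|_{C^1} < K = K(\bfK)$. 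The main obstacle, then, is the passage in the third step from the a priori (distortion and bounded non-linearity) estimates --- which do not in themselves exclude $\tif_n$ having enormous derivative --- to an absolute $C^1$-bound; this is possible only because a return map cannot expand its own interval, which is precisely the mechanism of \lemref{1d upper bound}.
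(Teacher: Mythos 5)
Your proposal is correct and follows essentially the same route as the paper: the paper's proof is simply ``follows immediately from \corref{a priori cor} and \lemref{1d upper bound},'' and you spell out exactly the chain that makes this ``immediate''---namely, that the quadratic factorization from \lemref{quad mapping} together with the distortion bound on $h_n$ from \corref{a priori cor} places the normalized profile in $\frU^4(K(\bfK))$, after which \lemref{1d upper bound} converts the relative non-linearity bound into an absolute $C^1$-bound because a return map cannot expand its own interval, and the $\partial_y$-direction is controlled trivially by dissipativity. You also correctly identify the affine-conjugacy invariance of $\|f'\|_{C^0}$ as the step that lets the bound on the normalized map transfer back to $\tif_n$ on the exponentially small domain $I^n_0$, which is the one point the terse proof in the paper genuinely relies on without saying so.
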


\begin{proof}
The result follows immediately from \corref{a priori cor} and \lemref{1d upper bound}.
\end{proof}

\begin{prop}\label{2d close 1d}
For $n_0 \leq n \leq N$, we have
$$
\|\tif_n^i - \Piod(\tiF_n^i)\|_{C^0} < \lambda^{(1-\bepsilon) R_n}
\matsp{for}
i = O(\bfb).
$$
\end{prop}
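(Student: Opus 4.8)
The plan is to exploit that $\tiF_n$ is super-exponentially thin, so that it is $C^0$-close to the degenerate H\'enon-like map $\iota(\tif_n)$ carrying the same $1$D profile, and then to propagate this closeness over $O(\bfb)$ iterates by a Gronwall-type estimate. Write $\tiF_n(x,y)=(g_n(x,y),x)$, so that $\tif_n(x)=g_n(x,0)$. Fixing $x\in I^n_0$, let $\xi_k$ be the first coordinates along the $\tiF_n$-orbit of $(x,0)$: thus $\xi_0=x$, $\xi_1=\tif_n(x)$, $\xi_{k+1}=g_n(\xi_k,\xi_{k-1})$, one has $\tiF_n^k(x,0)=(\xi_k,\xi_{k-1})\in B^n_0$, and $\Piod(\tiF_n^i)(x)=\xi_i$; and let $\eta_k:=\tif_n^k(x)$, so that $\eta_0=\xi_0$, $\eta_1=\xi_1$, $\eta_{k+1}=g_n(\eta_k,0)$ (these are $\pi_h$ of the $\iota(\tif_n)$-orbit of $(x,0)$). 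All the compositions involved are defined, since $\tiF_n(B^n_0)\Subset B^n_0$ and $\tif_n(I^n_0)\subseteq I^n_0$.

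First I would bound the thinness of $\tiF_n$. Since $F$ is dissipative, $|\Jac F^{R_n}|\le\lambda^{R_n}$ along the orbit of $\cB^n_0$, and since $\|(\Psi^n)^{\pm1}\|_{C^r}<K_0$ by \propref{vert prop nest} ii), the chain rule gives $\|\Jac\tiF_n\|_{C^0}\le\overline{K_0}\,\lambda^{R_n}=:\beta_n$; as $\Jac\tiF_n=-\partial_y g_n$, this is precisely $\|\partial_y g_n\|_{C^0}\le\beta_n$. I would also record $\|\partial_x g_n\|_{C^0}<K$ from \propref{c1 bound 2d}, and that $|\xi_{k-1}|\le C_0$, where $C_0$ is a uniform bound on the vertical extent of $B^n_0$ (again a consequence of the uniform $C^r$-bound on $\Psi^n$ together with $\cB^n_0\subset D$).

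Then, for $k\ge1$,
$$
|\xi_{k+1}-\eta_{k+1}|=|g_n(\xi_k,\xi_{k-1})-g_n(\eta_k,0)|\le\|\partial_x g_n\|_{C^0}|\xi_k-\eta_k|+\|\partial_y g_n\|_{C^0}|\xi_{k-1}|\le K|\xi_k-\eta_k|+C_0\beta_n,
$$
while $|\xi_i-\eta_i|=0$ for $i\le1$. Telescoping yields $|\xi_i-\eta_i|\le C_0\beta_n(1+K+\dots+K^{i-2})$ for $i\ge2$. For $i=O(\bfb)$ the geometric sum is bounded by a uniform constant $\overline{K_1}=\overline{K_1}(K,\bfb)$, so
$$
\|\tif_n^i-\Piod(\tiF_n^i)\|_{C^0}=\sup_{x\in I^n_0}|\eta_i-\xi_i|\le\overline{K_1}\,C_0\,\beta_n=\overline{K_1}\,C_0\,\overline{K_0}\,\lambda^{R_n}<\lambda^{(1-\bepsilon)R_n},
$$
the last step absorbing the uniform constant into the margin, which is legitimate because $n\ge n_0$ and $R_{n_0}$ is taken large in \eqref{eq.proper depth 2}.

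The only genuinely delicate ingredient is the thinness bound $\beta_n\le\overline{K_0}\,\lambda^{R_n}$: it hinges on the uniform $C^1$-control of the straightening charts $\Psi^n$ furnished by \propref{vert prop nest}, which is itself a consequence of the a priori bounds. Everything else is a routine propagation of errors over a bounded number of iterates, so I do not anticipate any further obstacle.
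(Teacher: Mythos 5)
Your argument has the right shape and matches the paper's in spirit: both control the thinness of $\tiF_n$, then propagate the error over $O(\bfb)$ composites using a uniform $C^1$ bound (your hand-rolled Gronwall is exactly what the paper extracts from \lemref{vary compose}). There is, however, a genuine gap in the thinness step.

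You write $\tiF_n(x,y)=(g_n(x,y),x)$ and then infer $\Jac\tiF_n=-\partial_y g_n$. But $\Psi^n$ is the \emph{centered} straightening chart, not the genuinely horizontal chart $\Phi^n$ that puts the return in H\'enon-like form, and the two differ by a non-trivial product change of coordinates. The second coordinate of $\tiF_n(x,y)$ is therefore a diffeomorphism $h_n(x)$ of $x$, not $x$ itself, so that $\Jac\tiF_n=-h_n'(x)\,\partial_y g_n$ and $|\partial_y g_n|=|\Jac\tiF_n|/|h_n'|$. Without a lower bound on $|h_n'|$ the desired $C^0$-thinness does not follow from the Jacobian estimate alone. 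A uniform bound $|h_n'|\asymp1$ does become available from \thmref{rescale} ii), but \thmref{rescale} depends (via \propref{exp scale}) on the very proposition you are proving, so invoking it here would be circular. The non-circular control available at this stage is \eqref{eq.first entry} in \thmref{crit chart} ii), which gives $|h_n'|>\lambda^{\bepsilon R_n}$; plugging this in yields $\|\partial_y g_n\|_{C^0}\lesssim\lambda^{(1-\bepsilon)R_n}/\lambda^{\bepsilon R_n}=\lambda^{(1-\bepsilon)R_n}$ after absorbing the extra power of $\bepsilon$, so your conclusion survives, but the $h_n'$ factor cannot be silently dropped. (A related small imprecision: the bound $|\Jac F^{R_n}|\le\lambda^{R_n}$ is not an immediate consequence of the standing hypotheses; what is available is $\Jac_pF^m\le\bL\lambda^{(1-\bepsilon)m}$ from \propref{jac bound}, which again is absorbed in the $\bepsilon$-notation.) The paper avoids the Jacobian route entirely, quoting \thmref{crit chart} ii) for $\|\tiF_n-\tiF_n\circ\Pi_h\|_{C^{r+3}}<\lambda^{(1-\bepsilon)R_n}$, which is available and form-agnostic; if you replace your thinness paragraph with that citation, the rest of your argument goes through unchanged.
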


\begin{proof}
Denote $\Pi_h(x,y) := (x,0)$. It suffices to show that
$$
\|(\tiF_n \circ \Pi_h)^i - \tiF_n^i \circ \Pi_h\|_{C^0} < \lambda^{(1-\bepsilon) R_n}.
$$

By \propref{c1 bound 2d}, $\|\tiF_n\|_{C^1}$ is uniformly bounded. Moreover, by \thmref{crit chart} ii), we have
$$
\|\tiF_n - \tiF_n \circ \Pi_h \|_{C^{r+3}} < \lambda^{(1-\bepsilon)R_n}.
$$
The result now follows from \lemref{vary compose}.
\end{proof}

\begin{prop}\label{exp scale}
For $n_0 \leq n < N$ and $-r_n \leq k \leq 2r_n$, denote
$$
u^n_k := \Psi^n(v_{kR_n})
\matsp{and}
a^n_k := \pi_h(u^n_k).
$$
Let $J$ be a connected component of $I^n_0 \setminus \{a^n_k\}_{k=-r_n}^{2r_n}.$ Then we have $|J| > \rho|I^n_0|$, where $\rho = \rho(\bfK, \bfb) \in (0,1)$ is a uniform constant.
\end{prop}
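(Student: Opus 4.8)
The plan is to reduce the statement to the one-dimensional estimate \propref{c1 bound geo}, applied to the $n$th profile $f_n:=S^n\circ\tif_n\circ(S^n)^{-1}$, and then to transfer the resulting gap bounds to the two-dimensional orbit $\{a^n_k\}$ by exploiting the super-exponential proximity between $\cR^n(F)$ and its profile (\propref{2d close 1d}, \thmref{crit chart}) together with the 1D-like combinatorial structure of the deep returns (\propref{value struct bound}).

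First I would check that $f_n$ is admissible for \propref{c1 bound geo}. By a priori bounds --- \corref{a priori cor} together with the factorization of \lemref{quad mapping} for the bounded non-linearity, and \propref{c1 bound 2d} for the uniform $C^1$ bound --- one has $f_n\in\frU^4(K)$ for a uniform $K=K(\bfK)$; and by \propref{value struct bound} the map $f_n$ is non-trivially renormalizable with return time $r_n\le\bfb$, its renormalization having $\bfb$-bounded kneading. Write $I_n$ for the domain of $f_n$, $\tiI^1\subset I_n$ for its $r_n$-periodic interval, and $c^1\in\tiI^1$ for the critical point of $f_n^{r_n}|_{\tiI^1}$. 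Then \propref{c1 bound geo} yields a uniform $\rho_0=\rho_0(\bfK,\bfb)\in(0,1)$ such that every component of $I_n\setminus\{f_n^i(c^1)\}_{i=0}^{3r_n+1}$ has length greater than $\rho_0$; since $|I_n|<2K^2$ by \lemref{1d upper bound} and affine conjugacies preserve ratios, this rephrases as: every component of $I^n_0\setminus(S^n)^{-1}\bigl(\{f_n^i(c^1)\}_{i=0}^{3r_n+1}\bigr)$ has length greater than $\rho_0'|I^n_0|$, with $\rho_0':=\rho_0/(2K^2)$.

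The heart of the matter is to show that $\{S^n(a^n_k)\}_{k=-r_n}^{2r_n}$ is shadowed, within $\lambda^{(1-\bepsilon)R_n}$, by the subset $\{f_n^i(c^1)\}_{i=0}^{3r_n}$ of $\{f_n^i(c^1)\}_{i=0}^{3r_n+1}$. For $-1\le k\le 2r_n$ this is routine: $a^n_0=0$ since $\Psi^n$ is centered at $v_0$, and $a^n_k=\Piod(\tiF_n^k)(0)$, so \propref{2d close 1d} (applicable as $k=O(\bfb)$) and \thmref{crit chart}(i) (which puts the critical value of $\tif_n$ within $\lambda^{(1-\bepsilon)R_n}$ of $0$) give that $S^n(a^n_k)$ lies within $\lambda^{(1-\bepsilon)R_n}$ of $f_n^{k+r_n}(c^1)$, where $k+r_n$ ranges over $\{r_n-1,\dots,3r_n\}$. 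The subtle case is $a^n_{-j}$ for $2\le j\le r_n$, which a priori is only a preimage of the critical point. Here I would invoke \propref{value struct bound}: since $v_0\in\cB^{n+1}_{R_{n+1}}$ we have $v_{-R_{n+1}}\in\cB^{n+1}_0$, hence $v_{-jR_n}=F^{(r_n-j)R_n}(v_{-R_{n+1}})$ lies in the piece $\cB^{n+1}_{(r_n-j)R_n}$, and because $v_{-R_{n+1}}$ is the critical point of the $(n+1)$st return, $v_{-jR_n}$ is the critical point of the depth $(n+1)$ sub-renormalization carried by that piece. Transporting this through the $n$th profile --- via \thmref{crit chart} and the bounded-distortion return $F^{(r_n-j)R_n}|_{\cB^{n+1}_0}$ --- shows that $S^n(a^n_{-j})$ lies within $\lambda^{(1-\bepsilon)R_n}$ of the critical point of $f_n^{r_n}$ restricted to $f_n^{r_n-j}(\tiI^1)$, which equals $f_n^{r_n-j}(c^1)$, a member of $\{f_n^i(c^1)\}_{i=0}^{r_n-2}$.

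Granting the shadowing, the proof concludes with a routine comparison. If $J$ is a component of $I^n_0\setminus\{a^n_k\}_{k=-r_n}^{2r_n}$, its endpoints lie within $\lambda^{(1-\bepsilon)R_n}$ of points of $(S^n)^{-1}\bigl(\{f_n^i(c^1)\}_{i=0}^{3r_n+1}\bigr)$; since all but the single point $(S^n)^{-1}(f_n^{3r_n+1}(c^1))$ are matched to some $a^n_k$, the interval $J$ either contains that unmatched point in its interior --- and hence a full component of $I^n_0\setminus(S^n)^{-1}(\{f_n^i(c^1)\}_{i=0}^{3r_n+1})$ --- or differs from such a component by collars of total width at most $2\lambda^{(1-\bepsilon)R_n}$. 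Either way $|J|>\rho_0'|I^n_0|-2\lambda^{(1-\bepsilon)R_n}$, and since $n\ge n_0$ satisfies \eqref{eq.proper depth 2} with $K_2=K_2(\bfK,\bfb)$ chosen large enough to absorb $\rho_0'$ and the (merely exponential) lower bound on $|I^n_0|$, the right-hand side exceeds $(\rho_0'/2)|I^n_0|=:\rho|I^n_0|$. I expect the third step to be the main obstacle: the backward-index points $a^n_{-j}$ are invisible to the one-dimensional estimate, and recognizing them as super-exponentially close to genuine forward critical-orbit points of the $n$th renormalization is exactly what the nested 1D-like structure of the deep returns provides.
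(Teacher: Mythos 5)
Your proposal follows the same strategy as the paper's (quite terse) proof: identify $a^n_0$ with the critical value of the 1D profile (the paper does this via the tangency characterization of $v_0$ and the $\lambda^{(1-\bepsilon)R_n}$-thinness of $\cR^n(F)$), reduce to \propref{c1 bound geo} via \propref{2d close 1d}, and you helpfully make explicit the combinatorial identification $a^n_k \approx f_n^{k+r_n}(c^1)$ including the backward indices $-r_n\le k\le -2$, which the paper leaves implicit. One caution on your last step: the needed lower bound on $|I^n_0|$ should be taken as $|I^n_0|\ge |a^n_1|>\lambda^{\bepsilon R_n}$ from \propref{value struct bound} (which trivially beats the collar $2\lambda^{(1-\bepsilon)R_n}$ once $n\ge n_0$), rather than the sharper exponential bound of \thmref{rescale}, whose proof invokes \propref{exp scale} and would be circular here.
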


\begin{proof}
Let $v_0$ be the critical value of $F$ defined in \secref{sec.conv chart}. Denote $u^n_0 :=\Psi^n(v_0)$. Note that $u^n_0$ is a point of tangency between foliation by vertical quadratic curves $\lambda^{(1-\bepsilon)R_n}$-close to the image of the horizontal foliation by $F_n$, and foliation by $\lambda^{(1-\bepsilon)R_n}$-vertical curves. On the other hand, $(f_n(0), 0)$ is the unique tangency between the image curve under the degenerate H\'enon map $\iota(f_n)$, and the genuine vertical foliation. Since 
$$
\|F_n-\iota(f_n)\|_{C^{r+3}} <\lambda^{(1-\bepsilon)R_n},
$$
we see that
$$
|\pi_h(u^n_0)-f_n(0)| < \lambda^{(1-\bepsilon)R_n}.
$$
The result now follows from Propositions \ref{c1 bound geo} and \ref{2d close 1d}.
\end{proof}

\begin{thm}\label{rescale}
For $n_0 \leq n \leq N$, there exist $\sigma_n \asymp |I^n_0|^{1/2}$; $\tau_n \in I^n_0$ and a $C^{r+3}$-diffeomorphism $\phi_n$ defined on the interval $\pi_v(B^n_0)$ such that for
$$
\cS^n(x,y) := (\sigma_n^{-2}x +\tau_n, \sigma_n^{-1}y+\tau_n),
$$
and
$$
\cY^n(x,y) = (x,\phi_n(y))
\matsp{and}
\Phi^n := (\cY^n)^{-1} \circ \Psi^n,
$$
the following statements hold.
\begin{enumerate}[i)]
\item There exist uniform constants $0< \rho_1 < \rho_2 < 1$ depending only on $\bfK$ and $\bfb$ such that $\rho_1^n < \sigma_n < \rho_2^n.$
\item The distortion of $\phi_n$ is bounded by $\bfK$, and $\|\phi_n^{\pm 1}\|_{C^1} < K$, where $K = K(\bfK) \geq 1$ is a uniform constant.
\item We have
$$
\cR^n(F) := \cS^n\circ\Phi^n \circ F^{R_n}\circ (\cS^n \circ \Phi^n)^{-1} \in \frHL^{r+3}_{\lambda_n}(\bfK),
$$
where $\lambda_n := \lambda^{(1-\bepsilon)R_n}$.
\end{enumerate}
\end{thm}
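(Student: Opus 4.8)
The plan is to construct the rescaling data $(\sigma_n,\tau_n,\phi_n)$ explicitly and then verify the three claimed properties using the $C^1$-bounds already established in this section together with a priori bounds. First I would fix $n$ with $n_0\le n\le N$ and recall that $\tif_n=\Piod(\tiF_n)$ where $\tiF_n=\Psi^n\circ F^{R_n}\circ(\Psi^n)^{-1}$. By Corollary~\ref{a priori cor} the diffeomorphism $h_n$ has distortion $<\bfK$, and by \propref{c1 bound 2d} we have $\|\tiF_n\|_{C^1}<K(\bfK)$; by \propref{2d close 1d} the 1D profile $\tif_n$ is $\lambda^{(1-\bepsilon)R_n}$-close to the genuine first-return dynamics, so all the geometry of $\cI^n_0$ is governed, up to super-exponentially small error, by the unimodal map $\tif_n$. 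Define $\tau_n$ so that the affine rescaling $\cS^n$ sends the critical point of (the normalization of) $\tif_n$ to $0$, and define $\sigma_n$ so that $\cS^n$ normalizes the width: i.e. $S^n\circ\tif_n\circ(S^n)^{-1}$ is in $\frU^{r+3}$, which is exactly the requirement in \eqref{eq.renorm}. Since normalization forces $(f_n)''(0)=2$ after rescaling, and $\tif_n(x)\approx \kappa_F(\psi(x))^2+a$ near its critical point by \lemref{quad mapping} (with $\psi$ having distortion $<K_1(1+\cdots)$ and $\psi'(0)\asymp 1$), a direct computation gives the quadratic scaling $\sigma_n\asymp|I^n_0|^{1/2}$.

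For claim (i), I would combine the upper and lower bounds on $|I^n_0|$: the exponential upper bound $|I^n_0|<\sigma_2^n$ follows from \propref{exp scale} (the gaps between consecutive points $a^n_k$ occupy a definite fraction of $I^n_0$, and there are definitely more than one such gap, giving a definite contraction factor $r_n\le\bfb$ at each step when passing from $\cI^n_0$ to $\cI^{n+1}_0$), while the exponential lower bound $|I^n_0|>\sigma_1^n$ follows from the $\bfK$-bounded distortion of the return maps, which prevents the intervals from shrinking faster than a fixed geometric rate. Since $\sigma_n\asymp|I^n_0|^{1/2}$, these translate into $\rho_1^n<\sigma_n<\rho_2^n$ with $\rho_i$ depending only on $\bfK$ and $\bfb$.

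For claim (ii), I would define $\phi_n$ as the reparametrization of the vertical interval $\pi_v(B^n_0)$ that straightens out the return dynamics in the $y$-direction — concretely, it is built from the first-entry map data $e_n$ of \thmref{crit chart} ii) (whose $C^r$-norm is $<\lambda^{(1-\bepsilon)R_n}$) composed with the diffeomorphism $h_n$; the distortion bound $\bfK$ comes from \corref{a priori cor} applied to the appropriate one-dimensional factor, and $\|\phi_n^{\pm1}\|_{C^1}<K(\bfK)$ follows from \lemref{1d upper bound}. The map $\cY^n(x,y)=(x,\phi_n(y))$ is a genuine-vertical change of coordinates, so $\Phi^n:=(\cY^n)^{-1}\circ\Psi^n$ is still a genuinely horizontal chart equivalent to $\Psi^n$. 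For claim (iii), the membership $\cR^n(F)=\cS^n\circ\Phi^n\circ F^{R_n}\circ(\cS^n\circ\Phi^n)^{-1}\in\frHL^{r+3}$ is \eqref{eq.renorm}; the $\bfK$-bounded non-linearity of its 1D profile is precisely Corollary~\ref{a priori cor} rephrased via \lemref{factor}/\eqref{eq.1d dist bound}; and the $\lambda_n$-thinness in $C^{r+3}$ follows because $\partial_y$ of the rescaled return is controlled by $\|\Jac F^{R_n}\|\le\lambda^{R_n}$ together with the fact that $\Phi^n$ and $\cS^n$ distort the Jacobian by at most a uniformly bounded factor (this is the heuristic already spelled out in \secref{sec.main thm}, made rigorous here by the uniform bound on $\|(\Psi^n)^{\pm1}\|_{C^r}<K_0$ from \propref{vert prop nest} and on $\phi_n$ from part (ii)), and the extra power gain to $(1-\bepsilon)R_n$ in the exponent absorbs the $\lambda^{-\bepsilon R_n}$-type losses from the chart norms.

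The main obstacle I expect is claim (iii), specifically the passage from the crude bound $\|\Jac\cR^n(F)\|\lesssim\lambda^{R_n}$ to the sharper \emph{$C^{r+3}$}-thinness with exponent $(1-\bepsilon)R_n$: one must control not just $\|\partial_y f\|_{C^0}$ but all derivatives up to order $r+2$ of the $y$-dependence, and the rescaling $\cS^n$ amplifies horizontal derivatives by $\sigma_n^{-2}$, which is exponentially large. The resolution is that the $y$-dependence, before rescaling, is not merely small in $C^0$ but super-exponentially small in $C^{r+3}$ — this is exactly what \thmref{crit chart} i) and iii) (the super-exponential convergence $\|\Phi_0\circ(\Psi^n)^{-1}-\Id\|_{C^r}<\lambda^{(1-\bepsilon)R_n}$ together with \eqref{eq.first entry}) provide — so that even after multiplying by the polynomially-in-$\sigma_n^{-1}$, hence at-worst-exponentially-in-$R_n$, amplification factor, the product remains bounded by $\lambda^{(1-\bepsilon')R_n}$ for a slightly larger marginal exponent, which the $\bepsilon$-notation absorbs.
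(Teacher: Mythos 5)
Your plan captures the right coarse shape (quadratic scaling $\sigma_n\asymp|I^n_0|^{1/2}$, part~(i) from \propref{exp scale} and bounded distortion, thinness surviving the anisotropic rescaling because the $y$-dependence is super-exponentially small in $C^{r+3}$ before rescaling), but it misses the key structural observation that drives the whole construction and misattributes the role of~$\phi_n$.

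The point the paper hinges on is that the centered straightening chart $\Psi^n$ puts the return map into the form
$$
\tiF_n := \Psi^n \circ F^{R_n}\circ (\Psi^n)^{-1}(x,y) = (g_n(x,y),\; h_n(x)),
$$
i.e.\ the \emph{second} coordinate is the first-entry diffeomorphism $h_n(x)$, not $x$. So $\tiF_n$ is not of H\'enon-like form, and the entire purpose of the vertical reparametrization $\cY^n(x,y)=(x,\phi_n(y))$ is to conjugate the second coordinate back to $x$: one checks that with $\phi_n$ a (rescaled copy of) $h_n$, the conjugate $(\cY^n)^{-1}\circ\tiF_n\circ\cY^n$ has the form $(g_n(x,\phi_n(y)),\,x)$, and the anisotropic $\cS^n$ then normalizes. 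Your proposal never states this fact; you write only that $\cY^n$ is ``a genuine-vertical change of coordinates'' so $\Phi^n$ remains genuinely horizontal, which is true but does not explain why $\phi_n$ is needed or what it must equal. Worse, you describe $\phi_n$ as ``built from the first-entry map data $e_n$ \dots composed with $h_n$,'' which is off: in the paper $\phi_n=\sigma_n^{-1}h_n$ (equivalently a rescaled $h_n$) and $e_n$ plays no role at all in defining $\phi_n$ --- $e_n$ only enters to show that, after the conjugation, the $y$-dependence of the horizontal component is $O(\lambda^{(1-\bepsilon)R_n})$ in $C^{r+3}$, via the estimate $\|g_n(\cdot,y)-f_0\circ h_n\|_{C^{r+3}}<\lambda^{(1-\bepsilon)R_n}$ coming from \thmref{crit chart}.

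Once $\phi_n$ is identified correctly, the remaining claims are immediate in a way your plan makes look harder than it is: the $\bfK$-bound on the distortion of $\phi_n$ is literally \corref{a priori cor} for $h_n$; the $C^1$-bound $\|\phi_n^{\pm1}\|_{C^1}<K$ comes from bounded distortion together with $|h_n(I^n_0)|\asymp|I^n_0|^{1/2}\asymp\sigma_n$ (rather than from \lemref{1d upper bound}); and the proof of $\sigma_n\asymp|I^n_0|^{1/2}$ in the paper is the geometric claim $|h_n(I^n_0)|^2\asymp|I^n_0|$ read off from \propref{exp scale} and the quadratic form of $f_0$ (the factorization \lemref{quad mapping} you invoke is a reasonable alternate route, but you would still need the $|h_n(I^n_0)|^2\asymp|I^n_0|$ step to connect $\sigma_n$ to $|I^n_0|$). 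So I would say: your plan is not wrong in its conclusions, but without the $\tiF_n=(g_n,h_n(x))$ observation you have no way to actually write down $\phi_n$ or verify that $\cR^n(F)$ is H\'enon-like, and the clause about $e_n$ would send you down the wrong path.
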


\begin{proof}
Consider the maps
$$
F_0 := \Phi_0 \circ F \circ \Phi_{-1}^{-1}
\matsp{and}
H_n := \Phi_{-1}\circ F^{R_n-1}\circ (\Psi^n)^{-1}.
$$
By \thmref{crit chart}, we have
$$
F_0(x,y) = (f_0(x)-\lambda y, x)
\matsp{and}
H_n(x,y) = (h_n(x), e_n(x,y)),
$$
where $f_0$ is a map with a unique critical point at $0$ with $f_0''(0) > 0$; $h_n$ is a diffeomorphism; and $e_n$ is a map such that $\|e_n\|_{C^{r+3}} < \lambda^{(1-\bepsilon) R_n}$. Moreover, \corref{a priori cor} states that $h_n$ has $\bfK$-bounded distortion.

The map $\tiF_n := (\Psi^n) \circ F^{R_n-1}\circ (\Psi^n)^{-1}$ is of the form
$$
\tiF_n(x,y) = (g_n(x,y), h_n(x)),
$$
where $g_n(\cdot, y)$ for $y \in \pi_v(\cB^n_0)$ is a unimodal map. By \thmref{crit chart} i), we see that
$$
\|g_n(\cdot, y) - f_0 \circ h_n(\cdot)\|_{C^{r+3}} < \lambda^{(1-\bepsilon)R_n}.
$$

We claim that $|h_n(I^n_0)|^2 \asymp |I^n_0|$. Write $I^n_0 = [a_n, b_n]$ and $h_n(I^n_0) = [\alpha_n, \beta_n]$. For $-r_n\leq k \leq 2r_n$, denote
$$
u^n_k := \Psi^n(v_{kR_n})
\matsp{and}
a^n_k := \pi_h(u^n_k).
$$
Recall that $a_n$ and $b_n$ are $\lambda^{\bepsilon R_n}$-close to $a^n_0 = 0$ and $a^n_{r_n}$ respectively, and that $a^n_{-1} \in I^n_0$. Additionally, observe that $g_n(0)$, $g_n(a^n_{-1})$ and $g_n(a^n_{r_n})$ are $\lambda^{(1-\bepsilon) R_n}$-close to $a^n_{r_n}$, $0$ and $a^n_{2r_n}$ respectively. By \propref{exp scale}, $|a^n_{r_n}|$ and $|a^n_{2r_n}|$ are commensurate with $|I^n_0|$. The claim now follows from the fact that $f_0(\alpha_n)$ and $f_0(\beta_n)$ are $\lambda^{(1-\bepsilon)R_n}$-close to $g_n(a_n)$ and $g_n(b_n)$ respectively.

Define
$$
\check\cY^n(x,y) := (x, h_n(y))
\matsp{and}
\chPhi^n := (\check\cY^n)^{-1}\circ \Psi^n.
$$
It is easy to check that
$$
\chF_n :=\chPhi^n \circ F^{R_n} \circ (\chPhi^n)^{-1}
$$
is a H\'enon-like map. Denote $\chf_n := \Piod(\chF_n)$, and let
$$
\check S^n(x) = \sigma_n^2 x+\tau_n
$$
be the unique orientation-preserving affine map on $\bbR$ such that $\check S^n \circ \chf_n \circ (\check S^n)^{-1} \in \frU^{r+3}(\bfK)$.

\lemref{1d lower bound} implies that $\sigma_n^2 \asymp |I^n_0|$. Property i) now follows from \propref{exp scale}. Let
$$
\phi_n(y) := \sigma_n^{-1}h_n(y).
$$
Properties ii) and iv) follow immediately. Lastly, since $h_n$ has bounded distortion, and $|h_n(I^n_0)| \asymp |I^n_0|^{1/2} \asymp \sigma_n$, it follows that $\|\phi_n^{\pm 1}\|_{C^1}$ is uniformly bounded. Thus, we also have Property iii).
\end{proof}


\section{Preservation of Regularity}\label{sec.preserve}

For $N \in \bbN$, let $F$ be the $N$-times $(L, \epsilon, \lambda)$-regularly H\'enon-like renormalizable map with combinatorics of $\bfb$-bounded type considered in \subsecref{subsec.unif c1} (with $n_0 \leq N$ satisfying \eqref{eq.proper depth 2}). For $n_0 \leq n \leq N$, we have by \thmref{rescale}:
$$
F_n := \cR^n(F) = \cS^n \circ \Phi^n \circ F^{R_n} \circ (\cS^n \circ \Phi^n)^{-1},
$$
where $\|(\Phi^n)^{\pm 1}\|_{C^1} < K$ for some uniform constant $K= K(\bfK) \geq 1$, and
$$
\cS^n(x,y) := (\sigma_n^{-2}(x +\tau_n), \sigma_n^{-1}(y+\tau_n))
$$
for some $\sigma_n \in (0,1)$ and $|\tau_n| \asymp \sigma_n^2$. Moreover, by \propref{exp scale}, there exist uniform constants $0 < \rho_1 < \rho_2 < 1$ depending only on $\bfK$ and $\bfb$ such that $\rho_1^n < \sigma_n < \rho_2^n$. Let $f_n := \Piod(F_n)$.

For $p \in \cB^n_0$, let
$$
E_p^{v,n} := (D\Phi^n)^{-1}(E_z^{gv})
\matsp{where}
z := \Phi^n(p).
$$
Moreover, since $\Phi^n$ is genuinely horizontal, we have
$$
E_p^{gh} = (D\Phi^n)^{-1}(E_z^{gh}).
$$
Denote $P^n_0 := \pi_h \circ \Phi^n$.

\begin{lem}\label{hor deriv bound}
For $n_0 \leq n < N$ and $0 \leq k < r_n$, let $p \in \cB^{n+1}_{kR_n} \subset \cB^n_0$. Then we have
$$
\frac{1}{K} < \|DF^{iR_n}|_{E_p^{gh}}\| < \|DF^{iR_n}\|< K
\matsp{for}
0\leq i \leq r_n - k,
$$
where $K = K(\bfK, \bfb) \geq 1$ is a uniform constant.
\end{lem}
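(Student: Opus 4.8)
The plan is to conjugate the return map by the centered straightening chart $\Psi^n$, reduce to the H\'enon-like return $\tiF_n:=\Psi^n\circ F^{R_n}\circ(\Psi^n)^{-1}$, and exploit that only a uniformly bounded number of returns (at most $\bfb$) is involved. First I would check that for $p\in\cB^{n+1}_{kR_n}$ and $0\le j\le r_n-k$ the point $F^{jR_n}(p)$ lies in $\cB^{n+1}_{(k+j)R_n}\subset\cB^n_0$: for $k+j<r_n$ this is part of the 1D-like structure of \propref{value struct bound}, and for $k+j=r_n$ it is $\cB^{n+1}_{R_{n+1}}\Subset\cB^{n+1}_0\Subset\cB^n_0$. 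Hence $F^{iR_n}|_{\cB^{n+1}_{kR_n}}=(\Psi^n)^{-1}\circ\tiF_n^{\,i}\circ\Psi^n$. By \propref{c1 bound 2d}, $\|\tiF_n\|_{C^1}<K_*=K_*(\bfK)$, and $\|(\Psi^n)^{\pm1}\|_{C^1}$ is uniformly bounded (\propref{vert prop nest}, \thmref{rescale}); since $0\le i\le r_n-k\le\bfb$, the chain rule gives $\|D\tiF_n^{\,i}\|\le K_*^{\bfb}$ and so $\|DF^{iR_n}\|\le K$ for a uniform $K=K(\bfK,\bfb)$. As $\|DF^{iR_n}|_{E_p^{gh}}\|\le\|DF^{iR_n}\|$ trivially, this gives the two upper estimates (we enlarge $K$ below).

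For the lower bound I would use the H\'enon-like form of $\tiF_n$. Since $\Psi^n$ is genuinely horizontal, $D\Psi^n$ sends $E_p^{gh}$ to the genuine horizontal direction at $z_p:=\Psi^n(p)$, so it suffices to bound $\|D\tiF_n^{\,i}|_{E^{gh}_{z_p}}\|$ from below; and since $\pi_v\circ\tiF_n^{\,i}=\pi_h\circ\tiF_n^{\,i-1}$, the vertical component of $D\tiF_n^{\,i}$ applied to the unit horizontal vector at $z_p$ equals $\partial_X(\pi_h\circ\tiF_n^{\,i-1})(z_p)$. For $i\le1$ this quantity is $1$ (empty product); for $i\ge2$, using the thinness estimate $\|\tiF_n-\tiF_n\circ\Pi_h\|_{C^{r+3}}<\lambda_n$ coming from \thmref{crit chart} ii) (as in the proof of \propref{2d close 1d}), together with \propref{c1 bound 2d} and \lemref{vary compose} (applied $i-1\le\bfb$ times), $\partial_X(\pi_h\circ\tiF_n^{\,i-1})(z_p)$ is, up to $O_{\bfb}(\lambda_n)$, the product of the $i-1$ numbers $\tilde f_n'(w_l)$, where $\tilde f_n:=\Piod(\tiF_n)$ and $w_l$ is the $\pi_h\circ\Psi^n$-coordinate of $F^{lR_n}(p)\in\cB^{n+1}_{(k+l)R_n}$, $0\le l\le i-2$. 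So it is enough to bound each $|\tilde f_n'(w_l)|$ below and above by uniform constants.

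The key point — and the only place real work is needed — is that the combinatorial range forces the $w_l$ to stay away from the critical point of $\tilde f_n$. Indeed $k\le k+l\le r_n-2$ (since $l\le i-2\le r_n-k-2$), so no index $k+l$ equals $r_n-1$; but $v_{-R_n}=F^{-R_n}(v_0)$ lies in $\cB^{n+1}_{(r_n-1)R_n}$ (because $v_0\in\cB^{n+1}_0\cap\cB^{n+1}_{R_{n+1}}$ by \eqref{eq.min domain} and nesting), and its coordinate $a^n_{-1}$ is $\lambda_n$-close to the critical point of $\tilde f_n$. By \propref{value struct bound} the domains $\{\cB^{n+1}_{jR_n}\}$ are pairwise disjoint and horizontally ordered in $\cB^n_0$, and by \propref{exp scale} consecutive marked points $a^n_m$ are $>\rho\,|I^n_0|$ apart, so $w_l$ is separated from $a^n_{-1}$ by $\gtrsim|I^n_0|$. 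Transferring through the affine rescaling $\cS^n$ (and using \thmref{rescale}), $\tilde f_n'(w_l)$ equals, up to $\lambda_n$, the value of $f_n'$ at a critical-orbit point of the normalized profile $f_n:=\Piod(\cR^n(F))\in\frU^{r+3}(\bfK)$ lying at distance $>\rho'$ from $0$; since $f_n$ has $\bfK$-bounded non-linearity, $|f_n'(x)|\in[c(\bfK),C(\bfK)]$ for $|x|>\rho'$, so each factor lies in this interval and the product of at most $\bfb$ of them lies in $[c(\bfK)^{\bfb},C(\bfK)^{\bfb}]$. For $n\ge n_0$ the $O_{\bfb}(\lambda_n)$ errors are dominated by $\tfrac12c(\bfK)^{\bfb}$, which yields $\|DF^{iR_n}|_{E_p^{gh}}\|>1/K$ after enlarging $K$. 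The main obstacle is this bookkeeping — identifying $a^n_{-1}$ with the profile's critical point, carrying the spread estimate of \propref{exp scale} through $\cS^n$, and checking that $i\le r_n-k$ is exactly the condition that keeps the critical point out of the orbit segment — rather than any single delicate inequality.
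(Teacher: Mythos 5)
Your proof is correct and takes a genuinely (if modestly) different route in the lower-bound part. Both arguments first pass to the rescaled picture and reduce to a product of $1$D derivatives of the normalized profile along a segment of the critical orbit, using the same inputs (\propref{c1 bound 2d}, \propref{value struct bound}, \propref{exp scale}, the thinness estimate from \thmref{crit chart} ii), and \lemref{vary compose}); the upper bound is handled identically. The difference is which component of $D\tiF_n^{\,i}\cdot E^{gh}$ is estimated from below. The paper works with the \emph{horizontal} component $\partial_x(\pi_h\circ F_n^i)$, which is a product over $i$ factors including one at the critical index $r_n-k-1$; to control that last, possibly degenerate factor it invokes the factorization $f_n=2\psi_{f_n}^2$ of \lemref{factor} and replaces $|f_n'|$ by $c|h_n'|$ (essentially the observation that $\|DF_n|_{E^{gh}}\|$ is bounded below regardless of proximity to the critical point). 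You instead estimate the \emph{vertical} component, which by the H\'enon-like identity $\pi_v\circ\tiF_n=\pi_h$ equals $\partial_x(\pi_h\circ\tiF_n^{\,i-1})$, a product over only $i-1$ factors with indices $l\le i-2\le r_n-k-2$. Combinatorially this forces $k+l\le r_n-2$, so the orbit segment never enters the critical piece at index $r_n-1$, and every factor $|f_n'|$ is uniformly bounded below; since $\|D\tiF_n^{\,i}|_{E^{gh}}\|$ dominates the absolute value of this vertical component, the lower bound follows. The index bookkeeping you flag as the real work (identifying $a^n_{-1}$ with the profile's critical point, carrying \propref{exp scale} through $\cS^n$, and checking $i\le r_n-k$ is exactly the threshold) matches the paper's assertion that $|x_j|>\rho$ for $0\le j<r_n-k-1$. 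In short, your observation that shifting the index by one places every factor in the ``good'' range replaces the paper's use of \lemref{factor} for the last factor; the two approaches are equivalent in strength, and yours is arguably the cleaner way to see why $i\le r_n-k$ is precisely the right range.
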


\begin{proof}
The upper bound follows immediately from \propref{c1 bound 2d}. Denote $z := \cS^n \circ \Phi^n(p)$, and $z_i = (x_i, y_i) :=F_n^i(z).$ Propositions \ref{value struct bound} and \ref{exp scale} imply that for $0 \leq j < r_n -k-1$, there exists a uniform constant $\rho = \rho(\bfK, \bfb) \in (0,1)$ such that $|x_j| > \rho$. Thus, $|f_n'(x_j)|$ is uniformly bounded below.

By the thinness of $F_n$, we see that for all $w = (u,v)$ in the domain of $F_n$, we have
$$
\|D(\pi_h\circ F_n)|_{E_w^{gh}}\| = |f_n'(u)| + O(\lambda^{(1-\bepsilon)R_n}).
$$
Let $h_n := \psi_{f_n}$ be the diffeomorphism given in \lemref{factor}. Then
$$
\|DF_n|_{E_w^{gh}}\| > c|h_n'(u)|
$$
for some uniform constant $c > 0$. Let
$$
\alpha_j := |f_n'(x_j)| - \lambda^{(1-\bepsilon)R_n}
\matsp{for}
0 \leq j < r_n-k-1,
$$
and $\alpha_{r_n-k-1} := c|h_n'(x_{r_n-k-1})|$. Then we have
$$
\|D(\pi_h\circ F_n^i)|_{E_{z_0}^{gh}}\| \geq \alpha_0\ldots \alpha_{i-1}.
$$
The desired lower bound follows.
\end{proof}

\begin{lem}\label{hor deriv out}
For $n_0 \leq n \leq N$, let $p_0 \in \cB^n_0$. Then
$$
\frac{1}{K^{n-n_0}} < \|DF^T|_{E_{p_0}^{gh}}\| < K^{n-n_0}
\matsp{for}
0\leq T < R_n,
$$
where $K = K(\bfK, \bfb) \geq 1$ is a uniform constant.
\end{lem}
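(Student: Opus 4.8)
The plan is to telescope the iterate $F^T$ through the nested renormalization domains, using \lemref{hor deriv bound} at each level to absorb one factor of $K$. Fix $p_0 \in \cB^n_0$ and $0 \leq T < R_n$. The idea is to track how long the forward orbit of $p_0$ stays inside each $\cB^m_0$ for $n_0 \leq m \leq n$, and to break the orbit segment $\{p_0, p_1, \dots, p_{T}\}$ into blocks of length a multiple of $R_m$ at the appropriate depth $m$.

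First I would set up the decomposition. Since $T < R_n = r_{n-1}R_{n-1}$, we may write $T = q_{n-1}R_{n-1} + T'$ with $0 \leq q_{n-1} < r_{n-1} \leq \bfb$ and $0 \leq T' < R_{n-1}$; moreover, since $p_0 \in \cB^n_0 \subset \cB^{n-1}_0$, the points $p_0, p_{R_{n-1}}, \dots, p_{q_{n-1}R_{n-1}}$ all lie in the $r_{n-1}$-cycle of $\cB^{n-1}_0$ (they lie in the successive iterates $\cB^{n-1}_{kR_{n-1}}$ for $k = 0, \dots, q_{n-1}$; here one uses the $R_{n-1}$-periodicity of $\cB^{n-1}_0$ together with the fact that $\cB^n_0 \subset \cB^{n-1}_0$ so its orbit is organized into the $r_{n-1}$ pieces). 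By \lemref{hor deriv bound} applied at level $n-1$ with $k=0$ and $i = q_{n-1} \leq r_{n-1}$, we get
$$
\frac{1}{K} < \|DF^{q_{n-1}R_{n-1}}|_{E_{p_0}^{gh}}\| < K.
$$
Now I would iterate this: the remaining exponent $T' < R_{n-1}$ acts on the point $p_{q_{n-1}R_{n-1}} \in \cB^{n-1}_{q_{n-1}R_{n-1}} = F^{q_{n-1}R_{n-1}}(\cB^{n-1}_0)$, and by applying $DF^{-q_{n-1}R_{n-1}}$ one reduces to a point in $\cB^{n-1}_0$. But since $\Phi^{n-1}$ is genuinely horizontal, the direction $E^{gh}$ is preserved under the change of coordinates, and the chain rule gives
$$
\|DF^T|_{E_{p_0}^{gh}}\| = \|DF^{q_{n-1}R_{n-1}}|_{E_{p_0}^{gh}}\| \cdot \|DF^{T'}|_{E_{p_{q_{n-1}R_{n-1}}}^{gh}}\|.
$$
The second factor is bounded, by the inductive hypothesis at depth $n-1$ (with the point $p_{q_{n-1}R_{n-1}}$, suitably translated back into $\cB^{n-1}_0$ using periodicity, or directly since $T' < R_{n-1}$ and the point lies in a domain in the $\cB^{n-1}$-cycle), by $K^{n-1-n_0}$ from above and $K^{-(n-1-n_0)}$ from below. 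Multiplying, we obtain $K \cdot K^{n-1-n_0} = K^{n-n_0}$, which is exactly the claimed bound. The base case $n = n_0$ is vacuous (there $T < R_{n_0}$ and the bound is $K^0 = 1$; this requires a separate uniform bound on $\|DF^T|_{E^{gh}}\|$ for $T < R_{n_0}$, which follows from $\|F^{R_{n_0}}|_{\cB^{n_0}}\|_{C^1}$ being one of the parameters on which $\bfK$, and hence $K$, depends — so the case $n=n_0$ is absorbed into the constant).

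The main obstacle I anticipate is bookkeeping the orbit structure correctly: one must be careful that when $p_0 \in \cB^n_0$ and we push forward by $F^{kR_{n-1}}$, the image genuinely lands in the domain $\cB^{n-1}_{kR_{n-1}}$ to which \lemref{hor deriv bound} applies, and that the $r_{n-1}$ pieces of the $\cB^n$-cycle sitting inside the $\cB^{n-1}$-cycle are indexed compatibly. This is where \propref{value struct bound} (1D-like structure, giving $R_n$-periodicity of $\cB^n_0$ and the nesting $\cB^{n+1}_0 \Subset \cB^n_0$ with the orbit of $\cB^{n+1}_0$ organized into $r_n$ disjoint pieces inside $\cB^n_0$) is essential. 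Once the combinatorial indexing is pinned down, the estimate itself is a routine induction: each of the $n - n_0$ levels contributes exactly one factor of $K$, with the residual short orbit segment at the bottom absorbed into the definition of $K$ via $\|F^{R_{n_0}}|_{\cB^{n_0}}\|_{C^1}$. A small technical point to check is that in passing between adjacent levels one may need to enlarge $K$ by a controlled amount (e.g., to account for $\|(\Phi^n)^{\pm 1}\|_{C^1} < K$ when comparing $E^{gh}$ in phase-space versus rescaled coordinates), but since $\Phi^n$ is genuinely horizontal this comparison costs nothing for the horizontal direction, so no extra loss occurs.
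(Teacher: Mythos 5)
Your high-level plan — telescope through the renormalization depths, invoking \lemref{hor deriv bound} once per level and absorbing the base case into the constant — is essentially the paper's plan. But the central identity you write,
$$
\|DF^T|_{E_{p_0}^{gh}}\| = \|DF^{q_{n-1}R_{n-1}}|_{E_{p_0}^{gh}}\| \cdot \|DF^{T'}|_{E_{p_{q_{n-1}R_{n-1}}}^{gh}}\|,
$$
is not the chain rule. The chain rule gives
$$
\|DF^T|_{E_{p_0}^{gh}}\| = \|DF^{q_{n-1}R_{n-1}}|_{E_{p_0}^{gh}}\| \cdot \|DF^{T'}|_{E'}\|,
\matsp{where}
E' := DF^{q_{n-1}R_{n-1}}\bigl(E_{p_0}^{gh}\bigr),
$$
and $E'$ is \emph{not} the genuine horizontal direction at $p_{q_{n-1}R_{n-1}}$. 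In the $\Psi^{n-1}$-coordinates the return map is of the H\'enon-like form $(x,y)\mapsto (f_{n-1}(x,y),\,x)$, so the genuine horizontal unit vector $(1,0)$ is mapped to $(\partial_x f_{n-1},\,1)$, whose vertical component has full size. Your closing remark attributes the possible loss to the charts $\Phi^n$ and dismisses it because they are genuinely horizontal; that is not where the issue lies. The chart costs nothing, but the \emph{dynamics} moves horizontal off horizontal, and your recursion silently replaces $E'$ with $E^{gh}$ at every level. For the upper bound this is harmless (one can replace $\|DF^{T'}|_{E'}\|$ by the operator norm $\|DF^{T'}\|$, which \lemref{hor deriv bound} also bounds). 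For the lower bound it is a genuine gap: $\|DF^{T'}|_{E'}\|$ is being compared to $\|DF^{T'}|_{E^{gh}}\|$ with no justification.

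The paper repairs exactly this by tracking, in place of $\|DF^{t_kR_k}|_{E^{gh}}\|$, the projected quantity
$$
d_k := \bigl\|D\bigl(P^k_0 \circ F^{t_k R_k}\bigr)|_{E^{gh}_{p_{T_k}}}\bigr\|
\matsp{with}
P^k_0 = \pi_h\circ \Phi^k,
$$
so that the direction is \emph{projected back to horizontal} at each level (this is what the proof of \lemref{hor deriv bound} actually estimates from below). The vertical components discarded along the way are controlled by the thinness $\lambda^{(1-\bepsilon)R_k}$ of the $k$th renormalization, hence summable over $k$, and their total contribution is absorbed into a single extra factor $K^{\pm 1}$ outside the product, yielding
$$
K^{-1}\, d_{n_0}\cdots d_{n-1} \leq \|DF^T|_{E_{p_0}^{gh}}\| \leq K\, D_{n_0}\cdots D_{n-1},
\matsp{where}
D_k := \|D_{p_{T_k}}F^{t_k R_k}\|.
$$
To repair your argument you would need to insert these horizontal projections and show that the accumulated projection error is $O(1)$; without that, the inductive lower bound does not go through. (A minor further remark: your aside about ``translating $p_{q_{n-1}R_{n-1}}$ back into $\cB^{n-1}_0$ using periodicity'' is superfluous, since $q_{n-1}R_{n-1}$ is already a multiple of $R_{n-1}$, so $p_{q_{n-1}R_{n-1}}\in \cB^n_{q_{n-1}R_{n-1}}\subset \cB^{n-1}_0$ automatically.)
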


\begin{proof}
Write
$$
T = t_0 + t_{n_0} R_{n_0} + \ldots + t_{n-1} R_{n-1}
$$
with $0 \leq t_0 < R_{n_0}$ and $0 \leq t_k < r_k$ for $n_0 \leq k < n$. Denote
$$
T_k := t_{k+1}R_{k+1} + \ldots + t_{n-1}R_{n-1}.
$$
Then clearly, we have
$$
K^{-1} \cdot d_{n_0}\cdot \ldots \cdot d_{n-1} \leq \|DF^T|_{E_{p_0}^{gh}}\| \leq K \cdot D_{n_0}\cdot \ldots \cdot D_{n-1},
$$
where
$$
D_k := \|D_{p_{T_k}}F^{t_kR_k}\|
\matsp{and}
d_k := \left\|D\left(P^k_0 \circ F^{t_k R_k}\right)|_{E^{gh}_{p_{T_k}}}\right\|
$$
for $n_0\leq k < n$. The result now follows from \lemref{hor deriv bound}.
\end{proof}

Let $v_0$ be the critical value of $F$. For $k \geq -r_n$, denote
$$
u^n_k := \cS^n\circ\Phi^n(v_{kR_n})
\matsp{and}
a^n_k := \pi_h(u^n_k).
$$

Consider an increasing sequence of renormalization depths
$$
n_0 \leq n_1 < n_2 < \ldots < n_k \leq N.
$$
We say that this sequence is {\it tempered} if for $1 \leq i < k$, we have
$$
\rho_1^{n_{i+1}-n_i} > \lambda^{\bepsilon R_{n_i}},
$$
where $\rho_1$ is given in \propref{exp scale}.

\begin{lem}\label{temper in}
Consider a tempered sequence $\{n_i\}_{i=1}^k$. Let
$$
S = s_1R_{n_1} + \ldots + s_{k-1}R_{n_{k-1}}
\matsp{and}
\hS := S + s_kR_{n_k};
$$
where $1 \leq s_i < r_{n_i}$ for $1 \leq i \leq k$. For $p_0 \in \cB^{n_k+1}_{R_{n_k+1}}$, define
$$
\hz =(\hx, \hy) := \cS^{n_k}\circ \Phi^{n_k}(p_{S-\hS})
\matsp{and}
\hE_{\hz} := D(\cS^{n_k}\circ \Phi^{n_k} \circ F^S)(E_{p_{-\hS}}^{gh}).
$$
Then $\hE_{\hz}$ is $\theta$-horizontal for some uniform constant $\theta = \theta(\bfK, \bfb) \geq 1$. Moreover, we have
\begin{equation}\label{eq.temper in}
\frac{1}{K^k} < \left\|DF^{\hS}|_{E_{p_{-\hS}}^{gh}}\right\| < K^k
\end{equation}
where $K = K(\bfK, \bfb) \geq 1$ is a uniform constant.
\end{lem}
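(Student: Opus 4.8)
The plan is to prove both assertions together by induction on $k$, peeling off the deepest block $F^{s_kR_{n_k}}$ at each step. For the base case $k=1$ we have $S=0$, so $\hE_{\hz}=D(\cS^{n_1}\circ\Phi^{n_1})(E^{gh}_{p_{-\hS}})$; since $\Phi^{n_1}$ is genuinely horizontal and $\cS^{n_1}$ is affine, this is genuinely horizontal, hence $\theta$-horizontal with $\theta=1$. Because $p_0\in\cB^{n_1+1}_{R_{n_1+1}}\subset\cB^{n_1+1}_0$ and $R_{n_1+1}=r_{n_1}R_{n_1}$, the periodicity of $\cB^{n_1+1}$ places $p_{-s_1R_{n_1}}$ in $\cB^{n_1+1}_{(r_{n_1}-s_1)R_{n_1}}$ with $1\le r_{n_1}-s_1<r_{n_1}$, so $\lemref{hor deriv bound}$ with $i=s_1$ gives $K^{-1}<\|DF^{\hS}|_{E^{gh}_{p_{-\hS}}}\|<K$.

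For the inductive step I would set $q:=p_{-s_kR_{n_k}}$, so $F^S(p_{-\hS})=q$ and $F^{s_kR_{n_k}}(q)=p_0$. The bookkeeping is to verify the domain memberships: since $R_{n_{k-1}+1}$ divides $R_{n_k}$ and $p_0\in F^{R_{n_k+1}}(\cB^{n_k+1})$ with $\cB^{n_k+1}\subset\cB^{n_{k-1}+1}$, the $R_{n_{k-1}+1}$-periodicity of $\cB^{n_{k-1}+1}$ yields $q\in\cB^{n_{k-1}+1}_{R_{n_{k-1}+1}}$, so the inductive hypothesis applies to $q$ together with the prefix sequence $\{n_i\}_{i=1}^{k-1}$ (which is again tempered); the same computation gives $q\in\cB^{n_k+1}_{(r_{n_k}-s_k)R_{n_k}}$, which is what $\lemref{hor deriv bound}$ needs at depth $n_k$. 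The inductive hypothesis then provides $\|DF^{S}|_{E^{gh}_{p_{-\hS}}}\|\in(K^{-(k-1)},K^{k-1})$ and, translating through the genuinely horizontal chart $\Phi^{n_{k-1}}$ and the affine $\cS^{n_{k-1}}$, the $\theta$-horizontality in $\cB^{n_{k-1}}_0$ of the intermediate pushforward of $E^{gh}_{p_{-\hS}}$ at the corresponding orbit point just before the last depth-$(n_{k-1})$ block.

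The heart of the argument is then to show that $\hE_{\hz}=D(\cS^{n_k}\circ\Phi^{n_k})(E_q)$, with $E_q:=DF^S(E^{gh}_{p_{-\hS}})$, stays in a \emph{uniform} horizontal cone, and afterwards to bound $\|DF^{\hS}|_{E^{gh}_{p_{-\hS}}}\|=\|DF^{s_kR_{n_k}}|_{E_q}\|\cdot\|DF^{S}|_{E^{gh}_{p_{-\hS}}}\|$. Transporting the intermediate direction forward to $E_q$ amounts, in the rescaled depth-$(n_{k-1})$ chart, to applying the bounded iterate $F_{n_{k-1}}^{s_{k-1}}$ of $F_{n_{k-1}}=\cR^{n_{k-1}}(F)$, which by $\thmref{rescale}$ is $\lambda^{(1-\bepsilon)R_{n_{k-1}}}$-thin with uniformly bounded $C^1$-norm; a horizontal direction is therefore carried into a uniform horizontal cone, the single passage of this partial return past the critical point being absorbed by the quadratic-tangency structure at the critical value (see $\propref{crit value}$ and $\thmref{crit chart}$), where the center, strong-stable and genuine-horizontal directions coincide, so that landing near $v_0$ forces the image direction to be horizontal. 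The change from the depth-$(n_{k-1})$ to the depth-$n_k$ chart is governed by $\propref{vert prop nest}$(ii) and $\thmref{crit chart}$(i), which bound their $C^r$-discrepancy by $\lambda^{(1-\bepsilon)R_{n_{k-1}}}$; the tempered inequality $\rho_1^{\,n_k-n_{k-1}}>\lambda^{\bepsilon R_{n_{k-1}}}$, together with $\sigma_{n_k}>\rho_1^{n_k}$, is precisely what keeps this discrepancy negligible after applying $\cS^{n_k}$, whose norm is controlled by $\sigma_{n_k}^{-2}$. Hence $\hE_{\hz}$ is $\theta$-horizontal for a uniform $\theta=\theta(\bfK,\bfb)$. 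For the derivative, the second factor is bounded by the inductive hypothesis, and since $E_q$ is $\theta$-horizontal and $q\in\cB^{n_k+1}_{(r_{n_k}-s_k)R_{n_k}}$, comparing $E_q$ with $E_q^{gh}$ (both lying in a common uniform cone, whose distortion under $F_{n_k}^{s_k}$ is uniformly bounded) and invoking $\lemref{hor deriv bound}$ with $i=s_k$ gives $\|DF^{s_kR_{n_k}}|_{E_q}\|\in(K^{-1},K)$ after enlarging $K$; multiplying the two factors proves \eqref{eq.temper in}.

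The main obstacle I anticipate is the direction transport across the depth change: one must check that magnifying the (tiny) chart discrepancies by the rescaling maps does not widen the horizontal cone, and that the single fold near the critical orbit inside the depth-$(n_{k-1})$ block still leaves the image direction horizontal at $q$. This is the only place the tempered hypothesis enters, and making the quantitative comparison uniform in $k$ — so that $\theta$ and the per-block derivative constant $K$ do not deteriorate with the length of the sequence — is the delicate point.
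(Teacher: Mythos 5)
Your induction scheme --- peeling off the last (deepest) block $F^{s_kR_{n_k}}$, relabeling $q:=p_{-s_kR_{n_k}}$, verifying the domain membership $q\in\cB^{n_{k-1}+1}_{R_{n_{k-1}+1}}$ via divisibility of the return times, and combining the inductive derivative bound with a single application of \lemref{hor deriv bound} at depth $n_k$ --- is the same as the paper's, and the bookkeeping is correct. The derivative estimate at the end is also fine once uniform $\theta$-horizontality of $\hE_{\hz}$ is established. The gap is precisely there.

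Your explanation of why $\hE_{\hz}$ stays in a uniform horizontal cone is the one step that actually needs the tempered hypothesis, and it contains a reversed claim. You write that the passage of the orbit near the critical value is ``absorbed by the quadratic-tangency structure\ldots\ where the center, strong-stable and genuine-horizontal directions coincide, so that landing near $v_0$ forces the image direction to be horizontal.'' This is backwards: the H\'enon-like return $F^{R_n}$ sends a genuinely horizontal curve through the preimage of the critical value to a \emph{vertical quadratic} curve through $v_0$ (see \thmref{crit chart} and \lemref{quad flat}), so landing \emph{close} to $v_0$ drives the image direction toward the vertical, with the horizontality constant blowing up like $1/\Delta$, where $\Delta$ is the $x$-distance to the critical value in the current chart. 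What the paper actually does is to track the orbit in the depth-$(n_{k-1})$ chart and observe, from Propositions \ref{value struct bound} and \ref{exp scale}, that all but the \emph{final} intermediate point stays $\rho_1$-far from the critical value (so the direction stays $(1/\rho_1)$-horizontal), while the final point can get as close as $\Delta$, where the tempered inequality gives the crucial lower bound $\Delta>K_0^{-1}\rho_1^{\,n_k-n_{k-1}}>\lambda^{\bepsilon R_{n_{k-1}}}$ --- preventing the direction from degenerating. The rescaling $\cS^{n_k}\circ\Phi^{n_k}\circ(\cS^{n_{k-1}}\circ\Phi^{n_{k-1}})^{-1}$ is \emph{anisotropic} (it stretches $x$ by $(\sigma_{n_{k-1}}/\sigma_{n_k})^2$ and $y$ only by $\sigma_{n_{k-1}}/\sigma_{n_k}$), and this is exactly what magnifies $\Delta$ back up to the uniform constant $\rho_1=|\hx-a^{n_k}_0|$ while simultaneously squashing the $O(1/\Delta)$ slope back down to a uniform $\theta$. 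This mechanism --- controlled proximity to the critical value quantified by temperedness, compensated by anisotropic rescaling --- is the whole content of the lemma, and your proof does not supply it. You mention temperedness, $\sigma_{n_k}>\rho_1^{n_k}$, and $\|\cS^{n_k}\|\sim\sigma_{n_k}^{-2}$ only as controlling the $C^r$-discrepancy between charts via \thmref{crit chart}(i), which is a secondary and much easier point: the chart discrepancies are exponentially small in $R_{n_{k-1}}$ and need no temperedness to be negligible. Without the distance-to-critical-value analysis, your claim that ``a horizontal direction is\ldots\ carried into a uniform horizontal cone'' simply does not hold.
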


\begin{proof}
Proceeding by induction on $k$, suppose the result holds for $k' < k$. We first show that $\hE_{\hz}$ is uniformly horizontal. Denote
$$
S' := s_1R_{n_1} + \ldots + s_{k-2}R_{n_{k-2}};
$$
and for $0 \leq i \leq s_{k-1}$,
$$
z_i = (x_i, y_i) := \cS^{n_{k-1}}\circ \Psi^{n_{k-1}}(p_{(i-s_{k-1})R_{n_{k-1}}})
$$
and
$$
\hE_{z_i} := D(\cS^{n_{k-1}}\circ \Phi^{n_{k-1}} \circ F^{S'+iR_{n_{k-1}}})(E_{p_{-S}}^{gh}).
$$
Then by Propositions \ref{value struct bound} and \ref{exp scale}, it follows that for $0 \leq j < s_{k-1}$
$$
|x_j-a^{n_{k-1}}_0| > \rho_1.
$$
Thus, $\hE_{z_j}$ is $(1/\rho_1)$-horizontal by \lemref{quad flat}. Propositions \ref{value struct bound} and \ref{exp scale} also imply that
$$
|\hx - a^{n_k}_0| > \rho_1.
$$
Since
$$
z_{s_{k-1}} = \cS^{n_{k-1}}\circ \Phi^{n_{k-1}} \circ (\cS^{n_k}\circ \Phi^{n_k})^{-1}(\hz),
$$
it follows from \thmref{rescale} that
$$
\Delta := |x_{s_{k-1}}-a^{n_{k-1}}_0| > {K_0}^{-1}\rho_1^{n_k-n_{k-1}} > \lambda^{\bepsilon R_{n_{k-1}}}.
$$
Thus, by \lemref{quad flat}, $\hE_{z_{s_{k-1}}}$ is $O(1/\Delta)$-horizontal. Under
$$
D(\cS^{n_k}\circ \Phi^{n_k}\circ(\cS^{n_{k-1}}\circ \Phi^{n_{k-1}})^{-1}),
$$
the distance $\Delta = |x_{s_{k-1}}-a^{n_{k-1}}_0|$ is rescaled to $\rho_1 = |\hx - a^{n_k}_0|$. We conclude that
$$
\hE_{\hz} = D(\cS^{n_k}\circ \Phi^{n_k}\circ(\cS^{n_{k-1}}\circ \Phi^{n_{k-1}})^{-1})(\hE_{z_{s_{k-1}}})
$$
is $\theta$-horizontal for some uniform constant $\theta \geq 1$.

Since $\hE_{\hz}$ is uniformly horizontal, we see that $\|D\pi_h|_{\hE_{\hz}}\| > K^{-1}.$
Thus, by \lemref{hor deriv bound}, we see that
$$
\left\|DF_{n_k}^{s_k}|_{\hE_{\hz}}\right\| > K^{-1}.
$$
By the induction hypothesis, we have
$$
\left\|DF^S|_{E^{gh}_{p_{-\hS}}}\right\| > K^{-(k-1)}.
$$
Concatenating the above two inequalities, \eqref{eq.temper in} follows.
\end{proof}

\begin{thm}\label{pres reg}
Fix $\delta \in (\bepsilon, 1)$ such that $\bfb \bdelta < 1$. Then there exists a uniform constant $\bfL = \bfL(\bfK, \bfb) \geq 1$ such that the following holds. For $m \in \bbN \cup \{\infty\}$, suppose that $F_{n_0}$ is $(m+2)$-times topologically renormalizable with combinatorics of $\bfb$-bounded type. Then $F$ has $n_0+m$ nested $(\bfL, \delta, \lambda)$-regular H\'enon-like returns.
\end{thm}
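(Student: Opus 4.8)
The plan is to argue by induction on the renormalization depth $n$, running over $n_0\le n\le n_0+m$, with the following inductive hypothesis: \emph{$F$ has $n$ nested $(\bfL,\delta,\lambda)$-regular H\'enon-like returns for a uniform constant $\bfL=\bfL(\bfK,\bfb)$, and moreover the a priori bounds (\thmref{a priori}) and the rescaling (\thmref{rescale}) apply at every depth $n_0\le k\le n$, so that $\cR^k(F)\in\frHL^{r+3}_{\lambda_k}(\bfK)$ with $\lambda_k:=\lambda^{(1-\bepsilon)R_k}$ and $\bfK$ the uniform constant of \thmref{a priori}.} The base case $n=n_0$ is the standing assumption together with monotonicity of regularity: enlarging the irregularity factor (to $\bfL\ge L$) and the marginal exponent (to $\delta>\epsilon$) while keeping the contraction base $\lambda$ fixed only weakens conditions \eqref{eq.for reg}--\eqref{eq.back reg}, so $(L,\epsilon,\lambda)$-regularity of the first $n_0$ returns implies $(\bfL,\delta,\lambda)$-regularity.

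For the inductive step, assume the hypothesis at depth $n<n_0+m$. Since $F_{n_0}=\cR^{n_0}(F)$ is $(m+2)$-times topologically renormalizable with $\bfb$-bounded combinatorics and $n-n_0<m$, the return map $F^{R_n}|_{\cB^n_0}$ is at least three further times topologically renormalizable, and these renormalizations are non-trivial because the ratios $r_j=R_{j+1}/R_j\ge 3$ exclude a generalized Morse--Smale return of $2$-bounded type. Applying \thmref{ren is henon like} at depth $n$ then produces the $(n+1)$st H\'enon-like return $(F^{R_{n+1}},\Psi^{n+1})$, with $\cB^{n+1}_0$ being $R_{n+1}$-periodic and with forward regularity along the verticality field $E^{v,n+1}_{p_0}=DF^{-R_{n+1}}(E^h_{p_{R_{n+1}}})$ and backward regularity along $E^h$, albeit only with \emph{a priori non-uniform} constants $(L',\epsilon',\lambda)$, $\epsilon'$ below the threshold of \subsecref{subsec.regular} and $\bfb\epsilon'<1$. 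Because $F$ is now $(n+1)$-times regularly renormalizable with $\bfb$-bounded combinatorics and $F^{R_{n+1}}|_{\cB^{n+1}_0}$ is still twice non-trivially topologically renormalizable (as $n+1-n_0\le m$), the apparatus of \secref{sec.a priori}--\secref{sec.c1 bound} reruns with $N$ replaced by $n+1$; the decisive point is that the distortion bound $\bfK$ of \thmref{a priori} is governed by the fixed shallow data ($L$, $\lambda$, $\epsilon$, $\lambda^{1-\epsilon}\|DF^{-1}\|$, $\|DF\|_{C^5}$, $\|F^{R_{n_0}}|_{\cB^{n_0}}\|_{C^6}$, $\kappa_F$) and is therefore independent of $n$ and of the transient $L',\epsilon'$, so \thmref{a priori} and \thmref{rescale} give $\cR^{n+1}(F)\in\frHL^{r+3}_{\lambda_{n+1}}(\bfK)$ with the \emph{same} uniform $\bfK$.

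It then remains to convert this depth-uniform geometric control into $(\bfL,\delta,\lambda)$-regularity of the $(n+1)$st return with $\bfL=\bfL(\bfK,\bfb)$. For $p_0\in\cB^{n+1}_0$ and $1\le M\le R_{n+1}$ one estimates $\|DF^M|_{E^{v,n+1}_{p_0}}\|$ and $\Jac_{p_0}F^M$ separately: the Jacobian is controlled by dissipativity of $F$ and the thinness of the renormalized maps, while the contraction along $E^{v,n+1}$ is, up to the complementary horizontal factor, the Jacobian divided by $\|DF^M|_{E^{gh}}\|$, and this last factor is bounded \emph{uniformly in $n$} by Lemmas \ref{hor deriv bound}, \ref{hor deriv out} and \ref{temper in}; backward $(\bfL,\delta,\lambda)$-regularity along $E^h$ for $q\in\cB^{n+1}_{R_{n+1}}$ follows by the same concatenation as in the final paragraph of the proof of \thmref{ren is henon like} but now with the uniform $\bfK$-bounds, and uniform transversality of $E^{v,n+1}$ and $E^h$ is immediate from \thmref{rescale}, since in rescaled coordinates these are $\lambda_{n+1}$-close to genuine vertical and genuine horizontal. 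This closes the induction; if $m=\infty$ it yields infinitely many nested $(\bfL,\delta,\lambda)$-regular returns with one and the same $\bfL$, and if $m<\infty$ it stops at depth $n_0+m$ (the step $n\to n+1$ needing three further topological renormalizations of $F^{R_n}|_{\cB^n_0}$, available exactly when $n\le n_0+m-1$).

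I expect the main obstacle to be the uniformity of $\bfL$, equivalently preventing the irregularity factor from degrading with depth: a naive iteration of \thmref{ren is henon like} inflates the factor by $L\mapsto CK_0(L)$ at every level and blows up. The resolution hinges on the a priori distortion bound being a scale-invariant quantity governed by the geometry of the renormalization cascade — the quadratic tip $\kappa_F$, the combinatorial bound $\bfb$, and the fixed $C^5$-size of $F$ — rather than by derivative magnitudes, so that $\bfK$ (and hence $\bfL$) does not grow with $n$; and on the temperedness estimates of Lemmas \ref{hor deriv bound}--\ref{temper in}, which ensure that the accumulated factors of the form $K^{\#\text{scales}}$ and $\lambda^{\pm\bepsilon R_k}$ arising along a return orbit are absorbed into the slack between the marginal exponents $\epsilon$ and $\delta$ (using $\bfb\bdelta<1$). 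Carefully bookkeeping those residual losses — in particular checking that the number of essential scales in the tempered decomposition of any sub-orbit of a return is controlled well enough for $\delta$ to stay below $1$ — is the technical heart of the argument.
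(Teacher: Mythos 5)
Your proposal takes essentially the same route as the paper: induction on depth, \thmref{ren is henon like} to produce the next H\'enon-like return, the uniformity of $\bfK$ from \thmref{a priori} as the non-degrading ingredient, \propref{transverse for reg} combined with \lemref{hor deriv out} for forward regularity, and the tempered-scale bookkeeping via \lemref{temper in} (with the slack $\epsilon\mapsto\delta$ absorbing the $K^{\#\text{scales}}$ losses) for backward regularity. You correctly identify all of the relevant lemmas and the decisive point that $\bfK$ depends only on shallow data and thus does not accumulate error.

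One remark on your description of the backward step. You say the backward $(\bfL,\delta,\lambda)$-regularity ``follows by the same concatenation as in the final paragraph of the proof of \thmref{ren is henon like} but now with the uniform $\bfK$-bounds.'' That concatenation, even with uniform $\bfK$, degrades the marginal exponent at every level (as the paper explicitly flags: ``with $\bepsilon$ increased some uniform amount from the $l$th step''), so it is not the mechanism used here. What the paper actually does is structurally different: it decomposes the backward time $S$ into a tempered tail $S'$ (controlled by \lemref{temper in}, with the $K^{k-m}$ factor absorbed by \eqref{eq.proper depth 2}) and a non-tempered remainder $S-S'$ which it shows is \emph{much smaller} than $\bepsilon S'$ --- because a non-tempered gap between $n_{m-1}$ and $n_m$ forces $R_{n_{m-1}}\lesssim \omega^{-1}\log(1/\lambda^{\bepsilon})\cdot n_m$ to be tiny compared to $R_{n_m}$ --- and can therefore be dumped wholesale into the marginal exponent using \propref{ext deriv bound}. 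Your final paragraph does anticipate exactly this mechanism (``temperedness estimates \ldots absorbed into the slack between $\epsilon$ and $\delta$\ldots checking that the number of essential scales in the tempered decomposition\ldots is controlled well enough''), so the understanding is there; only the attribution to the \thmref{ren is henon like} concatenation is misleading. A second minor slip: the Jacobian bound you need along the backward orbit is \propref{jac bound}, which is a consequence of the inductive regularity (Pesin theory), not of dissipativity and thinness per se.
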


\begin{proof}
Proceeding by induction, suppose that for $n_0 \leq M < n_0+m$, the map $F$ has $M$ nested $(\bfL, \delta, \lambda)$-regular H\'enon-like returns
$$
\{(F^{R_n}, \Phi^n : \cB^n_0 \to B^n_0)\}_{n=1}^M.
$$
By \thmref{ren is henon like}, $F$ has a $(\overline{\bfL}, \bdelta, \lambda)$-regular H\'enon-like return
$$
(F^{R_{M+1}}, \Phi^{M+1} : \cB^{M+1}_0 \to B^{M+1}_0).
$$
We claim that this return is $(\bfL, \delta, \lambda)$-regular.

Let $p_0 \in \cB^{M+1}_0$ and
$$
E_{p_0}^{v/h} := (D\Phi^{M+1})^{-1}(E_{\Phi^{M+1}(p_0)}^{gv/gh}).
$$
Let $R_{n_0} \leq T < R_{M+1}$. Write
$$
T = t_0 + t_1R_{n_1} + \ldots + t_kR_{n_k},
$$
with $0\leq t_0 < R_{n_0}$; $n_{i-1} \leq n_i \leq M$ and $1 \leq t_i < r_{n_i}$ for $1 \leq i\leq k$. \lemref{hor deriv out} implies that
$$
\frac{1}{K^k} < \|DF^T|_{E_{p_0}^h}\| < K^k
$$
By \eqref{eq.proper depth 2}, we have $K < \lambda^{-\epsilon R_{n_i}}.$ Together with \propref{jac bound}, this implies that $p_0$ is $R_{M+1}$-times forward $(K, \epsilon, \lambda)$-regular horizontally along $E_{p_0}^h$. By \propref{transverse for reg}, it follows that $p_0$ is $R_{M+1}$-times forward $(\bfL, \delta, \lambda)$-regular (vertically) along $E_{p_0}^v$.

Let $q_0 \in \cB^{M+1}_{R_{M+1}}$ and $S = lR_{n_0}$ for some $1 \leq l < R_{M+1}/R_{n_0}$. Write
$$
S = s_1R_{n_1} + \ldots + s_kR_{n_k},
$$
where $1 \leq s_i < r_{n_i}$ for $1 \leq i \leq k$. Let $1 \leq m \leq k$ be the smallest number such that $\{n_i\}_{i=m}^k$ is tempered. Denote
$$
S' := s_mR_{n_m} + s_{m+1}R_{n_{m+1}} + \ldots + s_kR_{n_k},
$$
and let
$$
\hE_{q_{-1}}:= DF^{S'-1}(E_{q_{-S'}}^{gh}).
$$
By Lemmas \ref{flat} i) and \ref{temper in}, we see that $\hE_{q_{-1}}$ is $\lambda^{(1-\bepsilon)R_{n_k}}$-horizontal in $\cU_{-1}$, and
\begin{equation}\label{eq.temper in pf}
K^{-(k-m)} < \|DF^{-S'+1}|_{\hE_{q_{-1}}}\| < K^{k-m}.
\end{equation}
Let
$$
E_{q_{-1}}^v := DF^{-1}(E_{q_0}^h) = D\Phi_{-1}^{-1}(E_{\Phi_{-1}(q_{-1})}^{gv}).
$$
Since $\|\Phi_{-1}^{\pm 1}\|_{C^1} < {K_0}$ by \thmref{crit chart}, we have
\begin{equation}\label{eq.temper in jac}
{K_0}^{-1}\leq \frac{\|DF^{-S'+1}|_{E_{q_{-1}}^v}\|\cdot\|DF^{-S'+1}|_{\hE_{q_{-1}}}\|}{\Jac_{p_{-1}}F^{-S'+1}} \leq {K_0}.
\end{equation}
Substituting in \propref{jac bound} and \eqref{eq.temper in pf}, we obtain
$$
\bL^{-1}K^{-(k-m)} \lambda^{-(1-\bepsilon)S'} < \|DF^{-S'+1}|_{E_{q_{-1}}^v}\| < \bL K^{k-m} \lambda^{-(1+\bepsilon)S'}.
$$
By \eqref{eq.proper depth 2}, we have $\bL K < \lambda^{-\epsilon R_{n_i}}$, and hence,
\begin{equation}\label{eq.reg pres 1}
K \lambda^{-(1-\bepsilon)S'} < \|DF^{-S'+1}|_{E_{q_{-1}}^v}\| < K \lambda^{-(1+\bepsilon)S'}.
\end{equation}

Denote
$$
\hE_{q_{-S'}}^v := DF^{-S'+1}(E_{q_{-1}}^v).
$$
By \propref{ext deriv bound}, we have
\begin{equation}\label{eq.reg pres 2}
\bK^{-1}\lambda^{\bepsilon(S-S')} < \|DF^{-(S-S')}|_{\hE_{q_{-S'}}^v}\| < \bK\lambda^{-(1+\bepsilon)(S-S')}.
\end{equation}

Since the gap between $n_{m-1}$ and $n_m$ is not tempered, we have
$$
\rho_1^{n_m} < \rho_1^{n_m-n_{m-1}} < \lambda^{\bepsilon R_{n_{m-1}}}.
$$
Denote $\omega := \logl\rho_1$. Then $n_m > (\bepsilon/\omega) R_{n_{m-1}}$. There exists a uniform constant $\hR = \hR(\epsilon, \omega, \bfb) \in \bbN$ such that for all $R \geq \hR$, we have
$$
2^{(\epsilon/\omega)R} > \frac{\bfb}{\epsilon} R.
$$
By uniformly increasing $n_0$ if necessary, we may assume that $R_{n_0} \geq R$, so that
$$
\bepsilon S' > \bepsilon R_{n_m} > \bepsilon 2^{n_m} > \bepsilon 2^{(\bepsilon/\omega)R_{n_{m-1}}} > \bfb R_{n_{m-1}} > S-S'.
$$
Therefore,
\begin{equation}\label{eq.reg pres 3}
\|DF^{-(S-S')}|_{\hE_{q_{-S'}}^v}\| > \bK^{-1}\lambda^{\bepsilon(S-S')} > \bK^{-1}\lambda^{\bepsilon S'}\lambda^{-(1-\bepsilon)(S-S')}.
\end{equation}

Concatenating \eqref{eq.reg pres 1} with \eqref{eq.reg pres 2} and \eqref{eq.reg pres 3}, we conclude by \propref{jac bound} that $q_0$ is $R_{M+1}$-times backward $(\bfL, \delta, \lambda)$-regular (vertically) along $E_{q_0}^h$.
\end{proof}


\section{Realization of Renormalization Combinatorics}\label{sec.realization}

\subsection{For unimodal maps}

Consider a $C^2$-unimodal map $f$ with critical point $c$. For concreteness, assume $f''(c) > 0$. For $\eta > 0$, we say that $f$ has {\it $\eta$-gap} if $f(c) < c-\eta$, and {\it double $\eta$-gap} if $f(c) < f^2(c) < c-\eta$. By \lemref{sink combin}, if $f$ has double $\eta$-gap for some $\eta >0$, then $c$ converges to a sink of period 1 or 2. Lastly, for $\chi \in \bbN$, we say that $f$ has {\it $(\eta, \chi)$-kneading} if
$$
f_{a_2}^{1+\chi}(c)+\eta < c < f_{a_2}^{1+i}(c)-\eta
\matsp{for}
1 \leq i < \chi.
$$

Let $\frI \subset \bbR$ be an interval, and consider a $C^1$-smoothly parameterized family $\frf = \{f_a\}_{a \in \frI}$ of $C^2$-unimodal maps (i.e. $f_a$ depends $C^1$-smoothly on the parameter $a$). For $\eta >0$ and $\chi \geq 2$, we say that $\frf$ is {\it $(\eta, \chi)$-full} if the following conditions hold.
\begin{itemize}
\item For all $a \in \frI$, the map $f_a$ has $\eta$-gap and $\chi$-bounded kneading.
\item There exists $a_1 \in \frI$ such that $f_{a_1}$ has double $\eta$-gap.
\item There exists $a_2 \in \frI$ such that $f_{a_2}$ has $(\eta, \chi)$-kneading.
\end{itemize}

Recall the definition of renormalization type $\tau(f)$ of a valuably renormalizable unimodal map $f$ given in \subsecref{subsec.ren combin 1d}.

\begin{prop}\label{1d full}
Consider a $C^1$-smoothly parameterized family $\frf = \{f_a\}_{a \in \frI} \subset \frU^2(K)$ for some $K \geq 1$. Suppose that $\frf$ is $(\eta_0, \bfb)$-full for some $\eta_0 >0$ and $\bfb \geq 2$. Then for any $\bfb$-bounded renormalization type $T$, there exist a uniform constant $\eta_1 = \eta_1(K, \bfb) >0$ and an interval $\frI_1 \subset \frI$ such that $\tau(f_a) = T$ for $a \in \frI_1$, and $\frf_1 := \{\cRod(f_a)\}_{a\in\frI_1} \subset \frU^2$ is $(\eta_1, \bfb)$-full.
\end{prop}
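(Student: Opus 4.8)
The plan is to run the classical one-dimensional realization argument: produce $\frI_1$ by a combinatorial selection inside $\frI$, and then extract the uniform constant $\eta_1=\eta_1(K,\bfb)$ from the real a priori bounds already recorded in \secref{sec.c1 bound} (Lemmas \ref{1d upper bound} and \ref{1d lower bound}, and \propref{c1 bound geo}) together with the hypothesis $\frf\subset\frU^2(K)$.

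\emph{Step 1: the window realizing $T$.} Let $R=R(T)\le\bfb$ be the return time built into the type $T$. Since $\frf$ is $(\eta_0,\bfb)$-full, the endpoint parameters $a_1$ (double $\eta_0$-gap; by \lemref{sink combin} the critical point is then attracted to a sink of period $\le 2$) and $a_2$ ($(\eta_0,\bfb)$-kneading) occupy opposite combinatorial extremes, and the sub-family of $\frf$ joining them sweeps continuously through every $\bfb$-bounded renormalization combinatorics. Using standard monotonicity of kneading invariants together with this full-family structure, I would isolate a closed sub-interval $\frI'\subset\frI$ on which each $f_a$ is non-trivially renormalizable with return time $R$ and $\tau(f_a)=T$; by \lemref{1d struct}, $I^1_a:=[v_a,f_a^R(v_a)]$ is then the minimal $R$-periodic interval containing $v_a$, so $\cRod(f_a)=S_a\circ f_a^R|_{I^1_a}\circ S_a^{-1}\in\frU^2$ is well defined for $a\in\frI'$. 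The crucial feature of $\frI'$ is that the renormalized family $\{\cRod(f_a)\}_{a\in\frI'}$ is itself combinatorially full: near one end of $\frI'$ the periodic orbit carried by $I^1_a$ is (super)attracting of low period for $f_a^R|_{I^1_a}$, so $\cRod(f_a)$ has a double gap, while near the other end $\cRod(f_a)$ develops $(\eta,\bfb)$-kneading, just short of its own first renormalization window.

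\emph{Steps 2--3: uniform margins and the three conditions.} For $a\in\frI'$, $f_a\in\frU^2(K)$, so \lemref{1d upper bound} bounds $\|f_a'\|$ and \lemref{1d lower bound} bounds the pertinent critical-orbit distances below by constants $\rho_n(K)$; since $R\le\bfb$ this makes $|I^1_a|/|I_a|$ bounded above and below by constants depending only on $K$ and $\bfb$, gives uniform control of the geometry of $f_a^R|_{I^1_a}$, and hence yields a non-linearity bound $K'=K'(K,\bfb)$ for $\cRod(f_a)$ (used only to quantify the gaps, not in the final statement). Applying \lemref{1d lower bound} to $\cRod(f_a)\in\frU^2(K')$ over the finitely many iterates $i\le 1+\bfb$ that occur in the gap and kneading conditions, set $\eta_1=\eta_1(K,\bfb)$ to be the minimum of the resulting $\rho_i(K')$. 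Then the gap $\cRod(f_a)(0)<-\eta_1$ holds for every $a\in\frI'$, because the critical value is the minimum of $\cRod(f_a)$ and lies to the left of $0$; I would take $\frI_1:=[a_1',a_2']\subset\frI'$ with $\cRod(f_{a_1'})$ of double gap and $\cRod(f_{a_2'})$ of $(\eta_1,\bfb)$-kneading, both available by Step 1 and automatically with margin $\ge\eta_1$ by the above. On $\frI_1$ one has $\tau(f_a)=T$ by construction, and $\cRod(f_a)$ has $\bfb$-bounded kneading throughout (a double gap forces all positive iterates of $0$ to remain negative, the kneading configuration gives $\cRod(f_a)^{1+\bfb}(0)<0$ directly, and $\frI_1$ stays below the first renormalization window of the renormalized family, so intermediate parameters inherit $\cRod(f_a)^{1+\bfb}(0)<0$ by monotonicity). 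Thus $a_1'$ and $a_2'$ witness conditions (b) and (c) of $(\eta_1,\bfb)$-fullness, and condition (a) is Step 2 together with the $\bfb$-bounded kneading just noted.

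\emph{Main obstacle.} The substantive point is Step 1: with only $(\eta_0,\bfb)$-fullness available, one must show that the type-$T$ window is nonempty and that on it the renormalized family $\{\cRod(f_a)\}$ is combinatorially full — ranging from a double-gap configuration all the way to a $(\eta,\bfb)$-kneading one without the critical orbit escaping the relevant itinerary. This is exactly where the one-dimensional realization/monotonicity machinery (admissibility of kneading sequences, existence of the inverse renormalization branch on combinatorial data) does the work; once that combinatorial skeleton is in place, Steps 2--3 are routine consequences of the real bounds of \secref{sec.c1 bound} and the hypothesis $\frf\subset\frU^2(K)$.
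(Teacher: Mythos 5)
Your overall strategy matches the paper's: realize the type $T$ via the Milnor--Thurston Intermediate Value Theorem, propagate the bounded non-linearity from $\frU^2(K)$ to the renormalized family (with a constant $K'=K'(K,\bfb)$), and extract $\eta_1$ from \lemref{1d lower bound}. The gap is in how you establish the uniform margin. You assert that ``the gap $\cRod(f_a)(0)<-\eta_1$ holds for every $a\in\frI'$,'' but by your own construction $\frI'$ contains the parameter where the critical orbit of $\cRod(f_a)$ is (super)attracting of low period; in a neighborhood of that parameter $\cRod(f_a)(0)$ is arbitrarily close to $0$, so no uniform gap holds there. \lemref{1d lower bound} bounds $|g^n(0)|$ from below only under the hypothesis that the critical orbit of $g$ does not converge to an $n$-periodic attracting sink, and this hypothesis is exactly what fails on the period-$1$ and period-$2$ windows inside your $\frI'$.

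The fix, which is what the paper does, is to pass to the \emph{maximal} subinterval $\frI_1'$ of the type-$T$ window on which the critical orbit of $\cRod(f_a)$ does not converge to a fixed attracting sink; there \lemref{1d lower bound} (with $n=1$) gives the uniform $\eta_1$-gap. One then takes $a_1\in\partial\frI_1'$, where the critical orbit of $\cRod(f_{a_1})$ converges to a fixed \emph{parabolic} sink of flip type --- parabolic, not attracting, so \lemref{1d lower bound} still applies there with $n=1,2$ --- which produces the double $\eta_1$-gap at $a_1$ with uniform margin. Your selection of $a_1'$ is instead unanchored: demanding only that $\cRod(f_{a_1'})$ have some double gap and asserting the margin $\ge\eta_1$ ``by the above'' is circular, because ``the above'' is precisely the false uniform-gap claim, and a double-gap parameter near the superattracting one has an arbitrarily small margin. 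Once $a_1$ is pinned to the parabolic-flip boundary and $\frI_1=[a_1,a_2]\subset\frI_1'$ is cut out by a second application of the IVT to enforce $\bfb$-bounded kneading throughout (with $a_2$ at the $(\eta_1,\bfb)$-kneading configuration, again using \lemref{1d lower bound}), the rest of your Steps 2--3 go through as stated.
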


\begin{proof}
The Intermediate Value Theorem by Milnor-Thurston \cite{MiTh} implies that there exist a parameter interval $\hat\frI_1 \subset \frI$ such that $\tau(f_a) = T$ for $a \in \frI_1$, and $\hat\frf_1 := \{\cRod(f_a)\}_{a\in\hat\frI_1}$ is full.

Clearly, there exist $K' = K'(K, \bfb) \geq 1$ such that $\hat\frf_1 \subset \frU^2(K')$. Let $\frI_1'$ be a maximal subinterval of $\hat\frI_1$ such that for $a \in \frI_1'$, the critical point $0$ of $\cRod(f_a)$ does not converge to a fixed attracting sink. Then by \lemref{1d lower bound}, we see that there exists a uniform constant $\eta_1 = \eta_1(K') >0$ such that $f_a$ has $\eta_1$-gap. Moreover, observe that there exist $a_1 \in \partial \frI_1'$ such that the critical point $0$ of $\cRod(f_{a_1})$ converges to a fixed parabolic sink of flip type. By decreasing $\eta_1$ a uniform amount if necessary, we see that $\cRod(f_{a_1})$ has double $\eta_1$-gap.

Applying the Intermediate Value Theorem again, we can restrict $\frI_1'$ to a smaller subinterval such that for $a \in \frI_1$, the map $\cRod(f_a)$ has $\bfb$-bounded kneading. Moreover, the endpoints of $\frI_1$ are $a_1$ and $a_2$, so that for $\hf_{a_2} := \cRod(f_{a_2})$, we have
$$
\hf_{a_2}^{1+\chi}(0) < 0 < \hf_{a_2}^{1+i}(0)
\matsp{for}
1 \leq i < \chi,
$$
and $0$ does not converge to a sink of period less than $\chi$. By decreasing $\eta_1$ a uniform amount if necessary, it follows from \lemref{1d lower bound} that $\hf_{a_2}$ has $(\eta_1, \chi)$-kneading.
\end{proof}

\subsection{For H\'enon-like maps}

For $N \in \bbN$, let $F$ be the $N$-times $(L, \epsilon, \lambda)$-regularly H\'enon-like renormalizable map with combinatorics of $\bfb$-bounded type considered in \subsecref{subsec.unif c1}. Let $\eta_1 = \eta_1(\bfK, \bfb) >0$ be the constant given in \propref{1d full}. We assume that $N$ is sufficiently large, so that for some $0 \leq n_0 \leq N$, we have (in addition to \eqref{eq.proper depth 2}):
\begin{equation}\label{eq.proper depth 3}
\lambda^{\epsilon R_{n_0}} < c\eta_1
\end{equation}
for some sufficiently small constant $c \in (0,1)$ independent of $F$.

For $0 \leq n \leq N$, let $F_n := \cR^n(F)$ and $f_n := \Piod(F_n)$. Denote the domain of $F_n$ by $D^n$. Recall the definition of the renormalization type $\tau(F_n)$ of $F_n$ given in \subsecref{subsec.ren combin 2d} (see \eqref{eq.proj value seq}).

\begin{prop}\label{2d ren from 1d ren}
Suppose that $f_N$ is valuably renormalizable with return time $r_N \leq \bfb$, and that $\cRod(f_N)$ has $\eta_1$-gap and $\bfb$-bounded kneading. Then $F$ is $(N+1)$-times H\'enon-like renormalizable, and $\tau(F_N) = \tau(f_N)$.
\end{prop}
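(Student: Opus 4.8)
The plan is to treat this as a perturbative transfer. By \thmref{rescale}, for $N\ge n_0$ the renormalization $F_N:=\cR^N(F)$ is $\lambda_N$-thin in $C^{r+3}$ with $\lambda_N:=\lambda^{(1-\bepsilon)R_N}$, and $\lambda_N\le\lambda^{(1-\bepsilon)R_{n_0}}$ is smaller than any prescribed threshold by \eqref{eq.proper depth 2} and \eqref{eq.proper depth 3}; thus $F_N$ is an arbitrarily small $C^{r+3}$-perturbation of the degenerate map $\iota(f_N)$, where $f_N:=\Piod(F_N)\in\frU^{r+3}(\bfK)$. The whole content is to push the $1$D renormalization structure of $f_N$ onto $F_N$ with constants that do not deteriorate as $N\to\infty$; the hypotheses on $\cRod(f_N)$ are precisely what make this uniformity possible.

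First I would set up the $1$D picture with uniform geometry. Since $f_N$ is valuably renormalizable with return time $r_N\le\bfb$, \lemref{1d struct} gives the minimal $r_N$-periodic interval $I^1_N:=[v_N,f_N^{r_N}(v_N)]$ about the critical value $v_N:=f_N(0)$; its cycle $\{f_N^j(I^1_N)\}_{j=0}^{r_N-1}$ is essentially disjoint, $f_N^j|_{I^1_N}$ is a diffeomorphism for $0\le j<r_N-1$, and $f_N^{r_N}|_{I^1_N}$ is unimodal with critical point $c^1_N$ where $f_N^{r_N-1}(c^1_N)=0$. Because $\cRod(f_N)$ has $\eta_1$-gap and $\bfb$-bounded kneading and $f_N$ has $\bfK$-bounded non-linearity, \propref{c1 bound geo} (with $\chi=\bfb\ge r_N$) supplies a uniform $\rho=\rho(\bfK,\bfb)>0$ with $|J|>\rho$ for every component $J$ of $I\setminus\{f_N^i(c^1_N)\}_{i=0}^{3r_N+1}$. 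Hence $|I^1_N|$, the gaps between the intervals of the cycle, and the separation of the boundary periodic orbit of the cycle from the rest of the critical orbit are all bounded below uniformly, and that boundary orbit has multiplier bounded away from $1$.

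Next I would lift the periodic domain to $2$D. The boundary periodic orbit of the cycle is uniformly expanding, so it continues to an $r_N$-periodic hyperbolic saddle orbit of $F_N$; since $\Jac F_N^{r_N}\asymp\lambda_N^{r_N}$ is tiny, these saddles are $(L,\epsilon,\lambda_N)$-regular forward and backward with uniform $L$, and by the argument of \propref{ss in bn} their local strong-stable manifolds are $\lambda_N$-close to vertical and vertically proper $C^r$-curves. Taking $\cB^{N+1}_0\Subset D^N$ to be the quadrilateral enclosed by the pair of these curves around $f_N^0(I^1_N)$ (or simply identifying $\cB^{N+1}_0$, which exists by the standing twice-renormalizability hypothesis, as the domain shadowing $I^1_N$), one checks exactly as in \propref{value or sink} that $\cB^{N+1}_0$ is vertically proper in $D^N$, that $F_N^{r_N}(\cB^{N+1}_0)\Subset\cB^{N+1}_0$, and that $\cB^{N+1}_0$ contains the critical value of $F_N$. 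For $0\le j<r_N-1$ the map $F_N^j|_{\cB^{N+1}_0}$ is a diffeomorphism $C^r$-close to $\iota(f_N)^j$, and the fold occurs only in the last step; following the degenerate identity $\cR(\iota(f_N))=\iota(\cRod(f_N))$, I would build a genuinely horizontal $C^r$-chart $\Phi$ on $\cB^{N+1}_0$ that rectifies the strong-stable foliation and, in the second coordinate, corrects the $f_N^{r_N-1}$-compression (a $\lambda_N$-perturbation of $(f_N^{r_N-1}|_{I^1_N})^{-1}$), so that $\Phi\circ F_N^{r_N}\circ\Phi^{-1}$ is H\'enon-like. Composing with $\cS^N\circ\Phi^N$ produces the $(N+1)$st H\'enon-like return of $F$, so $F$ is $(N+1)$-times H\'enon-like renormalizable, with $\cR^{N+1}(F)$ a $\lambda_{N+1}$-thin normalized H\'enon-like map whose profile is $\lambda_N$-close to $\cRod(f_N)$. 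This step is classical in the perturbative regime, cf.\ \cite{DCLMa}.

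Finally, $\tau(F_N)$ is the order of $\{a^N_k:=\pi_h\circ\Psi^N(v_{kR_N})\}_{k=0}^{r_N-1}$, and $\pi_h\circ\Psi^N$ carries the $F^{R_N}$-orbit of the critical value to the critical orbit of the $1$D profile of $\Psi^N\circ F^{R_N}\circ(\Psi^N)^{-1}$, which by \propref{2d close 1d} is $O(\lambda_N)$-$C^0$-close to (an affine normalization of) $f_N$; thus $\{a^N_k\}$ lies within $O(\lambda_N)$ of an order-preserving affine image of $\{f_N^k(v_N)\}_{k=0}^{r_N-1}$. By the previous step (equivalently \propref{exp scale}) these $1$D points are pairwise $\rho$-separated with $\rho\gg\lambda_N$, so the two orderings agree and $\tau(F_N)=\tau(f_N)$. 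I expect the main obstacle to be the domain-lifting and straightening in the third paragraph: producing the bounding strong-stable curves and the genuinely horizontal chart so that the return map is \emph{literally} H\'enon-like — second output coordinate equal to first input coordinate — with the critical value inside $\cB^{N+1}_0$, and doing so with constants independent of $N$. It is this uniformity that forces the hypothesis to control $\cRod(f_N)$ rather than merely $f_N$: the $\eta_1$-gap and $\bfb$-bounded kneading are what keep $|I^1_N|$, the critical-orbit separation, and the hyperbolicity of the boundary saddles from degenerating as $N\to\infty$.
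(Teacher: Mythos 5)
Your argument follows the same strategy as the paper's: establish uniform $1$D geometry from \propref{c1 bound geo}, exploit the $\lambda_N$-thinness of $F_N$ via \propref{2d close 1d}, construct the $(N+1)$st renormalization domain as a thickening of the $1$D periodic cycle, and then rectify the appropriate vertical foliation to produce the genuine horizontal straightening chart. The one place you diverge is in how the domain $\cB^{N+1}_0$ is produced: the paper takes the slightly more elementary route of thickening $J_i$ by a $\lambda^{\bepsilon R_N}$ margin to get rectangles $W_i := \tiJ_i \times \pi_v(D^N)$, verifies $F_N(W_i)\Subset W_{i+1 \md{r_N}}$ directly from \propref{2d close 1d}, and checks the away-from-critical condition $\cW_i\cap\cV^N_{v_0}(\rho_1^N)=\varnothing$ for $i>0$ before invoking the flatness lemmas (\lemref{flat}, \lemref{quad flat}, \lemref{quad straight}) inductively — no saddle is needed. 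Your route, via the hyperbolic continuation of the boundary periodic orbit and \propref{ss in bn}/\propref{value or sink} to bound the domain by local strong-stable curves, works too (these tools are available at this stage), but it imports a little more machinery than necessary and, as you note, the boundary periodic orbit lies at the edge of the \emph{maximal} $r_N$-periodic interval, not the minimal one $I^1_N$, so the resulting quadrilateral is the bigger one and you need to relocate the critical value inside it; the paper's metric thickening sidesteps this. Two small corrections to your write-up: the parenthetical fallback ``which exists by the standing twice-renormalizability hypothesis'' is circular here, since the proposition is precisely establishing $(N{+}1)$-fold renormalizability from $1$D data, and should be dropped; and on the positive side, your explicit derivation of $\tau(F_N)=\tau(f_N)$ from \propref{2d close 1d} and \propref{exp scale} actually fills in a step that the paper's proof states in the proposition but leaves implicit.
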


\begin{proof}
Let $J_0$ be the $r_N$-periodic interval of $f_N$ containing the critical value $f_N(0)$. Denote the critical point of $g := f_N^{r_N}|_{J^{N+1}}$ by $c$. By \lemref{1d struct}, we can assume that
$$
J_0 := [g(c), c] \cup [c, g^2(c)].
$$
Denote $J_i := g^i(J_0)$ for $0 \leq i < r_N$. We claim that $J_i$ and $J_j$ for $i \neq j$ are uniformly far apart. By \lemref{1d upper bound}, there exists a uniform constant $\rho_1 >0$ such that if $g^2(c) < c + \rho_1$, then $g^3(c) < c-\rho_1$. Considering the two cases $g^2(c) < c + \rho_1$ and $g^2(c) > c + \rho_1$ separately, and arguing as in the proof of \propref{c1 bound geo}, the claim follows.

For $0 \leq i < r_N$, let $\tiJ_i$ be an interval that compactly contains $J_i$, and the components of $\tiJ_i \setminus J_i$ have lengths commensurate to $\lambda^{\bepsilon R_n}$. Define
$$
W_i := \tiJ_i \times \pi_v(D^N).
$$
Observe that $W_i \cap W_j =\varnothing$ if $i \neq j$. Moreover, by \propref{2d close 1d}, it follows that we have $F_n(W_i)\Subset W_{i+1 \md{r_n}}$ for $0\leq i < r_n$.

Let
$$
\cW_i := (\cS^N\circ\Phi^N)^{-1}(W_i).
$$
Denote $R_{N+1} := r_NR_N$. By \thmref{rescale}, we see that if $i > 0$, then
$$
\cW_i \cap \cV_{v_0}^N(\rho_1^N) = \varnothing
$$
for some uniform constant $\rho_1 \in (0,1)$. Using Lemmas \ref{flat} iii) and \ref{quad flat} and proceeding by induction, one can show that under $F^{R_{N+1}}$, horizontal foliation of $\cW_0$ maps to a foliation by vertical quadratic curves in $\cW_0$. Similarly, using Lemmas \ref{flat} iv) and \ref{quad straight} and proceeding by induction, one can show that under $F^{-R_{N+1}}$, horizontal foliation of $F^{R_{N+1}}(\cW_0)$ maps to a $\lambda^{(1-\bepsilon)R_N}$-vertical foliation of $\cW_0$. Let $\Psi^{N+1}$ be a genuine horizontal chart that rectifies this vertical foliation. Then it follows immediately that $(F^{R_{N+1}}, \Psi^{N+1})$ is a H\'enon-like return.
\end{proof}

\begin{proof}[Proof of Theorem E]
Let $\frF = \{F_a\}_{a\in\frI} \subset \frHL^6$ be a $C^1$-smoothly parameterized family of H\'enon-like maps that satisfy the following properties. For $a \in \frI$, the map $F_a$ is $n_0$-times $(L, \epsilon, \lambda)$-regularly H\'enon-like renormalizable with combinatorics of $\bfb$-bounded type, where $n_0$ is sufficiently large so that \eqref{eq.proper depth 2} and \eqref{eq.proper depth 3} are satisfied.

For some $N \geq n_0$, suppose that $F_a$ has $N$ H\'enon-like returns $\{(F_a^{R_n}, \Phi_a^n)\}_{n=1}^N$ with combinatorics of $\bfb$-bounded type. For $n_0 \leq n \leq N$,  \thmref{pres reg} implies that $(F^{R_n}, \Phi^n)$ is $(\bfL, \delta, \lambda)$-regular for some uniform constants $\bfL \geq 1$ and $\delta \in (\bepsilon,1)$ with $\bfb \bdelta < 1$. Moreover, by \thmref{rescale}, we have $\cR^n(F_a) \in \frHL^5_{\lambda_n}(\bfK)$ with $\lambda_n := \lambda^{(1-\bdelta) R_n}$.

Proceeding by induction, suppose that there exists an interval of parameters $\frI_N \subset \frI$ such that the following properties hold.
\begin{itemize}
\item For $a \in \frI_N$, the map $F_a$ is $N$-times H\'enon-like renormalizable with combinatorics of $\bfb$-bounded type.
\item For all $a, a' \in \frI_N$, we have $\tau_N(F_a) = \tau_N(F_{a'})$.
\item Denote $F_{N,a} := \cR^N(F_a)$ and $f_{N,a}:= \Piod(F_{N,a})$. Then $\frf_N := \{f_{N, a}\}_{a\in\frI_N}$ forms a $(\eta_1/2, \bfb)$-full $C^1$-smoothly parameterized family.
\end{itemize}

Let $T$ be a renormalization type with return time $r_n \leq \bfb$. \lemref{1d full} implies that there exists an interval $\frI_{N+1} \subset \frI_N$ such that $\tau(f_{N, a}) = T$ for $a \in \frI_{N+1}$, and $\{\cRod(f_{N, a})\}_{a\in\frI_{N+1}}$ is $(\eta_1, \bfb)$-full. By \propref{2d ren from 1d ren}, $F_a$ is $(N+1)$-times H\'enon-like renormalizable, and $\tau(F_{N,a}) = \tau(f_{N, a})$. Moreover, we see from \propref{2d close 1d} that
$$
\|f_{N, a}^{r_N} - \Piod(F_{N,a}^{r_N})\|_{C^0} < \lambda^{(1-\bdelta)R_N} < c\eta_1.
$$
It follows that $\{\Piod\circ\cR^{N+1}(F_a)\}_{a\in\frI_{N+1}}$ is $(\eta_1/2, \bfb)$-full.
\end{proof}


\section{Uniform $C^r$-Bounds}\label{sec.cr bound}

Let $F$ be the infinitely regularly H\'enon-like renormalizable map with combinatorics of $\bfb$-bounded type considered in \subsecref{subsec.unif c1} (with $N=\infty$). For $n \geq n_0$, denote 
$$
\tiF_n = p\cR^n(F) := \Psi^n \circ F^{R_n} \circ (\Psi^n)^{-1}
\matsp{and}
\tif_n := \Piod(F_n).
$$
By \corref{a priori cor}, there exists a uniform constant $\bfK \geq 1$ such that $\tif_n$ has $\bfK$-bounded non-linearity.

Consider the arcs
$$
\cI^n_0 := (\Psi^n)^{-1}(I^n_0\times\{0\}) = \cI^h_0 \cap \cB^n_0 \ni v_0
$$
and $\cI^n_i := F^i(\cI^n_0)$ for $i \in \bbN$. Let $\{\cJ^n_i\}_{i=0}^{R_n-1}$ be the collection of arcs given in \eqref{eq.Js}. Recall that for $n_0 \leq m \leq n$; $0 \leq k < R_n/R_m$ and $0 \leq i < R_m$, we have
\begin{equation}\label{eq.J dist}
\cJ^n_0 := \cI^n_0
\comma
\cJ^n_{kR_m} \subset \cJ^m_0
\matsp{and}
\cJ^n_{i+kR_m} = \hH_i(\cJ^n_{kR_m}).
\end{equation}
Moreover, $\{\cJ^n_i\}_{i=0}^{R_n-1}$ is pairwise disjoint by \lemref{J order}.

The map
$$
\phi_0 := P_0|_{\cI^h_0} : \cI^h_0 \to I^h_0
$$
gives a parameterization of $\cI^h_0$ by its arclength. For $n \geq n_0$ and $0 \leq l < R_n/R_{n_0}$, let
$$
J^n_{lR_{n_0}} := \phi_0(\cJ^n_{lR_{n_0}}).
$$
Observe that $\{J^n_{lR_{n_0}}\}_{l=0}^{R_n/R_{n_0}-1}$ is a pairwise disjoint set of intervals 
contained in $\bbR$. Moreover,
\begin{equation}\label{eq.first gen J}
J^{n+1}_{kR_n} = \Piod \circ \tiF_n^k (J^{n+1}_0)
\matsp{for}
0 \leq k < r_n.
\end{equation}

Let $\gamma \subset \Gamma$ be $C^1$-curves in $\bbR^2$. We say that $\gamma$ is {\it commensurable with $\Gamma$} if $|\gamma| \asymp |\Gamma|$.

\begin{prop}\label{a priori geo}
Let $n \geq n_0$ and $0 \leq i < R_n$. Then any arc $\cJ^{n+1}_{i+ kR_n}$ for some $0 \leq k< r_n$, or any component of
$$
\cJ^n_i \setminus \bigcup_{k=0}^{r_n-1} \cJ^{n+1}_{i+kR_n}
$$
is commensurable with $\cJ^n_i$. Consequently, there exists a uniform constant $\rho\in(0,1)$ such that
$$
\sum_{i = 0}^{R_n-1} |\cJ^n_i| < O(\rho^n).
$$
\end{prop}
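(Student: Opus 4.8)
The plan is to prove the commensurability assertion first and then extract the geometric decay of $\sum_i|\cJ^n_i|$ as a formal consequence. \emph{Reduction to $i=0$:} Recall from \eqref{eq.J dist} that $\cJ^n_i=\hH_i(\cI^n_0)$ and $\cJ^{n+1}_{i+kR_n}=\hH_i(\cJ^{n+1}_{kR_n})$, so each $\cJ^{n+1}_{i+kR_n}$ is a subarc of $\cJ^n_i$ and $\hH_i$ carries $\cI^n_0\setminus\bigcup_k\cJ^{n+1}_{kR_n}$ onto $\cJ^n_i\setminus\bigcup_k\cJ^{n+1}_{i+kR_n}$ component by component. By \thmref{a priori} the restriction $\hH_i|_{\cI^n_0}$ is a diffeomorphism with $\Dis(\hH_i,\cI^n_0)<\bfK$, and a diffeomorphism of distortion $\le\bfK$ sends a pair of nested arcs of comparable length to a pair of comparable length (the comparability constant only getting multiplied by $\bfK^2$). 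Hence it suffices to prove the commensurability statement for $i=0$: the subarcs $\cJ^{n+1}_{kR_n}\subset\cI^n_0$ ($0\le k<r_n$) and each component of $\cI^n_0\setminus\bigcup_k\cJ^{n+1}_{kR_n}$ are commensurable with $\cI^n_0$.

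For the case $i=0$ I would parameterize $\cI^n_0$ by $\phi_0$, which differs from $P^n_0=\pi_h\circ\Psi^n$ on $\cI^n_0$ by an error $O(\lambda^{(1-\bepsilon)R_n})$ by \thmref{crit chart}(i) (and $P^n_0$ is bi-Lipschitz with constant $<K_0$ by \propref{vert prop nest}(ii)). Under this parameterization $\cJ^{n+1}_{kR_n}$ corresponds to the interval $J^{n+1}_{kR_n}=\Piod(\tiF_n^k)(J^{n+1}_0)$ by \eqref{eq.first gen J}, where $J^{n+1}_0$ is, up to a negligible error, the $r_n$-periodic interval of the unimodal profile $\tif_n$ containing its critical value (cf.\ \lemref{1d struct}). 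Using \propref{2d close 1d} to replace $\Piod(\tiF_n^k)$ by the one-dimensional map $\tif_n^k$ — legitimate since $\lambda^{(1-\bepsilon)R_n}$ is super-exponentially small while $|I^n_0|\asymp\sigma_n^2$ decays only exponentially (\propref{exp scale}, \thmref{rescale}) — the claim reduces to showing that the cycle $\{\tif_n^k(J^{n+1}_0)\}_{k=0}^{r_n-1}$, together with the complementary intervals in $I^n_0$, consists of intervals commensurable with $I^n_0$. Conjugating by the rescaling $\cS^n$ of \thmref{rescale} turns $\tif_n$ into the normalized profile $f_n\in\frU^{r+3}(\bfK)$, which is non-trivially renormalizable of return time $r_n\le\bfb$ and whose renormalization $f_{n+1}$ has $\bfb$-bounded kneading; thus \propref{c1 bound geo} (with \propref{exp scale} controlling the spacing of the critical orbit $\{a^n_k\}$) yields exactly these real bounds for $f_n$, and undoing $\cS^n$ and the parameterization completes the $i=0$ case.

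Finally, write $S_n:=\sum_{i=0}^{R_n-1}|\cJ^n_i|$. Commensurability furnishes a uniform $c\in(0,1)$ such that for every $i$ some component of $\cJ^n_i\setminus\bigcup_k\cJ^{n+1}_{i+kR_n}$ has length $\ge c|\cJ^n_i|$; since the subarcs $\cJ^{n+1}_{i+kR_n}\subset\cJ^n_i$ are pairwise disjoint (\lemref{J order}), $\sum_{k=0}^{r_n-1}|\cJ^{n+1}_{i+kR_n}|\le(1-c)|\cJ^n_i|$. Because $\{0,\dots,R_{n+1}-1\}$ is the disjoint union over $0\le i<R_n$ of the sets $\{i+kR_n:0\le k<r_n\}$, summing over $i$ gives $S_{n+1}\le(1-c)S_n$, hence $S_n\le(1-c)^{n-n_0}S_{n_0}=O(\rho^n)$ with $\rho:=1-c$. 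The substantive step is the $i=0$ real bounds: although all the underlying inputs (a priori bounds \thmref{a priori}, the one-dimensional real bounds \propref{c1 bound geo}, and the geometric control \propref{exp scale}) are already available, the real work is the bookkeeping needed to push the one-dimensional statement for $f_n$ back to the arcs $\cJ^{n+1}_{kR_n}\subset\cI^n_0$ — tracking $\phi_0$ versus $P^n_0$ versus $P_0$, the profile $\tif_n$ versus $\Piod(\tiF_n^k)$ versus $f_n$, and verifying that every two-dimensional correction of order $\lambda^{(1-\bepsilon)R_n}$ is negligible against the scale $|I^n_0|$.
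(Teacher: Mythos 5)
Your proof follows the same route as the paper: reduce to $i=0$ via the a priori bounds \thmref{a priori} and the identity $\cJ^n_{i+kR_m}=\hH_i(\cJ^n_{kR_m})$, then handle $i=0$ by pushing the 1D real bounds \propref{c1 bound geo} for $f_n$ back through \propref{2d close 1d} and \eqref{eq.first gen J}. You also spell out the derivation of the geometric decay (a gap of definite relative size in each $\cJ^n_i$, disjointness of the children, and the partition of $\{0,\dots,R_{n+1}-1\}$ into residues mod $R_n$), which the paper leaves to the reader after "Consequently"; that filling-in is correct and matches the intended argument.
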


\begin{proof}
Denote the critical point of $\tif_n$ by $c^n$. Then by \propref{c1 bound geo}, we see that each component of
$$
J^n_0 \setminus \bigcup_{k=0}^{3r_n+1}\tif_n^k(c^n)
$$
is commensurate with $J^n_0$. Thus, by \propref{2d close 1d} and \eqref{eq.first gen J}, this implies the result in the case $i=0$. The case $0 < i < R_n$ then follows immediately from \thmref{a priori} and \eqref{eq.J dist}.
\end{proof}

The map
$$
\phi_{-1} := P_{-1}|_{\cI^{n_0}_{R_{n_0}-1}} : \cI^{n_0}_{R_{n_0}-1} \to I^{n_0}_{R_{n_0}-1}
$$
gives a parameterization of $\cI^{n_0}_{R_{n_0}-1}$. Denote
$$
J^n_{lR_{n_0}-1} := \phi_{-1}(\cJ^n_{lR_{n_0}-1})
\matsp{for}
1 \leq l \leq R_n/R_{n_0}.
$$
Observe that $\{J^n_{lR_{n_0}-1}\}_{l=1}^{R_n/R_{n_0}}$ is a pairwise disjoint set of intervals contained in $\bbR$. Define
\begin{equation}\label{eq.gammas}
\gamma^n_{-1} := \bigcup_{l=3}^{R_n/R_{n_0}-1} J^n_{lR_{n_0}-1} \subset I_{-1}^h
\matsp{and}
\gamma^n_0 := \bigcup_{l=3}^{R_n/R_{n_0}-1} J^n_{lR_{n_0}} \subset I_0^h.
\end{equation}

\propref{H decomp fine} gives the following decomposition of $\hH_{R_n-1}$:
$$
\hH_{R_n-1}|_{\cI^n_0} = F^{R_{n_0}-1}|_{\cI^{n_0}_0} \circ \chH_{\frac{R_n}{R_{n_0}}-1} \circ \ldots \circ \chH_3 \circ \cP^{n_0}_0 \circ F^{2R_{n_0}}|_{\cI^n_0}.
$$
where for $3\leq l < R_n/R_{n_0}$, we have
$$
\chH_l :=  \cP^{\hm_l}_0 \circ F \circ \left(\cP^{n_0}_{-1}|_{\chcI^n_l}\right)^{-1} \circ F^{R_{n_0}-1}|_{\cI^{n_0}_0}.
$$
Define
$$
\Gamma^n_{-1} := \bigcup_{l=3}^{R_n/R_{n_0}-1} \chcI^n_l \subset \cU_{-1} \subset \bbR^2.
$$

\begin{lem}\label{pieces line up}
For $n\in\bbN$ and $3 \leq l < R_n/R_{n_0}$, the map $P_{-1}$ restricts to a diffeomorphism from $\chcI^n_l$ to $J^n_{lR_{n_0}-1}$ (and hence, also from $\Gamma^n_{-1}$ to $\gamma^n_{-1}$). Define
$$
g^n_{-1} := \pi_v \circ\Phi_{-1} \circ (P_{-1}|_{\Gamma^n_{-1}})^{-1}.
$$
Then
$$
\|g^n_{-1}|_{(-t, t)}\|_{C^r} =O(t^{1/\bepsilon}).
$$
\end{lem}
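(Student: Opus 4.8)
The plan is to split $\gamma^n_{-1}=\bigcup_{l=3}^{R_n/R_{n_0}-1}J^n_{lR_{n_0}-1}$ into its pairwise disjoint pieces, treat $g^n_{-1}$ separately on each $J^n_{lR_{n_0}-1}=\phi_{-1}(\cJ^n_{lR_{n_0}-1})$, and combine two inputs: the near-horizontality $\lambda^{(1-\bepsilon)R_{\hm_l}}$ of the arc $\chcI^n_l$ supplied by \propref{H decomp fine}, and a localization estimate showing that a piece can only enter $(-t,t)$ when $\hm_l$ — hence $R_{\hm_l}$ — is large.

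For the diffeomorphism claim I would argue as follows. By \propref{H decomp fine} the arc $\chcI^n_l$ is $\lambda^{(1-\bepsilon)R_{\hm_l}}$-horizontal, so $\Phi_{-1}(\chcI^n_l)$ is the graph of a $C^r$-function over an interval of $I^h_{-1}$, and $P_{-1}=\pi_h\circ\Phi_{-1}$ restricts to a diffeomorphism of $\chcI^n_l$ onto that interval. Since $\cP^{n_0}_{-1}=\Phi_{-1}^{-1}\circ G_{n_0}\circ P_{-1}$ (see \eqref{eq.-1 proj}) leaves the coordinate $P_{-1}$ unchanged, $P_{-1}(\chcI^n_l)=P_{-1}\bigl(\cP^{n_0}_{-1}(\chcI^n_l)\bigr)$; and matching the factorization of $\hH_{lR_{n_0}-1}$ in \propref{H decomp fine} against \lemref{lem unproj} and \eqref{eq.proj to original} identifies $\cP^{n_0}_{-1}(\chcI^n_l)$ with $\cJ^n_{lR_{n_0}-1}$. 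Hence $P_{-1}(\chcI^n_l)=\phi_{-1}(\cJ^n_{lR_{n_0}-1})=J^n_{lR_{n_0}-1}$; unioning over $l$ yields the diffeomorphism $\Gamma^n_{-1}\to\gamma^n_{-1}$, and shows that on $J^n_{lR_{n_0}-1}$ the function $g^n_{-1}$ is exactly the graph function of $\Phi_{-1}(\chcI^n_l)$.

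Next I would bound $g^n_{-1}$ on a single piece by $\lambda^{(1-\bepsilon)R_{\hm_l}}$. Its derivatives of orders $1,\dots,r$ are $<\lambda^{(1-\bepsilon)R_{\hm_l}}$ directly from \propref{H decomp fine}. For the $C^0$-part I would use that $\chcI^n_l$ passes through the orbit point $v_{\ha_l R_{\hm_l}-1}$, whose $\Phi_{-1}$-image is $\lambda^{(1-\bepsilon)R_{\hm_l}}$-close to the $x$-axis: pushing through $F$ via \eqref{eq.henon trans} gives $\lambda\cdot(\pi_v\circ\Phi_{-1})(v_{\ha_l R_{\hm_l}-1})=f_0\bigl(\pi_v\circ\Phi_0(v_{\ha_l R_{\hm_l}})\bigr)-\pi_h\circ\Phi_0(v_{\ha_l R_{\hm_l}})$, and the H\'enon-like normal form of $\tiF_{\hm_l}=\Psi^{\hm_l}\circ F^{R_{\hm_l}}\circ(\Psi^{\hm_l})^{-1}$ up to precision $\lambda^{(1-\bepsilon)R_{\hm_l}}$ (\thmref{rescale}), together with \lemref{est value}, makes the two terms cancel to that order. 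With $|J^n_{lR_{n_0}-1}|=O(1)$ this yields $\|g^n_{-1}|_{J^n_{lR_{n_0}-1}}\|_{C^r}<\lambda^{(1-\bepsilon)R_{\hm_l}}$.

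Finally, for the localization and conclusion: $F(\chcI^n_l)=\cI^{\hm_l+1}_{\ha_l R_{\hm_l}}$ lies near $v_{\ha_l R_{\hm_l}}$, which — as $\ha_l\ne0$ — is at horizontal distance $\asymp|I^{\hm_l}_0|$ from $v_0$ by \propref{exp scale}, so \eqref{eq.henon trans} and \lemref{est value} (giving $|f_0(x)|\asymp x^2$) force $\dist(J^n_{lR_{n_0}-1},0)\asymp|I^{\hm_l}_0|^{1/2}$. Since $|I^{\hm_l}_0|\asymp\sigma_{\hm_l}^2>\rho_1^{2\hm_l}$ by \thmref{rescale}, a piece meeting $(-t,t)$ must satisfy $\rho_1^{\hm_l}\lesssim t$, i.e. $\hm_l\ge c\log(1/t)-C$ for uniform $c,C>0$; with $\bfb$-bounded combinatorics this gives $R_{\hm_l}\ge R_{n_0}2^{\hm_l-n_0}\gtrsim t^{-\alpha}$ for a uniform $\alpha>0$, whence $\|g^n_{-1}|_{J^n_{lR_{n_0}-1}}\|_{C^r}<\lambda^{(1-\bepsilon)R_{\hm_l}}<\lambda^{c't^{-\alpha}}=o(t^M)$ for every $M$, in particular $O(t^{1/\bepsilon})$; while for $t$ bounded below $\|g^n_{-1}\|_{C^r}<\lambda^{(1-\bepsilon)R_{n_0}}=O(1)$. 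Taking the supremum over pieces meeting $(-t,t)$ gives the claim. I expect the main obstacle to be the middle step: both the bookkeeping needed to identify $\cP^{n_0}_{-1}(\chcI^n_l)$ with $\cJ^n_{lR_{n_0}-1}$ through the multiscale decomposition of \propref{H decomp fine}, and the $C^0$-estimate, whose analytic core is the near-exact cancellation between the quadratic $f_0$ and the first coordinate of the $\hm_l$-th return forced by the H\'enon-like normal form of $\tiF_{\hm_l}$.
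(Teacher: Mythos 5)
Your proof is correct and follows essentially the same route as the paper's: piecewise $C^r$-bounds of size $\lambda^{(1-\bepsilon)R_{\hm_l}}$ from the horizontality supplied by \propref{H decomp fine}, combined with a localization $\dist(0,J^n_{lR_{n_0}-1})\asymp\rho^{\hm_l}$ (which the paper reads off \propref{a priori geo}, and you rederive through \eqref{eq.henon trans}, \lemref{est value}, \propref{exp scale} and \thmref{rescale}) to force $R_{\hm_l}\gtrsim t^{-\alpha}$ for any piece meeting $(-t,t)$.

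One caveat: the $C^0$-cancellation argument you flag as the ``analytic core'' is unnecessary. In the paper's conventions (\secref{sec.prelim}), ``$\lambda^{(1-\bepsilon)R_{\hm_l}}$-horizontal in $C^r$ in $\cB_{-1}$'' is by definition the bound $\|g_h\|_{C^r}\leq\lambda^{(1-\bepsilon)R_{\hm_l}}$ on the graph function in the chart $\Phi_{-1}$ centered at $v_{-1}$, and that $C^r$-norm already controls $|g_h|$ itself. So \propref{H decomp fine} hands you the full $C^r$-bound on $g^n_{-1}$ over each piece, $C^0$ included; the near-exact cancellation between $f_0$ and the $\hm_l$-th return that you track by hand through \eqref{eq.henon trans} is precisely the content already packaged into that proposition, so spelling it out again is a detour rather than the hard step.
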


\begin{proof}
The first claim follows immediately from \propref{H decomp fine}.

Observe that $\hm_l$ is the largest integer such that
$$
\{0\} \cup J^n_{lR_{n_0}-1} \subset J^{\hm_l}_{R_{\hm_l}-1}.
$$
Moreover,
$$
J^n_{lR_{n_0}-1} \subset J^{\hm_l+1}_{\ha_lR_{n_0}-1}
\matsp{and}
0 \notin J^{\hm_l+1}_{\ha_lR_{n_0}-1}.
$$

By \propref{H decomp fine}, $\chcI^n_l$ is $\lambda^{(1-\bepsilon)R_{\hm_l}}$-horizontal. Additionally, by \propref{a priori geo}, we have
$$
\dist(0, \chI^n_l) \asymp \rho^{\hm_l}
$$
for some uniform constant $\rho \in (0,1)$. The estimate on $g^n_{-1}$ follows.
\end{proof}

Let $G : \cI \to \cJ$ be a $C^1$-diffeomorphism between two $C^1$-curves $\cI, \cJ \subset \bbR^2$. Define the {\it zoom-in operator} $\bfZ$ by
$$
\bfZ(G)(t) := |\cJ|^{-1}\cdot \phi_{\cJ}^{-1}\circ G\circ \phi_{\cI}(|\cI| t),
$$
where $\phi_{\cI} : [0, |\cI|] \to \cI$ is the parameterization of $\cI$ by its arclength (and $\phi_{\cJ}$ similarly defined). Note that $\bfZ(G) : [0, 1] \to [0,1]$.

This rest of this section is devoted to proving the following theorem.

\begin{thm}\label{cr bound proj}
There exists a universal constant $K > 0$ such that for all $n \geq n_0$ sufficiently large and $1 \leq i < R_n$, we have
$$
\|\bfZ(\hH_i|_{\cI^n_0})\|_{C^r} <K.
$$
\end{thm}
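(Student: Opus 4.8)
**The plan is to bootstrap from the $C^1$-bounds (\thmref{a priori}, \corref{a priori cor}, \propref{c1 bound 2d}) to $C^r$-bounds on the zoom-ins $\bfZ(\hH_i|_{\cI^n_0})$ by induction on the renormalization depth, using the decomposition of $\hH_i$ into pieces $\chH_l$ furnished by \propref{H decomp fine} together with the geometric commensurability estimates of \propref{a priori geo}.**

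First I would set up the induction. By \lemref{H decomp} and \propref{H decomp fine}, every $\hH_i|_{\cI^n_0}$ factors (up to the fixed, $F$-dependent but $n$-independent maps $F^j|_{\cI^{n_0}_0}$, $\cP^{n_0}_0$, $F^{2R_{n_0}}$, whose contribution is harmless) as a composition $F^{R_{n_0}-1}\circ \chH_{s}\circ\dots\circ\chH_3\circ\cP^{n_0}_0\circ F^{2R_{n_0}}$, where each $\chH_l = \cP^{\hm_l}_0\circ F\circ(\cP^{n_0}_{-1}|_{\chcI^n_l})^{-1}\circ F^{R_{n_0}-1}|_{\cI^{n_0}_0}$ sends a horizontal arc in $\cI^{n_0}_0$ back into $\cI^{n_0}_0$. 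The key point is that \lemref{quad mapping} exhibits each such return-composition, after rescaling, as a map of the form $x\mapsto \kappa_F(\psi(x))^2 + a$ with $\|\psi^{\pm1}\|_{C^r} < K_1(1+\|g\|_{C^{r+3}})$ where $g = g^n_{-1}$ is the graphing function of the relevant piece of $\chcI^n_l$; and \lemref{pieces line up} bounds $\|g^n_{-1}|_{(-t,t)}\|_{C^r}$ by $O(t^{1/\bepsilon})$. Since the arcs $\cJ^n_i$ have size $\asymp \rho^n$ by \propref{a priori geo}, the $C^{r+3}$-norm of the graphing functions on the relevant scales is uniformly controlled (indeed goes to zero), so each $\chH_l$, when written in zoom-in coordinates, is a uniformly $C^r$-bounded quadratic-type map with uniformly bounded inverse-branches (\lemref{root at value}, \lemref{est value}).

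Next I would assemble the composition. The zoom-in operator $\bfZ$ is (almost) multiplicative under composition once one tracks the arclengths $|\cJ^n_i|$: $\bfZ(\chH_s\circ\dots\circ\chH_3)$ equals a composition of the individual zoom-ins $\bfZ(\chH_l)$ taken between consecutive arcs $\cJ^n_{lR_{n_0}}$, which are pairwise commensurable with their predecessors by \propref{a priori geo}. Each factor $\bfZ(\chH_l)$ is a diffeomorphism between intervals of commensurate size, uniformly $C^r$-bounded with uniformly bounded $C^r$-inverse (here is where one uses that the critical branch of $f_0$ is only hit at the very end — at the pieces $l=3$ and near the turning point — so that generically each $\chH_l$ restricted to the piece $\cJ^n_{lR_{n_0}}$ avoids a definite neighborhood of $0$, hence is a genuine diffeomorphism with controlled distortion, while the single piece passing near the critical point contributes one square-root branch of the controlled form in \lemref{root at value}). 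A composition of uniformly many (namely $O(\bfb)$ within each block between consecutive $n_0$-level returns, and then across $n-n_0$ blocks) such maps has $C^r$-norm that a priori could grow like $K^{n}$; the induction hypothesis on the deeper level, combined with \propref{a priori geo}'s geometric decay $\sum|\cJ^n_i| = O(\rho^n)$ and the contraction this forces between scales, is what prevents this blow-up — the deep pieces are rescaled by definite ratios, absorbing the would-be exponential growth. This is precisely the standard "bounded geometry $\Rightarrow$ bounded nonlinearity" mechanism (as in the proof of real $a$ priori bounds for unimodal maps), now transplanted to the H\'enon-like setting via the projections.

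**The main obstacle** will be controlling the higher-order ($C^2$ through $C^r$) derivatives of the inverse critical branches $(\cP^{n_0}_{-1})^{-1}$ near the critical point, where $\chcI^n_l$ approaches $0$: \lemref{root at value} gives $|g_\pm^{(i)}(t)| < \bar\theta |t|^{-(i-1/2)}$, which is singular, so the composition near the turning point must be handled by pairing this blow-up against the quadratic vanishing of the next application of $f_0$ (\lemref{est value}), i.e. by the Koebe-type/cross-ratio distortion arguments in the spirit of \propref{c1 bound geo} but now carried to order $r$. Making the telescoping of these singular-but-cancelling contributions precise across all depths $n_0\le m\le n$ and all $O(\bfb)$-blocks, while keeping the constant $K$ independent of $n$, is the crux; I would organize it as a single inductive estimate on $\|\bfZ(\hH_i|_{\cI^n_0})\|_{C^r}$ in which the inductive step peels off the outermost block $\chH_s$ (a uniformly bounded map between commensurate arcs) and applies the hypothesis to the remaining deeper composition.
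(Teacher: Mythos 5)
Your high-level strategy (decompose $\hH_i$ into return-blocks, pass to zoom-in coordinates, exploit commensurability from \propref{a priori geo}) is the right one, and you have correctly identified the crux: controlling the $C^2$ through $C^r$ derivatives of the inverse critical branch near the turning point. But your proposed resolution of that crux is a genuine gap, and your proposed inductive framing is not the mechanism the paper uses.

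The key tool you are missing is \propref{diff quad decomp}, which shows that the entire composition $\phi_0\circ \hH_{lR_{n_0}}\circ\phi_0^{-1}|_{I^n_0}$ factors as
$$
(\bfq^n_l\circ\chh^n_l)\circ\cdots\circ(\bfq^n_1\circ\chh^n_1),
$$
where \emph{all} the $\chh^n_l$ are restrictions to intervals of a \emph{single} $C^r$-diffeomorphism $\chh^n$ with uniformly bounded $\|(\chh^n)^{\pm1}\|_{C^r}$, and \emph{all} the $\bfq^n_l$ are restrictions to intervals of the \emph{fixed, $F$-independent} map $\bfq(x)=\sign(x)x^2$. This structural factorization is what tames the singular derivatives near the critical point: there is nothing to pair against, because the singular part has been extracted as a single fixed map $\bfq$. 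Your sketch of "pairing the $|t|^{-(i-1/2)}$ blow-up of $g_\pm^{(i)}$ against the quadratic vanishing of $f_0$ via Koebe-type arguments carried to order $r$" is precisely the hard computation this lemma avoids; as stated it is unlikely to close, since cross-ratio / Koebe distortion techniques are fundamentally $C^1$-tools and do not propagate to $C^r$ in a straightforward way.

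With \propref{diff quad decomp} in hand, the argument is direct, not inductive. The number of factors is $R_n/R_{n_0}$, which grows like $2^n$ (not $O((n-n_0)\bfb)$ as you estimate), so a crude product bound is hopeless. Instead one applies \lemref{avila lem 1} to each factor: the zoom-in of a $C^r$-diffeomorphism restricted to an interval $I$ is $O(|I|)$-close to the identity. For the $\chh^n_l$ pieces this gives deviations summing to $O(\sum_i|\cJ^n_i|) = O(\rho^n)$ by \propref{a priori geo} (note: it is the \emph{sum} that is $O(\rho^n)$, not each individual arc, as you write). For the $\bfq^n_l$ pieces, one groups them by depth $\hm_l$: within each group $\bbL^n_m$, applying \lemref{avila lem 1} to $\bfq$ on the component $U^m_{R_m}$ and again invoking \propref{a priori geo} yields $\sum_{l\in\bbL^n_m}\|\bfZ(\bfq^n_l)-\Id\|_{C^r}=O(\rho^m)$, which sums geometrically over $m$. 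The conclusion then follows from \lemref{avila lem 2}, which says exactly that a composition of many diffeomorphisms of $[0,1]$ whose deviations from the identity have $O(1)$ total mass has uniformly bounded $C^r$-norm. This is the non-inductive "bounded geometry $\Rightarrow$ bounded nonlinearity" mechanism you allude to, made precise; no induction hypothesis on depth is needed, and no telescoping of singular contributions has to be carried out by hand.
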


Define
$$
\bfq(x) := \sign(x)x^2.
$$
Denote $\chI^h_0 := \bfq^{-1}(I^h_0)$. For $n \geq n_0$ and $0 \leq l < R_n/R_{n_0}$, let $\chJ^n_{lR_{n_0}} := \bfq^{-1}(J^n_{lR_{n_0}}).$ The proof of \thmref{cr bound proj} relies on the following key result.

\begin{prop}\label{diff quad decomp}
Let $n\in\bbN$. There exists a $C^r$-diffeomorphism $\chh^n : I^h_0 \to \chI^h_0$ with
$$
\|(\chh^n)^{\pm 1}\|_{C^r} = O(1)
$$
such that for $1\leq l \leq R_n/R_{n_0}$, we have
$$
\phi_0\circ \hH_{lR_{n_0}} \circ \phi_0^{-1}|_{I^n_0} = (\bfq^n_l \circ \chh^n_l) \circ \ldots\circ (\bfq^n_2 \circ \chh^n_2) \circ  (\bfq^{n}_1\circ \chh^{n}_1),
$$
where $\chh^n_l : J^n_{(l-1)R_{n_0}} \to \chJ^n_{lR_{n_0}}$ and $\bfq^n_l : \chJ^n_{lR_{n_0}} \to J^n_{lR_{n_0}}$ are diffeomorphisms given by
\begin{equation}\label{eq.diff quad decomp}
\chh^n_l := \chh^n|_{J^n_{(l-1)R_{n_0}}}
\matsp{and}
\bfq^n_l := \bfq|_{\chJ^n_{lR_{n_0}}}.
\end{equation}
\end{prop}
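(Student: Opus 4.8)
The plan is to build $\chh^n$ by induction on $l$, defining it separately on each of the pairwise-disjoint intervals $J^n_{(l-1)R_{n_0}}=\phi_0(\cJ^n_{(l-1)R_{n_0}})\subset I^h_0$, $1\le l\le R_n/R_{n_0}$, and then interpolating over the complementary gaps. The backbone is the block decomposition of $\hH_{lR_{n_0}}|_{\cI^n_0}$ coming from \propref{H decomp fine} (together with \lemref{lem unproj}, \lemref{pieces line up} and \eqref{eq.proj to original}): the passage from the arc $\cJ^n_{(l-1)R_{n_0}}$ to $\cJ^n_{lR_{n_0}}$ — both of which lie on the horizontal leaf $\cI^{n_0}_0$ by \lemref{on center track} — is realized by a map of the shape $\cP^{\hm_l}_0\circ F\circ(\cP^{n_0}_{-1}|_{\chcI^n_l})^{-1}\circ F^{R_{n_0}-1}$, preceded by the priming block $\cP^{n_0}_0\circ F^{2R_{n_0}}$ absorbing the cases $l\le 2$. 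What matters is that each block contains exactly one application of $F$ read in the critical chart $\Phi_{-1}$ around $v_{-1}$, while everything else is an iterate of $F$ of uniformly bounded length composed with the projections; by the a priori bounds (\thmref{a priori}, \corref{a priori cor}) and \thmref{crit chart}, in the arclength coordinate $\phi_0$ all of these auxiliary pieces are $C^r$-diffeomorphisms whose $C^r$-norms and inverses are bounded uniformly in $n$ and $l$. So the entire nonlinearity of the $l$-th block is concentrated in its single critical application of $F$.

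The second step is to extract the pure quadratic from that critical application. Because $\chcI^n_l$ is $\lambda^{(1-\bepsilon)R_{\hm_l}}$-horizontal (\propref{H decomp fine}), in $\Phi_{-1}$-coordinates it is a graph $(x,g(x))$ with $\|g\|_{C^{r+3}}$ uniformly small, so \lemref{quad mapping} — which rests on \eqref{eq.henon trans} and \lemref{est value} — gives $\pi_h\circ\Phi_0\circ F\circ\Phi_{-1}^{-1}\circ(x,g(x))=\kappa_F(\psi(x))^2+a$ with $\psi$ a $C^r$-diffeomorphism, $\|\psi^{\pm1}\|_{C^r}=O(1)$, and $a\in I^h_0$. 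Folding the surrounding uniformly bounded diffeomorphisms — including the $C^r$-bounded reparameterizations relating $\Psi^{\hm_l}$, $\Phi_0$ and their projections (\thmref{crit chart} i)) — into a single diffeomorphism, the $l$-th block, read through $\phi_0$, takes the form $x\mapsto\kappa_F(\hat\psi_l(x))^2+\hat a_l$ with $\hat\psi_l$ a $C^r$-bounded diffeomorphism. Writing $\kappa_F u^2=\bfq(\sqrt{\kappa_F}\,u)$, this is exactly $\bfq(x)=\sign(x)x^2$ post-composed with a bounded diffeomorphism and pre-composed with the translation by $\hat a_l$. The geometric input I would then invoke is the separation of this fold's critical value from $v_0$: by \lemref{J order} the arcs $\{\cJ^n_i\}$ are pairwise disjoint and $v_0=\phi_0^{-1}(0)\in\cJ^n_0$, while \propref{min domain} and \propref{a priori geo} force $J^n_{lR_{n_0}}$ — and hence $\chJ^n_{lR_{n_0}}=\bfq^{-1}(J^n_{lR_{n_0}})$ — to stay a commensurable distance (of order $\rho^{\hm_l}$) away from $0$ for every $l\ge 1$. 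Therefore $\bfq|_{\chJ^n_{lR_{n_0}}}$ is a genuine diffeomorphism, $\bfq^{-1}$ is $C^\infty$ with controlled norm on a neighborhood of $J^n_{lR_{n_0}}$, and setting $\chh^n_l:=\bfq^{-1}\circ(\phi_0\circ[\,l\text{-th block}\,]\circ\phi_0^{-1})|_{J^n_{(l-1)R_{n_0}}}$ produces a $C^r$-diffeomorphism onto $\chJ^n_{lR_{n_0}}$, with $C^r$-norm and inverse bounded independently of $n$ and $l$, such that $\bfq|_{\chJ^n_{lR_{n_0}}}\circ\chh^n_l$ coincides with the $\phi_0$-conjugate of the $l$-th block; concatenating the blocks yields the displayed identity. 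The first block ($l=1$) I would treat directly from \lemref{quad mapping} applied to the first return near $v_0$, whose critical value $a^{n_0}_1>0$ is again bounded away from $0$ by \propref{min domain} ii).

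To assemble the global diffeomorphism, I note that the domains $\{J^n_{(l-1)R_{n_0}}\}_{l}$ are pairwise disjoint in $I^h_0$ and each complementary gap — as well as the two extreme pieces — is commensurable with its neighbours by \propref{a priori geo}; since each $\chh^n_l$ is monotone with a common orientation (the fold of the $l$-th block lies outside its own domain once $l\ge 2$) and the target intervals $\chJ^n_{lR_{n_0}}=\bfq^{-1}(J^n_{lR_{n_0}})$ inherit the order of the $J^n_{lR_{n_0}}$, one can extend $\bigcup_l\chh^n_l$ across the gaps by a $C^r$-bounded diffeomorphism to obtain $\chh^n:I^h_0\to\chI^h_0$ with $\|(\chh^n)^{\pm1}\|_{C^r}=O(1)$ and $\chh^n|_{J^n_{(l-1)R_{n_0}}}=\chh^n_l$, exactly as in \eqref{eq.diff quad decomp}. (This is also why one insists on the particular normalization $\bfq(x)=\sign(x)x^2$: its zoom-ins are uniformly tame, being literally $t\mapsto t^2$ when an endpoint of the interval is $0$, which is what makes the decomposition useful for \thmref{cr bound proj}.)

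I expect the genuinely delicate point to be the middle step — cleanly isolating the single scale-invariant factor $\bfq$ from each block while certifying that the residual factor is an honest, critical-point-free $C^r$-diffeomorphism with norm independent of $n$ and $l$. This works only because two estimates cooperate: the a priori bounds, which keep every non-critical iterate uniformly controlled no matter how deep $n$ is, and the separation of each fold's critical value from $v_0$ (\propref{min domain}, \lemref{J order}, \propref{a priori geo}), which is precisely what guarantees that $\bfq^{-1}$ is ever composed only where it is smooth and nondegenerate. Secondary, more mechanical annoyances will be the smoothness bookkeeping through \lemref{quad mapping} ($F$ is only $C^{r+4}$, $\chcI^n_l$ only $C^{r+3}$, and one degree is lost in producing $\psi$) and verifying that the gap-interpolation in the last step respects orientation.
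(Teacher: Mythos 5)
Your plan correctly identifies the right ingredients (Prop.~\ref{H decomp fine}, Lemma~\ref{quad mapping}, Lemma~\ref{pieces line up}, the a priori bounds and the separation estimates), but the architecture --- build each $\chh^n_l$ separately from a per-block application of Lemma~\ref{quad mapping}, then interpolate across the gaps --- differs from the paper's, and as written it has a real gap at exactly the step you flag as delicate. The paper does not build $\chh^n$ piecewise at all: it applies Lemma~\ref{quad mapping} \emph{once}, to the single function $g^n_{-1}$ from Lemma~\ref{pieces line up} whose graph is the whole union $\Gamma^n_{-1}$, producing one diffeomorphism $\psi^n_{-1,0}:\gamma^n_{-1}\to\chgamma^n_0$ with $\|(\psi^n_{-1,0})^{\pm1}\|_{C^r}=O(1)$. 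The global $\chh^n$ is then literally $\psi^n_{-1,0}\circ P_{-1}\circ F^{R_{n_0}-1}\circ\phi_0^{-1}$; the restrictions $\chh^n_l$ and the displayed identity drop out of Prop.~\ref{H decomp fine} with no interpolation and no per-block normalizations.

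The gap in your version is twofold, and both halves are hidden in the sentence ``Writing $\kappa_F u^2=\bfq(\sqrt{\kappa_F}u)$, this is $\bfq$ post-composed with a bounded diffeomorphism and pre-composed with the translation by $\hat a_l$.'' First, $\kappa_F\psi_l(x)^2+a_l$ is $\bfq$ of a diffeomorphism \emph{followed} by translation, and translations do not commute with $\bfq$; so an identity of the form $\bfq\circ\chh^n_l$ does not follow from a per-block use of Lemma~\ref{quad mapping} unless the per-block constant $a_l$ is made to vanish. The reason $a=0$ holds for the paper's global construction is specific: with $\Psi^{\hm_l}$ centered at $v_0$ and with $f_0$ from \eqref{eq.henon trans} satisfying $f_0(0)=f_0'(0)=0$, the map $Q(x)=f_0(x)-\lambda g^n_{-1}(x)$ has a critical point and critical value both at $0$, because $g^n_{-1}(0)=0$, $(g^n_{-1})'(0)=0$ by Lemma~\ref{pieces line up}. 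Applied freshly on a single arc $\chcI^n_l$ that does not reach the origin, Lemma~\ref{quad mapping} gives no such normalization, and your per-block constants $\hat a_l$ need not vanish. Second, the assertion that ``$\bfq^{-1}$ is $C^\infty$ with controlled norm on a neighborhood of $J^n_{lR_{n_0}}$'' is false in the sense you need: the $C^r$-norm of $\bfq^{-1}$ near an interval at distance $\sim\rho^{\hm_l}$ from $0$ scales like $\rho^{-(r-1/2)\hm_l}$ and is very far from $O(1)$ for the deepest pieces. The uniform $C^r$-bound on $\chh^n_l=\bfq^{-1}\circ Q$ does not come from $\bfq^{-1}$ being tame on the target; it comes from cancellation between the divergence of $\bfq^{-1}$ at $0$ and the quadratic vanishing of $Q$ at $0$, which is exactly what the flatness estimate $\|g^n_{-1}|_{(-t,t)}\|_{C^r}=O(t^{1/\bepsilon})$ of Lemma~\ref{pieces line up} supplies (since $1/\bepsilon>2$, the perturbation $\lambda g^n_{-1}$ is $C^r$-negligible relative to the ambient quadratic at every scale). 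You cite Lemma~\ref{pieces line up} but never cash out this ratio estimate, which is the heart of the proposition. Once you do, the global formula for $\chh^n$ is simpler than interpolating piece by piece, and the orientation/commensurability bookkeeping for the gaps becomes unnecessary.
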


\begin{lem}
For $n\in\bbN$ and $3 \leq l < R_n/R_{n_0}$, we have
$$
P^{\hm_l}_0 \circ F \circ (\cP^{n_0}_{-1}|_{\chcI^n_l})^{-1} \circ F^{R_{n_0}-1} \circ \phi_0^{-1}|_{J^n_{(l-1)R_{n_0}}} = \bfq^n_l \circ \chh^n_l(x),
$$
where $\chh^n_l$ and $\bfq^n_l$ are as defined in \eqref{eq.diff quad decomp}.
\end{lem}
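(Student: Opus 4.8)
The plan is to unwind the five factors of the composite in the local charts $\Phi_0,\Phi_{-1}$ of \thmref{crit chart} and $\Psi^{\hm_l}$, bring everything into the H\'enon normal form \eqref{eq.henon trans}, and then quote \lemref{quad mapping} for the quadratic part. Note that the stated left-hand side is exactly the one-dimensional coordinate version of the building block $\chH_l$ of \propref{H decomp fine} — that is, $\phi_0\circ\chH_l\circ\phi_0^{-1}|_{J^n_{(l-1)R_{n_0}}}$ — because $\Phi_0$ and $\Psi^{\hm_l}$ are equivalent charts that agree, after arclength normalization at $v_0$, on the full horizontal leaf through $v_0$.

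First I would describe each factor in charts. The map $\phi_0^{-1}|_{J^n_{(l-1)R_{n_0}}}$ is the inclusion of $J^n_{(l-1)R_{n_0}}$ as the subarc $\cJ^n_{(l-1)R_{n_0}}\subset\cI^{n_0}_0$ of $\cI^h_0=\Phi_0^{-1}(I^h_0\times\{0\})$, concretely $t\mapsto(\Psi^{n_0})^{-1}(t,0)$. By \thmref{crit chart}~ii) the first-entry map is $H_{n_0}(x,y)=(h_{n_0}(x),e_{n_0}(x,y))$ in the charts $\Psi^{n_0},\Phi_{-1}$, with $h_{n_0}$ a $C^r$-diffeomorphism and $\|e_{n_0}\|_{C^r}$ super-exponentially small; in particular $F^{R_{n_0}-1}$ \emph{does not fold} and sends the arc to the subarc of $\cI^{n_0}_{R_{n_0}-1}$ with $\Phi_{-1}$-horizontal coordinate $h_{n_0}(t)$. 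Since $\cP^{n_0}_{-1}=\Phi_{-1}^{-1}\circ G_{n_0}\circ P_{-1}$ preserves the $\Phi_{-1}$-horizontal coordinate, $(\cP^{n_0}_{-1}|_{\chcI^n_l})^{-1}$ is in that chart the vertical translation onto the graph $\chcI^n_l$, i.e. it equals $\Phi_{-1}^{-1}\circ G^n_{-1}$ with $G^n_{-1}(x)=(x,g^n_{-1}(x))$; by \propref{H decomp fine} and \lemref{pieces line up} the graph function $g^n_{-1}$ over $J^n_{lR_{n_0}-1}$ is $\lambda^{(1-\bepsilon)R_{\hm_l}}$-horizontal, so $\|g^n_{-1}\|_{C^{r+3}}=O(\lambda^{(1-\bepsilon)R_{\hm_l}})<\underline{\kappa_F}$ for $n_0$ large. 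Finally $F$ near $v_0$ is $\Phi_0^{-1}\circ\bigl((x,y)\mapsto(f_0(x)-\lambda y,\,x)\bigr)\circ\Phi_{-1}$ with $f_0$ critical only at $0$ and $f_0''\ge\kappa_F>0$, and $P^{\hm_l}_0=\pi_h\circ\Psi^{\hm_l}$: this last $F$ carries the single quadratic fold.

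Composing the four pieces, the left-hand side becomes $t\mapsto P^{\hm_l}_0\circ F\circ\Phi_{-1}^{-1}\circ G^n_{-1}\bigl(h_{n_0}(t)\bigr)$. Applying \lemref{quad mapping} with ``$n$''$=\hm_l$ and ``$g$''$=g^n_{-1}$ produces $a^n_l\in I^{\hm_l}_0$ and a $C^r$-diffeomorphism $\psi^n_l$ with $\|(\psi^n_l)^{\pm1}\|_{C^r}=O(1)$ such that $P^{\hm_l}_0\circ F\circ\Phi_{-1}^{-1}\circ G^n_{-1}(x)=\kappa_F(\psi^n_l(x))^2+a^n_l$. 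To match the $\bfq(x)=\sign(x)x^2$ normalization, I would use that $\{\cJ^n_i\}_{i=0}^{R_n-1}$ is pairwise disjoint (\lemref{J order}) with $v_0\in\cJ^n_0$, so for $3\le l<R_n/R_{n_0}$ we have $v_0\notin\cJ^n_{lR_{n_0}}$ and $v_0\notin\cJ^n_{(l-1)R_{n_0}}$, whence $0\notin J^n_{lR_{n_0}}$, $0\notin\chJ^n_{lR_{n_0}}=\bfq^{-1}(J^n_{lR_{n_0}})$, and $t$ (hence $h_{n_0}(t)$, hence $\psi^n_l(h_{n_0}(t))$) has constant sign on $J^n_{(l-1)R_{n_0}}$. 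Then $\bfq^n_l=\bfq|_{\chJ^n_{lR_{n_0}}}$ is a genuine $C^r$-diffeomorphism onto $J^n_{lR_{n_0}}$ with $(\bfq^n_l)^{-1}(s)=\sign(s)\sqrt{|s|}$, and $t\mapsto(\bfq^n_l)^{-1}\bigl(\kappa_F(\psi^n_l(h_{n_0}(t)))^2+a^n_l\bigr)=\sqrt{\kappa_F}\,\psi^n_l(h_{n_0}(t))$ is a $C^r$-diffeomorphism of $J^n_{(l-1)R_{n_0}}$ onto $\chJ^n_{lR_{n_0}}$ of $O(1)$ norm; this is the map $\chh^n_l=\chh^n|_{J^n_{(l-1)R_{n_0}}}$ constructed in \propref{diff quad decomp}, which gives the claimed identity.

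The formal chart bookkeeping above is routine; the real content sits in two inputs it relies on. First, one must know that the orbit segment realizing $\chH_l$ meets no fold other than the last $F$, i.e. that $\cJ^n_{lR_{n_0}-1}$ stays away from the neighborhood of $v_{-1}$ on which $F$ is non-injective — this is precisely the $1$D-like combinatorial structure of deep returns (\propref{value struct bound}, \lemref{J order}, \lemref{H diffeo near tip}), and it is also what forces $\bfq^n_l$ and $\chh^n_l$ to be honest diffeomorphisms in the range $l<R_n/R_{n_0}$ (the genuine quadratic fold appears only in the boundary term $l=R_n/R_{n_0}$, handled separately). Second, although the composite depends on $l$ through $\chcI^n_l$ and the depth $\hm_l$, this enters only through the super-exponentially small function $g^n_{-1}$, so the $\psi^n_l$ are uniformly $C^r$-bounded and the corresponding factors over the pairwise disjoint intervals $\{J^n_{(l-1)R_{n_0}}\}_l$ glue — across the commensurable gaps of \propref{a priori geo} — into a single $C^r$-diffeomorphism $\chh^n\colon I^h_0\to\chI^h_0$ of $O(1)$ norm. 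Verifying that gluing and the uniform estimate is the heart of \propref{diff quad decomp}; for the present lemma it suffices to check the per-interval factorization, and the only subtlety there is the sign/fold issue just described.
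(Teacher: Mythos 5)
Your unwinding of the composite is the same as the paper's: after noting $P_{-1}\circ F^{R_{n_0}-1}\circ\phi_0^{-1}=h_{n_0}$ and $(\cP^{n_0}_{-1}|_{\chcI^n_l})^{-1}=\Phi_{-1}^{-1}\circ G^n_{-1}\circ P_{-1}$, the left-hand side becomes $P^{\hm_l}_0\circ F\circ\Phi_{-1}^{-1}\circ G^n_{-1}\circ h_{n_0}|_{J^n_{(l-1)R_{n_0}}}$, to which one applies \lemref{quad mapping} (with $g=g^n_{-1}$, whose $C^{r+3}$-smallness on the relevant piece is \lemref{pieces line up}) and then uses $0\notin J^n_{lR_{n_0}}$ to pass to a $\bfq$-factorization. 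This is exactly what the paper's proof does, citing Lemmas~\ref{quad mapping} and~\ref{pieces line up} and precomposing with $h_{n_0}$.

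However, the explicit identity you write in the last step is false: $(\bfq^n_l)^{-1}\bigl(\kappa_F(\psi^n_l(h_{n_0}(t)))^2+a^n_l\bigr)=\sqrt{\kappa_F}\,\psi^n_l(h_{n_0}(t))$ would require both $a^n_l=0$ and $\psi^n_l>0$ on $J^n_{lR_{n_0}-1}$, and neither holds. The constant $a^n_l$ from \lemref{quad mapping} is the value of $Q$ at its (extended) critical point; since $P^{\hm_l}_0=\pi_h\circ\Psi^{\hm_l}$ differs from the normal-form coordinate $\pi_h\circ\Phi_0$ by a near-identity change of variables $\chi$ that fixes the origin but has a nonzero vertical partial there, the critical value of $Q$ is displaced to $a^n_l=O(\lambda^{2(1-\bepsilon)R_{\hm_l}})$ — tiny, but not $0$. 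Moreover $\psi^n_l$ vanishes at that critical point, so its sign on $J^n_{lR_{n_0}-1}$ depends on which side of $v_{-1}$ the arc $\cJ^n_{lR_{n_0}-1}$ sits, and can be negative; your formula would then produce a negative value, whereas $\chJ^n_{lR_{n_0}}=\bfq^{-1}(J^n_{lR_{n_0}})\subset(0,\infty)$. The correct (and tautological) route is simply to set $\chh^n_l:=(\bfq^n_l)^{-1}\circ Q\circ h_{n_0}|_{J^n_{(l-1)R_{n_0}}}$, which is well defined and $C^r$ because $Q$ takes values in the interval $J^n_{lR_{n_0}}\not\ni0$; the lemma's identity is then the definition of $\bfq^n_l$ on that range. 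The role of \lemref{quad mapping} is not to hand you a closed form for $\chh^n_l$, but to split $Q$ into a bounded diffeomorphism $\psi^n_l$ and a normalized quadratic, which is what later (in \propref{diff quad decomp}) yields the uniform $C^r$-bound on $\chh^n$ via \propref{a priori geo} and \lemref{pieces line up} — that estimate is real work, and your $=$ would make it vacuous. So the structure of your argument is right, but the final arithmetic, as written, does not hold.
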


\begin{proof}
Define $\chgamma^n_0 := \bfq^{-1}(\gamma^n_0),$ where $\gamma^n_0$ is given in \eqref{eq.gammas}. By Lemmas \ref{quad mapping} and \ref{pieces line up}, there exists a $C^r$-diffeomorphism $\psi_{-1, 0}^n : \gamma^n_{-1} \to \chgamma^n_0$ with
$$
\|(\psi_{-1, 0}^n)^{\pm 1}\|_{C^r} = O(1)
$$
such that
$$
P^{\hm_l}_0 \circ F \circ \Phi_{-1}^{-1}\circ G^n_{-1}|_{\chI^n_l} = \bfq \circ \psi_{-1, 0}^n|_{\chI^n_l},
$$
where $G^n_{-1}(x) := (x, g^n_{-1}(x))$. Precomposing with $P_{-1}\circ F^{R_{n_0}-1} \circ \phi_0^{-1}|_{J^n_{(l-1)R_{n_0}}}$ gives the desired result.
\end{proof}

\begin{proof}[Proof of \thmref{cr bound proj}]

For $1\leq l < R_n/R_{n_0}$, let $n_0\leq \hm_l \leq n$ be the largest integer such that 
$$
\{0\} \cup \chJ^n_{lR_{n_0}} \subset \chJ^{\hm_l}_{R_{\hm_l}}.
$$
Denote $\bbL^n_m := \{1\leq l < R_n/R_{n_0} \; | \; \hm_l = m\}$. Then $l \in \bbL^n_m$ if and only if
$$
\chJ^n_{lR_{n_0}} \subset \chJ^m_{R_m}
\matsp{and}
\chJ^n_{lR_{n_0}-1} \cap \chJ^{m+1}_{R_{m+1}} = \varnothing.
$$
Note that
$$
\bigcup_{m=n_0}^n \bbL^n_m = \{1\leq l < R_n/R_{n_0}\}.
$$

Let $U^m_{R_m}$ be the component of $\chJ^m_{R_m} \setminus \chJ^{m+1}_{R_{m+1}}$ contained in $\bbR^-$. Applying \propref{a priori geo} and \lemref{avila lem 1} to $\bfZ\left(\bfq|_{U^m_{R_m}}\right)$, we see that
$$
\sum_{l \in \bbL^n_m} \|\bfZ(\bfq^n_l)-\Id\|_{C^r} = O(\rho^m)
$$
for some uniform constant $\rho \in (0,1)$. The result now follows from \propref{a priori geo}, \propref{diff quad decomp}, and Lemmas \ref{avila lem 1} and \ref{avila lem 2}.
\end{proof}

\begin{thm}\label{cr bound 2d}
For all $n \in \bbN$ sufficiently large, we have $\|\cR^n(F)\|_{C^r} = O(1).$
\end{thm}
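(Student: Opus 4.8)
The plan is to reduce the $C^r$-bound to a bound on the one--dimensional profile of $\cR^n(F)$, and then read that profile off from the near--critical decomposition of the return map. By \thmref{rescale} iii) we have $\cR^n(F)\in\frHL^{r+3}_{\lambda_n}(\bfK)$ with $\lambda_n:=\lambda^{(1-\bepsilon)R_n}$, so $\cR^n(F)$ is $\lambda_n$-thin in $C^{r+3}$. Writing $\cR^n(F)(x,y)=(f_n(x,y),x)$, the second coordinate contributes $O(1)$ to the $C^r$-norm, while $f_n(x,y)-f_n(x,0)=\int_0^y\partial_tf_n(x,t)\,dt$ has $C^r$-norm $O(\lambda_n)$, since $\|\partial_yf_n\|_{C^{r+2}}<\lambda_n$ and the domain of $\cR^n(F)$ has uniformly bounded diameter in $y$ (\thmref{rescale}). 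Thus it suffices to prove $\|\Piod\cR^n(F)\|_{C^r}=O(1)$. Unwinding $\cR^n(F)=\cS^n\circ\Phi^n\circ F^{R_n}\circ(\cS^n\circ\Phi^n)^{-1}$ with $\Phi^n=(\cY^n)^{-1}\circ\Psi^n$, and using that $\cY^n$ acts only on the second coordinate together with thinness, I would show that $\Piod\cR^n(F)$ coincides --- up to a $C^r$-error $O(\lambda_n)$ and up to affine pre-- and post--composition scaling the $x$-variable by $\sigma_n^{\pm2}$ --- with $P^n_0\circ F^{R_n}|_{\cI^n_0}$ expressed in the arclength parameter of $\cI^n_0$; here $P^n_0|_{\cI^n_0}$ \emph{is} this parameter because $\Psi^n$ is centered at $v_0$, and $|\cI^n_0|\asymp\sigma_n^2$ by \thmref{rescale} i).

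Next I would decompose $F^{R_n}|_{\cI^n_0}=F|_{\cI^n_{R_n-1}}\circ F^{R_n-1}|_{\cI^n_0}$ and invoke \eqref{eq.proj to original}, $\hH_{R_n-1}|_{\cI^n_0}=\cP^{n_0}_{-1}|_{\cI^n_{R_n-1}}\circ F^{R_n-1}|_{\cI^n_0}$, so that $F^{R_n-1}|_{\cI^n_0}=(\cP^{n_0}_{-1}|_{\cI^n_{R_n-1}})^{-1}\circ\hH_{R_n-1}|_{\cI^n_0}$. Since $P_{-1}\circ\cP^{n_0}_{-1}=P_{-1}$ and $\Phi_{-1}(\cI^n_{R_n-1})$ is the graph of a map $g_n$ that is $\lambda_n$-small in $C^r$ (\lemref{flat} iii)) and uniformly $C^{r+3}$-bounded (by the smoothness of $F$ and the charts of \thmref{crit chart}), \lemref{quad mapping} applies to $g_n$ on the interval $h_n(I^n_0)=P_{-1}(\cI^n_{R_n-1})$ and gives
\begin{equation}\label{eq.cr bound decomp}
P^n_0\circ F^{R_n}|_{\cI^n_0}=\kappa_F\bigl(\psi_n\circ P_{-1}\circ\hH_{R_n-1}|_{\cI^n_0}\bigr)^2+a_n,
\end{equation}
with $\|\psi_n^{\pm1}\|_{C^r}=O(1)$ and $a_n=O(\lambda_n)$ (it is the $P^n_0$-image of a point $\lambda_n$-close to $v_0$, and $P^n_0(v_0)=0$).

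Then I would apply \thmref{cr bound proj} with $i=R_n-1$: $\|\bfZ(\hH_{R_n-1}|_{\cI^n_0})\|_{C^r}<K$, i.e. $\hH_{R_n-1}|_{\cI^n_0}$ is uniformly $C^r$-bounded once its source $\cI^n_0$ (length $\asymp\sigma_n^2$) and target $\cJ^n_{R_n-1}$ (length $\asymp\sigma_n$, since its $P_{-1}$-projection is $h_n(I^n_0)$ and it is $\lambda_n$-horizontal) are normalized to unit size. Composing with $P_{-1}$ (which on the $\lambda_n$-horizontal curve $\cJ^n_{R_n-1}$ is $C^r$-close to its arclength parameter) and with $\psi_n$, the inner function in \eqref{eq.cr bound decomp}, read in the normalized coordinate of $\cI^n_0$, is $\sigma_n$ times a $C^r$-bounded map; squaring, multiplying by $\kappa_F$, and applying the outer affine map (scaling by $\sigma_n^{-2}$) cancels the two powers of $\sigma_n$, while $a_n=O(\lambda_n)$ and the critical value of $\Piod\cR^n(F)$ is $O(1)$. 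Hence $\|\Piod\cR^n(F)\|_{C^r}=O(1)$, which combined with the first paragraph proves the theorem.

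The main obstacle is the argument of the second and third paragraphs. \emph{A priori} bounds together with \thmref{rescale} iii) already give $\Piod\cR^n(F)\in\frU^{r+3}(\bfK)$, but $\bfK$-bounded non--linearity controls only the $C^1$-geometry; uniform control of the higher derivatives is precisely what forces the use of \thmref{cr bound proj}, which however governs the transport maps $\hH_i$ (with the projections $\cP^m_0$ built in) rather than the genuine return $F^{R_n}|_{\cI^n_0}$ that determines the profile. Reconciling the two via \eqref{eq.proj to original}, isolating the single quadratic factor through \lemref{quad mapping}, and tracking the exponential scales $\rho_1^n<\sigma_n<\rho_2^n$ and $|\cJ^n_{R_n-1}|\asymp\sigma_n\asymp|\cI^n_0|^{1/2}$ through the affine normalizations is the heart of the matter; the accompanying degree bookkeeping (the factor operation costs three derivatives, which is why $F\in C^{r+4}$ and the target space is $\frHL^{r+3}$) is the other delicate point.
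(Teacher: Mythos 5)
Your proof is correct and follows the same strategy as the paper's terse two-sentence argument, which invokes \thmref{cr bound proj}, \eqref{eq.proj to original}, and the thinness of $\cR^n(F)$. You have made explicit the steps the paper leaves implicit --- isolating the final quadratic fold via \lemref{quad mapping}, and the scale bookkeeping ($|\cI^n_0|\asymp\sigma_n^2$, $|\cJ^n_{R_n-1}|\asymp\sigma_n$) showing that the $\sigma_n^{-2}$-normalization cancels the squaring.
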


\begin{proof}
By \thmref{cr bound proj} and \eqref{eq.proj to original}, we see that
$$
\|\Piod\circ \cR^n(F)\|_{C^r} = O(1).
$$
Since $\cR^n(F)$ is a $\lambda^{(1-\bepsilon)R_n}$-thin H\'enon-like map, the result follows.
\end{proof}


\section{Exponentially Small Pieces}\label{sec.shrink}

Let $F$ be the infinitely regularly H\'enon-like renormalizable map considered in \secref{sec.cr bound}. The goal of this section is to prove Theorem B.

For any integer $l \geq 2$, we have
\begin{equation}\label{eq.decomp iterate}
dR_{n_0} = a_1 R_{n_1} + \ldots + a_k R_{n_k},
\end{equation}
for some
\begin{itemize}
\item $n_0 \leq n_1 < \ldots < n_k$,
\item $1 \leq a_m < r_{n_k}$ for $1 \leq m< k$, and
\item $2 \leq a_k < 2r_{n_k}$.
\end{itemize}
Define
$$
\hcH_{lR_{n_0}} := F^{a_1R_{n_1}}\circ\cP^{n_2}_0\circ F^{a_2R_{n_2}}\circ  \ldots \circ \cP^{n_k}_0\circ F^{a_kR_{n_k}}\circ \cP^{n_k}_0,
$$
where $\cP^n_0 : \cB^n_0 \to \cI^n_0$ for $n \geq n_0$ is the projection map onto $\cI^n_0$ given by
$$
\cP^n_0 := (\Psi^n)^{-1}\circ \Pi_h \circ \Psi^n.
$$
Denote $\hm(lR_{n_0}) := n_1$ and $\hk(lR_{n_0}) := n_k$. Recall the definition of $\hH_i$ given in \secref{sec.a priori}. Then we have
\begin{equation}\label{eq.h commute proj}
\cP^{\hm(lR_{n_0})}_0 \circ \hcH_{lR_{n_0}} = \hH_{lR_{n_0}} \circ \cP^{\hk(lR_{n_0})}_0.
\end{equation}

\begin{lem}\label{H vs F}
Let $i = lR_{n_0}$ for some $l \geq 2$. Then for $n \geq \hk(i)$, we have
$$
\|\hcH_i - F^i|_{\cB^{\hk(i)}_0}\|_{C^0} < \lambda^{(1-\bepsilon)R_{\hm(i)}}.
$$
\end{lem}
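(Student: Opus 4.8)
The plan is to track, for each input $p\in\cB^{\hk(i)}_0$, how far the orbit computed by $\hcH_i$ drifts from the true orbit $F^i(p)$, the discrepancy being caused solely by the projection maps $\cP^{n_m}_0$ spliced into the composition. Write $i=a_1R_{n_1}+\ldots+a_kR_{n_k}$ with $n_0\le n_1<\ldots<n_k$ as in \eqref{eq.decomp iterate}, so $\hm(i)=n_1$, $\hk(i)=n_k$, and $F^i=F^{a_1R_{n_1}}\circ\cdots\circ F^{a_kR_{n_k}}$; by definition $\hcH_i$ agrees with this composition except that it applies $\cP^{n_k}_0$ to the input and inserts $\cP^{n_{j+1}}_0$ between the $j$-th and $(j+1)$-st blocks for $1\le j<k$. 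The two features of a projection $\cP^n_0=(\Psi^n)^{-1}\circ\Pi_h\circ\Psi^n$ that I will use are: (a) it is $K_0$-Lipschitz, fixes $\cI^n_0$, and moves any $q\in\cB^n_0$ to a point on the $\Psi^n$-vertical leaf through $q$ — an arc tangent to $E^{v,n}$ of length $\le K_0\,|\pi_v(B^n_0)|=O(1)$ (\propref{vert prop nest}); and (b) by \propref{vert prop nest} iii) every point of $\hcB^n_0$ is $R_n$-times forward $(K_0,\epsilon,\lambda)$-regular along $E^{v,n}$, so any arc $\gamma$ tangent to $E^{v,n}$ in $\hcB^n_0$ satisfies $|F^m(\gamma)|\le K_0\lambda^{(1-\epsilon)m}|\gamma|$ for $1\le m\le R_n$. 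I will also use the uniform bound $\|DF^{R_n}|_{\cB^n_0}\|<K_0^2K=:K'$ from \propref{c1 bound 2d} and \propref{vert prop nest} ii), together with the combinatorial facts (\propref{value struct bound}, \eqref{eq.min domain}) that $\cB^{n_k}_0\subset\cdots\subset\cB^{n_1}_0$ with $\cB^{n_j}_0$ being $R_{n_j}$-periodic and $\cI^{n_{j+1}}_0\subset\cB^{n_j}_0$, and that $1\le a_j\le 2\bfb$ with $a_jR_{n_j}<R_{n_{j+1}}$ for $j<k$.

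The heart of the argument is a descending induction on $j=k,k-1,\ldots,1$: after $\hcH_i$ and $F^i$ have each carried out their first $k-j+1$ blocks (through block $j$), the two running points differ by at most $C\lambda^{(1-\epsilon)R_{n_j}}$ for a uniform $C$. The key point is that the $O(1)$ displacement caused by the projection preceding block $j$ is not generic: it lies along a sub-arc of a $\Psi^{n_{j+1}}$-vertical leaf (a $\Psi^{n_k}$-vertical leaf through the input when $j=k$), hence is tangent to $E^{v,n_{j+1}}$, and is therefore absorbed by the super-exponential contraction of the block that immediately follows. Concretely, split block $j$ as $F^{a_jR_{n_j}}=F^{(a_j-1)R_{n_j}}\circ F^{R_{n_j}}$: the first factor shrinks this arc from length $O(1)$ to $\le K_0\lambda^{(1-\epsilon)R_{n_j}}\cdot O(1)$ by (b) (legitimate since $R_{n_j}\le R_{n_{j+1}}$), and the remaining $(a_j-1)R_{n_j}$ iterates, confined to $\cB^{n_j}_0$, only multiply this by $(K')^{a_j-1}=O(1)$. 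Meanwhile the carried-over discrepancy of size $\le C\lambda^{(1-\epsilon)R_{n_{j+1}}}$ inherited from block $j+1$ is, after passing through the $K_0$-Lipschitz projection and the $(K')^{a_j}$-Lipschitz block $j$, still at most $O(\lambda^{(1-\epsilon)R_{n_{j+1}}})$, which is negligible against $\lambda^{(1-\epsilon)R_{n_j}}$ since $R_{n_{j+1}}\ge 3R_{n_j}$. Summing the two contributions closes the induction with a geometrically bounded $C$, provided $n_0$ is large enough that $\lambda^{R_{n_0}}$ dominates the finitely many uniform constants involved (increasing $n_0$ by a uniform amount if necessary). The base case $j=k$ is the same computation with only the leaf-displacement term present.

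Taking $j=1$ gives $\|\hcH_i(p)-F^i(p)\|\le C\lambda^{(1-\epsilon)R_{n_1}}$ for all $p\in\cB^{n_k}_0$, and since $C\lambda^{(1-\epsilon)R_{n_1}}<\lambda^{(1-\bepsilon)R_{\hm(i)}}$ once $n_0$ (hence $R_{n_1}$) is large, uniformity over $p$ yields the asserted $C^0$ estimate. The main obstacle to handle carefully is exactly the point above: a naive estimate sees each projection moving points by an $O(1)$ distance and concludes nothing. The argument works only because that distance is spent entirely inside a strong-stable leaf — tangent to the forward-regular field $E^{v,n_{j+1}}$ — so that it is instantly reclaimed by the contraction of the next renormalization block; keeping the ``leaf part'' and the ``generic part'' of the error separate throughout the telescoping, and treating block $k$ (whose length may reach $\approx 2R_{n_k+1}$, exceeding the window $R_{n_k}$ on which forward regularity of $E^{v,n_k}$ is guaranteed) via the split $F^{R_{n_k}}$-then-the-rest, is where the bookkeeping must be done with care.
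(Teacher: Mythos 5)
Your proposal is correct, and it rests on the same underlying mechanism as the paper's proof: the $O(1)$ displacement introduced by each projection $\cP^n_0$ lies along the $\Psi^n$-vertical (forward-contracting) direction, so it is shrunk to $O(\lambda^{(1-\bepsilon)R_n})$ by the $F^{R_n}$ block that follows. The implementation, however, is genuinely different. The paper conjugates everything into the renormalized coordinates $\Psi^m$, where $\cP^m_0$ becomes $\Pi_h$ and the vertical contraction is pre-packaged as the thinness estimate $\|\tiF_m-\tiF_m\circ\Pi_h\|_{C^{r+3}}<\lambda^{(1-\bepsilon)R_m}$ from Theorem~\ref{crit chart}~ii); it then telescopes through the blocks in one stroke via Lemma~\ref{vary compose} and the uniform $C^1$-bounds of Proposition~\ref{c1 bound 2d}. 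You instead stay in phase space, splitting each step's error into a "leaf part" (the new displacement, which lies along a $\Psi^{n_{j+1}}$-vertical leaf and is killed by the forward $(K_0,\epsilon,\lambda)$-regularity of $E^{v,n_{j+1}}$ from Proposition~\ref{vert prop nest}~iii)) and a "generic part" (the carried-over error, handled by Lipschitz bounds and dominated because $R_{n_{j+1}}\ge 3R_{n_j}$). Both close the induction with a uniform constant; yours is closer to first principles, the paper's is shorter because the abstract composition lemma does the bookkeeping. The one place your write-up glosses over — and which the paper's reformulation avoids — is confirming that the entire leaf arc from $q$ to $\cP^{n_{j+1}}_0(q)$ lies in $\hcB^{n_{j+1}}_0$ (so that (b) actually applies to every point of the arc) and that the images under the remaining $(a_j-1)R_{n_j}$ iterates stay inside $\cB^{n_j}_0$ (so that the uniform Lipschitz bound $K'$ is usable throughout); both hold, by the vertical properness of $\hcB^{n_{j+1}}_0$ in Proposition~\ref{vert prop nest} and the $R_{n_j}$-periodicity of $\cB^{n_j}_0$, but they should be said.
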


\begin{proof}
By \thmref{crit chart} and \propref{c1 bound 2d}, $\|(\Psi^m)^{\pm 1}\|_{C^{r+3}}$ and $\|\tiF_m\|_{C^1}$ are uniformly bounded. Moreover, by \thmref{crit chart} ii), we have
\begin{equation}\label{eq.proj F}
\|\tiF_m - \tiF_m \circ \Pi_h \|_{C^{r+3}} < \lambda^{(1-\bepsilon)R_m}.
\end{equation}

Let $i = lR_{n_0}$ be given by \eqref{eq.decomp iterate} with $\hk(i) = n_k < n$. Note that
$$
F^{R_{n_k}} = (\Psi^{n_k})^{-1}\circ \tiF_{n_k} \circ \Psi^{n_k}
$$
and
$$
\hcH_{R_{n_k}} = F^{R_{n_k}} \circ \cP^{n_k}_0 = (\Psi^{n_k})^{-1}\circ \left(\tiF_{n_k} \circ \Pi_h\right) \circ \Psi^{n_k}.
$$
Hence, we see by \eqref{eq.proj F} and \lemref{vary compose} that
$$
\|\hcH_{R_{n_k}}-F^{R_{n_k}}|_{\cB^{n_k}_0}\|_{C^0} < \lambda^{(1-\bepsilon)R_{n_k}}.
$$
Moreover,
$$
\hcH_{a_kR_{n_k}} = \left((\Psi^{n_k})^{-1}\circ \tiF^{a_k-1}_{n_k} \circ \Psi^{n_k}\right)\circ \hcH_{R_{n_k}}
$$
and
$$
F^{a_kR_{n_k}} = \left((\Psi^{n_k})^{-1}\circ \tiF^{a_k-1}_{n_k} \circ \Psi^{n_k}\right)\circ F^{R_{n_k}}
$$
Thus, another application of \lemref{vary compose} imply
$$
\|\hcH_{a_kR_{n_k}} - F^{a_kR_{n_k}}|_{\cB^{n_k}_0}\|_{C^0} < \lambda^{(1-\bepsilon)R_{n_k}}.
$$

Proceeding by induction, suppose that
$$
\|\hcH_{i_{j+1}} - F^{i_{j+1}}|_{\cB^{n_k}_0}\|_{C^0} < \lambda^{(1-\bepsilon)R_{n_{j+1}}}.
$$
where $1\leq j < k$ and
$$
i_{j+1} := a_{n_{j+1}}R_{n_{j+1}}+ \ldots + a_{n_k} R_{n_k}.
$$
Write
$$
\hcH_{i_j} = (\Psi^{n_j})^{-1} \circ \tiF_{n_j}^{a_{n_j}-1} \circ \left(\tiF_{n_j}\circ \Pi_h \right)\circ \Psi^{n_j}\circ  \hcH_{i_{j+1}}
$$
and
$$
F^{i_j}|_{\cB^{n_k}_0} = (\Psi^{n_j})^{-1} \circ \tiF_{n_j}^{a_{n_j}-1}\circ \tiF_{n_j}\circ \Psi^{n_j}\circ F^{i_{j+1}}|_{\cB^{n_k}_0}.
$$
Applying \lemref{vary compose}, the result follows.
\end{proof}

\begin{lem}\label{H shrinks}
There exists a uniform constant $\rho \in(0,1)$ such that
$$
\sum_{l=2}^{R_n/R_{n_0}}\diam(\hcH_{lR_{n_0}}(\cI^n_0)) =O(\rho^n).
$$
\end{lem}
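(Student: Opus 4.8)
The plan is to bound each piece $\hcH_{lR_{n_0}}(\cI^n_0)$ by a uniformly bounded sum of lengths of the arcs $\cJ^n_i$ appearing in \propref{a priori geo}, and then to sum these with controlled multiplicity. Fix $2\le l<R_n/R_{n_0}$ and write $m:=\hm(lR_{n_0})$, $a:=\ha(lR_{n_0})\le 2\bfb$, and $i':=lR_{n_0}-aR_m$, so that $m<n$ and either $i'=0$ or $\hm(i')>m$. Dropping the lowest block of the mixed‑radix expansion defining $\hcH_{lR_{n_0}}$ gives $\hcH_{i'+jR_m}=F^{jR_m}\circ\cP^{m}_0\circ\hcH_{i'}$ for $0\le j\le a$ (with the obvious conventions when $i'=0$ or $j\le 1$, which at worst shift the trailing projection to $\cP^{m-1}_0$ and produce only comparable lengths). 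Combining this with \eqref{eq.h commute proj} and the identity $\cP^k_0|_{\cI^k_0}=\Id$ (valid since $\cI^n_0\subset\cI^k_0$ for every $k\le n$), one obtains $\hcH_{i'+jR_m}(\cI^n_0)=F^{jR_m}(\cJ^n_{i'})$ and $\cP^m_0(\hcH_{i'+jR_m}(\cI^n_0))=\cJ^n_{i'+jR_m}$ for $0\le j\le a$, where $\cJ^n_{i'}\subset\cI^m_0$. Passing to the coordinates of $\Psi^m$, which has uniformly bounded distortion by \thmref{crit chart}, and writing $K'\times\{0\}:=\Psi^m(\cJ^n_{i'})$ (so $|K'|\asymp|\cJ^n_{i'}|$), this identifies $\Psi^m(\hcH_{lR_{n_0}}(\cI^n_0))$ with $\tiF_m^{\,a}(K'\times\{0\})$ and $\Psi^m(\cJ^n_{i'+jR_m})$ with the horizontal projection of $\tiF_m^{\,j}(K'\times\{0\})$.

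The technical heart is a one‑dimensional approximation with a \emph{self‑suppressed} error term: I claim $\tiF_m^{\,a}(K'\times\{0\})$ lies within Hausdorff distance $C\lambda^{(1-\bepsilon)R_m}E$ of the graph $\{(\tif_m^{\,a}(x),\tif_m^{\,a-1}(x)):x\in K'\}$, where $E:=\max_{0\le j\le a-1}|\tif_m^{\,j}(K')|$ and $C=C(\bfK,\bfb)$ is uniform. This follows by induction on $a$: since $\tiF_m$ is H\'enon‑like, $\tiF_m(x,0)=(\tif_m(x),x)$, so the first iterate is \emph{exact} and the curve acquires vertical extent only $\asymp|K'|$; each subsequent iterate introduces an error at most $\|\partial_yf_m\|$ times the current vertical extent, and $\|\partial_yf_m\|<\lambda^{(1-\bepsilon)R_m}$ together with $\|\tiF_m\|_{C^1}=O(1)$ (\thmref{crit chart} ii) and \propref{c1 bound 2d}) and $a\le 2\bfb$ close the induction. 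Absorbing the now‑negligible $\lambda^{(1-\bepsilon)R_m}E$ into $E\asymp\max_{0\le j\le a}|\cJ^n_{i'+jR_m}|$ (legitimate because $R_m\ge R_{n_0}$ is large by the proper‑depth hypothesis), one concludes
$$\diam\big(\hcH_{lR_{n_0}}(\cI^n_0)\big)\ \lesssim\ |\tif_m^{\,a}(K')|+|\tif_m^{\,a-1}(K')|+\lambda^{(1-\bepsilon)R_m}E\ \lesssim\ \sum_{j=0}^{a}|\cJ^n_{i'+jR_m}|,$$
with a uniform implied constant.

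Summing over $l$ then finishes the proof. The assignment sending $l$ to the set of indices $\{\,i'(l)+jR_{\hm(lR_{n_0})}:0\le j\le\ha(lR_{n_0})\,\}\subset\{0,\dots,R_n-1\}$ has bounded multiplicity in the sense that each index $i^*$ is hit by at most $O(n)$ values of $l$: it can be the lower‑order part $i'(l)$ for at most $\sum_{m'=n_0}^{\hm(i^*)-1}(r_{m'}-1)=O(\bfb n)$ choices of $l$, and can equal $i'(l)+jR_{\hm(lR_{n_0})}$ with $j\ge 1$ for at most $O(\bfb)$ choices (the scale and the residue of $j$ being forced by $i^*$). Hence, by \propref{a priori geo}, $\sum_{l=2}^{R_n/R_{n_0}-1}\diam(\hcH_{lR_{n_0}}(\cI^n_0))\lesssim n\sum_{i=0}^{R_n-1}|\cJ^n_i|=O(n\rho^n)$. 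The single remaining term $l=R_n/R_{n_0}$ gives $\hcH_{R_n}(\cI^n_0)=F^{R_n}(\cI^n_0)=\cI^n_{R_n}$, whose diameter is $\asymp\sigma_n<\rho_2^n$ by \thmref{rescale}; adding the two contributions and enlarging $\rho$ slightly to swallow the polynomial factor yields the stated bound. I expect the main obstacle to be precisely the one‑dimensional approximation step: a direct appeal to \lemref{H vs F} would only supply an error of size $\lambda^{(1-\bepsilon)R_{\hm(lR_{n_0})}}$ per term, which — since $\hm(lR_{n_0})$ descends all the way to the fixed depth $n_0$ — is not summable over the $\asymp R_n/R_{n_0}$ relevant $l$; the resolution is to exploit that after the first (exact) H\'enon‑like iterate the relevant curves are vertically thin, so that the thinness error is multiplied by a quantity already controlled by \propref{a priori geo}.
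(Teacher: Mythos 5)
Your route is genuinely different from the paper's. The paper applies \propref{H decomp fine} to write
$$
\hcH_{lR_{n_0}}(\cI^n_0) = F\circ\bigl(\cP^{n_0}_{-1}|_{\chcI^n_l}\bigr)^{-1}\circ F^{R_{n_0}-1}\bigl(\cJ^n_{(l-1)R_{n_0}}\bigr),
$$
and the pieces $\chcI^n_l$ all sit in the uniformly horizontal set $\Gamma^n_{-1}$, so the map $G_n := F\circ(\cP^{n_0}_{-1}|_{\Gamma^n_{-1}})^{-1}\circ F^{R_{n_0}-1}$ has uniformly bounded $C^1$-norm. This sends $\cJ^n_{(l-1)R_{n_0}}\mapsto\hcH_{lR_{n_0}}(\cI^n_0)$ bijectively, so the claim is an immediate consequence of \propref{a priori geo} -- no multiplicity estimate and no polynomial factor to absorb. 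You instead work in the chart $\Psi^m$ with $m=\hm(lR_{n_0})$, which is a perfectly reasonable plan; the cost is the bounded-multiplicity bookkeeping and the extra factor of $n$, which you correctly swallow into the exponent.

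However, the step you yourself call the technical heart has a gap. You claim $\tiF_m^a(K'\times\{0\})$ is within Hausdorff distance $C\lambda^{(1-\bepsilon)R_m}E$ of the 1D graph, with $E$ the maximum of the \emph{lengths} $|\tif_m^j(K')|$. Tracing the induction, the deviation at each step is $\|\partial_y f_m\|$ times the \emph{absolute} vertical coordinate of the current curve (i.e.\ $\sup_{x\in K'}|\tif_m^{j-1}(x)|$), not its vertical \emph{extent} $|\tif_m^{j-1}(K')|$. Since $\cJ^n_{i'}$ is typically disjoint from a neighborhood of $v_0$, that absolute coordinate is $O(1)$, not $O(E)$. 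So the Hausdorff error is really $O(\lambda^{(1-\bepsilon)R_m})$, and you are back in exactly the non-summable situation you identified at the end: for the $\asymp R_n/R_{n_0}$ values of $l$ with $\hm(lR_{n_0})=n_0$, the accumulated error is $\asymp(R_n/R_{n_0})\lambda^{(1-\bepsilon)R_{n_0}}$, which blows up. The intuition ``thinness times an already-controlled quantity'' does not survive inspection as stated.

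The gap is easily closed, and in a way that makes the whole Hausdorff-distance comparison unnecessary. Since $\tiF_m$ is H\'enon-like, $\pi_v\circ\tiF_m=\pi_h$ \emph{exactly}, so
$$
\diam\bigl(\tiF_m^a(K'\times\{0\})\bigr)\ \lesssim\ \bigl|\Pi_h\bigl(\tiF_m^a(K'\times\{0\})\bigr)\bigr|\ +\ \bigl|\Pi_h\bigl(\tiF_m^{a-1}(K'\times\{0\})\bigr)\bigr|\ \asymp\ |\cJ^n_{lR_{n_0}}| + |\cJ^n_{lR_{n_0}-R_m}|,
$$
using only $\Psi^m(\cJ^n_{i'+jR_m})=\Pi_h(\tiF_m^j(K'\times\{0\}))$ and the bounded distortion of $\Psi^m$. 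No error term appears, so no self-suppression is needed, and you can drop the entire induction on $a$. The resulting bound involves only two arcs per $l$ (rather than $a+1$), and the multiplicity of the index $lR_{n_0}-R_{\hm(lR_{n_0})}$ is still $O(n)$, so your absorption step closes the argument as before. With this replacement the proof is correct -- it remains somewhat heavier than the paper's $G_n$ route, but it has the merit of not invoking the detailed structure of the curves $\chcI^n_l$ from \propref{H decomp fine}.
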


\begin{proof}
For $2 \leq l \leq R_n/R_{n_0}$, consider the curve $\chcI^n_l \subset \cU_{-1}$ given in \propref{H decomp fine}. By \eqref{eq.h commute proj}, we have
$$
\hcH_{lR_{n_0}}(\cI^n_0) = F \circ \left(\cP^{n_0}_{-1}|_{\chcI^n_l}\right)^{-1} \circ F^{R_{n_0}-1}\circ \hH_{(l-1)R_{n_0}}(\cI^n_0).
$$
Thus, $\{\hcH_{lR_{n_0}}(\cI^n_0)\}_{l=2}^{R_n/R_{n_0}}$ is the image of
$$
\{\cJ^n_{lR_{n_0}} := \hH_{lR_{n_0}}(\cI^n_0)\}_{l=1}^{R_n/R_{n_0}-1}
$$
under
$$
G_n := F \circ \left(\cP^{n_0}_{-1}|_{\Gamma^n_{-1}}\right)^{-1} \circ F^{R_{n_0}-1},
$$
where
$$
\Gamma^n_{-1} := \bigcup_{l=2}^{R_n/R_{n_0}} \chcI^n_l.
$$
Since $\Gamma^n_{-1}$ is uniformly horizontal, $\|G_n\|_{C^r} = O(1)$. The result now follows from \propref{a priori geo}.
\end{proof}

\begin{thm}\label{exp shrink}
There exists a uniform constant $\tirho \in(0,1)$ such that for $n \in \bbN$, we have
$$
\sum_{i=0}^{R_n-1}\diam(F^i(\cB^n_{R_n})) =O(\tirho^n).
$$
\end{thm}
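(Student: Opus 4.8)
The plan is to first reduce the estimate to a bound on the forward orbit of the vertical quadratic curve $\cI^n_{R_n}$, and then to obtain that bound by running the \emph{a priori} bounds machinery of Sections~\ref{sec.a priori} and~\ref{sec.cr bound} along this orbit. Since for $n<n_0$ the left-hand side is a fixed finite quantity, we may assume $n\ge n_0$; and since $F$ is infinitely renormalizable with recurrent critical orbit (\thmref{crit rec}), no renormalization is trivial, so \propref{value struct bound} is available at every depth.

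\emph{Step 1: reduction to $\sum_{j=R_n}^{2R_n-1}\diam(\cI^n_j)$.} The point is that $\cB^n_{R_n}$ is a super-exponentially thin tube around $\cI^n_{R_n}=F^{R_n}(\cI^n_0)$. Indeed, \thmref{crit chart} ii) and \eqref{eq.first entry squeeze} show that $F^{R_n-1}(\cB^n_0)$ lies in a $\lambda^{(1-\bepsilon)R_n}$-thin strip around $\cI^n_{R_n-1}$; applying $F$ once more and using that $\|DF\|$ is bounded, $\cB^n_{R_n}=F^{R_n}(\cB^n_0)$ lies in a $\lambda^{(1-\bepsilon)R_n}$-neighbourhood of $\cI^n_{R_n}$ (one also sees directly from \thmref{rescale} that $\cR^n(F)$, hence $\Psi^n\circ F^{R_n}\circ(\Psi^n)^{-1}$, is $\lambda^{(1-\bepsilon)R_n}$-thin, with $\|(\Psi^n)^{\pm1}\|_{C^1}=O(1)$). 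Fix $0\le i<R_n$. Along the orbit segment from time $R_n$ to $R_n+i$ the derivative of $F^i$ is bounded by $\lambda^{-\bepsilon R_n}$ (\propref{ext deriv bound}), so propagating the previous estimate by $F^i$ gives
$$
F^i(\cB^n_{R_n})\subset\bigl\{\,q:\dist\bigl(q,\cI^n_{R_n+i}\bigr)<\lambda^{(1-\bepsilon)R_n}\,\bigr\},
$$
whence $\diam(F^i(\cB^n_{R_n}))\le\diam(\cI^n_{R_n+i})+2\lambda^{(1-\bepsilon)R_n}$. Summing over $i$ and using that $\bfb$-bounded type with $r_n\ge 3$ forces $R_n\ge 3^{\,n-n_0}$, so that $R_n\,\lambda^{(1-\bepsilon)R_n}$ decays faster than any geometric sequence, we obtain
$$
\sum_{i=0}^{R_n-1}\diam\bigl(F^i(\cB^n_{R_n})\bigr)\le\sum_{j=R_n}^{2R_n-1}\diam\bigl(\cI^n_j\bigr)+O\bigl(\tirho^n\bigr)
$$
for any $\tirho\in(0,1)$. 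It therefore remains to prove $\sum_{j=R_n}^{2R_n-1}\diam(\cI^n_j)=O(\rho^n)$, where $\rho\in(0,1)$ is the constant of \propref{a priori geo}.

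\emph{Step 2: the orbit of $\cI^n_{R_n}$.} The curve $\cI^n_{R_n}$ is a vertical quadratic curve in $\cB^n_0$ with vertex $v_0$ and diameter $\asymp|I^n_0|^{1/2}$; applying $F$ once turns it into a uniformly horizontal curve, and its orbit $\{\cI^n_{R_n+i}\}_{1\le i<R_n}$ over one period is then governed exactly as the orbit of $\cI^n_0$ is in \secref{sec.a priori}: by the $1$D-like structure (\propref{value struct bound}), between consecutive passages near the critical point the curve stays $\lambda^{-\bepsilon R_m}$-horizontal in $\cB^m_0$ at the relevant shallow depth $m$ (Lemmas~\ref{flat}, \ref{quad flat}, \ref{unproject}), while a passage through a critical-value neighbourhood $\cV^m_{v_0}(\lambda^{\bepsilon R_m})$ produces a fold whose contribution to the total length is controlled by the quadratic estimates of Lemmas~\ref{est value} and~\ref{root at value}. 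This is precisely the configuration analysed by \thmref{a priori} (bounded distortion along the orbit) and \propref{a priori geo}, \lemref{H shrinks} (summability of the folded pieces). Carrying out the combinatorial bookkeeping of \propref{H decomp fine} and using the commutation of $F^{R_m-1}$ with the projections $\cP^{n_0}_0,\cP^{n_0}_{-1}$ on horizontal curves (\lemref{first entry hor graph}, \eqref{eq.proj to original}), one matches each $\cI^n_j$ with $R_n\le j<2R_n$—up to a $C^r$-bounded change of parametrization—to a distinct arc among $\{\cJ^{n+1}_i\}_{i=0}^{R_{n+1}-1}$ (legitimately, since $R_n,\dots,2R_n-1<R_{n+1}$ because $r_n\ge 3$, and since $\cI^{n+1}_0\subset\cI^n_0$ differs from $\cI^n_0$ only by an arc bounded away from $v_0$). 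Hence
$$
\sum_{j=R_n}^{2R_n-1}\diam\bigl(\cI^n_j\bigr)\le C\sum_{i=0}^{R_{n+1}-1}\bigl|\cJ^{n+1}_i\bigr|=O\bigl(\rho^{n+1}\bigr)=O(\rho^n)
$$
by \propref{a priori geo}. Combined with Step~1 this gives $\sum_{i=0}^{R_n-1}\diam(F^i(\cB^n_{R_n}))=O(\tirho^n)$ for any $\tirho\in(\rho,1)$, as asserted.

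\emph{The main obstacle.} The delicate step is Step~2. The post-fold arc $\cI^n_{R_n}$ is longer than $\cI^n_0$ by the factor $\asymp|I^n_0|^{-1/2}\asymp\rho^{-n}$, so its orbit cannot be treated as a small perturbation of the orbit of $\cI^n_0$; moreover the orbit passes near $v_0$ not a bounded number of times but $\asymp R_n/R_{n_0}$ times, and the projection discrepancies at shallow depths (estimated by \lemref{H vs F}) do not decay with $n$, so one cannot simply bound the $\cI^n_j$ termwise by the $\cJ^{n+1}_i$ up to small errors. What makes the argument go through is that each near-passage to $v_0$ produces a quadratically short piece, so the cumulative length remains summable—this is exactly the content packaged in \propref{a priori geo}—and the real work is to carry out the bookkeeping matching the orbit arcs $\cI^n_j$ to the projected arcs $\cJ^{n+1}_i$ with bounded multiplicity, so that \propref{a priori geo} can be invoked.
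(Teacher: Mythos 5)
Your Step 1 (passing from $\sum_i\diam(F^i(\cB^n_{R_n}))$ to $\sum_{j=R_n}^{2R_n-1}\diam(\cI^n_j)$ plus a super-exponentially small error) is sound and is essentially implicit in the paper as well. The problem is Step 2, and you yourself flag it as the ``real work'': as stated, the proposed matching does not hold. You want to pair each $\cI^n_j$ (for $R_n\le j<2R_n$) with a distinct $\cJ^{n+1}_i$ ``up to a $C^r$-bounded change of parametrization,'' which would force $\diam(\cI^n_j)\asymp|\cJ^{n+1}_i|$. But already for $j=R_n$ this fails badly: $\cI^n_{R_n}$ is the vertical fold with $\diam(\cI^n_{R_n})\asymp|I^n_0|^{1/2}\asymp\sigma_n$, while any $\cJ^{n+1}_i$ is a subarc of $\cI^{n_0}_{i\,\mathrm{mod}\,R_{n_0}}$ produced by $\hH_i$ applied to the much shorter $\cI^{n+1}_0$ (indeed $\sum_i|\cJ^{n+1}_i|=O(\rho^{n+1})$, yet individual arcs such as $\cJ^{n+1}_0=\cI^{n+1}_0$ have length $\asymp\sigma_{n+1}^2$). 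The ratio $\sigma_n/\sigma_{n+1}^2$ grows exponentially in $n$, so no bounded reparametrization can exist, and more generally the arcs $\cI^n_j$ (true iterates in phase space, carrying the vertical spread of the fold) are not objects that the $\cJ$-bookkeeping of \propref{a priori geo} directly controls. Your concluding paragraph correctly identifies this as the delicate point, but the plan as written does not supply the missing argument, and the bijective-matching idea is the wrong mechanism.

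The paper avoids this entirely by never trying to control $\diam(\cI^n_j)$ term by term. Instead it fixes an intermediate depth $n_0<m<n$; at the marker times $lR_m$ ($0\le l<R_n/R_m$) it compares $F^{lR_m}(\cB^n_{R_n})$ with the projected composition $\hcH_{lR_m}(\cI^n_0)$ via \lemref{H vs F}, so that \lemref{H shrinks} gives $\sum_l\diam(F^{lR_m}(\cB^n_{R_n}))=O(\rho^n)+(R_n/R_m)\lambda^{(1-\bepsilon)R_m}$; and for the $i$ that are not multiples of $R_m$, it writes $F^{i-lR_m}$ as a composition of $m$ rescaled return maps and uses the uniform $C^1$ bound $\|F^{i-lR_m}\|_{C^1}<K^m$ from \thmref{crit chart} and \propref{c1 bound 2d} to absorb the remaining $R_m$ intermediate iterates at the cost of a factor $R_mK^m$. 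Choosing first $m$ and then $n/m$ large eliminates both polynomial correction factors. The two-scale choice of $m$ is the key device your sketch is missing: it converts the ``bounded multiplicity'' bookkeeping you were hoping for into a coarse-graining plus a crude Lipschitz bound, which is much more robust than any arc-to-arc matching.
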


\begin{proof}
Choose $n_0 < m < n$ to be determined later. By \lemref{H vs F}, we see that for $1 \leq l < R_n/R_m$, we have
$$
\diam(F^{lR_m}(\cB^n_{R_n})) < \diam(\hcH_{lR_m}(\cI^n_0)) + \lambda^{(1-\bepsilon)R_{\hm(lR_m)}}.
$$
Thus, by \lemref{H shrinks}, we have
$$
\sum_{l=0}^{R_n/R_m-1}\diam(F^{lR_m}(\cB^n_{R_n})) = O(\rho^n) + \frac{R_n}{R_m}\lambda^{(1-\bepsilon)R_m}.
$$
For $m$ sufficiently large, the expression on the right is bounded by $O(\rho_1^n)$ for some uniform constant $\rho_1 \in (\rho, 1)$.

Let $i = j + a_0R_{n_0} + \ldots + a_{m-1}R_{m-1} + lR_m$ with $0 \leq j < R_{n_0}$; $0 \leq a_k < r_k$ for $n_0 \leq k < m$, and $1 \leq l < R_n/R_m$. We can write
$$
F^{i-lR_m} = F^j\circ (\Psi^{n_0})^{-1}\circ \tiF_{n_0}^{a_0}\circ \Psi^{n_0}\circ \ldots \circ(\Psi^{m-1})^{-1}\circ \tiF_{m-1}^{a_{m-1}}\circ \Psi^{m-1}.
$$
By \thmref{crit chart} and \propref{c1 bound 2d}, we see that
$$
\|F^{i-lR_m}\|_{C^1} < K^m
$$
for some uniform constant $K \geq 1$. Hence,
$$
\sum_{i=0}^{R_n-1}\diam(F^i(\cB^n_{R_n})) = R_mK^m\sum_{l=0}^{R_n/R_m-1}\diam(F^{lR_m}(\cB^n_{R_n})) = O(R_mK^m\rho_1^n).
$$
For $n/m$ sufficiently large, the expression on the right is bounded by $O(\tirho^n)$ for some uniform constant $\tirho \in (\rho_1, 1)$.
\end{proof}

Observe that Theorem B is an immediate consequence of \thmref{exp shrink}.


\section{Regular Unicriticality}\label{sec.unicrit}

Let $F$ be the infinitely regularly H\'enon-like renormalizable map considered in \secref{sec.cr bound}. Recall that the renormalization limit set of $F$ is given by
$$
\Lambda_F := \bigcap_{n=1}^\infty \bigcup_{i = 0}^{R_n-1} \cB^n_{R_n +i}.
$$
By Theorem B, $\Lambda_F$ supports a unique invariant probability measure $\mu$ given by the counting measure:
$$
\mu(\cB^n_i) = 1/R_n
\matsp{for}
n, i \in \bbN.
$$

\begin{prop}[Proposition 8.3 \cite{CLPY1}]\label{lya exp}
With respect to $\mu$, the Lyapunov exponents of $F$ on $\Lambda_F$ are $0$ and $\log\lambda_\mu < 0$ for some $\lambda_\mu \in (0,1)$.
\end{prop}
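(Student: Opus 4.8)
## Proof Proposal for Proposition \ref{lya exp}

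\textbf{Setup and strategy.} The plan is to compute both Lyapunov exponents directly from the renormalization structure, using the fact that the unique invariant measure $\mu$ is the "counting measure'' distributed evenly over the $R_n$ pieces $\cB^n_i$ at each depth. Since $F$ is dissipative with $\|\Jac F\| \leq \lambda < 1$, the sum of the two Lyapunov exponents is $\int \log|\Jac F| \, d\mu < 0$, so the two exponents are $\chi_1 \geq \chi_2$ with $\chi_1 + \chi_2 < 0$. The goal is to show $\chi_1 = 0$ and $\chi_2 = \log\lambda_\mu$ for some $\lambda_\mu \in (0,1)$; equivalently, the top exponent vanishes and the bottom one is strictly negative.

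\textbf{Vanishing of the top exponent.} First I would establish $\chi_1 = 0$. The upper bound $\chi_1 \leq 0$: by subadditivity and unique ergodicity, $\chi_1 = \lim_n \frac{1}{R_n}\log\|DF^{R_n}|_{T_p}\|$ for any $p \in \Lambda_F$ and $T_p$ a "generic'' direction; but by \thmref{rescale}, $F^{R_n}$ restricted to $\cB^n_0$ is, up to the bounded rescaling $\cS^n \circ \Phi^n$, the renormalization $\cR^n(F)$, whose $C^1$-norm is uniformly bounded by \propref{c1 bound 2d} (or \thmref{cr bound 2d}). Since $\|(\cS^n \circ \Phi^n)^{\pm 1}\|$ distorts derivatives by at most $\sigma_n^{-2} \asymp |I^n_0|^{-1}$, which is only exponentially large in $n$ while $R_n$ grows super-exponentially (bounded type: $R_n \leq \bfb^n$... actually $R_n \geq 2^n$ and the rescaling is $\rho_i^n$, so $\log(\text{distortion}) = O(n) = o(R_n)$), we get $\frac{1}{R_n}\log\|DF^{R_n}\|_{C^0(\cB^n_0)} \to 0$. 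This forces $\chi_1 \leq 0$. For $\chi_1 \geq 0$: the horizontal derivative cannot contract on average. Here I would use \lemref{hor deriv out}, which gives $K^{-(n-n_0)} < \|DF^T|_{E^{gh}_{p_0}}\| < K^{(n-n_0)}$ for $0 \leq T < R_n$ and $p_0 \in \cB^n_0$; taking $T = R_n$ (or a full period) and recalling $\log K^{n} = o(R_n)$, we get that the horizontal expansion rate over a full renormalization cycle is subexponential, hence $\lim_n \frac{1}{R_n}\log\|DF^{R_n}|_{E^{gh}}\| = 0$, and since $E^{gh}$ is a legitimate measurable direction field, $\chi_1 \geq 0$. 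Combining, $\chi_1 = 0$.

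\textbf{Strict negativity of the bottom exponent.} Next, $\chi_2 < 0$. Since $\chi_1 + \chi_2 = \int\log|\Jac F|\,d\mu$ and $|\Jac F| = |\partial_y f|$, and $F$ is a genuine (non-degenerate) dissipative diffeomorphism, we have $\int \log|\Jac F|\, d\mu \leq \log\lambda < 0$; combined with $\chi_1 = 0$ this gives $\chi_2 \leq \log\lambda < 0$. Set $\log\lambda_\mu := \chi_2$, so $\lambda_\mu = \exp(\chi_2) \in (0,1)$. This also shows the two exponents are distinct, so the Oseledets splitting is genuinely two-dimensional, and the direction realizing $\chi_2$ is the strong-stable direction $E^{ss}_p$ while the $\chi_1 = 0$ direction is the center direction $E^c_p$ — consistent with the discussion in \subsecref{subsec.reg unicrit}.

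\textbf{Main obstacle.} The delicate point is the lower bound $\chi_1 \geq 0$, i.e. ruling out that \emph{every} tangent direction is contracted exponentially fast under $\mu$. A priori a dissipative map could have both exponents negative (a sink-like situation). What rescues us is precisely the H\'enon-like renormalization structure: \lemref{hor deriv out} says the genuine horizontal direction experiences only subexponential distortion along any orbit segment of length $< R_n$, which is a strong non-contraction statement that ultimately comes from the uniform $C^1$-bounds on renormalizations (\thmref{rescale}, \propref{c1 bound 2d}) together with the combinatorial structure (\propref{value struct bound}). I would want to be careful that \lemref{hor deriv out}'s constant $K^{n-n_0}$, when compared against $R_n \geq 2^{n}$, indeed gives $\frac{n-n_0}{R_n}\log K \to 0$; this is where bounded type is essential. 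With that in hand the argument is routine via the Birkhoff/Oseledets ergodic theorem applied to the uniquely ergodic system $(F|_{\Lambda_F}, \mu)$. (Alternatively, one may cite \cite[Proposition 8.3]{CLPY1} directly, as the statement indicates; the above is the self-contained route.)
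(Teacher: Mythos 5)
The paper does not prove this proposition; it quotes it from \cite{CLPY1}, so your proposal has to stand on its own. Your decomposition is the right one: show the top exponent $\chi_1$ vanishes and then deduce $\chi_2 = \int\log|\Jac F|\,d\mu < 0$, and that last step is immediate. For $\chi_1\leq 0$, the idea (bound $\|D_pF^{R_n}\|$ on $\cB^n_0$ via \thmref{rescale} and \propref{c1 bound 2d}, then compare $\log\sigma_n^{-1}=O(n)$ against $R_n\geq 2^n$) is sound, though the composition $D(\cS^n)^{-1}\cdot(\ \cdot\ )\cdot D\cS^n$ contributes $\sigma_n\cdot\sigma_n^{-2}=\sigma_n^{-1}$, not $\sigma_n^{-2}$; same rate, so the conclusion survives. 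You do still need to note that a $\mu$-typical $p_0$ sits in $\cB^n_i$ with $i\neq 0$, so one must decompose a long orbit segment into full $R_n$-blocks (each starting in $\cB^n_{R_n}\subset\cB^n_0$) plus two partial blocks of length $<R_n$, whose total contribution is $o(m)$.

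The genuine gap is in $\chi_1\geq 0$. You invoke \lemref{hor deriv out}, which gives $\|DF^T|_{E^{gh}_{p_0}}\|>K^{-(n-n_0)}$ only for $p_0\in\cB^n_0$ and $T<R_n$, and conclude ``since $E^{gh}$ is a legitimate measurable direction field, $\chi_1\geq 0$.'' This does not follow as stated: $E^{gh}$ is not $DF$-invariant, a $\mu$-typical point is not in $\cB^n_0$, and the Lyapunov exponent is a limit as $m\to\infty$ whereas the lemma caps $T<R_n$ for a fixed $n$. Naively chaining $k$ blocks of length $R_n$ compounds the constant to $K^{k(n-n_0)}$, which is useless as $k\to\infty$ with $n$ fixed. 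The cleanest repair is by contradiction: if $\chi_1<0$ then both exponents are negative, a positive-$\mu$-measure Pesin block carries uniform estimates $\|D_pF^m\|<e^{-cm}$, and Poincar\'e recurrence produces a uniformly contracting return on a small disc, hence a periodic sink that must support $\mu$ --- impossible, since by Theorem~D the set $\Lambda_F$ is an infinite minimal Cantor set. Alternatively, your horizontal-derivative route can be salvaged by letting $n$ grow with $m$ (take $R_n\leq m<R_{n+1}$, so at most $\bfb$ full $R_n$-blocks plus two partials occur), but one must then also verify that $DF^i(E^{gh}_{p_0})$ stays uniformly transverse to the strong-stable direction along the orbit so that the lemma's estimate can be reapplied block by block; that bookkeeping, which you skipped, is the substance of the argument.
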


\begin{prop}[Proposition 4.1 \cite{CLPY1}]\label{homog}
For any $\eta >0$, there exist uniform constants $N_\eta \in \bbN$ and $C_\eta \geq 1$ such that the following holds. Let $p \in \cB^n_k$ and $E_p \in \bbP^2_p$, where $n \geq N_\eta$ and $k \geq 0$. Then for all $i \in \bbN$, we have:
\begin{equation}\label{eq.homog deriv}
C_\eta^{-1}\lambda_\mu^{(1+\eta)i}< \|DF^i|_{E_p}\| < C_\eta\lambda_\mu^{-\eta i}
\end{equation}
and
\begin{equation}\label{eq.homog jac}
C_\eta^{-1}\lambda_\mu^{(1+\eta)i}< \Jac_p(F^i) < C_\eta\lambda_\mu^{(1-\eta)i}.
\end{equation}
\end{prop}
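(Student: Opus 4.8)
The plan is to reduce the statement to soft ergodic theory on the limit set $\Lambda_F$ and then propagate the resulting uniform finite-time bounds to the nested pieces $\cB^n_k$. By \thmref{exp shrink}, $\Lambda_F$ is minimal and carries a unique invariant probability measure $\mu$, and by \propref{lya exp} the Lyapunov exponents of $F$ on $(\Lambda_F,\mu)$ are $0$ and $\log\lambda_\mu<0$. Since $\log\Jac F(x,y)=\log|\partial_y f(x,y)|$ is continuous on $D$ and $F|_{\Lambda_F}$ is uniquely ergodic, the Birkhoff averages $\tfrac1i\log\Jac_p F^i=\tfrac1i\sum_{j=0}^{i-1}\log\Jac_{F^j p}F$ converge \emph{uniformly} in $p\in\Lambda_F$ to $\int_{\Lambda_F}\log\Jac F\,d\mu=\log\lambda_\mu$; this gives \eqref{eq.homog jac} on $\Lambda_F$ for all large $i$, the finitely many small $i$ being absorbed into the constant via the compactness bounds on $\Jac F^{\pm1}$. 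For the derivative norm I would use that $i\mapsto\log\|DF^i_p\|$ is a subadditive cocycle over a uniquely ergodic base, so that its Kingman limit $\lambda_{\max}=0$ is approached from above uniformly: $\limsup_i\tfrac1i\log\|DF^i_p\|\le 0$ uniformly in $p\in\Lambda_F$ (Furman's uniform subadditive ergodic theorem). This yields the upper bound $\|DF^i|_{E_p}\|\le\|DF^i_p\|<\lambda_\mu^{-\eta i}$ in \eqref{eq.homog deriv}, and, combined with the Jacobian estimate and the elementary inequality $\|DF^i|_{E_p}\|\ge\Jac_p F^i/\|DF^i_p\|$ (the norm along any line dominates the smallest singular value), the lower bound $\|DF^i|_{E_p}\|>\lambda_\mu^{(1+\eta)i}$. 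At this stage \eqref{eq.homog deriv}--\eqref{eq.homog jac} hold for all $p\in\Lambda_F$.

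Next I would pass from $\Lambda_F$ to the pieces $\cB^n_k$. The geometric fact driving this is that these pieces collapse onto $\Lambda_F$: by \thmref{exp shrink} the sets $\cB^n_m$ with $m\ge R_n$ have diameter $O(\tirho^n)$ and each meets $\Lambda_F$, while the forward orbit of any point of $\cB^n_k$ stays in $\bigcup_{m\ge 0}\cB^n_m$. Using the uniform $C^1$-bounds on the renormalizations (Propositions~\ref{c1 bound 2d} and \ref{exp scale}) and, for the long orbit segments running along the non-contracting (center) direction, the uniform regularity of points of $\cB^n_0$ furnished by the a priori bounds (Propositions~\ref{vert prop nest}, \ref{pull back vert in bn} and \ref{ss in bn}, and \thmref{pres reg}), the finite-time estimates of the previous step transfer up to enlarging the constant to a uniform $C_\eta$, once $n\ge N_\eta$ is large enough that $O(\tirho^{N_\eta})$ is negligible relative to $\eta$.

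I expect this last transfer to be the main obstacle. The ergodic input is soft but only asymptotic and measure-theoretic, whereas $\cB^n_k$ is exponentially thin and anisotropic, so a naive product of $\|DF\|$ along an orbit passing through it diverges like $\sigma_n^{-j}$ in the number $j$ of returns to $\cB^n$; one must invoke the $Q$-linearization and regularity machinery to see the cancellation between expanding and contracting passages, which is exactly where the a priori bounds of \cite{CLPY3} are indispensable. Making the Furman-type bound on $\Lambda_F$ genuinely uniform (rather than merely $\mu$-a.e.) is a comparatively routine but necessary check.
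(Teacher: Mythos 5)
Note first that the paper does not prove this statement: it is imported verbatim as Proposition 4.1 of \cite{CLPY1}, so there is no in-paper argument to compare your proposal against. Evaluated on its own terms, your outline has the right two ingredients---soft ergodic theory on $\Lambda_F$ (uniform Birkhoff for the Jacobian cocycle, a Furman-type uniform subadditive bound for the derivative cocycle, and the singular-value inequality $\|DF^i|_{E_p}\|\ge |\Jac_p F^i|/\|DF^i_p\|$ for the lower bound) and a geometric transfer from $\Lambda_F$ to the pieces $\cB^n_k$. You also correctly flag the transfer as the critical step.

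Where you go astray is in identifying the mechanism of that transfer. You suggest that one needs the $Q$-linearization / a~priori bounds machinery directly ``to see the cancellation between expanding and contracting passages.'' That is not the right tool here, and it also creates a unit mismatch: those estimates are phrased in terms of the contraction base $\lambda$, not the Lyapunov base $\lambda_\mu$. The transfer is in fact soft once one observes the following: for $p\in\cB^n_k$ and any $j\ge 0$ with $k+j\ge R_m$ (for a fixed $m\le n$), the point $F^j(p)$ lies in $\cB^m_{k+j}\subset X_m:=\bigcup_{b=R_m}^{2R_m-1}\cB^m_b$, and $X_m$ is a forward-invariant compact set with $\bigcap_m X_m=\Lambda_F$ and, by \thmref{exp shrink}, $\diam$-close to $\Lambda_F$ at rate $O(\tirho^{m})$. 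So the entire forward orbit of $p$, after a transient of at most $R_m$ steps, is trapped in a prescribed small neighborhood of $\Lambda_F$. For the Jacobian, this gives a weak-$*$ compactness argument: any sequential limit of empirical measures $\nu_{p_l,i_l}$ with $n_l,i_l\to\infty$ is an invariant probability supported on $\bigcap_m X_m=\Lambda_F$, hence equals $\mu$, and this upgrades the $\mu$-a.e.\ Birkhoff limit to a uniform finite-time estimate (small $i$ being absorbed into $C_\eta$). For the derivative, one fixes a block length $T$ so that $g_T:=\tfrac1T\log\|DF^T\|<\delta$ on $\Lambda_F$ (Furman), then uses continuity of $g_T$ to get $g_T<2\delta$ on a neighborhood, then invokes the trapping above to propagate $g_T<2\delta$ along the entire forward orbit, and finally chains via subadditivity $\log\|DF^i_p\|\le\sum_b T\,g_T(F^{bT}p)+O(T)$. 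The bounded transient of length $\le R_m$ contributes a bounded amount, absorbed into $C_\eta$. The a~priori bounds and regularity machinery enter this proof only indirectly, through the proof of \thmref{exp shrink}; they are not invoked at the transfer step itself. Your worry about ``naive products $\|DF\|^j$ diverging'' is defused precisely by this block decomposition plus trapping, not by $Q$-linearization.

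So: right ingredients, correctly flagged obstacle, but the proposed resolution points at the wrong lemma. A complete proof should replace the appeal to Propositions~\ref{vert prop nest}, \ref{pull back vert in bn}, \ref{ss in bn} and \thmref{pres reg} with the explicit trapping-plus-weak-$*$/block-Furman argument sketched above, with \thmref{exp shrink} as the only geometric input from the renormalization machinery.
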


For $p \in \cB^n_0$, define
$$
E^{v,n}_p := D(\Psi^n)^{-1}(E_{\Psi^n(p)}^{gv})
$$
and
$$
E^h_p := D(\Psi^n)^{-1}(E_{\Psi^n(p)}^{gh}) = D(\Phi_0)^{-1}(E_{\Phi_0(p)}^{gh}).
$$

\begin{thm}\label{lya reg henon return}
For any $\delta >0$, there exists $L_\delta \geq 1$ such that for all $n \in \bbN$, the $n$th H\'enon-like return $(F^{R_n}, \Psi^n)$ is $(L_\delta, \delta, \lambda_\mu)$-regular.
\end{thm}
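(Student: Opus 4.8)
The plan is to upgrade the hypotheses of regular $(L,\epsilon,\lambda)$-regularity (which refer to the \emph{fixed} contraction base $\lambda$ coming from the dissipativity bound $\|\Jac F\|\le\lambda$) to regularity with respect to the \emph{dynamical} contraction base $\lambda_\mu$ coming from the unique invariant measure $\mu$ on $\Lambda_F$, with an arbitrarily small marginal exponent $\delta$. The two sources of control to be combined are: (i) the homogeneity estimates of \propref{homog}, which give, for $n\geq N_\delta$, uniform sub-exponential bounds $C_\delta^{-1}\lambda_\mu^{(1+\delta)i}<\|DF^i|_{E_p}\|<C_\delta\lambda_\mu^{-\delta i}$ and the analogous Jacobian bounds \eqref{eq.homog jac}, valid along \emph{any} direction $E_p$ at any point in a deep enough renormalization piece; and (ii) the geometric control from \thmref{crit chart}, \propref{c1 bound 2d} and a priori bounds, which says that along the horizontal direction $E^h_p$ associated to the straightening chart, the map $F^{R_n-1}$ has uniformly bounded derivative, and the vertical direction $E^{v,n}_p$ is exponentially contracted.

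First I would fix $\delta>0$, apply \propref{homog} with $\eta=\delta/2$ (say) to get $N_\delta$ and $C_\delta$, and reduce to $n\geq N_\delta$; the finitely many shallower returns are absorbed by enlarging $L_\delta$ since each is individually $(L,\epsilon,\lambda)$-regular and $R_n$ is bounded there, so one can restate the finite-time estimates in base $\lambda_\mu$ at the cost of a constant. Next, for the forward-regularity condition along $E^{v,n}_p$: take $p\in\cB^n_0$ and $1\le m\le R_n$. Writing $m=j+kR_n$ is not available here since $m<R_n$, so instead I would use that $E^{v,n}_p$ is the pullback under $DF^{R_n-1}$ of the genuinely vertical direction at the critical point (cf. \thmref{crit chart}, \eqref{eq.first entry}), together with \eqref{eq.first entry}'s bound $\|e_n\|_{C^r}<\lambda^{(1-\bepsilon)R_n}$, to see that $\|DF^m|_{E^{v,n}_p}\|$ is squeezed between $\lambda_\mu^{(1+\delta)m}$-type and $\lambda_\mu^{-\delta m}$-type bounds via \propref{homog}, and then \emph{improve} the upper bound using the genuine exponential contraction: the key point is that $\|DF^m|_{E^{v,n}_p}\|$ actually contracts like $\lambda_\mu^{(1-\delta)m}$ because the vertical direction is the strong-stable-like direction, and this refined upper bound is exactly what the forward-regularity inequality \eqref{eq.for reg} (in its $s=0$ and $s=1$ forms, the latter using \eqref{eq.homog jac}) demands. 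I expect the cleanest route is: establish the $s=0$ upper bound $\|DF^m|_{E^{v,n}_p}\|\le L_\delta\lambda_\mu^{(1-\delta)m}$ directly from the fact that the image of the vertical foliation under the whole renormalization cycle is exponentially thin (this is where a priori bounds enters, via $\|\cR^n(F)\|_{C^r}=O(1)$ and thinness), then get the $s=0$ lower bound and the two $s=1$ bounds from \propref{homog} and \eqref{eq.homog jac}. The backward-regularity condition along $E^h_q$ for $q\in F^{R_n}(\cB^n)$ is handled symmetrically: $E^h_q$ is the image under $DF^{R_n-1}$ of a horizontal direction at a point of $\cB^n_0$, along which $DF^{-m}$ \emph{expands} like $\lambda_\mu^{-(1-\delta)m}$ — again a refined bound beyond the crude $\lambda_\mu^{-\delta m}$ of \propref{homog} — so the required inequalities \eqref{eq.back reg} follow once I combine this with \eqref{eq.homog jac}. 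Finally the uniform transversality $\measuredangle(E^v_q,E^h_q)>1/L_\delta$ is already part of the $(L,\epsilon,\lambda)$-regularity hypothesis and is unchanged by passing to base $\lambda_\mu$.

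The main obstacle I anticipate is not logical but bookkeeping: \propref{homog} alone only gives $\lambda_\mu^{-\delta m}$ on \emph{all} directions and cannot by itself produce the one-sided \emph{improvement} to $\lambda_\mu^{(1-\delta)m}$ along the vertical/horizontal directions that the definition of regularity requires. So the real content of the proof is the observation that the $(L,\epsilon,\lambda)$-regularity of the deep returns, together with the a priori geometric bounds (uniform $C^r$-bounds on $\cR^n(F)$, exponential thinness, \thmref{crit chart}'s factorization \eqref{eq.henon trans}, \eqref{eq.first entry}), forces the vertical direction to genuinely contract and the horizontal direction to genuinely expand at a rate that, measured against the Lyapunov scale $R_n^{-1}\log(\text{period})$ controlled by $\mu$, differs from $\lambda_\mu$ only by the prescribed marginal factor $\delta$ once $n$ is large. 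Care must be taken that the constant $L_\delta$ depends only on $\delta$ (through $C_\delta$, $N_\delta$, $\bfK$, $\bfb$) and not on $n$; this is where I would be most careful to track which estimates are uniform. Once \thmref{lya reg henon return} is in hand, the remaining assertion of Theorem E — that $F$ is $(\delta,\epsilon)$-regularly unicritical on $\Lambda_F$ — follows by feeding this theorem, together with \propref{crit value} (the critical value $v_0$ and its quadratic strong-stable/center tangency) and the avoidance-of-critical-value estimates of \secref{sec.crit avoid} (\propref{pull back vert in bn}: points whose orbit stays $t\lambda_\mu^{\epsilon n}$-away from $\cO_{\crit}$ are $N$-times forward regular with constant $L(t)$), directly into \defnref{def unicrit}.
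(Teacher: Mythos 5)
Your overall strategy --- combining the homogeneity estimates of \propref{homog} (which come from unique ergodicity of $\mu$) with the geometric control from the a priori bounds to upgrade the contraction base from $\lambda$ to $\lambda_\mu$ and the marginal exponent to an arbitrary $\delta$ --- is the correct one, and you rightly identify the crux: \propref{homog} alone gives only the two-sided $\lambda_\mu^{\mp\eta m}$ bounds, not the one-sided $\lambda_\mu^{(1-\delta)m}$ improvement that the regularity inequalities demand. But both halves of your execution have genuine gaps.

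For the forward case, your plan to obtain the $s=0$ upper bound $\|DF^m|_{E^{v,n}_p}\|\le L_\delta\lambda_\mu^{(1-\delta)m}$ ``directly from thinness of the vertical foliation under the whole cycle'' does not work as stated: thinness of $\cR^n(F)$ controls the contraction over the full return time $R_n$, not over intermediate times $1\le m<R_n$, and the $(L,\epsilon,\lambda)$-regularity bound $\lambda^{(1-\epsilon)m}$ that you do have for intermediate $m$ is in the wrong base (since $\lambda\ge\lambda_\mu$, it does not dominate $\lambda_\mu^{(1-\delta)m}$). The paper avoids this entirely: \lemref{hor deriv out} shows that $\|DF^T|_{E^{gh}_{p_0}}\|$ stays sub-exponentially pinched between $K^{\mp(n-n_0)}$ for all $T<R_n$; combined with the Jacobian bound \eqref{eq.homog jac}, this gives forward \emph{horizontal} $(O(1),\baeta,\lambda_\mu)$-regularity along $E^h_{p_0}$; and then \propref{transverse for reg} converts horizontal forward regularity into (vertical) forward regularity. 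You do not mention this conversion lemma, which exists precisely to make this step painless, and without it your forward argument is incomplete.

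For the backward case, your assertion that it is ``handled symmetrically'' is the more serious error. The backward orbit repeatedly passes near the critical point, where the return map degenerates quadratically, and the horizontal/center direction picks up anomalous derivative loss at each such passage that \propref{homog} alone cannot control. The paper handles this by reusing the tempered-sequence decomposition from the proof of \thmref{pres reg}: the iterate $S$ is split off a tempered tail $S'$ whose blocks $s_iR_{n_i}$ satisfy the temperedness condition, \lemref{temper in} and \eqref{eq.temper in jac} bound the derivative and angle over $S'$, the untempered head $S-S'$ is absorbed because it is short compared to $S'$, and throughout the rough bounds \propref{ext deriv bound}, \propref{jac bound} are replaced by the sharper \eqref{eq.homog deriv}, \eqref{eq.homog jac}. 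Without reproducing some version of this bookkeeping, your backward half has no content. Your handling of the transversality condition and the reduction to $n\ge N_\delta$ are fine, and your sketch of how the theorem then feeds into Theorem E is essentially right.
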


\begin{proof}
Choose $\eta \in (0, \udelta)$. It suffices to show the result for $n \geq \max\{n_0, N_\eta\}$, where $N_\eta$ is given in \propref{homog}. Let $p_0 \in \cB^n_0$. By \lemref{hor deriv out} and \eqref{eq.homog jac}, we see that $p_0$ is $R_n$-times forward $(O(1), \baeta, \lambda_\mu)$-regular horizontally along $E_{p_0}^h$. The required forward $(L_\delta, \delta, \lambda_\mu)$-regularity (vertically) along
$$
E_{p_0}^{v,n} := (D\Psi^n)^{-1}(E_{\Psi^n(p_0)}^{gv})
$$
now follows from \propref{transverse for reg}.

Let $q_0 := p_{R_n}$. We claim that $q_{-1}$ is backward regular (vertically) along
$$
E_{q_{-1}}^v := (D\Phi_{-1})^{-1}(E_{\Phi_{-1}(q_{-1})}^{gv}).
$$
We argue similarly as in the proof of \thmref{pres reg}.

Let $S = lR_{n_0}$ for some $1 \leq l < R_n/R_{n_0}$. Write
$$
S = s_1R_{n_1} + \ldots + s_kR_{n_k},
$$
where $1 \leq s_i < r_{n_i}$ for $1 \leq i \leq k$. Let $1 \leq m \leq k$ be the smallest number such that $\{n_i\}_{i=m}^k$ is tempered. Denote
$$
S' := s_{n_m}R_{n_m} + s_{n_{m+1}}R_{n_{m+1}} + \ldots + s_{n_k}R_{n_k},
$$
and let
$$
\hE_{q_{-1}}:= DF^{S'-1}(E_{q_{-S'}}^{gh}).
$$
Subbing in \eqref{eq.temper in pf} and \eqref{eq.homog jac} (instead of \propref{jac bound}) into \eqref{eq.temper in jac}, we obtain
$$
K^{-(k-m)}C_\eta^{-1}\lambda_\mu^{(1+\eta)S'}< \|DF^{-S'+1}|_{E_{q_{-1}}^v}\| < K^{k-m}C_\eta \lambda_\mu^{(1-\eta)S'}.
$$
Then following the same argument as in the proof of \thmref{pres reg} (but using \eqref{eq.homog deriv} and \eqref{eq.homog jac} instead of \propref{ext deriv bound} and \propref{jac bound} respectively), we conclude that $q_{-1}$ is $(R_n-1)$-times backward $(L_\delta, \delta, \lambda_\mu)$-regular (vertically) along $E_{q_{-1}}^v$.
\end{proof}

Recall that by \thmref{crit rec}, we have
$$
\bigcap_{n=1}^\infty \cB^n_{R_n} = \{v_0\}.
$$

\begin{thm}
The orbit $\{v_m\}_{m\in\bbZ}$ is a regular quadratic critical orbit.
\end{thm}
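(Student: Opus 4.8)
The plan is to assemble the structural description of the critical value furnished by \propref{crit value} with the sharp, Lyapunov-scale regularity of the H\'enon-like returns established in \thmref{lya reg henon return}, and then to check that the resulting data is exactly what the definition of a regular quadratic critical orbit in \subsecref{subsec.reg unicrit} asks for. First, $v_0\in\Lambda_F$: by \thmref{crit rec} we have $\{v_0\}=\bigcap_n\cB^n_{R_n}$, and since $\cB^n_{R_n}\subset\bigcup_{i=0}^{R_n-1}\cB^n_{R_n+i}$ for every $n$, the point $v_0$ lies in $\Lambda_F$; hence so does the whole orbit $\cO_{\crit}=\{v_m\}_{m\in\bbZ}$.

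Next, by \thmref{lya reg henon return}, for every $\delta>0$ there is $L_\delta\geq1$ such that each return $(F^{R_n},\Psi^n)$ is $(L_\delta,\delta,\lambda_\mu)$-regular; thus $F$ is infinitely $(L_\delta,\delta,\lambda_\mu)$-regularly H\'enon-like renormalizable, and \propref{crit value} applies with contraction base $\lambda_\mu$ and marginal exponent $\delta$. It yields a unique strong-stable direction $E^{ss}_{v_0}$ along which $v_0$ is infinitely forward $(L_\delta,\delta,\lambda_\mu)$-regular, with strong-stable manifold $W^{ss}(v_0)$; a unique center direction $E^c_{v_{-1}}$ at the critical point $v_{-1}=F^{-1}(v_0)$, with an associated center manifold $W^c(v_{-1})$ and with $v_{-1}$ infinitely backward $(L_\delta,\delta,\lambda_\mu)$-regular along $E^v_{v_{-1}}$; and the identity $E^{ss}_{v_0}=DF(E^c_{v_{-1}})$ together with the fact that $W^{ss}(v_0)$ and $W^c(v_0):=F(W^c(v_{-1}))$ form a quadratic tangency at $v_0$. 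Setting $E^*_{v_0}:=E^{ss}_{v_0}=E^c_{v_0}$ with $E^c_{v_0}:=DF(E^c_{v_{-1}})$, we get at once that $W^{ss}_{\loc}(v_0)\subset W^{ss}(v_0)$ and $W^c(v_0)$ have a quadratic tangency at $v_0$.

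It remains to verify \eqref{eq.for ly reg} and \eqref{eq.back ly reg} for $v_0$ with this direction $E^*_{v_0}$. The forward identity is immediate: the inequalities \eqref{eq.for reg} for $v_0$ along $E^{ss}_{v_0}$ (with $L=L_\delta$, $\epsilon=\delta$, $\lambda=\lambda_\mu$) give $L_\delta^{-1}\lambda_\mu^{(1+\delta)m}\leq\|DF^m|_{E^{ss}_{v_0}}\|\leq L_\delta\lambda_\mu^{(1-\delta)m}$ for all $m$, so that $\tfrac1m\log\|DF^m|_{E^*_{v_0}}\|\to\log\lambda_\mu$ after letting $m\to\infty$ and then $\delta\to0$. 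For the backward identity, write $\|DF^{-m}|_{E^c_{v_0}}\|=\|DF^{-(m-1)}|_{E^c_{v_{-1}}}\|\cdot\|DF^{-1}|_{E^c_{v_0}}\|$, so it suffices to show $\tfrac1m\log\|DF^{-m}|_{E^c_{v_{-1}}}\|\to0$; this is to be extracted from the backward $(L_\delta,\delta,\lambda_\mu)$-regularity of $v_{-1}$ along $E^v_{v_{-1}}$ (both the $s=0$ and $s=1$ cases of \eqref{eq.back reg}), the dissipativity bound $\Jac_{v_{-1}}F^{-m}\asymp\lambda_\mu^{-(1\pm\bepsilon)m}$ coming from \propref{homog}, and the Pesin quasi-linearization of the backward orbit of $v_{-1}$ ($\thmref{reg chart}$, $\thmref{center jet}$), which controls the position of $E^c_{v_{-1-m}}$ relative to $E^v_{v_{-1-m}}$ and lets one read off the neutral growth of $\|DF^{-m}|_{E^c_{v_{-1}}}\|$ up to a factor $\lambda_\mu^{\pm\bepsilon m}$; letting $m\to\infty$ and then $\delta\to0$ gives the claim. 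With \eqref{eq.for ly reg}, \eqref{eq.back ly reg}, $E^{ss}_{v_0}=E^c_{v_0}$, and the quadratic tangency all in hand, $\{v_m\}_{m\in\bbZ}$ is a regular quadratic critical orbit.

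The main obstacle is the backward identity \eqref{eq.back ly reg}. Since the direction along which $v_{-1}$ is infinitely backward regular is the strong-stable direction $E^v_{v_{-1}}=E^{ss}_{v_{-1}}$, which expands backward at rate $\lambda_\mu^{-1}$, one cannot simply read off the (neutral) growth of the center direction; it must be disentangled from the Jacobian relation using dissipativity together with the quasi-linearization near $\cO_{\crit}$, with the $\delta\to0$ limit performed only at the very end. Everything else is a routine unwinding of \propref{crit value} and \thmref{lya reg henon return} against the definitions of \subsecref{subsec.reg unicrit}.
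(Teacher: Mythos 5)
The proof is correct and takes essentially the same route as the paper's: \thmref{lya reg henon return} upgrades the constants to $(L_\delta, \delta, \lambda_\mu)$ for all $\delta>0$, \propref{crit value} supplies $E^{ss}_{v_0}$, $E^c_{v_{-1}}$, the identity $E^{ss}_{v_0}=DF(E^c_{v_{-1}})$ and the quadratic tangency, and one then checks \eqref{eq.for ly reg} and \eqref{eq.back ly reg} at $v_0$ by letting $m\to\infty$ and then $\delta\to 0$. The paper's own proof compresses the entire Lyapunov-exponent check into the single assertion that $v_0$ is ``infinitely forward and backward $(L_\delta,\delta,\lambda_\mu)$-regular along $E^*_{v_0}$''; read literally against \eqref{eq.back reg}, that phrase would say $E^*_{v_0}$ expands backward like $\lambda_\mu^{-m}$, which is not what is wanted (and not what is true, since $E^*_{v_0}=E^c_{v_0}$ is backward-neutral). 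You avoid this by instead routing the backward exponent through $v_{-1}$: the chain rule $\|DF^{-m}|_{E^c_{v_0}}\|=\|DF^{-(m-1)}|_{E^c_{v_{-1}}}\|\,\|DF^{-1}|_{E^c_{v_0}}\|$, together with the backward regularity of $v_{-1}$ along $E^v_{v_{-1}}$ (both $s$-cases of \eqref{eq.back reg}) and the Jacobian relation, squeezes out $\|DF^{-m}|_{E^c_{v_{-1}}}\|\asymp\lambda_\mu^{\pm\bdelta m}$, whence the exponent is $0$ after $\delta\to 0$. This is exactly the content the paper's one-liner is gesturing at, unwound correctly; your version is the more careful rendering. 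The only soft spot is that the extraction of the neutral growth of $E^c_{v_{-1}}$ is sketched rather than carried out (and the cited \thmref{center jet} is not really needed there — \propref{hor angle shrink} and \propref{ver hor cons} are the relevant quasi-linearization tools), but this matches the level of detail in the paper itself.
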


\begin{proof}
By \thmref{lya reg henon return}, $v_0$ is infinitely forward and backward $(L_\delta, \delta, \lambda_\mu)$-regular along $E^*_{v_0} = E^{ss}_{v_0} = E^c_{v_0}$ for all $\delta > 0$. Thus, $\{v_m\}_{m\in\bbZ}$ is a regular critical orbit. The quadratic tangency of $W^{ss}(v_0)$ and $W^c(v_0)$ at $v_0$ is given in \propref{crit value} iii).
\end{proof}

\subsection{Critical cover}

Let $\delta = \bepsilon$ for some $\epsilon \in (0, 1)$. Choose $\eta \in (0, \uepsilon)$. \propref{homog} and \thmref{lya reg henon return} imply that by replacing $F$ with $\cR^{n_0}(F)$ for some $n_0 \in \bbN$ sufficiently large, we may henceforth assume the following.
\begin{itemize}
\item Conditions \eqref{eq.proper depth 0} and \eqref{eq.proper depth 2} hold with $\lambda = \lambda_\mu$ and $n_0 = 0$.
\item The map $F$ is {\it $\eta$-homogeneous}: for all $p \in \cB$ and $E_p \in \bbP^2_p$, we have
$$
\lambda_\mu^{1+\eta}< \|DF|_{E_p}\| < \lambda_\mu^{-\eta}
\matsp{and}
\lambda_\mu^{1+\eta}< \Jac_p F < \lambda_\mu^{1-\eta}.
$$
\item For $n \in \bbN$, the $n$th H\'enon-like return $(F^{R_n}, \Psi^n)$ is $(1, \eta, \lambda_\mu)$-regular.
\end{itemize}

Denote $\epsilon' := (1+\bepsilon)\epsilon > \epsilon$. For $z = (a,b) \in B^n_0$ and $t \geq 0$, let
$$
V_z(t) := [a - t, a+t] \times I^v_0.
$$
For $p \in \cB^n_0$ and $t \geq 0$, let
$$
\cV_p^n(t) := (\Psi^n)^{-1}(V_{\Psi^n(p)}(t)).
$$
Lastly, for $t > 0$ and $p \in \bbR^2$, denote
$$
\bbD_p(t) := \{q \in \bbR^2 \; | \; \dist(q, p) < t\}.
$$

We now show that $F$ is $(\delta, \epsilon)$-regularly unicritical on $\Lambda_F$ (see \defnref{def unicrit}). First, we need to define a suitable cover of the iterated preimages of critical value $v_0$. For $n \in \bbN$ and $1 \leq i < r_n$, let $\cC^n$ be the connected component of
$$
\cB^n_{R_n} \cap \cV_{v_{-R_n}}^n(\lambda_\mu^{\epsilon' R_n})
$$
containing $v_{-R_n}$. Define
$
\cC^n_i := F^i(\cC^n)
$
for
$
0\leq j <R_n,
$
and
$$
\bfC^N:=\bigcup_{n=1}^{N+1}\bigcup_{i =0}^{R_n-1} \cC^n_i.
$$
Note that $\{v_{-i}\}_{i=1}^{R_{N+1}} \subset \bfC^N.$

\begin{prop}
We have
$
\diam(\cC^n_i) < \lambda_\mu^{\epsilon R_n}.
$
Consequently,
$$
\bfC^N \subset \bigcup_{i=1}^{R_{N+1}} \bbD_{v_{-i}}(\lambda_\mu^{\epsilon i}).
$$
\end{prop}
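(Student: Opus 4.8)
The plan is to prove the diameter bound $\diam(\cC^n_i) < \lambda_\mu^{\epsilon R_n}$ for all $n \in \bbN$ and $0 \leq i < R_n$, and then deduce the covering inclusion as an immediate consequence.

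\textbf{Step 1: Bound the diameter at time $R_n$.} By construction, $\cC^n$ is the connected component of $\cB^n_{R_n} \cap \cV^n_{v_{-R_n}}(\lambda_\mu^{\epsilon' R_n})$ containing $v_{-R_n}$, where $\epsilon' = (1+\bepsilon)\epsilon$. The set $\cV^n_{v_{-R_n}}(\lambda_\mu^{\epsilon' R_n}) = (\Psi^n)^{-1}(V_{\Psi^n(v_{-R_n})}(\lambda_\mu^{\epsilon' R_n}))$ is a thin vertical strip of horizontal width $\asymp \lambda_\mu^{\epsilon' R_n}$ in the $\Psi^n$-coordinates, capped vertically by the bounded interval $I^v_0$. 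Since $\|(\Psi^n)^{\pm 1}\|_{C^r} < K_0$ is uniformly bounded (by \propref{vert prop nest} ii) and \thmref{crit chart}), $\cC^n$ has horizontal extent $O(\lambda_\mu^{\epsilon' R_n})$ and vertical extent $O(1)$ in the genuine coordinates. I expect this step to require a slightly more careful argument than just "thin strip": one must intersect with $\cB^n_{R_n}$ and use that $\cB^n_{R_n} = F^{R_n}(\cB^n_0)$ together with the a priori geometric control — specifically \thmref{exp shrink} (or \propref{a priori geo}) gives $\diam(\cB^n_{R_n}) = O(\tirho^n)$, hence the vertical extent of $\cC^n$ is in fact also exponentially small, and the intersection with the width-$\lambda_\mu^{\epsilon' R_n}$ strip forces $\diam(\cC^n) = O(\lambda_\mu^{\epsilon' R_n}) < \lambda_\mu^{\epsilon R_n}$ after absorbing constants (using $\epsilon' > \epsilon$ and $R_n$ large).

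\textbf{Step 2: Propagate the bound forward.} For $0 \leq i < R_n$, write $\cC^n_i = F^i(\cC^n)$. The key point is that the forward orbit $\{\cB^n_{R_n + j}\}_{j=0}^{R_n - 1}$ consists of the pieces whose total diameter is $O(\tirho^n)$ by \thmref{exp shrink}, and $\cC^n \subset \cC^n_0 = \cB^n_{R_n}$, so $\cC^n_i \subset \cB^n_{R_n + i}$ for $0 \leq i < R_n$. Thus a crude bound $\diam(\cC^n_i) \leq \diam(\cB^n_{R_n+i}) = O(\tirho^n)$ holds but may not directly give the $\lambda_\mu^{\epsilon R_n}$ form with the right constant; the sharper route is to track $\cC^n$ under $F^i$ using $\eta$-homogeneity: $\|DF^i\| < \lambda_\mu^{-\eta i} \leq \lambda_\mu^{-\eta R_n}$, so $\diam(\cC^n_i) < \lambda_\mu^{-\eta R_n}\diam(\cC^n) = O(\lambda_\mu^{(\epsilon' - \eta)R_n})$. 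Since $\eta \in (0, \uepsilon)$ was chosen small and $\epsilon' = (1+\bepsilon)\epsilon$, we have $\epsilon' - \eta > \epsilon$, so $\diam(\cC^n_i) < \lambda_\mu^{\epsilon R_n}$ for $R_n$ large enough; combined with the assumption $n_0 = 0$ (i.e.\ we have already passed to a deep renormalization), this holds for all $n \in \bbN$.

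\textbf{Step 3: Deduce the covering.} Fix $n$ and $0 \leq i < R_n$. The point $v_{-R_n + i} = F^i(v_{-R_n}) \in \cC^n_i$, so every point of $\cC^n_i$ lies within $\lambda_\mu^{\epsilon R_n}$ of $v_{-R_n+i}$, i.e.\ $\cC^n_i \subset \bbD_{v_{-R_n+i}}(\lambda_\mu^{\epsilon R_n})$. Writing $j = R_n - i$ with $1 \leq j \leq R_n$, and noting $R_n \geq j$ so $\lambda_\mu^{\epsilon R_n} \leq \lambda_\mu^{\epsilon j}$, we get $\cC^n_i \subset \bbD_{v_{-j}}(\lambda_\mu^{\epsilon j})$. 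Taking the union over $1 \leq n \leq N+1$ and $0 \leq i < R_n$ yields $\bfC^N \subset \bigcup_{j=1}^{R_{N+1}} \bbD_{v_{-j}}(\lambda_\mu^{\epsilon j})$, as claimed. The main obstacle is Step 1 — pinning down that the component $\cC^n$ genuinely has diameter of order the strip width $\lambda_\mu^{\epsilon' R_n}$ (not merely $O(1)$ from the vertical direction); this is where one must invoke the exponential shrinking of the pieces $\cB^n_{R_n}$ from \thmref{exp shrink} to control the vertical extent, making the intersection truly small in both directions.
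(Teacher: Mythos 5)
There is a genuine gap in Step 1. You correctly identify that the naive ``thin vertical strip'' picture only controls the horizontal extent of $\cC^n$, so you need to bound the vertical extent. However, your proposed fix — invoking \thmref{exp shrink} (or \propref{a priori geo}) to get $\diam(\cB^n_{R_n}) = O(\tirho^n)$ — is far too weak. The quantity $\tirho^n$ is only \emph{exponentially} small in $n$, whereas the target $\lambda_\mu^{\epsilon R_n}$ is \emph{super-exponentially} small, since $R_n \geq 2^n$. Thus $\lambda_\mu^{\epsilon R_n} \ll \tirho^n$ for large $n$, and intersecting a set of diameter $O(\tirho^n)$ with a strip of width $\lambda_\mu^{\epsilon' R_n}$ cannot give a component of diameter $O(\lambda_\mu^{\epsilon' R_n})$: the component still has vertical extent up to $O(\tirho^n)$, so you only obtain $\diam(\cC^n) = O(\tirho^n)$, which does not yield the claimed bound.

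The missing geometric input is the following, which is what the paper's proof relies on. By \thmref{crit chart}~ii), $\cB^n_{R_n}$ is in fact a $\lambda_\mu^{(1-\bepsilon)R_n}$-thin strip around the vertical quadratic curve $\cI^n_{R_n} = F^{R_n}(\cI^n_0)$, whose vertical tangency sits within $\lambda_\mu^{(1-\baeta)R_n}$ of $v_0$. By \propref{value struct bound}, the strip $\cV^n_{v_{-R_n}}(\lambda_\mu^{\baeta R_n})$ is disjoint from $\cV^n_{v_0}(\lambda_\mu^{\baeta R_n})$, so the piece $\Gamma^n$ of $\cI^n_{R_n}$ meeting $\cV^n_{v_{-R_n}}(\lambda_\mu^{\epsilon'R_n})$ lies well away from the tangency; by \lemref{quad flat}, $\Gamma^n$ is $\lambda_\mu^{-\baeta R_n}$-horizontal. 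Consequently $\cC^n$ is an approximately rectangular region of thickness $\lambda_\mu^{(1-\bepsilon)R_n}$ and length $\asymp \lambda_\mu^{\epsilon' R_n}\lambda_\mu^{-\baeta R_n}$, giving $\diam(\cC^n) \asymp |\Gamma^n| < \lambda_\mu^{(\epsilon' - \baeta)R_n}$ — super-exponentially small as required. Your Steps~2 and~3 (propagation via $\eta$-homogeneity and the covering deduction) are essentially the same as the paper's and are sound once the Step~1 bound is corrected.
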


\begin{proof}
By \thmref{crit chart} ii), $\cB^n_{R_n}$ is a $\lambda_\mu^{(1-\bepsilon)R_n}$-thick strip around the curve $F^{R_n}(\cI^n_0)$, which is vertical quadratic in $\cB^n_0$ with the vertical tangency $\lambda_\mu^{(1-\baeta)R_n}$-close to $v_0$. By \propref{value struct bound}, we have
$$
\cV_{v_{-R_n}}(\lambda_\mu^{\baeta R_n})\cap \cV_{v_0}(\lambda_\mu^{\baeta R_n}) = \varnothing.
$$
By \lemref{quad flat}, the connected component $\Gamma^n$ of the curve 
$$
\cI^n_{R_n} \cap \cV_{v_{-R_n}}(\lambda_\mu^{\baeta R_n})
$$
is $\lambda^{-\baeta R_n}$-horizontal in $\cB^n_0$. Consequently,
$$
\diam(\cC^n) \asymp |\Gamma^n|< \lambda^{-\baeta R_n}\lambda^{\epsilon' R_n}.
$$
Then by $\eta$-homogeneity of $F$, we have
$$
\diam(\cC^n_i) < \lambda^{-\baeta i}\diam(\cC^n)
$$
for $0\leq i < R_n$. The result follows.
\end{proof}

\subsection{Forward regularity away from the critical cover}

For all $p \in \Lambda_F \setminus \{v_0\}$, there exists a unique number $d_p \geq 0$ such that $p \in \cB^{d_p}_0 \setminus \cB^{d_p+1}_0$. Define $\depth(p) := d_p$. If $p = v_0$, define $\depth(p) = \infty$. Let $p_0 \in \Lambda_F$. For $N \in \bbN$, let $0 \leq S \leq N$ be the largest number satisfying
$$
d = \depth(p_S) \geq \depth(p_i)
\matsp{for}
0\leq i \leq N.
$$
Define the {\it valuable moment} and the {\it valuable depth of the $N$-times forward orbit of $p_0$} as
$$
\vm(p_0, N) := S
\matsp{and}
\vd(p_0, N) := d
$$
respectively.

\begin{lem}\label{reg induct}
Let $p_0 \in \Lambda_F$ and $N \in \bbN$. Denote $S:=\vm(p_0, N)$ and $d := \vd(p_0, N)$. Write
$$
S = s_0R_0 + s_1 R_1+ \ldots +s_d R_d,
$$
where $0 \leq s_i < r_i$ for $0\leq i \leq d$. If $p_0 \setminus \bfC^d$, then for $0 \leq n \leq d$ and $0 \leq s \leq s_n$, we have
$$
p_{S_{n-1} + sR_n} \notin \cV^n_{v_0}(\lambda_\mu^{\bepsilon R_n})
\matsp{where}
S_{n-1} := s_0 R_0 + \ldots +s_{n-1} R_{n-1}.
$$
\end{lem}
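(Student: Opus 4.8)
The plan is to prove the contrapositive: assuming that $p_{S_{n-1}+sR_n}\in\cV^n_{v_0}(\lambda_\mu^{\bepsilon R_n})$ for some $0\le n\le d$ and $0\le s\le s_n$, I will deduce $p_0\in\bfC^d$. I would organize the argument as a descent, running the orbit backward from the offending aligned time down to time $0$ and tracking, at each aligned moment, how closely the orbit shadows the backward critical orbit $\{v_{-m}\}$; equivalently this can be phrased as a double induction, downward on the renormalization depth $n$ and, within a fixed depth, downward on $s$. Throughout I would use that $F$ may be taken $\eta$-homogeneous with $\eta\ll\epsilon$ (as in the setup of \secref{sec.unicrit}), so that $\|DF^{\pm k}\|<\lambda_\mu^{-\baeta k}$.

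First I would record a purely combinatorial fact: for every $0\le n\le d$ and $0\le s\le s_n$ the aligned point $p_{S_{n-1}+sR_n}$ lies in $\cB^n_{R_n}$. Since $p_S$ realizes the maximal depth $d$ among $p_0,\dots,p_N$, we have $p_S\in\cB^d_{R_d}\Subset\cB^n_{R_n}$ (using $\cB^{m+1}_{R_{m+1}}\Subset\cB^m_{R_m}$ from \secref{sec.conv chart}); writing $p_0\in\cB^n_{R_n+j_n}$ with $0\le j_n<R_n$, the inclusion $F^S(p_0)\in\cB^n_{R_n}$ forces $S\equiv -j_n\md{R_n}$, and since $S_{n-1}=S\bmod R_n$ and $sR_n\equiv 0\md{R_n}$ this yields $S_{n-1}+sR_n\equiv -j_n\md{R_n}$, whence $p_{S_{n-1}+sR_n}\in\cB^n_{R_n}$. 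In particular the offending point $q:=p_{S_{n-1}+sR_n}$ lies in $\cB^n_{R_n}$, and when $s\ge1$ so does $p_{S_{n-1}+(s-1)R_n}$.

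Next comes the geometric core. By \thmref{crit chart} ii), in the form used in the proof of the proposition preceding this lemma, $\cB^n_{R_n}$ is a $\lambda_\mu^{(1-\bepsilon)R_n}$-thin neighborhood of the vertical quadratic curve $\cI^n_{R_n}=F^{R_n}(\cI^n_0)$ whose vertex is $\lambda_\mu^{(1-\baeta)R_n}$-close to $v_0$. Intersecting such a neighborhood with the vertical slab $\cV^n_{v_0}(\lambda_\mu^{\bepsilon R_n})$ of horizontal width $\lambda_\mu^{\bepsilon R_n}$, the quadratic shape converts this width into a vertical deviation of order $\lambda_\mu^{\bepsilon R_n/2}$, so $q$ is confined to a $\lambda_\mu^{\ueta R_n}$-ball about $v_0$. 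Applying $F^{-R_n}$ then gives $\|p_{S_{n-1}+(s-1)R_n}-v_{-R_n}\|<\lambda_\mu^{-\baeta R_n}\lambda_\mu^{\ueta R_n}<\lambda_\mu^{\epsilon' R_n}$ with $\epsilon':=(1+\bepsilon)\epsilon$; by \lemref{quad flat} this Euclidean closeness passes to the $\Psi^n$-slab, so (with Step 1, for $s\ge1$) the point $p_{S_{n-1}+(s-1)R_n}$ lies in the connected component of $\cB^n_{R_n}\cap\cV^n_{v_{-R_n}}(\lambda_\mu^{\epsilon' R_n})$ containing $v_{-R_n}$, i.e.\ $p_{S_{n-1}+(s-1)R_n}\in\cC^n$. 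Iterating the pull-back within the fixed depth $n$ (each step of $F^{-R_n}$ costs only a $\lambda_\mu^{-\baeta R_n}$ factor and there are at most $s_n<r_n\le\bfb$ of them) shows that $p_{S_{n-1}}$ lies within $\lambda_\mu^{\epsilon' R_n}$-scale of $v_{-sR_n}$; shadowing the forward orbit of $v_{-sR_n}$ for a suitable multiple of $R_{n-1}$ steps (again a controlled, forward derivative bound) and invoking \propref{value struct bound}, \eqref{eq.min domain} and Lemmas \ref{flat}, \ref{quad flat}, \ref{quad straight} to match the combinatorial phase, one lands a later orbit point in $\cC^{n-1}$; since $\cC^{n-1}_i=F^i(\cC^{n-1})\subseteq\bfC^d$ for $0\le i<R_{n-1}$, the descent continues one step coarser, and because $T\le S<R_{d+1}$ it terminates after finitely many steps with $p_0$ inside $\bfC^d=\bigcup_{m=1}^{d+1}\bigcup_{i=0}^{R_m-1}\cC^m_i$, contradicting $p_0\notin\bfC^d$.

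The main obstacle is precisely this descent bookkeeping: the scales $\lambda_\mu^{\epsilon' R_m}$ become exponentially finer as $m$ grows, while a naive backward iteration keeps closeness only at roughly constant absolute size, so one cannot simply pull back across many renormalization depths — the argument must slide between the depth-$n$ and depth-$(n-1)$ descriptions using the renormalization combinatorics (moving partly forward along the shadowed critical orbit, where the operator-norm growth $\lambda_\mu^{-\baeta(\cdot)}$ is benign precisely because $\eta\ll\epsilon/\bfb$), and at each transition verify that the phase congruence places the point in the component of $\cB^m_{R_m}\cap\cV^m_{v_{-R_m}}(\cdot)$ that actually contains $v_{-R_m}$ and that the quadratic estimates survive the fold. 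The delicate endpoint is the case $S_{n-1}=s=0$ (so $T=0$): here one cannot conclude $p_0\in\bfC^d$ directly, since $\bfC^d$ covers only the strict backward orbit of $v_0$, and it must instead be excluded by showing that $p_0\in\cV^n_{v_0}(\lambda_\mu^{\bepsilon R_n})\cap\cB^n_{R_n}$ forces the forward orbit of $p_0$ to shadow $\{v_m\}$ and reach depth $d+1$ before time $N$, contradicting $d=\vd(p_0,N)<\infty$ (equivalently, that $p_0\ne v_0$).
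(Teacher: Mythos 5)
Your proposal correctly identifies that the contrapositive should be proved and that the quadratic shape of $\cB^n_{R_n}$ near $v_0$ together with $\eta$-homogeneity is the key geometric input, but the descent you set up does not close, and in fact is the wrong granularity of pullback. You pull back from $p_{S'}$ (with $S':=S_{n-1}+sR_n$) by $F^{-R_n}$, one coarse return time at a time, and then try to slide across renormalization depths by shadowing the critical orbit. As you yourself flag, this runs into the scale mismatch: the crescents $\cC^m$ at deeper depths $m$ live at exponentially finer scales $\lambda_\mu^{\epsilon' R_m}$, while a step-by-step backward iteration only preserves closeness up to factors $\lambda_\mu^{-\baeta R_n}$ at the \emph{current} depth. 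Your proposed remedy --- "slide between depth-$n$ and depth-$(n-1)$ descriptions... moving partly forward" --- is not substantiated and there is no evident way to make it work; the bookkeeping you postpone is precisely where the argument breaks.

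The missing idea is to pull back \emph{once} by $F^{-R_{n+1}}$, not repeatedly by $F^{-R_n}$. Since $S_{n-1}<R_n$ and $s\le s_n<r_n$, one has $0\le S'<R_{n+1}$, so $S'-R_{n+1}\le 0$: a single pullback by $F^{-R_{n+1}}$ already lands at or before time $0$. The geometric content (via \thmref{crit chart}~ii) and $\eta$-homogeneity, exactly the tools you invoke) is that $q_0\in\Lambda_F\cap\cV^n_{v_0}(\lambda_\mu^{\bepsilon R_n})$ forces $q_{-R_{n+1}}\in\cC^{n+1}$; applying this to $q_0=p_{S'}$ gives $p_{S'-R_{n+1}}\in\cC^{n+1}$, and then $p_0=F^{R_{n+1}-S'}(p_{S'-R_{n+1}})\in\cC^{n+1}_{R_{n+1}-S'}$ with the forward-iterate index $R_{n+1}-S'$ in the allowed range, whence $p_0\in\bfC^n\subset\bfC^d$, the desired contradiction. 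This one-step argument completely avoids the multi-depth descent, and it also absorbs your worry about the $S'=0$ endpoint: no separate case analysis is needed. Your Step~1 (showing $p_{S'}\in\cB^n_{R_n}$) and the repeated within-depth pullbacks are unnecessary once you take the single pullback by $F^{-R_{n+1}}$.
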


\begin{proof}
If $q_0 \in \Lambda_F \cap \cV^n_{v_0}(\lambda^{\bepsilon R_n})$, then it follows from \thmref{crit chart} ii) and $\eta$-homogeneity that $q_{-R_{n+1}} \in \cC^{n+1}$. Thus, if $p_{S'} \in \cV^n_{v_0}(\lambda_\mu^{\bepsilon R_n})$, where $S' := S_{n-1} + sR_n$, then $p_{-R_{n+1} + S'} \in \cC^{n+1}$. Therefore,
$$
p_0 \in \cC^{n+1}_{R_{n+1}-S'} \subset \bfC^n \subset \bfC^d.
$$
This is a contradiction.
\end{proof}

\begin{lem}\label{sq away from value}
Denote
$$
\epsilon_i = (1+\bepsilon)^i\bepsilon
\matsp{for}
i \geq 0.
$$
Let $q_0 \in \cB^n_0$ and $E_{q_0} \in \bbP^2_{q_0}$. If
$$
\measuredangle(E_{q_0}, E^{v, n}_{q_0}) > \lambda_\mu^{\epsilon_1 R_n},
$$
then
$$
\|DF^{R_n}|_{E_{q_0}}\| > \lambda_\mu^{\epsilon_2 R_n}.
$$
Moreover, if $q_{R_n} \notin \cV^n_{v_0}(\lambda_\mu^{\epsilon_0 R_n})$, then
$$
\measuredangle(E_{q_{R_n}}, E^{v, n}_{q_{R_n}}) > \lambda_\mu^{\epsilon_1 R_n}.
$$
\end{lem}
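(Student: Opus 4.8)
The plan is to push the whole statement through the straightening chart $\Psi^n$, where the return map has an essentially explicit H\'enon-like normal form, and then read off both conclusions from its derivative. Put $z_0:=\Psi^n(q_0)$, $z_{R_n}:=\Psi^n(q_{R_n})$ and $E_{z_0}:=D\Psi^n(E_{q_0})$, so that $E_{z_{R_n}}=D\tiF_n(E_{z_0})$ with $\tiF_n=\Psi^n\circ F^{R_n}\circ(\Psi^n)^{-1}$ and $D\Psi^n$ takes $E^{v,n}_{q_\bullet}$ to the genuine vertical $E^{gv}_{z_\bullet}$. Since $\|(\Psi^n)^{\pm1}\|_{C^r}$ is uniformly bounded (\thmref{crit chart}), $\Psi^n$ distorts derivative norms and angles only by uniformly bounded factors, so it suffices to prove the corresponding estimates for $\tiF_n$ at $z_0$. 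From \thmref{crit chart} i)--ii), \eqref{eq.henon trans} and the thinness of the return one obtains the factorization $\tiF_n\approx(\Phi_0\circ F\circ\Phi_{-1}^{-1})\circ H_n$, hence the normal form
$$
\tiF_n(x,y)=\bigl(f_0(h_n(x)),\,h_n(x)\bigr)+O(\lambda_\mu^{(1-\bepsilon)R_n})\qquad\text{in }C^1,
$$
where $f_0$ has a quadratic critical point at $0$ with $f_0(0)=0$ and, by \lemref{est value}, $|f_0(t)|\asymp t^2$ and $|f_0'(t)|\asymp|t|$, and $h_n$ is the diffeomorphism of \thmref{crit chart} ii), for which \thmref{rescale} gives $|h_n'|\asymp\sigma_n^{-1}\ge1$ once $n$ is large. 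Writing a unit vector along $E_{z_0}$ as $w=(\cos\alpha,\sin\alpha)$, the hypothesis $\measuredangle(E_{q_0},E^{v,n}_{q_0})>\lambda_\mu^{\epsilon_1 R_n}$ becomes $|\cos\alpha|\gtrsim\lambda_\mu^{\epsilon_1 R_n}$.

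For the first assertion, the normal form gives $D\tiF_n\,w=\bigl(f_0'(h_n(x))h_n'(x)\cos\alpha,\ h_n'(x)\cos\alpha\bigr)+O(\lambda_\mu^{(1-\bepsilon)R_n})$, whence $\|D\tiF_n|_{E_{z_0}}\|\ge|h_n'(x)\cos\alpha|-O(\lambda_\mu^{(1-\bepsilon)R_n})\gtrsim|h_n'(x)|\lambda_\mu^{\epsilon_1 R_n}\ge\lambda_\mu^{\epsilon_1 R_n}$, which exceeds $\lambda_\mu^{\epsilon_2 R_n}$ because $\epsilon_1<\epsilon_2$. (A chart-free alternative: apply the area identity $\|DF^{R_n}|_{E_{q_0}}\|\cdot\|DF^{R_n}|_{E^{v,n}_{q_0}}\|\cdot\sin\theta'=\Jac_{q_0}F^{R_n}\cdot\sin\measuredangle(E_{q_0},E^{v,n}_{q_0})$, bound $\Jac_{q_0}F^{R_n}\ge\lambda_\mu^{(1+\eta)R_n}$ by $\eta$-homogeneity and $\|DF^{R_n}|_{E^{v,n}_{q_0}}\|\le\lambda_\mu^{(1-\eta)R_n}$ by the $(1,\eta,\lambda_\mu)$-regularity of the return, to get $\|DF^{R_n}|_{E_{q_0}}\|\gtrsim\lambda_\mu^{(\epsilon_1+2\eta)R_n}>\lambda_\mu^{\epsilon_2 R_n}$ once $\eta$ is chosen small relative to $\bepsilon\epsilon_1$, consistently with the standing choice $\eta\in(0,\uepsilon)$.)

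For the second assertion I would use the position hypothesis to keep $h_n(x)$ away from $0$. As $\Psi^n(v_0)$ has vanishing first coordinate, $q_{R_n}\notin\cV^n_{v_0}(\lambda_\mu^{\epsilon_0 R_n})$ says $|\pi_h(z_{R_n})|\ge\lambda_\mu^{\bepsilon R_n}$; since $\pi_h(z_{R_n})=f_0(h_n(x))+O(\lambda_\mu^{(1-\bepsilon)R_n})$ this forces $|f_0(h_n(x))|\ge\tfrac12\lambda_\mu^{\bepsilon R_n}$, hence $|h_n(x)|\gtrsim\lambda_\mu^{\bepsilon R_n/2}$ and $|f_0'(h_n(x))|\gtrsim\lambda_\mu^{\bepsilon R_n/2}$. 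The angle of $E_{z_{R_n}}=D\tiF_n(E_{z_0})$ with the genuine vertical is $\arctan\!\bigl(|f_0'(h_n(x))h_n'(x)\cos\alpha+O(\lambda_\mu^{(1-\bepsilon)R_n})|\,\big/\,|h_n'(x)\cos\alpha+O(\lambda_\mu^{(1-\bepsilon)R_n})|\bigr)$; both leading terms dominate their $O(\lambda_\mu^{(1-\bepsilon)R_n})$ corrections (using $|\cos\alpha|\gtrsim\lambda_\mu^{\epsilon_1 R_n}$, $|h_n'|>\lambda_\mu^{\bepsilon R_n}$ and $1-\bepsilon\gg\bepsilon/2+\epsilon_1$), so the ratio is $\asymp|f_0'(h_n(x))|\gtrsim\lambda_\mu^{\bepsilon R_n/2}$ — the common factors $h_n'(x)$ and $\cos\alpha$ cancel. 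Transporting back, $\measuredangle(E_{q_{R_n}},E^{v,n}_{q_{R_n}})\gtrsim\lambda_\mu^{\bepsilon R_n/2}>\lambda_\mu^{\epsilon_1 R_n}$ for $n$ large, since $\bepsilon/2<\epsilon_1$.

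The step I expect to be the main obstacle is not the algebra but resisting the crude bound $|h_n'|>\lambda_\mu^{\bepsilon R_n}$ of \thmref{crit chart}, which is far too lossy for both assertions: one can afford to pay a factor $\lambda_\mu^{\bepsilon\epsilon_1 R_n}$ but not $\lambda_\mu^{\bepsilon R_n}$. The resolution is that the renormalization scale $\sigma_n$ decays only exponentially in $n$, so $|h_n'|\asymp\sigma_n^{-1}$ is \emph{sub}-exponential in $R_n$; equivalently, in the factorization above the large factor $h_n'$ cancels in every ratio that matters, leaving only the order-one quantity $f_0'$ controlled by \lemref{est value}. The rest — propagating the $O(\lambda_\mu^{(1-\bepsilon)R_n})$ errors through the composition and converting norms and angles between $\tiF_n$ and $F^{R_n}$ via the bounded distortion of $\Psi^n$ — is routine.
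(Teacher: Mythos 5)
Your argument is correct, and the ``chart-free alternative'' you give in parentheses for the first assertion is exactly the paper's one-line proof: the $(1,\eta,\lambda_\mu)$-regularity of the return supplies the Jacobian bound and the vertical contraction rate, and the area identity does the rest once $\eta$ is chosen in $(0,\uepsilon)$ so that $\bfb\eta$ loses less than the gap $\epsilon_2-\epsilon_1 = \bepsilon\epsilon_1$. Your second computation (using that $|f_0(h_n(x))|\gtrsim\lambda_\mu^{\bepsilon R_n}$ forces $|f_0'(h_n(x))|\gtrsim\lambda_\mu^{\bepsilon R_n/2}$, which then gives the angle after $h_n'$ and $\cos\alpha$ cancel) is precisely the content of \lemref{flat}~i) composed with \lemref{quad flat}, which is what the paper invokes; the only difference is that you unpack the lemmas rather than cite them. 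Your observation that the crude bound $|h_n'|>\lambda_\mu^{\bepsilon R_n}$ is too lossy for the normal-form route is accurate, but note that the paper sidesteps $h_n'$ altogether by using the area identity, so the $|h_n'|\asymp\sigma_n^{-1}\geq1$ input from \thmref{rescale} is needed only for your (optional) main route, not for the lemma itself.
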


\begin{proof}
The estimate on $\|DF^{R_n}|_{E_{q_0}}\|$ follows immediately from the $(1, \eta, \lambda_\mu)$-regularity of the H\'enon-like return $(F^{R_n}, \Psi^n)$. The estimate on $\measuredangle(E_{q_{R_n}}, E^{v, n}_{q_{R_n}})$ follows immediately from \lemref{quad flat}.
\end{proof}

\begin{lem}\label{sq shallow}
For $n, k \in \bbN$, let $q_0 \in \cB^{n+k}_0$ and $E_{q_0} \in \bbP^2_{q_0}$. If
$$
R_n \geq \bepsilon R_{n+k}
\matsp{and}
\measuredangle(E_{q_0}, E^{v, n+k}_{q_0}) > \lambda_\mu^{\bepsilon R_{n+k}},
$$
then
$$
\|DF^{R_n}|_{E_{q_0}}\| > \lambda_\mu^{\bepsilon R_n}
\matsp{and}
\measuredangle(E_{q_{R_n}}, E^{v, n}_{q_{R_n}}) > \lambda_\mu^{\baeta R_n}.
$$
\end{lem}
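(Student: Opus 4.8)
\emph{Approach.} The plan is to reduce \lemref{sq shallow} to the $(1,\eta,\lambda_\mu)$-regularity of the $n$th H\'enon-like return (available under the standing assumptions of this subsection by \thmref{lya reg henon return}), the super-exponential convergence of the centered straightening charts, and the angle/derivative distortion lemmas of Sections \ref{sec.crit avoid} and \ref{sec.combin}. The first step is the elementary observation that the hypothesis $R_n \geq \bepsilon R_{n+k}$ forces $k$ to be uniformly bounded: since $r_m = R_{m+1}/R_m \geq 3$ for $m \geq n_0$, one has $R_{n+k}/R_n \geq 3^{\,k}$, so $3^{\,k} \leq \bepsilon^{-1}$ and $k = O(1)$; in particular $R_n$ and $R_{n+k}$ are commensurate up to a uniform factor. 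Iterating the chart-convergence estimate \eqref{eq.psi conv} of \propref{vert prop nest} over the $O(1)$ intermediate depths then gives $\|\Psi^n \circ (\Psi^{n+k})^{-1} - \Id\|_{C^1} < \lambda_\mu^{(1-\bepsilon)R_n}$ on $\cB^{n+k}_0$, hence $\measuredangle(E^{v,n}_{q_0},E^{v,n+k}_{q_0}) < \lambda_\mu^{(1-\bepsilon)R_n}$, so that the hypothesis on $\measuredangle(E_{q_0},E^{v,n+k}_{q_0})$ is transferred, via the triangle inequality, into transversality of $E_{q_0}$ to the depth-$n$ vertical direction $E^{v,n}_{q_0}$.

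\emph{Derivative bound.} I would then pass to the Q-linearization of \thmref{reg chart} along the forward $R_n$-orbit of $q_0$ with contracting direction $E^{v,n}_{q_0}$ — legitimate because $q_0 \in \cB^{n+k}_0 \subseteq \cB^n_0$ is $R_n$-times forward $(1,\eta,\lambda_\mu)$-regular along $E^{v,n}_{q_0}$, and because $\Psi^n$ is equivalent to a genuinely horizontal chart, so that $E^{gh}_{q_0}$ is the return's horizontal direction. In these coordinates $F^{R_n}$ is triangular with vertical factor of size $\asymp\lambda_\mu^{R_n}$ and, by $\eta$-homogeneity, horizontal factor of size $\asymp 1$; a direction transverse to $E^{v,n}_{q_0}$ at a scale that is $\gtrsim\lambda_\mu^{R_n}$ (which holds since $\bepsilon R_{n+k}\le R_n$) therefore survives the vertical contraction and satisfies $\|DF^{R_n}|_{E_{q_0}}\| \asymp \|DF^{R_n}|_{E^{gh}_{q_0}}\|$. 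By \lemref{hor deriv out} and one further step of $\eta$-homogeneity this is $\geq K^{-(n-n_0)}\lambda_\mu^{1+\eta}$, which is only exponentially small in $n$ and hence dominates the super-exponentially small quantity $\lambda_\mu^{\bepsilon R_n}$, since $R_n$ grows geometrically.

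\emph{Angle bound.} For $\measuredangle(E_{q_{R_n}},E^{v,n}_{q_{R_n}})$ I would first note that the $E^{v,n}_{q_0}$-component of $E_{q_0}$ contributes at most $\lambda_\mu^{(1-\eta)R_n}$ to $DF^{R_n}E_{q_0}$ while its horizontal component contributes at least $K^{-(n-n_0)}$, so $E_{q_{R_n}}$ lies within angle $\lambda_\mu^{(1-\eta)R_n}K^{\,n} \ll \lambda_\mu^{\baeta R_n}$ of $DF^{R_n}E^{gh}_{q_0}$. Next, $DF^{R_n-1}E^{gh}_{q_0}$ is $\lambda_\mu^{(1-\bepsilon)R_n}$-horizontal in $\cB_{-1}$ by \lemref{flat} i); and since $q_0 \in \cB^{n+k}_0 \subseteq \cB^{n+1}_0$ lies near $v_0$ in the $1$D-like structure of the $n$th return (\propref{value struct bound}), its image $q_{R_n}$ lies in the piece $\cB^{n+1}_{R_n}$, which by \propref{min domain} and \propref{exp scale} is at $\pi_h$-distance $\gtrsim |I^n_0| \gg \lambda_\mu^{\bepsilon R_n}$ from $v_0$; thus $q_{R_n} \notin \cV^n_{v_0}(\lambda_\mu^{\bepsilon R_n})$ and \lemref{quad flat} yields that $DF^{R_n}E^{gh}_{q_0}$ is $\lambda_\mu^{-\bepsilon R_n}$-horizontal in $\hcB^n_0$, which (by the uniform transversality of $E^{h}$ and $E^{v,n}$) forces $\measuredangle(DF^{R_n}E^{gh}_{q_0},E^{v,n}_{q_{R_n}}) > \lambda_\mu^{\baeta R_n}$, and the claim follows.

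\emph{Main obstacle.} The delicate point is the exponent bookkeeping: the hypothesis only controls the initial angle at the coarse scale $\lambda_\mu^{\bepsilon R_{n+k}}$, which is genuinely finer than the scale $\lambda_\mu^{\epsilon_1 R_n}$ appearing in \lemref{sq away from value}, so one cannot simply quote that lemma. The resolution is to use the inequality $\bepsilon R_{n+k} \leq R_n$ \emph{directly} in the Q-linearized coordinates — over only $R_n$ iterates the depth-$n$ vertical direction contracts by at most $\lambda_\mu^{R_n}$, which absorbs exactly the loss coming from the coarse initial scale — together with the $1$D-like structure of the $n$th return to keep the forward image of $q_0$ out of the shrinking critical-value neighborhood. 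Once these two points are in place, everything else is a routine concatenation of the estimates of \secref{sec.crit avoid} and \secref{sec.combin}.
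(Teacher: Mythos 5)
Your skeleton matches the paper's: use chart convergence to transfer the hypothesis from the $(n+k)$-vertical to the $n$-vertical direction, obtain $\measuredangle(E_{q_0},E^{v,n}_{q_0}) \gtrsim \lambda_\mu^{\baeta R_n}$, note that $q_{R_n}$ is forced out of the small $\cV^n_{v_0}$-tube by the combinatorial structure, and finish via the quadratic/forward-regularity estimates. However, the paper's route is much leaner at two points. First, instead of deducing $k = O(1)$ and invoking commensurability, the paper uses the hypothesis $R_n \geq \bepsilon R_{n+k}$ directly to write $\baeta R_n > \baeta\bepsilon R_{n+k} = \bepsilon R_{n+k}$, so $\lambda_\mu^{\baeta R_n} < \lambda_\mu^{\bepsilon R_{n+k}}$ in one line, and the triangle inequality immediately gives $\measuredangle(E_{q_0},E^{v,n}_{q_0}) > \lambda_\mu^{\baeta R_n} - \lambda_\mu^{(1-\baeta)R_n} = \lambda_\mu^{\baeta R_n}$; your $k=O(1)$ observation is true but unnecessary. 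Second, the conclusion then follows exactly as in \lemref{sq away from value}: the derivative bound comes directly from the $(1,\eta,\lambda_\mu)$-regularity of the $n$th return along $E^{v,n}$, and the angle at $q_{R_n}$ from \lemref{quad flat}, so the Q-linearization and \lemref{hor deriv out} machinery you assemble is overkill.

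There is also a genuine quantitative slip in your derivative-bound step. You assert $\|DF^{R_n}|_{E_{q_0}}\| \asymp \|DF^{R_n}|_{E^{gh}_{q_0}}\|$ and that "its horizontal component contributes at least $K^{-(n-n_0)}$." At the borderline angle $\theta \approx \lambda_\mu^{\baeta R_n}$ the direction $E_{q_0}$ is extremely close to vertical, so its horizontal component has size only $\sin\theta \approx \lambda_\mu^{\baeta R_n}$; in the Q-linearized triangular coordinates one gets $\|DF^{R_n}|_{E_{q_0}}\| \approx \max\bigl(\sin\theta\,\|DF^{R_n}|_{E^{gh}}\|,\;\lambda_\mu^{R_n}\bigr) \approx K^{-n}\lambda_\mu^{\baeta R_n}$, not $K^{-n}$. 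Your stated bound $K^{-(n-n_0)}\lambda_\mu^{1+\eta}$ is therefore not what the argument produces. The desired conclusion $\|DF^{R_n}|_{E_{q_0}}\| > \lambda_\mu^{\bepsilon R_n}$ happens to survive the correction, since $cn + \baeta R_n < \bepsilon R_n$ for $n$ large (using $\baeta < \bepsilon$ and the geometric growth of $R_n$), but the intermediate $\asymp$ is false and the corrected inequality should be made explicit, since the same slip is repeated in the angle-bound paragraph ("horizontal component contributes at least $K^{-(n-n_0)}$").
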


\begin{proof}
Observe that
$$
\baeta R_n > \baeta\bepsilon R_{n+k} = \bepsilon R_{n+k}.
$$
So
$$
\lambda_\mu^{\baeta R_n} < \lambda_\mu^{\bepsilon R_{n+k}}.
$$
By \thmref{crit chart} i), we have
$$
\measuredangle(E^{v, n+k}_{q_0}, E^{v, n}_{q_0}) < \lambda_\mu^{(1-\baeta) R_n}.
$$
Hence,
$$
\measuredangle(E_{q_0}, E^{v, n}_{q_0}) > \lambda_\mu^{\bepsilon R_{n+k}} - \lambda_\mu^{(1-\baeta) R_n} > \lambda_\mu^{\baeta R_n} - \lambda_\mu^{(1-\baeta) R_n} = \lambda_\mu^{\baeta R_n}.
$$
Since $\depth(q_{R_n}) < n$, we have $q_{R_n} \notin \cV^n_{v_0}(\lambda_\mu^{\baeta R_n})$
by \propref{min domain}. The result then follows from \lemref{quad flat}.
\end{proof}

\begin{thm}\label{for reg away crit}
Let $p_0 \in \Lambda_F$ and $N \in\bbN$. Define
$$
\hE_{p_i} := D(F^i\circ\Phi_0^{-1})(E^{gh}_{p_0})
\matsp{for}
i \geq 0.
$$
If $p_0 \not\in \bfC^d$ with $d := \vd(p_0, N)$, then 
$$
\|DF^N|_{\hE_{p_0}}\| > \lambda_\mu^{\bepsilon N}.
$$
\end{thm}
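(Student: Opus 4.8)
The plan is to cut the orbit $p_0,\dots,p_N$ at the valuable moment $S:=\vm(p_0,N)$ (with valuable depth $d:=\vd(p_0,N)$) and to estimate multiplicatively, $\|DF^N|_{\hE_{p_0}}\|=\|DF^{N-S}|_{\hE_{p_S}}\|\cdot\|DF^S|_{\hE_{p_0}}\|$, since the derivative along a fixed tangent line is multiplicative and $\hE_{p_S}=DF^S(\hE_{p_0})$. Two structural facts come first. Because $p_S$ realizes the maximal depth $d$ over $[0,N]$ and $F^{R_d}(\cB^d_0)\Subset\cB^d_0$, one must have $N-S<R_d$ (otherwise $p_{S+R_d}\in\cB^d_0$ would again have depth $\ge d$, contradicting maximality of $S$). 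And, writing $S=s_0R_0+\dots+s_dR_d$ with $0\le s_i<r_i$, the hypothesis $p_0\notin\bfC^d$ feeds \lemref{reg induct}, which says that every return point $p_{S_{n-1}+sR_n}$ occurring in this decomposition (for $0\le n\le d$ and $0\le s\le s_n$) stays outside $\cV^n_{v_0}(\lambda_\mu^{\bepsilon R_n})$; the same avoidance extends to the iterates in the short tail $[S,N]$ because the forward images of $\bfC^d$ escape it only through the orbit of $v_0$, which is not in $\bfC^d$.

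The heart of the argument is to push the genuine-horizontal direction $\hE_{p_i}=DF^i(E^{gh}_{p_0})$ through the block decomposition of $[0,S]$ while keeping it uniformly transverse to the relevant vertical field. Initially $\measuredangle(\hE_{p_0},E^{v,n_1}_{p_0})$ is bounded below by a universal constant, since $\Phi_0$ is genuinely horizontal and $\Psi^{n_1}\circ\Phi_0^{-1}$ is $\lambda_\mu^{(1-\bepsilon)R_{n_1}}$-close to the identity by \thmref{crit chart} i). Inside a level-$n$ block I would apply \lemref{sq away from value} at each of the $s_n\le\bfb$ consecutive returns: the avoidance from the previous step makes the lemma applicable, and each application both regenerates $\measuredangle(\hE,E^{v,n})>\lambda_\mu^{\epsilon_1 R_n}$ and yields $\|DF^{R_n}|_{\hE}\|>\lambda_\mu^{\epsilon_2 R_n}$, so that over the block $\|DF^{s_n R_n}|_{\hE}\|>\lambda_\mu^{\epsilon_2 s_n R_n}$; only $O(\bfb)$ applications occur per block, so $\epsilon_1,\epsilon_2$ stay of $\bepsilon$-type throughout. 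At an ascending transition from level $n$ to a deeper level $m>n$ nothing is lost, because $E^{v,m}$ and $E^{v,n}$ differ by at most $\lambda_\mu^{(1-\bepsilon)R_n}$, which is far smaller than $\lambda_\mu^{\epsilon_1 R_n}$, so $\measuredangle(\hE,E^{v,m})$ remains at least a fixed multiple of $\lambda_\mu^{\epsilon_1 R_n}$, and this is $>\lambda_\mu^{\epsilon_1 R_m}$ — exactly the input \lemref{sq away from value} needs at level $m$. Multiplying over the at most $d+1$ blocks gives $\|DF^S|_{\hE_{p_0}}\|>\lambda_\mu^{\epsilon_2 S}=\lambda_\mu^{\bepsilon S}$.

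For the tail $[S,N]$, whose length is $<R_d$, the orbit performs at most one partial return at level $d$ together with full returns at levels within $O(1)$ of $d$ (those with $R_n\ge\bepsilon R_d$); for the latter \lemref{sq shallow} transfers the $\lambda_\mu^{\bepsilon R_d}$-transversality inherited from $p_S$ to $\lambda_\mu^{\baeta R_n}$-transversality at level $n$ and again supplies $\|DF^{R_n}|_{\hE}\|>\lambda_\mu^{\bepsilon R_n}$, while the residual short sub-pieces are controlled by $\eta$-homogeneity together with \propref{ext deriv bound}, the losses there being absorbed because the total remaining length is $<R_d$ and, by the tail avoidance noted above, the orbit stays out of every critical cover it cannot afford. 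This gives $\|DF^{N-S}|_{\hE_{p_S}}\|>\lambda_\mu^{\bepsilon(N-S)}$, and combining with the previous bound, $\|DF^N|_{\hE_{p_0}}\|>\lambda_\mu^{\bepsilon S}\lambda_\mu^{\bepsilon(N-S)}=\lambda_\mu^{\bepsilon N}$.

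The step I expect to be the main obstacle is precisely the transversality bookkeeping: ensuring that $\hE$ never collapses onto the strong-stable field, i.e. that the transversality scale only ever degrades from a coarser $R_n$ to a finer $R_m$ and never the other way. This is where $p_0\notin\bfC^d$ is indispensable — without it the orbit could approach $v_0$ at a depth beyond what the ascent's transversality budget covers — and it is most delicate in the tail $[S,N]$, where one must verify that the boundedly many descending transitions near depth $d$ all satisfy the hypothesis $R_n\ge\bepsilon R_{n+k}$ of \lemref{sq shallow} and that no remaining sub-return carries the orbit into a critical region of a level it cannot afford to enter.
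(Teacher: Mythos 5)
Your proposal is essentially correct and follows the same route as the paper: split the orbit at the valuable moment $S$, handle $[0,S]$ by an ascending-in-depth induction with \lemref{reg induct} supplying critical avoidance and \lemref{sq away from value} supplying per-return expansion and regenerated transversality, and handle the tail $[S,N]$ (of length $<R_d$) by a descending-in-depth induction with \lemref{sq shallow} for the commensurable transitions and $\eta$-homogeneity plus \propref{ext deriv bound} for the short residual ones. One small inaccuracy worth noting: critical avoidance in the tail is not inherited by "pushing forward $\bfC^d$" — it comes for free from the depth bookkeeping, since in the descending traversal the returned-to depth keeps decreasing and \propref{min domain} (already built into the second conclusion of \lemref{sq shallow}) forces the orbit out of $\cV^n_{v_0}(\lambda_\mu^{\baeta R_n})$ at each step, and the hypothesis of \lemref{sq shallow} is $R_n\geq\bepsilon R_{n+k}$ with $n+k$ the previous nonempty level, not $R_n\geq\bepsilon R_d$.
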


\begin{proof}
Write
$$
S := \vm(p_0, N) = s_0 R_0 + \ldots + s_{d_{\In}} R_{d_{\In}}
$$
with $0 \leq s_n < r_n$ for $0 \leq n \leq d_{\In}\leq d$. Using Lemmas \ref{reg induct} and \ref{sq away from value}, and arguing inductively, we see that
$$
\|DF^S|_{\hE_{p_0}}\| > \lambda_\mu^{\bepsilon S}\comma
p_S \notin \cV^{d_{\In}}_{v_0}(\lambda_\mu^{\baeta R_{d_{\In}}})
\matsp{and}
\measuredangle(\hE_{p_S}, E_{p_S}^{v, d_{\In}}) > \lambda_\mu^{\baeta R_{d_{\In}}}.
$$

Let
$$
T := N-S = t_0R_0 + \ldots + t_{d_{\out}}R_{d_{\out}}
$$
with $0\leq t_n < r_n$ for $0 \leq n \leq d_{\out} < d$. If $d_{\out} \geq d_{\In}$, then
$$
p_S \notin \cV^{d_{\out}}_{v_0}(\lambda_\mu^{\baeta R_{d_{\out}}}) \subset \cV^{d_{\In}}_{v_0}(\lambda_\mu^{\uepsilon R_{d_{\In}}})
\matsp{and}
\measuredangle(\hE_{p_S}, E_{p_S}^{v, d_{\out}})> \lambda_\mu^{\baeta R_{d_{\out}}}.
$$
Thus, by \lemref{reg induct}, we have
$$
\|DF^{t_{d_{\out}}R_{d_{\out}}}|_{\hE_{p_S}}\| > \lambda_\mu^{\bepsilon t_{d_{\out}}R_{d_{\out}}}.
$$

Denote
$$
T_n :=  t_0R_0 + \ldots + t_nR_n
\matsp{and}
0 \leq n \leq d_{\out}.
$$
Note that $T_n < R_{n+1} \leq \bfb R_n$.

If $d_{\out} < d_{\In}$, let $\chd := d_{\out}$, and denote $t_{d_{\In}} := s_{d_{\In}}$. Otherwise, let $\chd < d_{\out}$ be the largest integer such that $t_{\chd} > 0$. Proceeding by induction, suppose for some $n \leq \chd$ with $t_n > 0$, we have
$$
\|DF^{N-T_n}|_{\hE_{p_0}}\| > \lambda_\mu^{\bepsilon (N-T_n)}
\matsp{and}
\measuredangle(\hE_{p_{N-T_n}}, E_{p_{N-T_n}}^{v, n+k}) > \lambda_\mu^{\baeta R_{n+k}},
$$
where $k > 0$ is the smallest number such that $t_{n+k} > 0$.

If $R_n \geq \bepsilon R_{n+k}$, then \lemref{sq shallow} implies that
$$
\|DF^{t_nR_n}|_{\hE_{p_{N-T_n}}}\| > \lambda_\mu^{\bepsilon t_nR_n}
\matsp{and}
\measuredangle(\hE_{p_{N-T_{n-1}}}, E_{p_{N-T_{n-1}}}^{v, n}) > \lambda_\mu^{\baeta R_n}.
$$
If $R_n < \bepsilon R_{n+k}$, then by $\eta$-homogeneity, we have
$$
\|DF^N|_{\hE_{p_0}}\| > \lambda_\mu^{(1+\eta)T_n}\|DF^{N-T_{n+k}}|_{\hE_{p_0}}\| > \lambda_\mu^{\bepsilon R_{n+k}}\lambda_\mu^{\bepsilon (N-T_{n+k})} > \lambda_\mu^{\bepsilon N}.
$$
\end{proof}


\section{Renormalization Convergence}\label{sec.converge}

\subsection{For unimodal maps}

Let $r \geq 3$ be an integer. Consider a $C^r$-unimodal map $f : I \to I$ with the critical value $v \in I$. For an integer $0 \leq s \leq r$ and a number $t > 0$, the $t$-neighborhood of $f$ with respect to the $C^s$-topology is denoted $\frN^s(f, t)$.

\begin{lem}\label{1d pert ren}
For $K \geq 1$ and $\bfb \geq 2$, there exists a uniform constant $t_0 = t_0(K, \bfb) > 0$ such that the following holds. Let $f \in \frU^r(K)$. Suppose $f$ is non-trivially renormalizable with return time $R \leq \bfb$, and $\cRod(f)$ has $\bfb$-bounded kneading. If $\tif \in \frN^s(f,t) \cap \frU^2$ with $2 \leq s < r$ and $t \in [0, t_0]$, then $\tif$ is valuably renormalizable with $\tau(\tif) = \tau(f)$. Moreover,
$$
\|\cRod(f)-\cRod(\tif)\|_{C^s} < Ct,
$$
where $C \geq 1$ is a uniform constant depending only on $K$, $\bfb$ and $\|f\|_{C^{s+1}}$.
\end{lem}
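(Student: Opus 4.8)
The statement to prove is \lemref{1d pert ren}: a stability result saying that non-trivial valuable renormalizability with bounded combinatorics persists under small $C^s$-perturbations, and that the renormalization operator depends continuously (with an explicit modulus) on the map.

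Let me think about how to prove this.

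We have $f \in \frU^r(K)$, non-trivially renormalizable with return time $R \leq \bfb$, and $\cRod(f)$ has $\bfb$-bounded kneading. We want: for small perturbations $\tilde{f}$ in $C^s$ ($2 \leq s < r$), $\tilde{f}$ is valuably renormalizable with the same type, and $\|\cRod(f) - \cRod(\tilde{f})\|_{C^s} < Ct$.

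Key ingredients available:
- \lemref{1d struct}: non-trivial topological renormalizability implies valuable renormalizability, with $I^1 = [v, f^R(v)]$ the minimal $R$-periodic interval.
- \lemref{sink combin}: gives the trichotomy for when the critical point converges to a sink.
- \propref{c1 bound geo}: gives definite size $\rho$ to gaps between the first $3R+2$ iterates of $c^1$.
- \lemref{1d upper bound}, \lemref{1d lower bound}: upper bound on $|I|$ and $\|f'\|$, lower bound on $|f^n(0)|$ when no periodic sink.
- $\theta$-bounded kneading means $f(c) < c$ and $f^{1+\theta}(c) < c$.

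Strategy: The point is that valuable renormalizability with return time $R$ and a fixed combinatorial type is an "open" condition described by finitely many strict inequalities on the orbit $\{f^i(v)\}_{i=0}^{2R}$ (or so). Since $f$ has a definite gap structure by \propref{c1 bound geo} and bounded kneading, these inequalities hold with a definite margin. A small $C^0$-perturbation of $f$ (which follows from a small $C^s$-perturbation) changes the finite orbit segment by $O(t)$ (with constant depending on $\|f\|_{C^1}$ and $R$), so the strict inequalities persist. Then the renormalization is built out of $f^R|_{I^1}$ conjugated by the affine rescaling, and both depend smoothly (with derivative bounds coming from $\|f\|_{C^{s+1}}$) on $f$.

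Here's the plan in order.

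\textbf{Step 1: The $C^s$-perturbation controls a finite orbit segment.} Since $\tilde{f} \in \frN^s(f, t)$ with $s \geq 2 \geq 1$, in particular $\|\tilde{f} - f\|_{C^1} \leq t$. By \lemref{1d upper bound}, $\|f'\| < C_0 = C_0(K)$, so (shrinking $t_0$) $\|\tilde{f}'\| < 2C_0$. A standard telescoping/Gronwall estimate gives $\|f^i(x) - \tilde{f}^i(x)\|_{C^0} \leq (2C_0)^{i} t$ for $0 \leq i \leq 3R + 2 \leq 3\bfb + 2$, i.e. the first $O(\bfb)$ iterates of $f$ and $\tilde{f}$ agree up to $C_1 t$ where $C_1 = C_1(K, \bfb)$. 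Also the critical points $c, \tilde{c}$ of $f, \tilde{f}$ are $O(t)$-close (since $f', \tilde f'$ are $C^1$-close with non-degenerate zero; here one uses $f''(c) = 2$ and $\|\tilde f - f\|_{C^2} \leq t$ — this is exactly where $s \geq 2$ is needed). Hence the critical values $v = f(c)$ and $\tilde v = \tilde f(\tilde c)$ are $O(t)$-close, and the orbit segments $\{f^i(v)\}_{i=0}^{R}$, $\{\tilde f^i(\tilde v)\}_{i=0}^{R}$ are within $C_1 t$ of each other.

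\textbf{Step 2: Valuable renormalizability of $\tilde f$ with the same type.} By \lemref{1d struct}, $I^1 = [v, f^R(v)]$ is the minimal $R$-periodic interval, $f^R(I^1) \Subset I^1$, and $f^R(v)$ (hence $f^R(I^1)$) contains $v$. The combinatorial type $\tau(f)$ is the ordering of $\{f^i(v)\}_{i=0}^{R-1}$. By \propref{c1 bound geo} applied with $\chi = $ the kneading bound of $\cRod(f)$ (which is $\leq$ some multiple of $R \leq \bfb$ by $\bfb$-bounded kneading — I need to be a little careful: the hypothesis gives $\bfb$-bounded kneading for $\cRod(f)$, so $\chi = \bfb$ works, noting $\chi \geq R$ is needed and $R \leq \bfb$), every gap between points of $\{f^i(c^1)\}_{i=0}^{3R+1}$ has length $> \rho = \rho(K, \bfb)$. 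In particular: (a) the points $\{f^i(v)\}_{i=0}^{R-1}$ are pairwise $\rho$-separated and their ordering is robust, so (b) $f^i(I^1) \cap I^1 = \varnothing$ for $1 \leq i < R$ holds with margin $\rho$, and $f^R(v)$ is in the interior of $I^1$ with margin $\rho$, and (c) $v$ lies in the interior of $f^R(I^1)$ with a definite margin (this is the statement that $f$ is valuably renormalizable robustly — using the kneading estimate \eqref{eq.knead}-type bounds from the proof of \propref{c1 bound geo}, $v < f^{r_1 R}(v) < f^R(v) - \rho_1$). Choosing $t_0$ so that $C_1 t_0 < \rho/10$, all these inequalities persist for $\tilde f$: $\tilde f^i(\tilde I^1) \cap \tilde I^1 = \varnothing$ for $1 \leq i < R$ with $\tilde I^1 := [\tilde v, \tilde f^R(\tilde v)]$, $\tilde f^R(\tilde I^1) \Subset \tilde I^1$, and $\tilde v$ in the interior of $\tilde f^R(\tilde I^1)$. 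Hence $\tilde f$ is (non-trivially) topologically renormalizable with return time $R$, and by \lemref{1d struct} it is valuably renormalizable with $\tilde I^1 = [\tilde v, \tilde f^R(\tilde v)]$; the ordering of $\{\tilde f^i(\tilde v)\}$ matches that of $\{f^i(v)\}$ since they are $C_1 t$-close and the latter is $\rho$-separated, so $\tau(\tilde f) = \tau(f)$.

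\textbf{Step 3: Continuity of the renormalization operator.} Write $\cRod(f) = S \circ f^R|_{I^1} \circ S^{-1}$ with $S$ the affine map sending $I^1$ (or the appropriate subinterval) so that $\cRod(f)$ is normalized in $\frU$. The affine map $S$ has slope $\sim 1/|I^1|$ and $|I^1| = |f^R(v) - v|$ is bounded below by $\rho_1(K,\bfb)$ (from \lemref{1d lower bound} / the kneading estimate) and above by $2K^2$ (from \lemref{1d upper bound}), with endpoints depending $C^0$-continuously (in fact the $C^{s+1}$-orbit-segment bound gives $C^s$-control) on $f$; moreover $\tilde c^1$ is the critical point of $\tilde f^R|_{\tilde I^1}$ and the normalization slope depends on $(\tilde f^R)''(\tilde c^1) \asymp (f^R)''(c^1)$, which is bounded above and below and depends $C^{s-1}$-continuously on $f$ (needs $s+1 \leq r$ derivatives available on $f$ — this is where $s < r$ enters, and the constant $C$ depends on $\|f\|_{C^{s+1}}$). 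Therefore $\tilde S \to S$ with $\|S - \tilde S\| = O(t)$. Finally $f^R|_{I^1}$ is a composition of $R \leq \bfb$ copies of $f$ (restricted/adjusted), and $\tilde f^R - f^R$ is $O(t)$ in $C^s$ by \lemref{vary compose} (using $\|f\|_{C^{s+1}}$ bounds), so conjugating by the $O(t)$-close affine maps $S, \tilde S$ — whose $C^s$-norms and those of their inverses are uniformly bounded — gives $\|\cRod(f) - \cRod(\tilde f)\|_{C^s} \leq C t$ with $C = C(K, \bfb, \|f\|_{C^{s+1}})$, again by \lemref{vary compose} applied to the composition $S \circ f^R \circ S^{-1}$ versus $\tilde S \circ \tilde f^R \circ \tilde S^{-1}$.

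\textbf{Main obstacle.} The conceptually routine but technically delicate part is Step 3: bookkeeping the dependence of the normalizing affine conjugacy $S$ on $f$ in the $C^s$-norm, and in particular controlling the second derivative $(f^R)''(c^1)$ at the renormalized critical point from below uniformly (so the rescaling does not blow up) — this uses \propref{c1 bound geo} plus the chain rule along an orbit of bounded length, and it is where one must invoke $s < r$ to have a spare derivative and where the constant $C$ must be allowed to depend on $\|f\|_{C^{s+1}}$. The perturbation estimate for compositions is handled cleanly by \lemref{vary compose}, so once the affine data is controlled the rest is assembly.
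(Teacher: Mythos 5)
Your proof follows exactly the same route as the paper's: appeal to \lemref{1d struct} and \propref{c1 bound geo} to get persistence of valuable renormalizability with the same combinatorial type, then apply \lemref{vary compose} to $f^R|_{I^1}$ versus $\tif^R|_{\tiI^1}$ to conclude the affine normalizing maps $S,\tiS$ (and hence the renormalizations) are $O(t)$-close in $C^s$. The paper's version is considerably terser — it treats the persistence of the combinatorics as immediate and the affine-conjugacy continuity as a one-line consequence of \lemref{vary compose} — whereas you usefully spell out where $s\geq 2$, the definite gap $\rho(K,\bfb)$, and the dependence on $\|f\|_{C^{s+1}}$ actually enter, and you correctly single out the lower bound on $(f^R)''(c^1)$ as the only genuinely delicate point controlling the rescaling slope.
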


\begin{proof}
The renormalizability of $\tif$ such that $\tau(\tif) = \tau(f)$ follows immediately from \lemref{1d struct} and \propref{c1 bound geo}.

Denote the critical points of $f$ and $\tif$ by  $c = 0$ and $\tic$ respectively. Define
$$
I^1 := [f(c), f^R(c)]
\matsp{and}
\tiI^1 := [\tif(\tic), \tif^R(\tic)];
$$
and $f_1 := f^R|_{I^1}$ and $\tif_1 := \tif^R|_{\tiI^1}$. Let $S$ and $\tiS$ be the unique orientation-preserving affine maps on $\bbR$ such that $S\circ f_1 \circ S^{-1}, \tiS\circ\tif_1 \circ \tiS^{-1} \in \frU^s$.

By \lemref{vary compose}, we see that
$$
\|f_1-\tif_1\|_{C^s} < Ct.
$$
This implies immediately that $\|S - \tiS\| < C t$. The result follows.
\end{proof}

Consider the full renormalization attractor $\frA$ contained in the space $\frU^\omega$ of analytic unimodal maps. For an integer $\bfb \geq 2$, the compact invariant subset of $\frA$ consisting of all infinitely renormalizable unimodal maps with combinatorics of $\bfb$-bounded type is denoted $\frA_{\bfb}$.

The following is a consequence of the fact that $\frA_{\bfb}$ is a hyperbolic attractor for the renormalization operator $\cRod$ acting on $\frU^3$.

\begin{lem}\label{1d ren hyp}
Let $r \geq 3$ and $N \in \bbN$ be integers, and let $K \geq 1$ be a number. Suppose  $f \in \frU^r$ is $N$-times valuably renormalizable. Then for any $f^* \in \frA_{\bfb}$ with $\tau_N(f) = \tau_N(f^*)$, we have:
$$
\|\cRod^n(f) - \cRod^n(f^*)\|_{C^r} = C\rho^n\|f - f^*\|_{C^r}
\matsp{for}
1\leq n < N/2,
$$
where $\rho = \rho(\bfb) \in (0,1)$ is a universal constant and $C \geq 1$ is a uniform constant depending only on $\bfb$ and $\|f\|_{C^r}$.
\end{lem}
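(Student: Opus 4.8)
The plan is to deduce the estimate from the hyperbolicity of the renormalization operator on the attractor $\frA_{\bfb}$. Recall, as in the discussion following Theorem~A, that by de Faria--de Melo--Pinto $\frA_{\bfb}\subset\frU^\omega\subset\frU^3$ is a compact hyperbolic set for $\cRod$ acting on $\frU^3$ with one-dimensional unstable direction. Compactness yields uniform hyperbolicity constants: a uniform per-step contraction/expansion rate $\rho=\rho(\bfb)\in(0,1)$, a neighbourhood $\cN$ of $\frA_{\bfb}$ in $\frU^r$ (this is where $r\ge3$ enters) carrying a local product structure, and continuously varying local stable manifolds $W^s_{\loc}(g)$ (codimension one) and unstable manifolds $W^u_{\loc}(g)$ (one-dimensional) through every $g\in\frA_{\bfb}$, along which $\cRod$ contracts by a factor $\le\rho$, respectively expands by a factor $\ge\rho^{-1}$.

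First I would control the finite orbit $f_n:=\cRod^n(f)$, $0\le n\le N$. Since $\tau_N(f)=\tau_N(f^*)$, each $f_n$ with $n<N$ is valuably renormalizable with the same renormalization type as $f^*_n:=\cRod^n(f^*)\in\frA_{\bfb}$; by the real \emph{a priori} bounds for unimodal maps with combinatorics of $\bfb$-bounded type, the $C^r$-geometry of $f_n$ stays controlled in terms of $\|f\|_{C^r}$ and $\bfb$, and, using the $C^r$-attracting property of $\frA_{\bfb}$, there is a bounded $n_1=n_1(\bfb)$ with $f_n\in\cN$ and $\dist_{C^r}(f_n,\frA_{\bfb})=O(\rho^{\min(n,\,N-n)})$ for $n_1\le n\le N$. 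The finitely many steps $n<n_1$ contribute only a uniform factor to $C$, so I may assume $f\in\cN$. The combinatorial hypothesis then enters through the depth-$N$ cylinder $\cC_N(f^*):=\{h\in\cN:\tau_N(h)=\tau_N(f^*)\}$: as $\cRod^N(\cC_N(f^*))=\cC_0(f^*_N)$ and $\cC_0$ has definite size in the unstable direction, pulling the latter back through the $N$ expanding unstable factors shows that $\cC_N(f^*)$ lies in an $O(\rho^N)$-neighbourhood of $W^s_{\loc}(f^*)$ measured along the unstable direction.

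Writing $f-f^*=v^s+v^u$ in adapted coordinates at $f^*$, this gives $\|v^u\|=O(\rho^N)$ while $\|v^s\|\le\|f-f^*\|_{C^r}$. Iterating the linearized dynamics along the orbit of $f^*$, the stable component of $f_n-f^*_n$ is bounded by $C\rho^n\|v^s\|\le C\rho^n\|f-f^*\|_{C^r}$, and the unstable component by $C\rho^{-n}\|v^u\|=O(\rho^{N-n})$, which for $1\le n<N/2$ is $O(\rho^n)$; transferring back from the adapted coordinates to the $C^r$-norm (a uniformly bi-Lipschitz change, with constant depending on $\bfb$ and $\|f\|_{C^r}$) combines these into the asserted exponential bound for $1\le n<N/2$.

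I expect the main obstacle to be the first step. The hyperbolic theory of $\frA_{\bfb}$ is naturally phrased for infinitely renormalizable maps, so one must carefully combine the real \emph{a priori} bounds with the $C^r$-attracting property of $\frA_{\bfb}$ to guarantee that a merely $N$-times renormalizable map whose combinatorics match those of $f^*$ up to depth $N$ genuinely shadows the attractor in the $C^r$-topology and enters the product-structure neighbourhood. Keeping all estimates in $C^r$, rather than the weaker topologies in which the a priori bounds are most transparent, is where the bookkeeping is heaviest and where the assumption $r\ge3$ is essential (one has the DdMP $C^r$-hyperbolicity precisely for $\frU^3$, and \lemref{1d pert ren} provides the one-step $C^s$-perturbation control needed to propagate the bounds along the finite orbit).
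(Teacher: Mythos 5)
The paper offers no proof of this lemma — it simply asserts it is "a consequence" of the de Faria--de Melo--Pinto hyperbolicity of $\frA_{\bfb}$ in $\frU^3$ — so your hyperbolicity-plus-cylinder argument is the intended route, and its skeleton (project near the attractor into stable and unstable coordinates, bound the unstable component of $f-f^*$ by $O(\rho^N)$ via the depth-$N$ cylinder, iterate the linearized dynamics, handle the finitely many initial steps outside $\cN$ by inflating $C$) is right.

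There is, however, a real gap in your final step, and it matters because it is exactly where the precise form of the conclusion is decided. What your computation actually yields is
$$
\|\cRod^n(f)-\cRod^n(f^*)\|_{C^r}\ \le\ C\rho^n\|v^s\|+C\rho^{-n}\|v^u\|\ \le\ C\rho^n\|f-f^*\|_{C^r}+C\rho^{N-n},
$$
and for $1\le n<N/2$ the second term is $O(\rho^n)$. These two terms do not "combine into" $C\rho^n\|f-f^*\|_{C^r}$: when $\|f-f^*\|_{C^r}$ is small the additive term $\rho^{N-n}$ dominates and cannot be absorbed into a multiplicative factor of $\|f-f^*\|_{C^r}$. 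In fact the stated multiplicative bound is false in a baby model. Take $\frA_{\bfb}$ to be a single renormalization fixed point $f^*$ (the period-doubling horseshoe), and take $f=f^*+v^u$ with $v^u$ a small perturbation purely in the one-dimensional unstable direction, say $\|v^u\|\asymp\rho^{2N}$. Then $f$ is (roughly) $2N$-times valuably renormalizable with $\tau_{2N}(f)=\tau_{2N}(f^*)$, $\|f-f^*\|\asymp\rho^{2N}$, and
$$
\|\cRod^n(f)-f^*\|\ \asymp\ \rho^{-n}\|v^u\|\ \asymp\ \rho^{2N-n},
$$
which exceeds $\rho^n\|f-f^*\|\asymp\rho^{2N+n}$ by a factor $\rho^{-2n}$, unbounded as $n$ grows within the allowed range $n<N$. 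So what your argument (correctly) establishes, and what is actually available from hyperbolicity, is
$$
\|\cRod^n(f)-\cRod^n(f^*)\|_{C^r}\ \le\ C\rho^n\bigl(\|f-f^*\|_{C^r}+\rho^{N-2n}\bigr)\ =\ O(\rho^n)\quad\text{for }1\le n<N/2,
$$
not the bound with the bare factor $\|f-f^*\|_{C^r}$. You should flag this rather than declare the asserted bound proved; the discrepancy is an imprecision in the lemma's statement that your derivation already exposes, and the downstream convergence argument in the proof of Theorem~A(v) needs to be read with the corrected form in mind (using, as $n\to\infty$, that $f_n$ is $N'$-times renormalizable for arbitrarily large $N'$, which shrinks the additive term).
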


\subsection{For H\'enon-like maps}\label{subsec.ren conv henon}

Let $F$ be the infinitely regularly H\'enon-like renormalizable $C^{r+4}$-map considered in \secref{sec.cr bound}. For $n\geq n_0$, denote $F_n := \cR^n(F)$ and $f_n := \Piod(F_n)$. By \thmref{rescale}, we have $F_n \in \frHL^{r+3}_{\lambda_n}(\bfK)$, where $\bfK \geq 1$ is a uniform constant, and $\lambda_n := \lambda^{(1-\bepsilon)R_n}$. Moreover, by \thmref{cr bound 2d}, $\|F_n\|_{C^r}$ is uniformly bounded.

\begin{prop}[Shadowing Lemma]\label{shadow}
For $N \in \bbN$, there exists $n_1 = n_1(N) \in \bbN$ such that for all $n \geq n_1$, the map $f_n$ is $N$-times valuably renormalizable with
$
\tau_N(f_n) = \tau_N(F_n).
$
Moreover, we have
$$
\|f_{n+k} - \cRod^k(f_n)\|_{C^{r-1}} < C^k\lambda^{(1-\bepsilon)R_n}
\matsp{for}
1\leq k \leq N
$$
for some uniform constant $C \geq 1$.
\end{prop}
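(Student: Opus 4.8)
The plan is to combine three ingredients already assembled in the excerpt: (a) the uniform $C^r$-bounds on the renormalizations $F_n$ from \thmref{cr bound 2d} together with the super-exponential thinness $F_n \in \frHL^{r+3}_{\lambda_n}(\bfK)$ from \thmref{rescale}; (b) the comparison between the $k$th 1D-profile iterate and the 1D profile of the $k$th 2D renormalization from \propref{2d close 1d}; and (c) the one-dimensional perturbation-of-renormalization statement \lemref{1d pert ren}. The idea is that $f_n = \Piod(F_n)$ is a unimodal map which is $\lambda_n$-close in $C^{r-1}$ to the genuinely one-dimensional map $\Piod\circ\iota(f_n)=f_n$ itself, but whose \emph{dynamics} track the 2D dynamics of $F_n$; so the renormalization microscope applied to $f_n$ should, up to an error controlled by $\lambda_n$ and the combinatorial bound $\bfb$, reproduce the 1D profile of the deeper renormalization $F_{n+1}$, and then iterate.

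First I would fix $N$ and choose $n_1$ large enough that, for all $n\ge n_1$, the deepest $N$ renormalization levels beyond $n$ are all twice non-trivially renormalizable — this is available because $F$ is infinitely renormalizable with $\bfb$-bounded combinatorics, so the structural results of \secref{sec.combin} (in particular \propref{value struct bound} and \thmref{ren is henon like}) apply at every depth. For the base case $k=1$: by \propref{2d close 1d} applied with $i=r_n\le\bfb$, we have $\|f_n^{r_n} - \Piod(F_n^{r_n})\|_{C^0}<\lambda^{(1-\bepsilon)R_n}$, and since $F_n$ is $\lambda_n$-thin in $C^{r+3}$ with uniformly bounded $C^r$-norm, the affine rescaling $S$ producing $\cRod(f_n)$ is $O(\lambda_n)$-close to the rescaling producing $f_{n+1}$; combined with \lemref{vary compose} (used exactly as in the proof of \lemref{1d pert ren}) this gives $\|f_{n+1}-\cRod(f_n)\|_{C^{r-1}}<C\lambda_n$, where $C$ depends only on $\bfK$, $\bfb$, and the uniform $C^r$-bound. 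The identification $\tau(f_n)=\tau(F_n)$ at this level follows from \lemref{1d struct} applied to $f_n$, since the combinatorial type is detected by the order of the finite orbit of the critical value, and \propref{2d close 1d} shows this ordering agrees with the one for $F_n$ when the points are uniformly separated — separation being exactly \propref{exp scale}.

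Then I would induct on $k$. Suppose $\|f_{n+k}-\cRod^k(f_n)\|_{C^{r-1}}<C^k\lambda_n$. Apply $\cRod$ once more: $\cRod^{k+1}(f_n) = \cRod(\cRod^k(f_n))$, and $\cRod^k(f_n)$ lies in a $C^{r-1}$-ball of radius $C^k\lambda_n$ around $f_{n+k}\in\frU^{r-1}(\bfK)$. Provided $n_1$ is chosen so that $C^N\lambda_{n_1}<t_0$ (the uniform perturbation radius in \lemref{1d pert ren}), \lemref{1d pert ren} gives that $\cRod^k(f_n)$ is valuably renormalizable with the same renormalization type as $f_{n+k}$ — which by the $k=1$ analysis equals $\tau(F_{n+k})$ — and that $\|\cRod^{k+1}(f_n)-\cRod(f_{n+k})\|_{C^{r-1}}<\tilde C\,C^k\lambda_n$ for a uniform $\tilde C$. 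Finally the $k=1$ step itself gives $\|f_{n+k+1}-\cRod(f_{n+k})\|_{C^{r-1}}<C\lambda_{n+k}<C\lambda_n$; the triangle inequality then yields $\|f_{n+k+1}-\cRod^{k+1}(f_n)\|_{C^{r-1}}<(\tilde C C^k + C)\lambda_n\le C^{k+1}\lambda_n$ after absorbing $\tilde C$ into $C$ (which one may do since $k\le N$ is bounded and $C$ is allowed to depend on the uniform constants). The $C^{r-1}$ loss (versus $C^{r+3}$) is the price paid in \lemref{vary compose} and in passing from the thin 2D map to its 1D profile.

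The main obstacle I anticipate is bookkeeping the \emph{propagation of the renormalization type}: \lemref{1d pert ren} only controls one renormalization step, so to iterate I must ensure that at each of the $k\le N$ stages the perturbed map $\cRod^j(f_n)$ stays inside the uniform $t_0$-neighborhood of a genuinely renormalizable map of the right combinatorial type. This requires the errors $C^k\lambda_n$ to remain below $t_0$ \emph{throughout} the induction, which is exactly why $n_1$ must be taken as a function of $N$ (not uniform in $N$) — one needs $C^N\lambda^{(1-\bepsilon)R_{n_1}}<t_0$. A secondary subtlety is verifying that the combinatorial type realized by $\cRod^j(f_n)$ is genuinely $\tau(F_{n+j})$ and not some accidental nearby type; this is handled by the uniform geometric separation of the critical orbit points coming from \propref{exp scale} together with \propref{2d close 1d}, which together pin down the cyclic order robustly. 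No genuinely new estimate is needed beyond what \secref{sec.c1 bound}, \secref{sec.cr bound}, and \lemref{1d pert ren} already provide; the work is in assembling them in the correct order with the error-tracking above.
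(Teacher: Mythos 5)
Your plan is correct and follows essentially the same route as the paper: establish the one-step estimate $\|f_{n+1}-\cRod(f_n)\|_{C^{r-1}}<C\lambda_n$ via the $C^r$-thinness of $F_n$, the uniform $C^r$-bounds, and \lemref{vary compose}; identify the renormalization type via \lemref{1d struct}, \propref{2d close 1d}, and \propref{exp scale}; and then propagate through $k\le N$ steps using \lemref{1d pert ren}, choosing $n_1(N)$ large enough that all accumulated errors stay below the perturbation radius $t_0$. The paper organizes the propagation as a telescoping sum $\sum_k\bigl(\cRod^k(f_{n+N-k})-\cRod^{k+1}(f_{n+N-k-1})\bigr)$ while you run a direct induction on $k$ via a triangle inequality through $\cRod(f_{n+k})$, but the two are the same decomposition up to re-indexing, and the constant absorption $\tilde C C^k+C\le C^{k+1}$ works uniformly in $k$ (so $C$ need not depend on $N$, only $n_1$ does).
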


\begin{proof}
First, consider the case when $N=1$. The renormalizability of $f_n$ so that $\tau(f_n) = \tau(F_n)$ follows immediately from \lemref{1d struct}, and Propositions \ref{2d close 1d} and \ref{exp scale}.

Note that
$$
\|F_n - F_n\circ \Pi_h\|_{C^r} < \lambda^{(1-\bepsilon)R_n}.
$$
Since $\|F_n\|_{C^r}$ is uniformly bounded, \lemref{vary compose} implies that
$$
\|F_n^{r_n} - (F_n\circ \Pi_h)^{r_n}\|_{C^{r-1}} < C\lambda^{(1-\bepsilon)R_n}
$$
for some uniform constant $C \geq 1$. Thus,
$$
\|\Piod(F_n^{r_n}) - f_n^{r_n}\|_{C^{r-1}} < C \lambda^{(1-\bepsilon)R_n}.
$$
It follows that if $S$ and $\tiS$ are the unique orientation-preserving affine maps on $\bbR$ such that $S\circ \Piod(F_n^{r_n}) \circ S^{-1}, \tiS\circ f_n^{r_n} \circ \tiS^{-1} \in \frU^s$, then
$$
\|S - \tiS\| < C\lambda^{(1-\bepsilon)R_n}.
$$
Thus,
$$
\|f_{n+1} - \cRod(f_n)\|_{C^{r-1}} < C\lambda^{(1-\bepsilon)R_n}.
$$

Proceeding inductively, suppose that the result is true for all $1 \leq k < N$. In particular, we have
$$
\|f_{n+N-1} - \cRod^{N-1}(f_n)\|_{C^{r-1}} < C^{N-1}\lambda^{(1-\bepsilon)R_n}.
$$
By the above argument, $f_{n+N-1}$ is valuably renormalizable so that
$$
\tau(f_{n+N-1}) = \tau(F_{n+N-1}).
$$
If $n_1$ is sufficiently large, it follows from \lemref{1d pert ren} that $\cRod^{N-1}(f_n)$ is also valuably renormalizable, and
$$
\tau(\cRod^{N-1}(f_n)) = \tau(f_{n+N-1}).
$$

For $m \in \bbN$, we have
$$
\|f_{n+m} - \cRod(f_{n+m-1})\|_{C^{r-1}} < \lambda^{(1-\bepsilon)R_{n+m}}.
$$
Applying \lemref{1d pert ren} $0 \leq k < N$ times, we obtain
$$
\|\cRod^k(f_{n+m}) - \cRod^{k+1}(f_{n+m-1})\|_{C^{r-1}} < C^k\lambda^{(1-\bepsilon)R_{n+m}}.
$$
Thus,
\begin{align*}
\|f_{n+N} - \cRod^N(f_n)\|_{C^{r-1}} &\leq \sum_{k=0}^{N-1} \|\cRod^k(f_{n+N-k}) - \cRod^{k+1}(f_{n+N-(k+1)})\|_{C^{r-1}}\\
&< \sum_{k=0}^{N-1} C^k\lambda^{(1-\bepsilon) R_{n+N-k}}\\
&< O(C^N\lambda^{(1-\bepsilon) R_n}).
\end{align*}
\end{proof}

\begin{proof}[Proof of Theorem C]
Statements i), ii) and iii) are given by \thmref{crit chart}. Statement iv) is given by \thmref{cr bound 2d}. Hence, it remains to prove Statement v).

Suppose $r \geq 4$. Let $f^* \in \frA_{\bfb}$ so that
$$
\tau_\infty(f^*) = \tau_\infty(F) := [\tau(f_0), \tau(f_1), \ldots].
$$
Denote $f^*_n := \cRod^n(f^*)$ for $n \geq 0$.

Consider the constants $C \geq 1$ and $\rho \in (0,1)$ given in \lemref{1d ren hyp}. Choose $N \in \bbN$ sufficiently large so that $C\rho^N < \tirho < 1$. Let $n_1 = n_1(2N) \in \bbN$ be the number given in \propref{shadow}. Then for all $n \geq n_1$, we have
\begin{align*}
\|f_{n+N} - f^*_{n+N}\|_{C^{r-1}} &\leq \|f_{n+N} - \cRod^N(f_n)\|_{C^{r-1}} + \|\cRod^N(f_n) - \cRod^N(f_n^*)\|_{C^{r-1}}\\
&\leq O(\lambda^{(1-\bepsilon)R_n}) +\tirho\|f_n - f_n^*\|_{C^{r-1}}\\
&< \tirho'\|f_n - f_n^*\|_{C^{r-1}},
\end{align*}
for some uniform constant $\tirho' \in(0,1)$.
\end{proof}


\appendix


\section{Quantitative Estimates on Invariant Manifolds}\label{sec.pesin}

In this section, we summarize the results in \cite{CLPY2}. Let $r \geq 2$ be an integer, and consider a $C^{r+1}$-diffeomorphism $F : \cB \to F(\cB) \Subset \cB$, where $\cB \subset \bbR^2$ is a bounded domain. Let $\lambda, \epsilon \in (0,1)$ with $\bepsilon < 1$.

Let $p_0 \in \cB$ and $E^v_{p_0} \in \bbP^2_{p_0}$. For $m \in \bbZ$, decompose the tangent space at $p_m$ as
$$
\bbP^2_{p_m} = (E^v_{p_m})^\perp \oplus E^v_{p_m}.
$$
In this decomposition, we have
$$
D_{p_m}F =: \begin{bmatrix}
\alpha_m & 0\\
\zeta_m & \beta_m
\end{bmatrix},
$$
where $\alpha_m, \beta_m >0$ and $\zeta_m \in \bbR$.

For some $M, N \in \bbN \cup \{0, \infty\}$ and $L \geq 1$, suppose for $s \in \{0, 1\}$, we have
$$
L^{-1}\lambda^{(1+\epsilon)n} \leq \frac{\beta_{0} \ldots \beta_{{n-1}}}{(\alpha_0 \ldots \alpha_{n-1})^s}\leq L\lambda^{(1-\epsilon)n}
\matsp{for}
1 \leq n \leq N,
$$
and
$$
L^{-1}\lambda^{(1+\epsilon)n}\leq \frac{\beta_{{-n}} \ldots \beta_{{-1}}}{(\alpha_{{-n}} \ldots \alpha_{{-1}})^s}
\leq L\lambda^{(1-\epsilon)n}
\matsp{for}
1 \leq n \leq M.
$$
Then we say that $p_0$ is {\it $(M, N)$-times $(L, \epsilon, \lambda)$-regular along $E^v_{p_0}$}.

\begin{prop}[Growth in irregularity]\cite[Proposition 5.5]{CLPY2}\label{grow irreg}
For $-M \leq m \leq N$, let $\cL_{p_m} \geq 1$ be the minimum value such that $p_m$ is $(M+m, N-m)$-times $(\cL_{p_m}, \epsilon, \lambda)$-regular along $E^v_{p_m}$. Then
$$
\cL_{p_m} < \bL\lambda^{-\bepsilon |m|}.
$$
\end{prop}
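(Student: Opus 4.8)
The plan is to track how the regularity constant degrades as we move the base point $m$ steps along the orbit. Fix $m$ with $0 \le m \le N$ (the case $-M \le m < 0$ is symmetric, using backward iterates). We want to estimate, for each $1 \le n \le N - m$, the quantity
$$
\frac{\beta_m \cdots \beta_{m+n-1}}{(\alpha_m \cdots \alpha_{m+n-1})^s}
$$
from above and below in terms of $\lambda^{(1\pm\epsilon)n}$, and similarly the backward product of length $\le M + m$. The idea is to insert and cancel the missing factors: write
$$
\beta_m \cdots \beta_{m+n-1} = \frac{\beta_0 \cdots \beta_{m+n-1}}{\beta_0 \cdots \beta_{m-1}},
$$
so the length-$n$ product at base $p_m$ is a ratio of two products at base $p_0$ of lengths $m+n$ and $m$. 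The numerator is controlled by the hypothesis on $p_0$ (it is a forward product of length $m + n \le N$), and the denominator is controlled both above and below by the hypothesis on $p_0$ (it is a forward product of length $m \le N$). This immediately gives
$$
\frac{\beta_m \cdots \beta_{m+n-1}}{(\alpha_m \cdots \alpha_{m+n-1})^s}
\le \frac{L \lambda^{(1-\epsilon)(m+n)}}{L^{-1}\lambda^{(1+\epsilon)m}}
= L^2 \lambda^{(1-\epsilon)n} \lambda^{-2\epsilon m},
$$
and the analogous lower bound $\ge L^{-2}\lambda^{(1+\epsilon)n}\lambda^{2\epsilon m}$. Since $\lambda < 1$, the factor $\lambda^{-2\epsilon m}$ is $\ge 1$ and $\lambda^{2\epsilon m} \le 1$, so both bounds are consistent with an irregularity factor of order $L^2 \lambda^{-2\epsilon m}$; absorbing the constant and the factor of $2$ into the $\baeta$/$\bL$ notation (the exponent $2\epsilon$ is a uniformly bounded multiple of $\epsilon$, hence still a valid $\bepsilon$, and $L^2$ is a valid $\bL$), we obtain $\cL_{p_m} < \bL \lambda^{-\bepsilon m}$ for the forward direction.

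For the backward products at base $p_m$ of length $j \le M + m$, there are two regimes. If $j \le m$, the product $\beta_{m-j}\cdots\beta_{m-1}$ is a sub-block of the forward product $\beta_0 \cdots \beta_{m-1}$ at base $p_0$; writing it as the ratio $(\beta_0\cdots\beta_{m-1})/(\beta_0\cdots\beta_{m-j-1})$ and applying the forward hypothesis at $p_0$ to both factors gives bounds of the form $L^{\pm 2}\lambda^{(1\mp\epsilon)j}\lambda^{\mp 2\epsilon(m-j)}$, again consistent with irregularity $\bL\lambda^{-\bepsilon m}$. If $j > m$, split the product at index $0$: the part from $p_{m-j}$ to $p_{-1}$ is a backward product at $p_0$ of length $j - m \le M$, controlled by the backward hypothesis at $p_0$, and the part from $p_0$ to $p_{m-1}$ is the forward product at $p_0$ of length $m \le N$, controlled by the forward hypothesis. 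Multiplying the two bounds and simplifying yields once more a bound compatible with irregularity factor $\bL\lambda^{-\bepsilon m}$, where the loss comes entirely from the length-$m$ forward block being estimated in the ``wrong'' direction (its lower bound $L^{-1}\lambda^{(1+\epsilon)m}$ costs a factor $\lambda^{-(1+\epsilon)m}$ on one side). The only point requiring care is bookkeeping the exponents so that the net deviation is always $O(\epsilon |m|)$ in the exponent and $O(\text{poly}(L))$ in the prefactor; this is routine given the definitions but is the step where one must be careful not to accumulate a factor growing faster than $\lambda^{-\bepsilon|m|}$.

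The main obstacle, such as it is, is purely organizational rather than conceptual: one must verify that in every case (forward/backward product, $j \le m$ versus $j > m$, $s = 0$ versus $s = 1$) the worst-case exponent deviation is a uniformly bounded multiple of $\epsilon|m|$ and the worst-case prefactor is a uniformly bounded power of $L$, so that the conclusion $\cL_{p_m} < \bL\lambda^{-\bepsilon|m|}$ holds with constants independent of $F$, $m$, $M$, $N$. I expect no genuine difficulty here, only the need to enumerate the cases and apply the hypotheses at $p_0$ mechanically; the key mechanism throughout is the cocycle (telescoping) identity that expresses a product based at $p_m$ as a ratio of products based at $p_0$.
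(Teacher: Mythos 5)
Your proof is correct and uses the natural telescoping (cocycle) argument: express each product of $\beta$'s (resp.\ $\beta/\alpha$'s) based at $p_m$ as a ratio or product of products based at $p_0$, then apply the regularity hypothesis at $p_0$ to each factor. Note that the paper cites this proposition from \cite{CLPY2} without reproducing its proof, so there is no argument in the present text to compare against, but this is the standard and essentially unique route.

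One inaccuracy worth correcting: for the backward estimate at $p_m$ with $j > m$ (taking $m > 0$), you claim the degradation ``comes entirely from the length-$m$ forward block being estimated in the wrong direction,'' but in fact there is \emph{no} loss in that case. Splitting at index $0$ gives a backward product of length $j-m$ and a forward product of length $m$, both based at $p_0$ and both estimated on the \emph{same} side of their respective regularity intervals, so the exponents add exactly: the upper bound is $L\lambda^{(1-\epsilon)(j-m)} \cdot L\lambda^{(1-\epsilon)m} = L^2\lambda^{(1-\epsilon)j}$, the lower bound is $L^{-2}\lambda^{(1+\epsilon)j}$, and the irregularity factor is merely $L^2$, with no $m$-dependence. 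The $\lambda^{-\bepsilon|m|}$ degradation instead comes from the steps where you form a \emph{ratio} of two products at $p_0$ --- namely the forward estimates at $p_m$ (ratio of forward products of lengths $m+n$ and $m$, giving a binding constraint $\cL_{p_m} \geq L^2\lambda^{-2\epsilon m}$ independent of $n$) and the backward estimates with $j \leq m$ --- because there the denominator of length $\approx m$ must be bounded on the opposite side. This does not affect your conclusion, but it is worth getting right: it is precisely the ratio steps, not the concatenation step, that drive the irregularity growth.
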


\subsection{Quasi-Linearization}

For $w, l > 0$, denote
$$
\bbB(w, l) := (-w, w)\times (-l, l) \subset \bbR^2
\matsp{and}
\bbB(l) := \bbB(l, l).
$$

\begin{thm}[Regular charts]\cite[Theorem 6.1]{CLPY2}\label{reg chart}
There exists a uniform constant
$$
C = C(\|DF\|_{C^r}, \lambda^{-\epsilon}) \geq 1
$$
such that the following holds. For $-M \leq m \leq N$, let
$$
\omega := \frac{\lambda^{1-\epsilon}}{1-\lambda^{1-\epsilon}}\cdot \|DF^{-1}\|\cdot\|DF\|
\matsp{and}
\cK_{p_m} := \bL(1+\omega)^5\|DF^{-1}\|\lambda^{1-\bepsilon |m|}.
$$
Define
$$
U_{p_m} := \bbB(l_{p_m})
\matsp{where}
l_{p_m} := \lambda^{1+\bepsilon}(C\cK_{p_m})^{-1}.
$$
Then there exists a $C^r$-chart $\Phi_{p_m} : (\cU_{p_m}, p_m) \to (U_{p_m}, 0)$ such that $D\Phi_{p_m}(E^v_{p_m}) = E^{gv}_0$, 
$$
\|D\Phi_{p_m}^{-1}\|_{C^{r-1}} < C(1+\omega)
\comma
\|D\Phi_{p_m}\|_{C^s} < C\cK_{p_m}^{s+1}
\matsp{for}
0 \leq s <r,
$$
and the map $\Phi_{p_{m+1}} \circ F|_{\cU_{p_m}} \circ \Phi_{p_m}^{-1}$ extends to a globally defined $C^r$-diffeomorphism $F_{p_m} : (\bbR^2, 0) \to (\bbR^2, 0)$ satisfying the following properties:
\begin{enumerate}[i)]
\item $\displaystyle \|DF_{p_m}\|_{C^{r-1}} \leq \|DF\|_{C^r}$;
\item we have
$$
D_0F_{p_m} =\begin{bmatrix}
a_m & 0\\
0 & b_m
\end{bmatrix},
\matsp{where}
\lambda^{\bepsilon} < a_m < \lambda^{-\bepsilon}
\matsp{and}
\lambda^{1+\bepsilon} < b_m < \lambda^{1-\bepsilon}.
$$
\item $\|D_z F_{p_m} - D_0F_{p_m}\|_{C^0} < \lambda^{1+\bepsilon}$ for $z \in \bbR^2$;
\item we have
$$
F_{p_m}(x,y) = (f_{p_m}(x), e_{p_m}(x,y))
\matsp{for}
(x,y) \in \bbR^2,
$$
where $f_{p_m}:(\bbR, 0) \to (\bbR, 0)$ is a $C^r$-diffeomorphism, and $e_{p_m} : \bbR^2 \to \bbR$ is a $C^r$-map such that for all $0 \leq s \leq r$, we have
$$
\partial_x^s e_{p_m}(\cdot, y) \leq \|DF\|_{C^r} |y|
\matsp{for}
y \in \bbR.
$$
\end{enumerate}
\end{thm}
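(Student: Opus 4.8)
The plan is to run a quantitative version of the Pesin--Lyapunov chart construction in the partially‑hyperbolic (``dominated'') regime imposed by the hypotheses, treating $E^v$ as a strong‑stable direction and its orthocomplement as a (pseudo‑)center direction. I would assemble three ingredients: a tempered linear normalization of the derivative cocycle along the orbit; an $F$‑coherent family of $C^r$ invariant curves over the orbit segment (a strong‑stable foliation together with a center curve) to be straightened; and a global cutoff extension of the resulting local conjugacy. For the linear normalization: the $s=0$ inequality says $\beta_0\cdots\beta_{n-1}\approx\lambda^{n}$ (contraction along $E^v$) and the $s=1$ inequality says $\beta_0\cdots\beta_{n-1}/(\alpha_0\cdots\alpha_{n-1})\approx\lambda^{n}$ (domination of $E^v$ over $(E^v_{p_m})^\perp$). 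Combining these with \propref{grow irreg}, which makes the relevant ``Pesin constant'' at $p_m$ sub‑exponential of size $\asymp\lambda^{-\bepsilon|m|}$, I would build on each $\bbP^2_{p_m}$ a new inner product keeping $E^v_{p_m}\perp(E^v_{p_m})^\perp$, in which $DF$ contracts the vertical line by a factor in $(\lambda^{1+\bepsilon},\lambda^{1-\bepsilon})$ and distorts the center line by a factor in $(\lambda^{\bepsilon},\lambda^{-\bepsilon})$; the change of basis from the Euclidean metric is precisely the linear map whose operator norm is the constant $\cK_{p_m}$, and the center line is the $DF$‑invariant horizontal direction obtained from a forward‑invariant horizontal cone field (over a semi‑infinite or finite orbit, the limit of backward pullbacks of an arbitrary horizontal direction, converging at rate $\lambda^{1-\bepsilon}$ per step by domination). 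This produces the linear parts $A_{p_m}$ of the charts and yields (ii).

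Next I would construct the invariant geometry to be rectified. Using the standard graph‑transform machinery — whose contraction is again governed by the domination bound, and whose $C^r$‑control costs exactly the one extra derivative (this is the $C^{r+1}\Rightarrow C^r$ loss, and the reason the domination, i.e.\ bunching, hypothesis is needed) — I would produce over each neighborhood $\cU_{p_m}$ of $p_m$ an $F$‑coherent foliation by vertical $C^r$‑graphs with uniformly controlled $C^r$‑jets, and an $F$‑invariant center $C^r$‑curve $\cZ_m$ through $p_m$ tangent to the center direction (a pseudo‑center manifold, which always exists over an orbit segment). Let $\Psi_{p_m}$ be the $C^r$‑diffeomorphism, with $C^r$‑norm $O(1+\omega)$, that simultaneously straightens this vertical foliation onto the genuine vertical foliation and $\cZ_m$ onto $\{y=0\}$ — compatible normalizations, since the latter only reparametrizes the $x$‑fibers — and set $\Phi_{p_m}:=A_{p_m}\circ\Psi_{p_m}$, so that automatically $D\Phi_{p_m}(E^v_{p_m})=E^{gv}_0$. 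Coherence of the foliations forces $F_{p_m}:=\Phi_{p_{m+1}}\circ F\circ\Phi_{p_m}^{-1}$ to be triangular, $F_{p_m}(x,y)=(f_{p_m}(x),e_{p_m}(x,y))$, while invariance of $\{y=0\}$ forces $e_{p_m}(x,0)\equiv0$, whence $e_{p_m}=y\cdot\tilde e_{p_m}$ with $\tilde e_{p_m}$ controlled by $\|DF\|_{C^r}$; this is (iv).

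Finally I would globalize. The conjugacy $F_{p_m}$ is a priori defined only on (part of) $U_{p_m}=\bbB(l_{p_m})$; I would extend it to a global $C^r$‑diffeomorphism of $\bbR^2$ by a cutoff interpolating to the linear map $D_0F_{p_m}$, carried out separately on the $x$‑component and on $\tilde e_{p_m}$ so as to preserve the triangular form and the $O(|y|)$‑bound, and choosing $l_{p_m}=\lambda^{1+\bepsilon}(C\cK_{p_m})^{-1}$ small enough that on $\bbB(l_{p_m})$ the $C^1$‑oscillation of the un‑extended map already lies below $\lambda^{1+\bepsilon}$ — giving (iii) — and that the cutoff does not spoil the $C^{r-1}$‑bound $\|DF_{p_m}\|_{C^{r-1}}\le\|DF\|_{C^r}$ — giving (i). The chart‑norm bounds $\|D\Phi_{p_m}^{-1}\|_{C^{r-1}}<C(1+\omega)$ and $\|D\Phi_{p_m}\|_{C^s}<C\cK_{p_m}^{s+1}$ are then read off from $\Phi_{p_m}=A_{p_m}\circ\Psi_{p_m}$, the bounded $C^r$‑geometry of $\Psi_{p_m}$, and the fact that $\Phi_{p_m}$ lives on a domain of diameter $\asymp\cK_{p_m}^{-1}$ while carrying the Lyapunov rescaling.

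The main obstacle will be the quantitative invariant‑manifold step: producing the strong‑stable foliation and the center curve with genuine $C^r$‑regularity and with their jets bounded by the \emph{explicit} tempered constant $\cK_{p_m}$ rather than by an uncontrolled Pesin constant, and then threading those constants through the cutoff extension so that the final estimates come out clean — in particular the constant‑free inequalities $\|DF_{p_m}\|_{C^{r-1}}\le\|DF\|_{C^r}$ and $\|D_zF_{p_m}-D_0F_{p_m}\|_{C^0}<\lambda^{1+\bepsilon}$. Without the domination ($s=1$) hypothesis one would only obtain $C^{r-1+\alpha}$ leaves; everything else is classical Pesin theory made effective.
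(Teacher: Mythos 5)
This result is imported: the paper cites it as \cite[Theorem 6.1]{CLPY2} and gives no proof — the whole of \appref{sec.pesin} is explicitly a summary of \cite{CLPY2}. So there is no in-paper argument to compare against; I can only assess your sketch on its own merits.

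Your outline — tempered Lyapunov rescaling to produce the linear part with norm $\cK_{p_m}$, graph transform to build a coherent vertical foliation plus a center curve, a $C^r$ straightening $\Psi_{p_m}$, and a cutoff globalization of the conjugated map — is the right general shape, and you correctly identify the two places the work actually lives: (a) keeping the jets of the invariant objects bounded by the explicit tempered constant $\cK_{p_m}$ rather than an abstract Pesin constant, which is where the $s=1$ (domination) inequality and the loss of one derivative $C^{r+1}\to C^r$ are both spent; and (b) arranging the cutoff so that the constant-free bounds in (i) and (iii) and the triangular form and $O(|y|)$ bound in (iv) survive. One structural point you should sharpen: the theorem allows $M,N$ finite, in which case there is no canonical strong-stable foliation or center manifold, so one cannot first ``build the invariant geometry'' and then straighten it. The construction must instead be anchored at one end of the orbit segment with a trivial chart and propagated step by step — at each step one \emph{defines} the next chart so that the conjugated map $F_{p_m}$ is triangular with $e_{p_m}(\cdot,0)\equiv 0$, and the cutoff extension of $F_{p_m}$ to all of $\bbR^2$ is precisely what lets the graph-transform recursion run indefinitely with uniformly contracting steps. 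Phrased that way, your ``foliation'' is recovered a posteriori as the coordinate lines of the recursively constructed charts, and the coherence across $m$ is built in rather than needing to be checked. The estimate $l_{p_m}=\lambda^{1+\bepsilon}(C\cK_{p_m})^{-1}$, the $C^s$-bounds on $D\Phi_{p_m}$, and the identity $D\Phi_{p_m}(E^v_{p_m})=E^{gv}_0$ then all fall out of tracking how the tempered constant and the one lost derivative propagate through that recursion; this is the bookkeeping that \cite{CLPY2} carries out and that your sketch, reasonably, waves at.
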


The construction in \thmref{reg chart} is referred to as {\it a Q-linearization of $F$ along the $(M,N)$-orbit of $p_0$ with vertical direction $E^v_{p_0}$}. For $-M \leq m \leq N$, we refer to $l_{p_m}$, $\cU_{p_m}$, $\Phi_{p_m}$ and $F_{p_m}$ as a {\it regular radius}, a {\it regular neighborhood}, a {\it regular chart} and a {\it Q-linearized map at $p_m$} respectively.

For $p \in \bbR^2$ and $t > 0$, let
$$
\bbD_p(t) := \{\|q - p\| < t\}.
$$

\begin{lem}\cite[Lemma 6.2]{CLPY2}\label{size reg nbh}
For $-M \leq m \leq N$, we have
$$
\cU_{p_m} \supset \bbD_{p_m}\left(\frac{\lambda^{1+\bepsilon}}{C^2\cK_{p_m}^2}\right),
$$
where $C, \cK_{p_m} \geq 1$ are given in \thmref{reg chart}.
\end{lem}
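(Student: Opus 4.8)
The plan is to read this off directly from \thmref{reg chart} by the standard "an a priori bound on the image forces control of the domain" argument along radial segments. Recall that \thmref{reg chart} furnishes a $C^r$-diffeomorphism $\Phi_{p_m} : \cU_{p_m} \to U_{p_m} = \bbB(l_{p_m})$ with $\Phi_{p_m}(p_m) = 0$ and regular radius $l_{p_m} = \lambda^{1+\bepsilon}(C\cK_{p_m})^{-1}$, and (taking $s=0$ in the bound $\|D\Phi_{p_m}\|_{C^s} < C\cK_{p_m}^{s+1}$) that $\|D\Phi_{p_m}\|_{C^0} < C\cK_{p_m}$ on $\cU_{p_m}$. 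Set $r := \lambda^{1+\bepsilon}/(C^2\cK_{p_m}^2)$; the constants in \thmref{reg chart} are arranged precisely so that $C\cK_{p_m}\cdot r = l_{p_m}$, and one notes that the Euclidean ball $\bbD_0(l_{p_m})$ lies inside the square $\bbB(l_{p_m})$.

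First I would fix a point $q \in \bbD_{p_m}(r)$ and run the segment $\gamma(t) := p_m + t(q-p_m)$ for $t \in [0,1]$. Letting $t^*$ be the largest $t$ with $\gamma([0,t]) \subset \cU_{p_m}$ (which is positive since $\cU_{p_m}$ is an open neighborhood of $p_m$), I would integrate $D\Phi_{p_m}$ along $\gamma$ to obtain, for every $t < t^*$,
$$
\|\Phi_{p_m}(\gamma(t))\| \;\leq\; \|D\Phi_{p_m}\|_{C^0}\,\|q-p_m\| \;<\; C\cK_{p_m}\cdot r \;=\; l_{p_m}.
$$
Hence $\Phi_{p_m}(\gamma([0,t^*)))$ remains inside the fixed compact set $\overline{\bbD_0(C\cK_{p_m}\|q-p_m\|)} \Subset \bbB(l_{p_m})$; since $\Phi_{p_m}$ is a homeomorphism onto $\bbB(l_{p_m})$, the arc $\gamma([0,t^*))$ has compact closure in $\cU_{p_m}$, forcing $\gamma(t^*) \in \cU_{p_m}$. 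If $t^* < 1$, openness of $\cU_{p_m}$ would allow the segment to be extended past $t^*$, contradicting maximality; so $t^* = 1$, giving $q \in \cU_{p_m}$ and in fact $\Phi_{p_m}(q) \in \bbD_0(l_{p_m}) \subset U_{p_m}$. As $q$ was an arbitrary point of $\bbD_{p_m}(r)$, this yields $\bbD_{p_m}(r) \subset \cU_{p_m}$.

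The only step that is not a one-line estimate is the connectedness argument guaranteeing that the radial segment from $p_m$ to $q$ never exits $\cU_{p_m}$ before reaching $q$, and I do not expect this to be a genuine obstacle: it is exactly the routine device converting a uniform bound on $\|D\Phi_{p_m}\|$ over the domain into a lower bound on the domain's size, and all the quantitative content is already packaged in \thmref{reg chart}.
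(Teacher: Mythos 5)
Your proposal is correct: the constants are arranged exactly so that $C\cK_{p_m}\cdot r = l_{p_m}$, and the mean-value estimate along a radial segment combined with the escape-time/compactness argument does give $\bbD_{p_m}(r)\subset\cU_{p_m}$. Since this paper only cites \cite[Lemma 6.2]{CLPY2} and does not reproduce the proof, there is no in-paper argument to compare against, but your derivation is the standard one — reading off $\|D\Phi_{p_m}\|_{C^0}<C\cK_{p_m}$ from the $s=0$ case of \thmref{reg chart}, noting $\bbD_0(l_{p_m})\subset\bbB(l_{p_m})$, and using the diffeomorphism property to preclude the segment from exiting $\cU_{p_m}$ — and it is surely what the cited proof does.
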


\subsection{$C^1$-estimates}

\begin{prop}[Jacobian bounds]\cite[Proposition 6.14]{CLPY2}\label{jac bound}
We have
$$
\bL^{-1}\lambda^{(1+\bepsilon)n} \leq \Jac_{p_0}F^n \leq \bL\lambda^{(1-\bepsilon)n}
\matsp{for}
1 \leq n \leq N,
$$
and
$$
\bL^{-1}\lambda^{-(1-\bepsilon)n}\leq \Jac_{p_0}F^{-n} \leq \bL\lambda^{-(1+\bepsilon)n}
\matsp{for}
1 \leq n \leq M.
$$
\end{prop}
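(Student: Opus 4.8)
The plan is to reduce everything to the triangular normal form for $D_{p_m}F$ recorded immediately above the statement. Since that matrix is lower triangular with respect to the orthogonal splitting $(E^v_{p_m})^\perp\oplus E^v_{p_m}$, the line field $E^v$ is invariant along the orbit, i.e. $D_{p_m}F(E^v_{p_m})=E^v_{p_{m+1}}$. Composing the triangular matrices along the orbit, for $1\le n\le N$ the matrix of $D_{p_0}F^n$ from $(E^v_{p_0})^\perp\oplus E^v_{p_0}$ to $(E^v_{p_n})^\perp\oplus E^v_{p_n}$ is again lower triangular with diagonal entries $\alpha_0\cdots\alpha_{n-1}$ and $\beta_0\cdots\beta_{n-1}$. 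Because both splittings are orthogonal, the determinant of this matrix is literally the Jacobian, which gives the two identities
$$
\|D_{p_0}F^n|_{E^v_{p_0}}\| = \beta_0\cdots\beta_{n-1}, \qquad \Jac_{p_0}F^n = (\alpha_0\cdots\alpha_{n-1})(\beta_0\cdots\beta_{n-1}).
$$
Applying the same telescoping to $F^{-1}$ along the backward orbit, and using $D_{p_0}F^{-n}=(D_{p_{-n}}F^n)^{-1}$, yields $\Jac_{p_0}F^{-n}=\big((\alpha_{-n}\cdots\alpha_{-1})(\beta_{-n}\cdots\beta_{-1})\big)^{-1}$ for $1\le n\le M$.

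Next I would simply read the two factors off the regularity hypothesis. Taking $s=0$ in the forward condition bounds $\beta_0\cdots\beta_{n-1}$ between $L^{-1}\lambda^{(1+\epsilon)n}$ and $L\lambda^{(1-\epsilon)n}$; taking $s=1$ gives the same bounds for the ratio $(\beta_0\cdots\beta_{n-1})/(\alpha_0\cdots\alpha_{n-1})$. Dividing the $s=0$ estimate by the $s=1$ estimate isolates the $\alpha$-product: $L^{-2}\lambda^{2\epsilon n}\le \alpha_0\cdots\alpha_{n-1}\le L^2\lambda^{-2\epsilon n}$. Multiplying this with the $s=0$ bound on the $\beta$-product and invoking $\Jac_{p_0}F^n = (\prod\alpha_m)(\prod\beta_m)$ gives $L^{-3}\lambda^{(1+3\epsilon)n}\le \Jac_{p_0}F^n\le L^{3}\lambda^{(1-3\epsilon)n}$, and absorbing the bounded power $L^3$ into $\bL$ and the bounded multiple $3\epsilon$ into $\bepsilon$ (legitimate under the standing smallness assumption on the marginal exponent) produces the asserted forward estimate. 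The backward case is identical: the backward hypothesis at $s=0,1$ bounds $\beta_{-n}\cdots\beta_{-1}$ and its ratio with $\alpha_{-n}\cdots\alpha_{-1}$, hence sandwiches the product $(\prod_{m=1}^{n}\alpha_{-m})(\prod_{m=1}^{n}\beta_{-m})$ between $\bL^{-1}\lambda^{(1+\bepsilon)n}$ and $\bL\lambda^{(1-\bepsilon)n}$; taking reciprocals and using the corresponding Jacobian identity gives $\bL^{-1}\lambda^{-(1-\bepsilon)n}\le \Jac_{p_0}F^{-n}\le \bL\lambda^{-(1+\bepsilon)n}$.

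There is no real obstacle here — the argument is essentially the single observation that $\Jac F^n$ factors as the product of the two diagonal growth rates, each of which the regularity conditions at $s=0$ and $s=1$ control directly. The only points needing a little care are (i) verifying that the orthogonality of the splitting makes the matrix determinant equal to the Jacobian, so that no extra distortion factors appear when telescoping, and (ii) the bookkeeping with the $\bL$/$\bepsilon$ conventions, in particular confirming that the numerical constant produced by the division step is absorbed, which holds once $\epsilon$ is small in the standing sense.
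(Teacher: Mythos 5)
Your argument is correct. One small note: this proposition is quoted from \cite[Proposition~6.14]{CLPY2}, so the present paper does not contain a proof for you to compare against; but the derivation you give is exactly the natural one given the appendix's triangular normal form, and almost certainly the one in the cited reference. The key identities $\Jac_{p_0}F^n=(\alpha_0\cdots\alpha_{n-1})(\beta_0\cdots\beta_{n-1})$ and $\|D_{p_0}F^n|_{E^v_{p_0}}\|=\beta_0\cdots\beta_{n-1}$ follow as you say from the lower-triangular form with respect to the orthogonal splitting (the orthogonality guaranteeing that the matrix determinant coincides with $\Jac F$, with sign fixed by orientation-preservation), and then the $s=0$ and $s=1$ instances of the regularity inequalities combine to give $L^{-3}\lambda^{(1+3\epsilon)n}\le\Jac_{p_0}F^n\le L^3\lambda^{(1-3\epsilon)n}$, which is the claim after absorbing into the $\bL,\bepsilon$ conventions; the backward case is the reciprocal. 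You also implicitly rely on the equivalence of the appendix's $\alpha,\beta$-formulation with the $\Jac$-and-$\|DF^m|_E\|$-formulation of \eqref{eq.for reg}--\eqref{eq.back reg}, which is worth stating explicitly but is a one-line check.
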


\begin{prop}[Derivative bounds]\cite[Proposition 6.15]{CLPY2}\label{ext deriv bound}
Let $C \geq 1$ and $\omega >0$ be the uniform constants given in \thmref{reg chart}. For $E_{p_0} \in \bbP^2_{p_0}$, we have
$$
\frac{\lambda^{(1+\bepsilon)n}}{C\bL(1+\omega)^2} \leq \|DF^n|_{E_{p_0}}\| \leq C(1+\omega)^2\lambda^{-\bepsilon n}
\matsp{for}
1 \leq n \leq N,
$$
and
$$
\frac{\lambda^{\bepsilon n}}{{C\bL}(1+\omega)^2}\leq \|DF^{-n}|_{E_{p_0}}\| \leq C(1+\omega)^2\lambda^{-(1+\bepsilon)n}
\matsp{for}
1 \leq n \leq M.
$$
\end{prop}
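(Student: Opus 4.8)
The plan is to read off both chains of inequalities from the Q-linearization supplied by \thmref{reg chart}. Applying that theorem to the $(M,N)$-times $(L,\epsilon,\lambda)$-regular point $p_0$ yields a Q-linearization $\{\Phi_{p_m}:(\cU_{p_m},p_m)\to(U_{p_m},0)\}_{m=-M}^{N}$ of $F$ along the $(M,N)$-orbit of $p_0$ with vertical direction $E^v_{p_0}$, with $\|D\Phi_{p_m}^{-1}\|_{C^0}<C(1+\omega)$ uniformly in $m$, with $\|D\Phi_{p_m}\|_{C^0}<C\cK_{p_m}$ where $\cK_{p_m}=\bL(1+\omega)^5\|DF^{-1}\|\lambda^{1-\bepsilon|m|}$, and with linearized maps $F_{p_m}$ satisfying $D_0F_{p_m}=\operatorname{diag}(a_m,b_m)$, $\lambda^{\bepsilon}<a_m<\lambda^{-\bepsilon}$, $\lambda^{1+\bepsilon}<b_m<\lambda^{1-\bepsilon}$. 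The key structural point is that the orbit of $p_0$ sits at the origin in every regular chart, so that the relation $F=\Phi_{p_{m+1}}^{-1}\circ F_{p_m}\circ\Phi_{p_m}$ differentiates at $p_m$ to give
$$
D_{p_0}F^{n}=D\Phi_{p_n}^{-1}\circ\operatorname{diag}\!\Big(\textstyle\prod_{m=0}^{n-1}a_m,\ \prod_{m=0}^{n-1}b_m\Big)\circ D\Phi_{p_0},
\qquad
D_{p_0}F^{-n}=D\Phi_{p_{-n}}^{-1}\circ\operatorname{diag}\!\Big(\textstyle\prod_{m=-n}^{-1}a_m^{-1},\ \prod_{m=-n}^{-1}b_m^{-1}\Big)\circ D\Phi_{p_0}.
$$
Thus in linearized coordinates the derivative cocycle is genuinely diagonal, and for \emph{any} unit direction $E$ the norm $\|\operatorname{diag}(A,B)\,E\|$ lies between $\min(A,B)$ and $\max(A,B)$. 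The pointwise bounds on $a_m,b_m$ then give $\lambda^{(1+\bepsilon)n}<\min(\prod a_m,\prod b_m)$ and $\max(\prod a_m,\prod b_m)<\lambda^{-\bepsilon n}$ in forward time, and $\lambda^{\bepsilon n}<\min(\prod a_m^{-1},\prod b_m^{-1})$, $\max(\prod a_m^{-1},\prod b_m^{-1})<\lambda^{-(1+\bepsilon)n}$ in backward time.

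It then remains to conjugate these estimates back by the charts, using $\|DF^{\pm n}|_{E_{p_0}}\|=\|D_{p_0}F^{\pm n}(e)\|$ for a unit $e\in E_{p_0}$. For the forward upper bound one estimates $\|D_{p_0}F^{n}(e)\|\le\|D\Phi_{p_n}^{-1}\|_{C^0}\cdot\lambda^{-\bepsilon n}\cdot\|D\Phi_{p_0}\|_{C^0}<C(1+\omega)\cdot\lambda^{-\bepsilon n}\cdot C\cK_{p_0}$, and since $\cK_{p_0}$ is an $n$-independent uniform constant this collapses, after the $\bL$/$\bepsilon$ conventions absorb the uniformly bounded coefficients, to the claimed $C(1+\omega)^2\lambda^{-\bepsilon n}$. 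For the forward lower bound, $\|D_{p_0}F^{n}(e)\|\ge\min(\prod a_m,\prod b_m)\cdot\|D\Phi_{p_0}(e)\|/\|D\Phi_{p_n}\|_{C^0}\ge\lambda^{(1+\bepsilon)n}\cdot\big(C(1+\omega)\big)^{-1}\cdot\big(C\cK_{p_n}\big)^{-1}$; here $\cK_{p_n}\asymp\lambda^{-\bepsilon n}$ genuinely grows with $n$, but this only inflates the marginal exponent, yielding $\lambda^{(1+\bepsilon)n}/(C\bL(1+\omega)^2)$. The backward inequalities follow in the same way, now with $\prod b_m^{-1}$ (which expands like $\lambda^{-1}$) carrying the $\lambda^{-(1+\bepsilon)n}$ in the upper bound and $\prod a_m^{-1}$ (which is bounded) carrying the $\lambda^{\bepsilon n}$ in the lower bound. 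If one prefers to avoid tracking chart norms for the lower bounds entirely, they also follow from \propref{jac bound}: $\|DF^{n}|_{E_{p_0}}\|\ge\Jac_{p_0}F^{n}/\|D_{p_0}F^{n}\|$ together with the bound $\|D_{p_0}F^{n}\|\le\bL(1+\omega)^2\lambda^{-\bepsilon n}$ coming from the same diagonal cocycle, and symmetrically for $n<0$.

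The argument is essentially bookkeeping once \thmref{reg chart} is available — all the real work lives in the construction of the regular charts — so the one point that genuinely needs attention is verifying that the several accumulated powers of $\lambda^{\bepsilon}$ (from the pointwise bounds on $a_m,b_m$, from $\cK_{p_{\pm n}}$, and from the change of charts) can all be legitimately folded into the single marginal exponent $\bepsilon$ appearing in the statement, which is exactly why the hypothesis only requires $\bepsilon<1$ and the constants are permitted to carry the factors $\bL$ and $(1+\omega)^2$. Keeping the $n$-dependent norm $\cK_{p_{\pm n}}$ separated from its $m=0$ counterpart $\cK_{p_0}$ throughout is the place where carelessness would break the estimate.
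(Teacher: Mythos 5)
Your decomposition $D_{p_0}F^{\pm n}=D_0\Phi_{p_{\pm n}}^{-1}\circ\operatorname{diag}(\cdot,\cdot)\circ D_{p_0}\Phi_{p_0}$ is the right lever, and it is exactly what the Q-linearization of \thmref{reg chart} is designed to deliver: the orbit sits at the origin in every chart, $F_{p_m}$ fixes the origin, and property~(ii) makes each $D_0F_{p_m}$ diagonal, so the middle factor is literally $\operatorname{diag}\!\big(\prod a_m,\prod b_m\big)$ and the two-sided estimate $\lambda^{(1+\bepsilon)n}<\min\le\|\operatorname{diag}|_{E'}\|\le\max<\lambda^{-\bepsilon n}$ follows from property~(ii) alone. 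Your handling of $\cK_{p_{\pm n}}\asymp\lambda^{-\bepsilon|n|}$ — carefully kept apart from $\cK_{p_0}$, and folded into the marginal exponent rather than the constant — is the correct and important bookkeeping step, and the alternative derivation of the lower bounds via \propref{jac bound} and $\|DF^n|_E\|\ge\Jac_{p_0}F^n/\|D_{p_0}F^n\|$ is a legitimate shortcut, consistent with the $\bL$ that appears in the stated lower bounds.

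The one place you are waving your hands is the forward upper bound. Your chain gives $\|DF^n|_{E_{p_0}}\|\le C(1+\omega)\cdot\lambda^{-\bepsilon n}\cdot C\cK_{p_0}$, and $\cK_{p_0}=\bL(1+\omega)^5\|DF^{-1}\|\lambda$ carries a genuine factor of $\bL$. The statement's upper bound $C(1+\omega)^2\lambda^{-\bepsilon n}$ does not. The $\bL/\bepsilon$ conventions let you absorb uniformly bounded coefficients and powers, but they do not let a constant depending on $L$ disappear into an $L$-free bound, so "this collapses to the claimed $C(1+\omega)^2\lambda^{-\bepsilon n}$" is not justified as written. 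Either one needs a sharper estimate on $\|D_{p_0}\Phi_{p_0}\|$ at the base point than the generic $C\cK_{p_0}$ supplied by \thmref{reg chart} (plausible — the chart is anchored at $p_0$ and may be normalized there), or the upper bound genuinely costs a $\bL$ and the cited constant is tighter than what the stated chart bounds alone give. For the way this proposition is used in the paper (always in tandem with a depth hypothesis of the form $\overline K\lambda^{\epsilon R_{n_0}}<1$ that swallows any $\bL$), the discrepancy is harmless, but as a proof of the stated constant it leaves a gap that should be acknowledged rather than folded silently into the $\bepsilon$ notation.
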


Consider the sequence of Q-linearized maps $\{F_{p_m}\}_{-M}^N$ given in \thmref{reg chart}. For $1 \leq n \leq N-m$, we denote
\begin{equation}\label{eq.compose}
F_{p_m}^n = (f_{p_m}^n, e_{p_m}^n):= F_{p_{m +n-1}} \circ \ldots \circ F_{p_{m+1}}\circ F_{p_m}.
\end{equation}

\begin{prop}\cite[Proposition 6.4]{CLPY2}\label{lin comp}
For $-M \leq m \leq N$ and $0 \leq n \leq N-m$, consider the $C^r$-diffeomorphism $F_{p_m}^n$ given in \eqref{eq.compose}. Let $z = (x, y) \in U_{p_m}$, and suppose that
$$
z_i = (x_i, y_i) := F_{p_m}^i(z) \in U_{p_{m+i}}
\matsp{for}
0 \leq i \leq n.
$$
Denote
$$
D_zF_{p_m}^n =: \begin{bmatrix}
a_m^n(z) & 0\\
c_m^n(z) & b_m^n(z)
\end{bmatrix}.
$$
Define
$$
\chl_h := \sup_n n\lambda^{\bepsilon n} < \infty
\matsp{and}
\chl_v := (1-\lambda^{1-\bepsilon})^{-1};
$$
and
$$
\chi_h := \exp\left(\frac{\chl_h\|F\|_{C^3}}{\lambda^{\bepsilon}}\right)
\matsp{and}
\chi_v := \exp\left(\frac{(\chl_h+\chl_v)\|F\|_{C^3}}{\lambda^{\bepsilon}}\right).
$$
Then
$$
\frac{1}{\chi_h}\leq \frac{a_m^n(z)}{a_m^n(0)} \leq \chi_h
\comma
\frac{1}{\chi_v}\leq \frac{b_m^n(z)}{b_m^n(0)} \leq \chi_v
\matsp{and}
\|\gamma_m^n\| < \lambda^{(1-\bepsilon)n}.
$$
\end{prop}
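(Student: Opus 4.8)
The plan is to run a distortion argument built on the skew-triangular normal form of the $Q$-linearized maps. First I would record the structural fact underlying everything: by \thmref{reg chart} iv) each $F_{p_{m+i}}(x,y)=(f_{p_{m+i}}(x),e_{p_{m+i}}(x,y))$, so its Jacobian is everywhere lower triangular,
$$
D_z F_{p_{m+i}}=\begin{bmatrix}\alpha_{m+i}(z)&0\\ \gamma_{m+i}(z)&\beta_{m+i}(z)\end{bmatrix},\qquad \alpha_{m+i}(z)=f_{p_{m+i}}'(\pi_h z),\ \ \beta_{m+i}(z)=\partial_y e_{p_{m+i}}(z),\ \ \gamma_{m+i}(z)=\partial_x e_{p_{m+i}}(z).
$$
A product of lower triangular matrices is lower triangular, and its diagonal entries are the products of the corresponding diagonal entries; hence by the chain rule
$$
a_m^n(z)=\prod_{i=0}^{n-1}\alpha_{m+i}(z_i),\qquad b_m^n(z)=\prod_{i=0}^{n-1}\beta_{m+i}(z_i),
$$
$$
c_m^n(z)=\sum_{k=0}^{n-1}\Big(\prod_{i=k+1}^{n-1}\beta_{m+i}(z_i)\Big)\,\gamma_{m+k}(z_k)\,\Big(\prod_{i=0}^{k-1}\alpha_{m+i}(z_i)\Big).
$$
Since each $F_{p_{m+i}}$ fixes the origin, the orbit of $0$ is identically $0$, so $a_m^n(0)=\prod_i a_{m+i}$ and $b_m^n(0)=\prod_i b_{m+i}$, with $a_{m+i}\in(\lambda^{\bepsilon},\lambda^{-\bepsilon})$ and $b_{m+i}\in(\lambda^{1+\bepsilon},\lambda^{1-\bepsilon})$ by \thmref{reg chart} ii).

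Next I would bound the two diagonal ratios by taking logarithms and summing term by term. Because $\|D_zF_{p_{m+i}}-D_0F_{p_{m+i}}\|_{C^0}<\lambda^{1+\bepsilon}$, the diagonal entries stay comparable to their values at $0$ on the regular neighborhoods (so $\alpha_{m+i}(z)$ is bounded away from $0$ on the scale $\lambda^{\bepsilon}$ and $\beta_{m+i}(z)\gtrsim\lambda^{1+\bepsilon}$), while $\|D^2 F_{p_{m+i}}\|\le\|F\|_{C^3}$ by \thmref{reg chart} i). The mean value theorem applied to $\log\alpha_{m+i}$ and $\log\beta_{m+i}$ along the segment from $z_i$ to $0$ then yields
$$
\Big|\log\tfrac{\alpha_{m+i}(z_i)}{\alpha_{m+i}(0)}\Big|\lesssim\tfrac{\|F\|_{C^3}}{\lambda^{\bepsilon}}\,|x_i|,\qquad \Big|\log\tfrac{\beta_{m+i}(z_i)}{\beta_{m+i}(0)}\Big|\lesssim\tfrac{\|F\|_{C^3}}{\lambda^{\bepsilon}}\,(|x_i|+|y_i|).
$$
It then remains to control the sums of displacements along the confined orbit. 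By hypothesis $z_i\in U_{p_{m+i}}=\bbB(l_{p_{m+i}})$; combining the geometric decay of the regular radii $l_{p_{m+i}}$ with the $\lambda^{\pm\bepsilon}$-growth bound for the horizontal map along the orbit gives $\sum_{i=0}^{n-1}|x_i|\lesssim\chl_h$, and using the $\lambda^{1-\bepsilon}$-contraction of the vertical fibres (whose geometric series sums to $\chl_v$) gives $\sum_{i=0}^{n-1}|y_i|\lesssim\chl_v$, both uniformly in $n$, $z$ and the orbit. Plugging in produces $|\log(a_m^n(z)/a_m^n(0))|\le\chl_h\|F\|_{C^3}/\lambda^{\bepsilon}=\log\chi_h$ and $|\log(b_m^n(z)/b_m^n(0))|\le(\chl_h+\chl_v)\|F\|_{C^3}/\lambda^{\bepsilon}=\log\chi_v$, which are the first two claims.

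For the off-diagonal bound, the decisive point is the extra smallness of $\gamma$: by \thmref{reg chart} iv) with $s=1$ we have $\gamma_{m+k}(z_k)=\partial_x e_{p_{m+k}}(x_k,y_k)=O(\|F\|_{C^r}|y_k|)$, so the off-diagonal entries of the individual factors are already of size $O(|y_k|)$ and hence geometrically small along the orbit. Combining this with the product estimates $\prod_{i>k}\beta_{m+i}(z_i)\lesssim\lambda^{(1-\bepsilon)(n-k)}$ and $\prod_{i<k}\alpha_{m+i}(z_i)\lesssim\lambda^{-\bepsilon k}$ from the previous step and the vertical contraction $|y_k|\lesssim\lambda^{(1-\bepsilon)k}$, each summand in $c_m^n(z)$ is bounded by $\lambda^{(1-\bepsilon)n}$ times a factor whose sum over $k$ is a convergent geometric series; absorbing the constants and the series into the bar-notation gives $\|\gamma_m^n\|<\lambda^{(1-\bepsilon)n}$.

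The triangular linear algebra and the mean value estimates are routine; the step that requires genuine care — and is the main obstacle — is the uniform control of $\sum_i|x_i|$ and $\sum_i|y_i|$. One must reconcile the two descriptions of the orbit (confinement inside the shrinking neighborhoods $U_{p_{m+i}}$ versus the contraction/growth rates along the orbit), handle the regimes $m\ge0$ and $m<0$ on equal footing, and check that the resulting constants are exactly $\chl_h$ and $\chl_v$ — in particular that they are independent of $n$, of the base point $z$, and of the chosen orbit.
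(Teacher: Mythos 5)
Your overall strategy — exploit the lower-triangular structure of the $Q$-linearized maps $F_{p_{m+i}}$ so that $a_m^n$ and $b_m^n$ factor as products of the diagonal entries $\alpha_{m+i}$, $\beta_{m+i}$ along the orbit, and then run a mean-value distortion estimate on $\log\alpha$, $\log\beta$ together with a bound on the sums of displacements $\sum|x_i|$, $\sum|y_i|$ — is almost certainly the intended one. Note that this proposition is cited verbatim from \cite[Proposition 6.4]{CLPY2}; the proof is not reproduced in the present paper, so there is no in-text proof to compare against. Your triangular chain-rule decomposition of $a_m^n,b_m^n,c_m^n$, your use of Theorem~\ref{reg chart}~iv) for the cascaded smallness of the off-diagonal term, and the geometric-series absorption in the $c_m^n$ bound are all sound modulo the bar-notation.

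That said, your argument has two genuine gaps. First, the step you yourself flag as ``requiring genuine care'' is in fact the one you assert without derivation. You claim $\sum_{i=0}^{n-1}|x_i|\lesssim\chl_h$, citing only ``geometric decay of the regular radii combined with the $\lambda^{\pm\bepsilon}$-growth bound.'' The confinement bound $|x_i|<l_{p_{m+i}}\lesssim\lambda^{\bepsilon|m+i|}$ gives a geometric sum of order $(1-\lambda^{\bepsilon})^{-1}$, and the forward $\lambda^{-\bepsilon}$-growth bound gives a bound that \emph{increases} with $i$; neither of these produces $\chl_h=\sup_n n\lambda^{\bepsilon n}$. The specific form $n\lambda^{\bepsilon n}$ strongly suggests a uniform-in-$i$ estimate $|x_i|\lesssim\lambda^{\bepsilon n}$ for all $0\le i<n$, and it is not visible from the chart data you cite why such an $i$-independent bound should hold. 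You need to spell this out.

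Second, there is an internal inconsistency in your vertical distortion estimate. You correctly note $\beta_{m+i}(z)\gtrsim\lambda^{1+\bepsilon}$ (this is the right lower bound coming from $b_{m+i}>\lambda^{1+\bepsilon}$ and the perturbation bound in Theorem~\ref{reg chart}~iii)). But you then state
$$
\Bigl|\log\tfrac{\beta_{m+i}(z_i)}{\beta_{m+i}(0)}\Bigr|\lesssim\tfrac{\|F\|_{C^3}}{\lambda^{\bepsilon}}\bigl(|x_i|+|y_i|\bigr),
$$
with $\lambda^{\bepsilon}$ in the denominator. A mean-value estimate on $\log\beta$ using your own lower bound yields $\|F\|_{C^3}\lambda^{-(1+\bepsilon)}$, a factor $\lambda^{-1}$ worse. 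To recover the stated constant $\chi_v=\exp\bigl((\chl_h+\chl_v)\|F\|_{C^3}/\lambda^{\bepsilon}\bigr)$ you must cancel this extra $\lambda^{-1}$ --- plausibly because $l_{p_m}\propto\lambda^{1+\bepsilon}/(C\cK_{p_m})$ carries a hidden factor of $\|DF^{-1}\|^{-1}\lesssim\lambda$, so that $\sum|y_i|$ (and $\sum|x_i|$) already come with an extra power of $\lambda$ --- but as written you simply declare that the chain of $\lesssim$'s collapses to an exact equality $=\log\chi_v$, which does not follow from what you have shown.
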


For $-M \leq m \leq N$ and $q \in \cU_{p_m}$, write $z := \Phi_{p_m}(q)$. The {\it vertical/horizontal direction at $q$ in $\cU_{p_m}$} is defined as $E^{v/h}_q := D\Phi_{p_m}^{-1}(E^{gv/gh}_z)$. By the construction of regular charts in \thmref{reg chart}, vertical directions are invariant under $F$ (i.e. $DF(E_q^v) = E_{F(q)}^v$ for $q \in \cU_{p_m}$). Note that the same is not true for horizontal directions.

\begin{prop}\cite[Proposition 6.5]{CLPY2}\label{chart cons}
For $-M \leq m \leq N$ and $q \in \cU_{p_m}$, we have
$$
\frac{1}{\sqrt{2}} \leq \frac{\|D\Phi_{p_m}|_{E_z^{v/h}}\|}{\|D\Phi_{p_m}|_{E_{p_m}^{v/h}}\|} \leq \sqrt{2}.
$$
\end{prop}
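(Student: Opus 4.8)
Since the genuine coordinate directions are the \emph{constant} unit vector fields $e^{gv}=(0,1)$ and $e^{gh}=(1,0)$ on $U_{p_m}=\bbB(l_{p_m})$, and $E^{v/h}_q = D\Phi_{p_m}^{-1}(E^{gv/gh}_z)$ with $z:=\Phi_{p_m}(q)$, the first thing I would do is reduce the statement to a variation estimate for the \emph{inverse} chart. The elementary fact that $\|A|_{\langle A^{-1}u\rangle}\|=1/\|A^{-1}u\|$ for an invertible $A$ and a unit vector $u$ (applied to $A=D_q\Phi_{p_m}$, $A^{-1}=D_z(\Phi_{p_m}^{-1})$) turns the ratio in the statement into
\[
\frac{\|D\Phi_{p_m}|_{E^{v/h}_q}\|}{\|D\Phi_{p_m}|_{E^{v/h}_{p_m}}\|}
\;=\;\frac{\|D_0(\Phi_{p_m}^{-1})\,e^{\sharp}\|}{\|D_z(\Phi_{p_m}^{-1})\,e^{\sharp}\|},
\qquad e^{\sharp}\in\{e^{gv},e^{gh}\},
\]
where $D_w(\Phi_{p_m}^{-1})$ is the derivative of $\Phi_{p_m}^{-1}$ at $w\in U_{p_m}$. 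So it suffices to show that $D(\Phi_{p_m}^{-1})$, evaluated on a fixed coordinate vector, changes by a relative factor inside $[1/\sqrt2,\sqrt2]$ as $w$ runs over the small box $U_{p_m}$.

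For the absolute change I would invoke \thmref{reg chart}: since $r\ge 2$, $\|D^2(\Phi_{p_m}^{-1})\|_{C^0}\le\|D\Phi_{p_m}^{-1}\|_{C^{r-1}}<C(1+\omega)$ on the convex set $U_{p_m}$, so the mean value inequality along $[0,z]$ gives $\|D_z(\Phi_{p_m}^{-1})e^{\sharp}-D_0(\Phi_{p_m}^{-1})e^{\sharp}\|\le\sqrt2\,C(1+\omega)\,l_{p_m}$. Substituting $l_{p_m}=\lambda^{1+\bepsilon}(C\cK_{p_m})^{-1}$ and $\cK_{p_m}=\bL(1+\omega)^5\|DF^{-1}\|\lambda^{1-\bepsilon|m|}\ge\bL(1+\omega)^5\|DF^{-1}\|\lambda$, this collapses to $\sqrt2\,\lambda^{\bepsilon}\big(\bL(1+\omega)^4\|DF^{-1}\|\big)^{-1}\lambda^{\bepsilon|m|}$, which is small and, crucially, carries \emph{negative} powers of precisely the ``bad'' quantities $\bL$, $1+\omega$, $\|DF^{-1}\|$. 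The second half of the argument is a matching \emph{lower} bound for $\|D_0(\Phi_{p_m}^{-1})e^{\sharp}\|$. For $e^{\sharp}=e^{gv}$ this equals $1/\|D\Phi_{p_m}|_{E^v_{p_m}}\|$, and I would control it from the normal form at the base point --- $D_0F_{p_m}$ is diagonal with entries in $[\lambda^{1+\bepsilon},\lambda^{-\bepsilon}]$ and $\|DF_{p_m}\|_{C^{r-1}}\le\|DF\|_{C^r}$ (\thmref{reg chart}(i)--(ii)) --- combined with the relation $F_{p_m}=\Phi_{p_{m+1}}\circ F\circ\Phi_{p_m}^{-1}$ and \propref{lin comp}, telescoped forward along the $(M,N)$-orbit; for $e^{\sharp}=e^{gh}$ the same input together with the triangular form $F_{p_m}(x,y)=(f_{p_m}(x),e_{p_m}(x,y))$ of \thmref{reg chart}(iv) should do it. Once this lower bound is seen to involve the \emph{same} powers of $\bL$, $1+\omega$, $\|DF^{-1}\|$ that sit in $\cK_{p_m}$, dividing shows the relative change is $<\sqrt2-1$ (enlarging the constant $C$ of \thmref{reg chart} if necessary, which only shrinks $l_{p_m}$), and \propref{chart cons} follows.

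The hard part will be exactly this last matching, not the soft ``small domain, bounded $C^2$-norm'' heuristic: the required factor is the sharp $\sqrt2$, not merely $O(1)$, so one cannot afford the exponential losses of a crude Gronwall estimate (compare the constants $\chi_h,\chi_v$ in \propref{lin comp}). What makes it work is that $l_{p_m}$ and $\cK_{p_m}$ were calibrated in \thmref{reg chart} so that the variation of $D(\Phi_{p_m}^{-1})$ over $U_{p_m}$ is negligible \emph{relative to $D(\Phi_{p_m}^{-1})$ itself}, uniformly in $m$ and in $F$; pinning down that cancellation is the real content. A secondary subtlety is the horizontal direction: unlike the $F$-invariant vertical direction it does not evolve linearly along the orbit, so it has to be tracked through the skew-product normal form of \thmref{reg chart}(iv) rather than estimated directly.
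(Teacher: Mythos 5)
The paper itself does not prove \propref{chart cons}; it is cited verbatim from \cite[Proposition~6.5]{CLPY2}, so there is no in-paper proof to compare against. Evaluating your argument on its own terms: the opening reduction is correct and is surely the right first move. Since $E^{v/h}_q=D\Phi_{p_m}^{-1}(E^{gv/gh}_z)$ with $z=\Phi_{p_m}(q)$, the identity $\|A|_{\langle A^{-1}u\rangle}\|=\|A^{-1}u\|^{-1}$ applied at $q$ and at $p_m$ turns the ratio into $\|D_0\Phi_{p_m}^{-1}e^{\sharp}\|/\|D_z\Phi_{p_m}^{-1}e^{\sharp}\|$, and your mean-value estimate for the \emph{absolute} variation, using $\|D\Phi_{p_m}^{-1}\|_{C^{r-1}}<C(1+\omega)$ and $\operatorname{diam}U_{p_m}\asymp l_{p_m}$, is also fine.

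The genuine gap is the matching lower bound on $\|D_0\Phi_{p_m}^{-1}e^{\sharp}\|$, which you yourself flag as ``the real content'' but then only sketch. From the data actually displayed in \thmref{reg chart}, the only lower bound one can extract is $\|D_0\Phi_{p_m}^{-1}e^{\sharp}\|\geq \|D_0\Phi_{p_m}\|^{-1} > (C\cK_{p_m})^{-1}$ (minimum singular value of $D_0\Phi_{p_m}^{-1}$). Plugging this and $\delta\leq C(1+\omega)\sqrt{2}\,l_{p_m}$ into the quotient, the factors $C$, $\bL$, $\|DF^{-1}\|$ and most powers of $1+\omega$ cancel, but you are still left with $\delta/\|D_0\Phi_{p_m}^{-1}e^{\sharp}\|\lesssim C(1+\omega)\lambda^{1+\bepsilon}$, and there is no reason this is $<1-1/\sqrt2$: $\lambda$ need not be small, and $\omega=\frac{\lambda^{1-\epsilon}}{1-\lambda^{1-\epsilon}}\|DF^{-1}\|\|DF\|$ can be large precisely when $\lambda$ is close to $1$. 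Your proposed fix, telescoping the normal form $F_{p_m}=\Phi_{p_{m+1}}\circ F\circ \Phi_{p_m}^{-1}$ through \propref{lin comp}, would need to produce a lower bound on $\|D_0\Phi_{p_m}^{-1}e^{\sharp}\|$ that beats $(C\cK_{p_m})^{-1}$ by a factor that absorbs $(1+\omega)\lambda^{1+\bepsilon}$, and nothing in \thmref{reg chart} or \propref{lin comp} (which only give \emph{ratios} of off-center to on-center linearized derivatives, with constants $\chi_h,\chi_v$ that are themselves large) supplies that. This information lives inside the construction of $\Phi_{p_m}$ in CLPY2 and cannot be reconstructed from the exported upper bounds alone; so the argument does not close. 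A smaller issue: to get \emph{both} inequalities $1/\sqrt2\leq a/b\leq\sqrt2$ from $|a-b|\leq\theta\cdot\min(a,b)$ you need $\theta\leq 1-1/\sqrt2\approx 0.29$, not $\theta<\sqrt2-1\approx 0.41$ as you wrote.
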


\begin{cor}\cite[Corollary 6.6]{CLPY2}\label{ver hor cons}
For some $-M \leq m_0 \leq N$, let $q_{m_0} \in \cU_{p_{m_0}}$. Suppose for $m_0 \leq m \leq m_1 \leq N$, we have $q_m \in \cU_{p_m}$. Let
$$
\hE^h_{q_m} := DF^{m-m_0}(E^h_{q_{m_0}}).
$$
Then for $m_0 \leq m'\leq m_1$, we have
$$
\frac{1}{2\chi_h}\leq \frac{\|DF^{m'-m}|_{\hE^h_{q_m}}\|}{\|DF^{m'-m}|_{E^h_{p_m}}\|} \leq 2\chi_h
\matsp{and}
\frac{1}{2\chi_v}\leq \frac{\|DF^{m'-m}|_{E^v_{q_m}}\|}{\|DF^{m'-m}|_{E^v_{p_m}}\|} \leq 2\chi_v,
$$
where $\chi_h$ and $\chi_v$ are constants given in \propref{lin comp}.
\end{cor}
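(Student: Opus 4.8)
The plan is to transfer both quantities in the statement into the Q‑linearized coordinates provided by \thmref{reg chart} and then read the bound off from \propref{chart cons} (which controls chart distortion across the model neighborhood) and \propref{lin comp} (which controls the entries of the linearized cocycle). Since by hypothesis $q_m \in \cU_{p_m}$ for all $m_0 \le m \le m_1$, the conjugacy $\Phi_{p_{m+1}}\circ F = F_{p_m}\circ \Phi_{p_m}$ holds along this piece of orbit, and differentiating together with the multiplicativity of operator norm along one–dimensional subspaces (i.e. $\|(AB)|_V\| = \|A|_{BV}\|\cdot\|B|_V\|$) gives, for any $E_{q_m}\in\bbP^2_{q_m}$, any $m_0 \le m, m' \le m_1$, and $E_{q_{m'}} := DF^{m'-m}(E_{q_m})$,
$$
\|DF^{m'-m}|_{E_{q_m}}\|
= \frac{\|D\Phi_{p_m}|_{E_{q_m}}\|}{\|D\Phi_{p_{m'}}|_{E_{q_{m'}}}\|}\cdot
\|DF_{p_m}^{m'-m}|_{D\Phi_{p_m}(E_{q_m})}\|,
$$
and the analogous identity with $q$ replaced by $p$. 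Dividing the two identities, the chart terms reorganize into the time–$m$ ratio $\|D\Phi_{p_m}|_{E_{q_m}}\|/\|D\Phi_{p_m}|_{E_{p_m}}\|$ divided by the time–$m'$ analogue, leaving a purely linearized quotient of norms of $DF_{p_m}^{m'-m}$.

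For the vertical estimate I would take $E_{q_m} = E^v_{q_m}$. By \thmref{reg chart} the vertical direction is sent by $D\Phi_{p_m}$ to the genuine vertical $E^{gv}$ and is invariant under the lower–triangular model map $F_{p_m}$, so $\|DF_{p_m}^{m'-m}|_{E^{gv}_z}\| = b_m^{m'-m}(z)$ with $z = \Phi_{p_m}(q_m)$, respectively $z=0$. Then the two chart ratios each lie in $[1/\sqrt2,\sqrt2]$ by \propref{chart cons}, contributing a total factor in $[1/2,2]$, while the remaining factor $b_m^{m'-m}(z_m)/b_m^{m'-m}(0)$ lies in $[1/\chi_v,\chi_v]$ by \propref{lin comp}. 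Multiplying these gives exactly the bound $2\chi_v$.

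For the horizontal estimate the same telescoping applies, with the twist that $\hE^h_{q_m} := DF^{m-m_0}(E^h_{q_{m_0}})$ is not the chart–horizontal direction $E^h_{q_m}$: in the linearized coordinate at $p_{m_0}$ it is the $DF_{p_{m_0}}^{m-m_0}$–image of the genuine horizontal, whose slope is controlled by $\|\gamma_{m_0}^{m-m_0}\| < \lambda^{(1-\bepsilon)(m-m_0)}\le 1$ from \propref{lin comp}. A short computation with the triangular matrices of \propref{lin comp} shows (i) that along any direction of slope at most $1$ the norm of $D\Phi_{p_m}$ differs from its value along $E^h_{q_m}$ by a bounded factor (bringing \propref{chart cons} back into play), and (ii) that $\|DF_{p_m}^{m'-m}|_{D\Phi_{p_m}(\hE^h_{q_m})}\|$ agrees with $a_m^{m'-m}(z_m)$ up to a factor $1 + O(\lambda^{2(1-\bepsilon)(m'-m)})$, whose reciprocal analogue on the $p_m$–side (where the chart image is the genuine horizontal exactly) differs from $a_m^{m'-m}(0)$ by $1 + O(\lambda^{2(1-\bepsilon)(m'-m)})$ as well. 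Combining (i), (ii), \propref{chart cons}, and $a_m^{m'-m}(z_m)/a_m^{m'-m}(0)\in[1/\chi_h,\chi_h]$ from \propref{lin comp} yields the bound $2\chi_h$, the auxiliary bounded factors being absorbed as in \cite{CLPY2} (and if $m' < m$ the entire argument runs the same way with $F^{-1}$ in place of $F$).

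The only genuine obstacle is this horizontal case: one must keep careful track of how far the iterated horizontal direction drifts from the genuine horizontal in the linearized model and verify that the accumulated corrections stay bounded rather than growing with the length $m'-m$ of the orbit segment. This is precisely the role of the slope bound $\|\gamma_m^n\| < \lambda^{(1-\bepsilon)n}$ in \propref{lin comp}, which makes the drift geometrically summable and hence the constants independent of the segment.
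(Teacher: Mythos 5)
The overall strategy is the right one: use the conjugacy $\Phi_{p_{m+1}}\circ F = F_{p_m}\circ\Phi_{p_m}$ to split $\|DF^{m'-m}|_{E_{q_m}}\|$ into two chart factors and one model–map factor, then estimate the chart factors by \propref{chart cons} and the model factor by \propref{lin comp}. Your treatment of the \emph{vertical} estimate is correct and complete: $E^v_{q_m}$ is mapped by $D\Phi_{p_m}$ to genuine vertical, the lower–triangular model map preserves genuine vertical with scalar $b_m^{m'-m}(\cdot)$, \propref{chart cons} gives a factor in $[1/\sqrt2,\sqrt2]$ at each of the two times, and \propref{lin comp} gives $b_m^{m'-m}(z_m)/b_m^{m'-m}(0)\in[\chi_v^{-1},\chi_v]$, yielding $2\chi_v$.

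The gap is precisely in your step (i) for the \emph{horizontal} case. You assert that for a direction $\tiE_{q_m}$ whose chart image has slope $|\gamma|\le 1$, the norm $\|D\Phi_{p_m}|_{\tiE_{q_m}}\|$ is comparable to $\|D\Phi_{p_m}|_{E^h_{q_m}}\|$, attributing this to a computation with the triangular cocycle matrices and to \propref{chart cons}. Neither of those tools does what you need. \propref{lin comp} is a statement about the model cocycle $DF_{p_m}^n$, not about the chart derivative $D\Phi_{p_m}$; and \propref{chart cons} compares the chart norm at $q$ to the chart norm at $p_m$ \emph{along the chart-horizontal or chart-vertical directions only}, not along a tilted direction. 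The ratio you need is
$$
\frac{\|D_{q_m}\Phi_{p_m}|_{\hE^h_{q_m}}\|}{\|D_{q_m}\Phi_{p_m}|_{E^h_{q_m}}\|}
= \sqrt{1+\gamma^2}\cdot\frac{\|A^{-1}(1,0)\|}{\|A^{-1}(1,\gamma)\|},
\qquad A:=D_{q_m}\Phi_{p_m},
$$
and this is \emph{not} automatically bounded under the hypothesis $|\gamma|\le 1$: by \thmref{reg chart} only $\|D\Phi_{p_m}^{-1}\|$ is uniformly bounded, while $\|D\Phi_{p_m}\|$ is allowed to grow like $\cK_{p_m}$ (the chart can be very anisotropic, and the anisotropy grows along the orbit by \propref{grow irreg}). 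When $A^{-1}$ contracts strongly in one direction, a slope-$\gamma$ tilt in chart coordinates can drop $\|A^{-1}(1,\gamma)\|$ far below $\|A^{-1}(1,0)\|$ and make the above ratio large. What actually has to be checked — and what you do not check — is that the tilt $\gamma_{m_0}^{\,m-m_0}(z_{m_0})<\lambda^{(1-\bepsilon)(m-m_0)}$ shrinks fast enough to dominate the growth $\cK_{p_m}\sim\lambda^{-\bepsilon|m|}$ of the chart anisotropy. That comparison (roughly a competition between $\lambda^{(1-\bepsilon)(m-m_0)}$ and $\lambda^{-\bepsilon(m-m_0)}$, together with controlling the anisotropy already present at time $m_0$) is the non-trivial content of this corollary; noting merely that $\gamma$ is geometrically small does not close the argument, because what matters is $\gamma$ measured against the chart's singular-value ratio, not $\gamma$ measured against $1$.

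A secondary issue: even if (i) held with factor $\sqrt2$, your bookkeeping already produces extra $\sqrt2$ factors in the model term (from $\sqrt{1+\gamma^2}$ at the two endpoints), so the stated constant $2\chi_h$ is not recovered without a more careful accounting than ``auxiliary bounded factors being absorbed.'' This suggests that the actual argument tracks the $q$-side horizontal direction in a single chart from $m_0$, using that $\hE^h_{q_{m_0}}=E^h_{q_{m_0}}$ is genuinely chart-horizontal there (so no tilt at the starting time), rather than re-expressing the tilted direction at the intermediate time $m$ in the chart $\Phi_{p_m}$, which is where your anisotropy problem is created. The vertical half of your proof is fine; the horizontal half needs the missing estimate relating chart-tilt to chart-anisotropy along the orbit.
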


\begin{prop}[Vertical alignment of forward contracting directions]\cite[Proposition 6.8]{CLPY2}\label{vert angle shrink}
Let $q_0 \in \cU_{p_0}$ and $\tiE_{q_0}^v \in \bbP^2_{q_0}$. Suppose $q_i \in \cU_{p_ i}$ for $0 \leq i \leq n \leq N$, and that
$$
\nu := \|DF^n|_{\tiE_{q_0}^v}\| < \frac{\lambda^{\bepsilon n}}{\chi_h(2+\omega)^3 \bC},
$$
where $C, \omega, \chi_h \geq 1$ are uniform constants given in \thmref{reg chart} and \propref{lin comp}. Denote $z_0 := \Phi_{p_0}(q_0)$ and $\tiE_{z_0}^v := D\Phi_{p_0}(\tiE^v_{q_0})$. Then
$$
\measuredangle(\tiE^v_{z_0}, E_{z_0}^{gv}) < \chi_h (1+\omega)\bC\lambda^{-\bepsilon n} \nu.
$$
\end{prop}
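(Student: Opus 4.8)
The plan is to carry out the whole argument in the regular chart coordinates of \thmref{reg chart}. Set $z_i:=\Phi_{p_i}(q_i)\in U_{p_i}$ for $0\le i\le n$; since $q_i\in\cU_{p_i}$, the composition $F_{p_0}^n=\Phi_{p_n}\circ F^n\circ\Phi_{p_0}^{-1}$ satisfies $F_{p_0}^i(z_0)=z_i\in U_{p_i}$, so \propref{lin comp} is available along the orbit $z_0,\dots,z_n$. Write the unit vector spanning $\tiE^v_{z_0}$ in the genuine horizontal/vertical frame at $z_0$ as $(u_0,w_0)$ with $u_0^2+w_0^2=1$; since $E^{gv}_{z_0}$ is the vertical axis, $\measuredangle(\tiE^v_{z_0},E^{gv}_{z_0})=\arctan(|u_0|/|w_0|)$. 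The whole problem thus reduces to bounding the horizontal component $|u_0|$, which I would do by sandwiching the horizontal component of $D_{z_0}F_{p_0}^n(\tiE^v_{z_0})$ from below and above.

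For the lower bound: by \propref{lin comp} the matrix $D_{z_0}F_{p_0}^n$ is lower triangular with horizontal diagonal entry $a^n_0(z_0)$, so the horizontal component of $D_{z_0}F_{p_0}^n(\tiE^v_{z_0})$ equals $a^n_0(z_0)\,u_0$; moreover \thmref{reg chart} (ii) gives $a_i>\lambda^{\bepsilon}$ for each $i$, whence $a^n_0(0)=a_0\cdots a_{n-1}>\lambda^{\bepsilon n}$, and the distortion estimate of \propref{lin comp} yields $|a^n_0(z_0)|>\chi_h^{-1}\lambda^{\bepsilon n}$. For the upper bound, the horizontal component of the image is at most its norm, and
\[
D_{z_0}F_{p_0}^n(\tiE^v_{z_0})=D_{q_n}\Phi_{p_n}\circ D_{q_0}F^n\circ D_{z_0}\Phi_{p_0}^{-1}(\tiE^v_{z_0});
\]
using $\|D\Phi_{p_0}^{-1}\|<C(1+\omega)$, the defining relation $\|DF^n|_{\tiE^v_{q_0}}\|=\nu$, and $\|D\Phi_{p_n}\|<C\cK_{p_n}$ with $\cK_{p_n}=\bL(1+\omega)^5\|DF^{-1}\|\lambda^{1-\bepsilon n}$, one obtains $\|D_{z_0}F_{p_0}^n(\tiE^v_{z_0})\|<\bC(1+\omega)^6\lambda^{-\bepsilon n}\nu$. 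Dividing the two bounds gives
\[
|u_0|<\chi_h\,\bC\,(1+\omega)^6\,\lambda^{-2\bepsilon n}\,\nu,
\]
which matches the asserted form $\chi_h(1+\omega)\bC\lambda^{-\bepsilon n}\nu$ once one relabels the marginal exponent (the exponent $2\bepsilon$ is again a $\bepsilon$) and consolidates the fixed power of $1+\omega$ and the factors $\bL,\|DF^{-1}\|$ into $\bC$.

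To finish, the hypothesis $\nu<\lambda^{\bepsilon n}/(\chi_h(2+\omega)^3\bC)$ is chosen precisely so that the last display forces $|u_0|<\tfrac12$; then $|w_0|\ge\tfrac12$ and $\measuredangle(\tiE^v_{z_0},E^{gv}_{z_0})=\arctan(|u_0|/|w_0|)\le 2|u_0|$, and absorbing the factor $2$ gives the claim. I expect the only genuine difficulty to be the bookkeeping of the quantitative Pesin constants rather than any new idea: the operator norm of the far-end chart $\Phi_{p_n}$ genuinely grows like $\lambda^{-\bepsilon n}$ (the regular radius $l_{p_n}$ shrinks at that rate, cf. \propref{grow irreg} and \lemref{size reg nbh}), so pushing $\tiE^v_{q_0}$ through $\Phi_{p_n}$ costs a factor $\lambda^{-\bepsilon n}$ on top of the factor $\lambda^{-\bepsilon n}$ coming from inverting the lower bound on $a^n_0(z_0)$; this is exactly why the conclusion carries $\lambda^{-\bepsilon n}$ with a marginal exponent strictly larger than the input $\epsilon$, and carrying $\bL$, $\omega$, $\chi_h$ and $C$ consistently through the chain is where the care is needed.
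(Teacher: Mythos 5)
Your overall plan is sound and, as far as one can tell without the external reference, is essentially the natural proof: exploit that $D_{z_0}F_{p_0}^n$ is lower triangular in regular-chart coordinates, bound its $(1,1)$-entry $a_0^n(z_0)$ from below by $\chi_h^{-1}\lambda^{\bepsilon n}$ using \thmref{reg chart}~ii) together with the distortion estimate in \propref{lin comp}, bound the chart norm of the image from above by $\nu$ times the chart-conjugacy constants, and divide. The reduction of the angle to $|u_0|$ via $\arctan(|u_0|/|w_0|)\le 2|u_0|$ once $|u_0|<1/2$ is fine, as is the observation that the hypothesis threshold on $\nu$ is what makes the final inequality non-vacuous.

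Where you should be more careful is the upper bound. Going through $\|D\Phi_{p_n}\|<C\cK_{p_n}$ brings in $\cK_{p_n}=\bL(1+\omega)^5\|DF^{-1}\|\lambda^{1-\bepsilon n}$, so your computation produces a factor $\bL\,(1+\omega)^6\,\|DF^{-1}\|$ (and a $\lambda^{-2\bepsilon n}$) that you then ``consolidate into $\bC$'' or relabel as $\bepsilon$. The relabelling $2\bepsilon\rightsquigarrow\bepsilon$ is consistent with the paper's conventions, but folding $\bL$, $\|DF^{-1}\|$ and the extra powers of $(1+\omega)$ into $\bC$ is not: $\bC$ is only allowed to depend on the quantities $C$ depends on, and the statement deliberately keeps the $(1+\omega)$- and $\chi_h$-dependence explicit while showing no $\bL$ or $\|DF^{-1}$ at all. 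So either the cited proof uses a sharper bound on $D\Phi_{p_n}$ applied to the particular direction $\tiE^v_{q_n}$ (e.g.\ via \propref{chart cons} or \corref{ver hor cons} rather than the crude operator norm $C\cK_{p_n}$), or your argument genuinely proves a slightly weaker statement with extra factors. You should track these constants explicitly rather than appealing to the $\bar{\cdot}$ notation at this point; as written, the step from your $|u_0|<\chi_h\bC(1+\omega)^6\lambda^{-2\bepsilon n}\nu$ to the asserted $\chi_h(1+\omega)\bC\lambda^{-\bepsilon n}\nu$ is not justified by the paper's bookkeeping conventions alone.
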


\begin{prop}[Horizontal alignment of backward neutral directions]\cite[Proposition 6.9]{CLPY2}\label{hor angle shrink}
Let $q_0 \in \cU_{p_0}$ and $\tiE_{q_0}^h \in \bbP^2_{q_0}$. Suppose  $q_{-i} \in \cU_{p_ {-i}}$ for $0 \leq i \leq n \leq M$, and that
$$
\mu := \|DF^{-n}|_{\tiE^h_{q_0}}\| < \frac{1}{\chi_v(2+\omega)^3\bC \lambda^{(1-\bepsilon)n}}.
$$
Denote
$$
z_0 := \Phi_{p_0}(q_0)
\comma
\tiE_{z_0}^h := D\Phi_{p_0}(\tiE^h_{q_0})
\matsp{and}
\hE_{z_0}^h := D\Phi_{p_0} \circ F^n(E^h_{q_{-n}}).
$$
Then
$$
\measuredangle(\tiE_{q_0}^h, \hE_{q_0}^h) < \chi_v(1+\omega)\bC \lambda^{(1-\bepsilon)n}\cdot \mu.
$$
\end{prop}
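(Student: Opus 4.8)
This proposition is the backward-time mirror of \propref{vert angle shrink}, and the plan is to run the same argument in the Q-linearized coordinates of \thmref{reg chart}, exploiting the near-lower-triangular form of the derivative cocycle supplied by \propref{lin comp}. First I would fix notation: let $G := F^n_{p_{-n}} = \Phi_{p_0}\circ F^n\circ\Phi_{p_{-n}}^{-1}$ be the $n$-fold Q-linearized map of \eqref{eq.compose}, and let $w_0 := \Phi_{p_{-n}}(q_{-n})$, so $G(w_0) = z_0$. By \propref{lin comp} and \thmref{reg chart}(ii) we may write
$$
D_{w_0}G = \begin{bmatrix} a & 0 \\ c & b \end{bmatrix},
\qquad \frac{\lambda^{\bepsilon n}}{\chi_h}\le a\le \chi_h\lambda^{-\bepsilon n},\quad b\le \chi_v\lambda^{(1-\bepsilon)n},\quad |\gamma| := |c/a| < \lambda^{(1-\bepsilon)n}.
$$
Measuring directions at $z_0$ by their slope relative to $E^{gh}_{z_0}$, the image $\hE^h_{z_0} = D_{w_0}G(E^{gh}_{w_0})$ has slope $\gamma$, so it is $\lambda^{(1-\bepsilon)n}$-close to horizontal. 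If $\tiE^h_{z_0} = D\Phi_{p_0}(\tiE^h_{q_0})$ has slope $t$, a direct computation with $(D_{w_0}G)^{-1}$ shows its $G$-preimage has slope $t' = (a/b)(t-\gamma)$ and
$$
\|(D_{w_0}G)^{-1}|_{\tiE^h_{z_0}}\| = \frac{1}{a}\cdot\frac{\sqrt{1+(t')^2}}{\sqrt{1+t^2}},
$$
whence $\measuredangle(\tiE^h_{z_0},\hE^h_{z_0})\le 2|t-\gamma| = 2(b/a)|t'|\le 2(b/a)\sqrt{1+(t')^2}$.

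Next I would transfer the norm back to $\mu = \|DF^{-n}|_{\tiE^h_{q_0}}\|$. Since $DF^{-n} = D\Phi_{p_{-n}}^{-1}\circ(D_{w_0}G)^{-1}\circ D\Phi_{p_0}$ with $\|D\Phi_{p_0}^{\pm1}\|,\|D\Phi_{p_{-n}}^{-1}\|\le C(1+\omega)$ and $\|D\Phi_{p_{-n}}\|\le C\cK_{p_{-n}}$, we get $\|(D_{w_0}G)^{-1}|_{\tiE^h_{z_0}}\|\le C^2(1+\omega)\cK_{p_{-n}}\mu$. Substituting into the two displays above yields $|t-\gamma|\le b\cdot C^2(1+\omega)\cK_{p_{-n}}\cdot\mu\cdot\sqrt{1+t^2}$. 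The hypothesis $\mu < (\chi_v(2+\omega)^3\bC\lambda^{(1-\bepsilon)n})^{-1}$, together with $b\le\chi_v\lambda^{(1-\bepsilon)n}$ and the calibration of $\bC,\omega,\cK_{p_{-n}}$, is arranged exactly so that the coefficient of $\sqrt{1+t^2}$ is $\le \tfrac12$. Then $|t-\gamma|\le\tfrac12\sqrt{1+t^2}\le\tfrac12(1+|t|)$, which together with $|\gamma| < \lambda^{(1-\bepsilon)n}$ forces $|t| = O(1)$; hence $\sqrt{1+t^2} = O(1)$ and so $|t-\gamma| = O(b\mu) = O(\chi_v\lambda^{(1-\bepsilon)n}\mu)$. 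Finally, converting this chart-angle bound into a genuine angle $\measuredangle(\tiE^h_{q_0},\hE^h_{q_0})$ costs only the bounded factor $\|D\Phi_{p_0}^{\pm1}\|\le C(1+\omega)$, which is absorbed into the constant $\chi_v(1+\omega)\bC$, giving the claimed estimate.

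The main obstacle is the constant-chasing in the middle step: the chart $\Phi_{p_{-n}}$ is only controlled through $\|D\Phi_{p_{-n}}\|\le C\cK_{p_{-n}}$, which grows like $\lambda^{-\bepsilon n}$, so one must verify that after this loss the coefficient of $\sqrt{1+t^2}$ still closes below $1$ under the stated hypothesis — i.e. that the $\bepsilon$-exponents and the factors $\chi_v,(2+\omega)^3,\bC$ in the statement are matched precisely (this is why the hypothesis carries the $\lambda^{(1-\bepsilon)n}$ and the $(2+\omega)^3$, and is precisely the kind of accounting that \propref{lin comp} and \propref{chart cons} are set up to absorb). If the direct estimate proves too delicate, the alternative I would fall back on is to deduce \propref{hor angle shrink} from \propref{vert angle shrink} applied to the Q-linearization of $F^{-1}$ along the forward orbit of $q_{-n}$, after checking that the regular charts for $F^{-1}$ coincide with those for $F$ up to the bounded distortion quantified in \thmref{reg chart}; under this duality, strong backward expansion becomes strong forward contraction and the horizontal/vertical roles swap, reducing the claim to the already-established forward statement. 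Everything else — the slope transformation law, the smallness of $\gamma$ and $b$, and the self-improving bound on $|t|$ — is routine.
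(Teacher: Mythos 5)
This proposition is not proved in the present paper: it sits in the appendix, which the paper explicitly describes as a \emph{summary} of results from \cite{CLPY2} (it is labeled [CLPY2, Proposition 6.9]). There is therefore no in-paper proof to compare your proposal against, and I can only assess the proposal on its own terms.

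The overall scheme is the right one and is consistent with the surrounding machinery: pass to the Q-linearized cocycle $G:=F^n_{p_{-n}}$, use the lower-triangular form of $D_{w_0}G$ supplied by \propref{lin comp} to read off the pulled-back slope $t'=(a/b)(t-\gamma)$, derive $|t-\gamma|\le b\,\|(D_{w_0}G)^{-1}|_{\tiE^h_{z_0}}\|\,\sqrt{1+t^2}$, and self-improve using the smallness of $\mu$. Your slope and norm algebra, and the final conversion from a chart-slope bound to a genuine angle (at the cost of $\|D\Phi_{p_0}^{\pm 1}\|$, which is bounded at $m=0$), are all correct.

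The gap is precisely the step you flag as ``the main obstacle'' and then dismiss as bookkeeping. To pass from $\|(D_{w_0}G)^{-1}|_{\tiE^h_{z_0}}\|$ to $\mu$ you use $\|D\Phi_{p_{-n}}\|\le C\cK_{p_{-n}}$, which costs a factor of order $\lambda^{-\bepsilon n}$ (since $\cK_{p_{-n}}\sim\lambda^{-\bepsilon n}$); multiplying by $b\le\chi_v\lambda^{(1-\bepsilon)n}$ leaves a coefficient of order $\lambda^{(1-2\bepsilon)n}\mu$, not $\lambda^{(1-\bepsilon)n}\mu$. Plugging in the hypothesis $\mu<(\chi_v(2+\omega)^3\bC\lambda^{(1-\bepsilon)n})^{-1}$ with the \emph{same} $\bepsilon$ then leaves a residual of order $\lambda^{-\bepsilon n}/\bC$ in front of $\sqrt{1+t^2}$, which is not $\le\tfrac12$ uniformly in $n$ for any fixed $\bC$. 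So the self-improving step that forces $|t|=O(1)$ does not close from what you wrote. It can only close if one distinguishes the regularity marginal exponent $\epsilon$ from the $\bepsilon$ appearing in the hypothesis and the conclusion, taking the latter $\ge 2\epsilon$ and enlarging $\bC$ accordingly — permissible under the paper's $\bepsilon$-conventions, but something you should state rather than assert as already arranged. Alternatively, one would need a finer chart estimate (e.g.\ a uniform bound on the \emph{horizontal} stretch $\|D\Phi_{p_{-n}}|_{E^h_{p_{-n}}}\|$ rather than the crude $\|D\Phi_{p_{-n}}\|\le C\cK_{p_{-n}}$), which is not among the facts quoted in this appendix. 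Your fallback of applying \propref{vert angle shrink} to $F^{-1}$ is legitimate in spirit, but it would face the identical accounting: the asymmetry is that the chart norm $\cK_{p_m}$ grows with $|m|$, and swapping time direction merely moves the problematic end, it does not remove it.
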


The {\it $n$-times truncated regular neighborhood of $p_0$} is defined as
$$
\cU_{p_m}^n := \Phi_{p_m}^{-1}\left(U_{p_m}^n\right)\subset \cU_{p_m},
\matsp{where}
U_{p_0}^n := \bbB\left(\lambda^{\bepsilon n}l_{p_m}, l_{p_m}\right).
$$
The purpose of truncating a regular neighborhood is to ensure that its iterated images stay inside regular neighborhoods.

\begin{lem}\cite[Lemma 6.10]{CLPY2}\label{trunc neigh fit}
Let $-M \leq m \leq N$ and $0 \leq n \leq N-m$. We have $F^i(\cU_{p_m}^n) \subset \cU_{p_{m+i}}$ for $0 \leq i \leq n$.
\end{lem}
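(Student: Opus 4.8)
The plan is to pass to the Q-linearized coordinates of \thmref{reg chart} and to track the forward orbit of a point of $U_{p_m}^n$ under the globally defined extensions $F_{p_\ell}$, and only afterwards transfer the conclusion back to $F$ itself. Fix $z=(x_0,y_0)\in U_{p_m}^n$, so $|x_0|<\lambda^{\bepsilon n}l_{p_m}$ and $|y_0|<l_{p_m}$, and write $(x_j,y_j):=F_{p_m}^j(z)$ in the notation of \eqref{eq.compose}. Each $F_{p_\ell}$ has the quasi-triangular form $F_{p_\ell}(x,y)=(f_{p_\ell}(x),e_{p_\ell}(x,y))$; evaluating property iv) of \thmref{reg chart} at $y=0$ gives $e_{p_\ell}(\cdot,0)\equiv 0$, while properties ii)--iii) give the pointwise bounds $|f_{p_\ell}'-a_\ell|<\lambda^{1+\bepsilon}$ and $|\partial_y e_{p_\ell}-b_\ell|<\lambda^{1+\bepsilon}$, where $\lambda^{\bepsilon}<a_\ell<\lambda^{-\bepsilon}$ and $\lambda^{1+\bepsilon}<b_\ell<\lambda^{1-\bepsilon}$.

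Next I would propagate these estimates one step at a time. Integrating $\partial_y e_{p_\ell}$ from $y=0$ yields $|e_{p_\ell}(x,y)|\le(b_\ell+\lambda^{1+\bepsilon})|y|<\lambda^{1-\bepsilon}|y|$, hence $|y_j|<\lambda^{(1-\bepsilon)j}l_{p_m}$; and $|x_{j+1}|=|f_{p_{m+j}}(x_j)|\le(a_{m+j}+\lambda^{1+\bepsilon})|x_j|<\lambda^{-\bepsilon}|x_j|$, hence $|x_j|<\lambda^{-\bepsilon j}|x_0|\le\lambda^{\bepsilon n-\bepsilon j}l_{p_m}$, where the $\bepsilon n$ inherited from the truncation is to be read as a representative of $\bepsilon$ at least twice the one governing the per-step expansion (legitimate under the $\bepsilon$ convention). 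On the other hand $l_{p_\ell}=\lambda^{1+\bepsilon}(C\cK_{p_\ell})^{-1}$ with $\cK_{p_\ell}$ proportional to $\lambda^{1-\bepsilon|\ell|}$, so $l_{p_{m+j}}/l_{p_m}=\lambda^{\bepsilon(|m+j|-|m|)}\ge\lambda^{\bepsilon j}$. Combining, $|x_j|<l_{p_{m+j}}$ for all $0\le j\le n$; and since $\bepsilon<\tfrac12$, also $|y_j|<\lambda^{(1-\bepsilon)j}l_{p_m}\le\lambda^{\bepsilon j}l_{p_m}\le l_{p_{m+j}}$. Thus $F_{p_m}^j(z)\in U_{p_{m+j}}$ for $0\le j\le n$, in fact with a uniform margin from $\partial U_{p_{m+j}}$.

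Finally I would transfer this to $F$. Since $F_{p_\ell}$ coincides with $\Phi_{p_{\ell+1}}\circ F\circ\Phi_{p_\ell}^{-1}$ wherever the right-hand side is defined, and the extension orbit of $z=0$ is the constant orbit at $0\in\Int U_{p_{m+j}}$, a clopen argument on the connected set $U_{p_m}^n$ — the set of $z$ whose true $F$-orbit stays in the regular neighborhoods up to time $n$ is open, and is closed by the uniform interior margin just obtained — shows that for every $z\in\cU_{p_m}^n$ the true orbit agrees with the extension orbit, whence $F^j(\cU_{p_m}^n)=\Phi_{p_{m+j}}^{-1}\circ F_{p_m}^j\circ\Phi_{p_m}(\cU_{p_m}^n)\subset\cU_{p_{m+j}}$ for $0\le j\le n$. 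I expect the main obstacle to be precisely the exponent bookkeeping in the second paragraph: the horizontal coordinate can expand by $\lambda^{-\bepsilon}$ at each step while the regular radii $l_{p_\ell}$ themselves shrink by $\lambda^{\bepsilon}$ at each step, so the truncation factor $\lambda^{\bepsilon n}$ must be calibrated to absorb both effects over the full $n$ iterates — and one must check this calibration remains a genuine instance of the ambient $\bepsilon$ notation, while confirming that the vertical contraction $\lambda^{1-\bepsilon}$ comfortably outpaces the radius decay.
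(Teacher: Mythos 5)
Your proof is essentially correct and follows the natural route suggested by \thmref{reg chart}: pass to the Q-linearized coordinates, propagate the per-step bounds $|x_{j+1}|\lesssim\lambda^{-\bepsilon}|x_j|$ and $|y_{j+1}|\lesssim\lambda^{1-\bepsilon}|y_j|$ obtained from properties ii)--iv), compare against the radius decay $l_{p_{m+j}}\geq\lambda^{\bepsilon j}l_{p_m}$ coming from the $\lambda^{1-\bepsilon|m|}$ factor in $\cK_{p_m}$, and transfer back to $F$ by the connectedness argument. One small inaccuracy worth flagging: what the calibration actually requires is that the truncation exponent dominate the \emph{sum} of the per-step expansion exponent and the radius-decay exponent, not merely twice the former; since each summand is itself an instance of $\bepsilon$, the required instance $\bepsilon_2 > \bepsilon_1 + \bepsilon_3$ is still available under the $\bepsilon$ convention, and you have correctly isolated this bookkeeping as the delicate point.
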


\begin{prop}\cite[Propositions B.5 and B.6]{CLPY2}\label{cr lin comp}
There exists a uniform constant $K = K(\|DF\|_{C^r}, \lambda, \epsilon, r) \geq 1$ such that the following result holds. For $-M \leq m \leq N$ and $0 \leq n \leq N-m$, consider the $C^r$-maps $f_{p_m}^n$ and $e_{p_m}^n$ given in \eqref{eq.compose}. Then we have
$$
\|Df_{p_m}^n\|_{C^{r-1}} < K\lambda^{-\bepsilon n}
\matsp{and}
\|De_{p_m}^n\|_{C^{r-1}} < K\lambda^{(1-\bepsilon)n}.
$$
\end{prop}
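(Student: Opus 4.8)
The plan is to prove both bounds simultaneously by induction on $n$, exploiting the lower-triangular normal form $F_{p_m}(x,y)=(f_{p_m}(x),e_{p_m}(x,y))$ supplied by \thmref{reg chart} together with the quantitative data recorded there: writing $B:=\|DF\|_{C^r}$, each factor satisfies $\|DF_{p_m}\|_{C^{r-1}}\le B$, has linear part $D_0F_{p_m}=\operatorname{diag}(a_m,b_m)$ with $\lambda^{\bepsilon}<a_m<\lambda^{-\bepsilon}$ and $\lambda^{1+\bepsilon}<b_m<\lambda^{1-\bepsilon}$, has small nonlinearity $\|D_zF_{p_m}-D_0F_{p_m}\|_{C^0}<\lambda^{1+\bepsilon}$, and obeys $|\partial_x^s e_{p_m}(\cdot,y)|\le B|y|$ for $0\le s\le r$. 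Since $F_{p_m}(0)=0$, the origin is a fixed point of the whole orbit, so $D_0F_{p_m}^n$ is again lower-triangular with diagonal $\big(\prod_i a_{m+i},\prod_i b_{m+i}\big)$; in particular $a_m^n(0)<\lambda^{-\bepsilon n}$ and $b_m^n(0)<\lambda^{(1-\bepsilon)n}$. Throughout I would restrict to the truncated regular neighborhoods of \lemref{trunc neigh fit}, which keeps the entire orbit inside the regular neighborhoods and legitimizes the pointwise distortion bounds of \propref{lin comp}.

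For the horizontal component, $f^n_{p_m}=f_{p_{m+n-1}}\circ\cdots\circ f_{p_m}$ is a composition of one-dimensional diffeomorphisms, so $|Df^n_{p_m}|\le\chi_h\,a_m^n(0)<\chi_h\lambda^{-\bepsilon n}$ by \propref{lin comp}. For the derivatives of order $2\le j\le r$ I would use the Fa\`a di Bruno formula in the form of the additive cocycle identity satisfied by the normalized higher log-derivatives $N_j(\phi)$ (so $N_2(\phi)=\phi''/\phi'$, and $N_j$ for $j>2$ a polynomial in $\phi',\dots,\phi^{(j)}$): the contribution of the factor $f_{p_{m+k}}$ to $N_j(f^n_{p_m})$ is its own $C^{r-1}$-data — uniformly bounded in terms of $B$ and $\lambda^{-\bepsilon}$, since $f'_{p_{m+k}}>\lambda^{\bepsilon}-\lambda^{1+\bepsilon}$ — pulled back by $f^k_{p_m}$ and weighted by a product of at most $r$ copies of $Df^k_{p_m}$. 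Each such weight is $\le(\chi_h\lambda^{-\bepsilon k})^{c}$ with $c\le r$ fixed, and the resulting sums over $0\le k<n$ are geometric with ratio $\lambda^{-c\bepsilon}$, hence comparable to their last term $\lambda^{-c\bepsilon n}$ (or, when all factors are maximally contracting, bounded by $n\lambda^{-c\bepsilon n}$, controlled via $\chl_h$). Since $c$ is bounded by the fixed number $r$, the quantity $\lambda^{-c\bepsilon n}$ is itself of the form $\lambda^{-\bepsilon n}$; feeding the bound on $N_j(f^n_{p_m})$ back through the definition of $N_j$ and the bound on $Df^n_{p_m}$ yields $\|Df^n_{p_m}\|_{C^{r-1}}<K\lambda^{-\bepsilon n}$.

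For the vertical component I would iterate the relation $e^{k+1}_{p_m}(x,y)=e_{p_{m+k}}\big(f^k_{p_m}(x),\,e^k_{p_m}(x,y)\big)$. The hypotheses $e_{p_{m+k}}(\cdot,0)\equiv0$ and $\partial_w e_{p_{m+k}}(\cdot,0)=b_{m+k}+O(\lambda^{1+\bepsilon})$, combined with $|\partial_x^s e_{p_{m+k}}(\cdot,w)|\le B|w|$, show that $De^n_{p_m}$ satisfies a cocycle whose multiplier is $b_{m+k}+O(\lambda^{1+\bepsilon})\asymp\lambda^{1-\bepsilon}$, with error terms that are quadratic in $De^k_{p_m}$ or carry an extra factor $Df^k_{p_m}\le\chi_h\lambda^{-\bepsilon k}$ against $|e^k_{p_m}|\le\lambda^{(1-\bepsilon)k}$; since $\lambda^{-\bepsilon k}\lambda^{(1-\bepsilon)k}=\lambda^{(1-2\bepsilon)k}$ remains small, these errors are subdominant, and one obtains $\|e^n_{p_m}\|_{C^{r}}<K\lambda^{(1-\bepsilon)n}$, hence in particular $\|De^n_{p_m}\|_{C^{r-1}}<K\lambda^{(1-\bepsilon)n}$. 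The same Fa\`a di Bruno bookkeeping as in the horizontal case handles the higher $x$- and $y$-derivatives.

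I expect the main obstacle to be the bookkeeping in the higher-derivative estimates: one must verify that the exponent of $\lambda^{-\bepsilon}$ accumulated through the $n$-fold composition stays bounded by a fixed ($r$-dependent) multiple of $n$, so that it is absorbed into the $\bepsilon$-notation, and that no cross term in the Fa\`a di Bruno expansion produces a genuinely worse rate. The vertical estimate carries the additional subtlety that the inner argument of $e_{p_{m+k}}$ is the already-small map $e^k_{p_m}$, so one must extract the contraction factor $\prod b_{m+i}$ without its being swamped by the nonlinear remainder; this forces the smallness of $|e^k_{p_m}|$ to be carried jointly with the derivative bounds in the induction hypothesis.
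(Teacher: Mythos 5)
The paper does not actually contain a proof of this proposition --- it is cited verbatim from Propositions B.5 and B.6 of \cite{CLPY2}, which is a separate preprint. So there is no in-text argument against which to calibrate your attempt; I can only judge whether your reconstruction is sound and whether it uses the right ingredients.

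Your plan is essentially correct, and I believe it is the natural (and probably the intended) argument: direct estimation of the composed derivatives via Fa\`a di Bruno, with the distortion control of \propref{lin comp} supplying the crucial $n$-independent constants $\chi_h,\chi_v$, and with the $\bepsilon$-convention absorbing the bounded powers of $\lambda^{-\bepsilon n}$ that higher derivatives inevitably produce. The two most delicate points you identify are genuinely the ones that matter. First, the crude estimate $f'_{p_{m+k}} \leq a_{m+k}+\lambda^{1+\bepsilon}$ is useless by itself: it yields a factor $(1+O(\lambda))^n$, which is \emph{not} absorbed into $\lambda^{-\bepsilon n}$ for $\epsilon$ small relative to $\lambda$ --- so one really does need \propref{lin comp}, and hence the restriction to (truncated) regular neighborhoods where its hypotheses hold. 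You correctly make this restriction explicit. (As a consequence, the $C^{r-1}$-norm in the statement is necessarily taken over a domain on which every point has its orbit confined to the regular neighborhoods; the proposition as quoted is silent about this, and you are right to surface it.) Second, for the vertical component the telescoped recursion must pair the shrinking factor $|e^k_{p_m}|\lesssim\lambda^{(1-\bepsilon)k}$ (which uses $e_{p_m}(\cdot,0)\equiv 0$ and $\partial^s_x e_{p_m}(\cdot,w)\le B|w|$) against the growing factor $(f^k)'\lesssim\chi_h\lambda^{-\bepsilon k}$, and the ``tail'' product $\prod_{j>k}\partial_y e_{p_{m+j}}$ must again be estimated through \propref{lin comp} (as $b^{n-k-1}_{m+k+1}(z_{k+1})\le\chi_v\lambda^{(1-\bepsilon)(n-k-1)}$) rather than by the crude per-step bound $\lesssim 2\lambda^{1-\bepsilon}$, which would reintroduce an exponential $2^n$. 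You flag exactly this.

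What is left unfinished is the Fa\`a di Bruno bookkeeping itself, but your sketch of how it closes is accurate: writing the induction hypothesis as $\|(f^{n})^{(j)}\|\le K_j\lambda^{-j\bepsilon n}$ for $1\le j\le r$, the Bell-polynomial structure preserves the total weight, the geometric sums have ratio $\lambda^{-c\bepsilon}$ with $1\le c\le r$, and the resulting $\lambda^{-r\bepsilon n}$ is legitimately written as $\lambda^{-\bepsilon n}$ by the paper's convention. The constants that appear ($\chi_h,\chi_v,(\lambda^{-\bepsilon}-1)^{-1},\ldots$) depend only on $\|DF\|_{C^r}$, $\lambda$, $\epsilon$, $r$ --- in particular not on $L$ --- which is consistent with the asserted form of $K$. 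I see no gap in the argument beyond the acknowledged need to carry out this bookkeeping in full.
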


\subsection{$C^r$-estimates}

Let $g : \bbR \to \bbR$ be a $C^r$-function. The curve
$$
\Gamma_g := \{(x, g(x)) \; x \in \bbR\}
$$
is the {\it horizontal graph of $g$}. Let $H : \bbR^2 \to \bbR^2$ be a $C^r$-diffeomorphism. Suppose that there exists a $C^r$-function $H_*(g) : \bbR \to \bbR$ such that $H(\Gamma_g) = \Gamma_{H_*(g)}$. Then $H_*(g)$ and $\Gamma_{H_*(g)}$ are referred to as the {\it horizontal graph transform of $g$} and $\Gamma_g$ {\it by $H$} respectively.

\begin{prop}[$C^r$-convergence of horizontal graphs]\cite[Proposition 4.5]{CLPY2}\label{back dt}\label{for gt}
Let $g : \bbR \to \bbR$ be a $C^r$-map with $\|g'\|_{C^{r-1}} < \infty$. For $-M \leq m \leq N$ and $1 \leq n \leq N-m$, consider the graph transform $\tig := (F_{p_m}^n)_*(g)$. Then
$$
\|\tig'\|_{C^{r-1}} < C\lambda^{(1-\bepsilon)n}(1+\|g'\|_{C^{r-1}})^r
$$
where $C = C(\bfC, \lambda,\epsilon, r) \geq 1$ is a uniform constant.
\end{prop}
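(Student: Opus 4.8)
The plan is to write the graph transform out explicitly and differentiate it, feeding in the $C^{r-1}$-estimates on the Q-linearized maps from \propref{cr lin comp} and the structural properties from \thmref{reg chart}. Write $H := F_{p_m}^n = (f, e)$ with $f := f_{p_m}^n : (\bbR, 0) \to (\bbR, 0)$ a $C^r$-diffeomorphism and $e := e_{p_m}^n$ a $C^r$-map. Since $F_{p_m}(x,0) = (f_{p_m}(x), 0)$ by property iv) of \thmref{reg chart}, induction gives $F_{p_m}^n(x, 0) = (f_{p_m}^n(x), 0)$, so $e(x, 0) \equiv 0$; in particular all $x$-derivatives of $e$ vanish on the horizontal axis. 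Because $f$ is invertible, $H$ carries $\Gamma_g$ onto the horizontal graph of $\tig = e \circ \Psi$, where $\Psi(u) := \bigl(f^{-1}(u),\, g(f^{-1}(u))\bigr)$.

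First I would collect the quantitative inputs. \propref{cr lin comp} gives $\|Df\|_{C^{r-1}} < K\lambda^{-\bepsilon n}$ and $\|De\|_{C^{r-1}} < K\lambda^{(1-\bepsilon)n}$, so every partial derivative of $e$ of order $1$ through $r$ is $O(\lambda^{(1-\bepsilon)n})$ on all of $\bbR^2$. Applying \propref{lin comp} to the diagonal entry $a_m^n$ of $DF_{p_m}^n$ (which equals $(f_{p_m}^n)'$) gives the lower bound $|f'| > \chi_h^{-1}\lambda^{\bepsilon n}$. Combining this with the upper bound $\|f'\|_{C^{r-1}} < K\lambda^{-\bepsilon n}$, the standard Fa\`a di Bruno estimates for inverting $f$ yield $\|(f^{-1})'\|_{C^{r-1}} < K'\lambda^{-\bepsilon n}$, the extra losses being bounded multiples of $\epsilon n$ in the exponent, which the $\bepsilon$-convention absorbs.

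Next I would bound the derivatives of $\Psi$ up to order $r$: the first component inherits $\|\partial_u^j f^{-1}\| < K'\lambda^{-\bepsilon n}$ from the previous step, and the chain rule applied to $g \circ f^{-1}$ bounds $\|\partial_u^j(g\circ f^{-1})\|$ by a universal polynomial in $\|g'\|_{C^{r-1}}$ and $\|(f^{-1})'\|_{C^{r-1}}$ in which each term is linear in a single derivative of $g$, hence by $K''\lambda^{-\bepsilon n}(1+\|g'\|_{C^{r-1}})$. Now apply multivariate Fa\`a di Bruno to $\tig = e\circ\Psi$: for $1 \le j \le r$, $\partial_u^j\tig$ is a universal polynomial combination of mixed partials of $e$ of order between $1$ and $j$ --- \emph{never} order $0$, so the uncontrolled value $e(0, g(0))$ cannot appear, which is precisely why the statement is about $\tig'$ and not $\tig$ --- times products of derivatives of the components of $\Psi$ of total order $j$. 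Each $e$-factor contributes $O(\lambda^{(1-\bepsilon)n})$; the $\Psi$-factors contribute $O(\lambda^{-\bepsilon n})$ with a total power at most $j \le r$ in $(1+\|g'\|_{C^{r-1}})$; multiplying gives $\|\partial_u^j\tig\| < C\lambda^{(1-\bepsilon)n}(1+\|g'\|_{C^{r-1}})^r$, which is the assertion since $\|\tig'\|_{C^{r-1}} = \max_{1\le j\le r}\|\partial_u^j\tig\|_{C^0}$.

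The work is bookkeeping, not conceptual, but three points demand care. First, every term in the Fa\`a di Bruno expansion of $\partial_u^j\tig$ must genuinely carry at least one derivative of $e$, so that the vertical contraction $\lambda^{(1-\bepsilon)n}$ is never lost --- the cleanest way to see this is to use $e(x,0)\equiv 0$ and Hadamard's lemma to write $e(x,y) = y\,\hat e(x,y)$ with $\|\hat e\|_{C^{r-1}} = O(\lambda^{(1-\bepsilon)n})$, so that $\tig(u) = g(f^{-1}(u))\,\hat e(\Psi(u))$ makes the contraction manifest and reduces everything to the product and chain rules. Second, the accumulated $\lambda^{-\bepsilon n}$-losses from inverting $f$ and from the chain rule must remain a bounded multiple of $\epsilon n$ in the exponent, so that they fold into $\bepsilon$. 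Third, the dependence on $\|g'\|_{C^{r-1}}$ must not exceed the $r$-th power. None of these is serious, so this ``main obstacle'' amounts only to a disciplined accounting of constants and exponents.
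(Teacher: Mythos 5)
This proposition is imported from \cite{CLPY2}; the present paper gives no proof, so there is nothing local to compare against. Your blind proof, however, is essentially sound. The main argument --- write $\tig = e\circ\Psi$ with $\Psi(u) = (f^{-1}(u),\, g(f^{-1}(u)))$ and run multivariate Fa\`a di Bruno --- is the correct route. You correctly isolate the three inputs: $\|De_{p_m}^n\|_{C^{r-1}} < K\lambda^{(1-\bepsilon)n}$ and $\|Df_{p_m}^n\|_{C^{r-1}} < K\lambda^{-\bepsilon n}$ from \propref{cr lin comp}, and the lower bound $|f'| > \chi_h^{-1}\lambda^{\bepsilon n}$ obtained by combining \thmref{reg chart}~ii) with the bounded distortion of \propref{lin comp}; the $\lambda^{-O(r)\bepsilon n}$ losses from inverting $f$ and from the chain rule are indeed absorbed by the $\bepsilon$-convention. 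Your observation that for $j \ge 1$ every Fa\`a di Bruno term carries a mixed partial of $e$ of order $\ge 1$ --- so the $C^0$-norm of $e$ (which is \emph{not} controlled) never appears, and the contraction $\lambda^{(1-\bepsilon)n}$ is present in every term --- is exactly the crux, and that the number of $\Psi$-derivative factors is at most $j\le r$ gives the stated $(1+\|g'\|_{C^{r-1}})^r$.

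One caveat on the aside: you call the Hadamard factorization $e(x,y)=y\,\hat e(x,y)$, $\tig(u) = g(f^{-1}(u))\,\hat e(\Psi(u))$, ``the cleanest way'' to see the contraction, but applied na\"ively it is actually \emph{less} clean than the direct route. Writing $\tig = A\cdot B$ with $A := g\circ f^{-1}$ and differentiating by the Leibniz rule produces the term $A\cdot B^{(j)}$, and $A$ is not bounded in $C^0$ (only $\|g'\|_{C^{r-1}}$ is assumed finite, not $\|g\|_{C^0}$). The bound $\|\hat e\|_{C^{r-1}} = O(\lambda^{(1-\bepsilon)n})$ alone is therefore insufficient; one must additionally observe that $y\,\partial^\beta\hat e(x,y)$ is bounded (because it differs from the controlled quantity $\partial^\beta e$ by lower-order controlled terms), which in effect just re-derives the direct estimate. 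The direct Fa\`a di Bruno expansion of $e\circ\Psi$ never produces an order-zero $\Psi$-factor and thus avoids the unbounded $A$ entirely, so it is the cleaner path; keep the Hadamard picture as intuition for \emph{why} the contraction survives, not as the mechanism of the estimate.
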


For $p \in \bbR^2$ and $u \in \bbR$, let $E_p^u \in \bbP_p^2$ be the tangent direction at $p$ given by
$$
E_p^u := \{r(u, 1) \; | \; r \in \bbR\}.
$$
Let $\xi : \bbR^2 \to \bbR$ be a $C^{r-1}$-map. The direction field
$$
\cE_\xi := \{E_p^{\xi(p)} \; | \; p \in \bbR^2\}
$$
is the {\it vertical direction field of $\xi$}. Let $H : \bbR^2 \to \bbR^2$ be a $C^r$-diffeomorphism. Suppose that there exists a $C^{r-1}$-map $H^*(\xi) : \bbR^2 \to \bbR$ such that $DH^{-1}(\cE_\xi) =\cE_{H^*(\xi)}$. Then $H^*(\xi)$ and $\cE_{H^*(\xi)}$ are referred to as the {\it vertical direction field transform of $\xi$} and $\cE_\xi$ {\it by $H$} respectively.

\begin{prop}[Backward vertical direction field transform]\cite[Proposition 4.6]{CLPY2}\label{back dt}
There exist uniform constants $C, \tiC \geq 1$ depending only on $\bfC, \lambda, \epsilon, r$  such that the following holds. Let $\xi : \bbR^2 \to \bbR$ be a $C^{r-1}$-map with $\|\xi\|_{C^{r-1}} < \infty$. For $-M \leq m < N$ and $0 \leq n \leq M+m$, consider the vertical direction transform 
$$
\tixi := (F_{p_m}^n)^*(\xi)|_{\bbR \times (-1, 1)}.
$$
Suppose
$$
C\lambda^{(1-\bepsilon)n}(1+\|\xi\|_{C^{r-1}}) < 1.
$$
Then
$$
\|\tixi\|_{C^{r-1}} < \tiC\lambda^{(1-\bepsilon)n}\|\xi\|_{C^{r-1}}.
$$
\end{prop}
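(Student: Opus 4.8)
The plan is to write the transformed field $\tixi$ explicitly in terms of the triangular form of the Q-linearized map, and then to combine the $C^{r-1}$-estimates of \propref{lin comp} and \propref{cr lin comp} via the Banach-algebra property of $C^{r-1}$. First I would record the explicit formula: by \thmref{reg chart}~iv) and \propref{lin comp} the Jacobian $D_qF_{p_m}^n$ is lower-triangular with entries $a_m^n(q), c_m^n(q), b_m^n(q)$, so applying $D_qF_{p_m}^n$ to a vector $(v,1)$ and requiring its image to be proportional to $(\xi(F_{p_m}^n(q)),1)$ gives, after a one-line computation,
$$
\tixi(q) = \frac{b_m^n(q)\,\xi(F_{p_m}^n(q))}{a_m^n(q) - c_m^n(q)\,\xi(F_{p_m}^n(q))} = \frac{\beta(q)\,\Xi(q)}{1 - \gamma(q)\,\Xi(q)},
$$
where $\beta := b_m^n/a_m^n$, $\gamma := c_m^n/a_m^n$ and $\Xi := \xi\circ F_{p_m}^n$, valid on $\bbR\times(-1,1)$ once the denominator is seen to stay away from zero. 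This exhibits $\tixi$ as built from three factors whose $C^{r-1}$-norms I would estimate separately.

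Next I would bound each factor. From \thmref{reg chart}~ii) one has $a_m^n(0) > \lambda^{\bepsilon n}$ and $b_m^n(0) < \lambda^{(1-\bepsilon)n}$, and \propref{lin comp} compares the values at $q$ to those at $0$ up to the uniform factors $\chi_h,\chi_v$; hence $\|\beta\|_{C^0},\|\gamma\|_{C^0} < \lambda^{(1-\bepsilon)n}$. For the $C^{r-1}$-norms I would invoke \propref{cr lin comp}: since $a_m^n = \partial_x f_{p_m}^n$, $b_m^n = \partial_y e_{p_m}^n$ and $c_m^n = \partial_x e_{p_m}^n$, this gives $\|a_m^n\|_{C^{r-1}} < \lambda^{-\bepsilon n}$ and $\|b_m^n\|_{C^{r-1}}, \|c_m^n\|_{C^{r-1}} < \lambda^{(1-\bepsilon)n}$, while the pointwise lower bound $|a_m^n| > \lambda^{\bepsilon n}$ controls $\|1/a_m^n\|_{C^{r-1}} < \lambda^{-\bepsilon n}$ (each derivative of $1/a_m^n$ being a sum of products of derivatives of $a_m^n$ divided by a power of $a_m^n$ bounded in terms of the fixed integer $r$). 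Therefore $\|\beta\|_{C^{r-1}}, \|\gamma\|_{C^{r-1}} < \lambda^{(1-\bepsilon)n}$, and, by Fa\`a di Bruno together with $\|DF_{p_m}^n\|_{C^{r-2}} < \lambda^{-\bepsilon n}$ (again \propref{cr lin comp}), one gets $\|\Xi\|_{C^{r-1}} < \lambda^{-\bepsilon n}\|\xi\|_{C^{r-1}}$.

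Finally I would assemble these using the hypothesis $C\lambda^{(1-\bepsilon)n}(1+\|\xi\|_{C^{r-1}}) < 1$: it forces $\|\gamma\,\Xi\|_{C^{r-1}} < \lambda^{(1-\bepsilon)n}\|\xi\|_{C^{r-1}} < 1/2$, so $1 - \gamma\Xi$ is bounded away from zero and, applying the Banach-algebra property to the geometric series for $(1-\gamma\Xi)^{-1}$, $\|(1-\gamma\Xi)^{-1}\|_{C^{r-1}} = O(1)$ with a bound depending only on $\bfC,\lambda,\epsilon,r$. Multiplying the three estimates,
$$
\|\tixi\|_{C^{r-1}} \leq \|\beta\|_{C^{r-1}}\,\|\Xi\|_{C^{r-1}}\,\|(1-\gamma\Xi)^{-1}\|_{C^{r-1}} < \tiC\,\lambda^{(1-\bepsilon)n}\|\xi\|_{C^{r-1}},
$$
since all the $\lambda^{-\bepsilon n}$-losses (from composing with $F_{p_m}^n$, from dividing by $a_m^n$, from inverting $1-\gamma\Xi$) are powers of $\lambda^{-\epsilon n}$ whose exponents are controlled by the fixed integer $r$, hence absorbed by a single $\bepsilon$ once $\epsilon$ is taken sufficiently small. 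The main obstacle is exactly this bookkeeping: one must verify that each of those operations costs at most an $r$-dependent number of factors $\lambda^{-\epsilon n}$, so that the vertical-contraction gain $\lambda^{(1-\epsilon)n}$ is never overwhelmed and the net exponent stays of the form $1 - \bepsilon > 0$.
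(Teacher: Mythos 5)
The statement you are proving is cited verbatim from [CLPY2] (Proposition 4.6); the present paper does not reproduce its proof, so I can only assess your argument on its own merits against the cited ingredients in the appendix.

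Your explicit formula $\tixi = \beta\Xi/(1-\gamma\Xi)$ is correct, and the overall plan — decompose into the three factors $\beta = b_m^n/a_m^n$, $\gamma = c_m^n/a_m^n$, $\Xi = \xi\circ F_{p_m}^n$, bound each in $C^{r-1}$ using \propref{cr lin comp}, and assemble via the Banach-algebra and geometric-series estimates — is the natural one. The $\baeta$-bookkeeping at the end is sound: any fixed number of $\lambda^{-\bepsilon n}$-losses, each with exponent depending only on $r$, can be absorbed.

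The gap is the pointwise lower bound $|a_m^n(q)| > \lambda^{\bepsilon n}$ on the full strip $\bbR\times(-1,1)$, which is the keystone of your bounds for $\|1/a_m^n\|_{C^{r-1}}$, $\|\beta\|_{C^{r-1}}$ and $\|\gamma\|_{C^{r-1}}$. You deduce it by combining \thmref{reg chart}~ii) (which gives $a_m^n(0) > \lambda^{\bepsilon n}$) with \propref{lin comp}. But \propref{lin comp} is an explicitly localized statement: its hypothesis is that the whole forward orbit stays inside the regular neighborhoods, $z_i \in U_{p_{m+i}}$ for $0 \le i \le n$. These are small boxes $U_{p_{m+i}} = \bbB(l_{p_{m+i}})$, and the restriction $y \in (-1,1)$ controls neither the $x$-coordinate of $q$ nor that of its iterates. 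Outside the $U_{p_{m+i}}$ the only available pointwise bound is \thmref{reg chart}~iii), namely $|f_{p_{m+i}}'(x)-a_{m+i}| < \lambda^{1+\bepsilon}$, which gives the lower bound $f_{p_{m+i}}'(x) > a_{m+i} - \lambda^{1+\bepsilon} \ge \lambda^{\bepsilon}(1-\lambda)$. Multiplying over $n$ steps produces a factor $(1-\lambda)^n$ in front of $\lambda^{\bepsilon n}$, and $(1-\lambda)^n$ is \emph{not} absorbable into the $\baeta$-notation: $\log(1-\lambda)/\log\lambda$ is a fixed positive number determined by $\lambda$ alone, and for $\lambda$ close to $1$ it is arbitrarily large, whereas $\bepsilon$ must lie in $(\epsilon, C\epsilon^D)$ for uniform $C,D$ independent of the map. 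So from the ingredients actually quoted in the appendix your inequality $|a_m^n(q)| > \lambda^{\bepsilon n}$ is not established globally, and the whole chain of estimates depends on it. To close this you would need either a global lower bound on $(f_{p_m}^n)'$ from [CLPY2] that is sharper than what \thmref{reg chart}~iii) supplies (for instance a deviation bound proportional to $\min(a_m, b_m)$ rather than to $\lambda^{1+\bepsilon}$, or a normalization in which the extension of $F_{p_m}$ is exactly linear outside $U_{p_m}$), or else a reformulation of the argument that avoids dividing by $a_m^n$ on the whole strip.
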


\subsection{Stable and center manifolds}

For $-M \leq m \leq N$, define the {\it local vertical} and {\it horizontal manifold at $p_m$} as
$$
W^v_{\loc}(p_m) := \Phi_{p_m}^{-1}(\{(0, y) \in U_{p_m}\})
\matsp{and}
W^h_{\loc}(p_m) := \Phi_{p_m}^{-1}(\{(x,0) \in U_{p_m}\})
$$
respectively.

If $N = \infty$, then \propref{vert angle shrink} implies that $E_{p_0}^v$ is the unique direction along which $p_0$ is infinitely forward regular. In this case, we denote $E_{p_0}^{ss} := E_{p_0}^v$, and refer to this direction as the {\it strong stable direction at $p_0$}. Additionally, we define the {\it strong stable manifold of $p_0$} as
$$
W^{ss}(p_0) := \left\{q_0 \in \Omega \; | \; \limsup_{n \to \infty}\frac{1}{n}\log\|q_n - p_n\| < (1-\epsilon)\log\lambda\right\}.
$$

\begin{thm}[Canonical strong stable manifold]\cite[Theorem 6.13]{CLPY2}\label{stable}
If $N = \infty$, then
$$
W^{ss}(p_0) := \bigcup_{n=0}^\infty F^{-n}(W^v_{\loc}(p_n)).
$$
Consequently, $W^{ss}(p_0)$ is a $C^{r+1}$-smooth manifold.
\end{thm}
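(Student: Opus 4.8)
The plan is to prove the displayed identity $W^{ss}(p_0)=\bigcup_{n\ge 0}F^{-n}(W^v_{\loc}(p_n))$ by two inclusions, working throughout inside the regular charts $\Phi_{p_m}$ of \thmref{reg chart}, and then to upgrade the a priori $C^r$-regularity of the pieces $W^v_{\loc}(p_n)$ to $C^{r+1}$. First I would record that the local vertical manifolds are forward invariant: $F(W^v_{\loc}(p_m))\cap\cU_{p_{m+1}}\subseteq W^v_{\loc}(p_{m+1})$. This follows from the normal form $F_{p_m}(x,y)=(f_{p_m}(x),e_{p_m}(x,y))$ with $f_{p_m}(0)=0$ in \thmref{reg chart}(iv), the derivative bounds $\lambda^{1+\bepsilon}<b_m<\lambda^{1-\bepsilon}$ and $\|D_zF_{p_m}-D_0F_{p_m}\|_{C^0}<\lambda^{1+\bepsilon}$ in (ii)--(iii), and the comparison of chart radii $l_{p_{m+1}}/l_{p_m}=\lambda^{\bepsilon}$ coming from $\cK_{p_m}\asymp\lambda^{-\bepsilon|m|}$, which leaves room for the contracted vertical fibre.

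With invariance in hand, the inclusion $\bigcup_nF^{-n}(W^v_{\loc}(p_n))\subseteq W^{ss}(p_0)$ is a distance estimate: if $q_0\in F^{-n}(W^v_{\loc}(p_n))$ then $q_m\in W^v_{\loc}(p_m)$ for all $m\ge n$, so $q_m-p_m$ lies, in the chart, along the invariant vertical fibre, and by \corref{ver hor cons}, \propref{ext deriv bound} and \propref{grow irreg} the distance $\|q_m-p_m\|$ is comparable, up to the subexponential chart factor $\cK_{p_m}^{\pm1}$, to $\|DF^{m-n}|_{E^v_{q_n}}\|\,\|q_n-p_n\|$. Forward $(L,\epsilon,\lambda)$-regularity makes this contract exponentially at rate $\lambda^{1-o(1)}$, while $\cK_{p_m}$ grows only subexponentially, so $\limsup_m\frac1m\log\|q_m-p_m\|$ falls below the threshold $(1-\epsilon)\log\lambda$ defining $W^{ss}(p_0)$.

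For the reverse inclusion, let $q_0\in W^{ss}(p_0)$, so $\|q_m-p_m\|$ decays strictly faster than $\lambda^{(1-\epsilon)m}$. By \lemref{size reg nbh} the neighbourhoods $\cU_{p_m}$ shrink only subexponentially, so $q_m\in\cU_{p_m}$ for $m\ge n$ with $n$ large; write $\Phi_{p_m}(q_m)=(x_m,y_m)$. The crucial point is that the horizontal coordinate evolves under the $C^r$-diffeomorphisms $f_{p_m}$ of $\bbR$, whose derivatives at $0$ multiply---combining the $s=0$ and $s=1$ regularity inequalities---to a \emph{subexponential} quantity, so the horizontal direction in a regular chart is neutral up to factors $\lambda^{\pm2\epsilon k}$; hence by \propref{lin comp} one gets $|x_{n+k}|\ge c\,\lambda^{2\epsilon k}|x_n|$ for a uniform $c>0$. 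If $x_n\ne0$ this would force $\limsup_k\frac1k\log\|q_{n+k}-p_{n+k}\|\ge 2\epsilon\log\lambda>(1-\epsilon)\log\lambda$ (since $\epsilon$ is small enough that $3\epsilon<1$), contradicting $q_0\in W^{ss}(p_0)$; thus $x_n=0$, i.e.\ $q_n\in W^v_{\loc}(p_n)$, proving the identity. Finally, since $\Phi_{p_m}$ is only $C^r$, each $W^v_{\loc}(p_m)$ is a priori only $C^r$; but it is intrinsically the strongest-stable invariant curve through $p_m$ for the $C^{r+1}$-map $F$, so I would re-obtain it as the fixed point of the horizontal-graph-transform operator of \propref{for gt} acting on $C^{r+1}$-graphs: because the transverse direction is neutral up to subexponential factors while the vertical direction contracts exponentially, the bunching quantity $\|DF^k|_{E^v}\|\,\|DF^{-k}|_{E^h}\|^{\,r+1}\to0$, so the operator contracts in $C^{r+1}$ and its fixed graph is $C^{r+1}$ (see also \propref{cr lin comp}); by forward invariance $W^v_{\loc}(p_0)\subseteq F^{-1}(W^v_{\loc}(p_1))$ as a relatively open subset, so $W^{ss}(p_0)$ is an increasing union of $C^{r+1}$-submanifolds and hence $C^{r+1}$.

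I expect the main obstacle to be the reverse inclusion: rigorously controlling, in the fully non-autonomous setting with the subexponentially varying regular charts, the \emph{lower} bound on the horizontal coordinate---that a point off the vertical fibre cannot be dragged into $W^{ss}$ because the horizontal direction, though neutral, still decays no faster than $\lambda^{2\epsilon k}$. This is precisely where the marginal exponent $\epsilon$, the chart-radius decay, and the neutrality of the horizontal direction must all be balanced; by comparison the $C^{r+1}$-bootstrap, while technical, is a standard Hirsch--Pugh--Shub-type argument once the bunching inequality is checked.
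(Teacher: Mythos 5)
This theorem is quoted from \cite{CLPY2} and is not proved in the present paper, so there is no in-paper proof to compare against; your proposal follows the standard Hadamard--Perron/graph-transform route, which is the natural argument and almost certainly what \cite{CLPY2} carries out.

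Two points deserve attention. First, in the forward inclusion, you assert the orbit distance $\|q_m-p_m\|$ ``falls below the threshold $(1-\epsilon)\log\lambda$.'' But forward $(L,\epsilon,\lambda)$-regularity only gives $\|DF^m|_{E^v}\|\leq L\lambda^{(1-\epsilon)m}$, hence $\limsup_m\frac1m\log\|q_m-p_m\|\leq(1-\epsilon)\log\lambda$ after absorbing the subexponential chart and irregularity factors; the definition of $W^{ss}(p_0)$ as written in the appendix of this paper requires a \emph{strict} inequality, and nothing in the regularity hypothesis rules out the boundary case. This is either handled in \cite{CLPY2} by a looser threshold in the definition of $W^{ss}$ or by a refinement showing the vertical Lyapunov exponent is strictly below $(1-\epsilon)\log\lambda$, but you should flag and resolve it rather than rely on an $o(1)$ shorthand. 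Second, for the $C^{r+1}$ bootstrap you cite \propref{for gt}, but that proposition controls \emph{horizontal} graph transforms (center-manifold-type objects) in $C^r$, whereas the strong-stable manifold is a vertical graph and the charts $\Phi_{p_m}$ are only $C^r$; to get $C^{r+1}$ smoothness you need to run the backward (vertical) graph-transform contraction directly in the ambient $C^{r+1}$ structure of $F$, not pass through the $C^r$ charts, so the cited lemma does not by itself carry the argument. Your bunching inequality $\|DF^k|_{E^v}\|\cdot\|DF^{-k}|_{E^h}\|^{r+1}\to 0$ is the right condition, and the rest of the skeleton---forward invariance of the vertical fibres, the lower bound $a_m>\lambda^{\bepsilon}$ on the horizontal derivative combined with the $\lambda^{1+\bepsilon}$ perturbation bound to force $x_n=0$ in the reverse inclusion---is sound. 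One small slip: the subexponential control on the horizontal rate comes from the \emph{quotient} $\Jac_pF^m/\|DF^m|_{E^v}\|$ of the $s=0$ and $s=1$ regularity inequalities, not from multiplying them.
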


If $M = \infty$, then \propref{hor angle shrink} implies that $E_{p_0}^h$ is the unique direction along which $p_0$ is infinitely backward regular. In this case, we denote $E_{p_0}^c := E_{p_0}^h$, and refer to this direction as the {\it center direction at $p_0$}. Moreover, we define the {\it (local) center manifold at $p_0$} as
$$
W^c(p_0) := \Phi_{p_0}^{-1}(\{(x, 0) \in U_{p_0}\}).
$$
Unlike strong stable manifolds, center manifolds are not canonically defined. However, the following result states that it still has a canonical jet.

\begin{thm}[Canonical jets of center manifolds]\cite[Theorem 6.16]{CLPY2}\label{center jet}
Suppose $M = \infty$. Let $\Gamma_0: (-t, t) \to \cU_{p_0}$ be a $C^{r+1}$-curve parameterized by its arclength such that $\Gamma_0(0) = p_0$, and for all $n \in \bbN$, we have
$$
\|DF^{-n}|_{\Gamma_0'(t)}\| < \lambda^{-\frac{(1-\bepsilon)n}{r+1}}
\matsp{for}
|t| < \lambda^{\epsilon n}.
$$
Then $\Gamma_0$ has a degree $r+1$ tangency with $W^c(p_0)$ at $p_0$.
\end{thm}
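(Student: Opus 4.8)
The plan is to pass to the regular chart $\Phi_{p_0}$ furnished by \thmref{reg chart}, in which $W^c(p_0)$ becomes the horizontal axis $\{y=0\}$ and the center direction $E^c_{p_0}=E^h_{p_0}$ becomes genuinely horizontal, and to show that $\Phi_{p_0}(\Gamma_0)$ is a horizontal graph $\{y=\gamma(x)\}$ with $\gamma^{(j)}(0)=0$ for $0\le j\le r$ — i.e. the $C^r$-jet of $\Gamma_0$ at $p_0$ agrees with that of $W^c(p_0)$, which is the sense of tangency recorded in \subsecref{subsec.regular}. Since $\Phi_{p_0}$ is a diffeomorphism, this order of contact transports back to $\Gamma_0$ and $W^c(p_0)$. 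The workhorse throughout is \propref{hor angle shrink}: a tangent direction whose backward $n$-fold norm is small relative to $\lambda^{-(1-\bepsilon)n}$ must be exponentially close to the forward $F^n$-image of a genuinely horizontal direction at its $n$-th backward iterate; and since each $p_{-n}$ lies on its own local horizontal manifold, which is $F$-invariant by \thmref{reg chart} iv), that reference direction for the center manifold is simply the genuine horizontal.

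First I would establish $\Gamma_0'(0)=E^h_{p_0}$. The hypothesis with parameter $s=0$ gives $\|DF^{-n}|_{\Gamma_0'(0)}\|<\lambda^{-(1-\bepsilon)n/(r+1)}$, which is far below the threshold of \propref{hor angle shrink}, so that proposition applies for every $n$ and yields $\measuredangle(\Gamma_0'(0),E^h_{p_0})<C\lambda^{(1-\bepsilon)nr/(r+1)}\to 0$. Hence $\Gamma_0'(0)=E^h_{p_0}$, and near $p_0$ the image $\Phi_{p_0}(\Gamma_0)$ is a $C^{r+1}$ graph $\{y=\gamma(x)\}$ with $\gamma(0)=\gamma'(0)=0$.

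The heart of the argument is a bootstrap on the order of vanishing of $\gamma$ at $0$. Suppose it is already known that $\gamma(x)=O(|x|^{\alpha})$ for some $2\le\alpha\le r$. For $x$ small, I would take the largest $n=n(x)$ for which the backward orbit $F^{-i}(\Gamma_0(s))$, $0\le i\le n$, stays inside the regular neighborhoods $\cU_{p_{-i}}$: by \thmref{reg chart} and \propref{ext deriv bound} the vertical coordinate expands at rate at most $\lambda^{-(1+\bepsilon)i}$ while $\cU_{p_{-i}}$ has size $\asymp\lambda^{\bepsilon i}$, so the bound $|\gamma(x)|=O(|x|^{\alpha})$ permits $n(x)\asymp\tfrac{\alpha}{1+\bepsilon}\log_\lambda|x|$, which for $\epsilon$ small also respects the scale constraint $|s|<\lambda^{\epsilon n}$ under which the hypothesis is stated. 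Applying \propref{hor angle shrink} to $\Gamma_0'(s)$ (with $\mu<\lambda^{-(1-\bepsilon)n/(r+1)}$), and separately to the genuine horizontal direction at $\Gamma_0(s)$ — whose backward $n$-fold norm is only $\asymp\lambda^{\pm\bepsilon n}$ by \corref{ver hor cons} and \propref{jac bound}, so its $F^n$-image aligns with the same reference direction up to $O(\lambda^{(1-2\bepsilon)n})$ — I obtain $|\gamma'(x)|=O(\lambda^{(1-\bepsilon)nr/(r+1)})=O(|x|^{\alpha\theta})$ with $\theta:=\tfrac{(1-\bepsilon)r}{(1+\bepsilon)(r+1)}<1$. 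Thus $\gamma(x)=O(|x|^{\alpha\theta+1})$, improving the exponent via $\alpha\mapsto\alpha\theta+1$; iterating from $\alpha=2$ drives the exponent monotonically to the fixed point $\tfrac1{1-\theta}$, which is $r+1-O(\bepsilon)$. Taking the marginal exponent sufficiently small makes this exceed $r$, so $\gamma(x)=o(|x|^r)$, whence $\gamma^{(j)}(0)=0$ for $j=0,\dots,r$ by Taylor's theorem, as desired.

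I expect the main obstacle to be precisely the scale bookkeeping in the bootstrap step: one must simultaneously honor (i) the hypothesis, which controls tangent norms only on arcs of length $\lambda^{\epsilon n}$; (ii) the constraint that the $n$-fold backward orbit of the chosen point remain in the shrinking regular neighborhoods $\cU_{p_{-i}}$, which is what couples the order of vanishing already proved to the depth $n(x)$ one may use; and (iii) the marginal-exponent accounting ($\bepsilon$ versus $\epsilon$, and the absorption of the uniform constants in \propref{hor angle shrink}) needed to verify that the recursion $\alpha\mapsto\alpha\theta+1$ genuinely reaches the order required for $C^r$-jet agreement. Constraint (ii) is the delicate one, and it is the reason the estimate must be run as a bootstrap rather than settled in a single application of \propref{hor angle shrink}.
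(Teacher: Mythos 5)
The theorem is quoted verbatim from \cite[Theorem 6.16]{CLPY2}; the paper under review does not reproduce its proof, so no line-by-line comparison is possible. Evaluated on its own terms, your argument is structurally sound and, I believe, essentially correct. Passing to the regular chart $\Phi_{p_0}$ (so that $W^c(p_0)$ becomes $\{y=0\}$), establishing $\Gamma_0'(0)=E^h_{p_0}$ by one application of \propref{hor angle shrink} at $q_0=p_0$, and then bootstrapping the order of vanishing of $\gamma$ via the recursion $\alpha\mapsto\alpha\theta+1$ with $\theta=\tfrac{r}{r+1}-O(\bepsilon)$ is exactly the kind of argument this statement calls for. You are also right that the fixed point $\alpha^*=\tfrac1{1-\theta}=r+1-O(\bepsilon)$ never reaches $r+1$, but that this is harmless: once $\alpha^*>r$ you get $\gamma=o(|x|^r)$, and since $\gamma$ is $C^r$ (the chart is $C^r$, $\Gamma_0$ is $C^{r+1}$), Taylor gives $\gamma^{(j)}(0)=0$ for $0\le j\le r$, which is precisely the $C^r$-jet agreement the paper refers to when it uses this result.

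There is one point where the justification is off, even though the conclusion you need is true. In the triangle-inequality step you cite \corref{ver hor cons} and \propref{jac bound} to claim that the chart-horizontal direction $E^h_{q_0}$ at $q_0=\Gamma_0(s)$ satisfies $\|DF^{-n}|_{E^h_{q_0}}\|\asymp\lambda^{\pm\bepsilon n}$. But \corref{ver hor cons} controls the backward norm along the \emph{transported} direction $\hE^h_{q_0}:=DF^n(E^h_{q_{-n}})$, not along the chart-horizontal direction $E^h_{q_0}$; these two directions need not coincide for $q_0\ne p_0$, and the corollary gives no direct estimate on $E^h_{q_0}$. The estimate you want follows instead from the lower-triangular block structure recorded in \propref{lin comp}: writing $D_{z_{-n}}F_{p_{-n}}^n=\begin{pmatrix}a&0\\c&b\end{pmatrix}$ with $a\asymp\lambda^{\pm\bepsilon n}$, $b\asymp\lambda^{(1\pm\bepsilon)n}$, $|c|\lesssim\lambda^{(1-\bepsilon)n}$, the inverse sends $(1,0)\mapsto(a^{-1},-c/(ab))$, whose norm is $\lesssim\lambda^{-O(\bepsilon)n}$, which is all the threshold of \propref{hor angle shrink} requires. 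In fact this same computation shows $\measuredangle\bigl(\hE^h_{z_0},E^{gh}_{z_0}\bigr)=\arctan|c/a|\lesssim\lambda^{(1-O(\bepsilon))n}$ directly, so the second application of \propref{hor angle shrink} can be bypassed altogether. This is a routine repair and does not affect the bootstrap exponent or the final conclusion, but as written the cited corollary does not deliver the claimed bound.
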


\subsection{Horizontal regularity}

We say that $p \in \cB$ is {\it $N$-times forward horizontally $(L, \epsilon, \lambda)$-regular along $E_p^{h,+} \in \bbP_p^2$} if, for $s\in \{1, 2\}$, we have
\begin{equation}\label{eq:hor for reg}
L^{-1}\lambda^{(1+\epsilon)n} \leq \frac{\Jac_p F^n}{\|D_pF^n|_{E_p^{h,+}}\|^s} \leq L\lambda^{(1-\epsilon)n}
\matsp{for}
1 \leq n \leq N.
\end{equation}
Similarly, we say that $p$ is {\it $M$-times backward horizontally $(L, \epsilon, \lambda)$-regular along $E_p^{h,-} \in \bbP_p^2$} if, for $s \in \{1, 2\}$, we have
\begin{equation}\label{eq:hor back reg}
L^{-1}\lambda^{(1+\epsilon)n}  \leq  \frac{\|D_pF^{-n}|_{E_p^{h,-}}\|^s}{\Jac_p F^{-n}} \leq L\lambda^{(1-\epsilon)n}
\matsp{for}
1 \leq n \leq M.
\end{equation}
If \eqref{eq:hor for reg} and \eqref{eq:hor back reg} hold with $E_p^h := E_p^{h,+} = E_p^{h,-}$, then $p$ is {\it $(M, N)$-times horizontally $(L, \epsilon, \lambda)$-regular along $E_p^h$}.

\begin{prop}[Horizontal vs vertical forward regularity]\cite[Proposition 5.2]{CLPY2}\label{transverse for reg}
If $p$ is $N$-times forward horizontally $(L, \epsilon, \lambda)$-regular along $E_p^h \in \bbP_p^2$, then there exists $E_p^v \in \bbP_p^2$ such that $p$ is $N$-times forward $(\bL, \bepsilon, \lambda)$-regular along $E_p^v$.
\end{prop}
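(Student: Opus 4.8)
The plan is to build $E_p^v$ as (a limit of) the most strongly contracted singular directions of the linear maps $D_pF^m$, $1\le m\le N$, and to control everything by the exponentially wide gap between the two singular values of $D_pF^m$ that horizontal regularity forces. Write $J_m:=\Jac_pF^m$, $h_m:=\|D_pF^m|_{E_p^h}\|$, and let $s_1(m)\ge s_2(m)$ be the singular values of $D_pF^m$. First I would extract from \eqref{eq:hor for reg} that $J_m\asymp\lambda^m$ and $h_m\asymp 1$, both up to a constant power of $L$ and a marginal exponent comparable to $\epsilon$: the $s=1$ and $s=2$ instances of \eqref{eq:hor for reg} say that $J_m/h_m$ and $J_m/h_m^2$ are both $\asymp\lambda^m$, and dividing these gives $h_m\asymp 1$, whence $J_m\asymp\lambda^m$. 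Since $s_2(m)\le h_m\le s_1(m)$ we get $s_1(m)\gtrsim 1$ and $s_2(m)\lesssim\lambda^m$; together with $s_1(m)s_2(m)=J_m$ and the submultiplicativity bound $s_2(m)\ge\|DF^{-1}\|^{-m}$ — invoking the running control of $\lambda$ against $\|DF^{-1}\|$ built into the Pesin setup of \secref{sec.pesin} — this pins down $s_2(m)\asymp\lambda^m$ and $s_1(m)\asymp 1$ up to marginal exponents, so the spectral gap satisfies $s_1(m)/s_2(m)\gtrsim\lambda^{-(1-\bepsilon)m}$.

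Next, for each $1\le m\le N$ let $E_{p,m}^v\in\bbP^2_p$ be the unit direction minimizing $\|D_pF^m|_{\,\cdot\,}\|$, i.e. the right-singular direction of $D_pF^m$ for $s_2(m)$. Writing $D_pF^{m+1}=D_{p_m}F\circ D_pF^m$ and diagonalizing $(D_pF^m)^{\top}D_pF^m$, the off-diagonal perturbation introduced by the bounded factor $D_{p_m}F$ has size $O(s_2(m)/s_1(m))$ relative to the spectral gap, so an elementary $2\times 2$ perturbation estimate yields $\measuredangle(E_{p,m}^v,E_{p,m+1}^v)\lesssim\lambda^{(1-\bepsilon)m}$. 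Hence the sequence $\{E_{p,m}^v\}$ is exponentially Cauchy; I set $E_p^v:=E_{p,N}^v$ when $N<\infty$ and $E_p^v:=\lim_{m\to\infty}E_{p,m}^v$ when $N=\infty$, and in either case $\measuredangle(E_{p,m}^v,E_p^v)\lesssim\lambda^{(1-\bepsilon)m}$ for all $1\le m\le N$.

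To conclude, decompose $E_p^v$ in the orthonormal right-singular basis of $D_pF^m$: this gives $\|D_pF^m|_{E_p^v}\|^2=s_2(m)^2\cos^2\alpha_m+s_1(m)^2\sin^2\alpha_m$ with $|\alpha_m|=\measuredangle(E_{p,m}^v,E_p^v)\lesssim\lambda^{(1-\bepsilon)m}$. By the first step the second term is $\lesssim\lambda^{(2-\bepsilon)m}$, so $\|D_pF^m|_{E_p^v}\|\asymp s_2(m)\asymp\lambda^m$ up to a marginal exponent for all large $m$ (the finitely many small $m$ being absorbed into the irregularity factor). Substituting $J_m\asymp\lambda^m$ and $\|D_pF^m|_{E_p^v}\|\asymp\lambda^m$ into both the $s=0$ and $s=1$ cases of \eqref{eq.for reg} gives the asserted $(\bL,\bepsilon,\lambda)$-forward regularity of $p$ along $E_p^v$.

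The delicate point is the growth control in the first step — bounding $s_1(m)=\|D_pF^m\|$ from above, equivalently $s_2(m)$ from below by $\asymp\lambda^m$ — since this is precisely what makes the rotation error $s_1(m)\sin\alpha_m$ in the last step negligible. This is where one must genuinely use that $\lambda$ is the contraction base (one-step contraction no faster than $\lambda$, up to a marginal exponent), which is exactly the role played by the dependence of the constants on $\lambda^{1-\epsilon}\|DF^{-1}\|$ throughout \cite{CLPY2}; the remaining steps are routine linear algebra together with bookkeeping in the barred-constant notation.
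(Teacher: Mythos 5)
Your overall strategy is sound and quite close to the right one: build $E_p^v$ as the limit of the most-contracted singular directions $E_{p,m}^v$ of $D_pF^m$, use the spectral gap to control the angle drift, and then read off the regularity of $E_p^v$ from the singular-value decomposition. The derivation of $h_m\asymp 1$ and $J_m\asymp\lambda^m$ (up to $L$-powers and marginal exponents), the lower bound $s_1(m)/s_2(m)\ge h_m^2/J_m\gtrsim\lambda^{-(1-\bepsilon)m}$, and the resulting one-step angle estimate $\measuredangle(E_{p,m}^v,E_{p,m+1}^v)\lesssim s_2(m)/s_1(m)\lesssim\lambda^{(1-\bepsilon)m}$ are all correct.

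The gap is in the crucial lower bound $s_2(m)\gtrsim\lambda^{(1+\bepsilon)m}$ (equivalently, $s_1(m)\lesssim\lambda^{-\bepsilon m}$), which your final estimate $\|D_pF^m|_{E_p^v}\|^2\ge s_2(m)^2\cos^2\alpha_m$ genuinely requires. You derive it from the crude bound $s_2(m)\ge\|DF^{-1}\|^{-m}$ plus what you call ``the running control of $\lambda$ against $\|DF^{-1}\|$''. There is no such control among the hypotheses: the dependence of the constants in \cite{CLPY2} on $\lambda^{1-\epsilon}\|DF^{-1}\|$ merely means the constants are allowed to \emph{grow} with that quantity, not that it is bounded by $1$. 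For a genuinely two-dimensional dissipative map one typically has $\|DF^{-1}\|=\|DF\|/|\Jac F|\gg\lambda^{-1}$, so $\|DF^{-1}\|^{-m}$ is exponentially smaller than $\lambda^{(1+\bepsilon)m}$ and cannot be absorbed into $\bL$.

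The bound is nonetheless true, but it must be extracted from the fact that horizontal regularity holds at \emph{every} intermediate time $1\le k\le m$, not just at time $m$. In the orthonormal frame $\{E^h_{p_k},(E^h_{p_k})^\perp\}$ adapted to $E^h$, $D_{p_k}F$ becomes upper-triangular $\begin{pmatrix}a_k & b_k\\ 0 & c_k\end{pmatrix}$ with $|a_0\cdots a_{n-1}|=h_n$, $|c_0\cdots c_{n-1}|=J_n/h_n$, and $|b_k|\le\|DF\|$. Writing $D_pF^m=\begin{pmatrix}A_m & B_m\\ 0 & C_m\end{pmatrix}$ and solving the recursion $B_{m+1}=a_mB_m+b_mC_m$, the intermediate-time bounds $h_k\in[L^{-2}\lambda^{2\epsilon k},L^2\lambda^{-2\epsilon k}]$ and $|C_k|\le L\lambda^{(1-\epsilon)k}$ give
$$
|B_m|\;\le\;\sum_{k<m}\frac{h_m}{h_{k+1}}\,|b_k|\,|C_k|\;\lesssim\;\lambda^{-2\epsilon m}\sum_{k<m}\lambda^{(1-3\epsilon)k}\;\lesssim\;\lambda^{-\bepsilon m},
$$
whence $s_1(m)\le\|D_pF^m\|\lesssim\lambda^{-\bepsilon m}$ and $s_2(m)=J_m/s_1(m)\gtrsim\lambda^{(1+\bepsilon)m}$. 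With this, the rest of your argument goes through.

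You could also sidestep the singular-value gymnastics entirely: in this adapted frame, set $\xi_m:=-B_m/A_m$ and $E_p^v:=(-\xi_\infty:1)$ where $\xi_\infty:=\lim\xi_m$ (convergence at rate $|\xi_m-\xi_\infty|\lesssim\lambda^{(1-\bepsilon)m}$ follows directly from $|C_k/A_k|=J_k/h_k^2\lesssim\lambda^{(1-\epsilon)k}$). Then $D_pF^m E_p^v=(A_m(\xi_m-\xi_\infty),C_m)$, and the lower bound $\|D_pF^m|_{E_p^v}\|\ge|C_m|/\sqrt{1+\xi_\infty^2}\gtrsim\lambda^{(1+\epsilon)m}$ comes from the triangular structure alone, with no lower bound on $s_2(m)$ needed. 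Given how the CLPY2 appendix sets up its definitions via the lower/upper-triangular cocycle $\begin{pmatrix}\alpha_m & 0\\ \zeta_m & \beta_m\end{pmatrix}$, this is almost certainly what the cited proof does.
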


\begin{prop}[Horizontal vs vertical backward regularity]\cite[Proposition 5.3]{CLPY2}\label{transverse back reg}
Suppose $p$ is $M$-times backward horizontally $(L, \epsilon, \lambda)$-regular along $E_p^h \in \bbP_p^2$. Let $E_p^v \in \bbP_p^2 \setminus \{E_p^h\}$. If $\measuredangle (E_p^h, E_p^v) > \theta$, then the point $p$ is $M$-times backward $(\bL/\theta^2, \bepsilon, \lambda)$-regular along $E_p^v$.
\end{prop}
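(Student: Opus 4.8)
The assertion is a piece of two-dimensional linear algebra about the single cocycle matrix $A_m:=D_pF^{-m}$, $1\le m\le M$, so the plan is to transfer the \emph{horizontal} regularity hypotheses on $E_p^h$ to the \emph{vertical} regularity conclusions for $E_p^v$ at each fixed $m$ by means of the area identity $\|A_m u\|\,\|A_m v\|\,\sin\measuredangle(A_m u,A_m v)=\Jac_pF^{-m}$, valid for unit vectors $u,v$. First I would dispose of the bounded range: there is a threshold $m_0=m_0(L,\epsilon,\lambda,\theta)$ beyond which $\lambda^{(1-\epsilon)m}$ is smaller than every fixed positive multiple of $\theta$ occurring in the argument, and for $m\le m_0$ all relevant operator norms lie between $\|DF^{-1}\|^{-m_0}$ and $\|DF^{-1}\|^{m_0}$, so they can be swept into the output constant $\bL/\theta^2$. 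Thus the real work is for $m\ge m_0$.

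The heart of the matter is a singular-value analysis of $A_m$. Let $\sigma_m\ge\tau_m>0$ be its singular values, so $\sigma_m\tau_m=\Jac_pF^{-m}=:j_m$, with orthonormal singular directions $e_1^m,e_2^m$. The $s=2$ clause of horizontal backward regularity together with the trivial inequality $\|A_m|_{E_p^h}\|\ge\tau_m$ gives $\tau_m^2\le L\lambda^{(1-\epsilon)m}\sigma_m\tau_m$, i.e.\ a uniform spectral gap $\tau_m/\sigma_m\le L\lambda^{(1-\epsilon)m}$; feeding this back into the same identity shows $E_p^h$ is exponentially aligned with the least-stretched direction, $\measuredangle(E_p^h,e_2^m)=O(L\lambda^{(1-\epsilon)m})$. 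Hence for $m\ge m_0$ the transversal direction $E_p^v$ makes an angle at least $\tfrac12\theta$ with $e_2^m$, so it has a component of size $\gtrsim\theta$ along $e_1^m$, and therefore
$$
c\,\theta\,\sigma_m\;\le\;\|D_pF^{-m}|_{E_p^v}\|\;\le\;\sqrt2\,\sigma_m
$$
for a universal $c>0$: up to the factor $\theta$, $E_p^v$ behaves like the maximally expanded direction of $A_m$.

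It then remains to convert the bounds on $\sigma_m$ into bounds in powers of $\lambda$. Writing $\sigma_m=j_m/\tau_m$ and using that $\tau_m$ is comparable to $\|A_m|_{E_p^h}\|$ — the easy inequality $\tau_m\le\|A_m|_{E_p^h}\|$, and the reverse one from the near-alignment just established together with the standing a priori control of $\|DF^{-1}\|$ and of the Jacobian — the $s=1$ horizontal clause, which sandwiches $\|A_m|_{E_p^h}\|$ between $L^{-1}\lambda^{(1+\epsilon)m}j_m$ and $L\lambda^{(1-\epsilon)m}j_m$, turns into $\|D_pF^{-m}|_{E_p^v}\|\asymp\sigma_m\asymp j_m/\|A_m|_{E_p^h}\|\in[L^{-1}\lambda^{-(1-\epsilon)m},\,L\lambda^{-(1+\epsilon)m}]$ (up to the factor $\theta$), which is exactly the $s=0$ clause of vertical backward regularity — the single factor $\theta$ in the lower bound is absorbed by $\theta^2$ since $\theta<1$. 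For the $s=1$ vertical clause one uses $\Jac_pF^{-m}/\|D_pF^{-m}|_{E_p^v}\|^2\asymp\tau_m^2/j_m\asymp\|A_m|_{E_p^h}\|^2/j_m$, controlled by the $s=2$ horizontal clause; here the denominator $\|D_pF^{-m}|_{E_p^v}\|^2\asymp\theta^2\sigma_m^2$ is precisely what produces the factor $\theta^{-2}$ in the conclusion, and letting $\epsilon$ inflate to $\bepsilon$ soaks up the universal multiplicative constants and the finitely many exceptional indices $m\le m_0$.

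I expect the main obstacle to be the uniform \emph{upper} bound on $\sigma_m=\|D_pF^{-m}\|$ (equivalently, a lower bound on $\tau_m$): the horizontal-regularity hypotheses do not by themselves pin down the top singular value, so one is forced into a dichotomy — either $E_p^h$ is close enough to $e_2^m$ that $\|A_m|_{E_p^h}\|\asymp\tau_m$, whence $\sigma_m=j_m/\tau_m\lesssim\lambda^{-(1+\epsilon)m}$ follows from the $s=1$ horizontal lower bound, or the Jacobian $j_m$ is itself only sub-exponentially large and the estimate is closed using the standing bound relating $\|DF^{-1}\|$, $\lambda$ and $\epsilon$. Keeping careful track of which standing assumptions are in force, rather than any new idea, is where this argument demands care; the forward analog \propref{transverse for reg} is proved by a parallel singular-value argument.
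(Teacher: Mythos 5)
Your singular-value decomposition of $A_m := D_pF^{-m}$ correctly captures the \emph{lower} bounds in the conclusion: from the $s=2$ horizontal clause you get $\cos^2\measuredangle(E_p^h,e_1^m)\le L\lambda^{(1-\epsilon)m}\tau_m/\sigma_m$, hence near-alignment of $E_p^h$ with the least-stretched direction $e_2^m$, and combined with $\measuredangle(E_p^h,E_p^v)>\theta$ this gives $\|A_mv\|\gtrsim\theta\,\sigma_m\gtrsim(\theta/L)\,\lambda^{-(1-\epsilon)m}$. That portion is sound.

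The genuine gap is the $s=0$ upper bound $\|A_mv\|\le(\bL/\theta^2)\lambda^{-(1+\bepsilon)m}$ (and the companion $s=1$ lower bound), which requires $\sigma_m=\|D_pF^{-m}\|\lesssim\lambda^{-(1+\bepsilon)m}$. You flag this as the main obstacle, but the dichotomy you propose does not close it. The first horn --- ``$\|A_m|_{E_p^h}\|\asymp\tau_m$'' --- is exactly the statement $\tau_m\asymp 1$, which is what needs to be proved; the near-alignment you established is exponential but is multiplied by $\sigma_m$, so a tilt $\cos\phi_m$ of size $1/\sigma_m$ keeps $\|A_mu\|\asymp 1$ while $\tau_m$ is arbitrarily small. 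The second horn --- ``$j_m$ is sub-exponential'' --- is vacuous: combining the $s=1$ and $s=2$ hypotheses forces $j_m\in[L^{-3}\lambda^{-(1-3\epsilon)m},L^3\lambda^{-(1+3\epsilon)m}]$, which is always exponential. The horizontal-regularity hypotheses constrain only $\|A_mu\|$ and the product $\sigma_m\tau_m=j_m$, never the two singular values individually: a $2\times 2$ model with $\tau_m\sim\lambda^{10m}$, $\sigma_m\sim\lambda^{-11m}$, and $E_p^h$ tilted by $\cos\phi_m\sim\lambda^{11m}$ from $e_2^m$ satisfies every hypothesis while wildly violating the conclusion. What is actually needed is the structural bound relating $\|DF^{-1}\|$ to $\lambda$ and $\epsilon$ --- the quantity $\lambda^{1-\epsilon}\|DF^{-1}\|$ that appears throughout the constants, e.g.\ in $\cK_{p_m}$ of \thmref{reg chart} --- from which $\sigma_m\le\|DF^{-1}\|^m$ directly yields the upper bound and makes the lower bounds on $\tau_m$ and on $j/\|A_mv\|^2$ fall out. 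This is carried as a standing hypothesis in \cite{CLPY2}, from which the proposition is quoted without proof in this paper; you invoke ``the standing a priori control of $\|DF^{-1}\|$'' in passing but never actually deploy it, and that step is where the real content of the argument lies.
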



\section{Classification of Fixed Points}\label{sec.fixed}

Let $F : \cB \to F(\cB) \Subset \cB$ be a dissipative diffeomorphism defined on a Jordan domain $\cB \subset \bbR^2$. Suppose that $q_0 \in \cB$ is an isolated fixed point for $F$, and that $\lambda_-, \lambda_+ \in \bbR$ with $0 < |\lambda_-| < |\lambda_+|$ are the eigenvalues of $D_{q_0}F$. If $|\lambda_+| \geq 1$, then $q_0$ has a well-defined invariant local center manifold $W^c_{\loc}(q_0)$. In this case, we classify $q_0$ as:
\begin{itemize}
\item a {\it saddle with reflection} if the branches of $W^c_{\loc}(q_0) \setminus \{q_0\}$ alternate  and are repelling;
\item a {\it saddle with no reflection} if both branches of $W^c_{\loc}(q_0) \setminus \{q_0\}$ are fixed and repelling;
\item a {\it saddle-node} if both branches of $W^c_{\loc}(q_0) \setminus \{q_0\}$ are fixed and one is repelling while the other is attracting.
\end{itemize}

The {\it index of $q_0$}, denoted $\Index(q_0)$, is defined as the winding number of the vector field
$$
\Delta_{q_0} F(p) := F(p-x_0)-(p-q_0),
$$
and can be determined based on the type of $q_0$ as follows:
\begin{itemize}
\item $\Index(q_0) = 1$ if $q_0$ is a sink or a saddle with reflection;
\item $\Index(q_0) = 0$ if $q_0$ is a saddle-node; or
\item $\Index(q_0) = -1$ if $q_0$ is a saddle with no reflection.
\end{itemize}

\begin{prop}\label{exist saddle}
Let $F : \cB \to F(\cB) \Subset \cB$ be a dissipative diffeomorphism defined on a Jordan domain $\cB \subset \bbR^2$. Suppose that there exists an $R$-periodic Jordan subdomain $\cB^1 \Subset \cB$ for some integer $R \geq 2$. Then there exists a $r$-periodic saddle point in $\cB$ for some integer $r$ that divides $R$.
\end{prop}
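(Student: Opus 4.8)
The plan is to run a fixed-point index (Lefschetz) count for the return map $F^R$ on $\cB$, and to play the ``automatic'' contribution of the cyclic family of disks $\cB^1, F(\cB^1),\dots,F^{R-1}(\cB^1)$ against the total index.

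Set $U := \bigcup_{i=0}^{R-1} F^i(\cB^1)$. First I would record the elementary structural facts: the open sets $F^0(\cB^1),\dots,F^{R-1}(\cB^1)$ are pairwise disjoint (apply $F^{-i}$ to $F^i(\cB^1)\cap F^j(\cB^1)$ and use $F^{\,j-i}(\cB^1)\cap\cB^1=\varnothing$ for $1\le j-i<R$); each closure $\overline{F^i(\cB^1)}=F^i(\overline{\cB^1})$ is contained in $\cB$ (using $\cB^1\Subset\cB$ and $F(\cB)\Subset\cB$) and is a closed topological disk by Schoenflies; and $F^R(\overline{F^i(\cB^1)})=F^i(\overline{F^R(\cB^1)})\Subset F^i(\cB^1)$, so each $F^i(\cB^1)$ is a trapping topological disk for $F^R$. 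In particular $F^R$ has no fixed point on $\partial\cB$ (as $F^R(\overline\cB)\subset\cB$) nor on any $\partial F^i(\cB^1)$, and all of $\operatorname{Fix}(F^R)$ lies in $\cB$; hence the fixed-point indices $\operatorname{ind}(F^R,\Omega)$ are well defined for $\Omega\in\{\cB,\ F^0(\cB^1),\dots,F^{R-1}(\cB^1),\ \cB\setminus\overline U\}$, and by excision and additivity of the degree,
$$
\operatorname{ind}(F^R,\cB)=\sum_{i=0}^{R-1}\operatorname{ind}\!\big(F^R,F^i(\cB^1)\big)+\operatorname{ind}(F^R,\cB\setminus\overline U).
$$
I would then evaluate the first two terms with the Lefschetz fixed point theorem: if $V$ is a Jordan domain with $\overline V\subset\cB$ and $F^R(\overline V)\subset V$, then $F^R|_{\overline V}$ is a self-map of a compact ANR (a closed disk) with all fixed points in its interior, so its total index equals its Lefschetz number, which is $1$ (only $H_0$ contributes). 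Applying this to $V=\cB$ and to each $V=F^i(\cB^1)$ gives $\operatorname{ind}(F^R,\cB)=1$ and $\operatorname{ind}(F^R,F^i(\cB^1))=1$, whence
$$
\operatorname{ind}(F^R,\cB\setminus\overline U)=1-R\le-1 .
$$
Thus $\operatorname{Fix}(F^R)\cap(\cB\setminus\overline U)\neq\varnothing$, and these fixed points cannot all be sinks: a sink is an isolated fixed point of index $+1$, so an all-sink set would be finite (by compactness of $\operatorname{Fix}(F^R)$, which stays off $\partial\cB\cup\partial U$) with nonnegative total index, contradicting $1-R<0$. Hence there is a fixed point $q_*\in\cB\setminus\overline U$ of $F^R$ that is not a sink; by dissipativity and orientation-preservation, $D_{q_*}F^R$ has eigenvalues with $|\lambda_-|<1\le|\lambda_+|$ (sources are excluded since $|\det D_{q_*}F^R|<1$), so by the classification of \appref{sec.fixed} — and, when $q_*$ is isolated, because $1-R<0$ actually forces a fixed point of index $-1$, i.e.\ a saddle with no reflection — the point $q_*$ is a saddle of $F^R$. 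Finally, since $F^R(q_*)=q_*$, the set $\{k\in\bbZ:F^k(q_*)=q_*\}$ is a subgroup $r\bbZ$ containing $R$, so $q_*$ is an $r$-periodic saddle point of $F$ lying in $\cB$ with $r\mid R$.

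The main obstacle is the index bookkeeping rather than any single delicate estimate: one must verify that $F^R$ is fixed-point-free on $\partial\cB$ and on each $\partial F^i(\cB^1)$ so that the degrees are defined and the additivity/excision identity holds, that each $\overline{F^i(\cB^1)}$ is genuinely a closed disk to which the Lefschetz theorem applies, and that ``not a sink'' really upgrades to ``saddle'' in the sense of \appref{sec.fixed} (in particular handling a possibly non-isolated fixed point, at which $D_{q_*}F^R-I$ is singular, so $1$ is an eigenvalue and the point is a saddle-node-type point). The remaining ingredients — the cyclic disjointness of the $F^i(\cB^1)$ and the divisibility $r\mid R$ — are routine.
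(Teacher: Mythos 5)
Your argument is correct and rests on the same basic tool as the paper — the Lefschetz fixed–point formula for $F^R$ on a trapping topological disk — but it organizes the index bookkeeping differently. The paper's proof is a pure counting argument: the total index of $F^R$ in $\cB$ equals $1$; $F^R$ has at least $1+R$ distinct fixed points (one from the fixed point of $F$ in $\cB$, and $R$ from the cycle of a fixed point of $F^R|_{\cB^1}$ guaranteed by Brouwer); if all of these were sinks the total index would be at least $1+R>1$, a contradiction, so some fixed point is not a sink and hence, by dissipativity and the classification in \appref{sec.fixed}, is a saddle. Your proof instead applies additivity and excision of the fixed-point index across the decomposition $\cB = U \sqcup (\cB\setminus\overline U)$ to get $\operatorname{ind}(F^R,\cB\setminus\overline U)=1-R<0$, producing a non-sink fixed point directly in the complement of the cycle. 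This buys a slightly sharper localization — the saddle can be taken outside $\overline U$ — at the cost of having to verify that $F^R$ has no fixed points on $\partial\cB$ or on any $\partial F^i(\cB^1)$, which you do correctly. Both versions handle a possibly non-isolated fixed point the same way (dissipativity plus a unit eigenvalue forces a saddle-node), and both implicitly rely on the standard replacement of $\overline\cB$ by a slightly smaller closed Jordan subdomain $\overline{\cB'}$ with $F(\cB)\Subset\cB'\Subset\cB$ so that the Lefschetz theorem can be applied to a genuine self-map of a compact disk.
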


\begin{proof}
If $F^R$ has a non-isolated fixed point $q_0$, then $q_0$ must have an indifferent eigenvalue, and hence is a type of saddle.

Suppose that all fixed points of $F^R$ are isolated. By the classical Lefschetz formula, when the number of fixed points is finite, the sum of the index of the fixed points in the disc is equal to $1$. Observe that $F$ has at least one fixed point and one $R$-periodic orbit in $\cB$. Hence, not all fixed points of $F^R$ can be sinks.
\end{proof}


\section{Elementary $C^r$-Estimates}

\begin{lem}\label{vary compose}
Let $d \in\bbN$. Consider maps $H_0, \tiH_0 : U \to V$ and $H_1, \tiH_1 : V \to \bbR^d$ defined on domains $U, V \subset \bbR^d$. Suppose $H_0, \tiH_0, \tiH_1$ are $C^{r-1}$ and $H_1$ is $C^r$; and
$$
\|H_i-\tiH_i\|_{C^{r-1}} < \delta
\matsp{for}
i \in \{0, 1\}.
$$
Then we have
$$
\|H_1 \circ H_0 - \tiH_1\circ \tiH_0\|_{C^{r-1}} < \delta P(\|H_1\|_{C^r}, \|\tiH_0\|_{C^{r-1}}),
$$
where $P$ is a two-variable homogeneous polynomial of degree $r$.
\end{lem}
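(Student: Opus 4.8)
The plan is to prove the estimate by applying the chain rule, expanding the composition $H_1\circ H_0$ as a polynomial in the derivatives of $H_1$ (evaluated at $H_0$) and the derivatives of $H_0$, and then comparing term-by-term with the corresponding expansion of $\tiH_1\circ\tiH_0$. The key combinatorial fact is the Fa\`a di Bruno formula: for a multi-index $\alpha$ with $1\le|\alpha|\le r-1$, the derivative $D^\alpha(H_1\circ H_0)$ is a universal polynomial (with integer coefficients depending only on $\alpha$ and $d$) in the quantities $(D^\beta H_1)\circ H_0$ for $|\beta|\le|\alpha|$ and $D^\gamma H_0$ for $|\gamma|\le|\alpha|$, with each monomial having total degree (counting the $H_1$-factor once and the $H_0$-factors according to how many appear) equal to $|\alpha|+1\le r$. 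This is where the degree-$r$ homogeneity of the bounding polynomial $P$ comes from.

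The steps, in order, are as follows. First, fix a multi-index $\alpha$ with $|\alpha|\le r-1$ and write $D^\alpha(H_1\circ H_0)-D^\alpha(\tiH_1\circ\tiH_0)$ as a sum over the Fa\`a di Bruno monomials of the difference of products. Second, use the elementary telescoping identity $\prod a_i-\prod b_i=\sum_j (a_1\cdots a_{j-1})(a_j-b_j)(b_{j+1}\cdots b_n)$ to reduce each such difference to a sum in which exactly one factor is replaced by a difference of the form either $(D^\beta H_1)\circ H_0-(D^\beta\tiH_1)\circ\tiH_0$ or $D^\gamma H_0-D^\gamma\tiH_0$. Third, bound each difference: the second type is $\le\|H_0-\tiH_0\|_{C^{r-1}}<\delta$ directly; the first type is handled by the triangle inequality, writing
$$
(D^\beta H_1)\circ H_0-(D^\beta\tiH_1)\circ\tiH_0=\big((D^\beta H_1)-(D^\beta\tiH_1)\big)\circ H_0+\big((D^\beta\tiH_1)\circ H_0-(D^\beta\tiH_1)\circ\tiH_0\big),
$$
where the first summand is $\le\|H_1-\tiH_1\|_{C^{r-1}}<\delta$ (using $|\beta|\le|\alpha|\le r-1$) and the second is $\le\|D^\beta\tiH_1\|_{C^1}\|H_0-\tiH_0\|_{C^0}<\|H_1\|_{C^r}\cdot(\delta+\|H_1\|_{C^r})\cdot\delta$ after absorbing $\|\tiH_1\|_{C^r}\le\|H_1\|_{C^r}+\delta$ — here one uses $|\beta|\le r-1$ so that $D^\beta\tiH_1$ is $C^1$, which is why $H_1$ is required to be $C^r$ rather than merely $C^{r-1}$. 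Fourth, bound all the remaining (undifferenced) factors by $\|H_1\|_{C^r}$ or $\|\tiH_0\|_{C^{r-1}}$ as appropriate, again replacing any stray $\|\tiH_1\|$ by $\|H_1\|_{C^r}+\delta$ and any stray $\|H_0\|$ by $\|\tiH_0\|_{C^{r-1}}+\delta$. Collecting terms, every monomial is bounded by $\delta$ times a monomial of total degree at most $r$ in $\|H_1\|_{C^r}$ and $\|\tiH_0\|_{C^{r-1}}$ (the lower-degree terms being dominated after the usual convention that these norms are $\ge 1$, or else simply absorbed into a genuinely homogeneous $P$ by padding). Taking the supremum over $|\alpha|\le r-1$ and over the (finitely many, $\alpha$-dependent but $d$-universal) Fa\`a di Bruno coefficients yields the claimed bound with an explicit two-variable homogeneous polynomial $P$ of degree $r$.

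The main obstacle is purely bookkeeping: one must be careful that the difference-of-products telescoping interacts correctly with the chain-rule expansion so that no term of degree exceeding $r$ is ever produced, and that the substitution $\|\tiH_i\|\le\|H_i\|+\delta$ is applied only to factors that are genuinely bounded (not to the single ``difference'' factor). A secondary point to get right is the composition estimate $\|(D^\beta\tiH_1)\circ H_0-(D^\beta\tiH_1)\circ\tiH_0\|_{C^0}\le\|D^{\beta}\tiH_1\|_{C^1}\|H_0-\tiH_0\|_{C^0}$, which is just the mean value inequality but requires $|\beta|+1\le r$, explaining the asymmetry in the smoothness hypotheses. None of this is deep; the statement is a standard ``$C^{r-1}$ Lipschitz dependence of composition'' lemma, and the proof is a routine but slightly tedious application of Fa\`a di Bruno plus telescoping.
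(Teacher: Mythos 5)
Your overall strategy — Faà di Bruno plus telescoping, which is a fully-worked-out version of the paper's one-line first-order-perturbation argument — is sound, but there is a concrete error in the order of your telescoping split that makes the argument as written fail on the top-order terms.

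You split
$$
(D^\beta H_1)\circ H_0-(D^\beta\tiH_1)\circ\tiH_0
=\bigl((D^\beta H_1)-(D^\beta\tiH_1)\bigr)\circ H_0
+\bigl((D^\beta\tiH_1)\circ H_0-(D^\beta\tiH_1)\circ\tiH_0\bigr),
$$
and then estimate the second summand by the Lipschitz constant of $D^\beta\tiH_1$, invoking $\|\tiH_1\|_{C^r}\le\|H_1\|_{C^r}+\delta$. But the hypotheses give only $\tiH_1\in C^{r-1}$ and $\|H_1-\tiH_1\|_{C^{r-1}}<\delta$; the quantity $\|\tiH_1\|_{C^r}$ is simply not controlled (and need not be finite). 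When $|\beta|=r-1$, which does occur when $|\alpha|=r-1$, the map $D^\beta\tiH_1$ is merely continuous and the mean value inequality does not apply to it. Your parenthetical remark that this ``is why $H_1$ is required to be $C^r$'' is the right intuition applied to the wrong function: the extra derivative of regularity is available on $H_1$, not $\tiH_1$.

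The fix is to split in the opposite order, exactly as the paper does. Write $d_0:=H_0-\tiH_0$, $d_1:=H_1-\tiH_1$, and decompose
$$
H_1\circ H_0-\tiH_1\circ\tiH_0
=\bigl(H_1\circ H_0-H_1\circ\tiH_0\bigr)+d_1\circ\tiH_0,
$$
so that the mean-value (or Taylor) estimate is applied to $H_1$ precomposed with the two different arguments $H_0,\tiH_0$ — this uses $D^{r}H_1$, which exists by hypothesis — and the term $d_1\circ\tiH_0$ involves only $C^{r-1}$ quantities. At the Faà di Bruno level this means: for each monomial, telescope so that the differenced factor is either $D^\gamma H_0-D^\gamma\tiH_0$ (bounded by $\delta$) or $(D^\beta H_1)\circ H_0-(D^\beta H_1)\circ\tiH_0$ (bounded by $\|D^{\beta+1}H_1\|_{C^0}\,\|d_0\|_{C^0}\le\|H_1\|_{C^r}\,\delta$), and separately collect the single $d_1\circ\tiH_0$ term from the outer composition, whose $\alpha$-derivatives are controlled by $\|d_1\|_{C^{r-1}}\le\delta$ and $\|\tiH_0\|_{C^{r-1}}$. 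The rest of your bookkeeping — the degree count $\le r$ and the homogenization of $P$ — then goes through unchanged.
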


\begin{proof}
Let $d_i := H_i-\tiH_i$. A straightforward computation shows that
\begin{align*}
H_1 \circ H_0 &= H_1 \circ (\tiH_0 + d_0)\\
&= H_1\circ \tiH_0 +O(\|DH_1\circ \tiH_0\|\|d_0\|)\\
&= \tiH_1 \circ \tiH_0 + d_1\circ \tiH_0 + O(\|DH_1\circ \tiH_0\|\|d_0\|).
\end{align*}
The result follows.
\end{proof}

\begin{lem}\cite[Lemma C.3]{CLPY3}\label{factor}
For $r \geq 4$, let $f : I \to f(I)$ be a $C^r$-map defined on an interval $0 \in I \subset \bbR$ such that $f(0) = 0 = f'(0)$ and $f''(0) = \kappa > 0$. Then there exists a $C^r$-diffeomorphism $\psi_f : I \to \psi_f(I)$ such that $f(x) = \kappa \cdot (\psi_f(x))^2$, and $\|\psi_f^{\pm 1}\|_{C^{r-3}} < C$ for some uniform constant $C = C(\|f\|_{C^r},\kappa, r) >0$.
\end{lem}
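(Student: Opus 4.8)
The plan is to run the classical ``splitting off a perfect square'' construction at a nondegenerate quadratic critical point, keeping careful track of the loss of regularity and of the dependence of the constants. First, using $f(0)=f'(0)=0$ and Taylor's formula with integral remainder (Hadamard's lemma applied twice), I would write
\[
f(x)=x^2 g(x),\qquad g(x):=\int_0^1 (1-t)\,f''(tx)\,dt .
\]
Since $f\in C^r$, the function $g$ is of class $C^{r-2}$ with $\|g\|_{C^{r-2}}\leq C_1(\|f\|_{C^r})$, and $g(0)=\tfrac12 f''(0)=\kappa/2>0$.

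Next I would check positivity: in the setting where the lemma is applied, $f$ has a strict minimum value $0$ at $0$ and is monotone on each side of $0$ (otherwise one first shrinks $I$), so $g=f/x^2>0$ on $I\setminus\{0\}$ and hence $g\geq c_0>0$ on $I$; when $I$ is a sufficiently small neighbourhood of $0$, the bound $c_0$ depends only on $\kappa$ (and $\|f\|_{C^r}$), since $g$ is then $C^0$-close to $\kappa/2$. I would then set $\psi_f(x):=x\sqrt{g(x)/\kappa}$, so that $\kappa\,(\psi_f(x))^2=x^2 g(x)=f(x)$, which is the desired factorization. Because $g/\kappa$ is a positive $C^{r-2}$ function and $t\mapsto\sqrt t$ is smooth on $(0,\infty)$, the map $\psi_f$ is $C^{r-2}$, with $\|\psi_f\|_{C^{r-3}}$ bounded in terms of $\|g\|_{C^{r-2}}$ and $c_0$, hence in terms of $\|f\|_{C^r},\kappa,r$.

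To see that $\psi_f$ is a diffeomorphism, a direct computation gives $\psi_f'(x)=\dfrac{f'(x)/x}{2\sqrt{\kappa\, g(x)}}$ for $x\ne0$ (using $x\bigl(2g+xg'\bigr)=(x^2g)'=f'$) and $\psi_f'(0)=\sqrt{g(0)/\kappa}=1/\sqrt2$; in particular $\psi_f'$ has the sign of $f'(x)/x$ and is bounded away from $0$ on $I$. Hence $\psi_f$ maps $I$ diffeomorphically onto $\psi_f(I)$, and the standard inverse-function $C^k$-estimates yield $\|\psi_f^{-1}\|_{C^{r-3}}<C$ with $C=C(\|f\|_{C^r},\kappa,r)$. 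Away from $0$ one notes that $\psi_f(x)=\sign(x)\sqrt{f(x)/\kappa}$ is as smooth as $f$, so the only point of possible regularity loss is $x=0$; the bookkeeping above shows $C^{r-3}$-control survives globally, which is exactly what is used in the applications (Lemmas \ref{est value} and \ref{quad mapping} and the $C^1$-bounds of \secref{sec.c1 bound}).

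The step I expect to be the main obstacle is precisely this analysis at the critical point: both dividing $f$ by $x^2$ and extracting the square root are singular operations there, and the delicate point is to verify that the $C^{r-3}$-bounds on $\psi_f^{\pm1}$ hold with a constant depending only on $\|f\|_{C^r}$, $\kappa$ and $r$ (and to be honest about how many derivatives genuinely survive). Once $g>0$ is in hand, the algebraic identity and the diffeomorphism property are routine, and the behaviour away from $0$ is immediate.
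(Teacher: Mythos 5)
Your argument is correct and follows the natural route — Hadamard factorization $f=x^2g$ with $g\in C^{r-2}$ via Taylor with integral remainder, then $\psi_f:=x\sqrt{g/\kappa}$ — which is very likely the same construction as in the cited Lemma C.3 of \cite{CLPY3} (the present paper states the lemma without proof). Two observations on precision. First, your bookkeeping already gives $C^{r-2}$ control on $\psi_f^{\pm1}$, which is stronger than the claimed $C^{r-3}$; moreover, the cancellation identity $\psi_f'=\frac{f'(x)/x}{2\sqrt{\kappa g}}$ with $h(x):=f'(x)/x\in C^{r-2}$ (valid at $x=0$ as well, with $h(0)=\kappa$, $g(0)=\kappa/2$, giving $\psi_f'(0)=1/\sqrt2$ directly) shows $\psi_f\in C^{r-1}$. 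As you suspect, the lemma's ``$C^r$-diffeomorphism'' is a slight overstatement — $C^{r-1}$ is sharp for this construction, and since $\psi_f=\pm\sign(x)\sqrt{f(x)/\kappa}$ is forced up to sign, no other choice does better (e.g.\ $f(x)=x^2+|x|^{r+1/2}$ yields $\psi_f\in C^{r-1}\setminus C^r$). Second, the one genuine hypothesis you correctly flag is the uniformity of the lower bounds $g\geq c_0>0$ and $\inf_I|\psi_f'|>0$ in terms of $\|f\|_{C^r},\kappa,r$ alone: away from $0$ this needs either shrinking $I$ to radius $\lesssim\kappa/\|f\|_{C^r}$ (where $g\geq\kappa/4$, $h\geq\kappa/2$) or the extra hypothesis that $0$ is the unique critical point of $f$ on $I$ with quantitative control on $f'(x)/x$. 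In the applications in this paper (Lemmas \ref{est value}, \ref{quad mapping}; \secref{sec.c1 bound}) the relevant domains are already controlled, so this is harmless, but it should be stated rather than left implicit.
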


Let $g : I \to J$ be a $C^1$-diffeomorphism between two intervals $I, J \subset \bbR^2$. Define the {\it zoom-in operator} $\bfZ$ by
$$
\bfZ(g)(t) := |J|^{-1}\cdot g(|I| t).
$$
Note that $\bfZ(g) : [0, 1] \to [0,1]$.

\begin{lem}\cite[Lemma 5]{AvdMMa}\label{avila lem 1}
Let $\phi : U \to \phi(U)$ be a $C^r$-diffeomorphism defined on a domain $U \subset \bbR$. Then there exists a uniform constant
$$
K = K(\|\phi\|_{C^r}, \|\phi''/\phi'\|_{C^0}) \geq 1
$$
such that for any interval $I \subset U$, we have
$$
\|\bfZ(\phi|_{I})-\Id\|_{C^r} \leq K|I|.
$$
\end{lem}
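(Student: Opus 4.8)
The plan is to treat this as a direct distortion computation on $\phi$, so I will only outline the steps. Since $\phi$ is a $C^r$-diffeomorphism of the interval $U$, I would first assume it is orientation-preserving — this is the relevant case, as $\bfZ(\phi|_I)$ is $C^r$-close to the identity only then — and, after a translation, that $I = [0,h]$ with $h := |I|$. Writing $D := \phi(h) - \phi(0) = \int_0^h \phi'(s)\,ds > 0$ and $\psi := \bfZ(\phi|_I)$, one has $\psi(t) = D^{-1}(\phi(ht) - \phi(0))$, hence $\psi(0) = 0$, $\psi(1) = 1$, and, by the chain rule, $\psi^{(k)}(t) = D^{-1}h^{k}\phi^{(k)}(ht)$ for all $1 \le k \le r$. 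The goal is to show $\|(\psi - \Id)^{(k)}\|_{C^0} = O(|I|)$ for each $0 \le k \le r$.

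The one substantive step is a two-sided control of the normalizing denominator $D$. By the mean value theorem $D = h\,\phi'(\xi)$ for some $\xi \in I$, and since $\phi''/\phi' = (\log\phi')'$ has $C^0$-norm at most $\|\phi''/\phi'\|_{C^0}$, the ratio $\phi'(x)/\phi'(y)$ is pinched between $\exp(\pm\|\phi''/\phi'\|_{C^0}\diam(U))$ for $x,y \in U$; together with $\inf_U|\phi'| > 0$ (as $\phi$ is a $C^r$-diffeomorphism of $U$) this gives $h/D = 1/\phi'(\xi) = O(1)$, with the implied constant controlled by $\|\phi''/\phi'\|_{C^0}$, $\diam(U)$ and $\inf_U|\phi'|$. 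These last two quantities are uniform in every application of the lemma in this paper, so I would absorb them into the constant $K$.

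With $h/D = O(1)$ in hand, the derivative bounds follow in decreasing order. For $2 \le k \le r$: $|\psi^{(k)}(t)| = h^{k-1}|\phi^{(k)}(ht)|/|\phi'(\xi)| \le h^{k-1}\|\phi\|_{C^r}/\inf_U|\phi'| = O(|I|)$, absorbing a factor $\diam(U)^{k-2}$; for $k = 2$ one may instead write $|\psi''(t)| = h\,|\psi'(t)|\,|(\phi''/\phi')(ht)| = O(|I|)$, which shows the $C^2$-estimate needs only the nonlinearity hypothesis. For the $C^1$- and $C^0$-parts I would integrate down: from $\int_0^1 \psi' = \psi(1) - \psi(0) = 1$ there is $t_* \in [0,1]$ with $\psi'(t_*) = 1$, so $\psi'(t) - 1 = \int_{t_*}^t \psi''$ and $\|\psi' - 1\|_{C^0} \le \|\psi''\|_{C^0} = O(|I|)$; and then $\psi(t) - t = \int_0^t(\psi' - 1)$ gives $\|\psi - \Id\|_{C^0} \le \|\psi' - 1\|_{C^0} = O(|I|)$. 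Collecting all orders yields $\|\bfZ(\phi|_I) - \Id\|_{C^r} \le K|I|$. I do not expect a genuine obstacle: the lemma is essentially formal once the renormalizing denominator $D$ is known to be comparable to $|I|$, which is precisely what bounded nonlinearity provides.
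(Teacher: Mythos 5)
The paper does not prove this lemma; it is cited directly from [AvdMMa], so there is no internal argument to compare against. Your direct computation is correct and is essentially the only natural approach: after translating so that $I=[0,h]$, writing $\psi(t)=D^{-1}(\phi(ht)-\phi(0))$ with $D=\phi(h)-\phi(0)=h\,\phi'(\xi)$, and noting $\psi^{(k)}(t)=D^{-1}h^k\phi^{(k)}(ht)$, the whole point is the two-sided control of $D/h=\phi'(\xi)$ via the nonlinearity bound, after which the higher-order estimates are a chain-rule bookkeeping exercise and the $C^0$, $C^1$ bounds follow by integrating down from $\psi''$ using the normalization $\psi(0)=0$, $\psi(1)=1$. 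Your alternative form $\psi''(t)=h\,\psi'(t)\,(\phi''/\phi')(ht)$ for the $k=2$ step is a clean observation, since it exhibits the nonlinearity of $\psi$ as exactly $h$ times that of $\phi$.

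The one point worth flagging is the dependence of the constant: as you compute it, $K$ also involves $\operatorname{diam}(U)$ and $\inf_U|\phi'|$, neither of which is recoverable from $\|\phi\|_{C^r}$ and $\|\phi''/\phi'\|_{C^0}$ alone (e.g.\ $\phi(x)=\epsilon x$ has tiny $|\phi'|$ with both stated quantities bounded; the affine case is harmless since $\bfZ(\phi|_I)=\Id$, but it shows the lower bound on $|\phi'|$ is not free). You acknowledge this and absorb the extra quantities into $K$; that is the right call given how the lemma is used in the paper, but strictly speaking the statement as written is slightly imprecise about what $K$ depends on, and a fully honest constant would also carry a dependence on a bound for $\operatorname{diam}(U)$ and $\|\phi^{-1}\|_{C^1}$ (or equivalently $\inf_U|\phi'|$). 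With that caveat, which is a defect of the stated lemma rather than of your argument, the proof is complete.
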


\begin{lem}\cite[Lemma 6]{AvdMMa}\label{avila lem 2}
For $1\leq i \leq n$, let $\phi_i : [0, 1] \to [0, 1]$ be a $C^r$-diffeomorphism such that
$$
\sum_{i=1}^n\|\phi_i - \Id\|_{C^r} = O(1).
$$
Then
$$
\|\phi_n\circ\ldots\circ\phi_1\|_{C^r} = O(1).
$$
\end{lem}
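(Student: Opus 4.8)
The plan is to upgrade the qualitative statement to the quantitative one: $\|\phi_n\circ\cdots\circ\phi_1\|_{C^r}$ is bounded by a constant depending only on $r$ and on a bound $M$ for $\sum_{i=1}^n\|\phi_i-\Id\|_{C^r}$. Write $\phi_i=\Id+\psi_i$ and put $a_i:=\|\psi_i\|_{C^r}$, so $\sum_i a_i\le M$; in particular $\|\phi_i'\|_{C^0}\le 1+a_i$ and $\|\phi_i^{(m)}\|_{C^0}=\|\psi_i^{(m)}\|_{C^0}\le a_i$ for $2\le m\le r$. Introduce the partial compositions $\Phi_0:=\Id$ and $\Phi_k:=\phi_k\circ\Phi_{k-1}$ for $1\le k\le n$; each $\Phi_k$ maps $[0,1]$ into $[0,1]$, so $\|\Phi_k\|_{C^0}\le 1$. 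The goal is to show by induction on $j\in\{1,\dots,r\}$ that there is a constant $C_j=C_j(M,r)$ with $\|\Phi_k^{(j)}\|_{C^0}\le C_j$ for every $k$; summing over $j$ then gives $\|\Phi_n\|_{C^r}=O(1)$, which is the assertion.

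For $j=1$ the chain rule gives $\Phi_k'=(\phi_k'\circ\Phi_{k-1})\cdot\Phi_{k-1}'$, hence $\|\Phi_k'\|_{C^0}\le(1+a_k)\|\Phi_{k-1}'\|_{C^0}$, and iterating from $\|\Phi_0'\|_{C^0}=1$ yields $\|\Phi_k'\|_{C^0}\le\prod_{i=1}^k(1+a_i)\le e^{M}=:C_1$; note no orientation hypothesis is needed, since only $|\phi_i'|\le 1+a_i$ is used. For $j\ge 2$, differentiate $\Phi_k=\phi_k\circ\Phi_{k-1}$ exactly $j$ times via the Fa\`a di Bruno formula. The term carrying the highest derivative of $\Phi_{k-1}$ is precisely $(\phi_k'\circ\Phi_{k-1})\cdot\Phi_{k-1}^{(j)}$; every other term is a product $(\psi_k^{(m)}\circ\Phi_{k-1})\prod_{\ell}(\Phi_{k-1}^{(\ell)})^{m_\ell}$ with $m=\sum_\ell m_\ell\ge 2$ and, crucially, with all $\ell\le j-1$ (a block $m_j\ge 1$ would force $m=1$). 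Such a term is bounded by $a_k\cdot D_{j-1}$, where $D_{j-1}=D_{j-1}(M,r)$ depends only on the already-established bounds $C_1,\dots,C_{j-1}$ and on the combinatorial Fa\`a di Bruno coefficients. Setting $b_k:=\|\Phi_k^{(j)}\|_{C^0}$ we thus obtain $b_k\le(1+a_k)b_{k-1}+a_kD_{j-1}$, which rearranges to $b_k+D_{j-1}\le(1+a_k)(b_{k-1}+D_{j-1})$; telescoping and using $\prod_i(1+a_i)\le e^{M}$ together with $b_0=0$ gives $b_k\le e^{M}D_{j-1}=:C_j$, completing the induction.

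There is no genuine obstacle here; the one point requiring care is the Fa\`a di Bruno bookkeeping in the inductive step, namely confirming that each non-principal term involves a derivative $\psi_k^{(m)}$ of order $m\ge 2$ (so that it is controlled by the \emph{small} quantity $a_k$ rather than by $\|\phi_k'\|\sim 1$) and only derivatives of $\Phi_{k-1}$ of order $\le j-1$ (so that the inductive hypothesis applies). This is exactly what makes the per-step multiplier have the shape $(1+a_k)$ up to an $a_k$-small correction, which is the structure the telescoping argument exploits; it is essential that we are given summability $\sum_i a_i\le M$ and not merely uniform boundedness of the $a_i$. (Alternatively, the statement can simply be quoted from \cite[Lemma 6]{AvdMMa}.)
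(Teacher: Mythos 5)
The paper does not prove this lemma; it is quoted directly from \cite[Lemma 6]{AvdMMa}. Your proof is correct, and the inductive Fa\`a di Bruno argument you give — isolating the single partition with one block as the principal term $(\phi_k'\circ\Phi_{k-1})\Phi_{k-1}^{(j)}$, observing that every other partition has $\ge 2$ blocks each of size $\le j-1$ so it contributes a factor $\psi_k^{(m)}$ with $m\ge 2$ (hence size $\le a_k$), and then telescoping the recursion $b_k\le(1+a_k)b_{k-1}+a_kD_{j-1}$ via the substitution $b_k+D_{j-1}$ — is essentially the standard argument of the cited source.
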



\end{document}